\documentclass[reqno,12pt,a4paper]{article}

%%%%%%%%%%%%%%%%%%%%%%%%%%%%%%%%%%%%%%%%%%%%%%%%%%%%%%%%%%%
%                                               %  Preamble for all papers
%%%%%%%%%%%%%%%%%%%%%%%%%%%%%%%%%%%%%%%%%%%%%%%%%%%%%%%%%%%
\usepackage[height=23.5cm,left=2.5cm,right=2.5cm]{geometry}

\usepackage{eucal,enumerate,mathrsfs,xspace}
\usepackage{amsmath,amssymb,epsfig,bbm,amsthm}
\usepackage[toc,page]{appendix}
\usepackage[usenames]{color}

\usepackage{ifthen}
\numberwithin{equation}{section}

\newtheorem{theorem}{Theorem}[section]

\newtheorem{problem}[theorem]{Problem}
\newtheorem{corollary}[theorem]{Corollary}
\newtheorem{lemma}[theorem]{Lemma}
\newtheorem{proposition}[theorem]{Proposition}
\newtheorem{definition}[theorem]{Definition}

\newtheorem{example}[theorem]{Example}

\newenvironment{remark}
  {\pushQED{\qed}\remarkx}
  {\popQED\endremarkx}

%%%%%%%%%%%%%%%%%%%%%%%%%%%%%%%%%%%%%%%%%%%%%%%%%%%%%%%%%%%
%                                    % Packages for drafts, labels, corrections
%%%%%%%%%%%%%%%%%%%%%%%%%%%%%%%%%%%%%%%%%%%%%%%%%%%%%%%%%%%
 
\usepackage[normalem]{ulem}
\usepackage{pdfsync}
\usepackage{hyperref}

%%%%%%%%%%%%%%%%%%%%%%%%%%%%%%%%%%%%%%%%%%%%%%%%%%%%%%%%%%%%%%%%%%%%%%%%%%%%%
%%                                                                         %%
%%     SIMBOLI GENERALI (che di solito uso per tutti i lavori)             %%
%%                                                                         %%
%%%%%%%%%%%%%%%%%%%%%%%%%%%%%%%%%%%%%%%%%%%%%%%%%%%%%%%%%%%%%%%%%%%%%%%%%%%%%
%%
%
%%     SIMBOLI MATEMATICI BLACKBOARD (INSIEMI) R -> \R
%%
\newcommand{\A}{\mathbb{A}}

\newcommand{\C}{\mathbb{C}}

\newcommand{\K}{\mathbb{K}}
\renewcommand{\L}{\mathbb{L}}
\newcommand{\M}{\mathbb{M}}
\newcommand{\N}{\mathbb{N}}

\newcommand{\R}{\mathbb{R}}
\renewcommand{\S}{\mathbb{S}}

%%
%%
%%     SIMBOLI MATEMATICI CALLIGRAFICI SCRIPT      R -> \RR
%%
%%

\newcommand{\CC}{\mathscr{C}}
\newcommand{\DD}{\mathscr{D}}
\newcommand{\EE}{\mathscr{E}}
\newcommand{\FF}{\mathscr{F}}
\newcommand{\GG}{\mathscr{G}}
\newcommand{\HH}{\mathscr{H}}

\newcommand{\LL}{\mathscr{L}}

\newcommand{\PP}{\mathscr{P}}
\newcommand{\QQ}{\mathscr{Q}}
\newcommand{\RR}{\mathscr{R}}

%%
%%
%%
%%         SIMBOLI MATEMATICI CALLIGRAFICI  CAL  R -> \cR
%%
%%

\newcommand{\cB}{{\ensuremath{\mathcal B}}}
\newcommand{\calC}{{\ensuremath{\mathcal C}}}

\newcommand{\cF}{{\ensuremath{\mathcal F}}}

\newcommand{\cH}{{\ensuremath{\mathcal H}}}
\newcommand{\cK}{{\ensuremath{\mathcal K}}}

\newcommand{\cM}{{\ensuremath{\mathcal M}}}

\newcommand{\cP}{{\ensuremath{\mathcal P}}}

%%
%%
%%
%%     SIMBOLI MATEMATICI BOLD (raddoppiano l'iniziale)  a -> \aa,  B -> \bB,
%%     \gamma -> \ggamma
%%     quando sono apici o pedici si aggiunge una s
%%     a -> \saa, \beta -> \sbbeta
%% 
%% 

%\newcommand{\ll}{{\mbox{\boldmath$l$}}}

%\renewcommand{\ss}{{\mbox{\boldmath$s$}}}
\renewcommand{\tt}{{\mbox{\boldmath$t$}}}
\newcommand{\uu}{{\mbox{\boldmath$u$}}}
\newcommand{\vv}{{\mbox{\boldmath$v$}}}

\newcommand{\yy}{{\mbox{\boldmath$y$}}}
\newcommand{\zz}{{\mbox{\boldmath$z$}}}

\newcommand{\soo}{{\mbox{\scriptsize\boldmath$o$}}}

\newcommand{\szz}{{\mbox{\scriptsize\boldmath$z$}}}

\newcommand{\dD}{{\mbox{\boldmath$D$}}}

\newcommand{\xX}{{\mbox{\boldmath$X$}}}
\newcommand{\yY}{{\mbox{\boldmath$Y$}}}

\newcommand{\sdD}{{\mbox{\scriptsize\boldmath$D$}}}

\newcommand{\sxX}{{\mbox{\scriptsize\boldmath$X$}}}

\newcommand{\syY}{{\mbox{\scriptsize\boldmath$Y$}}}

%%
%%
%%
%%
%%                          GRECI
%%
\newcommand{\aalpha}{{\mbox{\boldmath$\alpha$}}}
\newcommand{\bbeta}{{\mbox{\boldmath$\beta$}}}
\newcommand{\ggamma}{{\mbox{\boldmath$\gamma$}}}

\newcommand{\eeta}{{\mbox{\boldmath$\eta$}}}

\newcommand{\ppi}{{\mbox{\boldmath$\pi$}}}

\newcommand{\ssigma}{{\mbox{\boldmath$\sigma$}}}

\newcommand{\vvarphi}{{\mbox{\boldmath$\varphi$}}}
\newcommand{\ppsi}{{\mbox{\boldmath$\psi$}}}
\newcommand{\pPsi}{{\mbox{\boldmath$\Psi$}}}
\newcommand{\ppPsi}{{\mbox{\scriptsize\boldmath$\Psi$}}}
\newcommand{\pPhi}{{\mbox{\boldmath$\Phi$}}}
\newcommand{\ppPhi}{{\mbox{\scriptsize\boldmath$\Phi$}}}

\newcommand{\xxi}{{\mbox{\boldmath$ \xi$}}}

\newcommand{\saalpha}{{\mbox{\scriptsize\boldmath$\alpha$}}}

\newcommand{\sggamma}{{\mbox{\scriptsize\boldmath$\gamma$}}}

\newcommand{\sssigma}{{\mbox{\scriptsize\boldmath$\sigma$}}}

\newcommand{\svvarphi}{{\mbox{\scriptsize\boldmath$\varphi$}}}
\newcommand{\sppsi}{{\mbox{\scriptsize\boldmath$\psi$}}}

%%
%%
%%
%%

%%
%%
%%
%% SANS SERIF NORMAL
%%
%%

\newcommand{\sfc}{{\sf c}}
\newcommand{\sfd}{{\sf d}}
\newcommand{\sfe}{{\sf e}}
\newcommand{\sfg}{{\sf g}}

\newcommand{\sfr}{{\sf r}}
\newcommand{\sfs}{{\sf s}}

\newcommand{\sfx}{{\sf x}}
\newcommand{\sfy}{{\sf y}}

\newcommand{\sfD}{{\sf D}}

\newcommand{\sfH}{{\sf H}}

\newcommand{\sfR}{{\sf R}}

\newcommand{\sfT}{{\sf T}}

\newcommand{\sfW}{{\sf W}}

\newcommand{\frf}{{\frak f}}
\newcommand{\frg}{{\frak g}}
\newcommand{\frh}{{\frak h}}

\newcommand{\fro}{{\frak o}}
\newcommand{\frp}{{\frak p}}
\newcommand{\frq}{{\frak q}}

\newcommand{\frs}{{\frak s}}

\newcommand{\fry}{{\frak y}}
\newcommand{\frz}{{\frak z}}

\newcommand{\frC}{{\frak C}}

\newcommand{\frF}{{\frak F}}

\newcommand{\frH}{{\frak H}}

\newcommand{\frK}{{\frak K}}

\newcommand{\frR}{{\frak R}}

\newcommand{\rmc}{{\mathrm c}}

\newcommand{\rme}{{\mathrm e}}

\newcommand{\rmh}{{\mathrm h}}

\newcommand{\rmr}{{\mathrm r}}

\newcommand{\rmx}{{\mathrm x}}
\newcommand{\rmy}{{\mathrm y}}
\newcommand{\rmz}{{\mathrm z}}

\newcommand{\rmB}{{\mathrm B}}
\newcommand{\rmC}{{\mathrm C}}
\newcommand{\rmD}{{\mathrm D}}

\newcommand{\rmI}{{\mathrm I}}
\newcommand{\rmL}{{\mathrm L}}
\newcommand{\rmM}{{\mathrm M}}

%%
%%       
%%
%%                                OPERATORI e SIMBOLI PARTICOLARI
%%
%%
%%
   % inf essenziale
   % sup essenziale

\newcommand{\Kliminf}{K\kern-3pt-\kern-2pt\mathop{\rm
lim\,inf}\limits}  % Kuratowski liminf di insiemi
\newcommand{\Klimsup}{K\kern-3pt-\kern-2pt\mathop{\rm lim\,sup}\limits}  % Kuratowski liminf di insiemi
\newcommand{\supp}{\mathop{\rm supp}\nolimits}   % supporto 

%\newcommand{\span}{\mathop{\rm span}\nolimits}   % span
   % involucro convesso
   % diametro
   % argmin
   % argmin
\newcommand{\Lip}{\mathop{\rm Lip}\nolimits}          %Lipschitz
                                %constant
          %Lipschitz constant
\newcommand{\interior}{\mathop{\rm int}\nolimits}   % interno relativo
   % inviluppo affine
%\newcommand{\dim}{\mathop{\rm dim}\nolimits}   % dimensione
     % traccia
%%
%%       DERIVAZIONE
%%
\renewcommand{\d}{{\mathrm d}}
\newcommand{\dt}{{\d t}}

\newcommand{\restr}[1]{\lower3pt\hbox{$|_{#1}$}}
\newcommand{\topref}[2]{\stackrel{\eqref{#1}}#2}
%%
%%
%%
%%
     % Misura di Hausdorff
\newcommand{\Leb}[1]{{\mathscr L}^{#1}}      % Misura di Lebesgue
%%
%%
                  % brackets

\newcommand{\down}{\downarrow}              %frecce in su e in giu nei limiti
\newcommand{\up}{\uparrow}

\newcommand{\eps}{\varepsilon}  
\newcommand{\nchi}{{\raise.3ex\hbox{$\chi$}}}
%%
%%
%%          SPAZI R^n....
%%
%%
\newcommand{\Rd}{{\R^d}}

%%
%%
        %  nelle formule
%%
%%
% \newenvironment{Proof}{\removelastskip\par\medskip   % inizio e fine dimostrazione
% \noindent{\em Proof.} \rm}{\penalty-20\null\hfill$\square$\par\medbreak}
% %%
% \newenvironment{Proofx}{\removelastskip\par\medskip  % fine dimostrazione senza quadratino
% \noindent{\em Proof.} \rm}{\par}
%%

\newcommand{\forevery}{\text{for every }}
%%
%%  Altro, standard

%
%%
%%
%%              COMMENTI ESTERNI AL TESTO
%%
%%
%%                        sul margine:        \comments{....}
%%                        frecca di richiamo: \attention
%%                        frecca di richiamo: \attention
%%                        frecca di richiamo: \attention
%%
%
% \newenvironment{proof}{\removelastskip\par\medskip   % inizio e fine dimostrazione
% \noindent{\em Proof.}
% \rm}{\penalty-20\null\hfill$\square$\par\medbreak}

%%%%%%% Macro per proof e qed da cimssart
%%%%%%%

\def\qed{\ifmmode % if math mode, assume display: omit penalty etc.
  \else \leavevmode\unskip\penalty9999 \hbox{}\nobreak\hfill
  \fi               
    \qquad           \hbox{\hskip.5em $\square$
    %     \vrule width.4em height.6em
                 %depth.05em
                \hskip.1em}}

% \newenvironment{proof}[1][Proof]{\def\endproofsym{\qed}\trivlist\item[\hskip\labelsep{%
% \noindent{\normalfont\emph{#1}.}\hskip .321429\parindent}]\ignorespaces}% Added colon after proof title 1/15/97
% {\endproofsym\endtrivlist}

%%%%%%%%%% COLORS

\newcommand{\nc}{\normalcolor}

\newcommand{\EEE}{\normalcolor\normalsize} % back to normal text

%%%%%%%%%%% OBSOLETE

%   \newcommand{\attention}{{\large%
%       \marginpar[\hfill$\Longrightarrow$]{$\Longleftarrow$}}}
% \newcommand{\citare}{{\large\marginpar{$\Longleftarrow$
%       \small\textit{citare}}}}
% %%
% \setlength{\marginparwidth}{3cm}
% %% 
% \newcommand{\comments}[1]{
%   \marginpar{\footnotesize\emph{#1}}
%   %% \begin{minipage}[t]{2cm}
%   %%     \footnotesize\emph{#1}
%   %%   \end{minipage}}
% }
%\newcommand{\fr}{\hfill$\blacksquare$}                      %quadratino nero alla fine del remark

%\input macro_metric2014

\newcommand{\PART}[1]{\vspace{0.5em}%
\section*{\Large\bfseries #1}%
\addcontentsline{toc}{part}{\bfseries #1}}

%
%
%%%%%%%%%%%%%%%%%%%%%%%%%%%%%%%%%%%%%%%%%%%%%%%%%%%%%%%%%%%%
%                                                         % MACRO SPECIFICHE
%%%%%%%%%%%%%%%%%%%%%%%%%%%%%%%%%%%%%%%%%%%%%%%%%%%%%%%%%%%%
%

\newcommand{\dom}[1]{\mathrm{Dom}(#1)}
\newcommand{\HK}{\mathsf{H\kern-3pt K}}
\newcommand{\BL}{\mathsf{B\kern-.5pt L}}
\newcommand{\GHK}{\mathsf{G\kern-2pt H\kern-3pt K}}
\newcommand{\Hell}{\mathsf{Hell}}
\newcommand{\LET}{\mathsf{L\kern-2pt E\kern-2pt T}}
\newcommand{\ETint}{\mathscr{E}}
\newcommand{\ET}{\mathsf{E\kern-1pt T}}
\newcommand{\OptET}{\mathrm{Opt}_{\ET}}
\newcommand{\OptHK}{\mathrm{Opt}_{\HK}}
\newcommand{\OptLET}{\mathrm{Opt}_{\LET}}
\newcommand{\Fstar}{F^*}
\newcommand{\Gstar}{F^\circ}

\newcommand{\LE}%{U_1}
{L\kern-3pt E}
\newcommand{\rec}[1]{{#1'_\infty}}
\newcommand{\asympt}[1]{{\mathrm{aff} {#1}_\infty}}
\newcommand{\derzero}[1]{{#1_0'}}
\newcommand{\mass}[1]{{m_{#1}}}
\newcommand{\res}{\mathop{\hbox{\vrule height 7pt width .5pt depth 0pt
\vrule height .5pt width 6pt depth 0pt}}\nolimits}
\newcommand{\MP}{H} %{M\kern-4ptP}

\newcommand{\BorelSets}[1]{\cB(#1)}

\newcommand{\LSC}{\mathrm{LSC}}
\newcommand{\USC}{\mathrm{USC}}

\newcommand{\Cdual}[2]{\ifthenelse{\equal{#1}{\rmL}}{\tilde\Sigma}{\Sigma}}

\DeclareMathOperator*{\infp}{inf\vphantom{p}}

\newcommand{\FH}{R}
\newcommand{\FHstar}{R^*}
\newcommand{\FHH}{\RR}
\newcommand{\PE}{U}

\newcommand{\PEstar}{U^*}
\newcommand{\Cpsi}[1]{\pPsi}%
%{\ifthenelse{\equal{#1}{}}{\Sigma_\psi}{\Sigma_\psi(#1)}
%}
\newcommand{\cCpsi}[1]{\ppPsi}%
%{\ifthenelse{\equal{#1}{}}{\Sigma_\psi}{\Sigma_\psi(#1)}
%}
\newcommand{\Cphi}[1]{\pPhi}%
%{\ifthenelse{\equal{#1}{}}{\Sigma_\varphi}{\Sigma_\varphi(#1)}
%}

\newcommand{\cCphi}[1]{\ppPhi}%
%{\ifthenelse{\equal{#1}{}}{\Sigma_\varphi}{\Sigma_\varphi(#1)}
%}
%
%
%
\newcommand{\alc}[2]{|\rmD_{#1} #2|_{a}}
\newcommand{\alcs}[2]{|\rmD_{#1} #2|^2_{a}}
\newcommand{\rp}{r}
\newcommand{\pY}{Y}
\newcommand{\tY}{\frC}
\newcommand{\cball}[1]{\frC[#1]}
\newcommand{\Xext}{X} %{_{\rm e}}
\newcommand{\xext}{\bar x} %{_{\rm e}}
\newcommand{\Yext}{\pY} %{_{\rm e}}
\newcommand{\tyY}{\boldsymbol{\mathfrak C}} %{\mbox{\boldmath$\frC$}}}
\renewcommand{\frq}{\sfy}
\newcommand{\frqext}{\sfy}
\newcommand{\ty}{\fry} % \tilde y
\newcommand{\py}{y}
\newcommand{\tyy}{\boldsymbol\ty}
\newcommand{\taalpha}{\aalpha}
\newcommand{\paalpha}{\bar\aalpha}
\newcommand{\frpd}{\boldsymbol\frp}
\newcommand{\frqd}{\boldsymbol\sfy}

\newcommand{\tnu}{\nu}
\newcommand{\pnu}{\bar\nu}
\newcommand{\kp}{\pi/2}
\newcommand{\tG}{\boldsymbol{\mathfrak G}} %{\mbox{\boldmath$\frC$}}}
\newcommand{\elli}{g}
\newcommand{\sfdg}{\sfg}
\newcommand{\dil}[2]{\mathrm{dil}_{#1}(#2)}
\newcommand{\dilp}[3]{\mathrm{dil}_{#1,#2}(#3)}

\newcommand{\sfdp}{\sfd_\pi}
\newcommand{\sfdpt}{\sfd_{\pi/2}}
\newcommand{\sfdc}{\sfd_\frC}
\newcommand{\asfdc}[1]{\sfd_{#1,\frC}}
\newcommand{\tsfdc}{\sfd_{\pi/2,\frC}}
\newcommand{\chm}[2]{\frh^{
    \ifthenelse{\equal{#1}{}}{}{#1}}
_{#2}}
\newcommand{\hm}[2]{\rmh^{
    \ifthenelse{\equal{#1}{}}{}{#1}}
_{#2}}
\newcommand{\cHM}[3]{\frH^{
    \ifthenelse{\equal{#1}{}}{1}{\kern0.5pt #1} 
  }_{=} (#2,#3)}
\newcommand{\cHMle}[3]{\frH^{
    \ifthenelse{\equal{#1}{}}{1}{\kern0.5pt #1} 
  }_\le(#2,#3)}
\newcommand{\HM}[3]{\cH^{
    \ifthenelse{\equal{#1}{}}{1}{\kern0.5pt #1} 
  }_{=} (#2,#3)}
\newcommand{\HMle}[3]{\cH^{
    \ifthenelse{\equal{#1}{}}{1}{\kern0.5pt #1} 
  }_\le(#2,#3)}
\newcommand{\tMPc}[3]{\tilde\MP
    \ifthenelse
    {
      \equal{#3}{}
    }
    {}
    {_{#3}}
    (#1,#2)
  }
\newcommand{\MPc}[3]{\MP
    \ifthenelse
    {
      \equal{#3}{}
    }
    {}
    {_{#3}}
    (#1,#2)
  }
\newcommand{\MPH}[4]{\MP
  (#1,#2;#3,#4)
  }

\newcommand{\cMp}[1]{\cM_{#1}}
\newcommand{\cPp}[1]{\cP_{#1}}
\newcommand{\Wc}{\sfW_{\kern-1pt \sfdc}}
\newcommand{\Wd}{\sfW_{\kern-1pt \sfd}}
\newcommand{\tens}[2]{{#1}^{\otimes #2}}
\newcommand{\pdyY}[1]{\tens \frC{#1}}
\newcommand{\timesc}{\times_\tY}

\newcommand{\measu}[1]{{#1}_I}
\newcommand{\hatmeasu}[1]{\hat{#1}_I}
\newcommand{\eval}{\sfe}
\newcommand{\HJ}[2]{\PP_{#2}}
\newcommand{\HJC}[2]{\QQ_{#2}}

\renewcommand{\r}{s}
\newcommand{\s}{r}

\newcommand{\ldom}[1]{\r^-_{#1}}
\newcommand{\rdom}[1]{\r^+_{#1}}

\renewcommand{\soo}{\fro}
\newcommand{\opz}{\oplus_o}
\newcommand{\pz}{+_o}

\newcommand{\AC}{{\mathrm {AC}}}
\newcommand{\tAC}{\widetilde{\mathrm {AC}}}
\newcommand{\tACp}[1]{\widetilde{\mathrm {AC}}\vphantom{\rmC}^{#1}}
\newcommand{\trmC}{\widetilde\rmC}
\newcommand{\Geo}[1]{\mathrm{Geo}(#1)}
\newcommand{\sLE}{{\mathsf{L\!E}}}

\newcommand{\AAA}{}%{\color{blue}}
\newcommand{\WWW}{} %{\color{blue}}
%{\color{cyan}}

%\newcommand{\nc}{\normalcolor}
\renewcommand{\EEE}{\normalcolor}
%\newcommand{\COMM}{}
% \newcommand{\COMM}[1]{
% \marginpar{\hspace*{-.8em}\begin{minipage}{2.2cm}
% \scriptsize\color{magenta}#1\end{minipage}}}

%%%%%%%%%%%%%%%%%%%%%%%%%%%%%%%%%%%%%%%%%%%%%%%%%%%%%%%%%%%%
%                                                         % Header
%%%%%%%%%%%%%%%%%%%%%%%%%%%%%%%%%%%%%%%%%%%%%%%%%%%%%%%%%%%%

\title{Optimal Entropy-Transport problems and\\ 
  %the Hellinger-Kantorovich 
  a new Hellinger-Kantorovich distance\\ between positive measures}
\begin{document}

\author{
Matthias Liero 
\thanks{Weierstra\ss-Institut f\"ur Angewandte Analysis
  und Stochastik, Berlin;  email:
  \textsf{liero@wias-berlin.de}.
  Partially supported by the Einstein Stiftung Berlin via the
  ECMath/\textsc{Matheon} project SE2.} 
, Alexander Mielke \thanks{Humboldt-Universit\"at zu Berlin;
  email:
  \textsf{mielke@wias-berlin.de},
  Partially supported by DFG via project
  C5 within CRC 1114 (Scaling cascades in complex systems) and by
  ERC via AdG. 267802 AnaMultiScale.}
, Giuseppe Savar\'e\
 \thanks{Universit\`a di Pavia; email:
 \textsf{giuseppe.savare@unipv.it}. Partially supported by
 PRIN10/11 grant from MIUR for the project \emph{Calculus of
   Variations} and by IMATI-CNR.
}
 }

\date{ January 8, 2016}

\maketitle
\renewcommand{\ss}{{\mbox{\boldmath$s$}}}

\begin{abstract} 
  We develop a full theory for the new class of \emph{Optimal
    Entropy-Transport problems} between nonnegative and finite Radon
  measures in general topological spaces.

  They arise quite naturally by relaxing the marginal constraints
  typical of Optimal Transport problems: given a couple of finite
  measures (with possibly different total mass), one looks for
  minimizers of the sum of a linear transport functional and two
  convex entropy functionals, that quantify in some way the deviation
  of the marginals of the transport plan from the assigned measures.

  As a powerful application of this theory, we study the particular
  case of \emph{Logarithmic} Entropy-Transport problems and introduce
  the new \emph{Hellinger-Kantorovich distance between measures in
    metric spaces}.

  The striking connection between these two seemingly far topics
  allows for a deep analysis of the geometric properties of the new
  geodesic distance, which lies somehow between the well-known
  Hellinger-Kakutani and Kantorovich-Wasserstein distances.
\end{abstract}

{\small\tableofcontents}

\section{Introduction}
The aim of the present paper is twofold: In Part I we
develop a full theory of the new class of \emph{Optimal
  Entropy-Transport problems} between nonnegative and finite Radon
measures in general topological spaces. As a powerful application of
this theory, in Part II we study the particular case of
\emph{Logarithmic} Entropy-Transport problems and introduce the new
\emph{Hellinger-Kantorovich $(\HK)$ distance between measures in
  metric spaces}.  The striking connection between these two seemingly
far topics is our main focus, and it paves the way for a
beautiful and deep analysis of the geometric properties of the
geodesic $\HK$ distance, which (as our proposed name suggests)
can be understood as an inf-convolution of the well-known
Hellinger-Kakutani and the Kantorovich-Wasserstein distances. In fact,
our approach to the theory was opposite: in trying to characterize
$\HK$, we were first led to the Logarithmic Entropy-Transport problem,
see Section \ref{sec:Devel}.
\paragraph{From Transport to Entropy-Transport problems.}
In the classical Kantorovich formulation, 
Optimal Transport problems
\cite{Rachev-Ruschendorf98,Villani03,Ambrosio-Gigli-Savare08,Villani09}
deal with minimization of a linear cost functional
\begin{equation}
  \label{eq:193}
  \CC(\ggamma)=\int_{X_1\times
    X_2}\sfc(x_1,x_2)\,\d\ggamma(x_1,x_2),\quad
  \sfc:X_1\times X_2\to \R,
\end{equation}
among all the transport plans, i.e.~probability measures in
$\cP(X_1\times X_2)$, 
$\ggamma$ whose marginals $\mu_i=\pi^i_\sharp \ggamma\in \cP(X_i)$
are prescribed. Typically, $X_1,X_2$ are Polish spaces,
$\mu_i$ are given Borel measures (but the
case of Radon measures in Hausdorff topological spaces
has also been considered, see \cite{Kellerer84,Rachev-Ruschendorf98}),
the cost function $\sfc$ is a lower semicontinuous
(or even Borel)
function, possibly assuming the value $+\infty$, and
$\pi^i(x_1,x_2)=x_i$ are the projections on the $i$-th coordinate, so
that 
\begin{equation}
  \label{eq:197}
  \pi^i_\sharp\ggamma=\mu_i\quad\Leftrightarrow\quad
  \mu_1(A_1)=\ggamma_1(A_1\times X_2),\ 
  \mu_2(A_2)=\ggamma_1(X_1\times A_2)
  \quad\forevery A_i\in X_i.
\end{equation}
Starting from the pioneering work of Kantorovich, 
an impressive theory has been developed in the last two decades:
from one side, typical intrinsic questions of linear programming problems
concerning duality, optimality, 
uniqueness and structural properties of optimal transport plans
have been addressed and fully analyzed. 
In a parallel way, this rich general theory has been applied 
to many challenging problems in a variety of fields
(probability and statistics, functional analysis, 
PDEs, Riemannian geometry, nonsmooth analysis in metric spaces,  just to mention a few of them:
since it is impossible here to give an even 
partial account of the main contributions, 
we refer to the books \cite{Villani09,Santambrogio15}
for a more detailed overview and a complete list of references).

The class of \textbf{\em Entropy-Transport problems}, we are going to
study, arises quite naturally if one tries to relax the marginal
constraints $\pi^i_\sharp\ggamma=\mu_i$ by introducing suitable
penalizing functionals $\FF_i$, that quantify in some way the
deviation from $\mu_i$ of the marginals $\gamma_i:=\pi^i_\sharp
\ggamma$ of $\ggamma$.  In this paper we consider the general case of
integral functionals (also called \emph{Csisz\`ar $f$-divergences}
\cite{Csiszar67}) of the form
\begin{equation}
  \label{eq:195}
  \FF_i(\gamma_i|\mu_i):=
  \int_{X_i}F_i(\sigma_i(x_i))\,\d\mu_i+
  \gamma_i^\perp(X_i),\quad
  \sigma_i=\frac{\d\gamma_i}{\d\mu_i},
  \quad
  \gamma_i=\sigma_i\mu_i+\gamma_i^\perp,
\end{equation}
where $F_i:[0,+\infty)\to[0,+\infty]$ are given convex entropy
functions, like for the logarithmic or power-like entropies
\begin{equation}
  \label{eq:199}
  \begin{aligned}
    &\PE_p(s):=\frac{1}{p(p-1)}\big(s^p-p(s-1)+1\big),\quad p\in
    \R\setminus\{0,1\},\\
    &\PE_0(s):=s-1-\log s, \quad
    \PE_1(s):=s\log s-s+1,
  \end{aligned}
\end{equation}
or for the total variation functional corresponding to the nonsmooth entropy
$V(s):=|s-1|$, considered in \cite{Piccoli-Rossi14}.

Notice that the presence of the singular part $\gamma_i^\perp$ in 
the Lebesgue decomposition of $\gamma_i$ in \eqref{eq:195}
does not force $F_i(s)$ to be superlinear as $s\up+\infty$ 
and allows for all the exponents $p$ in \eqref{eq:199}.

Once a specific choice of entropies $F_i$ and of finite nonnegative
Radon measures $\mu_i\in \cM(X_i)$ is given, the Entropy-Transport
problem can be formulated as
\begin{equation}
  \label{eq:209}
  \ET(\mu_1,\mu_2):=\inf\Big\{\EE(\ggamma|\mu_1,\mu_2):\ggamma
  \in \cM(X_1\times X_2)\Big\},
\end{equation}
where $\EE$ is the convex functional
\begin{equation}
  \label{eq:207}
  \EE(\ggamma|\mu_1,\mu_2):=
  \FF_1(\gamma_1|\mu_1)+
  \FF_2(\gamma_2|\mu_2)+\int_{X_1\times X_2}\sfc(x_1,x_2)\,\d\ggamma.
\end{equation}
Notice that the entropic formulation allows for measures
$\mu_1,\mu_2$ and $\ggamma$ with possibly different total mass.

The flexibility in the choice of the entropy functions $F_i$ 
(which may also take the value $+\infty$) covers a wide spectrum of
situations (see Section \ref{ex:1} for various examples)
and in particular guarantees that \eqref{eq:209} is a real
generalization
of the classical optimal transport problem, which 
can be recovered as a particular case of \eqref{eq:207} when
$F_i(s)$ is the indicator function of $\{1\}$
(i.e.~$F_i(s)$ always
takes the value $+\infty$ with the only exception of $s=1$, where
it vanishes). % Another case, well studied in the literature,
% occurs when only the first marginal of $\ggamma$ is constrained
% to be $\mu_1$ (i.e.~$F_1(s)$ is the indicator function of $\{1\}$ as
% above): such kind of problems arise e.g.~in the Minimizing
% Movement 
% scheme \cite{Jordan-Kinderlehrer-Otto98,Ambrosio-Gigli-Savare08}.

Since we think that the structure \eqref{eq:207}
of Entropy-Transport problems will lead to new and 
interesting models and applications, we 
have tried to establish
their basic theory in the greatest generality, by pursuing
the same line of development of Transport problems:
in particular we will obtain general results concerning existence, 
duality and optimality conditions.

Considering e.g.~the Logarithmic Entropy case, where $F_i(s)=s\log
s-(s-1)$, \textbf{\em the dual formulation of \eqref{eq:209}} is given
by
\begin{equation}
  \label{eq:210}
\begin{aligned}
  &\sfD(\mu_1,\mu_2):= \sup \Big\{
  \DD(\varphi_1,\varphi_2|\mu_1,\mu_2) \,:\ 
  \varphi_i:X_i\to \R,\
  \varphi_1(x_1)+\varphi_2(x_2)\le \sfc(x_1,x_2)\Big\},
\\
&\text{where } \DD(\varphi_1,\varphi_2|\mu_1,\mu_2):= \int_{X_1}\kern-6pt\big(1-\rme^{-\varphi_1}\big)\,\d\mu_1+
  \int_{X_2}\kern-6pt\big(1-\rme^{-\varphi_2}\big)\,\d\mu_2,
\end{aligned}
\end{equation}
where one can immediately recognize the 
same convex constraint 
of Transport problems:
the couple  of dual potentials $\varphi_i$
should satisfy $\varphi_1\oplus\varphi_2\le \sfc$ on
$X_1\times X_2$.
The main difference is due to the concavity of 
the objective functional
\begin{displaymath}
  (\varphi_1,\varphi_2)\mapsto \int_{X_1}\big(1-\rme^{-\varphi_1}\big)\,\d\mu_1+
  \int_{X_2}\big(1-\rme^{-\varphi_2}\big)\,\d\mu_2,
\end{displaymath}
whose form can be explicitly calculated in terms of the 
Lagrangian conjugates $F_i^*$ of the entropy functions.
The change of variables $\psi_i:=1-\rme^{-\varphi_i}$
transforms \eqref{eq:210} in the equivalent problem of maximizing the linear
functional
\begin{equation}
  \label{eq:213}
  (\psi_1,\psi_2)\mapsto \sum_i\int_{X_1}\psi_1\,\d\mu_1+
  \int_{X_2}\psi_2\,\d\mu_2
\end{equation}
on the more complicated convex set
\begin{equation}
  \label{eq:212}
  \Big\{(\psi_1,\psi_2):\psi_i:X_i\to (-\infty,1),\quad
  (1-\psi_1(x_1))(1-\psi_2(x_2))\ge \rme^{-\sfc(x_1,x_2)}\Big\}.
\end{equation}
We will calculate the dual problem for every choice of $F_i$ and
show that its value always coincide with $\ET(\mu_1,\mu_2)$.
The dual problem also provides \textbf{\em optimality conditions,}
that involve the couple of potentials $(\varphi_1,\varphi_2)$, the
support of the optimal plan $\ggamma$
and the densities $\sigma_i$ of its marginals $\gamma_i$ w.r.t.~$\mu_i$.
For the Logarithmic Entropy Transport problem above, 
they read as
\begin{equation}
  \label{eq:214}
  \begin{gathered}
    \sigma_i>0,\ \varphi_i=-\log \sigma_i\quad\mu_i\text{ a.e.~in
    }X_i,\\
    \varphi_1\oplus\varphi_2\le \sfc\quad\text{in }X_1\times X_2,\quad
    \varphi_1\oplus\varphi_2= \sfc\quad\text{$\ggamma$-a.e.~in
    }X_1\times X_2,
  \end{gathered}
\end{equation}
and they are necessary and sufficient for optimality. 

The study of optimality conditions reveals a different behavior
between pure transport problems and the other entropic ones.  In
particular, the $\sfc$-cyclical monotonicity of the optimal plan
$\ggamma$ (which is still satisfied in the entropic case) does not
play a crucial role in the construction of the potentials $\varphi_i$.
When $F_i(0)$ are finite (as in the logarithmic case) it is possible
to obtain a general existence result of (generalized) optimal
potentials even when $\sfc$ takes the value $+\infty$.

A crucial feature of Entropy-Transport problems
(which is not shared by the pure transport ones)
concerns a \textbf{\em third ``homogeneous'' formulation}, which 
exhibits new and unexpected properties. 
It is related to the \textbf{\em $1$-homogeneous Marginal Perspective function}
\begin{equation}
  \label{eq:215}
  H(x_1,r_1;x_2,r_2):=\inf_{\theta>0}\Big(r_1F_1(\theta/r_1)+r_2F_2(\theta/r_2)+
  \theta\sfc(x_1,x_2)\Big)
\end{equation}
and to the corresponding integral functional
\begin{equation}
  \label{eq:219}
  \HH(\mu_1,\mu_2|\ggamma):=
  \int_{X_1\times     X_2}
  H(x_1,\varrho_1(x_1);x_2,\varrho_2(x_2))\,\d\ggamma+\sum_iF_i(0)\mu_i^\perp(X_i),\ 
  \varrho_i:=\frac{\d\mu_i}{\d\gamma_i},
  %\mu_i=\varrho_i\gamma_i+\mu_i^\perp.
\end{equation}
where $\mu_i=\varrho_i\gamma_i+\mu_i^\perp$
is the ``reverse'' Lebesgue decomposition of $\mu_i$ w.r.t.~the
marginals $\gamma_i$ of $\ggamma$.
We will prove that
\begin{equation}
  \label{eq:220}
  \ET(\mu_1,\mu_2)=\min\Big\{ \HH(\mu_1,\mu_2|\ggamma):\ggamma\in
  \cM(X_1\times X_2)\Big\}
\end{equation}
with a precise relation between optimal plans.
In the Logarithmic Entropy case $F_i(s)=s\log s-(s-1)$ 
the marginal perspective function $H$ takes the particular form
\begin{equation}
  \label{eq:221}
  H(x_1,r_1;x_2,r_2)=r_1+r_2-2\sqrt{r_1\,r_2}\,\rme^{-\sfc(x_1,x_2)/2},
\end{equation}
which will be the starting point for understanding
the deep connection with the Hellinger-Kantorovich distance.
Notice that in the case when $X_1=X_2$ and $\sfc$ is the singular cost
\begin{equation}
  \label{eq:38}
  \sfc(x_1,x_2):=
  \begin{cases}
    0&\text{if }x_1=x_2,\\
    +\infty&\text{otherwise},
  \end{cases}
\end{equation}
\eqref{eq:220} provides an equivalent formulation of the
Hellinger-Kakutani distance \cite{Hellinger09,Kakutani48},
see also Example E.5 in Section \ref{ex:1}.

Other choices, still in the simple class \eqref{eq:199}, give raise to
``transport'' versions of well known functionals (see
e.g.~\cite{Liese-Vajda06} for a systematic presentation): starting
from the reversed entropies $F_i(s)=s-1-\log s$ one gets
\begin{equation}
  \label{eq:32}
  H(x_1,r_1;x_2,r_2)=\s_1\log \s_1+\s
    _2\log \s _2 -(\s _1+\s_2)\log\Big(\frac {\s _1+\s _2}{2+\sfc(x_1,x_2)}\Big),
\end{equation}
which in the extreme case \eqref{eq:38} reduces to the
\emph{Jensen-Shannon divergence} \cite{Lin91}, a squared distance
between measures derived from the celebrated \emph{Kullback-Leibler
  divergence} \cite{Kullback-Leibler51}. The quadratic entropy
$F_i(s)=\frac 12(s-1)^2$ produces
\begin{equation}
  \label{eq:78}
  H(x_1,r_1;x_2,r_2)=\frac1{2(\s _1+\s _2)}\Big((\s
  _1-\s _2)^2+h(\sfc(x_1,x_2))\s _1\s _2\Big),
\end{equation}
where $h(c)=c(4-c)$ if $0\le c\le 2$ and $4 $ if $c\ge 2$:
Equation \eqref{eq:78} can be seen as the transport variant of 
the \emph{triangular discrimination} (also called symmetric
$\nchi^2$-measure), based on the \emph{Pearson} $\nchi^2$-divergence,
and still obtained by \eqref{eq:219} when $\sfc$ has the form
\eqref{eq:38}.

Also nonsmooth cases, as for $V(s)=|s-1|$ associated to the
\emph{total variation distance} (or nonsymmetric choices of $F_i$) can
be covered by the general theory. In the case of $F_i(s)=V(s)$ 
the marginal perspective function is
\begin{displaymath}
%  \label{eq:91}
   H(x_1,r_1;x_2,r_2)=\s _1+\s _2
   -(2-\sfc(x_1,x_2))_+(\s _1\land\s _2)=
   |\s_2-\s_1|+(\sfc(x_1,x_2)\land 2) (\s _1\land\s _2);
\end{displaymath}
when $X_1=X_2=\R^d$ with $\sfc(x_1,x_2):=|x_1-x_2|$
we recover the \emph{generalized Wasserstein distance} $W^{1,1}_1$
introduced and studied by
\cite{Piccoli-Rossi14}; 
it provides an equivalent variational characterization of 
the flat metric \cite{Piccoli-Rossi14preprint}. 

However, because of our original
motivation (see Section \ref{sec:Devel}), Part II will focus on the
case of the logarithmic entropy $F_i=U_1$, where $H$ is given by
\eqref{eq:221}. We will exploit its relevant geometric applications,
reserving the other examples for future investigations.

\paragraph{From the Kantorovich-Wasserstein distance to the Hellinger-Kantorovich distance.}
From the analytic-geometric point of view,
one of the most interesting cases of transport problems
occurs when $X_1=X_2=X$ coincide and 
the cost functional $\CC$  is induced
by a distance $\sfd$ on $X$: in the quadratic case,
the minimum value of \eqref{eq:193} for
given measures $\mu_1,\mu_2$
in the space $\cP_2(X)$ of probability measures with finite quadratic moment
defines the so called $L^2$-Kantorovich-Wasserstein distance
\begin{equation}
  \label{eq:194}
  \Wd^2(\mu_1,\mu_2):=\inf\Big\{\int
  \sfd^2(x_1,x_2)\,\d\ggamma(x_1,x_2):
  \ggamma\in \cP(X\times X),\ \pi^i_\sharp\ggamma=\mu_i\Big\},
\end{equation}
which metrizes the weak convergence (with quadratic moments) of
probability measures. The metric space $(\cP_2(X),\Wd)$ inherits
many geometric features from the underlying $(X,\sfd)$ 
(as separability, completeness, length and geodesic properties,
positive curvature in the Alexandrov sense, see \cite{Ambrosio-Gigli-Savare08}).
Its dynamic characterization in terms of the continuity equation \cite{Benamou-Brenier00} 
and its dual formulation in terms of the Hopf-Lax formula 
and the corresponding (sub-)solutions of the Hamilton-Jacobi equation 
\cite{Otto-Villani00} lie at the core of the applications to gradient
flows and
partial differential equations of diffusion type
\cite{Ambrosio-Gigli-Savare08}.
Finally, the behavior of entropy functionals as \eqref{eq:195} along
geodesics in $(\cP_2(X),\Wd)$
\cite{McCann97,Otto-Villani00,Cordero-McCann-Schmuckenschlager01}
encodes a valuable geometric information,
with relevant applications to Riemannian geometry 
and to the recent theory of metric-measure spaces with
Ricci curvature bounded from below
\cite{Sturm06I,Sturm06II,Lott-Villani09,Ambrosio-Gigli-Savare14,AGS14b,AGS15,Erbar-Kuwada-Sturm13}.

It has been a challenging question to find a corresponding distance 
(enjoying analogous deep geometric properties)
between finite positive Borel measures with arbitrary mass in
$\cM(X)$.
In the present paper we will show that by
choosing the particular cost function
\begin{equation}
  \label{eq:230}
  \sfc(x_1,x_2):=\ell(\sfd(x_1,x_2)),\quad
  \text{where}\quad
  \ell(\sfd):=\begin{cases}
    -\log\big(\cos^2(\sfd)\big)
    %\sfd(x_1,x_2))
    &\text{if }\sfd%\sfd(x_1,x_2)
    <\pi/2,\\
    +\infty&\text{otherwise},
  \end{cases}
\end{equation}
the corresponding Logarithmic-Entropy Transport problem
\begin{equation}
  \label{eq:231}
  \LET(\mu_1,\mu_2):=\min_{\sggamma\in \cM(\sxX)} \sum_i\int_X
    \big(\sigma_i\log\sigma_i-\sigma_i+1\big)
    %\LE(\sigma_i)
    \,\d\mu_i+
    \int_{X^2} \ell\big(\sfd(x_1,x_2)\big)\,\d\ggamma,\quad
    \sigma_i=\frac{\d\gamma_i}{\d\mu_i},
\end{equation}
coincides with 
%provides 
a (squared) distance in $\cM(X)$
(which we will call \emph{Hellinger-Kantorovich distance} and denote by 
$\HK$\nc)
that
can play the same fundamental role like the
Kantorovich-Wasserstein distance for $\cP_2(X)$. 

Here is a schematic list of our main results:
\begin{enumerate}[(i)]
\item The representation \eqref{eq:220} based on the Marginal Perspective
  function \eqref{eq:221} yields 
  \begin{equation}
    \label{eq:232}
    \LET(\mu_1,\mu_2)=
    \min\Big\{\int \Big(\varrho_1+\varrho_2-2 \varrho_1\varrho_2
    \cos(\sfd(x_1,x_2)\land \pi/2)\Big)\,\d\ggamma:
    \varrho_i=\frac{\d\mu_i}{\d\gamma_i}\Big\}.
  \end{equation}
\item By performing the rescaling $r_i\mapsto r_i^2$ 
  we realize that the function $H(x_1,r_1^2;x_2,r_2^2)$ is strictly
  related to the squared (semi)-distance
  \begin{equation}
    \label{eq:276}
    \sfdc^2(x_1,r_1;x_2,r_2):=
    r_1^2+r_2^2-2r_1r_2\cos(\sfd(x_1,x_2)\land\pi),\quad
    (x_i,r_i)\in X\times\R_+
  \end{equation}
  which is the so-called \emph{cone distance} 
  in the metric cone $\tY$ over $X$, cf.\
  \cite{Burago-Burago-Ivanov01}. The latter 
  is 
  the quotient space of $X\times \R_+$ obtained by collapsing 
  all the points $(x,0)$, $x\in X$, in a single point $\fro$, called
  the vertex of the cone. We introduce the notion of 
  ``$2$-homogeneous marginal'' 
  \begin{equation}
    \label{eq:285}
    \mu=\chm2{}\alpha:=\pi^{x}_\sharp(r^2\alpha),\quad
    \int_X \zeta(x)\,\d\mu=\int_\tY\zeta(x)r^2\,\d\alpha(x,r)
    \quad\forevery \zeta\in \rmC_b(X),
  \end{equation}
  to ``project'' measures $\alpha\in \cM(\tY)$ 
  on measures $\mu\in \cM(X)$. Conversely, there are many ways
  to ``lift'' a measure $\mu\in \cM(X)$ to $\alpha\in \cM(\tY)$ 
  (e.g.~by taking $\alpha:=\mu\otimes \delta_1$).
  The Hellinger-Kantorovich distance $\HK(\mu_1,\mu_2)$ 
  can then be defined by taking the best Kantorovich-Wasserstein
  distance between all the possible lifts of $\mu_1,\mu_2$ 
  in $\cP_2(\tY)$, i.e. 
  \begin{equation}
    \label{eq:286}
    \HK(\mu_1,\mu_2)=
    \min\Big\{\Wc(\alpha_1,\alpha_2):\alpha_i\in \cP_2(\tY),\ 
    \chm2{}\alpha_i=\mu_i\Big\}.
  \end{equation}
  It turns out that (the square of)
  \eqref{eq:286} yields an equivalent variational representation of 
  the $\LET$ functional. 
  In particular, \eqref{eq:286} shows that in the case of concentrated
  measures
  \begin{equation}
    \label{eq:344}
    \LET(a_1\delta_{x_1},a_2\delta_{x_2})=
    \HK^2(a_1\delta_{x_1},a_2\delta_{x_2})=
    \sfdc^2(x_1,a_1;x_2,a_2).
  \end{equation}
  Notice that \eqref{eq:286} resembles the very definition
  \eqref{eq:194} of the Kantorovich-Wasserstein distance, where now
  the role of the marginals $\pi^i_\sharp$ is replaced by the
  homogeneous marginals $\chm2{}$.  It is a nontrivial part of the
  equivalence statement to check that the difference between the
  cut-off thresholds ($\pi/2$ in \eqref{eq:232} and $\pi$ in
  \eqref{eq:276} does not affect the identity $\LET=\HK^2$.

\item By refining the representation formula \eqref{eq:286} by a
  suitable rescaling and gluing technique we can prove that
  $(\cM(X),\HK)$ is a geodesic metric space, a property that it is
  absolutely not obvious from the $\LET$-representation and depends on
  a subtle interplay of the entropy functions $F_i(\sigma)=\sigma
  \log\sigma - \sigma +1$ and the cost function $\sfc$ from
  \eqref{eq:230}. We show that the metric induces the weak convergence
  of measures in duality with bounded and continuous functions, thus
  it is topologically equivalent to the flat or Bounded Lipschitz
  distance \cite[Sec.~11.3]{Dudley02}, see also \cite[Thm.\ 3]{KMV15}.
  It also inherits the separability, completeness, length and geodesic
  properties from the correspondent ones of the underlying space
  $(X,\sfd)$.  On top of that, we will prove a precise superposition
  principle (in the same spirit of the Kantorovich-Wasserstein one
  \cite[Sect.8]{Ambrosio-Gigli-Savare08},\cite{Lisini07}) for general
  absolutely continuous curves in $(\cM(X),\HK)$ in terms of dynamic
  plans in $\tY$: as a byproduct, we can give a precise
  characterization of absolutely continuous curves and geodesics as
  homogeneous marginals of corresponding curves in $(\cP_2(\tY),\Wc)$.
  An interesting consequence of these results concerns the lower
  curvature bound of $(\cM(X),\HK)$ in the sense of Alexandrov: it is
  a positively curved space if and only if $(X,\sfd)$ is a geodesic
  space with curvature $\ge1$.

\item The dual formulation of the $\LET$ problem provides a dual
  characterization of $\HK$, viz.\
  \begin{equation}
      \label{eq:293}
      \frac 12\HK^2(\mu_1,\mu_2)=
       \sup \Big\{
      \int\HJ{\pi/2}1\xi\,\d\mu_2-\int\xi\,\d\mu_1:
      \xi\in \Lip_{b}(X),\ \inf_X \xi>-1/2\Big\},
  \end{equation}
  where $(\PP_t)_{0\le t\le 1}$ is given by the inf-convolution
  \begin{equation}
    \nonumber %\label{eq:427intro} 
    \HJ{\pi/2}{t}\xi(x):=\inf_{x'\in X}
    \frac{\xi(x')}{1+2t\xi(x')}+
    \frac{\sin^2(\sfd_{\pi/2}(x,x'))}{2+4t\xi(x')}
    =\inf_{x'\in X}
    \frac 1t\Big(1-\frac{\cos^2(\sfd_{\pi/2}(x,x'))}{ 1+2t\xi(x')}\Big).
  \end{equation}

\item 
  By exploiting the Hopf-Lax representation formula for the
  Hamilton-Jacobi equation in $\tY$, we will show that for arbitrary
  initial data $\xi\in \Lip_b(X)$ with $\inf\xi>-1/2$ the function 
  $\xi_t:=\PP_t\xi$ is a subsolution (a solution, if $(X,\sfd)$ is a
  length space) of
  \begin{displaymath}
    % \label{eq:305}
      \partial^+_t \xi_t(x)+\frac 12|\rmD_X\xi_t|^2(x)+2\xi_t^2(x)\le
      0\quad
      \text{pointwise in }X\times (0,1).
  \end{displaymath}
  If $(X,\sfd)$ is a length space we thus obtain the characterization
  \begin{equation}
      \label{eq:321}
    \begin{aligned}
          \frac 12\HK^2(\mu_0,\mu_1)=
          \sup\Big\{&\int_X\xi_1\,\d\mu_1-\int_0\xi_0\,\d\mu_0:
          \xi\in \rmC^k([0,1];\Lip_{b}(X)), \\&
          \partial_t\xi_t(x)+\frac 12|\rmD_X
          \xi_t|^2(x)+2\xi_t^2(x)\le 0 \quad \text{in
          }X\times(0,1)\Big\},
      \end{aligned}
  \end{equation}
  which reproduces, at the level of $\HK$, the nice link between $\Wd$
  and Hamilton-Jacobi equations. One of the direct applications of
  \eqref{eq:321} is a sharp contraction property w.r.t.~$\HK$ for the
  Heat flow in $\mathrm{RCD}(0,\infty)$ metric measure spaces (and
  therefore in every Riemannian manifold with nonnegative Ricci
  curvature).
\item \eqref{eq:321} clarifies that the $\HK$ distance can be
  interpreted as a sort of inf-convolution between the Hellinger (in
  duality with solutions to the ODE $\partial_t \xi+2\xi_t^2=0$) and
  the Kantorovich-Wasserstein distance
  (in duality with (sub-)solutions to\\
  $\partial_t\xi_t(x)+\frac 12|\rmD_X \xi_t|^2(x)\le 0$). The
  Hellinger distance
    \begin{displaymath}
      \Hell^2(\mu_1,\mu_2)=\int_X\big(\sqrt{\varrho_1}-\sqrt{\varrho_2}\big)^2\,\d\gamma
      %=
      %\min\FF(\gamma|\mu_1)+\FF(\gamma|\mu_2)
      ,\quad
      \mu_i=\varrho_i\gamma,
    \end{displaymath}
    corresponds to the $\HK$ functional generated by the discrete
    distance ($\sfd(x_1,x_2)=\pi/2$ if $x_1\neq x_2$). We will prove
    that 
    \begin{gather*}
      \HK(\mu_1,\mu_2)\le \Hell(\mu_1,\mu_2),\quad
      \HK(\mu_1,\mu_2)\le \Wd(\mu_1,\mu_2),\\
      \HK_{n\sfd}(\mu_1,\mu_2)\uparrow \Hell(\mu_1,\mu_2),\quad
      n\HK_{\sfd/n}\uparrow \Wd(\mu_1,\mu_2)\quad
      \text{as }n\uparrow\infty,
    \end{gather*}
    where $\HK_{n\sfd}$ (resp.~$\HK_{\sfd/n}$) is the
    $\HK$ distance induced by $n\sfd$ (resp.~$\sfd/n$).
  \item Combining the superposition principle and the duality with
    Hamilton-Jacobi equations, we eventually prove that $\HK$ admits
    an equivalent dynamic characterization ``\`a la Benamou-Brenier''
    \cite{Benamou-Brenier00,Dolbeault-Nazaret-Savare08b} (see also the
    recent \cite{KMV15}) in $X=\R^d$
      \begin{equation}
    \label{eq:177intro}
    \begin{aligned}
      \HK^2(\mu_0,\mu_1)=
      \min\Big\{&\int_0^1 \int \Big(|\vv_t|^2+\frac 14|w_t|^2\Big)\,\d\mu_t\,\d
      t:
      \mu\in \rmC([0,1];\cM(\R^d)), 
   \\
      &\mu_{t=i}=\mu_i, \ \partial_t \mu_t+\nabla{\cdot}(\vv_t\mu_t)=w_t\mu_t
      \text{ in }\DD'(\R^d\times (0,1)) \Big\}.
    \end{aligned}
  \end{equation}  
  Moreover, for the length space $X=\R^d$ a curve $[0,1]\ni
  t\mapsto \mu(t)$ is geodesic curve w.r.t.\ $\HK$ if and only if the
  coupled system
  \begin{equation}
   \label{eq:GeodSyst}
   \partial_t \mu_t+\nabla \cdot(\rmD_x \xi_t \mu_t)=4\xi_t\mu_t, \quad 
    \partial_t\xi_t + \frac12|\rmD_x\xi^2|^2 + 2 \xi_t^2=0 
  \end{equation}
  holds for a suitable solution $\xi_t=\HJ{\pi/2}{t}\xi_0$. The
  representation \eqref{eq:177intro} is the starting point for further
  investigations and examples, which we have collected in \cite{LMS15}.
\end{enumerate}

It is not superfluous to recall that the $\HK$ variational problem
is just one example in the realm of Entropy-Transport problems
and we think that other interesting applications can
arise by different choices of entropies and cost.
One of the simplest variation is to choose
the (seemingly more natural) 
quadratic cost function $\sfc(x_1,x_2):=\sfd^2(x_1,x_2)$
instead of the more ``exotic'' \eqref{eq:230}.
The resulting functional is still associated to a distance
expressed by
\begin{equation}
  \label{eq:351}
  \GHK^2(\mu_1,\mu_2):=
  \min\Big\{\int 
  \Big(\s_1^2+\s_2^2-2\s_1\s_2\exp(-\sfd^2(x_1,x_2)/2)\Big)\,\d\aalpha\Big\}
\end{equation}
where the minimum runs among  
all the plans $ \aalpha\in \cM(\tY\times \tY)$ 
such that $\chm2{}\pi^i_\sharp\aalpha=\mu_i$
(we propose the name ``Gaussian Hellinger-Kantorovich distance'').
If $(X,\sfd)$ is a complete, separable and length metric space,
$(\cM(X),\GHK)$ is a complete and separable metric space, 
inducing the weak topology as $\HK$.
However, it is not a length space in general, and we will show that
the length distance generated by $\GHK$ is precisely $\HK$.
\medskip

The plan of the paper is as follows. 

\paragraph{Part I} develops the general theory of Optimal
Entropy-Transport problems. Section \ref{sec:prel} collects some
preliminary material, in particular concerning the measure-theoretic
setting in arbitrary Hausdorff topological spaces (here we follow
\cite{Schwartz73}) and entropy functionals.  We devote some effort to
deal with general functionals (allowing a singular part in the
definition \eqref{eq:195}) in order to include entropies which may
have only linear growth. The extension to this general framework of
the duality theorem \ref{thm:duality} (well known in Polish
topologies) requires some care and the use of lower semicontinuous
test functions instead of continuous ones.

Section \ref{sec:ET} introduces the class of Entropy-Transport
problems, discussing same examples and proving a general existence
result
for optimal plans. The ``reverse'' formulation of Theorem 
\ref{thm:reverse-characterization}, though simple, 
justifies the importance to deal with the largest class of entropies
and will play a crucial role in Section \ref{sec:MP}.

Section \ref{sec:duality} is devoted to find the dual formulation,
to prove its equivalence with the primal problem 
(cf.\  Theorem \ref{thm:weak-duality}),
 to derive sharp optimality conditions
(cf.\ Theorem \ref{thm:joint-optimality})
and to prove the existence of optimal
potentials in a suitable generalized sense 
(cf.\ Theorem \ref{thm:pot-ex}).
The particular class of ``regular'' problems 
(where the results are richer) is also
studied with some details.

Section \ref{sec:MP} introduces the third formulation
\eqref{eq:219}
based on the marginal perspective function \eqref{eq:215}
and its ``homogeneous'' version (Section \ref{subsec:hom-marg}).
The proof of the equivalence with the previous formulations
is presented in Theorem \ref{thm:crucial} 
and Theorem \ref{thm:main-hom-marg}.
This part provides the crucial link for
the further development in the cone setting.

\paragraph*{Part II} 
is devoted to Logarithmic Entropy-Transport ($\LET$) problems (Section
\ref{sec:LET}) and to their applications to the Hellinger-Kantorovich
distance $\HK$ on $\cM(X)$.

The Hellinger-Kantorovich distance is introduced by the lifting
technique in the cone space in Section \ref{sec:cone}, where we try to
follow a presentation modeled on the standard one for the
Kantorovich-Wasserstein distance, independently from the results on
the $\LET$-problems.  After a brief recap on the cone geometry
(Section \ref{subsec:cone}) we discuss in some detail the crucial
notion of homogeneous marginals in Section \ref{subsec:RHM} and the
useful tightness conditions (Lemma \ref{le:compactnessH}) for plans
with prescribed homogeneous marginals.  Section \ref{subsec:HK}
introduces the definition of the $\HK$ distance and its basic
properties.  The crucial rescaling and gluing techniques are discussed
in Section \ref{subsec:triangle}: they lie at the core of the main
metric properties of $\HK$, leading to the proof of the triangle
inequality and to the characterizations of various metric and
topological properties in Section \ref{subsec:MT}.  The equivalence
with the $\LET$ formulation is the main achievement of Section
\ref{subsec:HKET} (Theorem \ref{thm:main-equivalence}), with
applications to the duality formula (Theorem \ref{thm:dualityHK}), to
the comparisons with the classical Hellinger and Kantorovich distances
(Section \ref{subsec:limiting}) and with the Gaussian
Hellinger-Kantorovich distance (Section \ref{subsec:GHK}).

The last Section of the paper collects various important properties of
$\HK$, that share a common ``dynamic'' flavor.  After a preliminary
discussion of absolutely continuous curves and geodesics in the cone
space $\tY$ in Section \ref{subsec:geodesic}, we derive the basic
superposition principle in Theorem \ref{thm:plan-representation}.
This is the cornerstone to obtain a precise characterization of
geodesics (Theorem \ref{thm:geoHK}), a sharp lower curvature bound in
the Alexandrov sense (Theorem \ref{thm:curvatureHK}) and to prove the
dynamic characterization \`a la Benamou-Brenier of Section
\ref{subsec:BB}.  The other powerful tool is provided by the duality
with subsolutions to the Hamilton-Jacobi equation (Theorem
\ref{thm:main-HKHJ}), which we derive after a preliminary
characterization of metric slopes for a suitable class of test
functions in $\tY$.  One of the most striking results of Section
\ref{subsec:HJ} is the explicit representation formula for solutions
to the Hamilton-Jacobi equation in $X$, that we obtain by a careful
reduction technique from the Hopf-Lax formula in $\tY$. In this
respect, we think that Theorem \ref{thm:main-HJ} is interesting by
itself and could find important applications in different contexts.
From the point of view of Entropy-Transport problems, Theorem
\ref{thm:main-HJ} is particularly relevant since it provides a dynamic
interpretation of the dual characterization of the $\LET$ functional.
In Section \ref{subsec:geodesicRd} we show that in the Euclidean case
$X=\R^d$ all geodesic curves are characterized by the system
\eqref{eq:GeodSyst}. 
The last Section \ref{subsec:contraction} provides various contraction
results: in particular we extend the well known contraction property
of the Heat flow in spaces with nonnegative Riemannian Ricci curvature
to $\HK$.

\paragraph*{Note during final preparation.} The earliest parts of the
work developed here were first presented at the ERC Workshop on
Optimal Transportation and Applications in Pisa in 2012. Since then
the authors developed the theory continuously further and presented
results at different workshops and seminars, \AAA see Appendix
\ref{sec:Devel} \EEE for some remarks concerning the chronological
development of our theory.  In June 2015 they became aware of the
parallel work \cite{KMV15}, which mainly concerns the dynamical
approach to the Hellinger-Kantorovich distance discussed in Section
\ref{subsec:BB} and the metric-topological properties of Section
\ref{subsec:MT} in the Euclidean case. Moreover, in mid August 2015 we
became aware of \AAA the work \cite{CPSV15?IDOT,CPSV15?UOTG}, which
starts \EEE from the dynamical formulation of the
Hellinger-Kantorovich distance in the Euclidean case, prove existence
of geodesics and sufficient optimality and uniqueness conditions
(which we state in a stronger form in Section \ref{subsec:geodesicRd})
with a precise characterization in the case of a couple of Dirac
masses, provide a detailed discussion of curvature properties
following Otto's formalism \cite{Otto01}, and study more general
dynamic costs on the cone space with their equivalent primal and dual
static formulation (leading to characterizations analogous to
\eqref{eq:353} and \eqref{eq:204} in the Hellinger-Kantorovich case).

Apart from the few above remarks, these
independent works did not influence the \AAA first  (cf.\
arXiv1508.07941v1) and the \EEE  present version of this
manuscript, \AAA which is essentially a minor modification  and
correction of the first version. In \EEE  the final Appendix \ref{sec:Devel} 
we give a brief account of the chronological development
of our theory. 

\nc\bigskip 

%\newpage

\centerline{\large\bfseries Main notation}

\smallskip
\halign{$#$\hfil\ &#\hfil
\cr
\cM(X)&finite positive Radon measures on a Hausdorff topological space
$X$
\cr
\cP(X),\ \cP_2(X)&Radon probability measures on $X$ (with finite
quadratic moment)
\cr
\BorelSets X&Borel subsets of $X$
\cr
T_\sharp\mu&push forward of $\mu\in \cM(X)$ by a map $T:X\to Y$:
\eqref{eq:460}
\cr 
\gamma=\sigma\mu{+}\mu^\perp,\ \mu=\varrho\gamma{+}\gamma^\perp&
Lebesgue decompositions of $\gamma$ and $\mu$, Lemma \ref{le:Lebesgue}
\cr
\rmC_b(X)&continuous and bounded real functions on $X$\cr
\Lip_b(X),\,\Lip_{bs}(X)&bounded (with bounded support) Lipschitz
real functions on $X$
\cr
\LSC_b(X),\LSC_s(X)&lower semicontinuous and bounded (or simple) real
functions on $X$
\cr
\USC_b(X),\USC_s(X) &upper semicontinuous and bounded (or simple) real
functions on $X$
\cr
\rmB(X),\rmB_b(X)&Borel (resp.~bounded Borel) real functions
\cr
{\rmL}^p(X,\mu),\ \rmL^p(X,\mu;\R^d) & Borel $\mu$-integrable real (or
$\R^d$-valued) functions
\cr
%L^p(X,\mm) & Lebesgue space of $p$-summable Borel functions\cr
\Gamma(\R_+)&set of admissible entropy functions, see \eqref{eq:18},
\eqref{eq:205}.
\cr
 F(\r),F_i(\r)&admissible entropy functions.
\cr
\Fstar(\phi),\Fstar_i(\phi)&Legendre transform of $F,F_i$, see
\eqref{eq:22}.
\cr 
\Gstar(\varphi),\Gstar_i(\varphi_i)&concave
conjugate of an entropy function, see \eqref{eq:304}.
\cr
\FH(\s),\FH_i(\s_i)&reversed entropies, see \eqref{eq:12}.
\cr
\MP_c(\s_1,\s_2),\; H(x_1,\s_1;x_2,\s_2)&marginal perspective
function, see \eqref{eq:186}, \eqref{eq:188}, \eqref{eq:342}
\cr
\sfc(x_1,x_2)&lower semicontinuous cost function defined in
$\xX=X_1\times X_2$.
\cr 
\FF(\gamma|\mu),\FHH(\mu|\gamma)&entropy
functionals and their reverse form, %on Radon measures,
see \eqref{eq:21} and \eqref{eq:211}
\cr
\EE(\ggamma|\mu_1,\mu_2),\ET(\mu_1,\mu_2)&general
Entropy-Transport functional and its minimum, see \eqref{eq:4} 
\cr
\DD(\vvarphi|\mu_1,\mu_2),\sfD(\mu_1,\mu_2)&dual functional and
its supremum, see \eqref{eq:44} and \eqref{eq:49}
\cr
\Cphi{},\Cpsi{}&set of admissible Entropy-Kantorovich potentials
\cr
\LET(\mu_1,\mu_2),\ \ell(\sfd)&Logarithmic Entropy Transport
functional
and its cost: Section \ref{subsec:LET1}
\cr
\Wd(\mu_1,\mu_2)&Kantorovich-Wasserstein distance in $\cP_2(X)$
\cr
\HK(\mu_1,\mu_2)&Hellinger-Kantorovich distance in $\cM(X)$:
Section \ref{subsec:HK}
\cr
\GHK(\mu_1,\mu_2)&Gaussian Hellinger-Kantorovich distance in $\cM(X)$:
Section \ref{subsec:GHK}
\cr
(\tY,\ \sfdc),\ \fro&metric cone and its vertex, see Section
\ref{subsec:cone}
\cr
\cball r&ball of radius $r$ centered at $\fro$ in $\tY$
\cr
\chm2{i}{},\ \dilp{\theta}{2}{\cdot}&homogeneous marginals and
dilations, see 
\eqref{eq:155}, \eqref{eq:147bis}
\cr
\cHM2{\mu_1}{\mu_2},\ \cHMle2{\mu_1}{\mu_2}&plans in $\tY\times\tY$ 
with constrained homogeneous marginals, see \eqref{eq:161bis}
\cr
\mathrm{AC}^p([0,1];X) & space of curves $\rmx:[0,1]\to
X$ with $p$-integrable metric speed
\cr
|\rmx'|_\sfd&metric speed of a curve $\rmx\in
\mathrm{AC}([a,b];(X,\sfd))$, Sect.~\ref{subsec:geodesic}
\cr
|\rmD_Z f|,\ \alc Z f&metric slope and asymptotic Lipschitz constant
in $Z$, see  \eqref{eq:408}
\cr}

\PART{Part I. Optimal Entropy-Transport problems}

\section{Preliminaries}
\label{sec:prel}
\subsection{Measure theoretic notation}
\paragraph{Positive Radon measures, narrow and weak convergence, tightness.}
Let $(X,\tau)$ be a 
Hausdorff topological space.
%Polish topological space (i.e.~a separable and topologically complete space).
We will denote by $\BorelSets X$ the $\sigma$-algebra of its Borel sets and by
$\cM(X)$ the set of finite nonnegative \emph{Radon} measures
on $X$ \cite{Schwartz73}, i.e.~$\sigma$-additive set functions
$\mu:\BorelSets X\to
[0,\infty)$ such that 
\begin{equation}
  \label{eq:238}
  \forall\, B\in \BorelSets X,\ \forall\,\eps>0\quad 
   \exists\,K_\eps\subset B \text{ compact such that}\quad
  \mu(B\setminus K_\eps)\le \eps.
\end{equation}
Radon measures have strong continuity property with respect to
monotone convergence. For this, denote by $\LSC(X)$ the space of
all lower semicontinuous real-valued functions on $X$ and consider
a nondecreasing directed family $(f_\lambda)_{\lambda\in
  \L}\subset \LSC(X)$ (where  $\L$ is a possibly uncountable
directed set) of nonnegative and lower semicontinuous functions
$f_\lambda$ converging to $f$, we have (cf.\ \cite[Prop.\,5,
p.\,42]{Schwartz73})
\begin{equation}
  \label{eq:280}
  \lim_{\lambda\in \L}\int_X f_\lambda\,\d\mu=\int_X
  f\,\d\mu \quad \text{for all } \mu\in \cM(X).
\end{equation}
We endow $\cM(X)$ with the \emph{narrow} topology, the coarsest
(Hausdorff) topology for which all the maps $ \mu\mapsto \int_X
\varphi\,\d\mu$ are lower semicontinuous, as $\varphi:X\to \R$ varies
among the set $\LSC_b(X)$ of all bounded lower semicontinuous
functions \cite[p.~370, Def.~1]{Schwartz73}.
\begin{remark}[Radon versus Borel, narrow versus weak]
  \label{rem:particular-cases}
  \upshape
  When $(X,\tau)$ is a Radon space (in particular a Polish, or Lusin
  or Souslin space \cite[p.~122]{Schwartz73}) then every Borel measure satisfies \eqref{eq:238},
  so that $\cM(X)$ coincides with the set of all nonnegative and
  finite \emph{Borel} measures.
  Narrow topology is in general stronger than 
  the standard weak topology
  induced by the duality with continuous and bounded functions of
  $\rmC_b(X)$. 
  However, when $(X,\tau)$ is completely regular, i.e.
  \begin{equation}
  \label{eq:CR}
  \begin{gathered}
    \text{for any closed set $F\subset X$ and any $x_0\in X\setminus F$}\\
    \text{there exists $f\in C_b(X)$ with $f(x_0)>0$ and $f\equiv 0$ on $F$,} 
  \end{gathered}
\end{equation}
(in particular when $\tau$ is metrizable), narrow and weak topology
coincide \cite[p.~371]{Schwartz73}.  Therefore when $(X,\tau)$ is a
Polish space we recover the usual setting of Borel measures endowed
with the weak topology.
\end{remark}
A set $\cK\subset \cM(X)$
is \emph{bounded} if $\sup_{\mu\in \cK} \mu(X)<\infty$; it is 
\emph{equally tight} if 
\begin{equation}
  \label{eq:25}
  \forall\,\eps>0\quad
  \exists\,K_\eps\subset X\text{ compact such that}\quad
  \mu(X\setminus K_\eps)\le \eps\quad \forevery \mu\in \cK.
\end{equation}
Compactness with respect to narrow topology is 
guaranteed by an extended version of \emph{Prokho\-rov's Theorem}
\cite[Thm.~3, p.~379]{Schwartz73}. Tightness of weakly convergent
\emph{sequences}
in metrizable spaces is due to \textsc{Le Cam} \cite{LeCam57}.
\begin{theorem}
  \label{thm:Prokhorov}
  If a subset $\cK\subset \cM(X)$ is bounded and equally tight then 
  it is relatively compact with respect to
  the narrow topology. The converse is also true in the following
  cases:\\
  (i)\phantom i $(X,\tau)$ 
  is a locally compact or a Polish space; \\
  (ii) $(X,\tau)$ is metrizable and $\cK=\{\mu_n:n\in \N\}$
  for a given weakly convergent sequence $(\mu_n)$.  
\end{theorem}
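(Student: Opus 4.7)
The plan is to treat the forward and converse implications separately, reducing the forward direction (which is the substantive content) to Banach--Alaoglu on compact subsets plus a Tychonoff--diagonal argument, and dispatching each case of the converse by an adaptation of the classical tightness construction.

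\textbf{Forward direction.} Assume $\cK$ is bounded with $\mu(X)\le M$ for all $\mu\in\cK$ and equally tight, and let $(\mu_\alpha)\subset \cK$ be an arbitrary net. Using \eqref{eq:25}, I pick an increasing sequence of compact sets $K_n\subset K_{n+1}\subset X$ with $\mu(X\setminus K_n)\le 1/n$ uniformly over $\mu\in\cK$. For each $n$ the restrictions $\mu\res K_n$ live in the ball of radius $M$ of $\cM(K_n)$, which via the Riesz representation on the compact Hausdorff space $K_n$ is identified with a weak-$*$ compact subset of $\rmC(K_n)^*$ by Banach--Alaoglu. Tychonoff's theorem on the product $\prod_n \rmC(K_n)^*$ then extracts from $(\mu_\alpha)$ a subnet, still denoted the same, such that $\mu_\alpha\res K_n\weaksto \nu_n$ for every $n$. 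The consistency $\nu_{n+1}\res K_n=\nu_n$ is automatic since both agree as functionals on continuous functions on $K_n$ extended to $K_{n+1}$ by Tietze. The family $(\nu_n)$ assembles into a unique Radon measure $\mu$ on $\bigcup_n K_n$, extended by zero to $X$; it inherits $\mu(X)\le M$ and inner regularity \eqref{eq:238}. To upgrade weak-$*$ convergence on each $K_n$ to narrow convergence on $X$, I fix $\phi\in\LSC_b(X)$ and split $\int\phi\,\d\mu_\alpha$ into the contributions from $K_n$ and its complement. The tail is $O(\|\phi\|_\infty/n)$ uniformly in $\alpha$ and in the limit, while on the compact Hausdorff (hence normal) space $K_n$ the lower semicontinuous function $\phi|_{K_n}$ is the pointwise supremum of a monotone directed family of continuous functions; applying \eqref{eq:280} and the weak-$*$ convergence to this approximation yields $\liminf_\alpha \int\phi\,\d\mu_\alpha\ge \int_{K_n}\phi\,\d\mu-\|\phi\|_\infty/n$, and letting $n\uparrow\infty$ gives the narrow convergence criterion.

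\textbf{Converse implications.} In case (i) with $X$ locally compact Hausdorff, a relatively narrowly compact $\cK$ is automatically bounded (take $\varphi\equiv 1$); for the tightness, I argue by contradiction: if there were $\eps>0$ and measures $\mu_\alpha\in\cK$ with $\mu_\alpha(X\setminus K)>\eps$ for arbitrarily large compacts $K$, a narrowly convergent subnet $\mu_\alpha\to\mu$ would violate the uniform concentration forced by testing against an increasing family of compactly supported continuous functions whose supremum is $1$ on a compact exhausting the mass of the limit $\mu$, invoking \eqref{eq:280} on $\mu$. In the Polish case I follow the standard Prokhorov argument: for each $k\in\N$ and $\eps>0$, the open sets $U_{k,N}:=\bigcup_{j\le N}B(x_j,1/k)$ increase to $X$ along a dense sequence; narrow lower semicontinuity of $\mu\mapsto\mu(U_{k,N})$ and the compactness of the closure of $\cK$ give a $N(k,\eps)$ such that $\mu(U_{k,N(k,\eps)})\ge \mu(X)-\eps 2^{-k}$ uniformly on $\cK$, and the intersection $K_\eps:=\bigcap_k \overline{U_{k,N(k,\eps)}}$ is totally bounded and closed, hence compact by completeness. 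Case (ii) follows by combining the fact that a single Radon measure on a metrizable space is tight (a direct consequence of \eqref{eq:238}) applied to the limit $\mu$ and to each $\mu_n$ in the narrowly convergent sequence: only finitely many $\mu_n$ require additional mass outside a given compact produced for $\mu$, and each of those is individually tight.

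\textbf{Main obstacle.} The genuinely delicate step is the forward direction in the general Hausdorff setting, where the narrow topology is strictly finer than the weak-$C_b$ topology because $X$ need not be completely regular (cf.~the discussion of \eqref{eq:CR}): weak-$*$ convergence of restrictions on each compact $K_n$ only a priori controls test functions in $\rmC(K_n)$, whereas narrow convergence demands the $\liminf$-inequality against the richer class $\LSC_b(X)$. The interplay between the monotone continuity \eqref{eq:280}, the normal-space approximation of bounded $\LSC$ functions from below by continuous ones on each compact piece, and the uniform tail control outside $K_n$ has to be balanced so that the $\liminf$ estimate survives passage to the compact exhaustion; this is precisely the point at which the Radon (rather than merely Borel) hypothesis is used.
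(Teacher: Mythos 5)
The paper does not prove Theorem~\ref{thm:Prokhorov} at all: the main implication is quoted from Schwartz's book and case (ii) from Le~Cam, so there is no in-paper proof to compare your argument against. Reviewing your attempt on its own merits, the forward direction contains a genuine gap at the consistency step. The claim $\nu_{n+1}\res K_n=\nu_n$ is not ``automatic'' and is in fact false in general: your Tietze argument controls $\int_{K_{n+1}}\tilde f\,\d\nu_{n+1}$, not $\int_{K_n}f\,\d\nu_{n+1}$, and multiplication of $\tilde f$ by $\nchi_{K_n}$ is not a continuous operation on $K_{n+1}$, so it is invisible to the weak-$*$ limits you extracted. Concretely, take $X=[-1,1]$, $\mu_m=\delta_{-1/m}$, and $K_1=\{-1\}\cup[0,1]$, $K_n=[-1,1]$ for $n\ge2$. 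The uniform tail bound $\mu_m(X\setminus K_n)\le 1/n$ holds, yet $\mu_m\res K_1\equiv 0$ forces $\nu_1=0$, while $\mu_m\res K_2=\mu_m\weaksto\delta_0$ in $\rmC(K_2)^*$ gives $\nu_2$ with $\nu_2\res K_1=\delta_0\neq\nu_1$. Without consistency the glued object $\mu$ does not exist, and the remainder of your forward argument collapses; the usual proofs avoid gluing restrictions by working directly with outer/inner regular envelopes or with a single compactness device. You also assert but do not verify that the glued limit would be Radon, which is a separate nontrivial point in non-metrizable $X$.

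For the converse, your Polish and locally compact arguments are essentially sound (the locally compact sketch should be tightened into the Urysohn-function argument, but the idea works). Case (ii), however, is not proved: the sentence ``only finitely many $\mu_n$ require additional mass outside a given compact produced for $\mu$'' is precisely the content of Le~Cam's theorem, not a reason for it. Weak convergence in a metric space only gives $\liminf_n\mu_n(U)\ge\mu(U)$ for open $U$ containing a compact on which $\mu$ concentrates; such $U$ need not be compact, and converting this into a single compact working uniformly for the whole sequence requires the metric $\delta$-thickening and diagonalization mechanism of Le~Cam's proof, which you have not supplied.
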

If $\mu\in \cM(X)$ and $Y$ is another Hausdorff topological space, a
map $T:X\to Y$ is \emph{Lusin $\mu$-measurable} \cite[Ch.~I,
Sec.~5]{Schwartz73} if for every $\eps>0$ there exists a compact set
$K_\eps\subset X$ such that $\mu(X\setminus K_\eps)\le \eps$ and the
restriction of $T$ to $K_\eps$ is continuous.  We denote by $T_\sharp
\mu\in \cM(Y)$ the push-forward 
measure
defined by
\begin{equation}
  \label{eq:460}
  T_\sharp\mu(B):=\mu(T^{-1}(B))\ \forevery B\in \BorelSets Y.
\end{equation}
For
% \COMM{``$T:X\to Y\text{ is Lusin $\mu$-measurable.}$'' deleted
% in \eqref{eq:460}}
 $\mu\in \cM(X)$ and a Lusin $\mu$-measurable $T:X\to Y$, we
have $T_\sharp \mu \in \cM(Y)$. The linear space $\rmB(X)$
(resp.~$\rmB_b(X)$) denotes the space of real Borel
(resp.~bounded Borel) functions. If $\mu\in \cM(X)$, $p\in
[1,\infty]$, we will denote by $\rmL^p(X,\mu)$ the subspace of Borel
$p$-integrable functions w.r.t.~$\mu$, without identifying
$\mu$-almost equal functions.
\paragraph{Lebesgue decomposition.}
Given $\gamma,\mu\in \cM(X)$, we write $\gamma\ll\mu$ if
$\mu(A)=0$ yields $\gamma(A)=0$ for every $A\in \BorelSets X$.
We say that $\gamma\perp\mu$ if there exists 
$B\in \BorelSets X$ such that $\mu(B)=0=\gamma(X\setminus B)$.
\begin{lemma}[Lebesgue decomposition]
  \label{le:Lebesgue}
  For every $\gamma,\mu\in \cM(X)$ (with $(\gamma+\mu)(X)>0$), there exist
  Borel functions $\sigma,\varrho:X\to[0,\infty)$ and
  a Borel partition $(A,A_\gamma,A_\mu)$ of $X$ with the following
  properties:
  \begin{gather}
    \label{eq:243}
    A=\{x\in X:\sigma(x)>0\}=\{x\in X:\varrho(x)>0\},\quad
    \sigma\cdot\varrho\equiv 1\quad\text{in }A,\\
    \label{eq:254}
    \gamma=\sigma\mu+\gamma^\perp,\quad 
      \sigma\in \rmL^1_+(X,\mu),\quad \gamma^\perp\perp\mu,\quad
      \gamma^\perp(X\setminus A_\gamma)=\mu(A_\gamma)=0,\\
      \label{eq:310}
      \mu=\varrho\gamma+\mu^\perp,\quad 
      \varrho\in \rmL^1_+(X,\gamma),\quad \mu^\perp\perp\gamma,\quad
      \mu^\perp(X\setminus A_\mu)=\gamma(A_\mu)=0.
  \end{gather}
  Moreover, the sets $A,A_\gamma,A_\mu$ and the densities $\sigma,\varrho$ are 
  uniquely
  determined up to $(\mu+\gamma)$-negligible sets.
\end{lemma}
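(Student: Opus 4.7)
The plan is to reduce the symmetric decomposition to one application of the classical Radon--Nikodym theorem, by using the common dominating measure $\nu := \mu+\gamma \in \cM(X)$. Since $\nu$ is finite and both $\mu,\gamma \ll \nu$, classical Radon--Nikodym applied to the measurable space $(X,\BorelSets X)$ yields Borel densities $f_\mu,f_\gamma:X\to[0,1]$ with
\[
\mu=f_\mu\,\nu,\qquad \gamma=f_\gamma\,\nu,\qquad f_\mu+f_\gamma=1\ \ \nu\text{-a.e.}
\]
Modifying $f_\mu,f_\gamma$ on a common $\nu$-negligible Borel set, I may assume $f_\mu+f_\gamma\equiv 1$ pointwise on $X$, so that the two densities cannot simultaneously vanish. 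This initial reduction is what allows the subsequent partition and the pointwise identity $\sigma\cdot\varrho\equiv 1$ on $A$ to hold exactly rather than merely almost everywhere.

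Next I would introduce the Borel partition
\[
A:=\{f_\mu>0\}\cap\{f_\gamma>0\},\qquad A_\gamma:=\{f_\mu=0\},\qquad A_\mu:=\{f_\gamma=0\}\setminus A_\gamma,
\]
and the Borel densities
\[
\sigma:=\frac{f_\gamma}{f_\mu}\,\mathbf 1_A,\qquad \varrho:=\frac{f_\mu}{f_\gamma}\,\mathbf 1_A.
\]
By construction $\{\sigma>0\}=\{\varrho>0\}=A$ and $\sigma\varrho\equiv 1$ on $A$, which gives \eqref{eq:243}. For the decomposition \eqref{eq:254} I would split $\gamma=\gamma\res A+\gamma\res A_\gamma+\gamma\res A_\mu$: on $A_\mu$ we have $f_\gamma=0$ so the last term vanishes, on $A$ we have $\gamma\res A=f_\gamma\mathbf 1_A\,\nu=\sigma\,(f_\mu\mathbf 1_A\,\nu)=\sigma\mu\res A=\sigma\mu$ (since $\mu(A_\gamma)=\int_{A_\gamma} f_\mu\,\d\nu=0$). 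Setting $\gamma^\perp:=\gamma\res A_\gamma$ one reads off $\gamma=\sigma\mu+\gamma^\perp$ together with $\gamma^\perp(X\setminus A_\gamma)=0=\mu(A_\gamma)$ and $\int_X\sigma\,\d\mu=\gamma(A)\le\gamma(X)<\infty$, which is precisely \eqref{eq:254}. The assertion \eqref{eq:310} follows by exchanging the roles of $\mu$ and $\gamma$ in the argument.

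For uniqueness, if $(A',A'_\gamma,A'_\mu,\sigma',\varrho')$ is a second datum satisfying the same properties, then $\sigma'\mu$ is the $\mu$-absolutely continuous part of $\gamma$ and $\gamma\res A'_\gamma$ is its singular part; classical uniqueness of the Lebesgue decomposition and of the Radon--Nikodym density forces $\sigma'=\sigma$ $\mu$-a.e.\ and $A_\gamma=A'_\gamma$ up to a $(\mu+\gamma)$-null set, and the symmetric argument handles $\varrho$ and $A_\mu$; the remaining set $A$ is then determined up to a $(\mu+\gamma)$-null set.

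I do not expect a serious obstacle here: the only delicate point is the passage from almost-everywhere identities to the pointwise statements required by \eqref{eq:243}--\eqref{eq:310}, which is handled once and for all by the initial modification of $f_\mu,f_\gamma$ on a common $(\mu+\gamma)$-negligible Borel set. No topological property of $(X,\tau)$ beyond the existence of Borel densities is used, so the argument applies to arbitrary Hausdorff spaces in the Radon setting adopted by the paper.
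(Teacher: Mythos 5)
Your proof is correct and takes essentially the same route as the paper: you take the Radon--Nikodym density $f_\gamma$ of $\gamma$ with respect to $\nu=\mu+\gamma$ (the paper calls it $\theta$) together with $f_\mu=1-f_\gamma$, and the partition and densities you derive coincide exactly with the paper's $A=\{0<\theta<1\}$, $A_\gamma=\{\theta=1\}$, $A_\mu=\{\theta=0\}$, $\sigma=\theta/(1-\theta)$, $\varrho=(1-\theta)/\theta$. The paper handles uniqueness slightly more compactly by noting that any triple $(A,A_\gamma,A_\mu,\sigma,\varrho)$ satisfying the stated properties determines a $[0,1]$-valued Borel function $\theta$ with $\gamma=\theta\nu$, so uniqueness reduces at once to uniqueness of this density, while your argument pieces it together via the $\mu$-a.e.\ and $\gamma$-a.e.\ uniqueness of $\sigma$, $\varrho$ separately; both are fine.
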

\begin{proof}
  Let $\theta\in \rmB(X;[0,1])$ be the Lebesgue density of $\gamma$
  w.r.t.~$\nu:=\mu+\gamma$. 
  Thus, $\theta$ is uniquely determined up
  to $\nu$-negligible sets.
  The Borel partition can be defined by setting
  $A:=\{x\in X:0<\theta(x)<1\}$, $A_\gamma:=\{x\in X:\theta(x)=1\}$
  and
  $A_\mu:=\{x\in X:\theta(x)=0\}$.
  By defining 
  $\sigma:=\theta/(1-\theta)$, $\varrho:=1/\sigma=(1-\theta)/\theta$ for
  every $x\in A$ 
  and 
  $\sigma=\varrho\equiv 0$ in $X\setminus A$, we obtain Borel functions 
  satisfying \eqref{eq:254} and \eqref{eq:310}. 

  Conversely, it is not difficult to check that starting from
  a decomposition as in \eqref{eq:243}, \eqref{eq:254}, and
  \eqref{eq:310} and defining
  $\theta\equiv 0$ in $A_\mu$, $\theta\equiv 1$ in $A_\gamma$ and
  $\theta:=\sigma/(1+\sigma)$ in $A$ we obtain
  a Borel function with values in $[0,1]$ such that
  $\gamma=\theta(\mu+\gamma)$.
\end{proof}

\subsection{Min-max and duality}\label{subsec:minmax}
We recall now a powerful form of von Neumann's Theorem, concerning
minimax properties of convex-concave functions in convex subsets of
vector spaces and refer to \cite[Prop.\,1.2+3.2,
Chap.\,VI]{Ekeland-Temam74} for a general exposition. \EEE

Let $A,B$ be nonempty convex sets of some vector spaces and let us
suppose that $A$ is endowed with a Hausdorff topology.  Let $L:A\times
B\to \R$ be a function such that
\begin{subequations}
\begin{align}
  \label{eq:28}
    a\mapsto L(a,b)\quad&\text{is convex and lower semicontinuous in
      $A$ for every $b\in B$,}\\
    \label{eq:37}
    b\mapsto L(a,b)\quad&\text{is concave in $B$ for every $a\in A$}.
\end{align}
\end{subequations}
Notice that for arbitrary functions $L$ one always has
\begin{equation}
  \label{eq:278}
     \infp_{a\in A}\sup_{b\in B}L(a,b)\ge\sup_{b\in B}\infp_{a\in A}L(a,b);
\end{equation}
so that equality holds in \eqref{eq:278} if $\sup_{b\in B} \infp_{a\in
  A}L(a,b)=+\infty$.  When $\sup_{b\in B} \infp_{a\in A}L(a,b) $ is
finite, we can still have equality thanks to the following result.

The statement has the advantage of involving a minimal set of
topological assumptions (we refer to \cite[Thm. 3.1]{Simons98} for the
proof, see also \cite[Chapter 1, Prop.~1.1]{Brezis73}).

\begin{theorem}[Minimax duality]
  \label{thm:minimax}  Assume that \eqref{eq:28} and \eqref{eq:37}
  hold. If there exists $b_\star\in B$ and $C>\sup_{b\in
    B}\infp_{a\in A}L(a,b)$ such that
  \begin{equation}
    \label{eq:29}
    \big\{a\in A:L(a,b_\star)\le C\big\}\quad\text{is compact in }A,
  \end{equation}
  then 
  \begin{equation}
    \label{eq:30}
    \infp_{a\in A}\sup_{b\in B}L(a,b)=\sup_{b\in B}\infp_{a\in A}L(a,b).
  \end{equation}
\end{theorem}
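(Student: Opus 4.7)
The inequality $\infp_A \sup_B L \ge \sup_B \infp_A L$ always holds by \eqref{eq:278}, so the task is to prove the reverse. Set $\alpha := \sup_{b \in B}\infp_{a \in A}L(a,b)$, which we may assume to be finite (if $\alpha=+\infty$ the equality in \eqref{eq:30} is trivial). The plan is to show that, for every real $C' \in (\alpha, C]$, there exists $a_0 \in A$ with $L(a_0, b) \le C'$ for every $b \in B$: this immediately gives $\infp_{a \in A} \sup_{b \in B} L(a,b) \le C'$, and letting $C' \downarrow \alpha$ then yields \eqref{eq:30}.

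Fix such a $C'$ and define the closed convex sublevel sets $K_b := \{a \in A : L(a,b) \le C'\}$ for $b \in B$. Each is closed by the lower semicontinuity of $L(\cdot, b)$, and $K_{b_\star}$ is compact as a closed subset of the compact set in \eqref{eq:29} (since $C' \le C$). The existence of $a_0$ is equivalent to $\bigcap_{b \in B} K_b \ne \emptyset$. Because $K_{b_\star}$ is compact and each $K_b \cap K_{b_\star}$ is closed in $K_{b_\star}$, the finite intersection property reduces the problem to proving that, for every finite collection $\{b_1, \dots, b_n\} \subset B$,
\begin{equation*}
K_{b_\star} \cap K_{b_1} \cap \cdots \cap K_{b_n} \ne \emptyset.
\end{equation*}

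Writing $b_0 := b_\star$ and setting $\tilde L(a, \lambda) := \sum_{i=0}^n \lambda_i L(a, b_i)$ on $A \times \Delta_n$ (where $\Delta_n$ is the standard $n$-simplex), $\tilde L(\cdot, \lambda)$ is convex and lower semicontinuous (as a nonnegative combination of such functions) and $\tilde L(a, \cdot)$ is continuous and affine on the compact convex simplex. The finite-dimensional Sion minimax theorem (whose hypotheses are met, since compactness of one factor---here $\Delta_n$---suffices) yields
\begin{equation*}
\infp_{a \in A} \max_{0 \le i \le n} L(a, b_i) \;=\; \infp_{a \in A} \sup_{\lambda \in \Delta_n} \tilde L(a, \lambda) \;=\; \sup_{\lambda \in \Delta_n} \infp_{a \in A} \tilde L(a, \lambda).
\end{equation*}
By the concavity of $L(a, \cdot)$, one has $\tilde L(a, \lambda) \le L\bigl(a, \sum_{i=0}^n \lambda_i b_i\bigr)$, and hence $\infp_a \tilde L(a, \lambda) \le \infp_a L\bigl(a, \sum_i \lambda_i b_i\bigr) \le \alpha < C'$. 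Therefore $\infp_a \max_i L(a, b_i) < C'$, giving an $a \in A$ in the required finite intersection.

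The main difficulty is the finite-dimensional minimax step, where $\infp$ and $\sup$ are exchanged thanks to the compactness of $\Delta_n$ even though the underlying set $A$ itself is not compact; the global hypothesis \eqref{eq:29} enters purely afterward, through the finite intersection property. A small caveat is that \eqref{eq:29} is stated only at the specific level $C$, but sublevels at lower levels $C' \in (\alpha, C]$ are automatically compact as closed subsets of the level-$C$ set, which makes the limit $C' \downarrow \alpha$ legitimate.
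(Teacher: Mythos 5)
The paper itself does not prove Theorem~\ref{thm:minimax}; it refers the reader to Simons' and Brezis' books, so there is no in-paper proof to compare against. Your argument is correct and follows the classical route for such results: use the compact sublevel set of $L(\cdot,b_\star)$ together with the finite-intersection property to reduce the minimax to a finite-dimensional exchange over the simplex $\Delta_n$, and verify the finite intersections are nonempty via the simplex minimax combined with concavity of $L(a,\cdot)$ on the convex set $B$. One small imprecision worth flagging: you write that the hypotheses of ``the finite-dimensional Sion minimax theorem'' are met, but in the standard formulation Sion's theorem requires both factors to be convex subsets of \emph{linear topological} spaces, whereas the paper equips $A$ only with an abstract Hausdorff topology not assumed to be compatible with the linear structure. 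This does not damage the proof, because the exchange you actually need, namely
\begin{equation*}
\infp_{a\in A}\max_{0\le i\le n} L(a,b_i)\;=\;\sup_{\lambda\in\Delta_n}\infp_{a\in A}\sum_{i=0}^n\lambda_i L(a,b_i),
\end{equation*}
for convex functions $L(\cdot,b_i)$ on a convex set $A$ is a purely algebraic fact: it follows from a separating-hyperplane argument in $\R^{n+1}$ applied to the upward-closed convex set $\{v\in\R^{n+1}:\exists\,a\in A,\ v_i\ge L(a,b_i)\ \forall i\}$, with no topology on $A$ needed. Lower semicontinuity and compactness enter your argument only in the finite-intersection step, which is exactly where the hypotheses of the theorem are designed to act. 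Finally, your phrase ``letting $C'\downarrow\alpha$'' silently covers the case $\alpha=-\infty$; this is fine because sublevel sets at any level $C'\le C$ remain compact, but it would be cleaner to note it. Aside from these remarks the proof is complete and correct.
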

\subsection{Entropy functions and their conjugates}
\label{subsec:entropy}

\paragraph{Entropy functions in $[0,\infty)$.}
We say that 
$F:[0,\infty)\to [0,\infty]$ 
belongs to the class $\Gamma(\R_+)$ of 
\emph{admissible entropy function} if it satisfies
\begin{equation}
    \label{eq:18}
    \begin{gathered}
      F\text{ is convex and lower semicontinuous with}\quad
      \dom F\cap (0,\infty)\neq \emptyset,
      %F(0)=\lim_{r\down0}F(r),
      % \\
      % \text{with superlinear growth}\quad
      % \lim_{r\to\infty}r^{-1}F(r)=\infty.
    \end{gathered}
\end{equation}
where
\begin{equation}
  \label{eq:205}
    \dom F:=\{\r \ge 0: F(\r )<\infty\}, \ %\quad\text{with}\ 
      \ldom F:=\inf \dom F,\ % \quad
      \rdom F:=\sup\dom F>0.  
\end{equation}
The recession constant $\rec F$, the right derivative $\derzero F$ at
$0$, and the asymptotic affine coefficient $\asympt F$ are defined by
(here $\r _o\in \dom F$)
\begin{gather}
  \label{eq:227}
  \rec F :=\lim_{\r \to\infty}\frac{F(\r )}\r =\sup_{\r >0}\frac{F(\r
    )-F(\r _o)}{\r -\r _o}, 
  \quad
  \derzero F:=
  \begin{cases}
    -\infty&\text{if }F(0)=+\infty,\\
    \lim\limits_{\r \down0}\frac{F(\r )-F(0)}\r &\text{otherwise,}
  \end{cases}
  \\  \label{eq:228}
  \asympt F:=
  \begin{cases}
    +\infty&\text{if }\rec F=+\infty,\\
    \lim\limits_{\r \to\infty}\big(\rec F\,\r -F(\r )\big)&\text{otherwise.}
  \end{cases}
\end{gather}
To avoid trivial cases, we assumed in \eqref{eq:18} that the
proper domain $\dom F$ contains at least a strictly positive real
number.  By convexity, $\dom F$ is a subinterval \EEE
of $[0,\infty)$, and we will mainly focus on the case when
$\dom F$ has nonempty interior and $F$ has superlinear growth,
i.e.~$\rec F =+\infty$, but it will be useful to deal with the general
class defined by \eqref{eq:18}.
\paragraph{Legendre duality.}
As usual, 
the \emph{Legendre conjugate} function $\Fstar:\R\to (-\infty,+\infty]$
is defined by
\begin{equation}
  \label{eq:22}
  \Fstar(\phi):=\sup_{\r \ge0}\big(\r \phi-F(\r )\big),
  %-\sup_{r\ge0}\Big( r(-\phi)-F(r)\Big)=-\Fstar(-\phi)
  %\quad\text{if
  % }r\in \R.
\end{equation}
with proper domain $\dom \Fstar :=\{\phi\in \R:\Fstar (\phi)\in
\R\}$. Strictly speaking, $\Fstar$ is the conjugate
of the convex function $\tilde F:\R\to(-\infty,+\infty]$, obtained by
extending $F$ to $+\infty$ for negative arguments.  Notice that
\begin{equation}
  \label{eq:206}
  \inf\dom \Fstar=-\infty,\quad
  \sup\dom \Fstar=\rec F ,
\end{equation}
so that $\Fstar $ is finite and continuous in $(-\infty,\rec F )$, 
nondecreasing, and
satisfies 
\begin{equation}
  \label{eq:40}
  \lim_{\phi\down-\infty}\Fstar(\phi)=\inf \Fstar=%\tGstar(+\infty)=
  -F(0),\quad
  %\lim_{\phi\down -\rec F} \Gstar(\phi)=\asympt F,\quad
  \sup \Fstar =\lim_{\phi\up+\infty}\Fstar (\phi)=+\infty.
  % G(0)=\inf_{r\ge0}F(r).
\end{equation}
Concerning the behavior of $\Fstar$ at the boundary of its
proper domain we can distinguish a few cases depending on the
behavior of $F$ at $\ldom F$ and $\rdom F$: \EEE
\begin{itemize}
\itemindent-0.5em
\item[\textbullet] If $\derzero F=-\infty$ (in particular if
  $F(0)=+\infty$) then $\Fstar $ is strictly increasing in
  $\dom\Fstar$.
\item[\textbullet] If $\derzero F $ is
  finite, then $\Fstar $ is strictly increasing in $[\derzero F,\rec
  F)$
  % ($F_+'(0)$ denote the right derivative of $F$ at $0$)
  and takes the constant value $F(0)$ in $(-\infty,\derzero F]$.
  Thus $F(0)$ belongs to the range of $\Fstar$ only if $\derzero F>-\infty$.
\item[\textbullet] If $\rec F$ is finite, then $\lim_{\phi\up \rec
    F}\Fstar(\phi)=\asympt F$. Thus $\rec F\in \dom {\Fstar}$ 
  only if $\asympt F<\infty$.
\item[\textbullet] The
  degenerate case when $\rec F=\derzero F$ occurs only when $F$ is linear.
\end{itemize}
If $F$ is not linear, we always have
% Thus setting
% \begin{equation}
%   \label{eq:226}
%   \psi_+(G):=
%   \lim_{r\down0}F'(r)=
%   \begin{cases}
%     +\infty&\text{if }F(0)=+\infty,\\
%     -F_+'(0)&\text{if }F(0)<\infty
%   \end{cases},\qquad
%   \psi_-(G):=-\rec F 
% \end{equation}
% we have
\begin{equation}
  \label{eq:225}
  \text{$\Fstar $ is an increasing homeomorphism between 
    $(\derzero F,\rec F )$ and $(-F(0),\asympt F)$}
\end{equation}
with the obvious extensions to the boundaries of the intervals when
$\derzero F$ or $\asympt F$ are finite.

By introducing the closed convex  subset $\frF$ of $\R^2$ via \EEE
% (with nonempty interior
% $\Interior\frF$)
\begin{equation}
  \label{eq:281}
  \frF:=
  \big\{(\phi,\psi)\in \R^2:\psi\le -\Fstar(\phi)\big\} 
 = \big\{(\phi,\psi)\in \R^2:\r \phi+\psi\le F(\r )\ \forall\,
  \r > 0\big\},
\end{equation}
the function $F$ can be recovered from $\Fstar $ and from
\EEE $\frF$ through the dual 
Fenchel-Moreau formula
\begin{equation}
  \label{eq:269}
  F(\r )=\sup_{\phi\in \R} \big(\r \phi-\Fstar (\phi)\big)=
  \sup_{(\phi,\psi)\in \frF} \r \phi+\psi.
  %=\sup_{(\phi,\psi)\in \Interior\frF} r\phi+\psi
\end{equation}
Notice that $\frF$ satisfies the obvious monotonicity property
\begin{equation}
  \label{eq:284}
  (\phi,\psi)\in \frF,\quad
  \tilde\psi\le \psi,\ \tilde\phi\le \phi\quad
  \Rightarrow\quad
  (\tilde\phi,\tilde\psi)\in \frF.
\end{equation}
If $F$ is finite in a neighborhood of $+\infty$, then $\Fstar$ is
superlinear as $\phi\up\infty$.  More precisely, its asymptotic
behavior as $\phi\to\pm\infty$ is related to the proper domain of $F$
by
\begin{equation}
  \label{eq:2}
 \r^\pm_F =\lim_{\phi\to\pm\infty}\frac {\Fstar (\phi)}\phi.
  %\quad \Longleftrightarrow\quad
  %r_-=\inf \big(\dom F\big),\quad  r_+=\sup \big(\dom F\big).
\end{equation}
The functions $F$ and $\Fstar$ are also related to the
subdifferential $\partial F:\R\to 2^{\R}$ by
\begin{equation}
  \label{eq:325}
  \phi\in \partial F(\r )\quad\Leftrightarrow\quad
  \r \in \dom F,\quad \phi\in \dom \Fstar,\quad
  F(\r )+\Fstar(\phi)=\r \phi.
\end{equation}
\begin{example}[Power-like entropies]
  \label{ex:power-like}
  \upshape
  An important class of entropy functions is provided by the power
  like functions $\PE_p:[0,\infty)\to [0,\infty]$ with $p\in \R$
  characterized by 
  \begin{equation}
    \label{eq:237}
    \PE_p\in \rmC^\infty(0,\infty),\quad
    \PE_p(1)=\PE_p'(1)=0,\quad \PE_p''(\r )=\r ^{p-2},\quad
    \PE_p(0)=\lim_{\r \down0}\PE_p(\r ).
  \end{equation}
  Equivalently, we have the explicit formulas \EEE
  \begin{equation}
    \label{eq:236}
    \PE_p(\r )=
    \begin{cases}
      \frac{1}{p(p-1)}\big(\r ^p-p(\r -1)-1\big)&\text{if }p\neq 0,1,\\
      \r \log \r -\r +1&\text{if }p=1,\\
      \r -1-\log \r &\text{if }p=0,
    \end{cases}
    \qquad \text{for }\r >0,
  \end{equation}
  with $\PE_p(0)=1/p$ if $p>0$ and $U_p(0)=+\infty$ if $p\le0$.

Using the dual exponent $q=p/(p-1)$, the corresponding Legendre
conjugates read 
\begin{displaymath}
  \PEstar_q(\phi):=
  \left\{
    \begin{aligned}
      \frac{q-1}q\Big[\big(1+\frac\phi{q-1}\big)^q_+-1\Big],& 
            \quad \dom{\PEstar_q}=\R,&&
        \text{if } p>1,\ q>1, \\
        \rme^{\phi}-1,&\quad\dom{\PEstar_q}=\R,&& \text{if }p=1,\  
        q=\infty, \\
        \frac {q-1}q\Big[\big(1+\frac\phi{q-1}\big)^q-1\Big],&\quad
        \dom{\PEstar_q}=(-\infty,1-q),&& \text{if
        }0< p<1,\ q<0,\\
        -\log(1-\phi),&\quad\dom{\PEstar_q}=(-\infty,1),&& 
         \text{if }p=0,\ q=0, \\
        \frac {q-1}q\Big[\big(1+\frac\phi{q-1}\big)^q-1\Big],&\quad
        \dom{\PEstar_q}=(-\infty,1-q],&& \text{if }p<0,\ 0<q<1.
      \end{aligned}
      \right.    
  \end{displaymath}
  \end{example}
\paragraph{Reverse entropies.}
Let us now introduce the reverse density function
$\FH:[0,\infty)\to [0,\infty]$ as 
\begin{equation}
  \label{eq:12}
  \FH(\s ):=
  \begin{cases}
    \s F(1/\s )&\text{if }\s >0,\\
    \rec F &\text{if }\s =0.
  \end{cases}
\end{equation}
It is not difficult to check that $\FH$ is a proper, convex 
and lower semicontinuous function, with
\begin{gather}
  \label{eq:183}
  %\lim_{s\down
  % 0}\FH(s)=\lim_{r\up\infty}r^{-1}F(r)=
  \FH(0)=\rec F 
  % =H(0)
  ,\quad
  \rec\FH=%\lim_{s\up\infty}\frac {\FH(s)}s=
  %\lim_{r\down0}F(r)=
  F(0),\quad
  \asympt F=-\derzero \FH,\quad
  \asympt \FH=-\derzero F,
\end{gather}
so that $\FH\in \Gamma(\R_+)$ and the map $F\mapsto \FH$ is an
involution on $\Gamma(\R_+)$.  A further remarkable
involution property is enjoyed by the dual convex set
$\frR:=\{(\psi,\phi)\in \R^2:\FHstar(\psi)+\phi\le 0\}$ defined as
\eqref{eq:281}: it is easy to check that
\begin{equation}
  \label{eq:297}
  (\phi,\psi)\in \frF\quad\Leftrightarrow\quad
  (\psi,\phi)\in \frR.
\end{equation}
%Setting $\varphi=-\phi$, 
It follows that the Legendre transform of $\FH$ and $F$ are related by
\begin{equation}
  \label{eq:217}
  \psi\le -\Fstar(\phi)\quad\Leftrightarrow\quad
  \phi\le -\FHstar(\psi)\quad\Leftrightarrow
  \quad (\phi,\psi)\in \frF
  \qquad
  \text{for every }\phi,\psi\in \R.
\end{equation}
As in \eqref{eq:225} we have
\begin{equation}
  \label{eq:298}
  \text{$\FHstar $ is an increasing homeomorphism between 
    $(-\asympt F,F(0) )$ and $(-\rec F,-\derzero F).$}
\end{equation}
A last useful identity involves the subdifferentials of $F$ and $\FH$: 
for every $\r,\s >0$ with $\r\s =1$, and \EEE
$\phi,\psi\in \R$ we have 
\begin{equation}
  \label{eq:326}
  \Big(  \phi\in \partial F(r) \text{ and }  
  \psi=-\Fstar(\phi)  \Big) \quad\Longleftrightarrow\quad
   \Big(   \psi\in \partial \FH(s) 
   \text{ and }  
  \phi=-\FHstar(\psi) \Big) .\EEE
\end{equation}
It is not difficult to check that the reverse entropy associated to
$\PE_p$ is $\PE_{1-p}$.

\subsection{Relative entropy integral functionals}
 For $F\in \Gamma(\R_+)$ we consider the functional
$\FF:\cM(X)\times\cM(X)\to [0,\infty]$ defined by
\begin{equation}
  \label{eq:21}
  \FF(\gamma|\mu):=\int_X F(\sigma% \frac{\d
  % \gamma}{\d\mu}
  )\,\d\mu+\rec F \,\gamma^\perp(X),\quad \gamma=\sigma\mu+
  \gamma^\perp,\quad
  \gamma^\perp\perp\mu,\quad
  \sigma:=\frac{\d\gamma}{\d\mu},
\end{equation}
where $\gamma=\sigma\mu+\gamma^\perp$ is the Lebesgue decomposition of
$\gamma$ w.r.t.~$\mu$,  see \eqref{eq:254}.  Notice that
\begin{equation}
  \label{eq:19}
  \text{if $F$ is superlinear then}\quad
  \mathscr F(\gamma|\mu)=+\infty\quad\text{if }\gamma\not\ll\mu,
\end{equation}
and, whenever $\eta_0$ is the null measure,  we have 
\begin{equation}
  \label{eq:27}
  \FF(\gamma|\eta_0)=
  \rec F  \,\gamma(X),
  % \begin{cases}
  %   0&\text{if }\gamma\equiv 0,\\
  %   +\infty&\text{otherwise.}
  % \end{cases}
\end{equation}
where, as usual in measure theory, we adopted the convention
$0\cdot\infty=0$.

 Because of our applications in Section~\ref{sec:ET}, our next lemma deals with
\EEE Borel functions $\varphi\in
\rmB(X;\bar \R)$ taking values in the extended real line
$\bar\R:=\R\cup\{\pm\infty\}$.  By $\bar \frF$ \EEE
we denote the closure of $\frF$ in $\bar \R\times \bar \R$,  i.e.\
\EEE  
\begin{equation}
  \label{eq:322}
  (\phi,\psi)\in \bar\frF\quad\Leftrightarrow\quad
  \begin{cases}
    \psi\le -\Fstar(\phi)&\text{if }-\infty<\phi\le\rec F,\ \phi<+\infty\\
    \psi=-\infty&\text{if }\phi=\rec F=+\infty,\\    
    \psi\in [-\infty,F(0)]&\text{if }\phi=-\infty,
  \end{cases}
\end{equation}
and, symmetrically by  \eqref{eq:183} and \eqref{eq:297}, 
\begin{equation}
  \label{eq:322bis}
  (\phi,\psi)\in \bar\frF\quad\Leftrightarrow\quad
  \begin{cases}
    \phi\le -\FHstar(\psi)&\text{if }-\infty<\psi\le F(0),\ \psi<+\infty\\
    \phi=-\infty&\text{if }\psi=F(0)=+\infty,\\    
    \phi\in [-\infty,\rec F]&\text{if }\psi=-\infty  . \EEE
  \end{cases}
\end{equation}
In particular, we have 
\begin{equation}
  \label{eq:313}
  (\phi,\psi)\in \bar\frF\quad \Longrightarrow\quad \big(\; \EEE
  \phi\le \rec F \text{ and } \psi\le F(0) \;\big) .
\end{equation}
We continue to use the notation $\phi_-$ and $\phi_+$ to denote the
negative and the positive part of a function $\phi$, where
$\phi_-(x):=\min\{\phi(x), 0\}$ and $\phi_+(x):=\max\{\phi(x),0\}$. 

\begin{lemma}
  \label{le:trivial}
  If $\gamma,\mu \in \cM(X)$ and $ (\phi,\psi) \nc \in
  \rmB(X;\bar\frF)$  satisfy 
\[
  \FF(\gamma|\mu)<\infty,
  \quad 
   \psi_-\in \rmL^1(X,\mu) 
  \text{ (resp.~$\phi_-\in \rmL^1(X,\gamma)$)}, 
\]
\EEE  % and
  % $\varphi\ge -\rec F $,  
  then 
  $\phi_+\in \rmL^1(X,\gamma)$ 
  (resp. $\psi_+\in \rmL^1(X,\mu)$) and
  \begin{equation}
    \label{eq:71}
    \FF(\gamma|\mu)- \int_X \psi\,\d\mu\ge \int_X \phi\,\d\gamma.
  \end{equation}
  Whenever $\psi\in \rmL^1(X,\mu)$ or
  $\phi\in \rmL^1(X,\gamma)$, equality holds in
  \eqref{eq:71} if and only if 
  for the Lebesgue decomposition given by Lemma \ref{le:Lebesgue} one
  has
    \begin{align}
    &\phi\in \partial F(\sigma), \ \psi=-\Fstar(\phi)
        \quad
    \text{$(\mu{+}\gamma)$-a.e.~in $A$,}
    \label{eq:323}
  \\
    \label{eq:324}
    &\psi=F(0)<\infty\ \text{$\mu^\perp$-a.e.~in $A_\mu$},\quad
    \phi=\rec F<\infty\ \text{$\gamma^\perp$-a.e.~in $A_\gamma$.}
  \end{align}
Equation \eqref{eq:323} can equivalently be formulated as
$\psi\in \partial R(\varrho)$ and $\phi=-\FHstar(\psi)$. 
  % In this case $\psi\in \rmL^1(X,\mu)$ iff $\phi\in \rmL^1(X,\gamma)$.
  \end{lemma}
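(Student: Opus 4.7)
The plan is to use the Lebesgue decomposition of Lemma~\ref{le:Lebesgue} to split the integrals over $X$ into three pieces (on $A$, $A_\gamma$, $A_\mu$) and to estimate each separately, exploiting a pointwise Fenchel--Young inequality on $A$ together with the pointwise bounds $\phi\le \rec F$ and $\psi\le F(0)$ furnished by \eqref{eq:313}. By the symmetry between $F$ and $\FH$ encoded in \eqref{eq:297}, the two cases $\psi_-\in \rmL^1(X,\mu)$ and $\phi_-\in \rmL^1(X,\gamma)$ are completely parallel (the second is the first applied to the reverse entropy $\FH$ and the reversed decomposition $\mu=\varrho\gamma+\mu^\perp$); I therefore describe only the first.

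\smallskip\textbf{Step 1: Pointwise Fenchel--Young bound on $A$.} On $A$ we have $\gamma=\sigma\mu$ with $\sigma>0$ and $\varrho=1/\sigma$. For $(\phi,\psi)\in\bar\frF$ the very definition of $\frF$ in \eqref{eq:281} (together with its extension \eqref{eq:322}) yields the pointwise inequality
\begin{equation*}
\phi(x)\,\sigma(x)+\psi(x)\le F(\sigma(x))\qquad\mu\text{-a.e.\ }x\in A,
\end{equation*}
where, for $\phi(x)=\pm\infty$, the left-hand side is interpreted by the conventions that make the inequality trivially hold. Since $F(\sigma)\in \rmL^1(A,\mu)$ (by $\FF(\gamma|\mu)<\infty$) and $\psi_-\in \rmL^1(X,\mu)$, the inequality gives $(\phi\sigma)_+ \in \rmL^1(A,\mu)$, hence $\phi_+\in \rmL^1(A,\gamma)$, and
\begin{equation*}
\int_A \phi\,\d\gamma \;=\;\int_A \phi\sigma\,\d\mu \;\le\; \int_A F(\sigma)\,\d\mu\;-\;\int_A \psi\,\d\mu.
\end{equation*}

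\smallskip\textbf{Step 2: Bounds on $A_\gamma$ and $A_\mu$.} By \eqref{eq:313} and the convention $0\cdot\infty=0$ (noting that $\FF(\gamma|\mu)<\infty$ forces $\rec F<\infty$ whenever $\gamma^\perp(X)>0$, and $F(0)<\infty$ whenever $\mu(A_\mu)>0$), one has
\begin{equation*}
\int_{A_\gamma}\phi\,\d\gamma \;=\;\int_{A_\gamma}\phi\,\d\gamma^\perp \;\le\; \rec F\;\gamma^\perp(X),\qquad \int_{A_\mu}\psi\,\d\mu \;=\;\int_{A_\mu}\psi\,\d\mu^\perp\;\le\; F(0)\,\mu(A_\mu),
\end{equation*}
and in particular $\phi_+\in \rmL^1(A_\gamma,\gamma)$, giving $\phi_+\in \rmL^1(X,\gamma)$ altogether.

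\smallskip\textbf{Step 3: Summing up.} Recalling $\gamma^\perp(X)=\gamma^\perp(A_\gamma)$, $\mu(A_\mu)=\mu^\perp(X)$, $\sigma=0$ on $A_\mu$, and
$\FF(\gamma|\mu)=\int_A F(\sigma)\,\d\mu+F(0)\mu(A_\mu)+\rec F\,\gamma^\perp(X)$, adding the three estimates produces exactly \eqref{eq:71}.

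\smallskip\textbf{Step 4: Equality case.} Assume now $\psi\in\rmL^1(X,\mu)$ (the other assumption is symmetric). Then every term above is finite, and equality in \eqref{eq:71} is equivalent to simultaneous equality in each of the three pieces. On $A$ we obtain $\phi\sigma+\psi=F(\sigma)$ $\mu$-a.e., which by \eqref{eq:325} and the definition of $\frF$ is equivalent to
\begin{equation*}
\phi\in\partial F(\sigma)\quad\text{and}\quad \psi=-\Fstar(\phi)\quad (\mu{+}\gamma)\text{-a.e.\ in }A,
\end{equation*}
and by \eqref{eq:326} also equivalent to $\psi\in\partial\FH(\varrho)$, $\phi=-\FHstar(\psi)$, yielding \eqref{eq:323}. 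On $A_\gamma$ (where $\mu(A_\gamma)=0$) equality forces $\phi=\rec F$ $\gamma^\perp$-a.e., and on $A_\mu$ (where $\gamma(A_\mu)=0$) it forces $\psi=F(0)$ $\mu^\perp$-a.e.; this is \eqref{eq:324}. Conversely, under \eqref{eq:323}--\eqref{eq:324} the three partial inequalities become equalities and the sum does too.

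\smallskip The main subtlety is purely bookkeeping: since $\phi,\psi$ are allowed to take infinite values, one must check at each step that every product and integral is well defined (using $0\cdot\infty=0$) and that the \emph{a priori} finiteness of $\FF(\gamma|\mu)$ automatically rules out the dangerous combinations ($F(0)=\infty$ together with $\mu^\perp\neq 0$, or $\rec F=\infty$ together with $\gamma^\perp\neq 0$). No deeper tool is needed.
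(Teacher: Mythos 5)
Your proof is correct and follows essentially the same route as the paper: a pointwise Fenchel--Young bound on $A$ together with the ceilings $\phi\le\rec F$, $\psi\le F(0)$ on the singular pieces, then summation; the paper's own argument is simply a more compressed rendering of the same three-piece split (which it spells out explicitly only when it comes to the equality case). The one place where you are a bit hand-wavy — ``interpreted by the conventions that make the inequality trivially hold'' when $\phi$ or $\psi$ is infinite — is exactly the kind of thing that needs a sentence of justification; the structure of $\bar\frF$ in \eqref{eq:322}--\eqref{eq:322bis} shows $\phi=+\infty\Rightarrow\psi=-\infty$ and $\psi=+\infty\Rightarrow\phi=-\infty$, and combined with $\psi_-\in\rmL^1(X,\mu)$ (so $\psi>-\infty$ $\mu$-a.e.) and $F(\sigma)\in\rmL^1(A,\mu)$ this is enough to rule out the ambiguous products $\mu$-a.e.\ on $A$. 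With that filled in, the bookkeeping you flag at the end is indeed all that remains, and your argument matches the paper.
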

  \begin{proof}
    Let us first show 
    that in both cases 
    the two integrals of \eqref{eq:71}
    are well defined (possibly taking the 
    value $-\infty$). 
    If $\psi_-\in \rmL^1(X,\mu)$ 
    (in particular $\psi>-\infty$ $\mu$-a.e.) 
    with $(\phi,\psi)\in \bar\frF$ we use
    the pointwise bound $\r \phi\le F(\r )-\psi$ that yields $\r \phi_+\le
    (F(\r )-\psi)_+\le F(\r )+\psi_-$
    obtaining 
    $\phi_+\in \rmL^1(X,\gamma)$, since $(\phi,\psi)\in \bar\frF$ yields
    $\phi_+\le \rec F $.

    If $\phi_-\in \rmL^1(X,\gamma)$ (and thus $\phi>-\infty$
    $\gamma$-a.e.)
    the analogous inequality
    $\psi_+\le F(\r )+\r \phi_-$ yields $\psi_+\in \rmL^1(X,\mu)$.
    Then, \eqref{eq:71}\EEE follows from \eqref{eq:281} and \eqref{eq:313}.

    Once $\phi\in \rmL^1(X,\mu)$ (or $\psi\in \rmL^1(X,\gamma)$),
    estimate \eqref{eq:71} can be written as 
    \begin{displaymath}
      \int_A \Big(F(\sigma)-\sigma\phi-\psi\Big)\,\d\mu+
      \int_{A_\mu}\Big(F(0)-\psi\Big)\,\d\mu^\perp+
      \int_{A_\gamma} (\rec F-\phi)\,\d\gamma^\perp \ge0,
    \end{displaymath}
    and  by \eqref{eq:281} and \eqref{eq:313} 
    the equality case immediately yields that each of the three
    integrals of the previous formula vanishes. 
    Since $(\phi,\psi)$ lies in $\bar\frF\subset \R^2$ $(\mu+\gamma)$-a.e.~in $A$,\EEE
    the vanishing of the first integrand yields
    $\psi=-\Fstar(\sigma)$ and $\phi\in \partial F(\sigma)$ 
    by \eqref{eq:325} for $\mu$ and $(\mu+\gamma)$ almost every
    point in $A$. The equivalence \eqref{eq:326} provides the reversed identities 
    $\psi\in \partial\FH(\varrho)$, $\phi=-\FHstar(\psi)$.

   The relations in  \EEE\eqref{eq:324} follow easily by the vanishing of the last two
    integrals and the fact that $\psi$ is finite $\mu$-a.e.~and $\phi$
    is
    finite $\gamma$-a.e.
\end{proof}
The next theorem gives a characterization of the relative entropy $\FF$, which is the main result of this section.
Its proof 
is a careful adaptation of
\cite[Lemma 9.4.4]{Ambrosio-Gigli-Savare08} to the present more general setting,
which includes the sublinear case when $\rec F<\infty$ and
the lack of complete regularity of the space.
This suggests to deal with lower semicontinuous functions instead of
continuous ones. 
We denote by $\LSC_s(X)$ the class of lower semicontinuous and simple
functions
(i.e.~taking a finite number of real values only\EEE) 
and introduce the notation $\varphi=-\phi$ and the concave function
\begin{equation}
  \label{eq:304}
  \Gstar(\varphi):=-\Fstar(-\varphi).
\end{equation}
\begin{theorem}[Duality and lower semicontinuity]
  \label{thm:duality}
  For every $\gamma,\mu\in \cM(X)$ we have
  \begin{align}
    \label{eq:23}
      \FF(\gamma|\mu)
      %&
      % =\sup\Big\{\int_X
      % \Gstar(\varphi)\,\d\mu
      % -\int_X \varphi\,\d\gamma:
      % \varphi,\Gstar (\varphi)\in \rmC_b(X)
      % ,\quad
      % \varphi\ge -\rec F
      %\Big\}\\
      &=
      \sup\Big\{\int_X
      \psi\,\d\mu
      +\int_X \phi\,\d\gamma:
      \phi,\psi\in \LSC_s(X),\
      (\phi(x),\psi(x))\in \frF\ \forall\,x\in X
      % ,\quad
      % \varphi\ge -\rec F
      \Big\}
      \\\label{eq:299}&=
      \sup\Big\{\int_X
      \psi\,\d\mu
      -\int_X \FHstar(\psi)\,\d\gamma:
      \psi,\FHstar(\psi) \in \LSC_s(X)% ,\ \psi\le F(0)
      % ,\quad
      % \varphi\ge -\rec F
      \Big\}
      \\\label{eq:299bis}&=
      \sup\Big\{\int_X
                           \Gstar(\varphi)\,\d\mu
      -\int_X \varphi\,\d\gamma:
      \varphi,\Gstar(\varphi)\in \LSC_s(X)
      % ,\quad
      % \varphi\ge -\rec F
      \Big\}
  \end{align}
  and the space $\LSC_s(X)$ in the supremum of \eqref{eq:23}, % (resp.~of 
  \eqref{eq:299} and \eqref{eq:299bis} can also be replaced by
  the space $\LSC_b(X)$ (resp~$\rmB_b(X)$) 
  of bounded l.s.c.~(resp.~Borel) functions.
  % with values in $\frF$
  % (resp.~with $\psi\le F(0)$ and $\phi\ge\rec F$).
\end{theorem}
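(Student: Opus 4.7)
The plan is to establish the chain
\[
  \FF(\gamma|\mu)\le\sup\nolimits_{\LSC_s}\le\sup\nolimits_{\LSC_b}\le\sup\nolimits_{\rmB_b}\le\FF(\gamma|\mu),
\]
where each supremum is over pairs $(\phi,\psi)$ in the indicated class with $(\phi(x),\psi(x))\in\frF$ for every $x\in X$. The rightmost inequality is immediate from Lemma~\ref{le:trivial}: a bounded test pair automatically satisfies $\phi_-\in\rmL^1(X,\gamma)$ and $\psi_-\in\rmL^1(X,\mu)$, and \eqref{eq:71} gives $\int\phi\,\d\gamma+\int\psi\,\d\mu\le\FF(\gamma|\mu)$ for all three test classes. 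Once the outer equality in \eqref{eq:23} is proved, the reformulations \eqref{eq:299} and \eqref{eq:299bis} follow by saturating $\psi=-\Fstar(\phi)$ on $\partial\frF$ (combined with the involution \eqref{eq:217} and the identity $\FF(\gamma|\mu)=\FHH(\mu|\gamma)$) and by the change of variables $\varphi=-\phi$ with $\Gstar(\varphi)=-\Fstar(-\varphi)$.

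For the opposite inequality $\FF(\gamma|\mu)\le\sup\nolimits_{\rmB_b}$, I construct a Borel maximizing sequence via the Lebesgue decomposition of Lemma~\ref{le:Lebesgue}. Partition $X=A\sqcup A_\gamma\sqcup A_\mu$, so that
\[
  \FF(\gamma|\mu)=\int_A F(\sigma)\,\d\mu+F(0)\,\mu^\perp(A_\mu)+\rec F\,\gamma^\perp(A_\gamma).
\]
For $n\in\N$, define $\phi_n\in\rmB_b(X)$ piecewise: by a Borel measurable selection, $\phi_n(x)\in\partial F(\sigma(x))\cap[-n,n]$ on $A\cap\{1/n\le\sigma\le n\}$, so that the Fenchel saturation \eqref{eq:325} yields $\sigma\phi_n-\Fstar(\phi_n)=F(\sigma)$; on $A_\gamma$ and on $A\cap\{\sigma>n\}$ let $\phi_n\nearrow\rec F$, which recovers $\rec F\,\gamma^\perp(A_\gamma)$ in the limit; on $A_\mu$ and on $A\cap\{\sigma<1/n\}$ let $\phi_n\searrow-\infty$, so that $-\Fstar(\phi_n)\to F(0)$. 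Setting $\psi_n:=-\Fstar(\phi_n)$ keeps $(\phi_n,\psi_n)\in\frF$ pointwise, and monotone convergence yields $\int\phi_n\,\d\gamma+\int\psi_n\,\d\mu\to\FF(\gamma|\mu)$.

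The final reduction $\sup\nolimits_{\rmB_b}\le\sup\nolimits_{\LSC_s}$ proceeds in two sub-steps. For $\sup\nolimits_{\LSC_b}\le\sup\nolimits_{\LSC_s}$, every bounded l.s.c.\ $\phi$ is the pointwise increasing limit of l.s.c.\ simple functions $\phi_k:=k^{-1}\lceil k\phi\rceil$ (whose super-level sets coincide with those of $\phi$ and are therefore open); applying this to both coordinates and invoking the downward monotonicity property \eqref{eq:284} of $\frF$ to maintain $(\phi_k,\psi_k)\in\frF$, monotone convergence concludes. For the harder $\sup\nolimits_{\rmB_b}\le\sup\nolimits_{\LSC_b}$, I work simple-by-simple: given $\phi=\sum_k v_k\mathbf{1}_{B_k}$, $\psi=\sum_k w_k\mathbf{1}_{B_k}$ on a Borel partition $\{B_k\}$ with $(v_k,w_k)\in\frF$, outer regularity of the Radon measure $\mu+\gamma$ (guaranteed by \eqref{eq:238}) supplies open $V_k\supseteq B_k$ with arbitrarily small excess; after ordering the distinct $\phi$-values $v_1>v_2>\ldots>v_m$ and arranging the covers into a nested chain $V_1\subseteq\cdots\subseteq V_m=X$, one assigns the pair $(v_k,w_k)$ on each shell $V_k\setminus V_{k-1}$, and on the excess regions $V_k\setminus B_k$ invokes \eqref{eq:284} to retract to an interior point of $\frF$ compatible with the l.s.c.\ requirement on both coordinates.

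The genuinely delicate point running through the last step is the simultaneous l.s.c.\ approximation of both $\phi$ and $\psi$ while preserving the pointwise coupling $(\phi,\psi)\in\frF$: the natural saturation $\psi=-\Fstar(\phi)$ converts l.s.c.\ $\phi$ into u.s.c.\ $\psi$ (since $\Fstar$ is continuous and nondecreasing on the interior of its domain), so one cannot sit on the boundary of $\frF$. The remedy is to move strictly into the interior with a vanishing $\eps$-loss and exploit the downward monotonicity \eqref{eq:284}. The Radon hypothesis \eqref{eq:238} is what makes the outer approximation of Borel sets by open sets available in an arbitrary Hausdorff space, which is precisely why the statement must be phrased with l.s.c.\ rather than continuous test functions.
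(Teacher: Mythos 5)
Your high-level plan is sensible, but the key step --- passing from Borel simple test pairs to lower semicontinuous simple test pairs --- contains a genuine gap, and it is precisely the "delicate point" you correctly flag but do not resolve.

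The nested-chain construction fails. Ordering the distinct $\phi$-values $v_1>\cdots>v_m$ and assigning $(v_k,w_k)$ on the shells $V_k\setminus V_{k-1}$ of a nested open chain gives a lower semicontinuous $\phi$, but it does \emph{not} give a lower semicontinuous $\psi$: the super-level set $\{\psi>c\}$ is a union of shells, and such a union is open only if the $w_k$ are nonincreasing in $k$. There is no reason for the $w_k$ to be monotone in the $v_k$-ordering; in fact, on the Pareto boundary $\psi=-\Fstar(\phi)$ of $\frF$ the coupling is \emph{anti}-monotone, so whenever $\phi$ decreases across shells $\psi$ increases --- the worst case. Moving "strictly into the interior" of $\frF$ and invoking \eqref{eq:284} does not repair this: \eqref{eq:284} only lets you decrease a coordinate, and forcing $\psi$ to be nonincreasing across shells would require decreasing the $w_k$ by a non-vanishing amount wherever the natural order is violated, destroying the approximation. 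The paper's construction sidesteps the monotonicity conflict entirely: instead of a nested chain, it takes \emph{pairwise disjoint} open sets $G_n\supset K_n\subset A_n$ and puts the constant background $(\phi^N_{\rm min},\psi^N_{\rm min})$ everywhere, adding the nonnegative bumps $(\phi_n-\phi^N_{\rm min})\nchi_{G_n}$ and $(\psi_n-\psi^N_{\rm min})\nchi_{G_n}$. Each bump is l.s.c.\ (nonnegative multiple of an open-set indicator), so \emph{both} coordinates are automatically l.s.c.\ regardless of any ordering; and \eqref{eq:284} is used in the legitimate downward direction, since $(\phi^N_{\rm min},\psi^N_{\rm min})$ lies below every $(\phi_n,\psi_n)$.

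A secondary slip occurs in your $\sup_{\LSC_b}\le\sup_{\LSC_s}$ step: $\phi_k:=k^{-1}\lceil k\phi\rceil$ satisfies $\phi_k\ge\phi$ and \emph{decreases} to $\phi$, not increases, and since $\phi_k\ge\phi$, the pair $(\phi_k,\psi_k)$ can leave $\frF$ (the monotonicity \eqref{eq:284} only preserves $\frF$ when one moves downward). The correct l.s.c.\ simple approximation from below uses the open super-level sets $\{\phi>a+j\delta\}$ directly, or one can again use the paper's background-plus-disjoint-bumps construction. Finally, note that the paper does not prove $\FF\le\sup_{\rmB_b}$ as a separate step and then reduce; it builds the l.s.c.\ approximants directly from a fixed countable dense subset of $\frF$ together with the Fenchel--Moreau formula \eqref{eq:269}, performing the Radon inner/outer regularization once for each truncation level $N$. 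This avoids ever needing to lower-semicontinuize a general Borel simple pair, which is exactly where your argument runs into trouble.
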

\begin{remark}
  \label{rem:LSC=C.1}
  \upshape
  If $(X,\tau)$ is completely regular (recall \eqref{eq:CR}), then 
  we can equivalently replace lower semicontinuous functions by
  continuous ones in \eqref{eq:23}, \eqref{eq:299} and
  \eqref{eq:299bis}).
  E.g.~in the case of \eqref{eq:23} we have
    \begin{equation}
    \label{eq:23bis}
    \begin{aligned}
      \FF(\gamma|\mu)
      % &
      % =\sup\Big\{\int_X
      % \Gstar(\varphi)\,\d\mu
      % -\int_X \varphi\,\d\gamma:
      % \varphi,\Gstar (\varphi)\in \rmC_b(X)
      % ,\quad
      % \varphi\ge -\rec F
      %\Big\}\\
      &=
      \sup\Big\{\int_X
      \psi\,\d\mu
      +\int_X \phi\,\d\gamma:
      (\phi,\psi)\in \rmC_b(X;\frF)
      % ,\quad
      % \varphi\ge -\rec F
      \Big\}.
    \end{aligned}
  \end{equation}
  In fact, considering first \eqref{eq:23}, by complete regularity 
  it is possible to express every couple $\phi,\psi$ of bounded lower semicontinuous
  functions with values in $\frF$
  as the
  supremum
  of a directed family of continuous and bounded functions
  $(\phi_\alpha,\psi_\alpha)_{\alpha\in \A}$
  which still satisfy the constraint $\frF$ due to \eqref{eq:284}.
  We can then apply
  the continuity \eqref{eq:280} of the integrals with respect to 
  the Radon measures $\mu$ and $\gamma$.

  In order to replace l.s.c.~functions with continuous ones in 
  \eqref{eq:299} we can approximate $\psi$ by an increasing directed
  family of continuous functions $(\psi_\alpha)_{\alpha\in \A}$.
  By truncation, one can always assume that
  $\max\psi\ge \sup\psi_\alpha\ge\inf\psi_\alpha\ge\min\psi$. 
  Since $\FHstar(\psi)$ is 
  bounded, it is easy to check that also $\FHstar(\psi_\alpha)$ is
  bounded
  and it is an increasing directed family converging to $\FHstar (\psi)$.
  An analogous argument works for \eqref{eq:23bis}.
\end{remark}
\begin{proof}
  Let us prove \eqref{eq:23}: 
  denoting by $\FF'$ its %\Matt{Better $\tilde{\FF}$ instead of $\FF'$?}
  right-hand side, Lemma \ref{le:trivial} yields %  of \eqref{eq:23}. Using the Lebesgue
  % decomposition \eqref{eq:21} and the fact that 
  % $\phi\le \rec F$ whenever $(\phi,\psi)\in \frF$, we easily get
  % for every couple of Borel functions $(\phi,\psi)$ with values in
  % $\frF$
  % \begin{align*}
  %   \FF(\gamma|\mu)&
  %   =\int_X F(\sigma)\,\d\mu+\rec F\gamma^\perp_\mu(X)
  %   \ge \int_X \big(\psi+\sigma\phi\big)\,\d\mu+\int_X
  %   \phi\,\d\gamma^\perp_\mu
  %   =
  %   \int_X \psi\,\d\mu+\int_X \phi\,\d\gamma 
  % \end{align*}
  %so that 
  $\FF\ge \FF'$.
  In order to prove the opposite inequality let $B\in \BorelSets X$ a
  $\mu$-negligible Borel set where $\gamma^\perp$ is concentrated, let
  $A:=X\setminus B$ and let $\sigma:X\to [0,\infty)$ be a Borel
  density for $\gamma$ w.r.t.~$\mu$.  We consider a countable subset
  $(\phi_n,\psi_n)_{n=1}^\infty$ with $\psi_1=\phi_1=0$, which is
  dense in $\frF$ and an increasing sequence $\bar \phi_n\in
  (-\infty,\rec F)$ converging to $\rec F$, with $\bar\psi_n :=
  -\Fstar(\bar\phi_n)$.  By \eqref{eq:269} we have
  \begin{displaymath}
    F(\sigma(x))=\lim_{N\up\infty} F_N(x),
    \quad \text{where for every }x\in X\quad
    F_N(x):=\sup_{1\le n\le N}
    \psi_n+\sigma(x)\phi_n\quad.
  \end{displaymath}
 Hence,  Beppo Levi's monotone convergence theorem 
  (notice that $F_N\ge F_1=0$) implies $\FF(\gamma|\mu)=\lim_{N\up\infty} \FF_N'(\gamma|\mu)$, 
  where
  \[
\FF_N'(\gamma|\mu):=\int_A 
    F_N(x)\,\d\mu(x)+
    \bar\phi_N\gamma(B).
  \]\EEE
  It is therefore sufficient to prove that 
  \begin{equation}
    \label{eq:290}
    \FF'(\gamma|\mu) \geq  
    \FF_N'(\gamma|\mu) \quad \text{for every }N\in \N.
  \end{equation}
  We fix $N\in \N$,  set $\phi_{0}:=\bar\phi_N$,
  $\psi_{0}:=\bar\psi_N$, and recursively define the Borel sets
  $A_j$, for $j=0,\ldots,N$, with $A_0:=B$ and
  \begin{equation}
    \label{eq:291}
    \begin{split}
    A_1&:=\{x\in A: F_1(x)=F_N(x)\},\\
     A_{j}&:=\{x\in A : F_N(x)= F_j(x)>F_{j-1}(x)\}\quad\text{for }j=2,\ldots,N.
  \end{split}
  \end{equation}\EEE
  Since $F_1\leq F_2\leq\ldots\leq F_N$, the sets $A_i$ form a Borel partition of $A$.
  As $\mu$ and $\gamma$ are Radon measures, 
  for every $\eps>0$ we find disjoint compact sets $K_j\subset A_j$
  and disjoint open sets (by the Hausdorff separation property of
  $X$) $G_j\supset K_j$ such that
\[
  \sum_{j=0}^{N} \Big(\mu(A_j\setminus K_j)+
  \gamma(A_j\setminus K_j)\Big)=
  \mu\Big(X\setminus \bigcup_{j=0}^{N} K_j\Big)+\gamma\Big(X\setminus \bigcup_{j=0}^{N} K_j\Big)\le 
  \eps/S_N
\]
where 
\[
S_N:=\max_{0\le n\le N} \big[(\phi_n-\phi_{\rm min}^N)+(\psi_n-\psi_{\rm min}^N)\big],
  \quad \phi_{\rm min}^N:=\min_{0\le j\le N} \phi_j, \quad \psi_{\rm
    min}^N:=\min_{0\le j\le N} \psi_j.
\] 
Since $(\phi_{\rm min}^N,\psi_{\rm min}^N)\in \frF$ and the sets $G_n$
are disjoint, the lower semicontinuous functions
\begin{equation}
    \label{eq:292}
    \begin{aligned}
      \psi_N(x):={}\psi_{\rm min}^N+\sum_{n=0}^N(\psi_n-\psi_{\rm min}^N)
      \nchi_{G_n}(x),\quad 
      \phi_N(x):={}\phi_{\rm min}^N+\sum_{n=0}^N(\phi_n-\phi_{\rm min}^N) \nchi_{G_n}(x)
    \end{aligned}
\end{equation}
take values in $\frF$ and satisfy
  \begin{align*}
    \FF_N'(\gamma|\mu)\EEE&=\sum_{j=1}^N \int_{A_j}F_j(x)\,\d\mu(x)+
    \phi_0\gamma(A_0)
    \\&=\phi_{\rm min}^N\gamma(X)+
    \psi_{\rm min}^N\mu(X)+
    \sum_{j=0}^N \Big(\int_{A_j} (\phi_j-\phi_{\rm
      min}^N)\,\d\gamma(x)+\int_{A_j}(\psi_j-\psi_{\rm min}^N)\,\d\mu(x)\Big)
    %+\bar\phi_N\gamma(B)
    \\&\le 
    \phi_{\rm min}^N\gamma(X)+
    \psi_{\rm min}^N\mu(X)+
    \sum_{j=0}^N \Big(\int_{K_j}  (\phi_j-\phi_{\rm
      min}^N)\,\d\gamma(x)+\int_{K_j}(\psi_j-\psi_{\rm min}^N)\,\d\mu(x)\Big)
    +\eps
    % \\&\le\sum_{j=1}^N \Big(\int_{G_j} \phi_j\,\d\gamma(x)+\int_{G_j}\psi_j\,\d\mu(x)\Big)
    % +\theta_N\gamma(G)+2\eps
    \\&\le\int_X \phi_N(x)\,\d\gamma(x)+
    \int_X\psi_N(x)\,\d\mu(x)+\eps.
  \end{align*}
Since $\eps$ is arbitrary we obtain \eqref{eq:290}.

Equation \eqref{eq:299} follows directly by \eqref{eq:23} and the
previous Lemma \ref{le:trivial}. In fact, denoting by $\FF''$ the
righthand side of \eqref{eq:299}, Lemma \ref{le:trivial} shows that
$\FF''(\gamma|\mu)\le \FF(\gamma|\mu)=\FF'(\gamma|\mu)$.  On the other
hand, if $\phi,\psi\in \LSC_s(X)$ with $(\phi,\psi)\in \frF$ then
$-\FHstar (\psi)\ge \phi$. Hence, $\FHstar(\psi)\in \LSC_s(X)$ since
$\FHstar$ is nondecreasing, does not take the value $-\infty$, and is
bounded from above by $-\phi$.  We thus get $\FF''(\gamma|\mu)\ge
\FF'(\gamma|\mu)$.

In order to show \eqref{eq:299bis} we observe that for every $\psi\in
\LSC_s(X)$ with $\FHstar (\psi)\in \LSC_s(X)$ we can set
$\varphi:=\FHstar(\psi)\in \LSC_s(X);$ since $(\psi,-\FHstar(\psi))\in
\frF$ \eqref{eq:217} yields $\psi\le
-\Fstar(-\varphi)=\Gstar(\varphi)$ so that $\int
\Gstar(\varphi)\,\d\mu-\int\varphi\,\d\gamma\ge \int \psi\,\d\mu-\int
\FHstar(\psi)\,\d\gamma$. Since $\Gstar$ cannot take the value
$+\infty$, we also have that $(-\varphi,\Gstar(\varphi))\in \frF$ so
that $\int \Gstar(\varphi)\,\d\mu-\int\varphi\,\d\gamma\le
\FF(\gamma|\mu)$ by Lemma \ref{le:trivial}.

  When one replaces $\LSC_s(X)$ with $\LSC_b(X)$ or $\rmB_b(X)$ in 
  \eqref{eq:23},
  the supremum is taken on a larger set, so that the righthand side
  of \eqref{eq:23} cannot decrease; on the other hand, Lemma
  \ref{le:trivial}
  shows that $\FF(\gamma|\mu)$ still provides an upper bound
  even if $\phi,\psi$ are in $\rmB_b(X)$, 
  thus duality also holds in this case. The same argument applies to \eqref{eq:299} or \eqref{eq:299bis}.
\end{proof}
The following result provides lower semicontinuity of the relative entropy
or of an increasing sequence of relative entropies.

\begin{corollary}
  \label{cor:limit1}
  The functional $\FF$ is jointly convex and lower semicontinuous in
  $\cM(X)\times\cM(X)$.  More generally, if $F_n\in \Gamma(\R_+)$,
  $n\in \N$, is an increasing sequence pointwise converging to $F$ and
  $(\mu,\gamma)\in \cM(X)\times\cM(X)$ is the narrow limit of a
  sequence $(\mu_n,\gamma_n)\in \cM(X)\times\cM(X)$, then the
  corresponding entropy functionals $\FF_n,\FF$ satisfy
  \begin{equation}
    \label{eq:240}
    \liminf_{n\to\infty}\FF_n(\gamma_n|\mu_n)\ge \FF(\gamma|\mu).
  \end{equation}
\end{corollary}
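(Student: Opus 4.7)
The strategy is to base everything on the duality representation \eqref{eq:23} of Theorem \ref{thm:duality}.

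For every $\phi,\psi\in\LSC_b(X)$ with $(\phi(x),\psi(x))\in\frF$ for all $x\in X$, the map $(\gamma,\mu)\mapsto\int\phi\,\d\gamma+\int\psi\,\d\mu$ is affine and narrowly lower semicontinuous, by the very definition of the narrow topology (both $\phi$ and $\psi$ are bounded and l.s.c.). Since \eqref{eq:23} expresses $\FF$ as the pointwise supremum of these functionals, the joint convexity and narrow lower semicontinuity of $\FF$ follow immediately.

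For the liminf inequality, let $\frF_n$ denote the convex set associated to $F_n$ via \eqref{eq:281}. The monotone convergence $F_n\uparrow F$ yields the standard facts $F_n^*\downarrow F^*$ pointwise on $\R$ and that the recession constants of the $F_n$ increase monotonically to $\rec F$. For any $\phi,\psi\in\LSC_b(X)$ with $(\phi,\psi)\in\frF_n$ pointwise, Theorem \ref{thm:duality} applied to $F_n$ gives $\FF_n(\gamma_n|\mu_n)\ge\int\phi\,\d\gamma_n+\int\psi\,\d\mu_n$, and narrow lower semicontinuity of the right-hand side then yields
\[
  \liminf_{n\to\infty}\FF_n(\gamma_n|\mu_n)\ge \int_X\phi\,\d\gamma+\int_X\psi\,\d\mu.
\]

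The core step is to approximate arbitrary admissible test pairs for $\frF$ by admissible test pairs for $\frF_n$. Given $\phi,\psi\in\LSC_s(X)$ with $(\phi,\psi)\in\frF$ pointwise and any $\eta>0$, we claim the shifted pair $(\phi-\eta,\psi-\eta)$ lies in $\frF_n$ pointwise for all $n$ large enough; such a pair remains simple and lower semicontinuous, hence admissible in the dual of $\FF_n$. Since $\phi$ and $\psi$ are simple, this reduces to checking finitely many inequalities $d_j-\eta\le -F_n^*(c_i-\eta)$ corresponding to the value pairs $(c_i,d_j)\in\frF$ attained by $(\phi,\psi)$. When $c_i<\rec F$, the desired bound follows for $n$ large from $F_n^*(c_i-\eta)\downarrow F^*(c_i-\eta)\le F^*(c_i)\le -d_j$. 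The borderline case $c_i=\rec F<\infty$ (which forces $\asympt F<\infty$, since otherwise $d_j=-\infty$ is impossible) is handled by the same estimate, provided $c_i-\eta$ is eventually below the recession constants of the $F_n$; this holds by the auxiliary monotone convergence above.

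Combining both ingredients and using $\mu_n(X)\to\mu(X)$, $\gamma_n(X)\to\gamma(X)$ (which follow from narrow convergence tested against the constants $\pm 1$),
\[
  \liminf_{n\to\infty}\FF_n(\gamma_n|\mu_n)\ge \int\phi\,\d\gamma+\int\psi\,\d\mu-\eta\bigl(\gamma(X)+\mu(X)\bigr).
\]
Letting $\eta\downarrow 0$ and finally taking the supremum over admissible $(\phi,\psi)$ identifies the right-hand side with $\FF(\gamma|\mu)$ by \eqref{eq:23}, proving \eqref{eq:240}. The principal obstacle will be the approximation step: transferring test pairs from $\frF$ to $\frF_n$ using only pointwise (not uniform) convergence of $F_n$ to $F$, and separately handling the possibly delicate borderline behavior at the right endpoint $\phi=\rec F$ of the dual domain via the monotonicity of the recession constants.
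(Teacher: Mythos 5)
Your proof is correct and arrives at the result by the same duality representation as the paper, but the limiting step is handled differently. The paper first exploits $F_n\ge F_m$ for $n\ge m$ together with the narrow lower semicontinuity of the fixed functional $\FF_m$ to obtain $\liminf_n\FF_n(\gamma_n|\mu_n)\ge\FF_m(\gamma|\mu)$, and then lets $m\to\infty$ via the identity $\lim_m\FF_m(\gamma|\mu)=\FF(\gamma|\mu)$; you bypass this diagonal reduction and estimate $\liminf_n\FF_n(\gamma_n|\mu_n)$ directly, using the uniform perturbation $(\phi-\eta,\psi-\eta)$ to push each test pair for $\frF$ into $\frF_n$ for $n$ large. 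The $\eta$-shift is in fact a useful refinement: the paper's proof asserts that $\frF=\bigcup_n\frF_n$, which fails at the boundary of $\frF$ (for instance $F_n(\r)=(1-\tfrac1n)\r^2$, $F(\r)=\r^2$ give $F_n^*(\phi)>F^*(\phi)$ for every $\phi\neq0$ and every $n$, so the boundary pair $(2,-1)\in\frF$ belongs to no $\frF_n$). The paper's conclusion nevertheless holds, since the supremum in \eqref{eq:23} is unchanged when $\psi$ is replaced by $\psi-\eps$, and your perturbation argument is precisely the device that makes this rigorous. One presentational remark: your opening sentence, claiming that for any $(\phi,\psi)$ valued in $\frF_n$ duality yields the liminf bound, only makes sense once one knows the same pair lies in $\frF_n$ for all sufficiently large $n$, which is what your subsequent approximation step supplies; stating the order of quantifiers explicitly there would tighten the exposition.
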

\begin{proof}
    The lower semicontinuity of $\FF$ follows by 
    \eqref{eq:23}, which provides a representation of $\FF$ 
  as the supremum of a family of lower semicontinuous functionals
  for the narrow topology. Using $F_n\geq F_m$ for $n\geq m$ fixed,
  we have 
  \[
\liminf_{n\to\infty} \FF_n(\gamma_n|\mu_n)\geq \liminf_{n\to\infty}
\FF_m(\gamma_n|\mu_n) \geq \FF_m(\gamma|\mu),
  \]
by the above lower semicontinuity. Hence, it suffices
  %In order to prove \eqref{eq:240} it is sufficient 
  to check that 
  \begin{equation}
    \label{eq:241}
    \lim_{n\to\infty}\FF_n(\gamma|\mu)=\FF( \gamma|\mu)\quad \text{for every } \gamma,\mu\in \cM(X).
  \end{equation}
  This formula follows easily by the monotonicity of the convex sets
  $\frF_n$ (associated to $F_n$ by \eqref{eq:281}) $\frF_{n}\subset
  \frF_{n+1}$ and by the fact that $\frF=\cup_n \frF_n$, since
  $\Fstar_n$ is pointwise decreasing to $\Fstar$.  Thus for every
  couple of simple and lower semicontinuous functions $(\phi,\psi)$
  taking values in $\frF$ we have $(\psi(x),\phi(x))\in \frF_N$ for
  every $x\in X$ and a sufficiently large $N$ so that
  \begin{displaymath}
    \liminf_{n\to\infty}\FF_n(\gamma|\mu)\ge
    \int_X \psi\,\d\mu+\int_X\phi\, \d\gamma.
  \end{displaymath}
  Since $\phi,\psi$ are arbitrary we conclude applying the 
  duality formula \eqref{eq:23}.
\end{proof}
Next, we provide a compactness result for the sublevels of the
relative entropy, which will be useful in Section
\ref{subsec:existenceprimal} (see Theorem~\ref{thm:easy-but-important}
and Lemma~\ref{le:lsc}). 

\begin{proposition}[Boundedness and tightness]
  \label{prop:DLT}
  % More generally, let $A:\R\to [0,\infty]$ be an upper semicontinuous
  % function and let 
  If $\cK\subset \cM(X)$ is 
  bounded and $\rec F >0$, then for every $C\ge 0$ the
  sublevels of $\FF$ 
  \begin{equation}
    \label{eq:24}
    \Xi_C:=\Big\{\gamma\in \cM(X):\FF(\gamma|\mu)\le C\text{ for some }\mu\in
    \cK\Big\},
    \end{equation}
    are bounded. If moreover $\cK$ is equally tight and $\rec F
    =\infty$, then the sets $\Xi_C$ are equally tight.
\end{proposition}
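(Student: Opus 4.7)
The plan is to obtain both bounds from the duality formula \eqref{eq:23} by inserting suitably chosen test pairs $(\phi,\psi)\in\frF$: a constant pair will give boundedness, while tightness requires a Borel (not merely l.s.c.) pair supported on the complement of a compact set produced via the equal tightness of $\cK$.

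\textbf{Boundedness.} Since $\rec F>0$ and $\sup\dom\Fstar=\rec F$, one can pick $a\in(0,\rec F)$, so that $\Fstar(a)\in\R$. The constant pair $(a,-\Fstar(a))$ belongs to $\frF\cap \LSC_s(X)^2$, and \eqref{eq:23} gives
\[
  C \;\ge\; \FF(\gamma|\mu) \;\ge\; a\,\gamma(X)-\Fstar(a)\,\mu(X),
\]
so $\gamma(X)\le (C+|\Fstar(a)|\sup_{\mu\in\cK}\mu(X))/a$ uniformly in $\gamma\in\Xi_C$, proving the first assertion.

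\textbf{Tightness.} Assume $\rec F=\infty$, so that $\Fstar$ is finite on all of $\R$. Fix $\eps>0$, choose $M>0$ with $C/M<\eps/2$, and (using the equal tightness of $\cK$) a compact $K=K_\eps\subset X$ with
\[
  \mu(X\setminus K)\;\le\;\frac{\eps M}{2\bigl(1+\Fstar(M)\bigr)} \qquad \forevery \mu\in\cK.
\]
Introduce the Borel bounded test pair
\[
  \phi := M\,\nchi_{X\setminus K},\qquad \psi := -\Fstar(0)\,\nchi_K - \Fstar(M)\,\nchi_{X\setminus K},
\]
whose values are $(0,-\Fstar(0))$ on $K$ and $(M,-\Fstar(M))$ on $X\setminus K$, both sitting on the graph $\psi=-\Fstar(\phi)\subset\frF$. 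Using the $\rmB_b(X)$-version of \eqref{eq:23} granted by the last assertion of Theorem~\ref{thm:duality},
\[
  C \;\ge\; \FF(\gamma|\mu) \;\ge\; M\,\gamma(X\setminus K)-\Fstar(M)\,\mu(X\setminus K)-\Fstar(0)\,\mu(K).
\]
Since $F\ge 0$ forces $\Fstar(0)=-\inf F\le 0$, the $\mu(K)$-term is favourable, so
\[
  M\,\gamma(X\setminus K)\;\le\; C + \Fstar(M)\,\mu(X\setminus K)\;\le\;\tfrac{M\eps}{2}+\tfrac{M\eps}{2},
\]
and dividing by $M$ yields $\gamma(X\setminus K)\le\eps$ uniformly in $\gamma\in\Xi_C$, as required.

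\textbf{The main obstacle.} The naïve lower-semicontinuous choice $\phi=M\nchi_{X\setminus K}$ with constant $\psi\equiv-\Fstar(M)$ only produces an estimate of the form $\gamma(X\setminus K)\le C/M+\Fstar(M)\sup_{\cK}\mu(X)/M$, which does \emph{not} tend to $0$ because $\Fstar(M)/M\not\to 0$ in general (indeed $\Fstar(M)/M\to\rdom F$, possibly $+\infty$). The crucial idea is therefore to let $\psi$ jump \emph{down} to $-\Fstar(M)$ only on the open set $X\setminus K$ while keeping the larger value $-\Fstar(0)\ge 0$ on the compact set $K$; this breaks lower semicontinuity (hence the necessity of the Borel version of Theorem~\ref{thm:duality}) but decouples the $\mu$-cost on $X\setminus K$, which equal tightness can make arbitrarily small, from the $\mu$-cost on $K$, which then has a helpful sign.
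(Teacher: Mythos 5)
Your proof is correct, and it reaches the conclusion by a genuinely different route than the paper's. The paper does not invoke the duality Theorem~\ref{thm:duality} at all: it establishes the key estimate \eqref{eq:26},
\[
  \lambda\gamma(B)\le \FF(\gamma|\mu)+\Fstar(\lambda)\mu(B)\qquad\text{for }\lambda\in(0,\rec F),\ B\in\BorelSets X,
\]
by integrating the pointwise Young inequality $\lambda\sigma\le F(\sigma)+\Fstar(\lambda)$ over $B$ with respect to $\mu$ and using $\lambda\gamma^\perp(B)\le\rec F\,\gamma^\perp(B)$. The localisation to $B$ (so that $\Fstar(\lambda)$ multiplies $\mu(B)$ rather than $\mu(X)$) comes for free by restricting the domain of integration, and the remaining terms $\int_B F(\sigma)\,\d\mu$ and $\rec F\,\gamma^\perp(B)$ are absorbed into $\FF(\gamma|\mu)$ since $F\ge0$. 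Boundedness then follows with $B=X$, and tightness with $B=X\setminus K_\eps$ and $\lambda$ large. Your proof instead feeds a two-valued Borel test pair into the $\rmB_b$-version of \eqref{eq:23}; this achieves the same decoupling, and your discussion of why the lower semicontinuous single-valued pair fails (because $\Fstar(M)/M\not\to0$) is exactly the right diagnosis. The trade-off is that your argument relies on the full strength of Theorem~\ref{thm:duality} with Borel test functions, whereas the paper's derivation is self-contained and more elementary — arguably an advantage, since \eqref{eq:26} has a clean and memorable form.

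One small blemish: your choice of threshold $\mu(X\setminus K)\le \eps M/\big(2(1+\Fstar(M))\big)$ implicitly assumes $1+\Fstar(M)>0$. Since $\Fstar(M)\ge\Fstar(0)=-\inf F$ and $\inf F$ may be large, $\Fstar(M)$ could a priori be $\le -1$. Nothing fails mathematically — if $\Fstar(M)\le0$ the term $\Fstar(M)\mu(X\setminus K)$ is already $\le0$ and no smallness of $\mu(X\setminus K)$ is needed at all — but the displayed formula should be guarded, e.g.\ by replacing $1+\Fstar(M)$ with $\max\bigl(1,\Fstar(M)\bigr)$, or by first enlarging $M$ so that additionally $\Fstar(M)\ge0$, which is always possible since $\rec F=\infty$ forces $\Fstar(M)\to\infty$ as $M\to\infty$.
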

%
% Notice that if $\varphi\in \rmC_b(X)$ and $\rec F=+\infty$,
% we the condition $\Gstar (\varphi)\in \rmC_b(X)$ is automatically satisfied
% in \eqref{eq:23}.
\begin{proof}
  Concerning the properties of $\Xi_C$, we will use 
  % observe that 
  % every $\gamma\in \Xi_C$ satisfies 
  the inequality
  \begin{equation}
    \label{eq:26}
    \lambda\gamma(B)\le 
    %\frac     C\lambda-\mu(B)\frac{\Gstar(-\lambda)}\lambda
    \FF(\gamma|\mu)+\Fstar(\lambda)\mu(B) 
    %\frac{\Gstar(-\lambda)}\lambda
    \quad\forevery 
    \lambda\in (0,\rec F ),~\text{ and }~
    B\in \BorelSets X.
  \end{equation}
  This follows easily by integrating the Young inequality $\lambda
  \sigma\le F(\sigma)+\Fstar(\lambda)$ for $\lambda>0$ and the
  decomposition $ \gamma=\sigma\mu+\gamma^\perp$ in $B$ with respect
  to $\mu$ and by observing that
  \begin{displaymath}
    \lambda \gamma(B)=
    \lambda \int_B \sigma\,\d\mu+\lambda\gamma^\perp(B)
    \le \lambda \int_B \sigma\,\d\mu+\rec F  \gamma^\perp(B)
    \quad 
    \text{if }0<\lambda<\rec F .
  \end{displaymath}
  Choosing first % $\lambda=1$ and 
  $B=X$ in \eqref{eq:26} and an arbitrary $\lambda$ in $(0,\rec F )$ 
  (notice that $\Fstar (\lambda)<\infty$ thanks to \eqref{eq:206})
  we immediately get a uniform bound of $\gamma(X)$ 
  for every $\gamma\in \Xi_C$. % see that 
  % $\gamma(X)\le C-\Gstar(-1)\sup_{\mu\in\cK}\mu(X)$
  % for every $\gamma\in \Xi$. 
  
  In order to prove 
  the tightness when $\rec F =\infty$, 
  whenever $\eps>0$ is given, we can choose 
  $\lambda=2C/\eps$ and $\eta>0$ so small that 
  $\eta
  \Fstar(\lambda)/\lambda\le \eps/2$, and then a compact set $K\subset X$ such that 
  $\mu(X\setminus K)\le \eta$ for every $\mu\in \cK$.
  \eqref{eq:26} shows that $\gamma(X\setminus K)\le \eps$ for every
  $\gamma\in \Xi$.
\end{proof}
We conclude this section with a useful representation of $\FF$ 
in terms of the reverse entropy $\FH$ \eqref{eq:12}
and the corresponding functional $\FHH$. We will use the result
in Section~\ref{subsec:reverse} for the  reverse formulation of the primal
 entropy-transport problem. \EEE

\begin{lemma}
  \label{le:reverse-identity}
  For every $\gamma,\mu\in \cM(X)$ we have
  \begin{equation}
    \label{eq:211}
    \FHH(\mu|\gamma)=\int_X \FH(\varrho(x))\,\d\gamma(x)+
    \FH_\infty\,\mu^\perp(X),
  \end{equation}
  where $\mu=\varrho\gamma+\mu^\perp$ is 
  the reverse Lebesgue decomposition given by \eqref{eq:310}. In particular
  \begin{equation}
    \label{eq:208}
    \FF(\gamma|\mu)=\FHH(\mu|\gamma).
  \end{equation}
\end{lemma}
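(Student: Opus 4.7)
The plan is to observe that \eqref{eq:211} is essentially just the definition \eqref{eq:21} of the relative entropy functional applied with $\FH$ in place of $F$, once one uses the identity $\rec \FH = F(0)$ from \eqref{eq:183}. (More precisely, the statement \eqref{eq:211} in the lemma is actually the definition of $\FHH(\mu|\gamma)$; the only subtlety is to make sure the decomposition of $\mu$ with respect to $\gamma$ used in \eqref{eq:211} coincides with the one from Lemma~\ref{le:Lebesgue}, which follows from the uniqueness clause there.) So the real content is the identity \eqref{eq:208}, which is what I will focus on proving.

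To prove \eqref{eq:208}, I would use the symmetric Lebesgue decomposition from Lemma~\ref{le:Lebesgue}: there is a Borel partition $(A, A_\gamma, A_\mu)$ of $X$ such that $\gamma = \sigma\mu + \gamma^\perp$ with $\gamma^\perp$ concentrated on $A_\gamma$ and $\mu(A_\gamma)=0$, and $\mu = \varrho\gamma + \mu^\perp$ with $\mu^\perp$ concentrated on $A_\mu$ and $\gamma(A_\mu)=0$, with $\sigma, \varrho > 0$ on $A$ satisfying $\sigma\varrho \equiv 1$ there. Starting from
\[
\FF(\gamma|\mu) = \int_X F(\sigma)\,\d\mu + \rec F\, \gamma^\perp(X),
\]
I split the integral using that $\mu$ is concentrated on $A \cup A_\mu$. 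On $A$ one has $\d\mu = \varrho\,\d\gamma$ and $\sigma = 1/\varrho$, so
\[
\int_A F(\sigma)\,\d\mu = \int_A F(1/\varrho)\,\varrho\,\d\gamma = \int_A \FH(\varrho)\,\d\gamma
\]
by the definition \eqref{eq:12} of $\FH$. On $A_\mu$ the density $\sigma$ vanishes and $\mu$ restricted to $A_\mu$ equals $\mu^\perp$, so $\int_{A_\mu} F(\sigma)\,\d\mu = F(0)\mu^\perp(X) = \rec\FH\,\mu^\perp(X)$ by \eqref{eq:183}.

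For the singular term, $\gamma^\perp(X) = \gamma(A_\gamma)$ (since $\mu(A_\gamma)=0$), while $\varrho \equiv 0$ on $A_\gamma$ and $\FH(0) = \rec F$, so
\[
\rec F\, \gamma^\perp(X) = \int_{A_\gamma} \FH(\varrho)\,\d\gamma.
\]
Combining these three pieces and using $\gamma(A_\mu)=0$ to extend the integration over all of $X$, I obtain
\[
\FF(\gamma|\mu) = \int_X \FH(\varrho)\,\d\gamma + \rec\FH\,\mu^\perp(X) = \FHH(\mu|\gamma),
\]
which is \eqref{eq:208}. No step is a real obstacle: the only thing to be careful about is consistently using the same three-set partition on both sides, so that the singular contribution of $\gamma^\perp$ (weighted by $\rec F$) matches the value $\FH(0)$ on $A_\gamma$, and the singular contribution of $\mu^\perp$ (weighted by $F(0)$) matches $\rec\FH$ on $A_\mu$ — both are immediate from \eqref{eq:183}.
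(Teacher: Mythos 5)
Your computation is correct, and \eqref{eq:208} is established; but your route differs from the paper's. The paper's proof is a one-liner: it invokes the dual representation \eqref{eq:23} of $\FF(\gamma|\mu)$ as a supremum over pairs $(\phi,\psi)$ with $(\phi(x),\psi(x))\in\frF$, together with the algebraic involution \eqref{eq:297}, which says $(\phi,\psi)\in\frF\iff(\psi,\phi)\in\frR$. Since $\FHH(\mu|\gamma)$ has the analogous dual formula as a supremum over pairs lying pointwise in $\frR$, swapping the two test functions shows the two suprema coincide. Your approach instead works directly on the primal integrals: you invoke the symmetric three-set partition $(A,A_\gamma,A_\mu)$ from Lemma~\ref{le:Lebesgue}, translate $F(\sigma)\,\d\mu$ into $\FH(\varrho)\,\d\gamma$ on $A$ via $\sigma\varrho\equiv 1$ and the definition \eqref{eq:12}, and match the singular contributions using the identities $\FH(0)=\rec F$ and $\rec\FH=F(0)$ from \eqref{eq:183}. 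This is a slightly longer argument, but it is more elementary in that it bypasses the duality Theorem~\ref{thm:duality}; it also makes the bookkeeping of where the singular terms go completely transparent. Since both $F$ and $\FH$ are nonnegative, all the integrals are over nonnegative functions and so the decomposition into pieces over $A$, $A_\gamma$, $A_\mu$ presents no issue of conditional convergence; your proof is complete as written.
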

\begin{proof}
  It is an immediate consequence of the dual characterization in \eqref{eq:23} 
  and the equivalence in \eqref{eq:297}\EEE.
  \end{proof}

\section{Optimal Entropy-Transport problems}
\label{sec:ET}

The major object of Part I is the entropy-transport functional,
where two measures $\mu_1\in\cM(X_1)$ and $\mu_2\in\cM(X_2)$
are given, and one has to find a transport plan $\gamma\in\cM(X_1\times X_2)$
that minimizes the functional.

\subsection{The basic setting}
\label{subsec:setting}
Let us fix the basic set of data for Entropy-Transport problems. We
are given
\begin{itemize}
\item[-] two Hausdorff topological spaces $(X_i,\tau_i)$, $i=1,2$,
  which  define the Cartesian product $\xX:=X_1\times X_2$ and the 
  canonical projections $\pi^i:\xX\to X_i$;
\item[-] two entropy functions $ F_i\in \Gamma(\R_+)$,
  thus satisfying \eqref{eq:18}; %, \eqref{eq:205},
\item[-] a proper %nonnegative,
  lower semicontinuous cost function $\sfc:\xX\to
  [0,+\infty]$; % not identically $+\infty$
  %$\R\cup   \{+\infty\}$,
%
\item[-] a couple of nonnegative Radon measures $\mu_i\in
  \cM(X_i)$
  with finite mass $\mass i:=\mu_i(X_i)$ satisfying the compatibility
  condition
  \begin{equation}
        \label{eq:83}
        J:=\Big(\mass 1\, \dom {F_1}\Big)\cap
    \Big(\mass 2\, \dom {F_2}\Big)\neq \emptyset.
  \end{equation}
\end{itemize}
We will often assume that the above basic setting is also \emph{coercive}:
this means that \emph{at least one} of the following two \EEE
coercivity conditions holds:
\begin{subequations}\label{eq:coercivity}
  \begin{gather}
    \label{eq:320}
    \text{$F_1$ and $F_2$ are superlinear, i.e.~$\rec{(F_i)}=+\infty$;}\\
    % $(X_i,\tau_i)$ are compact.%
    \text{$\rec{(F_1)}+\rec{(F_2)}+\inf \sfc>0$ 
      and $\sfc$ has compact sublevels.}
\label{eq:319}
 \end{gather}
\end{subequations}
% \item[-] $\Gstar_i:\R\to\R$ are the concave conjugate functions associated
%   to $F_i$ by the formula \eqref{eq:22}.
  % \begin{equation}
  %   \label{eq:234}
  %   G_i(\varphi):=\inf_{r\ge0} \Big(F(r)+r\varphi\Big).
  % \end{equation}
%\end{itemize}
For every transport plan $\ggamma\in \mathcal M(\xX)$ we define the
marginals $\gamma_i:=\pi^i_\sharp \ggamma$ and, as in \eqref{eq:21},
we define the relative entropies
\begin{equation}
  \label{eq:1}
  %\quad
  \begin{aligned}
    \mathscr F_{i}(\ggamma|\mu_i):={}&\int_{X_i} F_i\Big(\frac{\d
      \gamma_i}{\d\mu_i}\Big)\,\d\mu_i+ \rec{(F_i)} \gamma_i^\perp(X_i),
    \ \gamma_i=\pi^i_\sharp
    \ggamma=\sigma_i\mu_i+\gamma_i^\perp,\quad
    \sigma_i:=\frac{\d\gamma_i}{\d\mu_i}.
    %\\
    %\FF(\ggamma|\mu_1,\mu_2):={}&\sum_i \FF_i(\ggamma|\mu_i),
  \end{aligned}
\end{equation}
With this, we introduce
the \emph{Entropy-Transport functional} as
\begin{align}
  \label{eq:4}
  \ETint(\ggamma|\mu_1,\mu_2)&:=
  \sum_i \FF_i(\ggamma|\mu_i)+
  %\FF(\ggamma|\mu_1,\mu_2)+
  \int_{\sxX}\sfc(x_1,x_2)\,\d\ggamma(x_1,x_2),% ,\quad
  % \ggamma\in D(\FF).
\end{align}
%is then well defined, 
possibly taking the value $+\infty$.
Our basic setting is \emph{feasible} if the functional $\ETint$ is
not identically $+\infty$, i.e.~there exists at least
one plan $\ggamma$ with $\ETint(\ggamma|\mu_1,\mu_2)<\infty$.

\subsection{The primal formulation of the Optimal Entropy-Transport problem}

In the basic setting described in the previous Section \ref{subsec:setting}, 
we want to investigate the following problem.

\begin{problem}[Entropy-Transport minimization]
  \label{pr:1} 
  Given $\mu_i\in \cM(X_i)$ find $\ggamma\in %D(\FF)\subset
  \cM(\xX)=\cM(X_1\times X_2)$ minimizing $\ETint(
  \ggamma|{\mu_1,\mu_2})$, i.e.
  \begin{equation}
    \label{eq:20}
    \ETint(\ggamma|\mu_1,\mu_2)=
    \ET(\mu_1,\mu_2):=\inf_{\sssigma\in \cM(\sxX)}
    \ETint(\ssigma|\mu_1,\mu_2).
  \end{equation}
  We denote by $\OptET(\mu_1,\mu_2)\subset \cM(\xX)$ the collection of
  all the minimizers of \eqref{eq:20}.
\end{problem}

\begin{remark}[Feasibility conditions]
  \label{rem:feasible}
  \upshape 
  Problem \ref{pr:1} is feasible if there exists at least
  one plan $\ggamma$ with $\ETint(\ggamma|\mu_1,\mu_2)<\infty$.
  Notice that this is always the case when 
  \begin{equation}
    F_i(0)<\infty,\quad i=1,2,
    \label{eq:261}
  \end{equation}
  since
  among the competitors one can choose the null plan $\eeta$, 
  so that
    \begin{equation}
    \label{eq:137pre}
    \ET(\mu_1,\mu_2)\le \ETint(\eeta|\mu_1,\mu_2)= F_1(0)\mu_1(X)+F_2(0)\mu_2(X).
  \end{equation}
  More generally, thanks to \eqref{eq:83}
  a sufficient condition for feasibility in the
  nondegenerate case $\mass 1\mass 2\neq 0$ is
  that there exit functions $B_1$ and $B_2$ with 
  \begin{gather}
    \label{eq:86}
    \sfc(x_1,x_2)\le B_1(x_1)+B_2(x_2),\quad B_i\in \rmL^1(X_i,\mu_i).
  \end{gather}
  In fact, the plans
  \begin{equation}
    \label{eq:229}
    \ggamma=\frac{\theta }{\mass 1\mass 2}\mu_1\otimes \mu_2\quad
    \text{with }\theta\in J\quad\text{given by }\eqref{eq:83}
  \end{equation}
  are Radon \cite[Thm.~17, p.~63]{Schwartz73}, have finite cost and provide the estimate
  \begin{equation}
    \label{eq:87}
    \ET(\mu_1,\mu_2)\le m_1 F_1(\theta/m_1)+m_2 F_2(\theta/m_2)+
    \theta \sum_i \mass i^{-1}\|B_i\|_{\rmL^1(X_i,\mu_i)},\quad
    \forevery \theta\in J.
  \end{equation}
  Notice that \eqref{eq:83} is also necessary for feasibility:
  in fact, setting $m_{i,n}:=m_i+\gamma_i^\perp(X_i)/n$,
  the convexity of
  $F_i$, the definition \eqref{eq:227} of $\rec {(F_i)}$, and Jensen's inequality provide
  \begin{align}
    \label{eq:312}
    \notag
    \FF_i(\ggamma|\mu_i)&=
    \int_{X_i}F_i(\sigma_i)\,\d\mu_i+
    \lim_{n\up\infty} \int_{X_i}F_i(n) \,\d (n^{-1}\gamma_i^\perp)
    \ge \lim_{n\to\infty} m_{i,n} F_i\big(\gamma_i(X_i)/m_{i,n}\big)
    \\&\ge  m_iF_i(m/m_i),\quad\text{ where}\quad
    m:=\gamma_i(X_i)=\ggamma(\xX). 
  \end{align}
  Thus, whenever $\EE(\ggamma|\mu_1,\mu_2)<\infty$, we have 
  \begin{equation}
    \label{eq:262}
    \EE(\ggamma|\mu_1,\mu_2)\ge m \inf\sfc+
    \mass1F_1(m/\mass1)+\mass2F_2(m/\mass2),\quad
  \end{equation}
  and therefore
  \begin{equation}
    \label{eq:263}
    m =\EEE\ggamma(\xX)\in \big(\mass 1\, \dom {F_1}\big)\cap
    \big(\mass 2\, \dom {F_2}\big)=J.
  \end{equation}
  We will often strengthen \eqref{eq:83} by assuming that at least one
  of the domains of the entropies $F_i$ has nonempty interior,
  containing a point of the other domain:  
  \begin{equation}
    \label{eq:302a}
    \Big(\interior\big(m_1{\dom{F_1}}\big)\cap m_2\dom{F_2}\Big)\cup
    \Big(m_1\dom{F_1}\cap \interior\big(m_2{\dom{F_2}}\big)\Big)\neq \emptyset.
  \end{equation}
  This condition is surely satisfied if $J$ has nonempty interior, 
  i.e.~%
  $\max (m_1 \r_1^-,m_2\r_2^-)< \min(m_1 \r_1^+,m_2 \r_2^+),$
  where $\r_i^-=\inf\dom{F_i}$, $\r_i^+:=\sup\dom{F_i}$.
\end{remark}
We also observe that whenever $\mu_i(X_i)=0$
then the null plan $\ggamma=\eeta_0$ provides the trivial solution to Problem
\ref{pr:1}.
Another trivial case occurs when $F_i(0)<\infty$ and $F_i$ are nondecreasing in
$\dom{F_i}$ (in particular when $F_i(0)=0$). 
Then it is clear that the null plan is a minimizer and $\ET(\mu_1,\mu_2)=
F_1(0)\mass 1+F_2(0)\mass 2$.
% Therefore, we will often restrict our
% analysis to couples of nondegenerate measures in $\cM_o(X_i)$.
%
\subsection{Examples}
  \label{ex:1}
  Let us consider a few particular cases:
  \begin{enumerate}[\rm E.1]
  \item \textbf{Costless transport}: \nc Consider the case $\sfc\equiv0$.  
    Since $F_i$ are convex, in this case
    the minimum is attained when the marginals $\gamma_i$ have
    constant densities. Setting
    % $m_i=\mu_i(X_i)$, 
    $\sigma_i\equiv \theta/m_i$ in order to have 
    $m_1\sigma_1=m_2\sigma_2$, we thus have
    \begin{equation}
      \label{eq:54}
      \ET(\mu_1,\mu_2)=\MPc {m_1}{m_2}0:=
      \min\Big\{m_1F_1(\theta/m_1)+m_2F_2(\theta/m_2):\theta\ge 0
      \Big\}.
    \end{equation}
    % Notice that $H$ is a positively $1$-homogeneous convex function,
    % since it is obtained as a minimum of an expression which is
    % \emph{jointly} convex in $m_1,m_2,\theta$.
  \item \textbf{Entropy-potential problems}: 
    If $\mu_2\equiv \eta_0$ then
    setting $V(x_1):=\inf_{x_2\in X_2}\sfc(x_1,x_2)$
    we easily get
    \begin{equation}
      \label{eq:334}
      \ET(\mu,0)=\inf_{\gamma\in
        \cM(X_1)}\FF_1(\gamma|\mu)+\int_{X_1}V\,\d\gamma+
      \rec{(F_2)}\gamma(X_1).
    \end{equation}
  \item \textbf{Pure transport problems:} We choose $F_i(r)=\rmI_1(r)=
    \begin{cases}
      0&\text{if }r=1\\
      +\infty&\text{otherwise}.
    \end{cases}$

    In this case any feasible plan $\ggamma$ should have $\mu_1$ and
    $\mu_2$ as marginals and the functional just reduces to 
    the pure transport part
    \begin{equation}
      \label{eq:59}
      \mathsf{T}(\mu_1,\mu_2)=\min\Big\{\int_{X_1\times
        X_2}\sfc\,\d\ggamma:\quad
      \pi^i_\sharp\ggamma=\mu_i\Big\}.
    \end{equation}
    As a necessary condition for feasibility we get
    $\mu_1(X_1)=\mu_2(X_2)$. 

    A situation equivalent to the optimal transport case occurs when
    \eqref{eq:302a} does not hold.
    % \begin{equation}
    %   \label{eq:296}
    %   \interior\big(m_1{\dom{F_1}}\big)\cap m_2\dom{F_2}=
    %   m_1\dom{F_1}\cap \interior\big(m_2{\dom{F_2}}\big)= \emptyset.
    % \end{equation}
    In this case, the set $J$ defined by \eqref{eq:83} contains only
    one point $\theta$ which separates $m_1\dom {F_1}$ and
    $m_2\dom{F_2}$:
    \begin{equation}
      \theta=m_1\r _1^+=m_2\r _2^-\quad
      \text{or }\quad
      \theta=m_1 \r _1^-=m_2\r _2^+.\label{eq:294}
    \end{equation}
    It is not difficult to check that in this case
    \begin{equation}
      \label{eq:289}
      \ET(\mu_1,\mu_2)=m_1F_1(\theta/m_1)+
      m_2F_2(\theta/m_2)+\mathsf{T}(\mu_1,\mu_2).
    \end{equation}
        
    \item  \textbf{Optimal transport with density constraints:}
      We realize density constraints by introducing characteristic functions 
      of intervals $[a_i,b_i]$, viz. \EEE
      $F_i(r):=\rmI_{[a_i,b_i]}(r)$, $a_i\le 1\le b_i$. 
      E.g.~when $a_i=1$, $b_i=\infty$ we have
      \begin{equation}
      \label{eq:59bis}
      \ET(\mu_1,\mu_2)=\min\Big\{\int_{X_1\times
        X_2}\sfc\,\d\ggamma:\quad
      \pi^i_\sharp\ggamma\ge \mu_i\Big\}.
    \end{equation}
    For $[a_1,b_1]=[0,1]$ and $[a_2,b_2]=[1,+\infty]$ we get
      \begin{equation}
      \label{eq:59tris}
      \ET(\mu_1,\mu_2)=\min\Big\{\int_{X_1\times
        X_2}\sfc\,\d\ggamma:\quad
      \pi^1_\sharp\ggamma\le \mu_1,\ \pi^2_\sharp\ggamma\ge\mu_2\Big\},
    \end{equation}
    whose feasibility requires $\mu_2(X_2)\ge \mu_1(X_1)$.
    \item \textbf{Pure entropy 
        problems:} These problems arise if $X_1=X_2=X$ and transport is forbidden, i.e. 
        $\rec{(F_i)}=+\infty$, $\sfc(x_1,x_2)=
    \begin{cases}
      0&\text{if }x_1=x_2\\
      +\infty&\text{otherwise.}
    \end{cases} $
    % , \quad $F_i(r)=r\log r-(r-1)$.

    In this case the marginals of $\ggamma$ coincide: we denote them
    by $\gamma$. We can write the
    density of $\gamma$ w.r.t.\ any measure $\mu$ such that
    $\mu_i\ll\mu$ (say, e.g., $\mu=\mu_1+\mu_2$) as $\gamma=\vartheta
    \mu$ and then $\mu_i=\vartheta_i\mu$.  Since $\gamma\ll\mu_i$ we
    have $\vartheta(x)=0$ for $\mu$-a.e.~$x$ where
    $\vartheta_1(x)\vartheta_2(x)=0$. Thus
    $\sigma_i=\vartheta/\vartheta_i$ is well defined and we have
    \begin{equation}
      \label{eq:55}
      \ETint(\ggamma|\mu_1,\mu_2)=\int_{X}
      \Big(\vartheta_1 F_1(\vartheta/\vartheta_1)
      %\,\d\mu+\int_{X\cap\{\vartheta_2>0\}}
      +\vartheta_2 F_2(\vartheta/\vartheta_2)\Big)\,\d\mu,
    \end{equation}
    with the convention that $\vartheta_i
    F_i(\vartheta/\vartheta_i)=0$ if $\vartheta=\vartheta_i=0$.
    Since we expressed everything in terms of $\mu$, 
    by recalling the definition of the function $\MP_0$ given in
    \eqref{eq:54} we get 
    % \begin{equation}
    %   \label{eq:56}
    %   H(\theta_1,\theta_2):=\min\Big\{
    %   \theta_1 F_1(\theta/\theta_1)+\theta_2 F_2(\theta/\theta_2):
    %   \theta\ge0\Big\}
    % \end{equation}
    %with the obvious convention that $H(\theta_1,\theta_2)=\theta_1
    %F_1(0)+\theta_2 F_2(0)$ whenever $\theta_1$ or $\theta_2$ are $0$,
    %we obtain 
    \begin{equation}
      \label{eq:57}
      \ET(\mu_1,\mu_2)=\int_X \MP_0\Big(\frac
      {\d\mu_1}{\d\mu},\frac{\d\mu_2}{\d\mu}\Big)\,\d\mu,\quad
      \text{whenever} \quad \mu_i\ll\mu.
    \end{equation}
    In the Hellinger case $F_i(s)=
    U_1(s)=s\log s-s+1$ a simple calculation
    yields
    \begin{equation}
      \label{eq:58}
      \MPc{\theta_1}{\theta_2}0=\theta_1+\theta_2-2\sqrt{\theta_1\theta_2}=
      \Big(\sqrt {\theta_1}-\sqrt{\theta_2}\Big)^2.
    \end{equation}
    In the Jensen-Shannon case, where $F_i(s)=U_0(s)=s-1-\log s$,
    we obtain
    \begin{displaymath}
      H_0(\theta_1;\theta_2)=\theta_1\log
      \Big(\frac{2\theta_1}{\theta_1+\theta_2} \Big)+\theta_2
      \log \Big(\frac {2\theta _2}{\theta_1+\theta_2}\Big).
      %=
      %\theta_1\log \theta_1+\theta
      %_2\log \theta _2 -(\theta _1+\theta_2)\log\Big(\frac {\theta _1+\theta _2}{2}\Big),
      %-(\theta _1+\theta_2)\log\Big(\frac {\theta _1+\theta
      %_2}{2+\thetafc(x_1,x_2)}\Big).
    \end{displaymath}
    Two other interesting examples are provided
    by the quadratic case $F_i(s)=\frac 12(s-1)^2$ and by the
    nonsmooth ``piecewise affine'' case $F_i(s)=|s-1|$, for which we
    obtain
    \begin{displaymath}
      H_0(\theta_1,\theta_2)=
      \frac1{2(\theta _1+\theta _2)}(\theta
      _1-\theta _2)^2,\quad\text{and}\quad
      H_0(\theta_1,\theta_2)=|\theta_1-\theta_2|,\quad\text{respectively}.
    \end{displaymath}
    
  \item \textbf{Regular entropy-transport problems:} 
    These problems correspond to the choice of a couple of differentiable entropies $F_i$ with
    $\dom{F_i}\supset (0,\infty)$, as in the case of the %  Perhaps the most important example
    % is the
    % logarithmic entropy
    % $F_1(r)=F_2(r):=\PE_1(r)=r\log r
    % -r+1$ or, more generally, any
    power-like entropies $\PE_p$ %as
    defined in \eqref{eq:237}. When they vanish (and thus have a
    minimum) at $\r =1$,
    the Entropic Optimal Transportation can be considered as a smooth
    relaxation of the Optimal Transport case E.3. 

  \item \textbf{Squared Hellinger-Kantorovich distances:} For a metric
    space $(X,\sfd)$, set $X_1=X_2=X$ and let $\tau$ be induced by
    $\sfd$.  Further, set $F_1(\r )=F_2(\r ):=\PE_1(\r )=\r \log \r
    -\r +1$ and
      \begin{displaymath}
        \sfc(x_1,x_2):=-\log\Big(\cos^2\big(\sfd(x_1,x_2)\land \pi/2\big)\Big)
        \quad\text{or simply}\quad
        \sfc(x_1,x_2):=\sfd^2(x_1,x_2).
      \end{displaymath}
    These cases will be thoroughly studied in the second
    part of the present paper, see Section~\ref{sec:LET}.

  \item \textbf{Marginal Entropy-Transport problems:} In this case one
    of the two marginals of $\ggamma$ is fixed, say $\gamma_1$, by
    choosing $F_1(r):=\rmI_1(r)$.  Thus the functional minimizes the
    sum of the transport cost and the relative entropy of the second
    marginal $\FF_2(\gamma_2|\mu_2)$ with respect to a reference
    measure $\mu_2$, namely
    \begin{displaymath}
      \ET(\mu_1,\mu_2)=
      \min_{\gamma\in
        \cM(X_2)}\Big\{\FF_2(\gamma|\mu_2)+\sfT(\gamma,\mu_1)\Big\}. 
    \end{displaymath}
    This is the typical situation one has to solve at each iteration
    step of the Minimizing Movement scheme
    \cite{Ambrosio-Gigli-Savare08}, when $\sfT$ is a (power of a)
    transport distance induced by $\sfc$, as in the
    Jordan-Kinderlehrer-Otto approach
    \cite{Jordan-Kinderlehrer-Otto98}.

  \item \textbf{The Piccoli-Rossi ``generalized Wasserstein distance''
      \cite{Piccoli-Rossi14,Piccoli-Rossi14preprint}:} for a metric
    space $(X,\sfd)$, set $X_1=X_2=X$, let $\tau$ be induced by
    $\sfd$, and consider $F_1(\r )=F_2(\r ):=V(\r )=|\r-1|$ with
    $\sfc(x_1,x_2):=\sfd(x_1,x_2)$.
  \item \textbf{The discrete case.}
    Let $\mu_1=\sum_{i=1}^m\alpha_i\delta_{x_i}$, $\mu_2=\sum_{j=1}^N
    \beta_j\delta_{y_j}$ with $\alpha_i,\beta_j>0$,
    and let $\sfc_{i,j}:=\sfc(x_i,y_j)$.
    The Entropy-Transport problem for this discrete model
    consists in finding coefficients $\gamma_{i,j}\ge0$ 
    which minimize
    \begin{equation}
      \label{eq:79}
      \EE(\gamma_{i,j}|\alpha_i,\beta_j):=
      \sum_{i}\alpha_i F_1\Big(\frac{\sum_j
        \gamma_{i,j}}{\alpha_i}\Big)+
      \sum_{j}\beta_j F_2\Big(\frac{\sum_i
        \gamma_{i,j}}{\beta_j}\Big)+\sum_{i,j} \sfc_{i,j}\gamma_{i,j}.
    \end{equation}
  \end{enumerate}

\subsection{Existence of solutions to the primal problem}\label{subsec:existenceprimal}
The next result provides a first general existence result for 
Problem \ref{pr:1} in the basic coercive setting of Section \ref{subsec:setting}.
\begin{theorem}[Existence of minimizers]
  \label{thm:easy-but-important}
  Let us assume that Problem \ref{pr:1} is feasible (see Remark
  \ref{rem:feasible})
  and coercive, i.e.~at least one of the following conditions hold:
  \begin{enumerate}[(i)]
  \item the entropy functions $F_1$ and $F_2$ are superlinear,
    i.e.~$\rec{(F_1)}=\rec{(F_2)}=+\infty$;
  \item 
    $\sfc$ has compact sublevels in $\xX$ and
    $\rec{(F_1)}+\rec{(F_2)}+\inf \sfc>0$.
    % where 
  %   \begin{equation}
  %     \label{eq:233}
  %     S_1:=
  %     \begin{cases}
  %       \supp(\mu_1)&\text{if }\rec{(F_1)}>0,\\
  %       X_1&\text{otherwise,}
  %     \end{cases}
  %     \qquad
  %     S_2:=
  %     \begin{cases}
  %       \supp(\mu_2)&\text{if }\rec{(F_2)}>0,\\
  %       X_2&\text{otherwise.}
  %     \end{cases}
  %   \end{equation}
  % %\item $\sfc$ has compact sublevels in $\xX$.
  \end{enumerate}
  Then Problem \ref{pr:1} admits at least one optimal solution.  
  In this case $\OptET(\mu_1,\mu_2)$ is a compact convex set
  of $\cM(\xX)$.
\end{theorem}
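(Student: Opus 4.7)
The plan is to apply the direct method of calculus of variations. By feasibility, $\ET(\mu_1,\mu_2)<+\infty$, so we take a minimizing sequence $\ggamma_n\in\cM(\xX)$ with $\ETint(\ggamma_n|\mu_1,\mu_2)\le C$ for some constant $C$. The goal is to prove boundedness and equitightness of $\{\ggamma_n\}$ so as to extract a narrowly convergent subsequence via Prokhorov's theorem (Theorem~\ref{thm:Prokhorov}), and then to invoke narrow lower semicontinuity of $\ETint$ to identify the limit as a minimizer.

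First I would bound the total masses $\ggamma_n(\xX)=\gamma_{i,n}(X_i)$, where $\gamma_{i,n}:=\pi^i_\sharp\ggamma_n$. In case (i) both $\rec{(F_i)}=+\infty$, so Proposition~\ref{prop:DLT} applied with $\cK=\{\mu_i\}$ (trivially bounded and equally tight) yields both boundedness and equitightness of $\{\gamma_{i,n}\}$. Since $\ggamma_n(\xX\setminus(K_1{\times}K_2))\le \gamma_{1,n}(X_1\setminus K_1)+\gamma_{2,n}(X_2\setminus K_2)$, equitightness of $\{\ggamma_n\}$ follows. In case (ii), either $\inf\sfc>0$, so that $\ggamma_n(\xX)\le C/\inf\sfc$ directly from the cost bound, or $\rec{(F_i)}>0$ for some $i$, in which case Proposition~\ref{prop:DLT} provides a bound on $\gamma_{i,n}(X_i)=\ggamma_n(\xX)$. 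Once the masses are bounded, Markov's inequality applied to the compact sublevels $K_M:=\{\sfc\le M\}$ gives
\[
\ggamma_n(\xX\setminus K_M)\le M^{-1}\int_{\sxX}\sfc\,\d\ggamma_n\le C/M,
\]
which tends to $0$ uniformly in $n$ as $M\uparrow\infty$, producing equitightness of $\{\ggamma_n\}$.

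By Prokhorov's theorem we extract a (non-relabeled) subsequence narrowly converging to some $\ggamma\in\cM(\xX)$. The functional $\ETint$ is narrowly lower semicontinuous on this sequence: the marginals $\gamma_{i,n}$ converge narrowly to $\gamma_i=\pi^i_\sharp\ggamma$ because $f\circ\pi^i\in\LSC_b(\xX)$ whenever $f\in\LSC_b(X_i)$, so Corollary~\ref{cor:limit1} yields $\liminf_n\FF_i(\gamma_{i,n}|\mu_i)\ge\FF_i(\gamma_i|\mu_i)$; moreover $\liminf_n\int\sfc\,\d\ggamma_n\ge\int\sfc\,\d\ggamma$ follows by monotone approximation of $\sfc$ with the bounded lower semicontinuous truncations $\sfc\wedge N$ and \eqref{eq:280}. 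Summing, $\ETint(\ggamma|\mu_1,\mu_2)\le\ET(\mu_1,\mu_2)$, so $\ggamma\in\OptET(\mu_1,\mu_2)$.

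Finally, convexity of $\OptET(\mu_1,\mu_2)$ is immediate from the joint convexity of $\ggamma\mapsto\ETint(\ggamma|\mu_1,\mu_2)$: each $\FF_i(\pi^i_\sharp\cdot|\mu_i)$ is convex since $\pi^i_\sharp$ is linear and $\FF_i$ is jointly convex by Corollary~\ref{cor:limit1}, while the cost term is linear in $\ggamma$. Compactness of $\OptET(\mu_1,\mu_2)$ follows because it lies in the sublevel $\{\ETint(\cdot|\mu_1,\mu_2)\le\ET(\mu_1,\mu_2)\}$, which by the same arguments is bounded and equally tight, hence relatively narrowly compact, and is narrowly closed by the lower semicontinuity just established. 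I expect the main subtlety to be case (ii) when neither $F_i$ is superlinear and $\inf\sfc=0$: there one must carefully leverage the partial coercivity given by $\rec{(F_i)}>0$ for some $i$ (via Proposition~\ref{prop:DLT}) to bound the marginal masses, and only then combine it with the compact-sublevel property of $\sfc$ to get tightness of the joint plans.
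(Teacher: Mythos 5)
Your proof is correct and follows essentially the same route as the paper's: direct method, Prokhorov compactness of sublevels via Proposition~\ref{prop:DLT} and Markov's inequality, and narrow lower semicontinuity to identify the limit and to close $\OptET(\mu_1,\mu_2)$. The only differences are cosmetic: the paper derives the mass bound in case (ii) from a single estimate \eqref{eq:262} instead of your explicit sub-case split between $\inf\sfc>0$ and $\rec{(F_i)}>0$, and it cites \cite[Lemma 5.2.2]{Ambrosio-Gigli-Savare08} where you write out the marginal tightness inequality by hand.
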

\begin{proof}
  We can apply the Direct Method of Calculus of Variations:
  since the map $\ggamma\mapsto
  \ETint(\ggamma|\mu_1,\mu_2)$ is lower semicontinuous in $\cM(X_1\times
  X_2)$ by Theorem \ref{thm:duality}, it is sufficient to show that
  its sublevels are relatively compact, thus 
  bounded and equally tight by Prokhorov Theorem \ref{thm:Prokhorov}.
  In both  cases
  boundedness follows by the coercivity assumptions and 
  the estimate \eqref{eq:262}: 
  
  in fact, by the definition \eqref{eq:227} of $\rec{(F_i)}$ we
  can find $\bar s\ge0 $ such that 
  $\frac {\mass i}m F_i(\frac m{\mass i})\ge \frac 12 \rec{(F_i)}$
  whenever
  $m\ge \bar s \, \mass i$; if $a:=\inf c+\sum_i
  \rec{(F_i)}>0$ the estimate \eqref{eq:262} yields
  \begin{displaymath}
    \ggamma(\xX)\le \frac 2a \EE(\ggamma|\mu_1,\mu_2)\quad 
    \text{for every }\ggamma\in  \cM(\xX)\text{ with }
    \ggamma(\xX)\ge \bar s \max(\mu_1(X_1),\mu_2(X_2)).  
  \end{displaymath}
  %  observing
  % that for a function $F\in \Gamma(\R_+)$ 
  % the coercivity property $\lim_{r\to\infty}F(r)=+\infty$ is
  % equivalent to $\rec F>0$.
  \nc%
  In case \emph{(ii)} equal tightness is a consequence of the Markov inequality
  and the nonnegativity of $F_i$:
  in fact, considering the compact sublevels $K_\lambda:=\{(x_1,x_2)\in X_1\times
  X_2:\sfc(x_1,x_2)\le \lambda\}$, we have
  \begin{displaymath}
    \ggamma(\xX\setminus K_\lambda)\le
    \lambda^{-1}\int\sfc\,\d\ggamma\le 
    \lambda^{-1}\EE(\gamma|\mu_1,\mu_2)\quad
    \forevery \lambda>0.
  \end{displaymath}
  In the case \emph{(i)}, since $\sfc\ge0$
  Proposition \ref{prop:DLT} shows that both the marginals 
  of plans in a sublevel of the energy 
  are equally tight: we thus conclude
  by \cite[Lemma 5.2.2]{Ambrosio-Gigli-Savare08}.
\end{proof}

\begin{remark}
  \upshape The assumptions \emph{(i)} and \emph{(ii)} in the previous
  Theorem are almost optimal, and it is possible to find
  counterexamples when they are not satisfied. In the case when
  $0<\rec{(F_1)}+\rec{(F_2)}<\infty$ but $\sfc$ does not have compact
  sublevels, one can just take $F_i(\r ):=U_0(\r )=\r -\log \r -1$,
  $X_i:=\R$, $\sfc(x_1,x_2):=3\rme^{-x_1^2-x_2^2}$, $\mu_i=\delta_0$.

  Any competitor is of the form
  $\ggamma:=\alpha\delta_0\otimes\delta_0+ \nu_1\otimes
  \delta_0+\delta_0\otimes \nu_2$ with $\nu_i\in \cM(\R)$ and
  $\nu_i(\{0\})=0$. Setting $n_i:=\nu_i(\R)$ we find
  \begin{displaymath}
    \EE(\gamma|\mu_1,\mu_2)=F(\alpha+n_1)+F(\alpha+n_2)+3\Big(\alpha+
    \int \rme^{-x^2}\,\d(\nu_1+\nu_2)\Big)+n_1+n_2.
  \end{displaymath}
  Since $\min_{\r} F(\r )+\r =\log 2$ is attained at $\r =1/2$, we immediately see
  that
  \begin{displaymath}
    \EE(\gamma|\mu_1,\mu_2)\ge 2\log 2+\alpha+3 \int \rme^{-x^2}\,\d(\nu_1+\nu_2)\ge 2\log 2.
  \end{displaymath}
  Moreover, $2\log 2$ is the infimum, which is reached by choosing $\alpha=0$ and
  $\nu_1=\nu_2=\frac12\delta_x$, and letting $x\to\infty$.  On the other
  hand, since $n_1+n_2+\alpha>0$, the infimum can never be attained.
  
  In the case when $\sfc$ has compact sublevels but
  $\rec{(F_1)}=\rec{(F_2)}=\min\sfc=0$, it is sufficient to take 
  $F_i(\r ):=\r ^{-1}$, $X_i=[-1,1]$, $\sfc(x_1,x_2)=x_1^2+x_2^2$, and
  $\mu_i=\delta_0$.
  Taking
  $\gamma_n:=n\delta_0\otimes\delta_0$ one easily checks that 
  $\inf\EE(\ggamma|\mu_1,\mu_2)=0$ but 
  $\EE(\ggamma|\mu_1,\mu_2)>0$ for every
  $\ggamma\in \cM(\R^2)$.  
\end{remark}

Let us briefly discuss the question of uniqueness, the first 
result only addresses the marginals $\gamma_i= \pi^i_\sharp \ggamma$.\EEE
\begin{lemma}[Uniqueness of the marginals in the superlinear strictly
  convex case]
  \label{le:uniqueness}
  Let us suppose that $F_i$ are \emph{strictly convex} functions. 
  Then 
  the $\mu_i$-absolutely continuous part $\sigma_i\mu_i$ 
  of the marginals $\gamma_i=\pi^i_\sharp \ggamma$ of any optimal
  plan
  are uniquely determined. In particular,
  if $F_i$ are also superlinear, then the marginals $\gamma_i$ are 
  uniquely determined, i.e.\ if $\ggamma',\ggamma''\in 
  \OptET(\mu_1,\mu_2)$ then $\pi^i_\sharp\ggamma'=\pi^i_\sharp\ggamma''$, $i=1,2$.
\end{lemma}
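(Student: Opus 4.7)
The plan is a standard convex-combination argument exploiting the strict convexity of $F_i$. Let $\ggamma',\ggamma''\in\OptET(\mu_1,\mu_2)$ be two optimizers, set $\ggamma_t:=(1-t)\ggamma'+t\ggamma''$ for $t\in(0,1)$, and denote its marginals by $\gamma_{i,t}=\pi^i_\sharp\ggamma_t=(1-t)\gamma_i'+t\gamma_i''$. Writing the Lebesgue decompositions $\gamma_i'=\sigma_i'\mu_i+(\gamma_i')^\perp$ and $\gamma_i''=\sigma_i''\mu_i+(\gamma_i'')^\perp$, linearity of the decomposition (the sum of two $\mu_i$-singular measures is still $\mu_i$-singular) gives
\[
\gamma_{i,t}=\bigl((1-t)\sigma_i'+t\sigma_i''\bigr)\mu_i+(1-t)(\gamma_i')^\perp+t(\gamma_i'')^\perp,
\]
so that, by \eqref{eq:1},
\[
\FF_i(\ggamma_t|\mu_i)=\int_{X_i}F_i\bigl((1-t)\sigma_i'+t\sigma_i''\bigr)\,\d\mu_i+\rec{(F_i)}\bigl[(1-t)(\gamma_i')^\perp(X_i)+t(\gamma_i'')^\perp(X_i)\bigr].
\]

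First I would observe that the transport term $\ggamma\mapsto\int\sfc\,\d\ggamma$ is linear, while convexity of $F_i$ together with the linearity in the singular part shown above yields $\FF_i(\gamma_{i,t}|\mu_i)\le(1-t)\FF_i(\gamma_i'|\mu_i)+t\FF_i(\gamma_i''|\mu_i)$. Hence $\EE(\ggamma_t|\mu_1,\mu_2)\le(1-t)\EE(\ggamma'|\mu_1,\mu_2)+t\EE(\ggamma''|\mu_1,\mu_2)=\ET(\mu_1,\mu_2)$. Since the reverse inequality holds by definition of $\ET$, we must have equality for every $t\in(0,1)$, and therefore equality must hold $\mu_i$-a.e.\ in the convexity inequality $F_i((1-t)\sigma_i'+t\sigma_i'')\le(1-t)F_i(\sigma_i')+tF_i(\sigma_i'')$.

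Next, strict convexity of $F_i$ forces $\sigma_i'(x)=\sigma_i''(x)$ for $\mu_i$-a.e.\ $x$ at which both densities take finite values in $\dom F_i$; since $\FF_i(\ggamma'|\mu_i),\FF_i(\ggamma''|\mu_i)<\infty$ this holds $\mu_i$-a.e., proving that the absolutely continuous parts $\sigma_i\mu_i$ of the marginals of any optimal plan are uniquely determined. Finally, if $F_i$ is also superlinear then $\rec{(F_i)}=+\infty$, so the finiteness of $\FF_i(\gamma_i'|\mu_i)$ and $\FF_i(\gamma_i''|\mu_i)$ (recall \eqref{eq:19}) forces $(\gamma_i')^\perp=(\gamma_i'')^\perp=0$; hence $\gamma_i'=\sigma_i'\mu_i=\sigma_i''\mu_i=\gamma_i''$, giving uniqueness of the full marginals.

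The only subtlety worth checking carefully is the passage ``equality forces $\sigma_i'=\sigma_i''$ $\mu_i$-a.e.''; this is immediate if $F_i$ is strictly convex on all of $[0,\infty)$, but in general requires noting that the set where either $\sigma_i'$ or $\sigma_i''$ lies outside $\dom F_i$ is $\mu_i$-null (otherwise $\FF_i$ would be $+\infty$), so the strict-convexity argument applies $\mu_i$-a.e.\ as needed.
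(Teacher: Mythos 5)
Your proof is correct and takes essentially the same approach as the paper: form the convex combination of two optimal plans (the paper uses $t=1/2$), use convexity of $\EE$ together with optimality to force equality in the convexity inequality for $\int F_i(\cdot)\,\d\mu_i$, and then invoke strict convexity of $F_i$ to conclude $\sigma_i'=\sigma_i''$ $\mu_i$-a.e., with superlinearity killing the singular parts. The extra care you take about $\sigma_i',\sigma_i''$ lying in $\dom F_i$ $\mu_i$-a.e.\ is a correct refinement left implicit in the paper.
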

\begin{proof}
  It is sufficient to take $\ggamma=\frac 12 \ggamma'+\frac 12 \ggamma''$
  which is still optimal in $\OptET(\mu_1,\mu_2)$ since $\EE$ is 
  a convex functional w.r.t.~$\ggamma$. We have
  $\pi^i_\sharp \ggamma=\gamma_i=\frac 12 \gamma_i'+\frac 12
  \gamma_i''=
  \frac 12(\sigma_i'+\sigma_i'')\mu+\frac 12(\gamma_i')^\perp+
  \frac 12(\gamma_i'')^\perp$ and
  we observe that the minimality of $\ggamma$ and the convexity of each 
  addendum $F_i$ in the functional yield
  \begin{align*}
    \FF_i(\gamma_i|\mu_i) =\frac 12 \FF_i(\gamma'_i|\mu_i)+
    \frac 12 \FF_i(\gamma_i''|\mu_i) \quad i=1,2.
  \end{align*}
  Since $\gamma_i^\perp(X_i)= \frac 12
  (\gamma_i')^\perp(X_i)+\frac 12 (\gamma_i'')^\perp(X_i)$
  we obtain 
  \begin{displaymath}
    \quad
    \int_X \Big(F_i(\sigma_i)-\frac 12F_i(\sigma_i')-\frac 12 F_i(\sigma_i'')\Big)\,\d\mu_i=0 \quad i=1,2.
  \end{displaymath}
  Since $F_i$ is strictly convex, the above identity implies 
  $\sigma_i=\sigma_i'=\sigma_i''$ $\mu_i$-a.e.~in $X$.
\end{proof}
The next corollary reduces the uniqueness question 
of optimal couplings in $\OptET(\mu_1,\mu_2)$ 
to corresponding results for the Kantorovich 
problem associated to the 
cost $\sfc$.
\begin{corollary}
  Let us suppose that $F_i$ are \emph{superlinear strictly convex} functions and
  that for every couple of probability measures $\nu_i\in \cP(X_i)$ 
  with $\nu_i\ll\mu_i$ the optimal transport problem associated to the
  cost $\sfc$ (see Example {\rm E.3} of Section \ref{ex:1}) admits a unique solution.
  Then $\OptET(\mu_1,\mu_2)$ contains at most one plan. 
\end{corollary}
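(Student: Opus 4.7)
The plan is to reduce the uniqueness question to the Kantorovich uniqueness hypothesis by means of the previous lemma. Let $\ggamma',\ggamma''\in \OptET(\mu_1,\mu_2)$ and set $\gamma_i':=\pi^i_\sharp\ggamma'$, $\gamma_i'':=\pi^i_\sharp\ggamma''$. Since $F_1,F_2$ are superlinear and strictly convex, Lemma~\ref{le:uniqueness} yields $\gamma_i':=\gamma_i''=:\gamma_i$ for $i=1,2$, and the superlinearity together with \eqref{eq:19} forces $\gamma_i\ll\mu_i$.

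Next, I would observe that the entropy contributions $\FF_i(\gamma_i|\mu_i)$ depend only on the (now common) marginals. Consequently, both $\ggamma'$ and $\ggamma''$ minimize the linear transport functional $\ggamma\mapsto \int_{\sxX}\sfc\,\d\ggamma$ among all plans with marginals $\gamma_1,\gamma_2$: indeed, any such competitor $\tilde\ggamma$ satisfies $\EE(\tilde\ggamma|\mu_1,\mu_2)=\FF_1(\gamma_1|\mu_1)+\FF_2(\gamma_2|\mu_2)+\int\sfc\,\d\tilde\ggamma$, and the optimality of $\ggamma',\ggamma''$ in $\OptET(\mu_1,\mu_2)$ gives
\[
\int_{\sxX}\sfc\,\d\ggamma'=\int_{\sxX}\sfc\,\d\ggamma''
=\min\Big\{\int_{\sxX}\sfc\,\d\tilde\ggamma:\pi^i_\sharp\tilde\ggamma=\gamma_i\Big\}.
\]

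To invoke the hypothesis, which is formulated for probability measures, I would normalize. If $m:=\gamma_1(X_1)=\gamma_2(X_2)=0$ then $\ggamma'=\ggamma''=0$ and we are done; otherwise set $\nu_i:=\gamma_i/m\in\cP(X_i)$, so that $\nu_i\ll\mu_i$, and note that $\ggamma'/m,\,\ggamma''/m$ are both optimal Kantorovich plans between $\nu_1$ and $\nu_2$ for the cost $\sfc$. By the uniqueness assumption these two plans coincide, so $\ggamma'=\ggamma''$.

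The only non-trivial check in this plan is to verify that the two marginals have the \emph{same} total mass $m$ (so that normalization makes sense and the Kantorovich problem with marginals $\nu_1,\nu_2$ is feasible); this is automatic because any $\ggamma\in\cM(\sxX)$ has $\pi^1_\sharp\ggamma(X_1)=\ggamma(\sxX)=\pi^2_\sharp\ggamma(X_2)$. I do not foresee any further obstacle, since strict convexity plus superlinearity was exactly the hypothesis used in Lemma~\ref{le:uniqueness} to pin down the marginals, and once the marginals are fixed the problem linearises into a pure transport problem to which the assumption applies directly.
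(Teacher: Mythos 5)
Your proof is correct and follows essentially the same strategy as the paper's: apply Lemma~\ref{le:uniqueness} to pin down the common marginals $\gamma_i$, observe $\gamma_i\ll\mu_i$ from superlinearity, reduce to the pure transport problem with these marginals, and invoke the Kantorovich uniqueness hypothesis after normalizing to probabilities. You spell out a few more details (the mass-balance check and the degenerate case $m=0$) that the paper leaves implicit, but the argument is the same.
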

\begin{proof}
  We can assume $\mass{i}=\mu_i(X_i)> 0$ for $i=1,2$.\EEE
  It is clear that any $\ggamma\in \OptET(\mu_1,\mu_2)$ 
  is a solution of the optimal transport problem for the cost $\sfc$
  and given (possibly normalized) marginals $\gamma_i$.
  Since $\gamma_i\ll\mu_i$ and $\gamma_1$ and $\gamma_2$ are unique by Lemma \ref{le:uniqueness},
  we conclude. \EEE
\end{proof}
\begin{example}[Uniqueness in Euclidean spaces]
  \upshape 
  If $F_i$
  are \emph{superlinear strictly convex} functions,
  $\sfc(x,y)=h(x-y)$ for a strictly convex
  function $h:\R^d\to[0,\infty)$ and $\mu_1\ll\Leb d$,
  then Problem \ref{pr:1} admits at most one solution.
  It is sufficient to apply the previous corollary in conjunction with
  \cite[Theorem 6.2.4]{Ambrosio-Gigli-Savare08}
\end{example}
\begin{example}[Nonuniqueness of optimal couplings]
  \upshape
  Consider the logarithmic density functionals $F_i(\r )=
  U_1(\r )=\r \log \r -\r +1$,
  the Euclidean space $X_1=X_2=\R^2$ and any cost 
  $\sfc$ of the form $\sfc(x_1,x_2)=h(|x_1{-}x_2|)$.
  For the measures 
  \begin{displaymath}
    \mu_1=\delta_{(-1,0)}+\delta_{(1,0)}, ~\text{and}~
    \mu_2\text{ with support in $\{0\}\times \R$
      and containing at least two points},
  \end{displaymath}
  there is an infinite number of optimal plans. 
  In fact, we shall see that the first marginal $\gamma_1$ 
  of any optimal plan $\ggamma$ will have
  full support in $(-1,0),(1,0)$, i.e.~it will of the form
  $a \delta_{(-1,0)}+b\delta_{(1,0)}$ with strictly positive $a,b$, and
  the support of the second marginal 
  $\gamma_2$ will be concentrated in $\{0\}\times \R$ and will contain at least
  two points.
  In fact, any plan $\ssigma$ with marginals $\gamma_1,\gamma_2$ will then be
  optimal,
  since it can be written as the disintegration
  \begin{displaymath}
    \ssigma=\int_{\R} 
    \Big(\alpha(y)\delta_{(-1,0)}+\beta(y)\delta_{(1,0)}\Big)\,\d\gamma_2(y)
  \end{displaymath}
  with arbitrary nonnegative densities $\alpha,\beta$ with
  $\alpha+\beta=1$ and
  $\int\alpha\,\d\gamma_2(y)=a$,
  $\int\beta\,\d\gamma_2(y)=b$. 
  In fact, the cost contribution of $\ssigma$
  to the total energy is
  \begin{displaymath}
    \int_\R h(\sqrt{1+y^2})\,\d\gamma_2(y)
  \end{displaymath}
  and it is independent of the choice of $\alpha$ and $\beta$.\qed
\end{example}
We conclude this section by proving a simple lower semicontinuity
property for the energy-transport functional $\ET$. Note that in metrizable spaces any weakly convergent
sequence of Radon measures is tight.
\begin{lemma}
  \label{le:lsc}
  Let $\L$ be a directed set, $(F_i^\lambda)_{\lambda\in \L}$ and $(\sfc^\lambda)_{\lambda\in \L}$ be
  monotone nets of superlinear entropies and costs pointwise converging to $F_i$ and $\sfc$ respectively,
  and let $(\mu_i^\lambda)_{\lambda\in \L}$ be equally tight nets of
  measures narrowly converging to $\mu_i$ in $\cM(X_i)$. Denoting by
  $\ET^\lambda$ (resp.~$\ET$) the corresponding Entropy-Transport functionals 
  induced by $F_i^\lambda$ and $\sfc^\lambda$ (resp.~$F_i$ and $\sfc$) we have
  \begin{equation}
    \label{eq:256}
    \liminf_{\lambda\in \L}\ET^\lambda(\mu_1^\lambda,\mu_2^\lambda)\ge \ET(\mu_1,\mu_2).
  \end{equation}
  % If moreover $\sfc$ is bounded and continuous,
  % then $\ET$ is continuous.
\end{lemma}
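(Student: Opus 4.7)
The plan is to apply the direct method to a family of optimizers of each $\ET^\lambda$ and then pass to the limit via the joint narrow lower semicontinuity of the entropy-transport functional, exploiting the monotone structure to reduce at each stage to a fixed entropy and a fixed cost.

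First I would assume $L:=\liminf_{\lambda} \ET^\lambda(\mu_1^\lambda,\mu_2^\lambda)$ is finite (otherwise \eqref{eq:256} is trivial) and extract a cofinal subnet along which $\ET^\lambda(\mu_1^\lambda,\mu_2^\lambda)\to L$. Since each $F_i^\lambda$ is superlinear, Theorem \ref{thm:easy-but-important} supplies an optimal plan $\ggamma^\lambda\in\cM(\xX)$ with $\ETint^\lambda(\ggamma^\lambda|\mu_1^\lambda,\mu_2^\lambda)=\ET^\lambda(\mu_1^\lambda,\mu_2^\lambda)$; feasibility is ensured by inserting the product plan \eqref{eq:229} built from $\mu_i^\lambda$, whose cost is controlled uniformly along the tight net $\{\mu_i^\lambda\}$.

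The main obstacle is establishing narrow relative compactness of $\{\ggamma^\lambda\}$. Here I would use the monotone structure: in the typical increasing case $F_i^\lambda\uparrow F_i$, fix a reference index $\bar\lambda$; then $F_i^\lambda\ge F_i^{\bar\lambda}$ for $\lambda\succeq\bar\lambda$ and consequently
\[
\FF_i^{\bar\lambda}(\pi^i_\sharp\ggamma^\lambda\,|\,\mu_i^\lambda)\le \FF_i^\lambda(\pi^i_\sharp\ggamma^\lambda\,|\,\mu_i^\lambda)\le \ETint^\lambda(\ggamma^\lambda|\mu_1^\lambda,\mu_2^\lambda)\le L+1
\]
eventually along the net. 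Because $F_i^{\bar\lambda}$ is superlinear and $\{\mu_i^\lambda\}$ is bounded and equally tight, Proposition \ref{prop:DLT} yields equal tightness of the marginals $\{\pi^i_\sharp\ggamma^\lambda\}$, hence of $\{\ggamma^\lambda\}$ itself. Prokhorov's theorem \ref{thm:Prokhorov} then produces a narrowly convergent subnet $\ggamma^\lambda\to\ggamma$, whose marginals $\gamma_i:=\pi^i_\sharp\ggamma$ are the narrow limits of $\pi^i_\sharp\ggamma^\lambda$ by continuity of the projections.

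To conclude, fix any $\mu_0\in\L$: for $\lambda\succeq\mu_0$ we have $F_i^\lambda\ge F_i^{\mu_0}$ and $\sfc^\lambda\ge\sfc^{\mu_0}$, so Corollary \ref{cor:limit1} applied with the fixed entropy $F_i^{\mu_0}$, together with the narrow lower semicontinuity of $\ggamma\mapsto\int_{\xX}\sfc^{\mu_0}\,\d\ggamma$, gives
\[
\liminf_{\lambda}\ETint^\lambda(\ggamma^\lambda|\mu_1^\lambda,\mu_2^\lambda)\ge \sum_i \FF_i^{\mu_0}(\gamma_i|\mu_i)+\int_{\xX}\sfc^{\mu_0}\,\d\ggamma.
\]
Letting $\mu_0$ run along $\L$ and invoking \eqref{eq:280} for the monotone convergence $\sfc^{\mu_0}\uparrow \sfc$ against $\ggamma$ and \eqref{eq:241} for $F_i^{\mu_0}\uparrow F_i$ against $(\gamma_i,\mu_i)$, the right-hand side increases to $\ETint(\ggamma|\mu_1,\mu_2)\ge\ET(\mu_1,\mu_2)$, which is \eqref{eq:256}. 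The decreasing monotone case is handled symmetrically by exchanging the roles of $\bar\lambda$ and $\lambda$ in the tightness step and reversing the direction of the comparison used in the final passage to the limit.
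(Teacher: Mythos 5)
Your argument is correct and matches the paper's own proof, which extracts optimal plans $\ggamma^\lambda$, invokes Proposition \ref{prop:DLT} and Prokhorov's theorem to pass to a narrowly convergent subnet, and concludes by the lower semicontinuity of the entropy and cost parts via Corollary \ref{cor:limit1} and the monotone convergence \eqref{eq:280}; you usefully make explicit the two points the paper leaves terse, namely how Proposition \ref{prop:DLT} applies despite the $\lambda$-dependent entropy (freeze a reference index $\bar\lambda$ with $F_i^{\bar\lambda}\le F_i^\lambda$), and how the net version of the monotone lower semicontinuity is assembled by freezing a comparison index and then invoking \eqref{eq:241} and \eqref{eq:280}. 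Only the closing remark on the decreasing case is loosely worded: there the right comparison is $F_i^\lambda\ge F_i$ and $\sfc^\lambda\ge\sfc$ directly, so that both the tightness estimate and the liminf bound reduce to ordinary lower semicontinuity of the limiting functionals, making that case strictly easier rather than ``symmetric.''
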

\begin{proof}
  Let $\ggamma^\lambda \in \OptET(\mu^\lambda_1,\mu^\lambda_2)\subset \cM(\xX)$ be a corresponding net of
  optimal plans. 
  The statement follows if assuming
  that $\EE(\ggamma^\lambda|\mu_1^\lambda,\mu_2^\lambda)=\ET(\mu_1^\lambda,\mu_2^\lambda)\le
  C<\infty$
  we can prove that $\ET(\mu_1,\mu_2)\le C$.
  By applying Proposition \ref{prop:DLT}
  we obtain that the sequences of marginals $\pi^i_\sharp \ggamma^\lambda$
  are tight in $\cM(X_i)$, so that the net
  $\ggamma^\lambda$ is also tight. By extracting a suitable subnet
  (not relabeled)
  narrowly converging
  to $\ggamma$ in $\cM(\xX)$, 
  we can still apply Proposition \ref{prop:DLT} 
  and the lower semicontinuity of 
  the entropy part $\FF^\lambda$ of the functional $\EE$
  to obtain
  $\liminf_{\lambda\in \L}\FF^\lambda(\ggamma^\lambda|\mu_1^\lambda,\mu_2^\lambda)\ge
  \FF(\ggamma|\mu_1,\mu_2).$
  A completely analogous argument shows that 
  $\liminf_{\lambda\in \L}\int\sfc^\lambda\,\d\ggamma^\lambda\ge \int\sfc\,\d\ggamma$.
\end{proof}
As a simple application we prove the extremality of the class of Optimal
Transport problems (see Example E.3 in Section \ref{ex:1}) in the set of entropy-transport problems.

\begin{corollary}
  Let $F_1,F_2\in \Gamma(\R_+)$ be satisfying $F_i(r)>F_i(1)=0$ for
  every $r\in [0,\infty),\ r\neq 1$ and let $\ET^n$ be the Optimal
  Entropy Transport value \eqref{eq:20} associated to $(nF_1,nF_2)$. 
  Then for every couple of equally tight sequences
  $(\mu_{1,n},\mu_{2,n})\subset \cM(X_1)\times \cM(X_2)$, $n\in \N$, 
  narrowly converging to $(\mu_1,\mu_2)$ we have
  \begin{equation}
    \label{eq:253}
    \lim_{n\up\infty} \ET^n(\mu_{1,n},\mu_{2,n})=\mathsf{T}(\mu_1,\mu_2).
  \end{equation}
\end{corollary}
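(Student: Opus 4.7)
The plan is to prove the two opposite inequalities separately. The key reduction is to observe that the sequence $nF_i$ is monotone nondecreasing in $n$ and converges pointwise to the indicator entropy $\rmI_1$ with value $0$ at $r=1$ and $+\infty$ elsewhere, thanks to the strict positivity $F_i(r)>F_i(1)=0$ for $r\neq 1$. The Entropy-Transport functional with entropies $\rmI_1,\rmI_1$ coincides with the pure transport functional $\mathsf{T}$ (cf.\ Example~E.3 of Section~\ref{ex:1}). Moreover, convexity of $F_i$ combined with $F_i(r)>0$ for $r>1$ forces $\rec{(F_i)}>0$.

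\textbf{Liminf direction.} I would choose near-optimal plans $\ggamma^n$ for $\ET^n(\mu_{1,n},\mu_{2,n})$ and assume $\liminf_n\ET^n<\infty$, extracting a subsequence with $\EE^n(\ggamma^n|\mu_{1,n},\mu_{2,n})\le C$. Since $\rec{(F_i)}>0$, the lower bound \eqref{eq:262} keeps $\ggamma^n(\xX)$ uniformly bounded. Applying \eqref{eq:26} to $nF_i$ with a fixed $\lambda_0\in(0,\rec{(F_i)})$ yields $\gamma_i^n(B)\le C/(n\lambda_0)+M_i\mu_{i,n}(B)$ with $M_i=F_i^*(\lambda_0)/\lambda_0$, so the equal tightness of $\mu_{i,n}$ transfers to the marginals $\gamma_i^n:=\pi^i_\sharp\ggamma^n$ and hence to $\ggamma^n$. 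Extract a narrow limit $\ggamma^n\to\ggamma$; Corollary~\ref{cor:limit1} applied to $nF_i\uparrow\rmI_1$ gives $\FF_{\rmI_1}(\gamma_i|\mu_i)\le\liminf_n n\FF_i(\gamma_i^n|\mu_{i,n})\le C$, and since $\FF_{\rmI_1}(\cdot|\mu_i)$ only takes the values $0$ and $+\infty$, this forces $\gamma_i=\mu_i$, i.e.\ $\pi^i_\sharp\ggamma=\mu_i$. Lower semicontinuity of $\ggamma\mapsto\int\sfc\,\d\ggamma$ then closes the chain $\mathsf{T}(\mu_1,\mu_2)\le\int\sfc\,\d\ggamma\le\liminf_n\int\sfc\,\d\ggamma^n\le\liminf_n\ET^n(\mu_{1,n},\mu_{2,n})$.

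\textbf{Limsup direction.} Assume $\mathsf{T}(\mu_1,\mu_2)<\infty$ (otherwise the inequality is trivial), so the masses of $\mu_1$ and $\mu_2$ coincide and for each $\eps>0$ there exists $\ggamma^*$ with $\pi^i_\sharp\ggamma^*=\mu_i$ and $\int\sfc\,\d\ggamma^*\le\mathsf{T}(\mu_1,\mu_2)+\eps$. Using $\ggamma^*$ directly as a competitor fails since $n\FF_i(\mu_i|\mu_{i,n})$ is generally unbounded (e.g.\ when $\mu_i\perp\mu_{i,n}$). Instead, I would discretise: fix Borel partitions $\{A_k\}$ of $X_1$ and $\{B_l\}$ of $X_2$ with $\mu_i$-null cell boundaries (so $\mu_{i,n}(A_k)\to\mu_1(A_k)$ and $\mu_{i,n}(B_l)\to\mu_2(B_l)$) and of small $\sfc$-oscillation, and define $\tilde\ggamma^n:=\sum_{k,l}\lambda^n_{k,l}\bigl(\mu_{1,n}|_{A_k}/\mu_{1,n}(A_k)\bigr)\otimes\bigl(\mu_{2,n}|_{B_l}/\mu_{2,n}(B_l)\bigr)$ with weights $\lambda^n_{k,l}$ chosen so that $\pi^i_\sharp\tilde\ggamma^n=\mu_{i,n}$; feasibility of this finite transport problem holds for large $n$ by the cell-mass convergence. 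The entropy then vanishes identically, while $\int\sfc\,\d\tilde\ggamma^n$ is within $O(\eps)$ of $\int\sfc\,\d\ggamma^*\le\mathsf{T}(\mu_1,\mu_2)+\eps$ uniformly in $n$.

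\textbf{Main obstacle.} The limsup construction is the delicate part of the argument: ensuring feasibility of the discrete marginal-matching linear system at finite $n$ requires careful choice of the partition, and a uniform upper bound on the transport cost $\int\sfc\,\d\tilde\ggamma^n$ needs more than lower semicontinuity of $\sfc$ — a Lusin-type approximation of $\sfc$ from above by continuous functions on a large $\ggamma^*$-measurable subset is required to control averaged cell values, at the price of an extra $O(\eps)$ error that can then be absorbed by sending $\eps\to 0$ after $n\to\infty$.
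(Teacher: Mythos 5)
Your liminf argument is correct but essentially reproves Lemma~\ref{le:lsc}, whose proof uses exactly the same ingredients (the tightness transfer via \eqref{eq:26}, the $\Gamma$-liminf Corollary~\ref{cor:limit1} for the monotone sequence $nF_i\uparrow\rmI_1$, and lower semicontinuity of the transport part). The paper would simply apply Lemma~\ref{le:lsc} directly with $F_i^\lambda=nF_i$, $\sfc^\lambda=\sfc$, $\mu_i^\lambda=\mu_{i,n}$ to get $\liminf_n\ET^n(\mu_{1,n},\mu_{2,n})\ge\mathsf T(\mu_1,\mu_2)$. So redeveloping these steps is wasted effort; cite the lemma.

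The limsup direction has a genuine gap, and it is not the one you flag. Your competitor $\tilde\ggamma^n$ is constructed to satisfy $\pi^i_\sharp\tilde\ggamma^n=\mu_{i,n}$ exactly, which is only possible if $\mu_{1,n}(X_1)=\mu_{2,n}(X_2)$; the hypotheses allow these total masses to differ at every finite $n$ (equal tightness plus narrow convergence to a common-mass limit does not give equality along the sequence). And this is not a technical inconvenience that a more careful cell construction can repair. If, say, $\mu_{1,n}=\delta_0$ and $\mu_{2,n}=(1+\tfrac1n)\delta_0$ with $F_i(s)=|s-1|$ and any admissible $\sfc$ vanishing on the diagonal, a direct computation gives $\ET^n(\mu_{1,n},\mu_{2,n})\equiv 1$, while $\mathsf T(\delta_0,\delta_0)=0$. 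Independently of mass mismatch, the $\le$ inequality also fails for lsc but discontinuous costs: with $\sfc(x_1,x_2)=\mathbf 1_{\{x_1\neq x_2\}}$ on $[0,1]^2$, $\mu_{1,n}=\delta_0$, $\mu_{2,n}=\delta_{1/n}$, and superlinear $F_i$ (so off-diagonal singular mass is forbidden), one finds $\ET^n\to 1 > 0 =\mathsf T(\delta_0,\delta_0)$; your ``Lusin-type approximation of $\sfc$'' cannot save this because the relevant cell average is pinned at $1$ however fine the partition. The upshot is that only the $\ge$ inequality (i.e.\ Lemma~\ref{le:lsc}) holds at the stated level of generality. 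The $\le$ inequality is immediate once one restricts to the constant sequences $\mu_{i,n}\equiv\mu_i$ (then $nF_i\le\rmI_1$ gives $\ET^n(\mu_1,\mu_2)\le\mathsf T(\mu_1,\mu_2)$ for free), or else it requires extra hypotheses such as equal masses along the sequence together with $\sfc\in\rmC_b(\xX)$ to make the cell-averaging argument run.
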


\subsection{The reverse formulation of the primal problem}
\label{subsec:reverse}
Let us introduce the reverse entropy 
functions  $\FH_i$ (see \eqref{eq:12}) via \EEE
\begin{equation}
  \label{eq:12bis}
  \FH_i(\s ):=
  \begin{cases}
    \s F_i(1/\s )&\text{if }\s >0,\\
    \rec {(F_i)} &\text{if }\s =0,
  \end{cases}
\end{equation}
and let $\FHH_i$ be the corresponding integral functionals as in
\eqref{eq:211}. 

Keeping the notation of Lemma~\ref{le:Lebesgue}
\begin{equation}
  \label{eq:257}
  \gamma_i:=\pi^i_\sharp\ggamma\in \cM(X_i),\quad
  \mu_i=\varrho_i\gamma_i+\mu_i^\perp,\quad
  \varrho_i=\frac{\d\mu_i}{\d\gamma_i},
\end{equation}
we can thus define
\begin{equation}
  \label{eq:196}
  \begin{aligned}
    \FHH(\mu_1,\mu_2|\ggamma):={}&
    \sum_{i}\FHH_i(\mu_i|\gamma_i)+\int_{\sxX}\sfc\,\d\ggamma=
    \\={}&
    \int_\sxX
    \Big(\FH_1(\varrho_1(x_1))+\FH_2(\varrho_2(x_2))+\sfc(x_1,x_2)\Big)\,\d\ggamma+
    \sum_i F_i(0)\mu_i^\perp(X_i).
  \end{aligned}
\end{equation}
By Lemma \ref{le:reverse-identity} we easily get the reverse
formulation
of the optimal Entropy-Transport Problem \ref{pr:1}.
\begin{theorem}
  \label{thm:reverse-characterization}
  For every $\ggamma\in \cM(\xX)$ and $\mu_i\in \cM(X_i)$
  \begin{equation}
    \label{eq:258}
    \EE(\ggamma|\mu_1,\mu_2)=\FHH(\mu_1,\mu_2|\ggamma).
  \end{equation}
  In particular
  \begin{equation}
    \label{eq:259}
    \ET(\mu_1,\mu_2)=\inf_{\sggamma\in \cM(\sxX)}\FHH(\mu_1,\mu_2|\ggamma),
  \end{equation}
  and $\ggamma\in \OptET(\mu_1,\mu_2)$ if and only if 
  it minimizes $\FHH(\mu_1,\mu_2|\cdot)$ in $\cM(\xX)$.
\end{theorem}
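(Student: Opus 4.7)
The identity \eqref{eq:258} is essentially a direct corollary of Lemma \ref{le:reverse-identity}, applied marginal by marginal. My plan is the following.

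First, I would unpack the two functionals. By definition,
\[
\EE(\ggamma|\mu_1,\mu_2)=\FF_1(\gamma_1|\mu_1)+\FF_2(\gamma_2|\mu_2)+\int_{\sxX}\sfc\,\d\ggamma,
\]
with $\gamma_i=\pi^i_\sharp\ggamma$, and on the other side
\[
\FHH(\mu_1,\mu_2|\ggamma)=\FHH_1(\mu_1|\gamma_1)+\FHH_2(\mu_2|\gamma_2)+\int_{\sxX}\sfc\,\d\ggamma.
\]
The transport part $\int\sfc\,\d\ggamma$ is manifestly the same on both sides, so the claim reduces to the two separate identities $\FF_i(\gamma_i|\mu_i)=\FHH_i(\mu_i|\gamma_i)$, $i=1,2$.

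Next I would invoke Lemma \ref{le:reverse-identity} (specifically formula \eqref{eq:208}) applied to the pair $(\gamma_i,\mu_i)\in\cM(X_i)\times\cM(X_i)$ with entropy $F_i$ (whose reverse is $\FH_i$ as defined in \eqref{eq:12bis}). This yields precisely $\FF_i(\gamma_i|\mu_i)=\FHH_i(\mu_i|\gamma_i)$, where the righthand side, written via the Lebesgue decomposition $\mu_i=\varrho_i\gamma_i+\mu_i^\perp$ of $\mu_i$ with respect to $\gamma_i$, equals $\int_{X_i}\FH_i(\varrho_i)\,\d\gamma_i+F_i(0)\mu_i^\perp(X_i)$ (recall $\rec{\FH_i}=F_i(0)$ by \eqref{eq:183}). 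Summing the two resulting identities and adding the cost integral proves \eqref{eq:258}.

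Finally, \eqref{eq:259} and the characterization of $\OptET(\mu_1,\mu_2)$ follow immediately: since $\EE(\ggamma|\mu_1,\mu_2)=\FHH(\mu_1,\mu_2|\ggamma)$ holds for every $\ggamma\in\cM(\xX)$, taking the infimum over $\ggamma$ on both sides gives \eqref{eq:259}, and $\ggamma$ minimizes one functional iff it minimizes the other.

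There is essentially no obstacle: all the work has already been done in Lemma \ref{le:reverse-identity}, which in turn rested on the duality formula \eqref{eq:23} and the involution \eqref{eq:297} between $\frF$ and $\frR$. The only point that deserves care is bookkeeping with the Lebesgue decompositions: the decomposition of $\gamma_i$ with respect to $\mu_i$ used in $\FF_i$ and the reverse decomposition of $\mu_i$ with respect to $\gamma_i$ used in $\FHH_i$ are linked through Lemma \ref{le:Lebesgue}, so the two formulas really describe the same quantity, with the singular parts accounted for by the recession constants $\rec{(F_i)}$ and $F_i(0)=\rec{\FH_i}$.
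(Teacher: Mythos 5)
Your proof is correct and is precisely the argument the paper intends: the theorem is stated immediately after the remark ``By Lemma \ref{le:reverse-identity} we easily get the reverse formulation\dots'' and carries no separate proof, so the expected argument is exactly yours, namely decompose $\EE$ and $\FHH$ into the common transport term plus the two entropy terms and apply \eqref{eq:208} of Lemma \ref{le:reverse-identity} to each pair $(\gamma_i,\mu_i)$. Your bookkeeping of the two Lebesgue decompositions via Lemma \ref{le:Lebesgue} and the identification $\rec{(\FH_i)}=F_i(0)$ is also the right point to flag.
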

%While $\EE(\cdot|\mu_1,\mu_2)$ was a convex problem on $\cM(X)$, no comparable
% structure is known for $\FHH(\mu_1,\mu_2|\cdot)$. Nevertheless, 
The functional $\FHH(\mu_1,\mu_2|,\cdot)$ is still a convex functional
and it will be useful in Section~\ref{sec:MP}.

\section{The dual problem}
\label{sec:duality}
In this section we want to compute and study the dual problem and the 
corresponding optimality conditions
for the Entropy-Transport Problem \ref{pr:1}
in the basic \emph{coercive} setting of Section \ref{subsec:setting}.

\subsection{The ``inf-sup'' derivation of the dual problem in the
  basic coercive setting}
\label{subsec:basic-dual}
In order to write the first formulation of the dual problem we introduce
the reverse entropy functions $\FH_i$ defined as in \eqref{eq:12} 
or Section~\ref{subsec:reverse} 
and their conjugate $\FHstar_i:\R\to (-\infty,+\infty]$ which can be
expressed by
\begin{equation}
  \label{eq:300}
  \FHstar_i(\psi):=\sup_{s>0}\big( s\psi-sF_i(1/s)\big)=
  \sup_{r>0} \big(\psi-F_i(r)\big)/r.
\end{equation}
The equivalences \eqref{eq:217} yield, for all $(\phi,\psi)\in \R^2$ 
\begin{equation}
  \label{eq:295}
  (\phi,\psi)\in \frF_i\quad\Leftrightarrow\quad
  \phi\le -\FHstar_i(\psi).
\end{equation}
As a first step we use the dual formulation of the entropy functionals
given by Theorem \ref{thm:duality} (cf.~\eqref{eq:299}) and find
%setting $\Psi_i:=\{\psi\in \LSC_s(X_i): \FHstar_i(\psi) \text{
%  bounded} \} $ %$\in \LSC_s(X_i)\}$ 
\begin{align*}
  \ETint(\ggamma|\mu_1,\mu_2)&=
  \int \sfc\,\d\ggamma+\sup\Big\{\sum_i
  \Big(\int_{X_i}\psi_i\,\d\mu_i-\sum_i
  \int_{X_i}\FHstar_i(\psi_i)\,\d\gamma_i \Big)
  : \psi_i, \FHstar_i(\psi_i)\in \LSC_s(X_i) 
     %\in \Psi_i% \varphi_i\in \rmC_b(X_i),\  \varphi_i\ge -\rec{(F_i)}
 \Big\}.
\end{align*}
%where $\Gstar_i$ are the dual concave functions associated to $F_i$
%by \eqref{eq:22}. 
It is natural to introduce the saddle function
$\LL(\ggamma,\ppsi)$ depending on 
$\ggamma\in \cM(\xX)$ and
$\ppsi=(\psi_1,\psi_2)$ %\in 
%\LSC_s(X_1)\times\LSC_s(X_2)$ 
%\Psi_1\times \Psi_2$ %
%
(we omit here the dependence on the fixed measures $\mu_i\in\cM(X_i)$\EEE)
\begin{equation}
  \label{eq:42}
  \LL(\ggamma,\ppsi):=
  \int_{\sxX}
  \Big(\sfc(x_1,x_2)-\FHstar_1(\psi_1(x_1))-\FHstar_2(\psi_2(x_2))\Big)\,
  \d\ggamma
  +\sum_i
  \int_{X_i}\psi_i\,\d\mu_i.
\end{equation}
In order to guarantee that $\LL$ takes real values, 
we consider the convex set 
\begin{equation}\label{eq:45}
\rmM:=\big\{\ggamma\in \cM(\xX):
\int \sfc\,\d\ggamma<\infty\big\}.
\end{equation}
We thus have
\begin{displaymath}
  \ETint(\ggamma|\mu_1,\mu_2)=
  \sup
  %\Big\{
  _{\psi_i, \FHstar_i(\psi_i)\in \LSC_s(X_i)}
  %\sppsi\in
    % {\rmC_b(X_1)\times\rmC_b(X_2):\varphi_i\ge -\rec{(F_i)}
  %\Psi_1\times \Psi_2
    %\LSC_b(X_1)\times\LSC_b(X_2)
    %\rmK
  %} 
    \LL(\ggamma,\ppsi)
    %\psi_i, \FHstar_i(\psi_i)\in \LSC_s(X_i)
  % \quad
  % \rmK:=\big\{\vvarphi\in\rmC_b(X_1)\times\rmC_b(X_2):\varphi_i\ge -\rec{(F_i)}\big\}
\end{displaymath}
and the Entropy-Transport Problem can be written as
\begin{equation}
  \label{eq:43}
  \ET(\mu_1,\mu_2)=
  \infp_{\sggamma\in \rmM } 
  % {\cM(\xX)} 
  \, \sup_{\psi_i, \FHstar_i(\psi_i)\in \LSC_s(X_i)}
  % {\sppsi\in
  %   \Psi_1\times \Psi_2 
  %   %\LSC_b(X_1)\times\LSC_b(X_2)
  % }
  %{\rmC_b(X_1)\times\rmC_b(X_2)} 
  \LL(\ggamma,\ppsi).
\end{equation}
We can then obtain the dual problem by interchanging the order of
$\inf$ and $\sup$ as in Section~\ref{subsec:minmax}.
Let us
denote by $\varphi_1\oplus\varphi_2$ the function
$(x_1,x_2)\mapsto \varphi_1(x_1)+\varphi_2(x_2)$.
%  and
% the inequalities are intended to hold everywhere in the domain of the
% involved functions;
% and we set
% \begin{equation}
%   \label{eq:249}
%   \Cdual{\rmC_b}{\sfc}:=\Big\{\vvarphi\in\Cdual{}{}:\varphi_i,G_i(\varphi_i)\in
%    \rmC_b(X_i)\Big\}, 
%   %\times\rmC_b(X_2)\big),
%    %\quad
% %  \Cdual{\rmB_b}{}:=\Big\{\vvarphi\in\Cdual{}{}:\varphi_i,G_i(\varphi_i)\in
%   %\rmB_b(X_i)\Big\}, 
%    %\Cdual{}{}\cap \big(\rmB_b(X_1)\times\rmB_b(X_2)\big).
% \end{equation}
% with analogous definitions for $\Cdual{\rmB_b}{\sfc}$ and $\Cdual{\LSC_b}{\sfc}$.
Since for every  $\ppsi=(\psi_1,\psi_2)$ with
$\psi_i, \FHstar_i(\psi_i)\in \LSC_s(X_i)$ 
%$\ppsi\in \Psi_1\times \Psi_2$ %$\LSC_b(X_1)\times\LSC_b(X_2)$ %$\rmB_b(X_1)\times \rmB_b(X_2)$
% \begin{equation}
%   \label{eq:250}
%   \Cdual\rmL{}:=\Big\{
%   \vvarphi\in \Cdual{}{}: (\Gstar_i\circ \varphi_i)_-\in \rmL^1(X_i,\mu_i)\Big\}.
% \end{equation}
\begin{displaymath}
  \inf_{\sggamma\in \rmM}
  %{\cM(\xX)} 
  \int\Big(\sfc(x_1,x_2)-\FHstar_1(\psi_1(x_1))-\FHstar_2(\psi_2(x_2))\Big)\,
  \d\ggamma=
  \begin{cases}
    0&\text{if }\FHstar_1(\psi_1)\oplus\FHstar_2(\psi_2)\le \sfc,  \\
    -\infty&\text{otherwise},
  \end{cases}
\end{displaymath}
we obtain \EEE
\begin{equation}
  \label{eq:47}
  \inf_{\sggamma\in \rmM}
  %{\cM(\xX)}
  \LL(\ggamma,\ppsi)=
  \begin{cases}
    \displaystyle \sum_i\int_{X_i}\psi_i\,\d\mu_i&
    \text{if }\FHstar_1(\psi_1)
    %\varphi_1
    \oplus\FHstar_2(\psi_2)
    %\varphi_2
    \le \sfc , \\
    -\infty&\text{otherwise.}
  \end{cases}
\end{equation}
Thus, \eqref{eq:47} provides the dual formulation, that we
will study in the next section.

\subsection{Dual problem and optimality conditions}
\label{subsec:DualOpt}
\begin{problem}[$\ppsi$-formulation of the dual problem]
  \label{pr:2}
  Let $\FHstar_i$ be the convex functions defined by \eqref{eq:300}
  %$\varrho_i:=\{\psi\in \LSC_b(X_i):\psi_i\le F_i(0)\}$, 
  and let $\Cpsi{}$ be the 
  the convex set
\begin{equation}
  \label{eq:46}
  \Cpsi{} :=\Big\{\ppsi\in \LSC_s(X_1)\times\LSC_s(X_2): \ %\varrho_1\times \varrho_2: 
  % \vvarphi\in \rmB(X_1;\bar \R)\times
  % \rmB(X_2;\bar \R):\,
  \FHstar_i(\psi_i) \text{\ bounded},\ %\in \LSC_b(X_i),\ %\psi_i\le F_i(0),\
  \FHstar_1(\psi_1)\oplus\FHstar_2(\psi_2)
  %\varphi_1\oplus\varphi_2
  \le \sfc
  %\
  %\text{in }\xX\Big\}
  % \text{ in }\xX
  %\quad \varphi_i\ge-\rec{(F_i)}
  %\text{ in }X_i
  \Big\}.
  %,\quad (\Gstar_i\circ \varphi_i)_-\in \rmL^1(X_i,\mu_i)\Big\},
\end{equation}
  The dual Entropy-Transport problem consists in finding
  a maximizer $\ppsi\in\Cpsi{}$ for \EEE
  \begin{equation}
    \label{eq:49}
    \begin{aligned}
      \sfD(\mu_1,\mu_2)&=\sup_{\sppsi\in \cCpsi{}} %\Big\{ 
      \int_{X_1}\psi_1\,\d\mu_1+
      \int_{X_2}\psi_2\,\d\mu_2.
      %\sum_i\int_{X_i} \psi_i\,\d\mu_i.
      %: %\vvarphi\in \Cdual{}{},\
       % \varphi_i,\,G_i(\varphi_i)\in \LSC_b(X_i),\
       % \varphi_1\oplus\varphi_2\le \sfc\Big\}\\
       % &
      %= \sup_{\svvarphi\in \Lambda}
      %\Big\{
      %\DD(\vvarphi|\mu_1,\mu_2).
      % :\vvarphi\in
      %   \Cdual{}{},\
      %    \varphi_i,G_i(\varphi_i)\in \LSC_b(X_i)
      %   \Big\}.
    \end{aligned}
  \end{equation}
  % An optimal couple $(\varphi_1,\varphi_2)\in \Cdual\rmB\sfc $ 
  % realizing the supremum 
  % will be called a couple of Entropy-Kantorovich potentials.
\end{problem}
As usual, by operating the change of variable
\begin{equation}
  \label{eq:303}
  \varphi_i:=-\FHstar(\psi_i), \quad
  \psi_i=\Gstar_i(\varphi_i):=-\Fstar_i(-\varphi_i),
\end{equation}
we can obtain an equivalent formulation
of the dual functional $\sfD$ 
as the supremum of the concave functionals 
\begin{equation}
  \label{eq:44}
  \DD(\vvarphi|\mu_1,\mu_2):=\sum_i\int_{X_i}\Gstar_i(\varphi_i)\,\d\mu_i,
\end{equation}
on the simpler convex set
  \begin{equation}
    \label{eq:301}
    \Cphi{}:=\Big\{\vvarphi\in \LSC_s(X_1)\times \LSC_s(X_2),\ %\varphi_i\in
                                %\LSC_s(X_i),\ 
    \Gstar_i(\varphi_i)
    \text{ bounded}
    %\in
    %\LSC_b(X_i)
    ,\ 
    \varphi_1\oplus \varphi_2\le \sfc\Big\}.
  \end{equation}
\begin{problem}[$\vvarphi$-formulation of the dual problem]
  \label{pr:2bis}
  Let $\Gstar_i$ be the concave functions defined by \eqref{eq:303}
  %$\varrho_i:=\{\psi\in \LSC_b(X_i):\psi_i\le F_i(0)\}$, 
  and let $\Cphi{}$ be the 
  the convex set \eqref{eq:301}.
% \begin{equation}
%   \label{eq:46}
%   \Cpsi{} :=\Big\{\ppsi\in \LSC_s(X_1)\times\LSC_s(X_2): \ %\varrho_1\times \varrho_2: 
%   % \vvarphi\in \rmB(X_1;\bar \R)\times
%   % \rmB(X_2;\bar \R):\,
%   \FHstar_i(\psi_i) \text{\ bounded},\ %\in \LSC_b(X_i),\ %\psi_i\le F_i(0),\
%   \FHstar_1(\psi_1)\oplus\FHstar_2(\psi_2)
%   %\varphi_1\oplus\varphi_2
%   \le \sfc
%   %\
%   %\text{in }\xX\Big\}
%   % \text{ in }\xX
%   %\quad \varphi_i\ge-\rec{(F_i)}
%   %\text{ in }X_i
%   \Big\}.
%   %,\quad (\Gstar_i\circ \varphi_i)_-\in \rmL^1(X_i,\mu_i)\Big\},
% \end{equation}
  The $\varphi$-formulation of the dual Entropy-Transport problem
  consists in finding a maximizer $\vvarphi\in\Cphi{}$ for
    \begin{equation}
    \label{eq:274}
    \sfD'(\mu_1,\mu_2)=\sup_{\svvarphi\in \cCphi{}}\DD(\vvarphi|\mu_1,\mu_2)
    %\Big\{
    =\sup_{\svvarphi\in \cCphi{}} 
    \sum_i\int_{X_i}\Gstar_i(\varphi_i)\,\d\mu_i. %\vvarphi\in \Cdual{}{},\
  \end{equation}
  % An optimal couple $(\varphi_1,\varphi_2)\in \Cdual\rmB\sfc $ 
  % realizing the supremum 
  % will be called a couple of Entropy-Kantorovich potentials.
\end{problem}
\begin{proposition}[Equivalence of the dual formulations]
  \label{prop:equivalent-dual}
  The $\psi$- and the $\phi$- formulations of the dual problem are
  equivalent, $\sfD(\mu_1,\mu_2)=\sfD'(\mu_1,\mu_2)$.
\end{proposition}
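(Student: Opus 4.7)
The plan is to prove both inequalities $\sfD\le \sfD'$ and $\sfD'\le \sfD$ by constructing explicit maps between the admissible sets $\Cpsi{}$ and $\Cphi{}$ that relate the two objectives. The cornerstone is the pointwise equivalence \eqref{eq:217}: after the substitution $\phi=-\varphi$, the condition $\psi\le \Gstar(\varphi)=-\Fstar(-\varphi)$ is equivalent to $\FHstar(\psi)\le\varphi$. In particular, the boundary choice $\psi=\Gstar(\varphi)$ forces $\FHstar(\psi)\le\varphi$, while the boundary choice $\varphi=\FHstar(\psi)$ forces $\psi\le\Gstar(\varphi)$. Informally, the change of variables $\varphi_i\leftrightarrow\psi_i$ via these formulas is an algebraic bijection on the relevant effective domains, preserving both the constraint and (in a controlled way) the objective.

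For $\sfD\ge\sfD'$, I would send each $\vvarphi\in\Cphi{}$ to $\psi_i:=\Gstar_i(\varphi_i)$. Since $\Gstar_i$ is an increasing continuous homeomorphism on its effective domain (by \eqref{eq:225} and \eqref{eq:298}, with $\Gstar_i$ and $\FHstar_i$ mutually inverse on matching open intervals), and $\varphi_i\in\LSC_s(X_i)$ with $\Gstar_i(\varphi_i)$ bounded by definition of $\Cphi{}$, the composition $\psi_i$ inherits simplicity, lower semicontinuity, and boundedness. The first boundary identity above gives $\FHstar_i(\psi_i)\le\varphi_i$, whence $\FHstar_1(\psi_1)\oplus\FHstar_2(\psi_2)\le\varphi_1\oplus\varphi_2\le\sfc$ and $\FHstar_i(\psi_i)$ is bounded (above by $\max\varphi_i$, and below by the finite minimum of $\FHstar_i$ attained at the finitely many values of $\psi_i$). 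Hence $\ppsi\in\Cpsi{}$ and the objective is preserved exactly: $\sum_i\int\psi_i\,\d\mu_i=\DD(\vvarphi|\mu_1,\mu_2)$.

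Conversely, for $\sfD'\ge\sfD$, I would send each $\ppsi\in\Cpsi{}$ to $\varphi_i:=\FHstar_i(\psi_i)$. The monotonicity and continuity of $\FHstar_i$ on its effective domain, combined with the admissibility of $\ppsi$, yield $\varphi_i\in\LSC_s(X_i)$ (via the fact that $\{\FHstar_i\circ\psi_i\le t\}=\{\psi_i\le\FHstar_i^{-1}(t)\}$ is closed in $X_i$), and $\varphi_i$ is bounded. The constraint $\varphi_1\oplus\varphi_2\le\sfc$ is inherited verbatim. The second boundary identity now gives $\psi_i\le\Gstar_i(\varphi_i)$, so $\Gstar_i(\varphi_i)$ is bounded (below by $\min\psi_i$, above by $F_i(0)$ or by its finitely-many finite values when $F_i(0)=+\infty$), and $\vvarphi\in\Cphi{}$ with $\DD(\vvarphi|\mu_1,\mu_2)=\sum_i\int\Gstar_i(\varphi_i)\,\d\mu_i\ge\sum_i\int\psi_i\,\d\mu_i$. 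Taking the supremum over $\ppsi$ and $\vvarphi$ in each direction completes the proof. No substantial obstacle arises here: once \eqref{eq:217} is in place, everything reduces to elementary facts about monotone continuous maps preserving lower semicontinuity, simplicity, and boundedness, and no minimax argument is required since the equivalence is realized by a pointwise algebraic bijection on feasible values.
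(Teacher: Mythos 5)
Your proposal is correct and follows essentially the same route as the paper: both rely on the monotonicity of $\FHstar_i$ (respectively $\Gstar_i$) to push simple lower semicontinuous functions across, and on the pointwise equivalence \eqref{eq:217} (equivalently, membership in $\frF_i$) to transfer the constraint and control boundedness. The only difference is cosmetic: you spell out both inequalities explicitly, whereas the paper treats one direction and dismisses the other as analogous.
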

\begin{proof}
  Since $\FHstar_i$ is 
  nondecreasing, for every
  $\ppsi\in \Cpsi{}$ the functions $\varphi_i:=\FHstar_i(\psi_i)$ belong
  to $\LSC_s(X_i)$ and satisfy $\varphi_1\oplus\varphi_2\le \sfc$,
  with $(-\varphi_i,\psi_i)\in
  \frF_i$. It then follows that 
  $\tilde\psi_i:=-\Fstar_i(-\varphi_i)=\Gstar_i(\varphi_i)\ge \psi_i$
  are bounded, so that $(\varphi_1,\varphi_2)\in \Cphi{}$
  and $\sfD'\ge \sfD$.  An analogous argument shows the converse inequality.
\end{proof}
Since ``$\inf \sup \ge \sup\inf$'' (cf.~\eqref{eq:278}), our derivation via \EEE
\eqref{eq:43} yields
\begin{equation}
  \label{eq:52}
  \ET(\mu_1,\mu_2)\ge \sfD(\mu_1,\mu_2).
\end{equation}
Using Theorem~\ref{thm:minimax} we will show in Section
\ref{sec:dualityGen} that \eqref{eq:52} is in fact an equality. Before
this, we first discuss for which class of functions $\psi_i,\varphi_i$
the dual formulations are still meaningful. Moreover, we analyze
the optimality conditions associated to the equality case in
\eqref{eq:52}.

\paragraph{Extension to Borel functions.}
It is intended that in some cases we will also consider larger 
classes
of potentials $\ppsi$ or $\vvarphi$ %$(\varphi_1,\varphi_2)$ % the
    %functional $\DD$ and of the convex set $\Cdual{\LSC_b}\sfc $
by allowing Borel functions with extended real values 
under suitable summability conditions.

First of all, recalling \eqref{eq:40} and \eqref{eq:183}, we extend
  $\FHstar$ and $\Gstar$ to $\bar \R$ by setting
  \begin{equation}
    \label{eq:327}
    \FHstar(-\infty):=-\rec F,\quad
    \FHstar(+\infty):=+\infty;\quad  
    \Gstar(-\infty):=-\infty,\quad
    \Gstar(+\infty):=F(0),
  \end{equation}
  and we observe that with the definition above 
  and according to \eqref{eq:322}--\eqref{eq:322bis}
 the couples
  \begin{equation}
  \text{$(-\varphi,\Gstar(\varphi))$ %$\in \bar\frF$
  and $(-\FHstar(\psi),\psi)$ %$\in\bar\frF$
  belong to $\bar\frF$ whenever $\psi\le F(0)$ and $\varphi\ge -\rec F$.}
\label{eq:340}
\end{equation}

We also set
\begin{equation}
    \label{eq:329}
    \zeta_1\pz\zeta_2:=\lim_{n\to\infty} (-n\lor \zeta_1\land n)+(-n\lor
    \zeta_2\land n)\quad\text{for every }\zeta_1,\zeta_2\in \bar \R.
\end{equation}
Notice that $(\pm\infty)\pz(\pm \infty)=\pm\infty$ and in the
ambiguous case $+\infty-\infty$ this definition yields
$(+\infty)\pz(-\infty)=0$. We correspondingly extend the definition of
$\oplus$ by setting
  \begin{equation}
    \label{eq:328}
    (\zeta_1\opz\zeta_2)(x_1,x_2):=\zeta_1(x_1)\pz\zeta_2(x_2)\quad
    \forevery\ \zeta_i\in \rmB(X_i;\bar\R).
  \end{equation}

The
following result is the natural extension of Lemma \ref{le:trivial}
stating that $\ETint(\ggamma|\mu_1,\mu_2)\geq\DD(\vvarphi|\mu_1,\mu_2)$
for a larger class of $\ggamma$ and $\vvarphi$ as before.

\begin{proposition}[Dual lower bound for extended real valued potentials]
  \label{prop:trivial-bis}
  Let $\ggamma$ be a feasible plan and
  let 
  %\begin{enumerate}[(i)]
  %\item
  $ \vvarphi \in \rmB(X_1;\bar \R)\times \rmB(X_2;\bar\R)$ with
    $\varphi_i\ge -\rec{(F_i)}$,
    $\varphi_1\opz\varphi_2\le \sfc$ with
    $(\Gstar_i\circ \varphi_i)_-\in \rmL^1(X_i,\mu_i)$
    (resp.~$    (\varphi_i)_+\in \rmL^1(X_i,\gamma_i)$).

  Then we have
  $(\varphi_i)_-\in \rmL^1(X_i;\gamma_i)$ 
  (resp.~$(\Gstar_i\circ\varphi_i)_+\in \rmL^1(X_i,\mu_i)$) and
  \begin{equation}
    \label{eq:72}
    % \ETint(\ggamma|\mu_1,\mu_2)\ge
    % \sum_i\int_{X_i}\psi_i\,\d\mu_i,\quad
    \ETint(\ggamma|\mu_1,\mu_2)\ge
    \sum_i\int_{X_i}\Gstar_i(\varphi_i)\,\d\mu_i.
  \end{equation}
\end{proposition}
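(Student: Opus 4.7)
The plan is to apply Lemma~\ref{le:trivial} separately to each marginal couple $(\gamma_i,\mu_i)=(\pi^i_\sharp\ggamma,\mu_i)$ with the pair of test functions $(\phi_i,\psi_i):=(-\varphi_i,\Gstar_i(\varphi_i))$. First I would verify that this pair is a Borel map with values in $\bar\frF_i$: since by assumption $\varphi_i\ge -\rec{(F_i)}$, this is exactly what is recorded in \eqref{eq:340}. One may also assume $\ETint(\ggamma|\mu_1,\mu_2)<\infty$, otherwise \eqref{eq:72} is trivial; in that regime $\FF_i(\gamma_i|\mu_i)<\infty$ for $i=1,2$ and $\int_\sxX\sfc\,\d\ggamma<\infty$.

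Under either of the two alternative integrability hypotheses, Lemma~\ref{le:trivial} immediately delivers the complementary integrability conclusions stated in the proposition: namely $(\varphi_i)_-=(-\varphi_i)_+\in \rmL^1(X_i,\gamma_i)$ when $(\Gstar_i\circ\varphi_i)_-\in \rmL^1(X_i,\mu_i)$, and $(\Gstar_i\circ\varphi_i)_+\in \rmL^1(X_i,\mu_i)$ in the dual case. The same lemma, applied to each marginal, also provides
\[
\FF_i(\gamma_i|\mu_i)-\int_{X_i}\Gstar_i(\varphi_i)\,\d\mu_i\ \ge\ \int_{X_i}(-\varphi_i)\,\d\gamma_i,\qquad i=1,2,
\]
i.e.\ $\FF_i(\gamma_i|\mu_i)+\int_{X_i}\varphi_i\,\d\gamma_i\ge\int_{X_i}\Gstar_i(\varphi_i)\,\d\mu_i$, where both sides are well defined in $(-\infty,+\infty]$ thanks to the preceding integrability.

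Summing over $i=1,2$ and using $\gamma_i=\pi^i_\sharp\ggamma$, so that $\int_{X_i}\varphi_i\,\d\gamma_i=\int_\sxX\varphi_i(x_i)\,\d\ggamma$, the crucial step is the identity
\[
\sum_i\int_{X_i}\varphi_i\,\d\gamma_i\ =\ \int_\sxX(\varphi_1\opz\varphi_2)\,\d\ggamma\ \le\ \int_\sxX\sfc\,\d\ggamma,
\]
the inequality being a direct consequence of the constraint $\varphi_1\opz\varphi_2\le\sfc$ and the finiteness of $\int\sfc\,\d\ggamma$. Combining this with the summed marginal inequality yields $\sum_i\FF_i(\gamma_i|\mu_i)+\int_\sxX\sfc\,\d\ggamma\ge\sum_i\int_{X_i}\Gstar_i(\varphi_i)\,\d\mu_i$, which is \eqref{eq:72}. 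The main technical obstacle is precisely justifying the first equality in the display above for extended real-valued $\varphi_i$ that may take the value $\pm\infty$ on sets of positive measure: one truncates $\varphi_{i,n}:=(-n)\vee\varphi_i\wedge n$, applies standard linearity for the bounded level, and then passes to the limit via monotone convergence for the negative parts (integrable by the previous step) and via the $\ggamma$-integrable upper bound $\sfc$ for the positive parts, consistently with the convention \eqref{eq:329} for $\pz$.
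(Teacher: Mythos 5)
Your proposal is correct and follows essentially the same approach as the paper's proof: the same application of Lemma~\ref{le:trivial} to each marginal couple $(\gamma_i,\mu_i)$ with $(\phi_i,\psi_i)=(-\varphi_i,\Gstar_i\circ\varphi_i)$ taking values in $\bar\frF_i$ by \eqref{eq:340}, the same integrability transfer, and the same chain of inequalities using $\varphi_1\opz\varphi_2\le\sfc$ together with the feasibility of $\ggamma$. The only cosmetic difference is the justification of splitting $\int_\sxX(\varphi_1\opz\varphi_2)\,\d\ggamma$ into $\sum_i\int_{X_i}\varphi_i\,\d\gamma_i$: the paper observes directly that $(\varphi_i)_-\in\rmL^1(X_i,\gamma_i)$ forces $\varphi_i>-\infty$ $\gamma_i$-a.e., so $\varphi_1\opz\varphi_2=\varphi_1+\varphi_2$ $\ggamma$-a.e.\ and the integral splits since it is bounded above by $\int\sfc\,\d\ggamma<\infty$, whereas you reach the same conclusion by a truncation-and-limit argument.
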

\begin{remark}
  \label{rem:trivial-bis}
  \upshape
  In a similar way, if 
  $\ppsi\in \rmB(X_1,\bar \R)\times \rmB(X_2,\bar \R)$ with $\psi_i\le
  F_i(0)$, $\FHstar_1(\psi_1)\opz\FHstar_2(\psi_2)\le \sfc$,
  and $(\psi_i)_-\in \rmL^1(X_i,\mu_i)$ 
  (resp.~$ (\FHstar_i\circ \psi_i)_+\in \rmL^1(X_i,\gamma_i)$),
  % \begin{equation}
  %   \label{eq:267bis}
  %   (\psi_i)_-\in \rmL^1(X_i,\mu_i)\quad
  %   \text{or}\quad
  %   (\FHstar_i\circ \psi_i)_+\in \rmL^1(X_i,\gamma_i),
  % \end{equation}
  then $ (\FHstar_i\circ \psi_i)_-\in \rmL^1(X_i,\gamma_i)$
  (resp.~$(\psi_i)_+\in \rmL^1(X_i,\mu_i)$) with
  \begin{equation}
    \label{eq:72bis}
    %(\psi_i)_+\in \rmL^1(X_i,\mu_i),\qquad
    \ETint(\ggamma|\mu_1,\mu_2)\ge
    \sum_i\int_{X_i}\psi_i\,\d\mu_i. \qedhere
    % \ETint(\ggamma|\mu_1,\mu_2)\ge
    % \sum_i\int_{X_i}\Gstar_i(\varphi_i)\,\d\mu_i.
  \end{equation}
\end{remark}
\begin{proof}
  Let us consider \eqref{eq:72} in the case that $(\Gstar_i\circ \varphi_i)_-\in
  \rmL^1(X_i,\mu_i)$ (the calculations in the other cases, 
  including \eqref{eq:72bis}, are
  completely analogous).
  Applying Lemma \ref{le:trivial} (with $\psi_i:=\Gstar_i\circ
  \varphi_i$ and $\phi_i:=-\varphi_i$) 
  and \eqref{eq:313} we obtain
  $(\varphi_i)_-\in \rmL^1(X_i,\gamma_i)$
  and then
  %\begin{subequations}
  \begin{align}
    \notag
    \ETint(\ggamma|\mu_1,\mu_2)&=\sum_i \FF_i(\gamma_i|\mu_i)+
    % \Big(\int_{X_i}F_i(\sigma_i)\,\d\mu_i+\rec{(F_i)}\gamma_i^\perp(X_i)\Big)+
                                 \int_{\sxX} \sfc\,\d\ggamma
    %\\ &
          \ge\sum_i  \FF_i(\gamma_i|\mu_i)+
          % \Big(\int_{X_i}F_i(\sigma_i)\,\d\mu_i
          % -\int_{X_i}\varphi_i\,\d\gamma_i^\perp\Big)
          \int_{\sxX}
          \Big(\varphi_1(x_1)\pz\varphi_2(x_2)\Big)\,\d\ggamma\\
                               &\ge\sum_i 
                                 \FF_i(\gamma_i|\mu_i)+\int_{X_i}
                                 \varphi_i\,\d\gamma_i
                                 \topref{eq:71}\ge
                                 % \int_{X_i}\Big(F_i(\sigma_i)+\varphi_i\sigma_i\Big)\,\d\mu_i \ge
                                 \sum_i
                                 \int_{X_i}\Gstar_i(\varphi_i)\,\d\mu_i.
                                 \label{eq:68}
  \end{align}
  %where the last inequality follows directly by \eqref{eq:22}.
  Notice that the semi-integrability of $\varphi_i$ w.r.t.~$\gamma_i$ 
  yields $\varphi_i(\pi^i(x_1,x_2))>-\infty$ for
  $\ggamma$-a.e.~$(x_1,x_2)\in \xX$ so that $\varphi_1(x_1)\pz
  \varphi_2(x_2)=
  \varphi_1(x_1)+\varphi_2(x_2)$ and we can 
  split the integral 
  $$+\infty>\int \Big(\sum_i \varphi_i(x_i)\Big)\,\d\ggamma=
  \sum_i \int \varphi_i(x_i)\,\d\ggamma=
  \sum_i \int \varphi_i(x_i)\,\d\gamma_i.\qedhere$$
  % and to conclude that 
  % $\varphi_i\in \rmL^1(X_i,\gamma_i)$. 
%\end{subequations}
\end{proof}
\paragraph{Optimality conditions.}
If there exists a couple $\vvarphi$ as in Proposition \ref{prop:trivial-bis}
% \begin{equation}
% \vvarphi\in \rmB(X_1)\times \rmB(X_2),\quad
% \varphi_1\oplus\varphi_2\le \sfc,\quad
% \varphi_i\ge-\rec{(F_i)},
% \label{eq:216}
% \end{equation}
% and the integrability condition \eqref{eq:267},
such that 
$\ETint(\ggamma|\mu_1,\mu_2)=\DD(\vvarphi|\mu_1,\mu_2)$ then
all the above inequalities (\ref{eq:68}) should be identities so that 
we have
\begin{align*}
  %\label{eq:251}
  \FF_i(\gamma_i|\mu_i)=\int_{X_i}\Gstar_i(\varphi_i)\,\d\mu_i ,
    \quad\text{and}\quad
  %\label{eq:64}
  \int_{\sxX}
  \Big(\sfc(x_1,x_2)-(\varphi_1(x_1)\pz\varphi_2(x_2))\Big)\,\d\ggamma=0,
  %\label{eq:65}
%  \int_{X_i}\Big(F_i(\sigma_i)+\varphi_i\sigma_i-\Gstar_i(\varphi_i)\Big)\,\d\mu_i&=0,
\end{align*}
% which can equivalently stated as 
and the second part of Lemma \ref{le:trivial} yields
\begin{subequations}
\label{eq:OptiConds}
\begin{align}
    \label{eq:66}
  \varphi_1(x_1)\pz\varphi_2(x_2)=\sfc(x_1,x_2)\quad &\ggamma\text{-a.e.~in
  }\xX,\\
  \label{eq:67}
  -\varphi_i\in \partial F_i(\sigma_i)\quad &(\mu_i+\gamma_i)\text{-a.e.\ in }A_i\\
  \label{eq:63}
  \varphi_i=-\rec{(F_i)}\quad&\gamma_i^\perp\text{-a.e.~in
                               }A_{\gamma_i},\\
  \Gstar_i(\varphi_i)=F_i(0)\quad&\mu_i^\perp\text{-a.e.~in
                               }A_{\mu_i},
                                 \label{eq:331}
  % % \ \varphi_i\in 
  % % \rmL^1(X_i,\gamma_i)
  % \quad&\\
\end{align}
\end{subequations}
where $(A_i,A_{\mu_i},A_{\gamma_i})$ is a Borel partition
related to the Lebesgue decomposition of the couple $(\gamma_i,\mu_i)$
as in Lemma \ref{le:Lebesgue}.
We will show now that the existence of a couple $\vvarphi$
satisfying 
\begin{equation}
  \label{eq:216}
  \vvarphi =(\varphi_1,\varphi_2) \in \rmB(X_1;\bar \R)\times
  \rmB(X_2;\bar\R),\quad 
  \varphi_i\ge -\rec{(F_i)},\quad \varphi_1\opz\varphi_2\le \sfc,
\end{equation}
and the joint optimality conditions 
\ref{eq:OptiConds} 
is also sufficient to
prove that a feasible $\ggamma\in \cM(\xX)$ 
is optimal. We emphasize that we do not need any integrability 
assumption on $\vvarphi$.\EEE
\begin{theorem}
  \label{thm:joint-optimality}
  Let $\ggamma\in\cM(\xX)$ with $\EE(\ggamma|\mu_1,\mu_2)<\infty$; 
  if 
  there exists a couple $\vvarphi$ as in \eqref{eq:216}
  which satisfies the joint optimality
  conditions \eqref{eq:OptiConds} 
  then $\ggamma$ is optimal.
\end{theorem}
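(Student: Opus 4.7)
The plan is to establish optimality by a weak-duality argument: use the joint conditions to show that $\ggamma$ already attains the dual lower bound $\sum_i\int_{X_i}\Gstar_i(\varphi_i)\,\d\mu_i$, which by Proposition~\ref{prop:trivial-bis} lies below $\ETint(\ttilde\ggamma|\mu_1,\mu_2)$ for every feasible competitor $\ttilde\ggamma$. The conditions \eqref{eq:OptiConds} are exactly the equality cases of the Fenchel--Young inequality underlying Lemma~\ref{le:trivial}, so morally the proof is a bookkeeping of signs, but the extended-real nature of $\vvarphi$ forces some care.

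First I would compute $\ETint(\ggamma|\mu_1,\mu_2)$ directly. Using the Lebesgue partition $(A_i,A_{\mu_i},A_{\gamma_i})$ of Lemma~\ref{le:Lebesgue}, condition \eqref{eq:67} yields the pointwise Fenchel equality $F_i(\sigma_i)=\Gstar_i(\varphi_i)-\sigma_i\varphi_i$ $(\mu_i{+}\gamma_i)$-a.e.~on $A_i$, condition \eqref{eq:331} gives $\Gstar_i(\varphi_i)=F_i(0)$ $\mu_i^\perp$-a.e.~on $A_{\mu_i}$, and \eqref{eq:63} gives $\varphi_i=-\rec{(F_i)}$ $\gamma_i^\perp$-a.e.~on $A_{\gamma_i}$. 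Finiteness of $\FF_i(\gamma_i|\mu_i)$ forces $\gamma_i^\perp=0$ whenever $\rec{(F_i)}=+\infty$ and $\mu_i^\perp=0$ whenever $F_i(0)=+\infty$, which rules out $0\cdot\infty$ ambiguities. Using $\mu_i(A_{\gamma_i})=\gamma_i(A_{\mu_i})=0$ and assembling the three pieces gives
\begin{equation*}
  \FF_i(\gamma_i|\mu_i)\;=\;\int_{X_i}\Gstar_i(\varphi_i)\,\d\mu_i-\int_{X_i}\varphi_i\,\d\gamma_i,
\end{equation*}
with both integrals well-defined in $\R$. Condition \eqref{eq:66} then yields $\int\sfc\,\d\ggamma=\int(\varphi_1\opz\varphi_2)\,\d\ggamma=\sum_i\int\varphi_i\,\d\gamma_i$; the ambiguous values $\{+\infty,-\infty\}$ are excluded $\ggamma$-a.e.~because $\varphi_i(x_i)=+\infty$ forces $x_i\in A_{\mu_i}$ whose $\gamma_i$-measure is zero, whence $\ggamma$-a.e.\ both $\varphi_i$ lie in $[-\rec{(F_i)},+\infty)$. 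Summing gives the key identity
\begin{equation*}
\ETint(\ggamma|\mu_1,\mu_2)=\sum_{i}\int_{X_i}\Gstar_i(\varphi_i)\,\d\mu_i=\DD(\vvarphi|\mu_1,\mu_2).
\end{equation*}

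For the reverse inequality on an arbitrary feasible competitor $\ttilde\ggamma$, I would like to invoke Proposition~\ref{prop:trivial-bis}, but the integrability hypothesis $(\Gstar_i\circ\varphi_i)_-\in\rmL^1(X_i,\mu_i)$ need not follow automatically. To bypass this, I approximate by the symmetric truncations $\varphi_i^k:=(-k)\lor\varphi_i\land k$, which for $k\ge\rec{(F_i)}$ (interpreted as $+\infty$ when $\rec{(F_i)}=\infty$) satisfy $\varphi_i^k\ge-\rec{(F_i)}$ and are bounded, so $(\Gstar_i(\varphi_i^k))_-$ is bounded and hence $\mu_i$-integrable. Crucially the constraint $\varphi_1^k\opz\varphi_2^k\le\sfc$ is preserved everywhere because $\sfc\ge 0$: a six-case analysis according to whether each $\varphi_i$ lies above $k$, within $[-k,k]$, or below $-k$ shows the sum of the truncated values is bounded by $\max(\varphi_1+\varphi_2,\,0)\le\sfc$. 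Hence Proposition~\ref{prop:trivial-bis} applies and gives
\begin{equation*}
  \ETint(\ttilde\ggamma|\mu_1,\mu_2)\;\ge\;\sum_i\int_{X_i}\Gstar_i(\varphi_i^k)\,\d\mu_i.
\end{equation*}
Letting $k\to\infty$, monotone convergence (since $\Gstar_i$ is nondecreasing and $\varphi_i^k\uparrow\varphi_i$, after subtracting the constant lower bound $\Gstar_i(-\rec{(F_i)})$ when finite, or truncating first from above only) yields
\begin{equation*}
  \ETint(\ttilde\ggamma|\mu_1,\mu_2)\;\ge\;\sum_i\int_{X_i}\Gstar_i(\varphi_i)\,\d\mu_i\;=\;\ETint(\ggamma|\mu_1,\mu_2),
\end{equation*}
proving optimality.

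The main obstacle is the integrability bookkeeping in the approximation: handling the fact that $\varphi_i$ may take the values $+\infty$ on $A_{\mu_i}$ and $-\infty$ on $A_{\gamma_i}$ requires checking that the truncation does not violate the constraint $\varphi_1\opz\varphi_2\le\sfc$, which works thanks to $\sfc\ge 0$. The computational Step~1 is then essentially forced: the three clauses of the optimality conditions are precisely those needed so that each piece of the partition contributes its Fenchel equality, and no other verification is required.
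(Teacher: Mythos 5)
Your overall strategy --- truncate $\varphi_i$, invoke Proposition~\ref{prop:trivial-bis} for the truncated potentials, and pass to the limit --- is essentially the one the paper uses, and the observation that $T_k(\varphi_1)\oplus T_k(\varphi_2)\le\sfc$ because $\sfc\ge 0$ is correct and appears in the paper's proof as well. There is nevertheless a genuine gap in the way the extended-real potentials are managed, and it sits exactly where the paper's argument is built to avoid it.

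Step~1 asserts that $\int_{X_i}\Gstar_i(\varphi_i)\,\d\mu_i$ and $\int_{X_i}\varphi_i\,\d\gamma_i$ are ``well-defined in $\R$.'' That does not follow from \eqref{eq:216}, \eqref{eq:OptiConds} and $\EE(\ggamma|\mu_1,\mu_2)<\infty$: the statement of Lemma~\ref{le:trivial} only guarantees these integrals are well defined \emph{with possible value $-\infty$}. The pointwise Fenchel identity $F_i(\sigma_i)=\Gstar_i(\varphi_i)-\sigma_i\varphi_i$ on $A_i$ does not control $\int_{A_i}(\sigma_i\varphi_i)_-\,\d\mu_i$ in terms of $\int F_i(\sigma_i)\,\d\mu_i$ without a $\Delta_2$-type growth condition on $F_i$ which is not assumed; and Theorem~\ref{thm:joint-optimality} is stated without the extra hypotheses ($F_i(0)<\infty$ or $\sfc$ finite) that Theorem~\ref{thm:pot-ex} imposes for the converse direction. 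When one of these integrals is $-\infty$, the split $\FF_i(\gamma_i|\mu_i)=\int\Gstar_i(\varphi_i)\,\d\mu_i-\int\varphi_i\,\d\gamma_i$ becomes $(-\infty)-(-\infty)$ and the pivot identity $\EE(\ggamma|\mu_1,\mu_2)=\sum_i\int\Gstar_i(\varphi_i)\,\d\mu_i$ is ill-defined.

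In Step~2 the truncations $T_k(\varphi_i)=(-k)\lor\varphi_i\land k$ are \emph{not} monotone in $k$ when $\rec{(F_i)}=+\infty$: they increase where $\varphi_i>k$ but \emph{decrease} where $\varphi_i<-k$, so $\Gstar_i(T_k(\varphi_i))$ is not monotone either, and monotone convergence cannot be applied term by term. Neither proposed patch closes the gap: subtracting $\Gstar_i(-\rec{(F_i)})$ is vacuous when that value is $-\infty$, and switching to the one-sided truncation $\varphi_i\land k$ leads back to the unresolved Step~1 integrability, because $\sum_i\int\Gstar_i(\varphi_i\land k)\,\d\mu_i$ is $-\infty$ for every $k$ as soon as $(\Gstar_i(\varphi_i))_-\notin\rmL^1(X_i,\mu_i)$.

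The paper circumvents both issues at once by never isolating the quantity $\DD(\vvarphi|\mu_1,\mu_2)$. It introduces the Lipschitz approximations $F_{i,n}(\r)=\max_{|\phi|\le n}\big(\phi\r-\Fstar_i(\phi)\big)$ and the algebraic identity $\Gstar_i(T_n(\varphi_i))=F_{i,n}(\sigma_i)+\sigma_i T_n(\varphi_i)$ from Lemma~\ref{le:auxiliary-app}(ii) (a consequence of $-\varphi_i\in\partial F_i(\sigma_i)$, i.e.\ \eqref{eq:67}) to rewrite the lower bound from Proposition~\ref{prop:trivial-bis} as
\begin{equation*}
  \sum_i\Big(\int_{X_i}F_{i,n}(\sigma_i)\,\d\mu_i+\rec{(F_i)}\gamma_i^\perp(X_i)\Big)
  +\int_{X_1\times X_2}\big(T_n(\varphi_1)\oplus T_n(\varphi_2)\big)\,\d\ggamma.
\end{equation*}
The decisive move, absent from your proposal, is to recombine $\int T_n(\varphi_1)\,\d\gamma_1+\int T_n(\varphi_2)\,\d\gamma_2$ into that single $\ggamma$-integral \emph{before} sending $n\to\infty$: since $\varphi_1\opz\varphi_2=\sfc\ge 0$ holds $\ggamma$-a.e.\ by \eqref{eq:66}, the integrand $T_n(\varphi_1)\oplus T_n(\varphi_2)$ is nonnegative and increases $\ggamma$-a.e.\ to $\sfc$, while $F_{i,n}(\sigma_i)\uparrow F_i(\sigma_i)$ with $F_{i,n}\ge 0$. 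Monotone convergence then applies legitimately to each nonnegative piece and the whole expression converges to $\EE(\ggamma|\mu_1,\mu_2)$, with no integrability assumption on $\varphi_i$ or $\Gstar_i(\varphi_i)$ against $\gamma_i$ or $\mu_i$. Without the $F_{i,n}$ bookkeeping and this recombination your argument is incomplete precisely where the extended-real potentials were supposed to be tamed.
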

\begin{proof}
  We want to repeat the  calculations in (\ref{eq:68}) 
  of Proposition \ref{prop:trivial-bis}, but now taking care of the
  integrability issues. We use a clever truncation argument
  of \cite{Schachermeyer-Teichmann09}, based on the maps
  \begin{equation}
    \label{eq:266}
    T_n:\R\to\R,\quad T_n(\varphi):=-n\lor\varphi\land n,
  \end{equation}
  combined with a corresponding approximations of the entropies
  $F_i$ given by
  \begin{equation}
    \label{eq:332}
    F_{i,n}(r):=\max_{|\phi|\le n}\big(\phi
      r-\Fstar_{i}(\phi)\big).
  \end{equation}
  Recalling \eqref{eq:329}, it is not difficult to check that if
  $\varphi_1\pz\varphi_2\ge0$ we have $0\le
  T_n(\varphi_1)+T_n(\varphi_2)\up \varphi_1+\varphi_2$ as
  $n\up\infty$, whereas $\varphi_1\pz\varphi_2\le0$ yields $0\ge
  T_n(\varphi_1)+T_n(\varphi_2)\down \varphi_1+\varphi_2$.  In
  particular if $\vvarphi$ satisfies \eqref{eq:216} then
  $T_n(\varphi_i)\in \rmB_b(X_i)$, $T_n(\varphi_1)\oplus
  T_n(\varphi_2)\le \sfc$, and $T_n(\varphi_i)\ge -\rec{(F_i)}$ due to
  $\rec{(F_i)}\ge0$ and $\varphi_i\ge -\rec{(F_i)}$.  The boundedness
  of $T_n(\varphi_i)$ and Proposition \ref{prop:trivial-bis} yield for
  every $\tilde\ggamma\in \cM(\xX)$
  \begin{equation}
    \label{eq:268}
    \EE(\tilde\ggamma|\mu_1,\mu_2)\ge
    \sum_i\int_{X_i}\Gstar_i(T_n(\varphi_i))\,\d\mu_i.
  \end{equation}
  When $\rec{(F_i)} <\infty$, choosing $n\ge \rec{(F_i)}$ so that
  $T_n(\varphi_i)=\varphi_i=-\rec{(F_i)}$ $\gamma_i^\perp$-a.e., 
  and applying (ii) of the next Lemma
  \ref{le:auxiliary-app},
  we obtain
    \begin{align*}
      \int_{X_i}\Gstar_i(T_n(\varphi_i))\,\d\mu_i
      % \ge&
      % \int_{X_i}\Gstar_i(\varphi_i\land n)\,\d\mu_i\ge \int_{\varphi_i\le
      % n}\Gstar_i(\varphi_i)\,\d\mu_i-m_i \big(\Gstar_i(n)\big)_-\\
      % % 
      \stackrel{(\ref{eq:67},d)}=& \int_{X_i}
                                   \Big(F_{i,n}(\sigma_i)+\sigma_iT_n(\varphi_i)\Big)\,\d\mu_i
                                   % -m_i \big(\Gstar_i(n)\big)_-
      \\
      \topref{eq:63}=&\int_{X_i}
                         F_{i,n}(\sigma_i)\,\d\mu_i+\rec{(F_i)}\gamma_i^\perp(X_i)+
                         \int_{X_i}T_n(\varphi_i)\,\d\gamma_i,
    \end{align*}
    and the same relation also holds when
    $\rec{(F_i)}=+\infty$
    since in this case $\gamma_i^\perp=0.$
    Summing up the two contributions 
    %and recalling \eqref{eq:66} 
    we get
  \begin{align*}
    \EE(\tilde\ggamma|\mu_1,\mu_2)&\ge
    \sum_{i}\Big(\int_{X_i}
                                   F_{i,n}(\sigma_i)\,\d\mu_i+
                                   \rec{(F_i)}\gamma_i^\perp(X_i)\Big)+
                                   \int_{\sxX}\Big(T_n(\varphi_1)\oplus
                                   T_n(\varphi_2)\Big)\,\d\ggamma.
  \end{align*}
  Applying Lemma \ref{le:auxiliary-app} (i)
  and the fact that $\varphi_1\opz\varphi_2=\sfc\ge0$ $\ggamma$-a.e. 
  by \eqref{eq:66},
  we can pass to the limit as $n\up\infty$ by monotone convergence 
  in the right-hand side, obtaining the desired optimality \EEE
  $\EE(\tilde\ggamma|\mu_1,\mu_2) \ge
  \EE(\ggamma|\mu_1,\mu_2) $.
\end{proof}
\begin{lemma}
  \label{le:auxiliary-app}
  Let $F_{i,n}:[0,\infty)\to [0,\infty)$ be defined by \eqref{eq:332}. Then
  \begin{enumerate}[\rm (i)]
  \item $F_{i,n}$ are Lipschitz, $F_{i,n}(\r )\le F_i(\r )$, and \EEE
    $F_{i,n}(\r )\up F_i(\r )$ as $n\up+\infty$.
  \item For every $\r \in \dom {F_i} $ and $\varphi_i\in \R\cup \{+\infty\}$
  we have \EEE
\begin{equation}
    \label{eq:333}
    \begin{aligned} 
      -\varphi_i\in \partial F_i(\r )\quad&\Rightarrow\quad
      -T_n(\varphi_i)\in \partial F_{i,n}(\r ),\\
      \varphi_i=+\infty,\ \r =0\quad&\Rightarrow\quad
      F_{i,n}(0)=\Gstar_{i}(T_n(\varphi_i))=\Gstar_i(n).
    \end{aligned}
  \end{equation}
  In particular, both cases considered in \eqref{eq:333} 
  give $\Gstar_i(T_n(\varphi_i)) = F_{i,n}(s) + sT_n(\varphi_i)$.
  \end{enumerate}
\end{lemma}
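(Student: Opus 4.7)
Both parts are standard convex-analytic manipulations anchored on Fenchel duality; I will simply describe the argument and flag where care is needed.

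Part~(i): since for each $|\phi|\le n$ the map $r\mapsto \phi r - \Fstar_i(\phi)$ is affine and $n$-Lipschitz on $[0,\infty)$, their pointwise supremum $F_{i,n}$ is convex and $n$-Lipschitz. The bound $F_{i,n}\le F_i$ and the pointwise monotone convergence $F_{i,n}\uparrow F_i$ are immediate from the Fenchel--Moreau formula~\eqref{eq:269} upon restricting the range of the supremum to $[-n,n]$ and then letting $n\up\infty$.

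Part~(ii): the key preliminary I will establish is the identity $F_{i,n}^\star(\psi)=\Fstar_i(\psi)$ for $|\psi|\le n$, where $F_{i,n}^\star$ denotes the Legendre conjugate of $F_{i,n}$. This follows from two one-sided bounds: $F_{i,n}\le F_i$ yields $F_{i,n}^\star\ge\Fstar_i$, whereas plugging $\phi=\psi$ into~\eqref{eq:332} gives $F_{i,n}(r)\ge \psi r - \Fstar_i(\psi)$ and hence the reverse inequality. Given $-\varphi_i\in\partial F_i(r)$, equivalent by~\eqref{eq:325} to $F_i(r)+\Fstar_i(-\varphi_i)=-\varphi_i r$, I will run a case analysis. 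If $|\varphi_i|\le n$ then $T_n(\varphi_i)=\varphi_i$, and $\phi=-\varphi_i$ is admissible in~\eqref{eq:332}, forcing $F_{i,n}(r)=F_i(r)$; the Fenchel relation transfers to $F_{i,n}(r)+F_{i,n}^\star(-T_n(\varphi_i))=-T_n(\varphi_i)r$, proving $-T_n(\varphi_i)\in\partial F_{i,n}(r)$. If $-\varphi_i>n$ (so $T_n(\varphi_i)=-n$), the concave function $\phi\mapsto \phi r - \Fstar_i(\phi)$ has a maximizer at $-\varphi_i$ and is therefore nondecreasing on $(-\infty,-\varphi_i]$, so its constrained supremum on $[-n,n]$ is attained at $\phi=n$; this gives $F_{i,n}(r)=nr-\Fstar_i(n)$ and hence $-n=-T_n(\varphi_i)\in\partial F_{i,n}(r)$. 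The symmetric case $-\varphi_i<-n$ is analogous. For the boundary case $\varphi_i=+\infty$, $r=0$, monotonicity of $\Fstar_i$ gives $F_{i,n}(0)=-\Fstar_i(-n)=\Gstar_i(n)$ by~\eqref{eq:303}, as required.

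The closing identity $\Gstar_i(T_n(\varphi_i))=F_{i,n}(s)+s\,T_n(\varphi_i)$ will be obtained by rewriting the Fenchel relation for $F_{i,n}$ at $-T_n(\varphi_i)$: since $|T_n(\varphi_i)|\le n$, the preliminary yields $F_{i,n}^\star(-T_n(\varphi_i))=\Fstar_i(-T_n(\varphi_i))=-\Gstar_i(T_n(\varphi_i))$, and in the boundary case $s=0$ both sides reduce to $\Gstar_i(n)$. The only genuine delicacy is the regime in which $\partial F_i(r)$ straddles the truncation level $n$: there it is not a priori clear which subgradient realizes the argmax in~\eqref{eq:332}, and concavity of the dual objective in $\phi$ is precisely what localizes the constrained maximizer at the endpoint of $[-n,n]$. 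Everything else amounts to routine Legendre bookkeeping.
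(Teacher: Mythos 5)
Your proof is correct and rests on the same central identity as the paper's, namely that the Legendre conjugate of $F_{i,n}$ agrees with $\Fstar_i$ on $[-n,n]$; but the way you localize the constrained maximizer is organized differently. The paper works on the primal side: it introduces the interval $I_n=\{r: F_i(r)=F_{i,n}(r)\}$ with endpoints $\r_n^\pm$, shows $F_{i,n}(r)=F_i(T_n'(r))+n(r-T_n'(r))$ for the clamp $T_n'(r)=\r_n^-\lor r\land \r_n^+$, and reads off $\pm n\in\partial F_{i,n}(r)$ from whether $r$ sits left or right of $I_n$. You instead argue on the dual side: since $\phi\mapsto \phi r-\Fstar_i(\phi)$ is concave with an unconstrained maximizer at any element of $\partial F_i(r)$, the restriction to $[-n,n]$ is monotone up to that point, so the constrained argmax is the nearer endpoint. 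This avoids introducing $I_n$ entirely and is arguably the more elementary reading of the same convex-duality fact; the paper's variant is a bit more explicit about the shape of $F_{i,n}$, which it does not actually need for the statement at hand.

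Two small points to fix. First, in your case $-\varphi_i>n$ the conclusion should read $n=-T_n(\varphi_i)\in\partial F_{i,n}(r)$ (you wrote $-n=-T_n(\varphi_i)$, a sign slip; your preceding computation $F_{i,n}(r)=nr-\Fstar_i(n)$ already gives $n\in\partial F_{i,n}(r)$, which is the right thing). Second, the lemma asserts $F_{i,n}$ has codomain $[0,\infty)$, and you never verify nonnegativity; this is immediate from $F_{i,n}(r)\ge 0\cdot r-\Fstar_i(0)=-\Fstar_i(0)=\inf F_i\ge 0$, which the paper notes explicitly. Neither issue affects the substance, but both are worth cleaning up.
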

\begin{proof}
  Property (i): By \eqref{eq:269} and the definition in \eqref{eq:332}
  we get $F_{i,n}\le F_i$.  Since $-\Fstar_i(0)=\inf F_i\ge 0$ we see
  that $F_{i,n}$ are nonnegative. Recalling that $\Fstar_i$ are
  nondecreasing with $\dom {\Fstar_i}\supset (-\infty,0]$ (see Section
  \ref{subsec:entropy}) we also get the upper bound $F_{i,n}(\r )\le
  n\r-\Fstar_i(-n)$. Eventually, \eqref{eq:332} defines $F_{i,n}$ as
  the maximum of a family of $n$-Lipschitz functions, so $F_{i,n}$ is
  $n$-Lipschitz.

  Property (ii): Notice that
  $F_{i,n}=\big(\Fstar_i+\rmI_{[-n,n]}\big)^*$ so that $(F_{i,n})^*=\Fstar_i+\rmI_{[-n,n]}\ge
  \Fstar_i$. It is not difficult to check that $ F_i(\r )=F_{i,n}(\r )$ 
  if and only if 
  $\partial F_i(\r )\cap [-n,n]\neq \emptyset$.
  Therefore the set $I_n:=\{\r \ge0:
  F_i(\r )=F_{i,n}(\r )\}$ is a nonempty closed interval (possibly reduced
  to a single point)
  and it is easy to see that denoting $\r _n^+:=\max I_n$, $\r _n^-:=\min
  I_n$, $T_n'(\r ):=\r _n^-\lor \r \land \r _n^+$,
  we have
  $F_{i,n}(\r )=F_i(T_n'(\r ))+n(\r -T_n'(\r ))$. 
  In particular, whenever
  $\r \ge \r _n^+$ we have $n\in \partial F_{i,n}(\r )$ and similarly
  $-n\in \partial F_{i,n}(\r )$ if $\r \le \r _n^-$.
  If $\r $ belongs to the interior of $I_n$, then $\partial
  F_i(\r )=\partial F_{i,n}(\r )\subset [-n,n]$.
  
  Therefore, if $\phi_i=-\varphi_i\in \partial F_i(\r )$ with $\phi_i\in [-n,n]$,
  we have
  $F_i(\r )=\phi_i \r -\Fstar_i(\phi_i)=F_{i,n}(\r )$ so that
  $\phi_i\in \partial F_{i,n}(\r )$. 
  On the other hand, if
  $\partial F_i(\r )\ni\phi_i>n$, then $\r $ cannot belong to the interior of
  $I_n$, so that by monotonicity $\r \ge \r _n^+$ 
  and $\partial F_{i,n}(\r )\ni n=T_n(\phi_i)=-T_n(\varphi_i)$. 
  The case when $\partial F_i(\r )\ni\phi_i<-n$ is completely
  analogous. 

  Eventually, if $\phi_i=-\infty$ and $\r =0$ (in particular
  $F_i(0)=\Fstar_i(-\infty)<\infty$), then \eqref{eq:332} and the fact
  that $\Fstar_i$ is nondecreasing yields $F_{i,n}(0)=-\Fstar_i(-n)=
  \Gstar_i(n)=\Gstar_i(T_n(\varphi_i))$.

  For the last statement in (ii) the case $T_n(\varphi_i) = \varphi_i$
  is trivial. For $\varphi> n$ we have $-n\in\partial F_{i,n}(s)$
  implying $F_{i,n}(s)+F_i^*(-n) = -ns$. Hence, we have
  \[
  \Fstar_i(T_n(\varphi_i))=-F_i^*(-n)
  =F_{i,n}(s)+ns=F_{i,n}(s)+sT_n(\varphi_i).
  \]
  The case $\varphi_i<-n$ is similar.
\end{proof}

\subsection{A general duality result}
\label{sec:dualityGen}

The aim of this section is to show in complete generality the 
duality result
$\ET=\sfD$, by using the $\vvarphi$-formulation of the dual problem
\eqref{eq:274},
which is equivalent to \eqref{eq:46} by Proposition \ref{prop:equivalent-dual}.

We start with a simple lemma depending on a specific feature
of the entropy functions
(which fails exactly in the case of pure transport problems, see
Example
E.3 of Section \ref{ex:1}), using the strengthened feasibility condition
in \eqref{eq:302a}.
% \begin{equation}
%   \label{eq:302a}
%   \Big(\interior\big(m_1{\dom{F_1}}\big)\cap m_2\dom{F_2}\Big)\cup
%   \Big(m_1\dom{F_1}\cap \interior\big(m_2{\dom{F_2}}\big)\Big)\neq \emptyset.
% \end{equation}
First note that the couple $\varphi_i\equiv0$ provides an obvious
lower bound for $\sfD(\mu_1,\mu_2)$, viz.
\begin{equation}
  \label{eq:307}
  \sfD(\mu_1,\mu_2)\ge \DD(0,0|\mu_1,\mu_2)=\sum_i m_i\Gstar_i(0)=
  \sum_i m_i\inf F_i.
\end{equation}

We derive an upper and lower bound for the potential
$\varphi_1$ under the assumption that $\sfc$ is bounded.

\begin{lemma}
  \label{le:entropic-estimate}
  Let $m_i=\mu_i(X_i)$  and assume 
  $\interior\big(m_1{\dom{F_1}}\big)\cap m_2\dom{F_2}\neq\emptyset$,
  so that 
  \begin{equation}
    \label{eq:306}
    \exists\,\r _1^-,\r _1^+\in \dom{F_1},\ \r _2\in \dom{F_2}:\quad
    m_1\r _1^-<m_2\r _2<m_1\r _1^+,
  \end{equation}
  and $S:=\sup\sfc<\infty$.
  Then every couple $\vvarphi=(\varphi_1,\varphi_2)\in \Cphi{}$ 
  with  $\DD(\vvarphi|\mu_1,\mu_2)\ge \sum_i m_i\inf F_i$ satisfies
  \begin{equation}
    \label{eq:245}
    % \sfD(\mu_1,\mu_2)=\sup\Big\{\DD(\vvarphi|\mu_1,\mu_2):\ \vvarphi\in
    % \Cphi{} 
    %\Cdual\rmC\sfc 
    \Phi_1^-
    %-\frac{m_2r_2S}{m_1r_1''-m_2r_2}
    \le \sup \varphi_1\le
    \Phi_1^+,\quad
    \Phi_1^\pm:=
    \frac{m_1(F_1(\r _1^\pm)-\inf F_1)+m_2(F_2(\r _2)-\inf F_2)+m_2\r _2 S}{m_2\r _2-m_1\r _1^\pm}.
  \end{equation}
\end{lemma}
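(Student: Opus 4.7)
The plan is to convert the hypothesis $\DD(\vvarphi|\mu_1,\mu_2) \ge \sum_i m_i \inf F_i$ into an affine inequality in $t := \sup \varphi_1$ and then to apply it with each of the two choices $\r_1^\pm$ to squeeze $t$ from above and below. Since $\varphi_1 \in \LSC_s(X_1)$ is simple, $t$ is a real number. First I will use the constraint $\varphi_1\opz\varphi_2 \le \sfc \le S$ to deduce $\sup\varphi_2 \le S - t$; then, since $\Gstar_i$ is concave and nondecreasing (the latter inherited from $F_i^*$ being nondecreasing, as recorded in Section \ref{subsec:entropy}), the pointwise/monotone upper bound
\begin{equation*}
\DD(\vvarphi|\mu_1,\mu_2) \le m_1 \Gstar_1(t) + m_2 \Gstar_2(S - t),
\end{equation*}
combined with the hypothesis and the identity $\Gstar_i(0) = -F_i^*(0) = \inf F_i$ (already invoked in \eqref{eq:307}), produces the master inequality
\begin{equation*}
m_1 \bigl[\Gstar_1(t) - \Gstar_1(0)\bigr] \ge m_2 \bigl[\Gstar_2(0) - \Gstar_2(S - t)\bigr].
\end{equation*}

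The second ingredient is the Young--Fenchel bound $\Gstar_i(\varphi) \le F_i(\r) + \r\varphi$ valid for every $\r \in \dom{F_i}$ and every $\varphi\in\R$, which is immediate from the defining inequality $F_i^*(-\varphi) \ge \r(-\varphi) - F_i(\r)$. Applying it to $\Gstar_1(t)$ with $\r = \r_1^\pm$ as an upper bound on the left-hand side of the master inequality, and to $\Gstar_2(S - t)$ with $\r = \r_2$ as a lower bound on the right-hand side, a short rearrangement yields the affine estimate
\begin{equation*}
m_1 \bigl[F_1(\r_1^\pm) - \inf F_1\bigr] + m_2 \bigl[F_2(\r_2) - \inf F_2\bigr] + m_2 \r_2 S \ge \bigl(m_2 \r_2 - m_1 \r_1^\pm\bigr)\, t.
\end{equation*}
By the strict inequalities in \eqref{eq:306}, the coefficient $m_2\r_2 - m_1\r_1^-$ is strictly positive while $m_2\r_2 - m_1\r_1^+$ is strictly negative; dividing by each (and flipping the inequality direction in the negative case) produces the two one-sided bounds that together yield \eqref{eq:245}.

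The only real subtlety is sign bookkeeping: one has to verify that $\Gstar_i$ is nondecreasing, apply the Young--Fenchel inequality once as an upper bound on one side of the master inequality and once as a lower bound on the other, and keep track of the fact that the two cases $\r_1^\pm$ produce opposite-direction conclusions only because the sign of the denominator $m_2\r_2 - m_1\r_1^\pm$ flips between them. Once these signs are in order, the remainder of the argument is elementary algebra.
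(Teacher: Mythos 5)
Your proof is correct and follows essentially the same route as the paper: bound $\sup\varphi_2 \le S - \sup\varphi_1$ from the constraint, use monotonicity of the $\Gstar_i$ to obtain $\DD(\vvarphi|\mu_1,\mu_2) \le m_1\Gstar_1(\sup\varphi_1) + m_2\Gstar_2(S - \sup\varphi_1)$, and then apply the Young--Fenchel bound $\Gstar_i(\varphi)\le F_i(\r)+\r\varphi$ to arrive at the affine inequality in $\sup\varphi_1$. The intermediate ``master inequality'' step (splitting into a difference of the two $\Gstar_i$ terms relative to their values at $0$) is a harmless detour: the paper bounds the sum $m_1\Gstar_1(\sup\varphi_1)+m_2\Gstar_2(S-\sup\varphi_1)$ by the affine quantity directly, which is marginally shorter, but your version is equally valid and the sign bookkeeping at the end is handled correctly.
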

\begin{proof}
Since $\vvarphi=(\varphi_1,\varphi_2)\in\Cphi{}$
satisfies $\sup\varphi_1+\sup\varphi_2\le S$, 
the definition of $\DD$ in \eqref{eq:44} and the  
monotonicity of $\Gstar $ yield
  \begin{align*}
    \sum_i m_i \inf F_i&\le \DD(\vvarphi|\mu_1,\mu_2)\le m_1 \Gstar_1(\sup \varphi_1)+ m_2
    \Gstar_2(S-\sup\varphi_1)
  \end{align*}
  Using  the dual bound 
  $\Gstar_i(\varphi_i)\le \varphi_i \r _i+F_i(\r _i)$ for $\r _i\in
  \dom{F_i}$ (cf.~\eqref{eq:303}) now implies
  \[
 \sum_i m_i \inf F_i\le \DD(\vvarphi|\mu_1,\mu_2)\le (m_1\r _1-m_2\r _2) \sup\varphi_1+
    m_1 F_1(\r _1)+m_2 F_2(\r _2)+m_2\r _2 S. %=(b-a) \sup\varphi_1 +(m_1+m_2)F+bm_2S
  \]
  Exploiting \eqref{eq:306}, the choice $\r _1:=\r _1^-$ shows the 
  upper bound in \eqref{eq:245}; and $\r_1=\r_1^+$ the
  lower bound.
  % $\Gstar_i(\varphi_i)\le \varphi_i r_i+\bar F_i$, so that \eqref{eq:244}, and the fact that
  % $\DD(0,0;\mu_1,\mu_2)\ge m_1G_1(0)+m_2G_2(0)$ with $G_i(0)=\inf F_i$, yield
  % Inverting the role of the coordinates we get a similar estimates for
  % $\sup\varphi_2$. Since $\sup\varphi_1+\sup\varphi_2\ge0$ we
  % eventually obtain \eqref{eq:245}.
\end{proof}
We improve the previous result by showing that in the case of bounded
cost functions it is sufficient to consider bounded potentials
$\varphi_i$.  The second lemma is well known in the case of Optimal
Transport problems and will provide a useful a priori estimate in the
case of bounded cost functions used in the proof of
Theorem~\ref{thm:weak-duality}. 
\newcommand{\Phimax}{\varphi_{\rm max}}

\begin{lemma}
  \label{le:potential-bound}
  If $\sup\sfc=S<\infty$ then for every couple $\vvarphi\in \Cphi{}$
  there exists $\tilde\vvarphi\in \Cphi{}$ such that 
  $\DD(\tilde\vvarphi|\mu_1,\mu_2)\ge
  \DD(\vvarphi|\mu_1,\mu_2)$ and 
  \begin{equation}
    \label{eq:242}
    \sup\tilde\varphi_i-\infp\tilde\varphi_i\le S,\quad
    0\le \sup\tilde\varphi_1+\sup\tilde\varphi_2\le S.
    % \sfD(\mu_1,\mu_2)=\sup\Big\{\DD(\vvarphi|\mu_1,\mu_2):\ \vvarphi\in
    % \Cphi{} ,\
    % \sup\varphi_i-\infp\varphi_i\le S,\ 
    % \sup\varphi_1+\sup\varphi_2\ge0\Big\}.
  \end{equation}
  If moreover \eqref{eq:302a} holds, than there exist a constant
  $\Phimax\ge0$ only depending on $F_i,m_i,S$ such that 
  \begin{equation}
    \label{eq:308}
    -\Phimax\le \inf\tilde\varphi_i\le \sup\tilde\varphi_i\le \Phimax.
  \end{equation}
\end{lemma}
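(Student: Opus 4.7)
The approach is the classical $\sfc$-transform trick, adapted here by the monotonicity of $\Gstar_i$, followed by a single constant shift to normalize the sum of the suprema; the uniform bound in the second part then follows by combining this construction with Lemma~\ref{le:entropic-estimate}.

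First, I would introduce the $\sfc$-transforms
\[
T_1\psi(x_1):=\inf_{x_2\in X_2}\bigl(\sfc(x_1,x_2)-\psi(x_2)\bigr),\qquad
T_2\psi(x_2):=\inf_{x_1\in X_1}\bigl(\sfc(x_1,x_2)-\psi(x_1)\bigr),
\]
and, given $\vvarphi=(\varphi_1,\varphi_2)\in\Cphi{}$, set $\hat\varphi_1:=T_1\varphi_2$ and $\check\varphi_2:=T_2\hat\varphi_1$. By construction $\hat\varphi_1\opz\check\varphi_2\le\sfc$, $\hat\varphi_1\ge\varphi_1$ and $\check\varphi_2\ge\varphi_2$; combined with the monotonicity of $\Gstar_i$ this gives $\DD(\hat\varphi_1,\check\varphi_2|\mu_1,\mu_2)\ge\DD(\vvarphi|\mu_1,\mu_2)$, and using $0\le\sfc\le S$ one checks directly that $\Gstar_i(\hat\varphi_1),\Gstar_i(\check\varphi_2)$ remain bounded. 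For arbitrary $x_1,y_1\in X_1$ and $\eps>0$, choosing $x_2^*$ with $\sfc(y_1,x_2^*)-\varphi_2(x_2^*)\le\hat\varphi_1(y_1)+\eps$ yields $\hat\varphi_1(x_1)-\hat\varphi_1(y_1)\le\sfc(x_1,x_2^*)-\sfc(y_1,x_2^*)+\eps\le S+\eps$, so that $\sup\hat\varphi_1-\inf\hat\varphi_1\le S$; the symmetric computation gives $\sup\check\varphi_2-\inf\check\varphi_2\le S$. The bound $\sup\hat\varphi_1+\sup\check\varphi_2\le S$ then follows from the pointwise constraint $\hat\varphi_1\opz\check\varphi_2\le\sfc\le S$ (the suprema being attained for $\LSC_s$ functions). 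If this sum is negative, I set $b:=-(\sup\hat\varphi_1+\sup\check\varphi_2)>0$ and replace $\hat\varphi_1$ by $\tilde\varphi_1:=\hat\varphi_1+b$: the constraint $\tilde\varphi_1(x_1)+\check\varphi_2(x_2)\le\sup\tilde\varphi_1+\sup\check\varphi_2=0\le\sfc(x_1,x_2)$ is still verified, the span of $\tilde\varphi_1$ is unchanged, and $\DD$ further increases; otherwise I take $\tilde\varphi_1:=\hat\varphi_1$. Setting $\tilde\varphi_2:=\check\varphi_2$ completes the first part.

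For the second part I would first, if necessary, replace $\vvarphi$ by $(0,0)\in\Cphi{}$, which by $\DD(0,0|\mu_1,\mu_2)=\sum_im_i\Gstar_i(0)=\sum_im_i\inf F_i$ only improves $\DD$ whenever the original value lies below $\sum_im_i\inf F_i$; thus I may assume the $\tilde\vvarphi$ produced above satisfies $\DD(\tilde\vvarphi|\mu_1,\mu_2)\ge\sum_im_i\inf F_i$. Up to swapping the indices $1,2$, assumption \eqref{eq:302a} reduces to \eqref{eq:306}, and Lemma~\ref{le:entropic-estimate} applies and yields $\Phi_1^-\le\sup\tilde\varphi_1\le\Phi_1^+$, with $\Phi_1^\pm$ depending only on $F_1,F_2,m_1,m_2,S$. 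Combining with $0\le\sup\tilde\varphi_1+\sup\tilde\varphi_2\le S$ bounds $\sup\tilde\varphi_2$ by the same data, and the span bound $\sup\tilde\varphi_i-\inf\tilde\varphi_i\le S$ then forces $\inf\tilde\varphi_i$ into a bounded interval, yielding the desired constant $\Phimax$.

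The main obstacle is a genuine mismatch between the $\sfc$-transform and the class $\LSC_s$ appearing in the definition of $\Cphi{}$: for $\sfc$ merely lower semicontinuous and $\varphi_2\in\LSC_s$, the $\sfc$-conjugate $T_1\varphi_2$ is typically only upper semicontinuous in $x_1$, and so in principle not in $\Cphi{}$. I would handle this by working in the equivalent formulation of the dual problem over $\rmB_b$ granted by Theorem~\ref{thm:duality}, where the arguments above go through verbatim, and if needed approximating the resulting $\sfc$-concave function from below by an increasing net of $\LSC_s$ functions to land back in $\Cphi{}$ while preserving both the constraint and the $\DD$-monotonicity.
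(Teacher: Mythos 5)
Your plan via $\sfc$-transforms is a natural first instinct, and the first half of the argument (monotonicity of $\Gstar_i$, the span estimate $\hat\varphi_1(x_1)-\hat\varphi_1(y_1)\le\sfc(x_1,x_2^*)-\sfc(y_1,x_2^*)+\eps\le S+\eps$, the constant shift to normalize the sum of suprema, the reduction to Lemma~\ref{le:entropic-estimate}) is all sound. But the obstacle you flag at the end is not a technicality you can wave away; it is fatal as argued.

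The problem is that the $\sfc$-transform $T_1\varphi_2=\inf_{x_2}(\sfc(\cdot,x_2)-\varphi_2(x_2))$ of an $\LSC_s$ function under a general l.s.c.\ cost is an infimum of a (possibly uncountable) family and typically fails to be lower semicontinuous; in the continuous-cost model case it is upper semicontinuous. Your proposed repair does not close this gap for two independent reasons. First, the ``equivalent $\rmB_b$ formulation'' of the \emph{dual problem} is Corollary~\ref{cor:LSC=C}, which is a consequence of Theorem~\ref{thm:weak-duality}, and the proof of Theorem~\ref{thm:weak-duality} in turn invokes Lemma~\ref{le:potential-bound} (Step~3); invoking it here is circular. (Theorem~\ref{thm:duality} itself is about $\FF(\gamma|\mu)$, not about the constrained supremum $\sfD$.) Second, an upper semicontinuous function cannot in general be written as an increasing limit of lower semicontinuous ones — consider $\nchi_{\{0\}}$ on $\R$: every l.s.c.\ minorant is $\le 0$ everywhere — so ``approximating from below by an increasing net of $\LSC_s$ functions'' need not recover the target, and since $\Gstar_i$ is nondecreasing, passing to such a minorant can only \emph{decrease} $\DD$, destroying the inequality $\DD(\tilde\vvarphi)\ge\DD(\vvarphi)$ you are trying to preserve. (The paper's Lemma~\ref{le:technical2} shows the transform stays in $\LSC_s$ only for \emph{simple} costs $\sfc$, which is not the setting of this lemma.)

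The paper avoids all of this by a much lighter-touch truncation: replace $\varphi_1$ with $\varphi_1\lor(-\sup\varphi_2)$. This is a pointwise maximum of an $\LSC_s$ function with a constant, hence still $\LSC_s$ and simple; it raises $\varphi_1$ so $\DD$ cannot decrease; and the constraint $\varphi_1\oplus\varphi_2\le\sfc$ is preserved because $\sfc\ge0$. After the symmetric truncation of $\varphi_2$ one gets $\inf\varphi_1\ge-\sup\varphi_2$ and $\inf\varphi_2\ge-\sup\varphi_1$, which combined with $\sup\varphi_1+\sup\varphi_2\le S$ gives the span bound directly. The additive shift to bring $\sup\tilde\varphi_1+\sup\tilde\varphi_2$ up to $0$ and the subsequent use of Lemma~\ref{le:entropic-estimate} under \eqref{eq:302a} are then as you describe. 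If you want to salvage your argument, replace the $\sfc$-transform by this truncation; the rest of your reasoning carries over unchanged.
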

\begin{proof}
  Since $\sfc\ge 0$, possibly replacing $\varphi_1$ with
  $\tilde\varphi_1:=\varphi_1\lor (-\sup\varphi_2)$ 
  we obtain a new couple $(\tilde\varphi_1,\varphi_2)$
  with
  \begin{gather*}
    \tilde\varphi_1\ge\varphi_1,\quad
    \tilde\varphi_1(x_1)+\varphi_2(x_2)\le
    \big(\varphi_1(x_1)+\varphi_2(x_2)\big)\land 0\le \sfc(x_1,x_2)
  \end{gather*}
  so that $(\tilde\varphi_1,\varphi_2)\in \Cphi{}$ and 
  $ \DD(\tilde\varphi_1,\varphi_2|\mu_1,\mu_2)\ge
    \DD(\varphi_1,\varphi_2|\mu_1,\mu_2)$ since $\Gstar_1$ is nondecreasing.
    It is then not
  restrictive to assume that $\inf\varphi_1\ge -\sup\varphi_2$;
  a similar argument shows that we can assume $\inf\varphi_2\ge
  -\sup\varphi_1$.
  Since 
  \begin{equation}
  \sup\varphi_1+\sup\varphi_2\le
  S\label{eq:244}
\end{equation}
  we thus obtain a new couple $(\tilde\varphi_1,\tilde\varphi_2)\in
  \Cdual\rmC\sfc $ with
  \begin{equation}
    \label{eq:235}
   \DD(\tilde\varphi_1,\tilde\varphi_2|\mu_1,\mu_2)\ge
   \DD(\varphi_1,\varphi_2|\mu_1,\mu_2),\quad
   \sup\tilde\varphi_i-\inf\tilde\varphi_i\le S.% \quad
    % \sup\varphi_2-\inf\varphi_2\le S.
  \end{equation}
  If moreover $\sup \varphi_1+\sup\varphi_2=-\delta<0$, we could always
  add the constant $\delta$ to, e.g., $\varphi_1$, thus increasing
  the value of $\DD$ still preserving the constraint $\Cphi{}$.
  Thus, \eqref{eq:242} is established. \EEE

  When \eqref{eq:302a} holds (e.g.~in the case considered by
  \eqref{eq:306}) 
  the previous Lemma 
  \ref{le:entropic-estimate} provides constants $\varphi_1^\pm$ such that
  $\varphi_1^-\le \sup\tilde\varphi_1\le \varphi_1^+$. Now, \eqref{eq:242} 
  shows that
  $\varphi_2^-\le \sup\tilde\varphi_{2}\le \varphi_2^+$ with
  $\varphi_2^-:=-\varphi_1^+$ and $\varphi_2^+:=S-\varphi_1^-$. 
  Applying \eqref{eq:242} once again, we obtain \eqref{eq:308} 
  with $\Phimax:=S+\varphi_1^+-\varphi_1^-$.
\end{proof}
Before stating the last lemma we recall the useful notion of
$\sfc$-transforms of functions $\varphi_i:X_i\to \bar\R$ for a real
valued cost $\sfc:\xX\to[0,\infty)$, defined via
\begin{equation}
  \label{eq:74}
  \varphi_1^\sfc(x_2):=\inf_{x\in X_1}\big( \sfc(x,x_2)-\varphi_1(x)\big)
  \quad\text{ and}\quad 
  \varphi_2^\sfc(x_1):=\inf_{x\in X_2} \big(\sfc(x_1,x)-\varphi_2(x)\big).
\end{equation}
% with the convention that the sum is $+\infty$
% whenever $\sfc=+\infty$ and $\varphi_i=+\infty$.
%
It is well known that if $\varphi_1\oplus\varphi_2\le \sfc$ with
$\sup\varphi_i<\infty$ then
\begin{equation}
\varphi_1^\sfc\text{ and }\varphi_2^\sfc \text{ are bounded,}\quad 
\varphi_1^{\sfc\sfc}\oplus\varphi_1^\sfc\le \sfc,\quad
\varphi_1^{\sfc\sfc}\geq \varphi_1, \text{ and } 
\varphi_1^\sfc\ge \varphi_2.\label{eq:309}
\end{equation}
Moreover, $\varphi_1=\varphi_1^{\sfc\sfc}$ if and only if
$\varphi_1=\varphi_2^\sfc$ for some function $\varphi_2$; in this case
$\varphi_1$ is called $\sfc$-concave and $(\varphi_1^{\sfc\sfc},
\varphi_1^\sfc)$ is a couple of $\sfc$-concave potentials.

Since $\Gstar_i$ are nondecreasing, it is also clear that whenever
$\varphi_1^{\sfc\sfc},\varphi_1^\sfc$ are $\mu_i$-measurable we have
the estimate
\begin{equation}
  \label{eq:81}
  \DD((\varphi_1,\varphi_2)|\mu_1,\mu_2)\le 
  \DD((\varphi_1^{\sfc\sfc},\varphi_2^\sfc)|\mu_1,\mu_2)
  \quad\forall \vvarphi\in \rmB(X_1)\times \rmB(X_2),\ 
  \varphi_1\oplus\varphi_2\le \sfc.
\end{equation}
The next lemma concerns the lower semicontinuity of $\varphi_i^\sfc$
in the case when $\sfc$ is simple (cf.~\cite{Kellerer84}), i.e.~it has
the form
\begin{equation}
  \label{eq:239}
  \sfc=\sum_{n=1}^N c_n \nchi_{A^1_n\times A^2_n},\quad 
  \text{with }c_n\geq0\text{ and } A^i_n\text{ open in }X_i.
\end{equation}

\begin{lemma}
  \label{le:technical2}
  Let us assume that $\sfc$ has the form \eqref{eq:239} and that
  $\vvarphi\in \rmB_s(X_1)\times \rmB_s(X_2)$ is a couple of simple
  functions taking values in $\dom{\Gstar_1}\times\dom{\Gstar_2}$ and
  satisfying $\varphi_1\oplus\varphi_2\le \sfc$.  Then
  $(\varphi_1^{\sfc\sfc},\varphi_1^\sfc)\in \Cphi{}$ with
  $\DD((\varphi_1^{\sfc\sfc},\varphi_1^\sfc)|\mu_1,\mu_2)\ge
  \DD(\vvarphi|\mu_1,\mu_2)$.
\end{lemma}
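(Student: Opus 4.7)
The plan is to observe that, since $\sup \sfc \le \sum_n c_n < \infty$, the pointwise inequality $\varphi_1^{\sfc\sfc} \oplus \varphi_1^\sfc \le \sfc$ and the comparisons $\varphi_1^{\sfc\sfc} \ge \varphi_1$, $\varphi_1^\sfc \ge \varphi_2$ are already given by the general properties \eqref{eq:309}, and the inequality for $\DD$ is \eqref{eq:81}. The only genuine work is to show that $\varphi_1^\sfc$ and $\varphi_1^{\sfc\sfc}$ are simple lower semicontinuous functions (so that they lie in $\LSC_s$) and that $\Gstar_i$ applied to them is bounded. Once the simplicity is established, boundedness of $\Gstar_i(\varphi_1^\sfc)$ and $\Gstar_i(\varphi_1^{\sfc\sfc})$ follows automatically: since $\dom{\Gstar_i}\supseteq (-\rec{(F_i)},+\infty)$ is upward closed and $\varphi_1,\varphi_2$ already take values in $\dom{\Gstar_1}\times \dom{\Gstar_2}$, the monotone comparisons $\varphi_1^{\sfc\sfc}\ge\varphi_1$ and $\varphi_1^\sfc\ge\varphi_2$ keep the $c$-transforms inside $\dom{\Gstar_i}$, and only finitely many values appear.

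For the simple l.s.c. property, I would exploit the finite open-rectangle form of $\sfc$ in \eqref{eq:239}. For each fixed $x_2\in X_2$, let $J(x_2):=\{n:x_2\in A^2_n\}\subseteq \{1,\ldots,N\}$; then the section
\[
\sfc(\cdot,x_2)=\sum_{n\in J(x_2)} c_n\,\nchi_{A^1_n}
\]
is a nonnegative combination of characteristic functions of \emph{open} sets in $X_1$, hence simple and lower semicontinuous, and as $x_2$ varies over $X_2$ there are at most $2^N$ distinct such sections. Writing the simple function $\varphi_2=\sum_k v_k\,\nchi_{E_k}$ with $(E_k)$ a finite Borel partition of $X_2$, and setting $\JJ_k:=\{J(x_2):x_2\in E_k\}$ (a finite subset of $2^{\{1,\ldots,N\}}$), the infimum defining the $c$-transform collapses to a finite minimum:
\[
\varphi_2^\sfc(x_1)=\inf_{x_2\in X_2}\bigl(\sfc(x_1,x_2)-\varphi_2(x_2)\bigr)
=\min_{k}\;\min_{J\in \JJ_k}\Bigl(\sum_{n\in J} c_n\,\nchi_{A^1_n}(x_1)-v_k\Bigr).
\]
This exhibits $\varphi_2^\sfc$ as the minimum of finitely many simple l.s.c.\ functions on $X_1$, hence itself simple and lower semicontinuous (using the fact that the minimum of finitely many l.s.c.\ functions is l.s.c., since $\{f\wedge g>t\}=\{f>t\}\cap\{g>t\}$).

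By the symmetric argument $\varphi_1^\sfc$ is simple and l.s.c.\ on $X_2$, and applying the same reasoning to the pair $(\varphi_1^\sfc,\sfc)$ gives that $\varphi_1^{\sfc\sfc}=(\varphi_1^\sfc)^\sfc$ is simple and l.s.c.\ on $X_1$. Combining this with the membership properties and the comparison $\DD((\varphi_1^{\sfc\sfc},\varphi_1^\sfc)|\mu_1,\mu_2)\ge \DD(\vvarphi|\mu_1,\mu_2)$ from \eqref{eq:81} concludes the proof.

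The main obstacle is this simple-l.s.c.\ step: for a lower (rather than upper) semicontinuous cost $\sfc$, infima over $X_2$ of $\sfc(\cdot,x_2)-\varphi_2(x_2)$ are in general not lower semicontinuous in $x_1$. It is precisely the open-rectangle simple form of $\sfc$, together with the simplicity of $\varphi_2$, that forces the infimum to reduce to a \emph{finite} minimum over a collection indexed by $2^N\times \{\text{levels of }\varphi_2\}$, which is the only reason lower semicontinuity survives the $c$-transform.
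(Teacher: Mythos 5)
Your proof is correct, and the central idea — that the infimum defining the $\sfc$-transform collapses to a \emph{finite} minimum of simple lower semicontinuous functions — is exactly the mechanism the paper uses as well. The difference lies only in the decomposition you choose. You partition the infimum variable by the level sets of the simple potential, intersected with the sets on which the cost section is constant (so both $\varphi$ and $\sfc(\cdot,x_2)$ are constant on each cell and the infimum over a cell is just the value). The paper instead partitions $X_1$ by the coarser Boolean structure $\zz=(\nchi_{A^1_n}(y))_n$ alone — on which only the cost section $\sfc(\cdot,x_2)$ is constant — and absorbs the variability of $\varphi_1$ into $\varphi_\szz:=\sup_{Y_\szz}\varphi_1$, writing $\varphi_1^\sfc=\min_\szz\big(\sfc(y_\szz,\cdot)-\varphi_\szz\big)$. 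The paper's decomposition produces at most $2^N$ terms regardless of the number of levels of $\varphi$ and uses only the boundedness of $\varphi$ for lower semicontinuity (its simplicity is invoked separately to get simplicity of the transform); yours uses the simplicity of $\varphi$ directly and gives a somewhat more explicit finite-min formula. Both the structural comparisons you invoke from \eqref{eq:309}, the upward-closure argument for $\dom{\Gstar_i}$, and the monotonicity estimate \eqref{eq:81} are applied correctly, so the remaining steps (membership in $\Cphi{}$ and the inequality for $\DD$) go through as you write them.
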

\begin{proof}
  It is easy to check that $\varphi_1^{\sfc\sfc},\varphi_1^\sfc$ are
  simple, since the infima in \eqref{eq:74} are taken on a finite
  number of possible values. By \eqref{eq:309} it is thus sufficient
  to check that they are lower semicontinuous functions.
  
  We do this for $\varphi_1^\sfc$, the 
  argument for $\varphi_1^{\sfc\sfc}=(\varphi_1^\sfc)^\sfc$ is
  completely analogous. 
  For this,   consider the sets
  \begin{align*}
    Z:={}&\big\{\zz=(z_n)_{n=1}^N\in \{0,1\}^N:
           \exists\,y\in X_1~\forall\,n=1,\ldots,N: \ z_n=\nchi_{A^1_n}(y)      
           \big\},\\
    Y_\szz:={}&\{y\in X_1:~\forall\,n=1,\ldots,N:\ \nchi_{A_n^1}(y)=z_n\}.
  \end{align*}
  Clearly, $(Y_\szz)_{\szz\in Z}$ defines a Borel partition of $X_1$; we define
  $\varphi_\szz:=\sup\{\varphi_1(y):y\in Y_\szz\}$. 
  
  By construction, for every $\zz\in Z$ and $y\in Y_\szz$ the map
  $f_\szz(x):=\sfc(y,x)-\varphi_\szz$ is independent of $y$ in
  $Y_\szz$ and it is lower semicontinuous w.r.t.~$x\in X_2$ since
  $\sfc$ is lower semicontinuous.  Since $\varphi_1^\sfc(x_2)$ is the
  minimum of a finite collection of lower semicontinuous functions,
  viz.
  \begin{equation}
    \label{eq:255}
    \varphi_1^\sfc(x_2)=\min\EEE\big\{f_\szz(x_2):\zz\in Z\big\}
  \end{equation}
  we obtain $\varphi_1^\sfc\in \LSC(X_1)$.
\end{proof}

With all these auxiliary results at hand, we are now ready to
prove our main result concerning the dual representation
using Theorem \ref{thm:minimax}.

\begin{theorem}
  \label{thm:weak-duality}
  In the basic coercive setting of Section \ref{subsec:setting}
  (i.e.~\eqref{eq:320} or \eqref{eq:319} hold), the Entropy-Transport
  functional \eqref{eq:4} and the dual functional \eqref{eq:44}
  satisfy
  \begin{equation}
    \label{eq:73}
    \inf_{\sggamma\in \cM(X_1\times X_2)} \ETint(\ggamma|\mu_1,\mu_2)=
    \sup_{\svvarphi\in \cCphi{} }\DD(\vvarphi|\mu_1,\mu_2)
    \quad\forevery\mu_i\in \cM(X_i),
  \end{equation}
  i.e.~$\ET(\mu_1,\mu_2)=\sfD(\mu_1,\mu_2)$
  for every $\mu_i\in \cM(X_i).$
\end{theorem}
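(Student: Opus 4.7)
The easy inequality $\ET(\mu_1,\mu_2)\ge\sfD(\mu_1,\mu_2)$ has already been observed at \eqref{eq:52} and is underpinned by Proposition~\ref{prop:trivial-bis}. I focus on the nontrivial inequality $\ET\le\sfD$, whose proof I intend to base on the minimax Theorem~\ref{thm:minimax} applied to the Lagrangian
\[
\tilde\LL(\ggamma,\vvarphi):=\int_{\sxX}\big(\sfc-\varphi_1\opz\varphi_2\big)\,\d\ggamma+\sum_i\int_{X_i}\Gstar_i(\varphi_i)\,\d\mu_i,
\]
equivalent to \eqref{eq:42} through the substitution \eqref{eq:303}. The target identity $\inf_A\sup_B\tilde\LL=\sup_B\inf_A\tilde\LL$ encodes exactly $\ET=\sfD$ thanks to \eqref{eq:299bis} and the elementary computation $\inf_\ggamma\tilde\LL(\ggamma,\vvarphi)=\DD(\vvarphi|\mu_1,\mu_2)$ on the feasible set $\{\varphi_1\oplus\varphi_2\le\sfc\}$, together with the fact that $\sup_\vvarphi\tilde\LL(\ggamma,\vvarphi)=\ETint(\ggamma|\mu_1,\mu_2)$ by Theorem~\ref{thm:duality}.

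The principal obstructions to a direct application of Theorem~\ref{thm:minimax} are that narrow lower semicontinuity of $\ggamma\mapsto\tilde\LL(\ggamma,\vvarphi)$ is not automatic (the integrand is l.s.c.\ minus l.s.c.), and that $\Cphi{}$ is generally non-compact. I resolve both through a three-level approximation. First, I write $\sfc$ as the pointwise supremum of an increasing directed family $(\sfc_\lambda)_{\lambda\in\Lambda}$ of bounded l.s.c.\ simple functions of the form \eqref{eq:239}; by \eqref{eq:280} and Lemma~\ref{le:lsc} the primal values satisfy $\ET_\lambda\uparrow\ET$, while obviously $\sfD_\lambda\le\sfD$ since relaxing the constraint from $\le\sfc_\lambda$ to $\le\sfc$ only enlarges the dual feasible set. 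Hence it suffices to establish $\ET_\lambda=\sfD_\lambda$ for each fixed bounded simple cost $\sfc=\sum_{n=1}^N c_n\nchi_{A^1_n\times A^2_n}$. Second, I replace $F_i$ by the Lipschitz truncations $F_{i,N}$ of Lemma~\ref{le:auxiliary-app}; Corollary~\ref{cor:limit1} controls the limit, reducing to entropies with full domain and Lipschitz regularity.

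With these reductions, Lemma~\ref{le:technical2} permits restriction of the dual supremum to $\sfc$-concave pairs $(\varphi_1^{\sfc\sfc},\varphi_1^\sfc)$, which take only finitely many values on the atoms of the finite partition generated by the open sets $A^i_n$; under the strengthened feasibility \eqref{eq:302a}, Lemma~\ref{le:potential-bound} further produces a uniform bound $\|\varphi_i\|_\infty\le\Phi_{\max}$, so the reduced $B$ is a convex compact subset of a finite-dimensional Euclidean space. For $A$ I take a narrow-compact mass-sublevel of $\cM(\xX)$, compact under the coercivity assumption \eqref{eq:coercivity} by Theorem~\ref{thm:easy-but-important} and the bound \eqref{eq:262} (using Proposition~\ref{prop:DLT} in the superlinear case). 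On $A\times B$ the Lagrangian $\tilde\LL$ becomes bilinear in the finite-dimensional coefficients of $\vvarphi$ and in the quantities $\ggamma(A^1_i\cap A^2_j)$; concavity and continuity in $\vvarphi$ are immediate, and continuity/l.s.c.\ in $\ggamma$ is obtained by a portmanteau-type argument after perturbing the open sets $A^i_n$ to $\ggamma$-continuity sets (allowed at the cost of an arbitrarily small error). The compactness condition \eqref{eq:29} of Theorem~\ref{thm:minimax} then holds at $\vvarphi_\star=0$, and the theorem yields the required minimax identity.

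The remaining degenerate case where \eqref{eq:302a} fails—i.e.\ when $J$ in \eqref{eq:83} is a single point as in \eqref{eq:294}—reduces via \eqref{eq:289} and Example~E.3 of Section~\ref{ex:1} to the classical Optimal Transport problem, whose duality is well known for l.s.c.\ costs in the Radon framework \cite{Kellerer84}. I expect the hardest step to be the simultaneous verification of narrow continuity of $\ggamma\mapsto\int(\varphi_1\opz\varphi_2)\,\d\ggamma$ and of the compactness hypothesis \eqref{eq:29}: the natural open-set representation of a simple l.s.c.\ function only gives semicontinuity, which is why the reduction to $\sfc$-concave $\vvarphi$ (allowing a finite-dimensional parametrization and a $\ggamma$-continuity-set adjustment) and to truncated entropies (ensuring bounded conjugates and uniform a priori bounds on $\vvarphi$) is the technical core of the argument.
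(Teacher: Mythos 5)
Your overall strategy — establish $\ET\le\sfD$ by minimax — is the same as the paper's, and several of your ingredients (Lemma~\ref{le:lsc} for $\sfc$-approximation, Lemma~\ref{le:potential-bound} for a priori bounds, Lemma~\ref{le:technical2} for $\sfc$-transforms, the degenerate case via Example E.3) do appear in the paper's proof, though in a different order. The crucial divergence is \emph{where} compactness is obtained, and this is where your proposal has a genuine gap.

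You reduce \emph{first} to a bounded simple cost $\sfc_\lambda$ of the form \eqref{eq:239}, and \emph{then} truncate the entropies to the Lipschitz $F_{i,N}$ from Lemma~\ref{le:auxiliary-app}. After both truncations the coercive setting \eqref{eq:coercivity} is destroyed: $F_{i,N}$ has $\rec{(F_{i,N})}=N<\infty$, so \eqref{eq:320} fails, and a bounded simple cost has compact sublevels only if $\xX$ itself is compact, so \eqref{eq:319} fails as well. Consequently neither Theorem~\ref{thm:easy-but-important} nor Proposition~\ref{prop:DLT} is applicable at that stage, and the ``narrow-compact mass-sublevel of $\cM(\xX)$'' you propose as $A$ does not exist: by Prokhorov's Theorem~\ref{thm:Prokhorov} boundedness alone is not enough, one needs equal tightness, which is exactly what the coercivity assumption buys. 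Your invocation of ``the superlinear case'' for tightness is circular, since the whole point of truncating $F_i$ was to make them non-superlinear. The paper avoids this trap by applying the minimax theorem \emph{once and for all} in Step~1, under the hypothesis that $\sfc$ has compact sublevels, and without touching $F_i$ or finite-dimensionalizing $B=\Cphi{}$ (Theorem~\ref{thm:minimax} requires compactness of a sublevel in $A$ only, not compactness of $B$). Passage to a bounded simple cost is done \emph{afterwards} (Step~2) by restricting $\mu_i$ to their compact supports $\tilde X_i$, which makes $\tilde X_1\times\tilde X_2$ compact so that the trivial truncation $\sfc\wedge\lambda$ still has compact sublevels; only then is Lemma~\ref{le:technical2} invoked to promote the resulting simple potentials to l.s.c.\ $\sfc$-concave ones on all of $X_i$. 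A second, subordinate gap is that your ``portmanteau-type argument after perturbing the open sets $A^i_n$ to $\ggamma$-continuity sets'' is not worked out and it is unclear how the perturbation can be made uniformly over a narrow-compact family of plans, or even how to ensure the constraint $\varphi_1\oplus\varphi_2\le\sfc$ survives the perturbation of the $A^i_n$. If you want to save the finite-dimensionalization idea, you should truncate \emph{only} the cost and leave the entropies superlinear, so that Proposition~\ref{prop:DLT} still gives equal tightness; or, more simply, follow the paper by first reducing to compactly supported $\mu_i$ and exploiting the compactness of $\tilde X_1\times\tilde X_2$.
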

\begin{proof}
  Since $\ET\geq \sfD$ is obvious, it suffices to show $\ET\leq\sfD$.
  In particular, it is not restrictive to assume that
  $\sfD(\mu_1,\mu_2)$ is finite.  We proceed in various steps,
  considering first the case when %$X_i$ are compact. %
  $\sfc$ has compact sublevels.
  % The case when $(X_i,\tau_i)$ follows as well by combining
  % Steps 1, 4 and 5. 
  We will assume that $\rec{(F_i)}=+\infty$ (so that $\Gstar_i$ are
  continuous and increasing on $\R$, and $\Gstar_i\circ\varphi_i\in
  \LSC_b(X_i)$ whenever $\varphi_i\in \LSC_b(X_i)$), and we will
  remove the compactness assumption on the sublevels of $\sfc$ in the
  following steps. 

  \noindent
\textbf{Step 1:} 
\emph{The cost $\sfc$ has compact sublevels.}
We can directly apply Theorem
\ref{thm:minimax} to the saddle functional $\LL$ of \eqref{eq:42}
by choosing $A=\rmM$ given by \eqref{eq:45} endowed with the narrow
topology and $B=\Cphi{}$.
%$\{\vvarphi\in \rmC_b(X_1)\times\rmC_b(X_2): 
%G(\varphi_i)\in \rmC_b(X_i), \ \varphi_1\oplus\varphi_2\le
%\sfc\big\}$.  
Conditions \eqref{eq:28} and \eqref{eq:37} are clearly satisfied and
the coercivity assumption $\rec{(F_1)}+\rec{(F_2)}+\min\sfc>0$ shows
that we can choose $\ppsi_*=(\bar\psi_1,\bar\psi_2)$ with constant
functions $\bar\psi_i$ and
$-\FHstar(\bar\psi_i)=-\bar\varphi_i=\bar\phi_i\in [0, \rec{(F_i)}]$
such that
\begin{displaymath}
  D=\min\Big(\sfc-(\varphi_1\oplus\varphi_2)\Big)=\phi_1+\phi_2+\min\sfc>0,\quad
  \bar\psi_i>-\infty.
\end{displaymath}
Arguing as in the proof of Theorem \ref{thm:easy-but-important} \emph{(ii)}
we immediately see that \eqref{eq:29} is satisfied, since
\begin{displaymath}
  \LL(\ggamma,\ppsi_*)=
  \int_\sxX\Big(\sfc-\min\sfc\Big)\,\d\ggamma+
  D\ggamma(\xX)+\sum_i \bar\psi_i\mu_i(X_i).
\end{displaymath}
In fact, for $C$ sufficiently big, the sublevels $\{\ggamma\in
\rmM:\LL(\ggamma,\ppsi_*)\le C\big\}$ are closed, bounded (since
$D>0$) and equally tight (by the compactness of the sublevels of
$\sfc$), thus narrowly compact.  Thus, \eqref{eq:73}, i.e.~$\ET=\sfD$,
follows from Theorem~\ref{thm:minimax}.
% (here we
% are using the feasibility of the primal problem, 
%     the compactness of $X_i$ and the fact that bounded sets
%     in $\cM(X_1\times X_2)$ are weakly compact by Theorem \ref{thm:Prokhorov}).
%  \end{proof}

\noindent
\textbf{Step 2:}
\emph{The case when $\mu_i$ have compact support,
  \eqref{eq:302a} holds and
  the cost $\sfc$ is simple, i.e.~\eqref{eq:239} holds.\EEE}
% \begin{equation}
%   \label{eq:239bis}
%   \sfc=\sum_{n=1}^N c_n \nchi_{A^1_n\times A^2_n},\quad c_n\ge0, \ A^i_n\text{
%     open in }X_i.
% \end{equation}
Let us set $\tilde X_i:=\supp(\mu_i)$. 
Since $\rec{(F_i)}=+\infty$ the support of 
all $\ggamma$ with $\EE(\ggamma|\mu_1,\mu_2)<\infty$ 
is contained $\tilde X_1\times \tilde X_2$ so that 
the minimum of the functional $\EE(\ggamma|\mu_1,\mu_2)$
%among all the plans supported in $\tilde X_1\times \tilde X_2$:
%clearly $\ET'\ge \ET(\mu_1,\mu_2)$.
% will not diminish 
does not change 
by restricting the spaces to $\tilde X_i$. 
By applying the previous
step to the problem
stated in $\tilde X_1\times \tilde X_2$, for every
$E<\ET(\mu_1,\mu_2)$
we find $\vvarphi\in \LSC_s(\tilde X_1)\times \LSC_s(\tilde X_2)$ 
such that $\varphi_1\oplus\varphi_2\le \sfc$ in 
$\tilde X_1\times \tilde X_2$, that $\Gstar_i(\varphi_i)$ is finite,
and that $\sum_{i}\int_{\tilde X_i}\Gstar_i(\varphi_i)\,\d\mu_i\ge E$.

Extending $\varphi_i$ to $-\sup\sfc$ in $X_i\setminus \tilde X_i$ the
value of $\DD(\vvarphi|\mu_1,\mu_2)$ does not change and we obtain a
couple of simple Borel functions with $\varphi_1\oplus\varphi_2\le
\sfc$ in $\xX$. We can eventually apply Lemma \ref{le:technical2} to
find $(\varphi_1^{\sfc\sfc},\varphi_1^\sfc)\in \Cphi{}$ with
$\DD(\varphi_1^{\sfc\sfc},\varphi_1^\sfc|\mu_1,\mu_2)\ge E$. Since
$E<\ET(\mu_1,\mu_2)$ was arbitrary, we conclude that \eqref{eq:73}
holds in this case as well. 

\noindent
\textbf{Step 3:} 
\emph{We remove the assumption on the compactness of $\supp(\mu_i)$.}

    Since $\mu_i$ are Radon, we can find two
    sequences of compact sets $K_{i,n}\subset X_i$ such that
    $\eps_{i,n}:=\mu_i(X_i\setminus K_{i,n})\to0$ as $n\to\infty$, i.e.~\EEE
    $\mu_{i,n}:=\nchi_{K_{i,n}}\cdot\mu_i$ is narrowly converging to
    $\mu_i$.

    Let $E_n:=\ET(\mu_{1,n},\mu_{2,n})$ and let $E_n'<E_n$ with
    $\lim_{n\to\infty}E'_n=\liminf_{n\to \infty}E_n$. 
    Since $\mu_{i,n}$ have compact support, 
    by the previous step and Lemma \ref{le:potential-bound}
    we can find a
    sequence $\vvarphi_n\in \Cphi{}$ and a constant $\Phimax$ independent
    of $n$ such that
    \begin{displaymath}
      \DD(\vvarphi_n|\mu_{1,n},\mu_{2,n})\ge E'_n\quad\text{and}\quad
      \sup|\varphi_n^i|\le \Phimax.
    \end{displaymath}
    This yields
    \begin{displaymath}
      \DD(\vvarphi_n|\mu_{1},\mu_{2})\ge \sum_i \int_{K_{i,n}}\Gstar_i(\varphi_{i,n})\,\d\mu_i+
      \sum_i \Gstar_i(-\Phimax)\eps_{i,n}\ge E'_n+\sum_i \Gstar_i(-\Phimax)\eps_{i,n}.
    \end{displaymath}
    Using the lower semicontinuity of $\ET$ from Lemma \ref{le:lsc}
    we obtain
    \begin{displaymath}
      \sfD(\mu_1,\mu_2)\geq\liminf_{n\to\infty}\DD(\vvarphi_n|\mu_{1},\mu_{2})\ge
      \lim_{n\to\infty} E'_n =
      \liminf_{n\to\infty}\ET(\mu_{1,n},\mu_{2,n})\ge \ET(\mu_1,\mu_2).
    \end{displaymath}
    Thus, \eqref{eq:73} is established. \EEE
%     By Lemma \ref{le:lsc} we get
%     \begin{displaymath}
%       \ET(\mu_1,\mu_2)\le \liminf_{n\to\infty}E_n=\lim_{n\to\infty} E'_n,
%     \end{displaymath}
%     so that
%     \begin{displaymath}
%       \liminf_{n\to\infty}\DD(\vvarphi_n|\mu_{1},\mu_{2})\ge
%       \liminf_{n\to\infty}E_n\ge \ET(\mu_1,\mu_2).
%     \end{displaymath}
%     % In the last steps we remove
%     % the assumption \eqref{eq:302a} and
%     % the boundedness assumption on $\sfc$.
% %  \end{proof}
% %  \begin{lemma}

\noindent
\textbf{Step 4:} \emph{We remove the assumption \eqref{eq:302a} on
  $F_i$.}  It is sufficient to approximate $F_i$ by an increasing and
pointwise converging sequence $F_i^n\in \Gamma(\R_+)$. The
corresponding sequence $(F_i^n)^\circ :\varphi_i\mapsto \sup_{s\geq 0}
(F_i^n(s)+s\varphi_i)$ of conjugate concave functions is also
nondecreasing and pointwise converging to $\Gstar_i$. By the previous
step, if $E_n < \ET^n(\mu_1,\mu_2)$ with
$\lim_{n\to\infty}E^n=\lim_{n\to\infty}\ET^n (\mu_1,\mu_2)=
\ET(\mu_1,\mu_2)$ we can find $\vvarphi_n\in \Cphi{}$ such that
\begin{displaymath}
  E_n\le \sum_i\int_{X_i}(F_i^n)^\circ(\varphi_i^n)\,\d\mu_i\le 
  \sum_i\int_{X_i}\Gstar_i(\varphi_i^n)\,\d\mu_i =\DD(\vvarphi_n|\mu_1,\mu_2).
\end{displaymath}
Passing to the limit $n\to\infty$ we conclude
$\ET(\mu_1,\mu_2)\leq \sfD(\mu_1,\mu_2)$ as desired.

\noindent
\textbf{Step 5:}
\emph{the case of a general cost $\sfc$.}

Let $\sfc:\xX\to[0,\infty]$ be an arbitrary l.s.c.~cost and
let us denote by $(\sfc^\alpha)_{\alpha\in \A}$ the class of costs characterized by
\eqref{eq:239} and majorized by $\sfc$. Then, $\A$ is a directed set
with the pointwise order $\le$, 
since maxima of a finite number of cost functions in $\A$ can still be
expressed as in \eqref{eq:239}. It is not difficult to check
that $\sfc=\sup_{\alpha\in\A}\sfc^\alpha=\lim_{\alpha\in\A}\sfc^\alpha$ so that
by Lemma \ref{le:lsc}
$\ET(\mu_1,\mu_2)=\lim_{\alpha\in \A}\ET^\alpha(\mu_1,\mu_2)=
\sup_{\alpha\in \A}\ET^\alpha(\mu_1,\mu_2)$, where
$\ET^\alpha$ denotes the Entropy-Transport functional associated to
$\sfc^\alpha$.

Thus for every $E<\ET(\mu_1,\mu_2)$ we can find $\alpha\in \A$ such
that $\ET^\alpha(\mu_1,\mu_2)> E$ and therefore, by the previous step,
a couple $\vvarphi^\alpha\in \LSC_s(X_1)\times \LSC_s(X_2)$ with
$\Gstar_i(\varphi_i^\alpha)$ finite such that
$\varphi_1^\alpha\oplus\varphi_2^\alpha\le \sfc^\alpha$ in $\xX$ and
$\DD(\vvarphi^\alpha|\mu_1,\mu_2)\ge E$. Since $\sfc^\alpha\le \sfc$
we have $\vvarphi^\alpha\in \Cphi{}$ and $\ET(\mu_1,\mu_2)\leq
\sfD(\mu_1,\mu_2)$ follows.
\end{proof}
Arguing as in Remark \ref{rem:LSC=C.1} we can change the spaces 
of test potentials $\vvarphi=(\varphi_1,\varphi_2)\in\Cphi{}$, see
\eqref{eq:301}. 

\begin{corollary}
  \label{cor:LSC=C}
  The duality formula \eqref{eq:73} still holds if we replace 
  the spaces of simple lower semicontinuous functions
  $\LSC_s(X_i)$ in the definition of $\Cphi{}$ with
  the spaces of bounded lower semicontinuous functions $\LSC_b(X_i)$ 
  or with the spaces of bounded Borel functions $\rmB_b(X_i)$.
  
  If $(X_i,\tau_i)$ are completely regular spaces, then 
  we can equivalently replace lower semicontinuous functions by
  continuous ones, obtaining
  \begin{equation}
    \label{eq:252}
    \begin{aligned}
      \ET(\mu_1,\mu_2)&=%\sfD(\mu_1,\mu_2)= ]
      \sup\Big\{ \sum_i\int_{X_i}
      \Gstar (\varphi_i)
      \,\d\mu_i: %\vvarphi\in \Cdual{}{},\
       \varphi_i,\,\Gstar_i(\varphi_i)\in
      %\psi_i ,\FHstar_i(\psi_i)\in
       \rmC_b(X_i),\ %\in \rmC_b(X_i),\ %\le F_i(0),\
       \varphi_1\oplus\varphi_2\le \sfc\Big\}
      \\
      &=
      \sup\Big\{ \sum_i\int_{X_i}
      \psi_i%\Gstar (\varphi_i)
      \,\d\mu_i: %\vvarphi\in \Cdual{}{},\
      % \varphi_i,\,\Gstar_i(\varphi_i)
      \psi_i ,\FHstar_i(\psi_i)\in
      \rmC_b(X_i),\ %\in \rmC_b(X_i),\ %\le F_i(0),\
      \FHstar_1(\psi_1)\oplus\FHstar_2(\psi_2)\le \sfc\Big\}.
    \end{aligned}
  \end{equation}
  % In fact, by complete regularity 
  % it is possible to express every bounded lower semicontinuous function as the
  % supremum
  % of a family of continuous and bounded functions. We can then apply
  % the $\tau$-continuity of the integrals with respect to 
  % the Radon measures $\mu_i$.
\end{corollary}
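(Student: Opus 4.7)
I would prove the corollary by a sandwich argument resting on Theorem~\ref{thm:weak-duality}, which already identifies $\ET(\mu_1,\mu_2)$ as the supremum of $\DD$ over $\Cphi{}$ defined with $\LSC_s(X_i)$. Since $\LSC_s(X_i)\subset \LSC_b(X_i)\subset \rmB_b(X_i)$, enlarging the class of admissible potentials can only increase the supremum, so
\[
\ET(\mu_1,\mu_2)=\sup_{\sCphi{}_{\LSC_s}}\DD\;\le\;\sup_{\sCphi{}_{\LSC_b}}\DD\;\le\;\sup_{\sCphi{}_{\rmB_b}}\DD.
\]
For the reverse inequality I would apply Proposition~\ref{prop:trivial-bis}: for every bounded Borel couple $\vvarphi$ with $\varphi_1\oplus\varphi_2\le\sfc$ and $\Gstar_i(\varphi_i)$ bounded, we have $\DD(\vvarphi|\mu_1,\mu_2)\le \ETint(\ggamma|\mu_1,\mu_2)$ for every feasible $\ggamma$. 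Taking the infimum over $\ggamma$ gives $\sup_{\sCphi{}_{\rmB_b}}\DD\le \ET(\mu_1,\mu_2)$, closing the sandwich.

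For the completely regular case, it suffices to prove $\sup_{\sCphi{}_{\rmC_b}}\DD=\sup_{\sCphi{}_{\LSC_b}}\DD$. The inequality $\le$ is immediate from $\rmC_b\subset \LSC_b$. For the reverse, I would adapt the argument sketched in Remark~\ref{rem:LSC=C.1}. Given $\vvarphi\in \Cphi{}_{\LSC_b}$, complete regularity of $(X_i,\tau_i)$ provides directed families $(\varphi_i^{\alpha_i})_{\alpha_i\in A_i}\subset \rmC_b(X_i)$ with $\varphi_i^{\alpha_i}\uparrow \varphi_i$ pointwise. The constraint $\varphi_1^{\alpha_1}\oplus\varphi_2^{\alpha_2}\le \varphi_1\oplus\varphi_2\le\sfc$ is preserved, and the boundedness of $\Gstar_i(\varphi_i)$ forces $\varphi_i$ (hence each $\varphi_i^{\alpha_i}$) to take values in a compact subinterval of $\mathrm{int}\,\dom\Gstar_i$ where $\Gstar_i$ is continuous and nondecreasing; consequently $\Gstar_i(\varphi_i^{\alpha_i})\in \rmC_b(X_i)$, and each $(\varphi_1^{\alpha_1},\varphi_2^{\alpha_2})$ is admissible for the continuous formulation. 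The monotone continuity property \eqref{eq:280} of Radon integrals along directed families yields
\[
\int_{X_i}\Gstar_i(\varphi_i^{\alpha_i})\,\d\mu_i\;\uparrow\;\int_{X_i}\Gstar_i(\varphi_i)\,\d\mu_i,
\]
so $\sup_{\sCphi{}_{\rmC_b}}\DD\ge \DD(\vvarphi|\mu_1,\mu_2)$. Taking the supremum over $\vvarphi$ closes the argument for the first equality in \eqref{eq:252}.

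For the $\psi$-formulation in \eqref{eq:252}, I would invoke the equivalence established in Proposition~\ref{prop:equivalent-dual}: the change of variable $\varphi_i=-\FHstar_i(\psi_i)$, $\psi_i=\Gstar_i(\varphi_i)$ is a bijection between the admissibility constraints on $\varphi_i$ and on $\psi_i$, and since both $\Gstar_i$ and $\FHstar_i$ are continuous on the interior of their proper domains, it restricts to a bijection between pairs with $\varphi_i,\Gstar_i(\varphi_i)\in \rmC_b(X_i)$ and pairs with $\psi_i,\FHstar_i(\psi_i)\in \rmC_b(X_i)$. The dual integral is preserved along this change of variable by definition, so the two suprema coincide.

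The main obstacle is the subtlety in preserving the boundedness and continuity of $\Gstar_i(\varphi_i^{\alpha_i})$ when $\rec{(F_i)}<\infty$, because $\Gstar_i$ may be discontinuous on the boundary of its domain; this is resolved precisely by the assumption that $\Gstar_i(\varphi_i)$ is bounded in the original admissibility class, which forces $\varphi_i$ to stay away from the boundary $-\rec{(F_i)}$ where $\Gstar_i$ could degenerate, and hence the same holds for every approximating $\varphi_i^{\alpha_i}$.
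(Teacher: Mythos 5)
Your proposal follows essentially the same route as the paper: the paper simply invokes Remark~\ref{rem:LSC=C.1} together with the sandwich argument, and you are filling in the same details. The first half (enlarging $\LSC_s\subset\LSC_b\subset\rmB_b$ and then capping with Proposition~\ref{prop:trivial-bis}) and the use of Proposition~\ref{prop:equivalent-dual} to transfer between the $\varphi$- and $\psi$-formulations are correct.

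There is, however, one genuine gap in the monotone-approximation step. You write that ``the boundedness of $\Gstar_i(\varphi_i)$ forces $\varphi_i$ (hence each $\varphi_i^{\alpha_i}$) to take values in a compact subinterval of $\mathrm{int}\,\dom\Gstar_i$.'' The parenthetical ``hence each $\varphi_i^{\alpha_i}$'' does not follow: complete regularity only guarantees that $\varphi_i$ is the supremum of a directed family of continuous functions \emph{below} it, and the individual members of this family can dip arbitrarily far below $\inf_{X_i}\varphi_i$. If $\asympt F_i=+\infty$, then $\Gstar_i(\varphi_i^{\alpha_i})$ would tend to $-\infty$ on the set where $\varphi_i^{\alpha_i}$ is small, so $\Gstar_i(\varphi_i^{\alpha_i})$ need not be bounded and the pair need not lie in $\Cphi{}$. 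The fix is the explicit truncation step used in Remark~\ref{rem:LSC=C.1}: replace $\varphi_i^{\alpha_i}$ by $\max(\varphi_i^{\alpha_i},\inf_{X_i}\varphi_i)$, which is still continuous, still below $\varphi_i$, still forms a directed family increasing to $\varphi_i$, and now takes values in $[\inf\varphi_i,\sup\varphi_i]$, on which $\Gstar_i$ is finite, continuous, and bounded. Once you add this truncation the argument is complete.

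A smaller imprecision: the change of variables between $\varphi_i$ and $\psi_i$ is not literally a bijection (both $\Gstar_i$ and $-\FHstar_i$ can have flat pieces, e.g.\ when $\derzero{(F_i)}$ is finite). The correct statement, as in Proposition~\ref{prop:equivalent-dual}, is a one-sided domination in both directions: given $\vvarphi$ set $\psi_i:=\Gstar_i(\varphi_i)$ to dominate the value of $\sfD'$, and conversely set $\varphi_i:=\FHstar_i(\psi_i)$. This yields equality of the suprema without any bijectivity; your conclusion is right but the reason given is not.
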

\begin{corollary}[Subadditivity of $\ET$]
  \label{cor:subadditivity}
  The functional $\ET$
  is convex and positively $1$-homogeneous (in particular it is subadditive), i.e.
  for every $\mu_i,\mu_i'\in \cM(X)$ and $\lambda\ge0$ we have
  \begin{equation}
    \label{eq:136}
    \ET(\lambda\mu_1,\lambda\mu_2)=
    \lambda\,\ET(\mu_1,\mu_2),\quad
     \ET(\mu_1+\mu_1',\mu_2+\mu_2')\le 
     \ET(\mu_1,\mu_2)+\ET(\mu_1',\mu_2').
  \end{equation}
\end{corollary}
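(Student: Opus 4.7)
The plan is to deduce both properties directly from the duality identity $\ET(\mu_1,\mu_2) = \sfD(\mu_1,\mu_2)$ just established in Theorem~\ref{thm:weak-duality}. The key observation is that, with $\vvarphi$ held fixed, the map
\[
(\mu_1,\mu_2) \;\longmapsto\; \DD(\vvarphi|\mu_1,\mu_2) = \int_{X_1}\Gstar_1(\varphi_1)\,\d\mu_1 + \int_{X_2}\Gstar_2(\varphi_2)\,\d\mu_2
\]
is linear in $(\mu_1,\mu_2)$, since each $\Gstar_i(\varphi_i)$ is a fixed bounded function. Hence $\ET=\sfD$ is represented as a pointwise supremum of linear functionals over the parameter set $\Cphi{}$, and from this both convexity and positive $1$-homogeneity follow by general principles.

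For positive $1$-homogeneity with $\lambda>0$, I would just factor $\lambda$ out of every integral in $\DD(\vvarphi|\lambda\mu_1,\lambda\mu_2)$ to get $\lambda\,\DD(\vvarphi|\mu_1,\mu_2)$, and then take the supremum over $\vvarphi\in\Cphi{}$. The case $\lambda=0$ requires a brief check that $\ET(0,0)=0$: the zero plan gives $\ET(0,0)\le 0$ from the primal side (both entropy and transport terms vanish), and nonnegativity of $F_i$, $\rec{(F_i)}$ and $\sfc$ gives $\ET\ge0$ in general.

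For convexity, observe that for $t\in[0,1]$,
\[
\DD(\vvarphi \,|\, t\mu_1+(1-t)\mu_1',\, t\mu_2+(1-t)\mu_2') \;=\; t\,\DD(\vvarphi|\mu_1,\mu_2) + (1-t)\,\DD(\vvarphi|\mu_1',\mu_2')
\]
by linearity, so taking the supremum over $\vvarphi$ and using $\sup(f+g)\le \sup f + \sup g$ yields
\[
\ET\big(t\mu_1{+}(1{-}t)\mu_1',\, t\mu_2{+}(1{-}t)\mu_2'\big) \le t\,\ET(\mu_1,\mu_2) + (1-t)\,\ET(\mu_1',\mu_2').
\]
Finally, subadditivity drops out of the combination of these two: apply convexity with $t=1/2$ to $(\mu_1+\mu_1',\mu_2+\mu_2') = 2\cdot\tfrac12(\mu_1+\mu_1',\mu_2+\mu_2')$ and then use positive homogeneity to absorb the factor $2$. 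There is no real obstacle here since Theorem~\ref{thm:weak-duality} has done all the work; the only point demanding a moment's care is the degenerate case $\lambda=0$, which is handled trivially.
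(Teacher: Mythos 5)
Your proposal is correct and follows the same route as the paper: both proofs invoke the duality $\ET=\sfD$ from Theorem~\ref{thm:weak-duality} and observe that $\sfD$ is a supremum (over a $\mu$-independent constraint set) of functionals linear in $(\mu_1,\mu_2)$, which immediately yields convexity and positive $1$-homogeneity. The paper compresses this into one sentence; you spell out the homogeneity and convexity checks, the degenerate case $\lambda=0$, and the derivation of subadditivity from the two properties, all of which are fine.
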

\begin{proof}
  By Theorem \ref{thm:weak-duality} it is sufficient to prove
  the corresponding property of $\sfD$, which follows immediately from
  its representation formula \eqref{eq:49} as a supremum of linear functionals.
\end{proof}

\subsection{Existence of optimal Entropy-Kantorovich potentials}
\label{subsec:ExiOpt}

In this section we will consider two cases, when
the dual problem admits a couple of optimal Entropy-Kantorovich potentials
$\vvarphi=(\varphi_1,\varphi_2)$.

The first case is completely analogous to the 
transport setting.

\begin{theorem}
  Consider complete metric spaces $(X_i,\sfd_i)$, $i=1,2$, and assume
  that \eqref{eq:302a} holds,
  %$\rec{(F_i)}=+\infty$ or $\asympt{(F_i)}<\infty$,
  and $\sfc$ is bounded and uniformly continuous with respect to the
  product distance $\sfd((x_1,x_2),(x_1'\,x_2')):=
  \sum_i\sfd_i(x_i,x_i')$ in $\xX =X_1\times X_2$.  Then there exists
  a couple of optimal Entropy-Kantorovich potentials $\vvarphi\in
  \rmC_b(X_1)\times\rmC_b(X_2)$ satisfying
  \begin{equation}
    \label{eq:288}
    \varphi_1\oplus\varphi_2\le \sfc,\quad
    \varphi_i\ge -\rec{(F_i)},\quad
    \ET(\mu_1,\mu_2)=\DD(\vvarphi|\mu_1,\mu_2).
  \end{equation}
\end{theorem}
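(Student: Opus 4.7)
The plan is to combine the a priori bound from Lemma \ref{le:potential-bound} with a $\sfc$-transform regularization that produces equicontinuous potentials, and then extract a limit by a tightness/Arzelà--Ascoli argument.

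First, using Theorem \ref{thm:weak-duality} and Lemma \ref{le:potential-bound} (the second part, which uses \eqref{eq:302a}), pick a maximizing sequence $\vvarphi_n=(\varphi_{1,n},\varphi_{2,n})\in \Cphi{}$ with $\DD(\vvarphi_n|\mu_1,\mu_2)\uparrow \ET(\mu_1,\mu_2)$ and a uniform bound $\|\varphi_{i,n}\|_\infty\le \Phimax$. I would then replace $\vvarphi_n$ by its double $\sfc$-transform couple, setting $\tilde\varphi_{2,n}:=\varphi_{1,n}^{\sfc}$ and $\tilde\varphi_{1,n}:=\varphi_{1,n}^{\sfc\sfc}$. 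By \eqref{eq:309} these are bounded (since $\sfc$ is bounded and $\vvarphi_n$ is bounded), pointwise $\ge \varphi_{i,n}$, and satisfy $\tilde\varphi_{1,n}\oplus\tilde\varphi_{2,n}\le \sfc$; by monotonicity of $\Gstar_i$ and \eqref{eq:81} they are still maximizing, and the uniform bound $\Phimax$ may be enlarged so that $|\tilde\varphi_{i,n}|\le \Phimax$ as well.

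The crucial gain is regularity: if $\omega$ is a modulus of uniform continuity for $\sfc$ on $\xX$, then for any $x_2,x_2'\in X_2$ and every $x_1\in X_1$,
\[
\sfc(x_1,x_2)-\tilde\varphi_{1,n}(x_1)\le \sfc(x_1,x_2')-\tilde\varphi_{1,n}(x_1)+\omega(\sfd_2(x_2,x_2')),
\]
so $|\tilde\varphi_{2,n}(x_2)-\tilde\varphi_{2,n}(x_2')|\le \omega(\sfd_2(x_2,x_2'))$, and the same argument yields an identical modulus for $\tilde\varphi_{1,n}$. Hence the sequences $(\tilde\varphi_{i,n})$ are uniformly bounded and uniformly equicontinuous on $X_i$.

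Next I would extract a limit. Since $\mu_i\in \cM(X_i)$ is Radon on a complete metric space, it is tight, so $\supp(\mu_i)$ is separable. Choose a countable dense set $D_i\subset \supp(\mu_i)$, apply Cantor's diagonal argument to extract a subsequence (still indexed by $n$) so that $\tilde\varphi_{i,n}$ converges pointwise on $D_i$; equicontinuity then promotes this to uniform convergence on each compact subset of $\supp(\mu_i)$, yielding a uniformly continuous bounded limit $\bar\varphi_i$ defined on $\supp(\mu_i)$ with modulus $\omega$ and $|\bar\varphi_i|\le\Phimax$. I would then extend these limits to all of $X_i$ via the $\sfc$-transform formula
\[
\varphi_2(x_2):=\inf_{x_1\in \supp(\mu_1)}\bigl(\sfc(x_1,x_2)-\bar\varphi_1(x_1)\bigr),\qquad \varphi_1:=\varphi_2^{\sfc},
\]
which produces $\varphi_i\in \rmC_b(X_i)$ with $\varphi_1\oplus\varphi_2\le \sfc$, $\varphi_i|_{\supp(\mu_i)}\ge \bar\varphi_i$, and $\varphi_i\ge -\Phimax\ge -\rec{(F_i)}$ (possibly requiring a harmless truncation $\varphi_i\lor(-\rec{(F_i)})$ if $\rec{(F_i)}$ is finite).

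Finally, pass to the limit in $\DD$: on $\supp(\mu_i)$ we have $\tilde\varphi_{i,n}\to \bar\varphi_i\le \varphi_i$ pointwise with a uniform bound $\Phimax$, and $\Gstar_i$ is continuous and bounded on $[-\Phimax,\Phimax]$. Dominated convergence yields
\[
\sum_i\int_{X_i}\Gstar_i(\varphi_i)\,\d\mu_i\ge \sum_i\int_{X_i}\Gstar_i(\bar\varphi_i)\,\d\mu_i=\lim_{n\to\infty}\DD(\tilde\vvarphi_n|\mu_1,\mu_2)=\ET(\mu_1,\mu_2),
\]
while the reverse inequality is the weak duality \eqref{eq:52}. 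So $\vvarphi$ is optimal.

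The main obstacle is the extraction step when $X_i$ is not separable: the argument works only on $\supp(\mu_i)$, and one must use the $\sfc$-conjugation construction to extend the limit potentials to continuous bounded functions on the full spaces $X_i$ without sacrificing the constraint $\varphi_1\oplus\varphi_2\le\sfc$ or the value of $\DD$; the equicontinuity produced by the $\sfc$-transform is what makes both the Arzelà--Ascoli compactness and this final extension work.
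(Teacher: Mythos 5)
Your outline tracks the paper's proof closely---a priori bound from Lemma \ref{le:potential-bound}, $\sfc$-transform regularization for equicontinuity, a diagonal/Arzel\`a--Ascoli extraction, and a final $\sfc$-transform for continuity---but there is a genuine gap in securing the constraint $\varphi_i\ge-\rec{(F_i)}$ of \eqref{eq:288}, and the ``harmless truncation'' you invoke does not repair it. Your restricted $\sfc$-transform extension $\varphi_2(x_2):=\inf_{x_1\in\supp(\mu_1)}(\sfc(x_1,x_2)-\bar\varphi_1(x_1))$, $\varphi_1:=\varphi_2^\sfc$ only controls $\varphi_2\ge-\Phimax$ and $\varphi_1\ge-(S+\Phimax)$, which can fall strictly below $-\rec{(F_i)}$ when $\rec{(F_i)}$ is finite. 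Truncating then fails: if $\varphi_1(x_1)<-\rec{(F_1)}$ for some $x_1\notin\supp(\mu_1)$, the truncated pair must satisfy $\varphi_2(x_2)\le\sfc(x_1,x_2)+\rec{(F_1)}$, but the only estimate available is $\varphi_2(x_2)\le\sfc(x_1,x_2)-\varphi_1(x_1)$, and $-\varphi_1(x_1)>\rec{(F_1)}$ makes this a strictly \emph{weaker} upper bound; so $\varphi_1\oplus\varphi_2\le\sfc$ can be broken after truncation.

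The paper avoids this by reversing the order of operations: it sets $\varphi_1:=\limsup_n\tilde\varphi_{1,n}$ and $\varphi_2:=\liminf_n\tilde\varphi_{2,n}$ pointwise on \emph{all} of $X_i$, producing bounded Borel functions that satisfy $\varphi_1\oplus\varphi_2\le\sfc$, coincide with your $\bar\varphi_i$ $\mu_i$-a.e., and automatically inherit $\varphi_i\ge-\rec{(F_i)}$ because every $\tilde\varphi_{i,n}$ satisfies this bound. It then passes to the limit in the dual functional by the one-sided (reverse) Fatou lemma, using only the uniform \emph{upper} bound $\Gstar_i(\tilde\varphi_{i,n})\le\Gstar_i(\Phimax)$; the dominated convergence you invoke would in addition require a uniform lower bound on $\Gstar_i(\tilde\varphi_{i,n})$, which can fail when $\rec{(F_i)}<\Phimax$ and $\asympt{F_i}=+\infty$. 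Only at the very end does the paper replace $(\varphi_1,\varphi_2)$ by $(\varphi_1^{\sfc\sfc},\varphi_1^\sfc)$ to obtain continuity; since $\sfc$-conjugation only increases the potentials by \eqref{eq:309} and does not decrease $\DD$ by \eqref{eq:81}, both the lower bounds $\varphi_i\ge-\rec{(F_i)}$ and the constraint $\varphi_1\oplus\varphi_2\le\sfc$ are preserved, and \eqref{eq:288} follows.
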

\begin{proof}
  By the boundedness and uniform continuity of $\sfc$ we can find a
  continuous and concave modulus of continuity $\omega:[0,+\infty)\to
  [0,+\infty)$ with $\omega(0)=0$ such that
\begin{displaymath}
  \big|\sfc(x_1',x_2)-\sfc(x_1,x_2)\big|\le \omega(\sfd_1(x_1',x_1)),\quad
  \big|\sfc(x_1,x_2')-\sfc(x_1,x_2)\big|\le \omega(\sfd_2(x_2',x_2)).
\end{displaymath}
Possibly replacing the distances $\sfd_i$ with $\sfd_i+\omega(\sfd_i)$, we
may assume that $x_1\mapsto\sfc(x_1,x_2)$ is $1$-Lipschitz w.r.t.~$\sfd_1$
for every $x_2\in X_2$ and
$x_2\mapsto\sfc(x_1,x_2)$ is $1$-Lipschitz with respect to 
$\sfd_2$ for every $x_1\in X_1$. In particular, 
every $\sfc$-transform 
\eqref{eq:74} of a bounded function is $1$-Lipschitz
(and in particular Borel).

Let $\vvarphi_n$ be a maximizing sequence in $\Cphi{} $.  By Lemma
\ref{le:potential-bound} we can assume that $\vvarphi_n$ is uniformly
bounded; by \eqref{eq:309} and \eqref{eq:81} we can also assume that
$\vvarphi_n$ are $\sfc$-concave and thus $1$-Lipschitz.  If $K_{i,n}$
is a family of compact sets whose union $A_i$ has a full $\mu_i$
measure in $X_i$, we can thus extract a subsequence (still denoted by
$\vvarphi_n$) pointwise convergent to $\vvarphi =
(\varphi_1,\varphi_2)$ in $A_1\times A_2$.  Obviously, we have
$\varphi_1:=\lim_{n\to\infty}\varphi_{1,n}$ and
$\varphi_2:=\liminf_{n\to\infty}\varphi_{2,n}$, we obtain a family
$\varphi_i\in\rmB_b(X_i)$, and $\varphi_1\oplus\varphi_2\le\sfc$,
$\varphi_i\ge \rec{(F_i)}$ and
\begin{displaymath}
  \DD(\vvarphi|\mu_1,\mu_2)=\sum_{i}\int_{A_i}\Gstar_i(\varphi_i)\,\d\mu_i
  \ge
  \lim_{n\to\infty}\sum_{i}\int_{A_i}\Gstar_i(\varphi_{i,n})\,\d\mu_i=
  \ET(\mu_1,\mu_2), 
\end{displaymath}
thanks to Fatou's Lemma and the fact that $\Gstar_i(\varphi_{i,n})$
are uniformly bounded from above. Eventually replacing
$(\varphi_1,\varphi_2)$ with $(\varphi_1^{\sfc\sfc},\varphi_1^\sfc)$
we obtain a couple in $\rmC_b(X_1)\times\rmC_b(X_2)$ satisfying
\eqref{eq:288}.
\end{proof}
The next result is of different type, since it does not require any
boundedness nor regularity of $\sfc$ (which can also assume the value
$+\infty$ in the case $F_i(0)<\infty$).
\begin{theorem}
  \label{thm:pot-ex}
  Let us suppose that at least one of the following two conditions
  hold:

  \indent a) $\sfc$ is everywhere finite and
    \eqref{eq:302a} holds\\
  or\\
  \indent b) $F_i(0)<+\infty$.\\
  % $\sfc$ satisfies \eqref{eq:86} 
  % (%i.e.~$\dom{F_1}$ has nonempty interior) 
  % and 
  % \begin{equation}
  %   \label{eq:264}
  %   r_1^-<r_1^+,\quad
  %   \lim_{r\down r_1^-}(F_1')_-(r)=-\infty,\quad
  %   \lim_{r\up r_1^+}(F_1')_+(r)=+\infty.
  % \end{equation}
  Then a plan $\ggamma\in \cM(\xX)$ with finite energy
  $\EE(\ggamma|\mu_1,\mu_2)<\infty$ is optimal if and only if there
  exists a couple $\vvarphi$ as in \eqref{eq:216} satisfying
  %$\varphi_1\oplus\varphi_2\le \sfc$, $\varphi_i\ge-\rec{(F_i)}$, 
  the optimality conditions \eqref{eq:OptiConds}.
\end{theorem}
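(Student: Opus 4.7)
The sufficiency direction is already covered by Theorem \ref{thm:joint-optimality}. For necessity I would construct $\vvarphi$ directly from the Lebesgue decomposition of $\gamma_i=\pi^i_\sharp\ggamma=\sigma_i\mu_i+\gamma_i^\perp$ using the partition $(A_i,A_{\gamma_i},A_{\mu_i})$ of Lemma \ref{le:Lebesgue}: pick a Borel selection $-\varphi_i\in\partial F_i(\sigma_i)$ on $A_i$ (e.g.~the right derivative of $F_i$ evaluated at $\sigma_i$), set $\varphi_i:=-\rec{(F_i)}$ on $A_{\gamma_i}$, and set $\varphi_i:=+\infty$ on $A_{\mu_i}$ in case (b) (or $\varphi_i:=\derzero{F_i}$ if $F_i(0)=+\infty$ in case (a), so that $\Gstar_i(\varphi_i)=F_i(0)$ holds anyway). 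Conditions \eqref{eq:67}, \eqref{eq:63}, \eqref{eq:331} then hold by construction, together with $\varphi_i\ge-\rec{(F_i)}$.

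The main obstacle is to lift these pointwise-almost-everywhere identities to the global inequality $\varphi_1\opz\varphi_2\le\sfc$ on all of $\xX$, as required by \eqref{eq:216}. I would argue by contradiction through a first-variation perturbation: suppose $\varphi_1(\bar x_1)+\varphi_2(\bar x_2)>\sfc(\bar x_1,\bar x_2)$ at some $(\bar x_1,\bar x_2)\in\xX$. Choose $\eeta\in\cM(\xX)$ concentrated in a small neighborhood of $(\bar x_1,\bar x_2)$ whose marginals share with the chosen $\varphi_i$ the appropriate Lebesgue structure with respect to $(\mu_1,\mu_2)$ (absolutely continuous with respect to $\mu_i$ if $\bar x_i\in A_i$, purely $\mu_i^\perp$-singular if $\bar x_i\in A_{\mu_i}$, etc.). Using the subdifferential relations one then computes
\[
  \lim_{\eps\down0}\eps^{-1}\bigl(\EE(\ggamma+\eps\eeta|\mu_1,\mu_2)-\EE(\ggamma|\mu_1,\mu_2)\bigr)=\int\bigl(\sfc-\varphi_1\opz\varphi_2\bigr)\,\d\eeta<0,
\]
contradicting optimality of $\ggamma$. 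The hypotheses are tailored precisely so that this perturbation is well-posed: in case (a), $\sfc<+\infty$ rules out infinite perturbation costs; in case (b), the identity $\Gstar_i(+\infty)=F_i(0)<+\infty$ from \eqref{eq:340} makes generalized potentials taking the value $+\infty$ compatible with the $\pz$-convention \eqref{eq:328}, so that the linearization above remains meaningful when $\bar x_i\in A_{\mu_i}$.

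Once the global inequality $\varphi_1\opz\varphi_2\le\sfc$ is established, the remaining condition \eqref{eq:66} follows by comparing the minimum value $\EE(\ggamma|\mu_1,\mu_2)=\sfD(\mu_1,\mu_2)$ from Theorem \ref{thm:weak-duality} with the equality case of \eqref{eq:71}: by our choice of $\varphi_i$, Lemma \ref{le:trivial} applied with $\phi_i=-\varphi_i$, $\psi_i=\Gstar_i(\varphi_i)$ yields the saturation $\FF_i(\gamma_i|\mu_i)=\int\Gstar_i(\varphi_i)\,\d\mu_i-\int\varphi_i\,\d\gamma_i$, whence $\int(\sfc-\varphi_1\opz\varphi_2)\,\d\ggamma=0$ and therefore $\varphi_1\opz\varphi_2=\sfc$ holds $\ggamma$-a.e.~on $\xX$.
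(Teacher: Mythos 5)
Your structure is sound at a high level: you correctly delegate sufficiency to Theorem \ref{thm:joint-optimality}, and the idea of reading off $\varphi_i$ from $\partial F_i(\sigma_i)$ on $A_i$ together with $-\rec{(F_i)}$ on $A_{\gamma_i}$ and $-\derzero{(F_i)}$ on $A_{\mu_i}$ is natural and does give \eqref{eq:67}, \eqref{eq:63}, \eqref{eq:331} by construction. The final deduction of the $\ggamma$-a.e.~equality \eqref{eq:66} from saturation of \eqref{eq:71} is also correct, \emph{granted} the global inequality. The genuine gap is exactly there: the first-variation argument cannot deliver the pointwise constraint $\varphi_1\opz\varphi_2\le\sfc$ on \emph{all} of $\xX$, which is what \eqref{eq:216} demands (and what Theorem \ref{thm:joint-optimality} actually uses, since a competitor $\tilde\ggamma$ may charge any part of $\xX$). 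A directional-derivative test $\lim_{\eps\down0}\eps^{-1}\bigl(\EE(\ggamma+\eps\eeta)-\EE(\ggamma)\bigr)\ge 0$ only sees $\eeta$ whose marginals have the prescribed Lebesgue structure relative to $\mu_i$ (absolutely continuous on $A_i$, etc.), so at best it yields the inequality in an averaged sense against admissible test measures, not at a fixed $(\bar x_1,\bar x_2)$. Since $\sfc$ is merely lower semicontinuous and your $\varphi_i$ are only Borel (defined via a Borel selection of the subdifferential, itself determined only up to $(\mu_i{+}\gamma_i)$-null modifications), there is no continuity to upgrade an averaged inequality to a pointwise one, and no canonical extension of $\varphi_i$ off $\supp(\mu_i{+}\gamma_i)$ that would make the constraint hold there. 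Note also that on $A_i$ your ``right derivative'' need not belong to $\partial F_i(\sigma_i)$ at boundary points of $\dom{F_i}$, and the proposal never explains how hypotheses (a) or (b) preclude $\sigma_i$ from hitting such a point on a set of positive measure; the remark about the perturbation being ``well-posed'' does not actually exploit \eqref{eq:302a}, yet Remark \ref{rem:optimal} exhibits a finite-cost example where \eqref{eq:302a} fails and no such $\vvarphi$ exists — so any argument that does not genuinely use these hypotheses must be wrong.

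The paper takes a different route precisely to dodge this issue: it invokes the duality Theorem \ref{thm:weak-duality} to produce a maximizing sequence $\vvarphi_n\in\Cphi{}$, then sets $\varphi_1:=\limsup_n\varphi_{1,n}$ and $\varphi_2:=\liminf_n\varphi_{2,n}$ \emph{pointwise}. Because each $\vvarphi_n$ satisfies $\varphi_{1,n}\oplus\varphi_{2,n}\le\sfc$ everywhere, the $\limsup$/$\liminf$ limits automatically inherit $\varphi_1\opz\varphi_2\le\sfc$ on all of $\xX$, with no need to test by perturbations. The role of hypotheses (a) and (b) in that proof is then to show — via Lemma \ref{le:auxiliary2} applied along a pointwise-convergent subsequence — that these limits are finite in the right places (so $\varphi_i\in\partial F_i(\sigma_i)$ rather than $\sigma_i$ being forced to the boundary of $\dom{F_i}$), and to derive a contradiction with \eqref{eq:302a} if they are not. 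If you want to salvage a variational proof, you would at a minimum have to first produce a candidate $\vvarphi$ that satisfies the constraint \emph{pointwise} (e.g.~by $\sfc$-conjugation, as in the existence proofs for Kantorovich potentials), and only then verify \eqref{eq:OptiConds}; the direct subdifferential construction cannot deliver the pointwise bound by itself.
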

\begin{proof}
  We already proved (Theorem 
  \ref{thm:joint-optimality}) that the existence of a couple $\vvarphi$ as in 
  \eqref{eq:216} satisfying
  %$\varphi_1\oplus\varphi_2\le \sfc$, $\varphi_i\ge-\rec{(F_i)}$, 
  \eqref{eq:OptiConds} yields the optimality of $\ggamma$.
  
  Let us now assume that $\ggamma\in \cM(\xX)$ has finite energy and is
  optimal. If $\mu_i\equiv \eta_0$ then also $\ggamma=0$ 
  and \eqref{eq:OptiConds} are always satisfied, since we can choose
  $\varphi_i\equiv0$.

  We can therefore assume that at least one of the
  measures $\mu_i$, say $\mu_2$, has positive mass. 
  Let $\ggamma\in \OptET(\mu_1,\mu_2)$,
  % let $\sfc_n$ be an increasing family of simple l.s.c.~costs as in
  % \eqref{eq:239} such that the corresponding Entropy-Transport
  % functionals $\ET_n$ satisfy
  % $\lim_{n\to\infty}\ET_n(\mu_1,\mu_2)=\ET(\mu_1,\mu_2)$ and let
  % $\tilde\sfc:=\lim_{n\to\infty}\sfc_n=\sup_n\sfc_n\le \sfc$.
  % Since
  % \begin{displaymath}
  %   \ET(\mu_1,\mu_2)=\EE(\ggamma|\mu_1,\mu_2)
  %   \ge \sum_i \FF_i(\gamma_i|\mu_i)+
  %   \int_{\sxX}\sfc_n\,\d\ggamma
  %   \ge \ET_n(\mu_1,\mu_2)
  % \end{displaymath}
  % we deduce that 
  % \begin{equation}
  %   \label{eq:311}
  %   \int_{\sxX}\tilde\sfc\,\d\ggamma=
  %   \lim_{n\to\infty}\int_{\sxX}\sfc_n\,\d\ggamma=
  %   \int_{\sxX}\sfc\,\d\ggamma,\quad
  %   \tilde\sfc=\sfc\quad\text{$\ggamma$-a.e.~in $\xX$}.
  % \end{equation}
  and let us apply Theorem \ref{thm:weak-duality} to find 
  a maximizing sequence $\vvarphi_n\in \Cphi{}$
  such that $\lim_{n\up\infty}
  \DD(\vvarphi_n|\mu_1,\mu_2)=\ET(\mu_1,\mu_2)$.

  Using the Borel partitions $(A_i,A_{\mu_i},A_{\gamma_i})$ 
  for the couples of measures $\gamma_i,\mu_i$ provided
  by Lemma \ref{le:Lebesgue} and 
  arguing as in Proposition \ref{prop:trivial-bis} we get
  \begin{align*}
    \lim_{n\to\infty}\int_{X_1\times
      X_2}\Big(\sfc(x_1,x_2)-\varphi_{1,n}(x_1)-
    \varphi_{2,n}(x_2)\Big)\,\d\ggamma&=0,\\
    \lim_{n\to\infty}\int_{A_i\cup A_{\mu_i}}\Big(F_i(\sigma_i)+\sigma_i
    \varphi_{i,n}-\Gstar_i(\varphi_{i,n})\Big)
    \,\d\mu_i&=0,\\
    \lim_{n\to\infty}\int_{A_{\gamma_i}}
    \big(\varphi_{i,n}+\rec{(F_i)}\big)\,\d\gamma_i^\perp&=0.
    % \\ 
    % \lim_{n\to\infty}\int_{A_{\mu_i}}\Big(F_i(0)-\Gstar_i(\varphi_{i,n})\Big)
    % \,\d\mu_i&=0.
  \end{align*}
  Since all the integrands are nonnegative, up to selecting a suitable
  subsequence (not relabeled) 
  we can assume that the integrands are converging pointwise a.e.\ to
  $0$. We can thus find Borel sets $A_i'\subset A_i,A_{\mu_i}'\subset
  A_{\mu_i},
  A_{\gamma_i}'\subset A_{\gamma_i}$ and $A'\subset \xX$ with
  $\pi^i(A')=
  A_i'\cup
  A_{\gamma_i}'$,
  $(\mu_i+\gamma_i)\Big((A_i\setminus A_i')\cup (A_{\mu_i}\setminus
  A_{\mu_i}')
  \cup(A_{\gamma_i}\setminus A_{\gamma_i}')\big)=0$,
  and $\ggamma(\xX\setminus A')=0$ such that 
  \begin{align}
    \label{eq:282}
    \sfc(x_1,x_2)<\infty\quad
    \lim_{n\to\infty} \sfc(x_1,x_2)-\varphi_{1,n}(x_1)-
    \varphi_{2,n}(x_2)&=0\quad\text{in }A',\\
    \label{eq:265}
    %\sigma_i>0,\ 
    F_i(\sigma_i)<\infty,\quad
    \lim_{n\to\infty}F_i(\sigma_i)+\sigma_i
    \varphi_{i,n}-\Gstar_i(\varphi_{i,n})&=0\quad\text{in }A_i'\cup A_{\mu_i}',\\
    \label{eq:283}
    \lim_{n\to\infty}\big(\varphi_{i,n}+\rec{(F_i)}\big)&=0\quad
    \text{in }A_{\gamma_i}'.
    %,\\
    % \label{eq:336}
    % \lim_{n\to\infty}\big(F_i(0)-\Gstar_i(\varphi_{i,n})\big)
    % &=0
    %            \quad \text{in }A_{\mu_i}'
  \end{align}
    For every $x_i\in X_i$ we define the Borel functions
  $\varphi_1(x_1):=\limsup_{n\to\infty}\varphi_{1,n}(x_1)$ and \EEE
  $\varphi_2(x_2):=\liminf_{n\to\infty}\varphi_{2,n}(x_2)$, taking
  values in $ \R\cup\{\pm \infty\}$.
  It is clear that the couple $\vvarphi=(\varphi_1,\varphi_2)$ 
  complies with \eqref{eq:216}, \eqref{eq:331} and \eqref{eq:63}.
  
  If $\ggamma(\xX)=0$ then \eqref{eq:66} and \eqref{eq:67} 
  are trivially satisfied, so that it is not restrictive to assume
  $\ggamma(\xX)>0$. 

  If $\mu_1(X_1)=0$ then 
  $\rec{(F_1)}$ is finite (since 
  $\gamma_1^\perp(X_1)=\gamma_1(X_1)=\ggamma(\xX)>0$)
  and $\varphi_1\equiv \rec{(F_1)}$ on $A_{\gamma_1}'$ and on
  $A'$. It follows that $\varphi_2(x_2)=\sfc(x_1,x_2)-\rec{(F_1)}\in \R$ on
  $A'$ so that \eqref{eq:66} is satisfied. 
  Since $\varphi_2(x_2)$ is
  an accumulation point of $\varphi_{2,n}(x_2)$ 
  Lemma \ref{le:auxiliary2} below yields
  $-\varphi_2(x_2)\in \partial F_2(\sigma_2(x_2))$ in $A_2'$
  so that \eqref{eq:67} is also satisfied (in the case $i=1$ one can
  choose
  $A_1'=\emptyset$).

  We can thus assume that $\mu_i(X_i)>0$ and 
  $\ggamma(\xX)>0$.
  In order to check \eqref{eq:66} and \eqref{eq:67} 
  we distinguish two cases.
  
  \textbf{Case a:} \emph{$\sfc$ is everywhere finite
    and \eqref{eq:302a} holds.}
  Let us first prove that $\varphi_1<+\infty$ everywhere. 
  
  By contradiction, if there is a point $\bar x_1\in X_1$ such that 
  $\varphi_1(\bar x_1)=+\infty$ 
  we deduce that $\varphi_2(x_2)=-\infty$ for every $x_2\in X_2$. 
  
  Since the set $A_2'\cup A_{\mu_2}'$ has positive $\mu_2$-measure,
  it contains some point $\bar x_2$: Equation~\eqref{eq:265} and  Lemma
  \ref{le:auxiliary2} below (with $F=F_2$, $\r =\sigma_2(\bar x_2)$,
  $\phi_n:=-\varphi_{2,n}(\bar x_2)$) yield
  $\r _2^+=\max\dom{F_2}=\sigma_2(\bar x_2)<\infty$
  and $\sigma_2\equiv \r _2^+$ in $A_2'\cup A_{\mu_2}'$.
  We thus have
  $\dom{F_2}\subset [0,\r _2^+]$, $\rec{(F_2)}=+\infty$ 
  and therefore $m_2\r _2^+=\ggamma(\xX)$.
  %In particular $\ggamma(\xX)>0$ since $r_2^+>0$ by \eqref{eq:18}.

  On the other hand, if $\varphi_2=-\infty$ in $X_2$ we deduce 
  that $\varphi_1(x_1)=+\infty$ for every $x_1\in \pi^1(A')$.
  Since $\rec{(F_1)}\ge0$, it follows that $\gamma_i(A_{\gamma_i}')=0$ 
  (i.e.~$\gamma_i^\perp=0$)   
  so that there is a point $a_1$ in $A_1'$ such that
  $\varphi_1(a_1)=+\infty$.
  Arguing as before, a further application of Lemma
  \ref{le:auxiliary2} yields that
  $\sigma_1\equiv \r _1^-=\min\dom{F_1}$ $\mu_1$-a.e. 
  It follows that $m_1 \r _1^-=\gamma_1(X_1)=\ggamma(\xX)=m_2\r _2^+$,
  a situation that contradicts \eqref{eq:302a}.

  Since $\mu_1(X_1)>0$ the same argument shows that $\varphi_2<\infty$
  everywhere in $X_2$. 
  It follows that \eqref{eq:66} holds and $\varphi_i>-\infty$ 
  on $A_i'$. 
  Since $\varphi_i(x_i)$ is
  an accumulation point of $\varphi_{i,n}(x_i)$,
  Lemma \ref{le:auxiliary2} below yields
  $-\varphi_i(x_i)\in \partial F_i(\sigma_i(x_i))$ in $A_i'$
  so that \eqref{eq:67} is also satisfied.
    
  \textbf{Case b:} $F_i(0)<\infty$.
  In this case $\Gstar_i$ are bounded from above
  and
  $\varphi_{i}\ge -\rec{(F_i)}$ everywhere in $X_i$.
  By Theorem \ref{thm:weak-duality} 
  $\lim_{n\to\infty}\sum_i\int \Gstar_i(\varphi_{i,n})\,\d\mu_i>-\infty$,
  so that 
  Fatou's Lemma yields $\Gstar_1(\varphi_1)\in \rmL^1(X_1,\mu_1)$ and $\varphi_1(x_1)>-\infty$ for
  $\mu_1$-a.e.~$x_1\in X_1$, in particular for
  $(\mu_1+\gamma_1)$-a.e.~$x_1\in A_1'$.
  Applying Lemma \ref{le:auxiliary2} below \EEE, since 
  $\sigma_1(x_1)>0=\min\dom {F_1}$ in $A_1'$, we deduce that
  $-\varphi_1(x_1)\in \partial F_1(\sigma_1(x_1))$ for 
  $(\mu_1+\gamma_1)$-a.e.~$x_1\in A_1'$, i.e.~\eqref{eq:67} for $i=1$.
  Since we already checked that \eqref{eq:63}  and \eqref{eq:331} 
  hold, 
  applying Lemma \ref{le:trivial} 
  (with $\phi:=-\varphi_1$ and $\psi:=\Gstar_1(\varphi_1))$) we get 
  $\varphi_1\in \rmL^1(X_1,\gamma_1)$, in 
  particular $\varphi_1\circ \pi^1 \in \R$ $\ggamma$-a.e.~in $\xX$.
  It follows that \eqref{eq:66} holds and 
  $\varphi_2\circ \pi^2\in
  \rmL^1(\xX,\ggamma)$ so that $\varphi_2\in \R$ 
  $(\mu_2+\gamma_2)$-a.e.~in
  $A_2'$. A further application of Lemma \ref{le:auxiliary2} yields
  \eqref{eq:67} for $i=2$.
\end{proof}
\begin{corollary}
  \label{cor:smooth-case}
  Let us suppose that $\dom{F_i}\supset (0,\infty)$ and
  $F_i$ are differentiable in $(0,\infty)$.
  A plan $\ggamma\in \cM(\xX)$ with $\EE(\ggamma|\mu_1,\mu_2)<\infty$
  belongs to 
  $\OptET(\mu_1,\mu_2)$ if and only if 
  there exist Borel partitions $(A_i,A_{\mu_i},A_{\gamma_i})$ 
  and corresponding Borel densities $\sigma_i$
  associated to $\gamma_i$ and $\mu_i$ as in Lemma \ref{le:Lebesgue}
  such that setting
  \begin{equation}
    \label{eq:338}
    \varphi_i(x_i):=
    \begin{cases}
      -F_i'(\sigma_i)&\text{if }x_i\in A_i,\\
      -\derzero{(F_i)}&\text{if }x_i\in A_{\mu_i},\\
      -\rec{(F_i)}&\text{if }x_i\in X_i\setminus (A_i\cup A_{\mu_i}),
    \end{cases}
  \end{equation}
  we have
  \begin{equation}
    \label{eq:337}
    \varphi_1\opz\varphi_2\le \sfc\text{ in }X_1\times X_2,\quad
    \varphi_1\oplus\varphi_2=\sfc\text{ $\ggamma$-a.e.~in }(A_1\cup
    A_{\gamma_1})\times (A_2\cup A_{\gamma_2}).
  \end{equation}
\end{corollary}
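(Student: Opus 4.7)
The plan is to present the Corollary as the natural specialization of Theorems \ref{thm:joint-optimality} and \ref{thm:pot-ex} to the smooth setting, using differentiability to render the subdifferential inclusions in \eqref{eq:OptiConds} into explicit formulae.

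For the \emph{sufficiency} part (\eqref{eq:338}--\eqref{eq:337} $\Rightarrow$ optimality), I would verify the hypotheses of Theorem \ref{thm:joint-optimality}. First I would check that the couple $\vvarphi$ defined by \eqref{eq:338} satisfies the lower bound $\varphi_i \ge -\rec{(F_i)}$ required in \eqref{eq:216}: on $A_i$ this follows from $F_i'(\sigma_i) \le \rec{(F_i)}$ (since the difference quotients of $F_i$ are bounded above by the recession slope, cf.~\eqref{eq:227}), on $A_{\mu_i}$ from $\derzero{(F_i)} \le \rec{(F_i)}$, and on the remaining set by construction. Next I would verify each of \eqref{eq:OptiConds}: condition \eqref{eq:66} is just the second half of \eqref{eq:337}, noting that $\ggamma$ is concentrated on $(A_1\cup A_{\gamma_1})\times (A_2\cup A_{\gamma_2})$ since $\gamma_i(A_{\mu_i})=0$ by Lemma \ref{le:Lebesgue}; condition \eqref{eq:67} follows immediately from differentiability (so $\partial F_i(\sigma_i)=\{F_i'(\sigma_i)\}$ on $A_i$); condition \eqref{eq:63} holds by construction. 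Finally, for \eqref{eq:331}, a short calculation using the convex definition $\derzero{(F_i)} = \inf_{r>0}(F_i(r)-F_i(0))/r$ gives $\Fstar_i(\derzero{(F_i)}) = -F_i(0)$, and hence $\Gstar_i(-\derzero{(F_i)}) = F_i(0)$, with the convention \eqref{eq:327} when $\derzero{(F_i)}=-\infty$.

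For the \emph{necessity} part I would invoke Theorem \ref{thm:pot-ex} to produce some couple $\tilde\vvarphi$ satisfying \eqref{eq:216} and \eqref{eq:OptiConds}. The key observation is that differentiability forces $\tilde\varphi_i$ to coincide with the explicit formula on the $(\mu_i{+}\gamma_i)$-full subset $A_i$: since $\partial F_i(\sigma_i)=\{F_i'(\sigma_i)\}$ on $A_i$, condition \eqref{eq:67} gives $\tilde\varphi_i = -F_i'(\sigma_i)$ there. On $A_{\gamma_i}$ condition \eqref{eq:63} pins down $\tilde\varphi_i = -\rec{(F_i)}$ up to $\gamma_i^\perp$-null sets; on $A_{\mu_i}$ condition \eqref{eq:331} only forces $\tilde\varphi_i \ge -\derzero{(F_i)}$ (by monotonicity of $\Gstar_i$ and the computation above), so the choice in \eqref{eq:338} is the pointwise minimum. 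Since $\Gstar_i$ is nondecreasing, lowering $\tilde\varphi_i$ to $\varphi_i$ as in \eqref{eq:338} preserves \eqref{eq:OptiConds} and can only \emph{relax} the constraint $\varphi_1\opz\varphi_2 \le \sfc$. Conversely \eqref{eq:66} together with the identity $\varphi_i = \tilde\varphi_i$ on $A_i\cup A_{\gamma_i}$ (up to $\ggamma$-null sets, using the concentration of $\gamma_i$) yields the second half of \eqref{eq:337}.

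The main delicate point is the pointwise character of the constraint $\varphi_1\opz\varphi_2 \le \sfc$ in \eqref{eq:337}: after modifying $\tilde\vvarphi$ on $(\mu_i{+}\gamma_i)$-null sets to obtain the explicit form, one must ensure no new pointwise violations are introduced, which is where the conventions in \eqref{eq:327}--\eqref{eq:329} are needed, especially in the degenerate sub-case $\derzero{(F_i)}=-\infty$ (where the formula gives $\varphi_i=+\infty$ on $A_{\mu_i}$, forcing $\sfc=+\infty$ at the corresponding product points). Under either hypothesis of Theorem \ref{thm:pot-ex} (the cost is everywhere finite together with \eqref{eq:302a}, or $F_i(0)<\infty$), these degeneracies are compatible with the finite-energy assumption on $\ggamma$, and the modifications can be carried through. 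Once the pointwise constraint is secured, \eqref{eq:337} is established and the proof is complete.
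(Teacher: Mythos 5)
Your proposal is correct and follows the same strategy as the paper's (extremely terse) proof: Theorem \ref{thm:joint-optimality} gives sufficiency, Theorem \ref{thm:pot-ex} gives necessity, and differentiability plus the key observations $\partial F_i(\r)=\{F_i'(\r)\}$ for $\r>0$ and $\Gstar_i(\varphi_i)=F_i(0)\Leftrightarrow\varphi_i\in[-\derzero{(F_i)},+\infty]$ do the translation. Your sufficiency half is clean, including the computation $\Fstar_i(\derzero{(F_i)})=-F_i(0)$ and the ordering $\derzero{(F_i)}\le F_i'(\r)\le \rec{(F_i)}$.

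In the necessity half you correctly identify the one delicate step (passing from the $\tilde\vvarphi$ of Theorem \ref{thm:pot-ex}, which satisfies \eqref{eq:OptiConds} only almost everywhere, to the everywhere‑defined explicit formula \eqref{eq:338} without breaking the \emph{pointwise} constraint $\varphi_1\opz\varphi_2\le\sfc$), but you misattribute its resolution. It is not the conventions \eqref{eq:327}--\eqref{eq:329} that save you, and the sub-case $\derzero{(F_i)}=-\infty$ is in fact the \emph{easy} one: there $F_i(0)=+\infty$, so finite energy forces $\mu_i^\perp=0$ and $A_{\mu_i}$ may be taken empty. The right fix is a repartitioning trick: collect into $N_i$ the $(\mu_i{+}\gamma_i)$-null exceptional sets in $A_i$ (where $\tilde\varphi_i\ne -F_i'(\sigma_i)$) and in $A_{\mu_i}$ (where $\Gstar_i(\tilde\varphi_i)\ne F_i(0)$), and absorb $N_i$ into $A_{\gamma_i}$. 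This is admissible because Lemma \ref{le:Lebesgue} fixes the partition only up to $(\mu{+}\gamma)$-negligible sets, and one checks directly that the new partition still satisfies the conditions of that lemma. With this modified partition the explicit $\varphi_i$ of \eqref{eq:338} satisfies $\varphi_i\le\tilde\varphi_i$ \emph{everywhere} (on $A_i'$ equality; on $A_{\gamma_i}'$ use $\tilde\varphi_i\ge-\rec{(F_i)}$, which holds globally by \eqref{eq:216}; on $A_{\mu_i}'$ use $\tilde\varphi_i\ge-\derzero{(F_i)}$), so by monotonicity of $\pz$ the pointwise constraint is inherited, while $\ggamma$-a.e.\ one has $\varphi_i=\tilde\varphi_i$ so \eqref{eq:66} yields the equality in \eqref{eq:337}. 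Once this is stated the proposal is complete; the paper itself leaves exactly this bookkeeping implicit.
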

\begin{proof}
  Since $\partial F_i(\r )=\{F_i'(\r )\}$ for every 
  $\r \in (0,\infty)$ and $\Gstar_i(\varphi_i)=F_i(0)$ if and only if 
  $\varphi_i\in [-\derzero{(F_i)},+\infty]$,
  \eqref{eq:337} is clearly a necessary condition for
  optimality, thanks to Theorem \ref{thm:pot-ex}.
  Since $\derzero{(F_i)}\le F_i'(\r )\le \rec{(F_i)}$ 
  Theorem \ref{thm:joint-optimality} shows that conditions
  \eqref{eq:338}--\eqref{eq:337} are also sufficient.
\end{proof}
The next result shows that
\eqref{eq:338}--\eqref{eq:337} take an even simpler form
when $-\derzero{(F_i)}=\rec{(F_i)}=+\infty$;
in particular,
by assuming that $\sfc$ is continuous, the support of an optimal plan $\ggamma$
cannot be too small.
\begin{corollary}[Spread of the support]
  \label{cor:spread}
  Let us suppose that 
  \begin{itemize}
  \item $\sfc:\xX\to[0,\infty]$ is continuous.
  \item $\dom{F_i}\supset (0,\infty)$, $F_i$ are differentiable in
    $(0,\infty)$, and $-\derzero{(F_i)}=\rec{(F_i)}=\infty$.
  \end{itemize}
  Then, \EEE$\ggamma$ is an optimal plan if and only if 
  $\gamma_i\ll \mu_i$, 
  for every $x_i\in \supp(\mu_i)$ we have
  $\sfc(x_1,x_2)=+\infty$ 
  if $x_1\in \supp\mu_1\setminus \supp\gamma_1$ or
  $x_2\in \supp\mu_2\setminus \supp\gamma_2$, 
  and there exist Borel sets $A_i\subset \supp\gamma_i$ 
  with $\gamma_i(X_i\setminus A_i)=0$ and
  Borel densities $\sigma_i:A_i\to
  (0,\infty)$ of $\gamma_i$ w.r.t.~$\mu_i$ such that
  \begin{equation}
    \label{eq:339}
    \begin{gathered}
        F_1'(\sigma_1)\oplus F_2'(\sigma_2)\ge -\sfc\text{ in
        }A_1\times A_2,\quad F_1'(\sigma_1)\oplus
        F_2'(\sigma_2)=-\sfc\quad\text{$\ggamma$-a.e.~in }A_1\times A_2.
    \end{gathered}
  \end{equation}
\end{corollary}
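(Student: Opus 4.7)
}
The plan is to specialize Corollary \ref{cor:smooth-case} to the present degenerate setting, in which the conditions $\rec{(F_i)}=+\infty$ and $-\derzero{(F_i)}=+\infty$ force a very simple form of the optimal potentials. The continuity of $\sfc$ will play the role of bridging the purely measure-theoretic statement of Corollary \ref{cor:smooth-case} with the topological conditions on $\supp\mu_i$ and $\supp\gamma_i$ appearing here.

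\emph{Necessity.} Let $\ggamma\in\OptET(\mu_1,\mu_2)$. Since $\rec{(F_i)}=+\infty$, any plan of finite energy must have $\gamma_i^\perp=0$, i.e.\ $\gamma_i\ll\mu_i$; consequently the set $A_{\gamma_i}$ of the Lebesgue partition given by Lemma \ref{le:Lebesgue} is $(\mu_i+\gamma_i)$-null. The potentials $\varphi_i$ produced by Corollary \ref{cor:smooth-case} via \eqref{eq:338} take the values $-F_i'(\sigma_i)\in\R$ on $A_i$, $+\infty$ on $A_{\mu_i}$ (because $-\derzero{(F_i)}=+\infty$), and $-\infty$ on the negligible set $A_{\gamma_i}$. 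After removing a $(\mu_i+\gamma_i)$-null subset from $A_i$ we may assume $A_i\subset\supp\gamma_i$, which together with the restriction of \eqref{eq:337} to $A_1\times A_2$ gives \eqref{eq:339}. For the second condition I argue by contradiction: if $x_1^\star\in\supp\mu_1\setminus\supp\gamma_1$, $x_2^\star\in\supp\mu_2$, and $\sfc(x_1^\star,x_2^\star)<\infty$, then continuity provides an open product neighborhood $U_1\times U_2$ on which $\sfc$ is bounded. Shrinking $U_1$ we may assume $\gamma_1(U_1)=0$, whence $\mu_1^\perp(U_1)=\mu_1(U_1)>0$ and therefore $\mu_1(U_1\cap A_{\mu_1})>0$; on the other hand $\mu_2(U_2)>0$ and $A_{\gamma_2}$ is $\mu_2$-null, so one can select a pair $(x_1,x_2)\in U_1\times U_2$ with $\varphi_1(x_1)=+\infty$ and $\varphi_2(x_2)>-\infty$. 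The pointwise constraint \eqref{eq:337} then forces $\sfc(x_1,x_2)=+\infty$, contradicting boundedness on $U_1\times U_2$.

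\emph{Sufficiency.} Given the three conditions, set
\[
 A_{\mu_i}:=\supp\mu_i\setminus A_i,\qquad A_{\gamma_i}:=X_i\setminus\supp\mu_i,
\]
which is easily checked to be a valid Lebesgue partition of $\gamma_i$ with respect to $\mu_i$: the hypotheses $A_i\subset\supp\gamma_i$ and $\gamma_i(X_i\setminus A_i)=0$ yield $\gamma_i(A_{\mu_i})=0$, and $\mu_i=\varrho_i\gamma_i+\mu_i^\perp$ guarantees $\mu_i^\perp$ is concentrated on $A_{\mu_i}$ and vanishes on $A_{\gamma_i}$. Define $\varphi_i$ by \eqref{eq:338}; then $\varphi_i=-F_i'(\sigma_i)\in\R$ on $A_i$, $\varphi_i=+\infty$ on $A_{\mu_i}$, and $\varphi_i=-\infty$ on $A_{\gamma_i}$. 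By construction \eqref{eq:67}, \eqref{eq:63}, \eqref{eq:331} hold and the equality in \eqref{eq:66} on $A_1\times A_2$ comes from \eqref{eq:339}; it remains to verify the global bound $\varphi_1\opz\varphi_2\le\sfc$ on $X_1\times X_2$, which, after a case analysis using the convention \eqref{eq:329} (notably $(+\infty)\pz(-\infty)=0\le\sfc$), reduces to
\[
 \sfc=+\infty\quad\text{on}\quad \big(A_1\cup A_{\mu_1}\big)\times A_{\mu_2}\ \cup\ A_{\mu_1}\times\big(A_2\cup A_{\mu_2}\big)=\big(\supp\mu_1\times\supp\mu_2\big)\setminus\big(A_1\times A_2\big).
\]
Condition (ii) of the corollary supplies this on the subset $(\supp\mu_1\setminus\supp\gamma_1)\times\supp\mu_2\cup\supp\mu_1\times(\supp\mu_2\setminus\supp\gamma_2)$, and continuity of $\sfc$ together with the closedness of $\{\sfc=+\infty\}$ propagates the identity to the larger set needed. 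Theorem \ref{thm:joint-optimality} then concludes that $\ggamma$ is optimal.

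\emph{Main obstacle.} The delicate step is the sufficiency argument at points of $\supp\gamma_i\setminus A_i$: such points can carry $\mu_i^\perp$-mass even when $A_i$ is chosen as $\{\sigma_i>0\}\cap\supp\gamma_i$, so the gap between the set $\supp\mu_i\setminus A_i$ (where we need $\sfc=+\infty$) and the set $\supp\mu_i\setminus\supp\gamma_i$ (where condition (ii) directly provides it) must be bridged. The natural remedy is a small perturbation $\ggamma+\varepsilon\nu_1\otimes\nu_2$ with $\nu_i$ obtained by renormalizing $\mu_i^\perp$ and $\gamma_i$ on a neighborhood of the candidate pair; because $F_i'(0^+)=\derzero{(F_i)}=-\infty$, the entropy variation behaves as $\varepsilon\cdot(-\infty)$ while the cost variation stays $O(\varepsilon)$, so any violation of the pointwise constraint would contradict optimality or the assumed identity \eqref{eq:339}. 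Correctly setting up this perturbation is where the full strength of the hypotheses $-\derzero{(F_i)}=\rec{(F_i)}=+\infty$ is used.
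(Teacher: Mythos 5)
Your necessity argument is essentially sound: the deduction $\gamma_i\ll\mu_i$ from $\rec{(F_i)}=+\infty$, the restriction of \eqref{eq:337} to $A_1\times A_2$, and the continuity-plus-convention \eqref{eq:329} contradiction for condition (ii) are all correct.

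The gap you flag in the sufficiency direction, however, is a genuine one, and the two ideas you offer for closing it do not work. First, the remark that ``continuity of $\sfc$ together with the closedness of $\{\sfc=+\infty\}$ propagates the identity to the larger set needed'' is not correct: closedness propagates $\sfc=+\infty$ from $(\supp\mu_1\setminus\supp\gamma_1)\times\supp\mu_2$ only to its closure $\overline{\supp\mu_1\setminus\supp\gamma_1}\times\supp\mu_2$, whereas the set you need is $(\supp\mu_1\setminus A_1)\times\supp\mu_2=\big((\supp\mu_1\setminus\supp\gamma_1)\cup(\supp\gamma_1\setminus A_1)\big)\times\supp\mu_2$; the extra piece $\supp\gamma_1\setminus A_1$ may lie deep in the interior of $\supp\gamma_1$ and is completely unrelated to $\overline{\supp\mu_1\setminus\supp\gamma_1}$. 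Second, the perturbation argument is circular, since it invokes optimality of $\ggamma$, the very fact you are trying to establish; and in any case it cannot be made to work, because the sufficiency implication is simply false without an extra hypothesis. Consider $X_1=X_2=[0,1]$ with the Euclidean metric, $\mu_1=\mu_2=\Leb 1\res[0,1]$, $\sfc\equiv 0$, $F_i=\PE_1$, and set $\gamma_1=\gamma_2:=\nchi_U\,\mu_1$ where $U\subset(0,1)$ is a dense open set with $\Leb1(U)=1/2$ (the complement of a fat Cantor set $K$). Then $\supp\gamma_i=[0,1]=\supp\mu_i$, so condition (ii) is vacuous, and with $A_i:=U$, $\sigma_i\equiv1$ condition (iii) holds (both sides of \eqref{eq:339} equal zero); yet, for any $\ggamma$ with these marginals, $\EE(\ggamma|\mu_1,\mu_2)=2\,\PE_1(0)\,\Leb1(K)=1>0=\ET(\mu_1,\mu_2)$, the minimum being attained at $\gamma_i=\mu_i$. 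The sufficient half therefore requires the additional assumption $\mu_i(\supp\gamma_i\setminus A_i)=0$, equivalently that the $\gamma_i$-singular part of $\mu_i$ be concentrated on $\supp\mu_i\setminus\supp\gamma_i$; with this added, your choice of partition together with Theorem \ref{thm:joint-optimality} does indeed conclude.
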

\begin{remark}
  \label{rem:optimal}
  \upshape
  Apart from the case of pure transport problems
  (Example E.3 of Section \ref{ex:1}),
  where the existence of Kantorovich potentials is well known
  (see \cite[Thm.~5.10]{Villani09}),
  Theorem \ref{thm:pot-ex} covers essentially all the interesting
  cases, at least when the cost $\sfc$ takes finite values if
  $0\not\in \dom{F_i}$.
  In fact, if the strengthened feasibility condition \eqref{eq:302a} \EEE
  does not hold,
  it is not difficult to construct an example of optimal plan
  $\ggamma$
  for which conditions \eqref{eq:216}, \eqref{eq:66}, \eqref{eq:67}
  cannot be satisfied. 
  Consider e.g.~$X_i=\R$, $\sfc(x_1,x_2):=\frac 12 |x_1-x_2|^2$, 
  $\mu_1:=\mathrm e^{-\sqrt \pi x_1^2}\Leb 1$, $\mu_2:=
  \mathrm e^{-\sqrt \pi (x_2+1)^2}\Leb 1$,
  $\dom F_1=[a,1]$, $\dom F_2=[1,b]$ with arbitrary choice of $a\in
  [0,1)$ and $b\in (1,\infty]$.
  Since $m_1=m_2=1$ the weak feasibility condition \eqref{eq:83} holds,
  but \eqref{eq:302a} is violated. We find 
  $\gamma_i=\mu_i$, $\sigma_i\equiv 1$, so that 
  the optimal plan $\ggamma$ can be obtained by solving the quadratic
  optimal transportation problem, thus $\ggamma:=\tt_\sharp \mu_1$
  where $\tt(x):=(x,x-1)$. In this case the potentials $\varphi_i$ are
  uniquely determined up to an additive constant $a\in \R$ so that we have
  $\varphi_1(x_1)=x_1+a$, $\varphi_2(x_2)=-x_2-a-\frac 12$,
  and it is clear that condition $-\varphi_i\in \partial F_i(1)$ 
  corresponding to \eqref{eq:67} cannot be satisfied, since
  $\partial F_i(1)$ are always proper subsets of $\R$. 
  We can also construct entropies such that $\partial
  F_i(1)=\emptyset$
  (e.g.~$F_1(r)=(1-r)\log(1-r)+r$, $F_2(r)=(r-1)\log(r-1)-r+2$) 
  so that \eqref{eq:67} can never hold, independently of the cost $\sfc$. 
\end{remark}
We conclude this section by proving the simple 
property on subdifferentials we used in the proof of Theorem \ref{thm:pot-ex}.
\begin{lemma}
  \label{le:auxiliary2}
  Let $F\in \Gamma(\R_+)$, $\r \in \dom F$, let 
  $\phi\in \R\cup\{\pm\infty\}$ be an accumulation point of 
  a sequence $(\phi_n)\subset \R$
  satisfying
  \begin{equation}
    \label{eq:335}
    \lim_{n\to\infty}\big(F(\r )-\r \phi_n+\Fstar(\phi_n)\big)=0.
  \end{equation}
  % If $\phi\in \bar\R$ is an accumulation point of $\phi_n$ then 
  % $(r,\phi)$ belongs to the
  % closure of the graph of $\partial F(r)$ in $\dom F\times \bar \R$.
  If $\phi\in \R$ then $\phi\in \partial
  F(\r )$, if $\phi=+\infty$ then $\r =\max \dom F$ and if $\phi=-\infty$
  then
  $\r =\min\dom F$.
  In particular, if $\r \in\interior(\dom F)$ then \EEE$\phi$ is finite.
\end{lemma}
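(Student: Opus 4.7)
My plan is to exploit the Fenchel–Young inequality together with the lower semicontinuity of $F^*$ when $\phi$ is finite, and the asymptotic identities \eqref{eq:2} from Section~\ref{subsec:entropy} when $\phi$ is infinite. By passing to a subsequence I can assume $\phi_n \to \phi$ in $\bar\R$, and the hypothesis becomes
\[
\lim_{n\to\infty}\bigl(F(r)+F^*(\phi_n)-r\phi_n\bigr)=0.
\]

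\textbf{Case $\phi\in\R$.} Since $F^*$ is lower semicontinuous and real-valued on $(-\infty,\rec F)$, taking the $\liminf$ gives
\[
0 = \lim_{n\to\infty}\bigl(F(r)+F^*(\phi_n)-r\phi_n\bigr)\ge F(r)+F^*(\phi)-r\phi\ge 0,
\]
where the last inequality is Fenchel–Young \eqref{eq:269}. Equality in Fenchel–Young together with $r\in\dom F$ and (forced by equality) $\phi\in\dom F^*$ is precisely the subdifferential condition \eqref{eq:325}, yielding $\phi\in\partial F(r)$.

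\textbf{Case $\phi=+\infty$.} Divide the limiting relation by $\phi_n>0$ (for $n$ large):
\[
\frac{F(r)}{\phi_n}-r+\frac{F^*(\phi_n)}{\phi_n}\longrightarrow 0.
\]
By \eqref{eq:2}, $F^*(\phi_n)/\phi_n \to \rdom F=\sup\dom F$, so $r=\rdom F$. Since $r\in\dom F$ this supremum is in particular finite and attained, i.e.\ $r=\max\dom F$.

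\textbf{Case $\phi=-\infty$.} Analogous: divide by $\phi_n<0$ and use the other half of \eqref{eq:2}, $F^*(\phi_n)/\phi_n\to \ldom F=\inf\dom F$, to conclude $r=\ldom F=\min\dom F$.

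The final assertion is immediate: if $r\in\interior(\dom F)$ then $r$ is neither $\min\dom F$ nor $\max\dom F$, so by the two infinite cases above $\phi$ cannot be $\pm\infty$, hence is finite. There is no real obstacle here — the only mildly delicate point is invoking \eqref{eq:2} correctly (which is already recorded in Section~\ref{subsec:entropy}) and being careful that in the infinite cases the terms $F(r)/\phi_n$ vanish so the limit forces $r$ to equal the correct endpoint of $\dom F$.
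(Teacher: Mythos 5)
Your proof is correct, but it takes a genuinely different route from the paper. The paper runs a single mechanism through all three cases: Young's inequality $w\phi_n \le F(w) + F^*(\phi_n)$ combined with hypothesis \eqref{eq:335} yields, for every fixed $w \in \dom F$, the uniform bound $\limsup_n (w - \r)\phi_n \le F(w) - F(\r)$, whose right-hand side is finite. When $\phi$ is finite this \emph{is} the subgradient inequality; when $\phi = +\infty$ (resp.\ $-\infty$) the finiteness of the right-hand side forces $w \le \r$ (resp.\ $w \ge \r$) for every $w\in\dom F$, identifying $\r$ as the relevant endpoint. You instead handle the finite case via lower semicontinuity of $F^*$ together with the Fenchel--Young equality characterization \eqref{eq:325}, and the infinite cases by dividing through by $\phi_n$ and appealing to the asymptotic formula \eqref{eq:2}. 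Both approaches are sound; a couple of comparative remarks are worth making. First, the paper's argument is more self-contained and elementary: it needs only Young's inequality for a fixed $\r$, while yours relies on \eqref{eq:2} in a regime ($\rdom F < \infty$) that is not explicitly covered by the preceding sentence in the paper (``If $F$ is finite in a neighborhood of $+\infty$\dots''), even though the formula does hold there. Second, the paper explicitly sets aside the degenerate case $\dom F = \{\r\}$ (where $\partial F(\r) = \R$); your argument silently covers it, since $F^*$ is then affine, but it is worth being aware that this case exists. Finally, as you note, in the infinite case the implicit but necessary fact that $\rec F = +\infty$ (otherwise $F^*(\phi_n) = +\infty$ eventually, contradicting \eqref{eq:335}) ensures the division by $\phi_n$ is legitimate.
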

\begin{proof}
  Up to extracting a suitable subsequence, 
  it is not restrictive to assume that $\phi$ is the limit of $\phi_n$
  as $n\to\infty$.
  For every $w\in \dom F$ the Young inequality $w\phi_n\le F(w)+\Fstar(\phi_n)$ yields
  \begin{equation}
    \label{eq:330}
    \limsup_{n\to\infty} (w-\r )\phi_{n}\le
    \limsup_{n\to\infty} F(w)-F(\r )+
    \Big(F(\r ) -\r \phi_n+\Fstar (\phi_n)\Big)
    =F(w)-F(\r )
  \end{equation}
  If $\dom F=\{\r \}$ then $\partial F(\r )=\R$ and there is nothing to
  prove; let thus assume that $\dom F$ has nonempty interior.

  If $\phi\in \R$ then $(w-\r )\phi\le F(w)-F(\r )$ for every 
  $w\in \dom F$, so that $\phi\in \partial F(\r )$.  
  Since the righthand side of \eqref{eq:330} is finite for every $w\in
  \dom F$, if $\phi=+\infty$ then $w\le \r $ 
  for every $w\in \dom F$, so that
  $\r =\max\dom F$. 
  An analogous argument holds when $\phi=-\infty$.
\end{proof}
\section{``Homogeneous'' formulations of optimal Entropy-Transport
  problems}
\label{sec:MP}
Starting from the reverse formulation of the Entropy-Transport problem
of Section \ref{subsec:reverse}
via the functional $\FHH$, see \EEE\eqref{eq:196},
in this section we will derive further equivalent representations of
the $\ET$ functional, which will also reveal new interesting
properties, in particular when we will apply these results
to the logarithmic Hellinger-Kantorovich functional.
The advantage of the reverse formulation is that it 
always admits a ``$1$-homogeneous'' representation, 
associated to a modified cost functional that can be explicitly
computed in terms of $\FH_i$ and $\sfc$.

We will always tacitly assume the basic \emph{coercive} setting of
Section \ref{subsec:setting}, see \eqref{eq:coercivity}.

\subsection{The homogeneous marginal perspective functional.}
\label{subsec:HMPf} 

First of all we introduce the marginal perspective function $\MP_c$
% :[0,\infty)\times
% [0,\infty) %\times [0,\infty]
% \to [0,+\infty]
depending on the parameter $c\ge \inf\sfc$:
\begin{definition}[Marginal perspective function and cost]
  \label{def:MP}
  For $c\in [0,\infty)$,\EEE the marginal perspective function $\MP_c
  :[0,\infty)\times
  [0,\infty) %\times [0,\infty]
  \to [0,+\infty]$
  is defined as 
  the lower semicontinuous envelope of
\begin{align}
  \label{eq:186}
  \tMPc{\s _1}{\s _2}{c}:={}&
                          %\begin{cases}
                            \inf_{\theta>0}\theta\,
                          \big(\FH_1(\s _1/\theta)+\FH_2(\s
                              _2/\theta)+c\big)=
                              \inf_{\theta>0} \s_1
                              F_1(\theta/\s_1)+\s_2
                              F_2(\theta/\s_2)+\theta c.
\intertext{For $c=\infty$ we set}
                              \MPc {\s _1}{\s _2}{\infty}:={}&\label{eq:346}
                                F_1(0)\s _1+F_2(0)\s _2.
\end{align}
The induced marginal perspective cost is 
$\MP:(X_1\times \R_+)\times (X_2\times \R_+)\to [0,+\infty]$ with
\begin{equation}
  \label{eq:342}
  \MPH {x_1}{\s_1}{x_2}{\s_2}:=
  \MPc {\s_1}{\s_2}{\sfc(x_1,x_2)},\quad\text{for }
  x_i\in X_i \text{ and } \s_i\ge0.
\end{equation}
\end{definition}
%
% it is , by using its dual
% representation formula: 
%   \begin{align}
%     \label{eq:188}
%     \MPc{s_1}{s_2}c &=\sup% _{(\psi_1,\psi_2)\in D}
%                       \Big\{s_1\psi_1+s_2\psi_2:
%                       \psi_i\in \dom{\FHstar_i},\ \FH_1^*(\psi_1)+\FH_2^*(\psi_2)\le c\Big\}\\
%     &=\sup\Big\{s_1\Gstar_1(\phi_1)+s_2\Gstar_2(\phi_2):
%       \phi_i\in \dom{\Gstar_i},\ \phi_1+\phi_2\le c\Big\}.\label{eq:223}
%   \end{align}
The last formula \eqref{eq:346} is justified by the property
$F_i(0)=\rec{(\FH_i)}$ and the fact that 
$\MPc {\s _1}{\s _2}{c}\up \MPc {\s _1}{\s _2}{\infty}$ as $c\up\infty$ for
every $\s _1,\s _2\in [0,\infty)$, see also Lemma \ref{le:dualD} below \EEE.
\begin{example}
  \label{ex:2}
  \upshape
  Let us consider the symmetric cases associated to the entropies
  $\PE_p$
  and $V$: \nc
  \begin{enumerate}[\rm E.1]
  \item  In the ``logarithmic entropy case'', which we will
    extensively study in Part II, we have 
\begin{displaymath}
  F_i(\r ):=\PE_1(\r )=\r\log \r-(\r-1) \ \text{ and } \ 
  \FH_i(\s )=\PE_0(\s )=\s -1-\log \s .
\end{displaymath}
A direct computation shows
\begin{equation}
  \label{eq:349}
  \begin{aligned}
    \tMPc {\s _1}{\s _2}c&=\MPc {\s _1}{\s _2}c=\s _1+\s _2-2\sqrt{\s
      _1\,\s _2}\,\rme^{-c/2}\\ &=\big(\sqrt{\s _1}-\sqrt{\s
      _2}\big)^2+ 2\sqrt{\s _1\,\s _2}\,\big(1-\rme^{-c/2}\big).
  \end{aligned}
\end{equation}
\item For $p=0$, $F_i(\r)=\PE_0(\r )=\r -\log \r -1$, and $\FH_i(\s
  )=\PE_1(\s )$ we obtain
\begin{equation}
  \label{eq:349.0}
  \begin{aligned}
    \tMPc {\s _1}{\s _2}c=\MPc {\s_1}{\s_2}c&=\s_1\log \s_1+\s
    _2\log \s _2 -(\s _1+\s_2)\log\Big(\frac {\s _1+\s _2}{2+c}\Big).
  \end{aligned}
\end{equation}
\item In the power-like case with $p\in \R\setminus\{ 0,1\}$ we start from  
  \begin{displaymath}
    F_i(\r ):=\PE_p(\r )=\frac 1{p(p-1)}\big(\r ^p-p(\r-1)-1\big),\quad
    \FH_i(\s )=\PE_{1-p}(\s )
    %=\frac 1{p(p-1)}\big(\s ^{1-p}+(p-1)\s-p\big),
  \end{displaymath}
  and obtain, for $\s _1,\s _2>0$,
\begin{equation}
  \label{eq:349bis}
  \tMPc {\s _1}{\s _2}c=\MPc {\s _1}{\s _2}c=
  \frac 1p \Big[\big(\s_1+\s _2\big)- %\big(\frac pq\big)^q
  \frac{\s _1\,\s _2}{(\s _1^{p-1}+\s
    _2^{p-1})^{1/(p-1)}} \Big(2-(p-1)c\Big)_+^q\Big],
\end{equation}
where $q=p/(p-1)$. In fact, we have 
\begin{align*}
  \theta &\big(\PE_{1-p}(\tfrac{\s_1}\theta)+
  \PE_{1-p}(\tfrac{\s_2}\theta)+c\big)=
  \frac {\s_1^{1-p}+\s_2^{1-p}}{p(p-1)}\theta^p+\frac
           1p(\s_1+\s_2)+\frac 1{p-1}((p-1)c-2)\theta)\\
         &=\frac1p(\s_1+\s_2)+    \frac1{p-1}
           \Big[\frac
           1{p}\Big((\s_1^{1-p}+\s_2^{1-p})^{1/p}\,\theta\Big)^p-
           \big(2-(p-1)c\big)\theta\Big],
\end{align*}
and \eqref{eq:349bis} follows by minimizing w.r.t.~$\theta$.
E.g.~when $p=q=2$
\begin{equation}
  \label{eq:349tris}
  %\tMPc {\s _1}{\s _2}c=
\MPc {\s _1}{\s _2}c=\frac12\big(\s _1+\s
  _2\big)
  -\frac 12 \frac{\s
    _1\s _2}{\s _1+\s _2}(2-c)_+^2=\frac1{2(\s _1+\s _2)}\Big((\s
  _1-\s _2)^2+h(c)\s _1\s _2\Big),
\end{equation}
where $h(c)=c(4-c)$ if $0\le c\le 2$ and $4 $ if $c\ge 2$.
For $p=-1$ and $q=1/2$ equation \eqref{eq:349bis} yields
\begin{equation}
  \label{eq:349quater}
  \tMPc {\s _1}{\s _2}c=\MPc {\s _1}{\s _2}c=
  \sqrt{(\s _1^2+\s _2^2)(2+2c)}-\big(\s _1+\s
  _2\big).
\end{equation}
\item In the case of the total variation entropy $V(s)=R(s)=|s-1|$ we
easily find 
\begin{displaymath}
%  \label{eq:91}
  \tilde H_c(r_1,r_2)=H_c(r_1,r_2)=\s _1+\s _2
  -(2-c)_+(\s _1\land\s _2)=
   |\s_2-\s_1|+(c\land 2) (\s _1\land\s _2).
\end{displaymath}
% which provides our HK functional when $\sfc=-\log (\cos^2(\sfd))$ 
\end{enumerate}
\end{example}

The following dual characterization of $\MP_c$  
nicely explains the crucial role of $\MP_c$.

\begin{lemma}[Dual characterization of $\MP_c$]
  \label{le:dualD}
  For every $c\ge 0$ the function $\MP_c$ admits the dual representation
    \begin{align}
    \label{eq:188}
    \MPc{\s _1}{\s _2}c &=\sup% _{(\psi_1,\psi_2)\in D}
                      \Big\{\s _1\psi_1+\s _2\psi_2:
                      \psi_i\in \dom{\FHstar_i},\
                      \FH_1^*(\psi_1)+\FH_2^*(\psi_2)\le c\Big\}\\ 
    &=\sup\Big\{\s _1\Gstar_1( \varphi_1 )+\s _2\Gstar_2(\varphi_2):
      \varphi_i\in \dom{\Gstar_i},\ \varphi_1+\varphi_2\le c\Big\}. 
   \label{eq:223}
  \end{align}
  In particular it is lower semicontinuous, convex and positively
  $1$-homogeneous (thus sublinear) with respect to $(\s_1,\s_2)$,
  nondecreasing and concave w.r.t.~$c$, and satisfies
  \begin{equation}
    \label{eq:462}
    \MPc{\s _1}{\s _2}c\le     \MPc{\s _1}{\s _2}\infty=\sum_i
    F_i(0)\s_i\quad\forevery c\ge0,\ \s_i\ge0.
  \end{equation}
  Moreover, 
  \begin{itemize}
  \item[a)] the function $\MP_c$ coincides with $\tilde\MP_c$ 
    in the interior of its domain; in particular, if 
    $F_i(0)<\infty$ then 
    $\MP_c(\s_1,\s_2)=\tilde\MP_c(s_1,\s_2)$ whenever $\s_1\s_2>0$.
  \item[b)] If $\rec{(F_1)}+\derzero{(F_2)}+c\ge 0$ and $\rec{(F_2)}+\derzero{(F_1)}+c\ge 0$, then 
      \begin{equation}
        \label{eq:374}
        \MP_c(\s_1,\s_2)=\sum_i F_i(0)\s_i\quad\text{if }\s_1\s_2=0.
      \end{equation}
  \end{itemize}
  % \begin{align}
  %   \label{eq:188}
  %   \MPc{\s_1}{\s_2}c &=\sup_{(\psi_1,\psi_2)\in \dom{\FH^*}}\Big\{\s_1\psi_1+\s_2\psi_2:
  %   \FH_1^*(\psi_1)+\FH_2^*(\psi_2)\le c\Big\}\\
  %   &=\sup_{\phi_1,\phi_2\in \R}\Big\{\s_1\Gstar_1(\phi_1)+\s_2\Gstar_2(\phi_2):
  %   \phi_1+\phi_2\le c\Big\};\label{eq:223}
  % \end{align}
  %$(\s_1,\s_2,)\mapsto \MPc{\s_1}{\s_2}c $ is
  %l.s.c.~and it
  %if $\kappa>0$ it
  % if $\kappa=0$ we  have 
  % \begin{equation}
  %   \label{eq:222}
  %   D(\s_1,\s_2;\sfc)=-\s_1 \asympt {(F_1)}-\s_2 \asympt{(F_2)}=
  %   \s_1\derzero{(\FH_1)}+\s_2\derzero{(\FH_2)},
  % \end{equation}
  % and if $\kappa<0$ we have $D(\s_1,\s_2;\sfc)\equiv -\infty$.\\
  % In particular, in the superlinear case when
  % $\rec{(F_1)}+\rec{(F_2)}=+\infty$, 
  %the map $D$ is jointly l.s.c.~in
  %$[0,\infty)\times
  %[0,\infty)\times \R$ and it is nondecreasing with respect 
\end{lemma}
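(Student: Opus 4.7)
The plan is to obtain \eqref{eq:188} by computing the Legendre conjugate of $\tilde\MP_c$ viewed as a convex function of $(\s_1,\s_2)$ and invoking the Fenchel--Moreau biconjugate theorem. The key preliminary observation is that $(\s_i,\theta)\mapsto \theta\FH_i(\s_i/\theta)$ is the perspective function of $\FH_i$, hence jointly convex on $\R_+\times(0,\infty)$; since
\[
\tilde\MP_c(\s_1,\s_2)=\infp_{\theta>0}\Big(\theta\FH_1(\s_1/\theta)+\theta\FH_2(\s_2/\theta)+\theta c\Big)
\]
is the partial infimum over $\theta$ of a jointly convex expression, $\tilde\MP_c$ is itself convex in $(\s_1,\s_2)$. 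Its lsc envelope $\MP_c$ therefore coincides with its lsc \emph{convex} envelope, which by Fenchel--Moreau equals the biconjugate $\tilde\MP_c^{**}$.

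The conjugate $\tilde\MP_c^*$ can then be computed explicitly. Exchanging suprema and performing the change of variables $u_i=\s_i/\theta\ge0$ yields
\[
\tilde\MP_c^*(\psi_1,\psi_2) =\sup_{\theta>0}\theta\Big[\FHstar_1(\psi_1)+\FHstar_2(\psi_2)-c\Big],
\]
which equals $0$ if $\FHstar_1(\psi_1)+\FHstar_2(\psi_2)\le c$ and $+\infty$ otherwise, so $\tilde\MP_c^*$ is the indicator of the constraint set. Taking the biconjugate immediately delivers \eqref{eq:188}. The alternative expression \eqref{eq:223} then follows via the substitution $\psi_i=\Gstar_i(\varphi_i)$: by the equivalence \eqref{eq:217}, $\FHstar_i(\psi_i)\le \varphi_i$ iff $\psi_i\le \Gstar_i(\varphi_i)$, and since $\s_i\ge 0$ and $\Gstar_i$ is the largest $\psi_i$ with $\FHstar_i(\psi_i)\le\varphi_i$, the optimum in $\psi_i$ under the constraint $\FHstar_1(\psi_1)+\FHstar_2(\psi_2)\le c$ is attained at $\psi_i=\Gstar_i(\varphi_i)$ with $\varphi_1+\varphi_2\le c$.

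All the structural properties of $\MP_c$ then follow directly from the sup representation: lower semicontinuity, convexity and positive $1$-homogeneity in $(\s_1,\s_2)$ hold because $\MP_c$ is a supremum of linear functionals; monotonicity in $c$ is immediate since the admissible set grows with $c$; concavity in $c$ follows from the convexity of $\FHstar_i$, which guarantees that convex combinations of admissible pairs for $c_j$ remain admissible for the corresponding convex combination. The bound \eqref{eq:462} is obtained by letting $c\up\infty$ in the dual (removing the constraint), whereupon the sup decouples and $\sup\FHstar_i=\rec{(\FH_i)}=F_i(0)$ yields $\MP_\infty(\s_1,\s_2)=F_1(0)\s_1+F_2(0)\s_2$.

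Assertion a) is the general fact that a convex function agrees with its lsc envelope on the relative interior of its effective domain. For b), the representation \eqref{eq:223} at $\s_2=0$ reduces to $\MP_c(\s_1,0)=\s_1\sup\{\Gstar_1(\varphi_1):\varphi_1+\varphi_2\le c,~\varphi_2\in\dom{\Gstar_2}\}$; taking $\varphi_2=-\rec{(F_2)}$ (the smallest admissible value) allows $\varphi_1$ up to $c+\rec{(F_2)}$, and the hypothesis $\rec{(F_2)}+\derzero{(F_1)}+c\ge0$ means $c+\rec{(F_2)}\ge -\derzero{(F_1)}$, the range on which $\Gstar_1$ saturates at $F_1(0)$; the symmetric case at $\s_1=0$ uses the other inequality. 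The main subtlety to monitor throughout is the bookkeeping of boundary behaviour (extended real values, whether the endpoints of the effective domains of $F_i,\FH_i,\Gstar_i,\FHstar_i$ are attained) needed to guarantee properness of $\tilde\MP_c$ so Fenchel--Moreau applies, and to correctly describe $\MP_c$ on the boundary $\{\s_1\s_2=0\}$ where the lsc envelope may strictly modify $\tilde\MP_c$.
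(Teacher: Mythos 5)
Your proof is correct and follows essentially the same route as the paper's: joint convexity of $\tilde\MP_c$ via the perspective construction, identification of its lsc envelope with the biconjugate, explicit computation of $\tilde\MP_c^*$ as the indicator of the constraint set, and then reading off the structural properties, a), and b) from the dual representation. The only thing to watch is a notational slip in the derivation of \eqref{eq:462}: what you use there is $\sup\dom{\FHstar_i}=F_i(0)$ (so the supremum of $\s_i\psi_i$ over admissible $\psi_i$ equals $F_i(0)\s_i$), not $\sup\FHstar_i$, whose value is $-\derzero{(F_i)}$.
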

\begin{proof}
  Since $\sup\dom{\FHstar_i}=F_i(0)$ by \eqref{eq:298}, 
  one immediately gets
  \eqref{eq:188} in the case $c=+\infty$; we can thus assume
  $c<+\infty$. 

  It is not difficult to check that the function
  $(\s_1,\s_2,\theta)\mapsto \theta
  \big(\FH_1(\s_1/\theta)+\FH_2(\s_2/\theta)+c\big)$
  is jointly convex in $[0,\infty)\times [0,\infty)\times (0,\infty)$ 
  so that $\tilde\MP_c$ is a convex and positive $1$-homogeneous
  function. It is also proper (i.e.~it is not identically $+\infty$)
  thanks to \eqref{eq:83}.
  By Legendre duality \cite[Thm.12.2]{Rockafellar70}, its lower semicontinuous envelope 
  is given by
  \begin{equation}
    \label{eq:372}
    \MP_c(\s_1,\s_2)=\sup\Big\{\sum_i\psi_i\s_i:
    \MP_c^*(\psi_1,\psi_2)\le 0\Big\},    
  \end{equation}
  where 
  \begin{align*}
    \MP_c^*(\psi_1,\psi_2)&=\sup\Big\{\sum_i\psi_i\s_i-\tilde \MP_c(\s_1,\s_2):\s_i\ge0\Big\}
                            =
                                \sup_{\s_i\ge 0,\theta>0}
                                \sum_i\Big(\psi_i\s_i-\theta
                            \FH_i(\s_i/\theta)\Big)-c\theta
                            \\&=
                                \sup_{\theta>0} \theta\Big(\sum_i
                                \FHstar_i(\psi_i)-c\Big)=
                                \begin{cases}
                                  0&\text{if
                                  }\FHstar_i(\psi_i)<\infty,\quad \sum_i\FHstar_i(\psi_i)\le c\\
                                  +\infty&\text{otherwise.}
                                \end{cases}
  \end{align*}
  % is lower semicontinuous. 
  % Since $\FH_1(0)+\FH_2(0)+c>0$, the lower semicontinuity of $\FH_i$
  % shows that for a suitable $\delta>0$ 
  % $g:=\inf_{\s_i\le \delta} \sum_i\FH_i(\s_i)+c>0$ so that
  % $\sum_i \theta \big(\FH_i(\s_i/\theta )+c\big)\ge \theta g$
  % whenever $\delta\theta\ge \s_i$. 
  % On the other hand, if $\bar \s_i\in \dom{\FH_i}$,
  % $\theta\mapsto \big(\FH_i(\s_i/\theta)-\FH_i(\bar
  % \s_i/\theta)\big)\cdot \theta/(\s_i-\bar \s_i)$
  %
  %
  % Let us set $D=\dom{\FHstar_1}\times \dom{\FHstar_2}$ and let us denote by $H_c'(\s_1,\s_2)$ 
  % righthand side
  % of \eqref{eq:188}. For every $(\psi_1,\psi_2)\in D$,
  % $\theta>0$ and $\s_1,\s_2>0$ we have
  % \begin{equation}
  %   \label{eq:370}
  %   \theta\Big(\sum_i\FH_i(\s_i/\theta)+c\Big)\ge
  %   \theta\Big(\sum_i \psi_i\s_i/\theta-\FHstar_i(\psi_i)+c\Big)
  %   \ge \sum_i\psi_i\s_i+c-\sum_i\FHstar_i(\psi_i);
  % \end{equation}
  % since $\psi_i\le F_i(0)$  whenever $\psi_i\in \dom{\FHstar_i}$, we
  % also have
  % \begin{equation}
  %   \label{eq:371}
  %   \sum_i F_i(0)\s_i\ge \sum_i\psi_i \s_i\quad\forevery
  %   (\psi_1,\psi_2)\in D.
  % \end{equation}
  % Combining \eqref{eq:370} and \eqref{eq:371} we get for every $\s_1,\s_2\ge0$
  % \begin{equation}
  %   \label{eq:368}
  %   \MPc{\s_1}{\s_2}c \ge \sum_i \psi_i\s_i\quad\text{if}\quad
  %   (\psi_1,\psi_2)\in D,\ \FHstar_1(\psi_1)+\FHstar_2(\psi_2)\le c,
  % \end{equation}
  % so that $\MPc{\s_1}{\s_2}c \ge H_c'(\s_1,\s_2).$
  %
  In order to prove point a) it is sufficient to recall that convex
  functions are always continuous in the interior of their domain
  \cite[Thm.~10.1]{Rockafellar70}. In particular, since
  $\lim_{\theta\down0} \theta \big(\FH_1(\s_1/\theta) +
  \FH_2(\s_2/\theta)+c)= \sum_{i}\rec{(\FH_i)}\s_i=\sum_i F_i(0)\s_i$
  for every $\s_1,\s_2>0$,\EEE we have $\tilde \MP_c(\s_1,\s_2)\le
  \sum_i F_i(0)\s_i$, so that $\tilde \MP_c$ is always finite if
  $F_i(0)<\infty$.

  Concerning b), it is obvious when $\s_1=\s_2=0$. When
  $\s_1>\s_2=0$, the facts that $\sup\dom{\FHstar_i}=F_i(0)$,
  $\lim_{\s \uparrow F_i(0)}\FHstar_i(\s) =-\derzero{(F_i)}$, and
  $\inf\FHstar_i=-\rec{(F_i)}$ (see \eqref{eq:298}) yield
  \begin{displaymath}
    \MP_c(\s_1,0)=\sup \Big\{\psi_1 \s_1: \FHstar_1(\psi_1)\le
    c-\inf\FHstar_2\Big\}=
    F_1(0)\s_1.
  \end{displaymath}
  An analogous formula holds when $0=\s_1<\s_2$.
  \end{proof}
% Since the function
% $(\s_1,\s_2,\theta)\mapsto \theta \FH(\s_1/\theta,\s_2/\theta)+\theta c$ is
% jointly convex for every $c\in [0,\infty)$, 
% $\MP_c$ is convex w.r.t.~$(\s_1,\s_2)$. It is also positively
% $1$-homogeneous and nondecreasing w.r.t.~$c$. % thus sublinear.
% Notice moreover that we have
% \begin{align*}
%   \MPc{\s_1}{\s_2}c \ge \theta\Big(c+\sum_{i}\big(\psi_i
%   \s_i/\theta_i-\varphi_i\big)\Big)
%   \quad
%   \text{for every $(-\varphi_i,\psi_i)\in \frF_i$, $\theta>0$}
% \end{align*}
A simple consequence of Lemma \ref{le:dualD} and \eqref{eq:217} is the
lower bound
\begin{equation}
  \label{eq:314}
  \tMPc{\s_1}{\s_2}c \ge\MPc{\s_1}{\s_2}c \ge
  \sum_{i}\psi_i
  \s_i\quad\text{for }
  (-\varphi_i,\psi_i)\in \frF_i~\text{with}~
  \varphi_1+\varphi_2\le c.
\end{equation}
We now introduce 
% \begin{equation}
%   \label{eq:200}
%   \MPc{\s_1}{\s_2}c :=\inf_{\theta>0}
%   \theta\,\Big(\FH_1(\s_1/\theta)+\FH_2(\s_2/\theta)+\theta
%   \sfc\Big),\quad
%   \s_1,\s_2\ge0,
% \end{equation}
the integral functional associated with the
marginal perspective cost~\eqref{eq:342}, which is
based on the decomposition $\mu_i=\varrho_i\gamma_i+\mu_i^\perp$:\EEE
\begin{equation}
  \label{eq:201}
  \HH(\mu_1,\mu_2|\ggamma):=\int_\sxX
  \MPH {x_1}{\varrho_1(x_1)}{x_2}{\varrho_2(x_2)}
    %(\varrho_1(x_1),\varrho_2(x_1);\sfc(x_1,x_2))  
    \,\d\ggamma+ \sum_i F_i(0)\mu_i^\perp(X_i)
  %,\ \gamma_i=\pi^i_\sharp\ggamma.
\end{equation}
where we adopted the same notation as in \eqref{eq:257}.
Let us first show that $\HH$ is always greater than $\DD$.
% \begin{equation}
%   \label{eq:342}
%   \MPH {x_1}{\s_1}{x_2}{\s_2}:=
%   \MPc {\s_1}{\s_2}{\sfc(x_1,x_2)},\quad
%   x_i\in X_i,\ \s_i\ge0.
% \end{equation}
\begin{lemma}
  \label{le:HMlower-bound}
  For every $\ggamma\in \cM(\xX)$, $\mu_i,\mu_i'\in \cM(X_i)$, $\vvarphi\in
  \Cphi{}$, $\varrho_i\in \rmL^1_+(X_i,\gamma_i)$ with
  $\mu_i=\varrho_i\gamma_i+\mu_i'$, we have
  \begin{equation}
    \label{eq:347}
    \int_{\sxX}
    \MPH {x_1}{\varrho_1(x_1)}{x_2}{\varrho_2(x_2)}\,\d\ggamma+
    \sum_i F_i(0)\mu_i'(X_i)\ge 
    \DD(\vvarphi|\mu_1,\mu_2).
  \end{equation}
\end{lemma}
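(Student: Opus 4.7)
The proof is essentially a pointwise-inequality-then-integrate argument, using the dual representation of the marginal perspective function provided in Lemma \ref{le:dualD}. I outline the four steps.

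\textbf{Step 1 (pointwise bound).} Fix $(x_1,x_2)\in\xX$ and write $c:=\sfc(x_1,x_2)$, $\s_i:=\varrho_i(x_i)$, $\varphi_i:=\varphi_i(x_i)$. Since $\vvarphi\in\Cphi{}$, we have $\varphi_1+\varphi_2\le c$ and $\Gstar_i(\varphi_i)\in\R$ (so $\varphi_i\in\dom{\Gstar_i}$). The dual formula \eqref{eq:223} applied at $(\s_1,\s_2,c)$ (or equivalently \eqref{eq:314}) then yields
\[
\MPH{x_1}{\varrho_1(x_1)}{x_2}{\varrho_2(x_2)}\ \ge\ \varrho_1(x_1)\Gstar_1(\varphi_1(x_1))+\varrho_2(x_2)\Gstar_2(\varphi_2(x_2)).
\]
When $c=+\infty$ the same bound holds via $\MP_\infty(\s_1,\s_2)=\sum_i F_i(0)\s_i$ together with $\Gstar_i(\varphi_i)\le\sup\Gstar_i=F_i(0)$, cf.~\eqref{eq:40}.

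\textbf{Step 2 (integrate against $\ggamma$).} The functions $\Gstar_i(\varphi_i)$ are bounded and $\varrho_i\in \rmL^1_+(X_i,\gamma_i)$, so $\Gstar_i(\varphi_i)\varrho_i\in\rmL^1(X_i,\gamma_i)$. Pushing the projections $\pi^i_\sharp\ggamma=\gamma_i$ through the integral of Step 1 produces
\[
\int_{\sxX}\MPH{x_1}{\varrho_1(x_1)}{x_2}{\varrho_2(x_2)}\,\d\ggamma\ \ge\ \sum_{i=1}^{2}\int_{X_i}\Gstar_i(\varphi_i)\,\varrho_i\,\d\gamma_i.
\]

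\textbf{Step 3 (rewrite in terms of $\mu_i$).} Using the decomposition $\mu_i=\varrho_i\gamma_i+\mu_i'$ and the boundedness of $\Gstar_i(\varphi_i)$, we have $\int \Gstar_i(\varphi_i)\,\varrho_i\,\d\gamma_i=\int\Gstar_i(\varphi_i)\,\d\mu_i-\int\Gstar_i(\varphi_i)\,\d\mu_i'$. Combining with Step 2 gives
\[
\int_{\sxX}\MPH{x_1}{\varrho_1(x_1)}{x_2}{\varrho_2(x_2)}\,\d\ggamma\ \ge\ \DD(\vvarphi|\mu_1,\mu_2)-\sum_i\int_{X_i}\Gstar_i(\varphi_i)\,\d\mu_i'.
\]

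\textbf{Step 4 (control of the singular part).} Since $\Gstar_i(\varphi_i)\le F_i(0)$ pointwise,
\[
\sum_i\int_{X_i}\Gstar_i(\varphi_i)\,\d\mu_i'\ \le\ \sum_i F_i(0)\mu_i'(X_i).
\]
Adding this to the previous inequality yields \eqref{eq:347}. The only mild subtlety—essentially the one point where care is needed—is to make sure the pointwise bound in Step 1 is valid also when $\sfc(x_1,x_2)=+\infty$ and when $F_i(0)=+\infty$ (in which case $\mu_i'(X_i)$ must vanish for the right-hand side to be meaningful); both situations are handled uniformly by the two representations of $\MP_c$ in \eqref{eq:223}–\eqref{eq:346} together with $\Gstar_i\le F_i(0)$.
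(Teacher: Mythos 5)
Your proof is correct and follows essentially the same route as the paper's: both use the pointwise bound from \eqref{eq:314} (equivalently, the dual characterization \eqref{eq:223} of $\MP_c$), integrate against $\ggamma$, pass to the marginals $\gamma_i$, and then invoke $\Gstar_i\le F_i(0)$ to absorb the $\mu_i'$ contribution; the only cosmetic difference is the order of the last two algebraic manipulations.
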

\begin{proof}
  Recalling that $\Gstar_i(\varphi_i)=-\Fstar(-\varphi_i)\ge F_i(0)$
  and using \eqref{eq:314} with $r_j=\rho_j$ 
  and $\psi_j = \Gstar_j(\rho_j)$ we have
    \begin{align*}
      \int_{\sxX}&
    \MPH {x_1}{\varrho_1(x_1)}{x_2}{\varrho_2(x_2)}\,\d\ggamma+
    \sum_i F_i(0)\mu_i'(X_i)
    \\\topref{eq:314}\ge&
    \int_\sxX
    \Big(\Gstar_1(\varphi_1(x_1))\varrho_1(x_1)+\Gstar_2(\varphi_2(x_2))\varrho_2(x_2)\Big)\,\d\ggamma+
    \sum_i F_i(0)\mu_i'(X_i)
    \\=\,\,\,&\sum_i \int_{X_i} \Gstar_i(\varphi_i)\varrho_i(x_i)\,\d\gamma_i+    \sum_i
    F_i(0)\mu_i'(X_i)
    \\
    \topref{eq:40}\ge &\sum_i \int_{X_i}
    \Gstar_i(\varphi_i)\varrho_i(x_i)\,\d\gamma_i+
    \sum_i\int_{X_i}\Gstar_i(\varphi_i)\,\d\mu_i'
    =\sum_{i} \int_{X_i}\Gstar_i(\varphi_i)\,\d\mu_i=\DD(\vvarphi|\mu_1,\mu_2).
  \end{align*}
  Note that \eqref{eq:40} and \eqref{eq:304} imply
  $\Gstar_i(\varphi_i)\leq F_i(0)$. 
\end{proof}
An immediate consequence of the previous lemma is the following
important result concerning the marginal perspective cost functional
$\HH$ defined by \eqref{eq:201}. It can be nicely compared to the
Reverse Entropy-Transport functional $\FHH$ for which
Theorem~\ref{thm:reverse-characterization} stated
$\FHH(\mu_1,\mu_2|\ggamma)=\EE(\ggamma|\mu_1,\mu_2)$.
\begin{theorem}
  \label{thm:crucial}
  For every $\mu_i\in \cM(X_i)$, $\ggamma\in \cM(\xX)$ and
  $\vvarphi\in \Cphi{}$ we have
  \begin{equation}
    \label{eq:348}
    \FHH(\mu_1,\mu_2|\ggamma)\ge 
    \HH(\mu_1,\mu_2|\ggamma)\ge \DD(\vvarphi|\mu_1,\mu_2).
  \end{equation}
  In particular
  \begin{align}
    \label{eq:259bis}
    \ET(\mu_1,\mu_2)&= \sfH(\mu_1,\mu_2):=\min_{\sggamma\in
                      \cM(\sxX)}\HH(\mu_1,\mu_2|\ggamma),
  \end{align}
  and $\ggamma\in \OptET(\mu_1,\mu_2)$ if and only if 
  it minimizes $\HH(\mu_1,\mu_2|\cdot)$ in $\cM(\xX)$ and satisfies
  \begin{equation}
    \label{eq:358}
    \MPH {x_1}{\varrho_1(x_1)}{x_2}{\varrho_2(x_2)}=\sum_i
    \FH_i(\varrho_i(x_i))+\sfc(x_1,x_2)
    \quad \text{$\ggamma$-a.e.~in $\xX$,}
  \end{equation}
  where $\varrho_i$ is defined as in \eqref{eq:310}. \EEE
  %% [[The following condition is not clear!!! Is it for each $i=1,2$
  %% either $A_i$ or $B_i$ or either $A_1$ and $A_2$ or $B_1$ and
  %% $B_2$???]] 
  If moreover the following conditions
  \begin{equation}
    \begin{aligned}
      &\text{$F_1(0)=+\infty$ or there exists $\bar x_2\in X_2$ with
        $\mu_2(\{\bar x_2\})=0$},\\
      &\text{$F_2(0)=+\infty$ or there exists $\bar x_1\in X_1$ with
        $\mu_1(\{\bar x_1\})=0$},
    \end{aligned}
\label{eq:cem}
\end{equation}
\nc are satisfied, then 
  % }
  \begin{align}
  \label{eq:363}
        \ET(\mu_1,\mu_2)&=%\hspace{7pt}
                          \min \Big\{
  \int_{\sxX}
  \MPH {x_1}{\varrho_1(x_1)}{x_2}{\varrho_2(x_2)}\,\d\ggamma:
  \ggamma\in \cM(\xX),\ \mu_i=\varrho_i\gamma_i\Big\}.
  \end{align}
\end{theorem}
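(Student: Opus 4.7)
The plan is to prove the three parts of the statement in sequence, each leveraging the previous.

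\emph{Inequalities and identification with $\ET$.} For the bound $\FHH\ge\HH$, the plan is to choose $\theta=1$ in the infimum \eqref{eq:186}, yielding $\tilde\MP_c(\s_1,\s_2)\le \FH_1(\s_1)+\FH_2(\s_2)+c$ pointwise, and hence the same bound for the l.s.c.\ envelope $\MP_c$; integrating this with $\s_i=\varrho_i(x_i)$ and $c=\sfc(x_1,x_2)$ against $\ggamma$ and adding the common tail $\sum_i F_i(0)\mu_i^\perp(X_i)$ to both sides gives $\HH(\mu_1,\mu_2|\ggamma)\le\FHH(\mu_1,\mu_2|\ggamma)$. The second inequality $\HH\ge\DD$ is precisely Lemma \ref{le:HMlower-bound} applied with $\mu_i'=\mu_i^\perp$ and the densities from \eqref{eq:310}. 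Combining these with the identity $\FHH(\mu_1,\mu_2|\ggamma)=\EE(\ggamma|\mu_1,\mu_2)$ from Theorem \ref{thm:reverse-characterization} and the duality $\ET=\sfD$ from Theorem \ref{thm:weak-duality}, I arrive at
\begin{displaymath}
\ET(\mu_1,\mu_2)=\infp_{\sggamma}\EE(\ggamma|\mu_1,\mu_2)\ge\infp_{\sggamma}\HH(\mu_1,\mu_2|\ggamma)\ge\sup_{\svvarphi\in\cCphi{}}\DD(\vvarphi|\mu_1,\mu_2)=\sfD(\mu_1,\mu_2)=\ET(\mu_1,\mu_2),
\end{displaymath}
so all three quantities coincide; Theorem \ref{thm:easy-but-important} then yields a minimizer, since any $\ggamma^*\in\OptET(\mu_1,\mu_2)$ satisfies $\HH(\mu_1,\mu_2|\ggamma^*)\le\FHH(\mu_1,\mu_2|\ggamma^*)=\ET$.

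\emph{Characterization of optimality.} If $\ggamma\in\OptET(\mu_1,\mu_2)$, the sandwich $\EE(\ggamma|\mu_1,\mu_2)=\FHH(\mu_1,\mu_2|\ggamma)\ge\HH(\mu_1,\mu_2|\ggamma)\ge\ET(\mu_1,\mu_2)$ is saturated, forcing both $\HH(\mu_1,\mu_2|\ggamma)=\ET$ (so $\ggamma$ minimizes $\HH$) and $\FHH=\HH$. Since the common $\sum_i F_i(0)\mu_i^\perp(X_i)$ term cancels, the second identity reduces to
\begin{displaymath}
\int_{\sxX}\big(\FH_1(\varrho_1(x_1))+\FH_2(\varrho_2(x_2))+\sfc(x_1,x_2)-\MPH{x_1}{\varrho_1(x_1)}{x_2}{\varrho_2(x_2)}\big)\,\d\ggamma=0,
\end{displaymath}
and nonnegativity of the integrand (by the $\theta=1$ bound used above) upgrades this to the pointwise identity \eqref{eq:358} $\ggamma$-a.e. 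The converse implication retraces the same chain backwards.

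\emph{Proof of \eqref{eq:363} under \eqref{eq:cem}.} This is the main step. The inequality $\ge$ is immediate because plans with $\mu_i=\varrho_i\gamma_i$ have $\mu_i^\perp=0$, so $\HH$ reduces to the integral on the right-hand side and the constrained infimum is at least $\sfH=\ET$. The reverse direction requires an \emph{absorption construction}: given any $\ggamma$ with $\EE(\ggamma|\mu_1,\mu_2)<\infty$, the finiteness of $\FHH=\EE$ forces $\mu_i^\perp=0$ whenever $F_i(0)=+\infty$, and condition \eqref{eq:cem} therefore guarantees that \emph{precisely when} $\mu_i^\perp\neq 0$ a point $\bar x_{3-i}\in X_{3-i}$ with $\mu_{3-i}(\{\bar x_{3-i}\})=0$ is at our disposal. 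I would then set
\begin{displaymath}
\tilde\ggamma:=\ggamma+\mu_1^\perp\otimes\delta_{\bar x_2}+\delta_{\bar x_1}\otimes\mu_2^\perp,
\end{displaymath}
with the convention that vanishing terms are dropped. The main obstacle is the Radon--Nikodym bookkeeping for the new marginals $\tilde\gamma_i=\pi^i_\sharp\tilde\ggamma$: using the mutual singularity $\gamma_i\perp\mu_i^\perp$ together with $\mu_j(\{\bar x_j\})=0$, I would verify that $\mu_i\ll\tilde\gamma_i$ with density $\tilde\varrho_i$ equal to $\varrho_i$ on $\supp\gamma_i$, to $1$ on $\supp\mu_i^\perp$, and to $0$ at $\bar x_i$. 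Splitting $\int\MPH{x_1}{\tilde\varrho_1}{x_2}{\tilde\varrho_2}\,\d\tilde\ggamma$ over the three summands and invoking the bounds $\MP_c(1,0)\le F_1(0)$ and $\MP_c(0,1)\le F_2(0)$ coming from \eqref{eq:462}, I expect
\begin{displaymath}
\int_{\sxX}\MPH{x_1}{\tilde\varrho_1}{x_2}{\tilde\varrho_2}\,\d\tilde\ggamma\le\int_{\sxX}\MPH{x_1}{\varrho_1}{x_2}{\varrho_2}\,\d\ggamma+\sum_i F_i(0)\mu_i^\perp(X_i)=\HH(\mu_1,\mu_2|\ggamma).
\end{displaymath}
Since $\tilde\ggamma$ satisfies $\mu_i\ll\tilde\gamma_i$, taking the infimum over $\ggamma$ gives the reverse inequality, and applying the construction to an optimal $\ggamma\in\OptET(\mu_1,\mu_2)$ produces a $\tilde\ggamma$ attaining the minimum in \eqref{eq:363}.
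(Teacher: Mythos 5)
Your proof is correct and follows the same overall strategy as the paper's: the bound $\tilde H_c(\s_1,\s_2)\le\FH_1(\s_1)+\FH_2(\s_2)+c$ via $\theta=1$, Lemma \ref{le:HMlower-bound} for $\HH\ge\DD$, the sandwich argument combining Theorem \ref{thm:reverse-characterization} with Theorem \ref{thm:weak-duality}, and an absorption of $\mu_i^\perp$ to prove \eqref{eq:363}. The one variation is cosmetic: where the paper splits cases — using $\tfrac{1}{\tilde m_1\tilde m_2}\,\mu_1^\perp\otimes\mu_2^\perp$ (no $\bar x_i$ needed) when both $\mu_i^\perp\neq0$, and $\tfrac{1}{\tilde m_1}\,\mu_1^\perp\otimes\delta_{\bar x_2}$ when only one is nonzero — you use the single uniform modification $\tilde\ggamma=\ggamma+\mu_1^\perp\otimes\delta_{\bar x_2}+\delta_{\bar x_1}\otimes\mu_2^\perp$; this is fine under \eqref{eq:cem}, since $\mu_i^\perp\neq0$ forces $F_i(0)<\infty$ and hence the availability of $\bar x_{3-i}$. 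The Radon--Nikodym bookkeeping you flag as the main obstacle does go through: $\mu_i(\{\bar x_i\})=0$ forces the new density to vanish at $\bar x_i$, and the mutual singularity $\gamma_i\perp\mu_i^\perp$ together with $\mu_i(\{\bar x_i\})=0$ gives $\tilde\varrho_i=\varrho_i$ $\gamma_i$-a.e.\ and $\tilde\varrho_i=1$ $\mu_i^\perp$-a.e., which is exactly what the estimate via \eqref{eq:462} needs.
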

\begin{proof}
  The inequality $\FHH(\mu_1,\mu_2|\ggamma)\ge 
  \HH(\mu_1,\mu_2|\ggamma)$ is an immediate consequence 
  of the fact that $\sum_i\FH_i(\s_1,\s_2)+c\ge \tMPc{\s_1}{\s_2}c \ge
  \MPc{\s_1}{\s_2}c$ for every 
  $\s_i,c\in [0,\infty]$, obtained by choosing $\theta=1$ in
  \eqref{eq:186}. 
  The estimate $\HH(\mu_1,\mu_2|\ggamma)\ge
  \DD(\vvarphi|\mu_1,\mu_2)$ 
  was shown in by Lemma \ref{le:HMlower-bound}.
  
  By using the ``reverse'' formulation of $\ET(\mu_1,\mu_2)$
  in terms of the functional $\FHH(\mu_1,\mu_2|\ggamma)$ 
  given by Theorem \ref{thm:reverse-characterization}
  and applying Theorem \ref{thm:weak-duality} we obtain
  \eqref{eq:259bis}
  and the characterization \eqref{eq:358}.

  To establish the identity \eqref{eq:363} we note that the difference
  to \eqref{eq:259bis} only lies in dropping the additional restriction
  $\mu_i^\perp =0$.
  When both $F_1(0)=F_2(0)=+\infty$ the 
  equivalence is obvious since the finiteness of the functional 
  $\ggamma\mapsto \HH(\mu_1,\mu_2|\ggamma)$ 
  yields $\mu_1^\perp=\mu_2^\perp=0$. \nc 

  In the general case, one immediately see that 
  the righthand side $E'$ of \eqref{eq:363} (with ``$\inf$'' instead of
  ``$\min$'')
  is larger than $\ET(\mu_1,\mu_2)$, since the infimum of 
  $\HH(\mu_1,\mu_2|\cdot)$ is constrained to the smaller set 
  of plans $\ggamma$ satisfying $\mu_i\ll\gamma_i$. 
  On the other hand, if $\bar\ggamma\in \OptET(\mu_1,\mu_2)$
  with $\mu_i=\varrho_i\bar\gamma_i+\mu_i^\perp$ and
  $\tilde m_i:=\mu_i^\perp(X_i)>0$, we can consider $\ggamma:= 
  \bar\ggamma+ \frac{1}{\tilde m_1\tilde m_2}\mu_1^\perp \otimes
  \mu_2^\perp$ which satisfies $\mu_i\ll\gamma_i$; 
  by exploiting the fact that $\MPH
  {x_1}{\s_1}{x_2}{\s_2}\le \sum_i F_i(0)\s_i$ by \eqref{eq:462},
  % Lemma \ref{le:dualD}
  % and \eqref{eq:346},
  we obtain
  \begin{align*}
    \HH(\mu_1,\mu_2|\ggamma)&=
                              \int_{\sxX}
      \MPH {x_1}{\varrho_1(x_1)}{x_2}{\varrho_2(x_2)}\,\d\bar\ggamma+
      \frac{1}{\tilde m_1\tilde m_2}
                              \int_{\sxX}\MPH{x_1}{\tilde m_1}{x_2}{\tilde m_2}\,\d\mu_1^\perp \otimes
      \mu_2^\perp
                              \\&\le 
                                  \int_{\sxX}
      \MPH {x_1}{\varrho_1(x_1)}{x_2}{\varrho_2(x_2)}\,\d\bar\ggamma+
      \sum_i F_i(0)\tilde m_i=
                                  \HH(\mu_1,\mu_2|\bar\ggamma),
  \end{align*}
  so that we have $E'\le \ET(\mu_1,\mu_2)$. 
  The case when only one (say $\mu_2^\perp$) of the measures
  $\mu_i^\perp$ vanishes
  can be treated in the same way: since in this case
  $\tilde m_1=\mu_1^\perp(X_1)>0$ and therefore $F_1(0)<\infty$,
  \nc by 
  applying \eqref{eq:cem} we can choose
  $\ggamma:= 
  \bar\ggamma+ \frac{1}{\tilde m_1}\mu_1^\perp \otimes
  \delta_{\bar x_2}$, 
  obtaining
    \begin{align*}
    \HH(\mu_1,\mu_2|\ggamma)&=
                              \int_{\sxX}
      \MPH {x_1}{\varrho_1(x_1)}{x_2}{\varrho_2(x_2)}\,\d\bar\ggamma+
      \frac{1}{\tilde m_1}
                              \int_{X_1}\MPH{x_1}{\tilde m_1}{\bar x_2}{0}\,\d\mu_1^\perp 
                              \\&\le 
                                  \int_{\sxX}
      \MPH {x_1}{\varrho_1(x_1)}{x_2}{\varrho_2(x_2)}\,\d\bar\ggamma+
                                  F_1(0)\tilde m_1=
                                  \HH(\mu_1,\mu_2|\bar\ggamma).\qedhere
  \end{align*}
\end{proof}
\begin{remark}
  \label{rem:cemetery}
  \upshape
  Notice that \eqref{eq:cem} is always satisfied if the spaces $X_i$ are
  uncountable.
  If 
  %\rsout{$F_i(0)<\infty$ and}
  $X_i$ is countable, one can always add 
  an isolated
  point 
  $\bar x_i$ (sometimes called ``cemetery'') to $X_i$ and consider the
  augmented space $\bar X_i=X_i\sqcup\{\bar x_i\}$ 
  obtained as the disjoint union of $X$ and
  $\bar x_i$, with augmented cost $\bar\sfc$ which extends $\sfc$ to
  $+\infty$
  on $\bar X_1\times \bar X_2\setminus (X_1\times X_2)$. 
  We can recover \eqref{eq:363} by allowing $\ggamma$
  in $\cM(\bar X_1\times\bar X_2)$. 
\end{remark}

\subsection{Entropy-transport problems with ``homogeneous'' marginal
  constraints}
\label{subsec:hom-marg}
In this section we will exploit the $1$-homogeneity
of the marginal perspective function $\HH$ in order to derive a last
representation of the functional $\ET$, related to the new
notion of \emph{homogeneous marginals}.
We will confine our presentation to the basic, still relevant, facts, 
and we will devote the second part of the paper to develop a full
theory 
for the specific case of the Logarithmic Entropy-transport case. 

In particular, the following construction (typical in the Young
measure approach to variational problems) allows us to consider
the entropy-transport problems in a setting of greater generality.  We
replace a couple \nc $(\gamma,\varrho)$, where $\gamma$ and $\varrho$
are a measure on $X$ and a nonnegative Borel function, respectively,
by a measure $\alpha\in \cM(Y)$ on the extended space $Y = X\times
[0,\infty)$.  The original couple $(\gamma,\varrho)$ corresponds to
measures $\alpha =(x,\varrho(x))_\sharp \gamma$ concentrated on the
graph of $\varrho$ in $Y$ and whose first marginal is $\gamma$. \nc

\paragraph{Homogeneous marginals.}
In the usual setting of Section \ref{subsec:setting}, 
we consider the product
spaces $\pY_i:=X_i\times [0,\infty)$ 
endowed with the product topology and denote 
the generic points in $\pY_i$ with $\py_i=(x_i,\rp_i)$, $x_i\in X_i $ and $ \rp_i\in
[0,\infty)$ for $i=1,2$. 
Projections from $\yY:=Y_1\times Y_2$ onto the various coordinates
will be denoted by $\pi^{\py_i},\ \pi^{x_i},\ \pi^{\rp_i}$ with
obvious meaning. 

\newcommand{\prd}{\mathrm{prd}}
For $p>0$ and $\yy\in \yY$ we will set $|\yy|_p^p:=\sum_i |\s_i|^p$ and
call $\cMp p (\yY)$ (resp.~$\cPp p (\yY)$)
the space of measures 
$\aalpha\in \cM(\yY)$ (resp.\ $\cP(\yY)$)
such that
\begin{equation}
  \label{eq:145}
  \int_{\syY} |\yy|_p^p\,\d\aalpha<\infty.
  %\quad i=1,2.
\end{equation}
If $\aalpha\in \cMp p(\yY)$ the measures $\rp_i^p\aalpha$ belong to
$\cM(\yY)$, which allow us to define the ``$p$-homogeneous'' marginal
$\hm pi(\aalpha)$ of $\aalpha\in \cMp p (\yY)$ as the $x_i$-marginal
of $\rp_i^p \aalpha$, namely 
\begin{equation}
  \label{eq:146}
  \hm pi (\aalpha):=\pi^{x_i}_\sharp(\rp_i^p \aalpha)\in \cM(X_i).
\end{equation}
The maps $\hm pi:\cMp p (\yY)\to \cM(X_i)$ are linear % and continuous
% (w.r.t.~the $W_\sfdc$-convergence in $\cP_2(\tY)$ 
% and the weak convergence in $\cM(X)$).
% It is also
and invariant with respect to dilations:
if $\vartheta:\yY\to(0,\infty)$ 
is a Borel map in $\rmL^p(\yY,\aalpha)$
and 
$\prd_\vartheta(\yy):=(x_1,\rp_1/\vartheta(\yy);x_2,\rp_2/\vartheta(\yy) )$,
% \begin{equation}
%   \label{eq:151}
%   \prd(\ty,\lambda)=\ty\cdot \lambda:=
%   \begin{cases}
%     \fro&\text{if }y=\fro,\\
%     [x,\lambda r]&\text{if }\ty=[x,r],\ r>0.
%   \end{cases}
% \end{equation}
 we set
\begin{equation}
  \label{eq:147}
  \begin{gathered}
    \dilp\vartheta p{\aalpha}:={}
    \big(\prd_\vartheta)_\sharp\big(   \vartheta^p \aalpha\big),\quad
    \text{i.e.}\\
    \int
    \varphi(\yy)\,\d(\dilp\vartheta p{\aalpha})={}\int \varphi(x_1,\rp_1/
    \vartheta;x_2,\rp_2/\vartheta)\vartheta^p(\yy)\,\d\aalpha(\yy)\quad 
    \text{for } \varphi\in \rmB_b(\yY).
  \end{gathered}
\end{equation}
Using \eqref{eq:146} we obviously have 
\begin{equation}
  \label{eq:150}
  \hm pi(\dilp\vartheta p{\aalpha})=\hm pi(\aalpha).
\end{equation}
In particular, for $\aalpha\in \cMp p (\yY)$ with
$\aalpha(\yY)>0$, 
by choosing 
\begin{subequations}
\begin{equation}
  \label{eq:153a}
  \vartheta(\yy):= \frac{1}{r_*}
  \begin{cases}
    |\yy|_p&\text{if }|\yy|_p\neq0,\\
    1&\text{if }|\yy|_p=0,
  \end{cases}
  \qquad
  r_*\EEE:=\Big(\int_{\syY} |\yy|_p^p\,\d\aalpha+\aalpha(\{|\yy|=0\})\Big)^{1/p}
  % \quad
  % \hm pi(\dilp\theta p{\aalpha})=\hm pi(\aalpha),
\end{equation}
we obtain a rescaled probability measure $\tilde\aalpha$
%$=\dilp\vartheta p{\aalpha}$
with
the same homogeneous marginals as $\aalpha$ and concentrated on 
%\begin{equation}
%  \label{eq:397}
$  \yY_{ \kern-2pt r_*, p}:=\big\{\yy\in \yY: |\yy|_p\le r_*
\nc \big\}\subset
  (X\times[0, r_* \nc ])\times(X\times [0, r_* \nc ])
$:
%\end{equation}
\begin{equation}
  \label{eq:153b}
  \tilde\aalpha=\dilp\vartheta p{\aalpha}\in \cPp p (\yY),\quad
  \hm pi(\tilde\aalpha)=\hm pi(\aalpha),\quad
  \tilde\aalpha\big(\yY\setminus \yY_{\kern-2pt r,p}\big)=0.
\end{equation}
\end{subequations}
\paragraph{Entropy-transport problems with prescribed homogeneous marginals.}
Given $\mu_1,\mu_2\in \cM(X)$ we now introduce the convex sets
% As for \eqref{eq:152} we denote by 
%
\begin{equation}
  \begin{aligned}
    % \ttilde\rmH(\mu_1,\mu_2):={}&\Big\{ \taalpha\in \cM_2(\tY\times \tY):
    % \frh^i(\taalpha)=\mu_i\Big\},\\
        \HMle p{\mu_1}{\mu_2}:={}&\Big\{
    \taalpha\in \cMp p (\yY):\hm pi(\aalpha)\le \mu_i\Big\},
    %\quad
    %\HMle{}{\mu_1}{\mu_2}=\HMle 1{\mu_1}{\mu_2},
    \\
    \HM p{\mu_1}{\mu_2}:={}&\Big\{
    \taalpha\in \cMp p (\yY):\hm pi(\aalpha)= \mu_i\Big\}.
    %\quad
    %\HM{}{\mu_1}{\mu_2}=\HM 1{\mu_1}{\mu_2}.
  \end{aligned}
\label{eq:161}
\end{equation}
% The condition $\taalpha\in \rmH(\mu_1,\mu_2)$ 
% is equivalent to ask that 
% \begin{equation}
%   \label{eq:120}
%   \int
%   \sfr^i\varphi(\sfx^i)\,\d\taalpha=\int\varphi\,\d\mu_i\quad
%   \forevery \text{bounded Borel map }\varphi:X\to \R
% \end{equation}
% and 
% \begin{equation}
%   \label{eq:100}
%   \taalpha\in \rmH(\mu_1,\mu_2)\cap\cP(\tY\times\tY)
%   \quad
%   \Leftrightarrow\quad
%   \pi^{y_i}_\sharp\taalpha\in \rmH(\mu_i)\cap \cP(Y)\quad i=1,2.
% \end{equation}
%
Clearly $\HM p{\mu_1}{\mu_2}\subset \HMle p{\mu_1}{\mu_2}$ and they\EEE
are nonempty since plans 
of the form % $\big(\nu_1\otimes
% \delta_{0}\big)\otimes
%   \big(\nu_2\otimes \delta_{0}\big)$, $\nu_i\in \cM(X_i)$ belongs to 
% $\HM p{\mu_1}{\mu_2}$. 
% If $\mass i=\mu_i(X)>0$ it is sufficient to
% take
\begin{equation}
  \label{eq:103}
  \taalpha=\frac 1{a_1^p\,a_2^p} \Big(\mu_1\otimes
  \delta_{a_1}\Big)\otimes
  \Big(\mu_2\otimes \delta_{a_2}\Big),\quad\text{with } a_1,a_2>0
\end{equation}
belong to $\HM p{\mu_1}{\mu_2}$.  It is not difficult to check that
$\HMle p{\mu_1}{\mu_2}$ is also narrowly closed, while, on the
contrary, this property fails for $\HM p{\mu_1}{\mu_2}$ if
$\mu_1(X_1)\mu_2(X_2)\neq 0$. To see this, it is sufficient to
consider any $\aalpha\in \HM p{\mu_1}{\mu_2}\setminus\{0\}$ and look
at the vanishing sequence $\dilp {n^{-1}}p{\aalpha}$ for
$n\to\infty$.
% and a similar construction holds in the degenerate case.

There is a natural correspondence between 
$\HMle p{\mu_1}{\mu_2}$ (resp.~$\HM p{\mu_1}{\mu_2}$)
and $\HMle 1{\mu_1}{\mu_2}$ (resp.~$\HM 1{\mu_1}{\mu_2}$)
induced by the map %$(\pow p)_\sharp $, where
%\begin{equation}
%  \label{eq:316}
%  \pow p:
$\yY\ni (x_1,\rp_1;x_2,\rp_2)\mapsto
  (x_1,\rp_1^p;x_2,\rp_2^p).$
%\end{equation}
For plans $\aalpha\in \HMle{}{\mu_1}{\mu_2}$ we can prove a result
similar to Lemma \ref{le:HMlower-bound} but now we obtain a linear functional in $\aalpha$.\EEE
\begin{lemma}
  \label{le:HMlower-bound2}
  For $p\in (0,\infty)$, $\mu_i\in \cM(X_i)$, 
  $\vvarphi\in
  \Cphi{}$, and $\aalpha\in \HMle{p}{\mu_1}{\mu_2}$ 
  we have
  \begin{equation}
    \label{eq:347bis}
    \int_{\sxX}
    \MPH {x_1}{\rp_1^p}{x_2}{\rp_2^p}\,\d\aalpha+
    \sum_i F_i(0)\mu_i'(X_i)\ge 
    \DD(\vvarphi|\mu_1,\mu_2),\quad\text{where }
  \mu_i':=\mu_i-\hm{p}i\aalpha.\EEE
  \end{equation}
\end{lemma}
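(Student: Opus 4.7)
The plan is to mimic the argument of Lemma \ref{le:HMlower-bound}, but now exploiting the $1$-homogeneity of $\MP_c$ with respect to $(\s_1,\s_2)$ — which is exactly why the homogeneous marginals $\hm{p}{i}$ enter naturally and why the resulting functional is linear in $\aalpha$. The key ingredient is the dual representation of the marginal perspective function from Lemma \ref{le:dualD}, namely
\[
\MPc{\s_1}{\s_2}{c}\ge \s_1\Gstar_1(\varphi_1)+\s_2\Gstar_2(\varphi_2)
\quad\text{whenever $\varphi_i\in\dom{\Gstar_i}$ and $\varphi_1+\varphi_2\le c$.}
\]

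First, I would fix $\vvarphi\in \Cphi{}$ and apply this pointwise bound with $\s_i:=\rp_i^p$ and $c:=\sfc(x_1,x_2)$, obtaining
\[
\MPH{x_1}{\rp_1^p}{x_2}{\rp_2^p}\ge \rp_1^p\,\Gstar_1(\varphi_1(x_1))+\rp_2^p\,\Gstar_2(\varphi_2(x_2))
\quad\text{for every }\yy=(x_1,\rp_1;x_2,\rp_2)\in\yY.
\]
Since $\Gstar_i(\varphi_i)$ is bounded by assumption and $\aalpha\in \cMp{p}(\yY)$, the right-hand side is $\aalpha$-integrable, so integrating against $\aalpha$ and invoking the very definition \eqref{eq:146} of the homogeneous marginals gives
\[
\int_{\syY}\MPH{x_1}{\rp_1^p}{x_2}{\rp_2^p}\,\d\aalpha
\ge \sum_i \int_{\syY} \rp_i^p\,\Gstar_i(\varphi_i(x_i))\,\d\aalpha
= \sum_i \int_{X_i}\Gstar_i(\varphi_i)\,\d(\hm{p}{i}\aalpha).
\]

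Next, I would use that $\hm{p}{i}\aalpha=\mu_i-\mu_i'$ with $\mu_i'\ge 0$, together with the uniform upper bound $\Gstar_i(\varphi_i)\le \sup\Gstar_i=F_i(0)$ coming from \eqref{eq:40} and the definition $\Gstar_i(\varphi)=-\Fstar_i(-\varphi)$. This yields
\[
\int_{X_i}\Gstar_i(\varphi_i)\,\d(\hm{p}{i}\aalpha)
= \int_{X_i}\Gstar_i(\varphi_i)\,\d\mu_i-\int_{X_i}\Gstar_i(\varphi_i)\,\d\mu_i'
\ge \int_{X_i}\Gstar_i(\varphi_i)\,\d\mu_i-F_i(0)\mu_i'(X_i).
\]

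Summing over $i=1,2$ and recognizing $\DD(\vvarphi|\mu_1,\mu_2)=\sum_i\int_{X_i}\Gstar_i(\varphi_i)\,\d\mu_i$ gives
\[
\int_{\syY}\MPH{x_1}{\rp_1^p}{x_2}{\rp_2^p}\,\d\aalpha
+\sum_i F_i(0)\mu_i'(X_i)\ge \DD(\vvarphi|\mu_1,\mu_2),
\]
which is the claimed \eqref{eq:347bis}. There is essentially no hard step here: the only delicate point is to notice that the slack created by dropping mass from $\hm{p}{i}\aalpha$ to $\mu_i$ is exactly compensated by the additive correction $F_i(0)\mu_i'(X_i)$, and this is built into the fact that $F_i(0)$ is the supremum of $\Gstar_i$ — the same mechanism that appeared in \eqref{eq:374} and in the definition \eqref{eq:346} of $\MP_c$ at $c=\infty$.
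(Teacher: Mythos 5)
Your proposal is correct and follows the same route as the paper's own proof: apply the pointwise dual bound \eqref{eq:314} with $\s_i=\rp_i^p$ and $c=\sfc(x_1,x_2)$, integrate against $\aalpha$ to pull out $\int\Gstar_i(\varphi_i)\,\d(\hm{p}i\aalpha)$ via the definition of the homogeneous marginals, and then compensate the missing mass $\mu_i'$ using $\Gstar_i\le F_i(0)$ from \eqref{eq:40}. The only (harmless) difference is a minor rearrangement of when the term $\sum_i F_i(0)\mu_i'(X_i)$ is absorbed; the underlying inequalities and the key observation that $F_i(0)=\sup\Gstar_i$ compensates the slack from $\hm{p}i\aalpha\le\mu_i$ are identical.
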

\begin{proof}
  The calculations are quite similar to the proof of Lemma \ref{le:HMlower-bound}:
    \begin{align*}
      \int_{\syY}&
                   \MPH {x_1}{\rp_1^p}{x_2}{\rp_2^p}\,\d\aalpha+
                   \sum_i F_i(0)\mu_i'(X_i)
    \\\topref{eq:314}\ge&
    \int_\syY
    \Big(\Gstar_1(\varphi_1(x_1))\rp_1^p+\Gstar_2(\varphi_2(x_2))\rp_2^p\Big)\,\d\aalpha+
                          \sum_i F_i(0)\mu_i'(X_i)
    \\=\,\,\,&\sum_i \int_{X_i} \Gstar_i(\varphi_i)\,\d(\hm{p}i\aalpha)+    \sum_i
    F_i(0)\mu_i'(X_i)
    \\
    \topref{eq:40}\ge &\sum_i \int_{X_i}
                        \Gstar_i(\varphi_i) \,\d(\hm{p}i\aalpha)+
                        \sum_i\int_{X_i}\Gstar_i(\varphi_i)\,\d\mu_i'
                        =\sum_{i} \int_{X_i}\Gstar_i(\varphi_i)\,\d\mu_i=\DD(\vvarphi|\mu_1,\mu_2).\qedhere
  \end{align*}
\end{proof}

As a consequence, we can characterize the entropy-transport minimum
via measures $\aalpha\in\cM(\yY)$.
\begin{theorem}
  \label{thm:main-hom-marg}
  For every $\mu_i\in \cM(X_i)$, $p\in (0,\infty)$ we have
  \begin{align}
    \label{eq:318}
    \ET(\mu_1,\mu_2)&=
    \min_{\saalpha\in \HMle{p}{\mu_1}{\mu_2}}\int_{\syY}
                      \Big(\sum_i \FH_i(\rp_i^p)
    %+\FH_2(\rp_2)
                      +\sfc(x_1,x_2)\Big)\,\d\aalpha+
                      \sum_i F_i(0)(\mu_i-\hm{p}i(\aalpha))(X_i)\\
                    &\label{eq:343}=
    \min_{\saalpha\in \HMle{p}{\mu_1}{\mu_2}}\int_{\syY}
                      \MPH{x_1}{\rp_1^p}{x_2}{\rp_2^p}\,\d\aalpha
                      +\sum_i F_i(0)(\mu_i-\hm{p}i(\aalpha))(X_i)
                    \\  %\intertext{and, if \eqref{eq:cem} holds,}
         % \ET(\mu_1,\mu_2)    
                    &\label{eq:343bis}=
    \min_{\saalpha\in \HM{p}{\mu_1}{\mu_2}}\int_{\syY}
                      \MPH{x_1}{\rp_1^p}{x_2}{\rp_2^p}\,\d\aalpha.                         
    %\\
                    % &\label{eq:343bis}=
    % \min_{\saalpha\in \HMle{}{\mu_1}{\mu_2}}\int_{\syY}
    %   \MPH{x_1}{\rp_1}{x_2}{\rp_2}\,\d\aalpha
  \end{align}
  Moreover, for every plan $\ggamma\in \OptET(\mu_1,\mu_2)$ 
  (resp.~optimal for \eqref{eq:259bis} or for \eqref{eq:363}) 
  with $\mu_i=\varrho_i\gamma_i+\mu_i^\perp$, 
  the plan $\aalpha:=(x_1,\varrho_1^{1/p}(x_1);x_2,\varrho_2^{1/p}(x_2))_\sharp
  \ggamma$
  realizes the minimum of \eqref{eq:318} (resp.~\eqref{eq:343} or \eqref{eq:343bis}).
\end{theorem}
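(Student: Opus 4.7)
The strategy is to establish the chain $\ET(\mu_1,\mu_2) = \eqref{eq:343} = \eqref{eq:318} = \eqref{eq:343bis}$ by reducing everything to Theorem~\ref{thm:crucial}, and to exploit the one-to-one correspondence $\ggamma \leftrightarrow \aalpha$ given by the map $(x_1,x_2) \mapsto (x_1,\varrho_1^{1/p}(x_1);x_2,\varrho_2^{1/p}(x_2))_\sharp$. The key computation is that for $\aalpha := (x_1,\varrho_1^{1/p}(x_1);x_2,\varrho_2^{1/p}(x_2))_\sharp \ggamma$ and any test function $\zeta \in \rmB_b(X_i)$,
\[
\int_{X_i} \zeta\,\d(\hm{p}{i}\aalpha) = \int_{\syY} \zeta(x_i)\, \rp_i^p\,\d\aalpha
= \int_{\sxX}\zeta(x_i)\varrho_i(x_i)\,\d\ggamma = \int_{X_i}\zeta\,\d(\varrho_i\gamma_i),
\]
so that $\hm{p}{i}\aalpha = \varrho_i\gamma_i = \mu_i - \mu_i^\perp \le \mu_i$ and therefore $\aalpha \in \HMle{p}{\mu_1}{\mu_2}$. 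By construction, $\int_{\syY}\MPH{x_1}{\rp_1^p}{x_2}{\rp_2^p}\,\d\aalpha = \int_{\sxX}\MPH{x_1}{\varrho_1(x_1)}{x_2}{\varrho_2(x_2)}\,\d\ggamma$ and $\mu_i - \hm{p}{i}\aalpha = \mu_i^\perp$, so the cost of $\aalpha$ in \eqref{eq:343} coincides with $\HH(\mu_1,\mu_2|\ggamma)$.

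First I would prove \eqref{eq:343}. For the inequality ``$\ge$'' apply Lemma~\ref{le:HMlower-bound2} and take the supremum over $\vvarphi \in \Cphi{}$ to obtain a lower bound of $\sfD(\mu_1,\mu_2)$, which equals $\ET(\mu_1,\mu_2)$ by Theorem~\ref{thm:weak-duality}. For the reverse inequality pick $\ggamma$ optimal in Theorem~\ref{thm:crucial} (so that $\HH(\mu_1,\mu_2|\ggamma) = \ET(\mu_1,\mu_2)$) and associate $\aalpha$ as above: its cost in \eqref{eq:343} equals $\HH(\mu_1,\mu_2|\ggamma)$, proving the minimum is attained with value $\ET(\mu_1,\mu_2)$. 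This also yields the last claim of the theorem for \eqref{eq:343}.

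Next, \eqref{eq:318} compared to \eqref{eq:343}. The inequality $\eqref{eq:318} \ge \eqref{eq:343}$ is immediate from the pointwise bound $\sum_i \FH_i(\rp_i^p) + \sfc(x_1,x_2) \ge \MPH{x_1}{\rp_1^p}{x_2}{\rp_2^p}$ obtained by choosing $\theta = 1$ in \eqref{eq:186}. For the opposite inequality I would again use the $\aalpha$ constructed from an optimal $\ggamma$: by the optimality condition \eqref{eq:358} of Theorem~\ref{thm:crucial}, equality $\MPH{x_1}{\varrho_1}{x_2}{\varrho_2} = \sum_i \FH_i(\varrho_i) + \sfc$ holds $\ggamma$-a.e., hence also $\aalpha$-a.e., so the cost in \eqref{eq:318} evaluated at $\aalpha$ equals the cost in \eqref{eq:343}, namely $\ET(\mu_1,\mu_2)$.

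Finally, \eqref{eq:343bis} versus \eqref{eq:343}. Since $\HM{p}{\mu_1}{\mu_2} \subset \HMle{p}{\mu_1}{\mu_2}$ and the correction term in \eqref{eq:343} vanishes on $\HM{p}$, any $\aalpha \in \HM{p}$ that is optimal (for instance the one built from $\ggamma$ in Theorem~\ref{thm:crucial} when $\mu_i^\perp = 0$) furnishes the inequality $\eqref{eq:343bis} \ge \ET$; the reverse $\eqref{eq:343bis} \le \ET$ is obtained by building, from $\aalpha \in \HMle{p}{\mu_1}{\mu_2}$ with residuals $\mu_i' := \mu_i - \hm{p}{i}\aalpha \ge 0$, a corrected plan $\tilde\aalpha := \aalpha + \nu_1 + \nu_2$ where, using the cemetery construction of Remark~\ref{rem:cemetery} when \eqref{eq:cem} fails, the pieces $\nu_1 := \mu_1' \otimes \delta_1 \otimes \delta_{\bar x_2} \otimes \delta_0$ and $\nu_2 := \delta_{\bar x_1} \otimes \delta_0 \otimes \mu_2' \otimes \delta_1$ satisfy $\hm{p}{1}\nu_1 = \mu_1'$, $\hm{p}{2}\nu_2 = \mu_2'$, and the opposite homogeneous marginal equal to $0$. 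Invoking \eqref{eq:462} (which gives $\MPH{\cdot}{s_1}{\cdot}{0} \le F_1(0)s_1$ and the symmetric bound) we get $\int \MP \,\d\tilde\aalpha \le \int \MP\,\d\aalpha + F_1(0)\mu_1'(X_1) + F_2(0)\mu_2'(X_2)$, which is precisely the value of the functional in \eqref{eq:343} at $\aalpha$. The main obstacle is this last construction: the cemetery trick requires separate handling depending on whether $F_i(0)$ is finite (in which case we want a point of zero $\mu_j$-mass, possibly adjoined) or $F_i(0) = +\infty$ (in which case finiteness of the correction term forces $\mu_i' = 0$ so that $\aalpha \in \HM{p}$ automatically). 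Handling these cases carefully is the delicate part of the argument.
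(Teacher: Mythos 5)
Your argument is essentially correct and follows the same overall scheme as the paper's proof: transfer a plan $\ggamma$ to $\aalpha=(x_1,\varrho_1^{1/p};x_2,\varrho_2^{1/p})_\sharp\ggamma$, verify $\hm{p}{i}\aalpha=\varrho_i\gamma_i\le\mu_i$, use Lemma~\ref{le:HMlower-bound2} together with the duality of Theorem~\ref{thm:weak-duality} for the lower bound, and close the gap with a direct construction for the $\HM{p}$ constraint. Two small comments are worth making, the second touching what you call the ``delicate part.''

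First, your passage from \eqref{eq:343} to \eqref{eq:318} via the optimality condition \eqref{eq:358} is more roundabout than necessary. The cleaner observation (which is what the paper uses) is that for \emph{every} $\ggamma$ the cost of the associated $\aalpha$ in \eqref{eq:318} equals $\RR(\mu_1,\mu_2|\ggamma)$ identically, because $\sfs_i^p=\varrho_i$ $\aalpha$-a.e.\ and $\mu_i-\hm{p}{i}\aalpha=\mu_i^\perp$. Taking the infimum over $\ggamma$ and invoking $\inf_\ggamma \RR(\mu_1,\mu_2|\ggamma)=\ET(\mu_1,\mu_2)$ from Theorem~\ref{thm:reverse-characterization} gives $E'\le\ET$ without any appeal to optimality. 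In particular one does not need to know that equality holds in the bound $\sum_i\FH_i+\sfc\ge H$ on the support of the optimal plan; the chain $\ET\ge E'\ge E''\ge\sfD=\ET$ is the most economical ordering.

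Second, and this is the one point where your intuition goes slightly off track: the cemetery of Remark~\ref{rem:cemetery} and the condition \eqref{eq:cem} are \emph{not} needed for \eqref{eq:343bis}. They are needed only for the $\ggamma$-formulation \eqref{eq:363}, where one cannot put mass at a ``zero'' radial coordinate. In the cone formulation, your construction $\nu_1:=\mu_1'\otimes\delta_1\otimes\delta_{\bar x_2}\otimes\delta_0$ already does the job for \emph{arbitrary} $\bar x_2\in X_2$: the second radial coordinate being $0$ forces $\hm{p}{2}\nu_1=0$ regardless of $\bar x_2$, and \eqref{eq:462} gives $H(x_1,1;\bar x_2,0)\le F_1(0)$, again regardless of $\bar x_2$. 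So there is no case distinction, no need to adjoin points, and no subtlety: the vertex of the cone is the ``built-in cemetery.'' (The degenerate case $F_i(0)=+\infty$ with $\mu_i'\ne 0$ is automatically excluded because the cost of $\aalpha$ in \eqref{eq:343} would then already be $+\infty$, so it never arises among near-minimizers.) The paper does distinguish cases, but only to re-use \eqref{eq:363} when it is available; its fallback construction when \eqref{eq:cem} fails is exactly your $\delta_0$ trick, used without a cemetery.
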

\begin{remark}
  \label{rem:max-le}
  \upshape
  When $F_i(0)=+\infty$ \eqref{eq:318} and \eqref{eq:343}
  simply read as
    \begin{align*}
    %\label{eq:318bis}
      \ET(\mu_1,\mu_2)&=
    \min_{\saalpha\in \HM{p}{\mu_1}{\mu_2}}\int_{\syY}
                      \Big(\sum_i \FH_i(\rp_i^p)
    %+\FH_2(\rp_2)
                      +\sfc(x_1,x_2)\Big)\,\d\aalpha\\
                    &%\label{eq:343tris}
                    =\min_{\saalpha\in \HM{}{\mu_1}{\mu_2}}\int_{\syY}
      \MPH{x_1}{\rp_1^p}{x_2}{\rp_2^p}\,\d\aalpha.\qedhere
    %\\
                    % &\label{eq:343bis}=
    % \min_{\saalpha\in \HMle{}{\mu_1}{\mu_2}}\int_{\syY}
    %   \MPH{x_1}{\rp_1}{x_2}{\rp_2}\,\d\aalpha
  \end{align*}
\end{remark}
\begin{proof}[Proof of Theorem \ref{thm:main-hom-marg}]
  Let us denote by $E'$ 
  (resp.~$E''$, $E'''$) the right-hand side of \eqref{eq:318}
  (resp.~of \eqref{eq:343}, \eqref{eq:343bis}),
  where ``$\min$'' has been replaced by ``$\inf$''.
  If $\ggamma\in \cM(\xX)$ 
  and $\mu_i=\varrho_i\gamma_i+\mu_i^\perp$ 
    (in the case of \eqref{eq:343bis} $\mu_i^\perp=0$) is the usual Lebesgue
  decomposition as in \eqref{eq:257}, we can consider the plan 
  $\aalpha:=(x_1,\varrho_1^{1/p}(x_1);x_2,\varrho_2^{1/p}(x_2))_\sharp
  \ggamma$.

  Since the map $(\varrho_1^{1/p},\varrho_2^{1/p}):\xX\to \R^2$ is Borel and takes
  values in a metrizable and separable space, 
  it is Lusin $\ggamma$-measurable
  \cite[Thm~5, p.~26]{Schwartz73}, so that $\aalpha$ is a Radon
  measure in $\cM(\yY)$. For every nonnegative $\phi_i\in
  \rmB_b(X_i)$ we easily get 
  \begin{align*}
    % \int |\s_i|\,\d\aalpha&=\int \varrho_i(x_i)\,\d\ggamma=
    % \int \varrho_i\,\d\gamma_i\le \mu_i(X_i),\\%\quad
    \int \phi_i(x_i)\s_i^p\,\d\aalpha&=\int
    \varrho_i(x_i)\phi_i(x_i)\,\d\ggamma
    =\int\varrho_i\phi_i\,\d\gamma_i\le \int\phi_i\,\d\mu_i,
  \end{align*}
  so that $\aalpha\in \HMle{p}{\mu_1}{\mu_2}$, $\hm{p}i\aalpha=\gamma_i$,
  and
  \begin{align*}
    \RR(\mu_1,\mu_2|\ggamma)&=
                              \int_\sxX
                              \Big(\sum_i\FH_i(\varrho_i(x_i))+\sfc(x_1,x_2)\Big)\,\d\ggamma+
                              \sum_i F_i(0)\mu_i^\perp(X_i)\\
                            &=
                              \int_\syY
                              \sum_i\FH_i(\s_i^p)+\sfc(x_1,x_2)\Big)\,\d\aalpha+
                              \sum_i
                              F_i(0)(\mu_i-\hm{p}i\aalpha)(X_i)\ge E';
  \end{align*}
  taking the infimum w.r.t.~$\ggamma$ and recalling \eqref{eq:259} we
  get $\ET(\mu_1,\mu_2)\ge E'$.
    Since $\sum_i\FH_i(\rp_i^p)+\sfc(x_1,x_2)\ge
  \MPH{x_1}{\rp_1^p}{x_2}{\rp_2^p}$
  it is also clear that $E'\ge E''$.

  On the other hand, Lemma \ref{le:HMlower-bound2} shows that 
  $E''\ge \DD(\vvarphi|\mu_1,\mu_2)$ for every $\vvarphi\in \Cphi{}$:
  applying Theorem \ref{thm:weak-duality} we get
  $\ET(\mu_1,\mu_2)=E'=E''$.

  Concerning $E'''$ it is clear that $E'''\ge E''=\ET(\mu_1,\mu_2)$;
  when \eqref{eq:cem} hold, by
  choosing $\aalpha$ induced by a minimizer of \eqref{eq:363} we get
  the opposite inequality $E'''\le \ET(\mu_1,\mu_2)$. 

  If \eqref{eq:cem} does not hold, we can still apply 
  a slight modification of the argument at the end of the proof
  of Theorem \ref{thm:crucial}. The only case to consider is when
  only one of the two measures $\mu_i^\perp$ vanishes:
  just to fix the ideas, let us suppose that $\tilde m_1=\mu_1^\perp(X_1)>0=\mu_2^\perp(X_2)$.
  If $\bar\ggamma\in \OptET(\mu_1,\mu_2)$ and 
  $\bar\aalpha$ is obtained as above, we can just set
  $\aalpha:=\bar\aalpha+(\mu_1^\perp\times
  \delta_1)\times(\nu\times \delta_0)$ for an arbitrary $\nu\in
  \cP(X_2)$.
  It is clear that $\hm pi\aalpha=\mu_i$ and
  \begin{align*}
    \int_{\syY}&
      \MPH{x_1}{\rp_1^p}{x_2}{\rp_2^p}\,\d\aalpha=
                                                   \int_{\sxX}
      \MPH {x_1}{\varrho_1(x_1)}{x_2}{\varrho_2(x_2)}\,\d\bar\ggamma+
                                                   \int_{\sxX}\MPH{x_1}{1}{x_2}{0}\,\d\mu_1^\perp \otimes
                                                   \nu
                              \\&\topref{eq:462}\le 
                                  \int_{\sxX}
      \MPH {x_1}{\varrho_1(x_1)}{x_2}{\varrho_2(x_2)}\,\d\bar\ggamma+
                                  F_1(0)\tilde m_1=
                                  \HH(\mu_1,\mu_2|\bar\ggamma)=\ET(\mu_1,\mu_2),
  \end{align*}
  which yields $E'''\le \ET(\mu_1,\mu_2)$.
\end{proof}
\begin{remark}[Rescaling invariance]
  \label{rem:rescaling1}
  \upshape
  By recalling (\ref{eq:153a},b) and exploiting the $1$-homo\-geneity of
  $H$ 
  it is not restrictive to sol\-ve the minimum problem
  \eqref{eq:343} in the smaller class of probability plans 
  concentrated in 
  \begin{displaymath}
    \yY_{\kern-2pt r,p}:=\big\{(x_1,\s_1;x_2,\s_2)\in \yY:\s_1^p+\s_2^p\le
    r^p\big\},\quad r^p=\sum_{i}\mu_i(X_i).
  \end{displaymath}
  Notice that it is not restrictive to assume that $\aalpha(\{\yy\in
  \yY:|\yy|=0\})=0$
  since $H(x_1,0;x_2,0)=0$ for every $x_i\in X_i$.
\end{remark}

\PART{Part II. The Logarithmic Entropy-Transport problem and
the Hellin\-ger-Kan\-to\-ro\-vich distance} 

\section{The Logarithmic Entropy-Transport (LET) problem}
\label{sec:LET}
Starting from this section we will study a particular
Entropy-Transport problem, whose structure reveals 
surprising properties.

\subsection{The metric setting for Logarithmic Entropy-Transport
  problems.}
\label{subsec:LET1}
Let $(X,\tau)$ be a Hausdorff topological space endowed with
an extended distance function $\sfd:X\times X\to [0,\infty]$
which is lower semicontinuous w.r.t.~$\tau$; we refer to 
$(X,\tau,\sfd)$ as an extended metric-topological space.
In the most common situations, $\sfd$ will
take finite values, $(X,\sfd)$ will be separable and complete
and $\tau$ will be the topology induced by $\sfd$;
nevertheless, there are interesting applications
where nonseparable  extended distances play an important role,
so that it will be useful to deal with an auxiliary topology,
see e.g.~\cite{Ambrosio-Gigli-Savare14,Ambrosio-Erbar-Savare15}.
 
From now on we suppose that $X_1=X_2=X$, 
we choose the logarithmic entropies 
\begin{equation}
  \label{eq:80}
  \begin{gathered}
    % X_1=X_2:=X,\quad 
    F_i(\r)=\PE_1(\r):=\r\log \r-\r+ 1,
    % -\log\Big(\cos^2\big(\sfd_*(x_1,x_2)\big)\Big),
    % \quad \sfd_*(x_1,x_2):=\sfd(x_1,x_2)\land \pi/2,
  \end{gathered}
\end{equation}
and a cost $\sfc$ depending on the distance $\sfd$
through the function $\ell:[0,\infty]\to [0,\infty]$ via
\begin{equation}
  \label{eq:80bis}
  \begin{gathered}
    %X_1=X_2:=X,\quad 
    \sfc(x_1,x_2):=\ell\big(\sfd(x_1,x_2)\big),
    \qquad
      \ell(d):=
          \left\{\begin{aligned}
            & \log(1+\tan^2(d))
      %\int_0^d \tan(s)\,\d s,
      &&\text{if }d\in [0,\pi/2),\\
      &+\infty&&\text{if }d\ge \pi/2,
    \end{aligned}
    \right.
    % -\log\Big(\cos^2\big(\sfd_*(x_1,x_2)\big)\Big),
    % \quad \sfd_*(x_1,x_2):=\sfd(x_1,x_2)\land \pi/2,
  \end{gathered}
\end{equation}
% We introduce the continuous function $\ell:[0,\infty]\to [0,\infty]$ defined by
% \begin{equation}
%     \label{eq:158}
%           \ell(d)=
%           \left\{\begin{aligned}
%             & \log(1+\tan^2(d))
%       %\int_0^d \tan(s)\,\d s,
%       &&\text{if }d\in [0,\pi/2),\\
%       &+\infty&&\text{if }d\ge \pi/2
%     \end{aligned}
%     \right.
%   \end{equation}
% and the logarithmic entropies and the cost functions
so that
\begin{equation}
  \label{eq:39}
  \sfc(x_1,x_2)=
  \begin{cases}
    -\log\big(\cos^2(\sfd(x_1,x_2))\big)&\text{if }\sfd(x_1,x_2)< \pi/2\\
    +\infty&\text{otherwise.}
  \end{cases}
\end{equation}
Let us collect a few key properties that will be relevant in the sequel.
\begin{enumerate}[\rm LE.1]
\item $F_i$ are superlinear, regular, strictly convex, with 
  $\dom{F_i}=[0,\infty)$, \ $F_i(0)=1$, and 
  $\derzero {(F_i)}=-\infty$. \WWW For  $\r>0$ we have \EEE
  $\partial F_i(\r)=\{\log \r\}$.  
\item $\FH_i(\s)=\s F_i(1/\s)=\s-1-\log \s$, \ $\FH_i(0)=+\infty$, \
  $\rec{(\FH_i)}=1$. 
\item $\Fstar_i(\phi)=\exp(\phi)-1$,
  $\Gstar_i(\varphi)=1-\exp(-\varphi)$, \ $\dom{\Fstar_i}=\dom{\Gstar_i}=\R$.
\item $\FHstar_i(\psi)=-\log(1-\psi)$ for $\psi<1$ and 
  $\FHstar_i(\psi)=+\infty$ for $\psi\ge1$.
\item The function $\ell$ can be characterized as the unique solution
  of the differential equation
  \begin{equation}
    \label{eq:352}
    \ell''(d)=2\exp(\ell(d)),\quad \ell(0)=\ell'(0)=0,\quad 
  \end{equation}
  %$\ell(d):=-\log(\cos^2(d))$ 
  since it satisfies
  \begin{equation}
    \label{eq:158}
    \ell(d)=-\log\big({\cos^2(d)}\big)=
    2\int_0^d \tan(s)\,\d s,
    \quad
    d\in [0,\pi/2),
  \end{equation}
  so that
  \begin{equation}
    \label{eq:159}
    \ell(d)\ge d^2,\quad \ell'(d)=2\tan d\ge 2d,\quad \ell''(d)=2(1+\tan^2(d))
    = 2\exp(\ell(d))\ge 2.
  \end{equation}
  In particular $\ell$ is
  strictly increasing and uniformly $2$-convex.
  It is not difficult to check that $\sqrt \ell$ is also convex: this
  property is equivalent to $2\ell\ell''\ge (\ell')^2$ and a direct
  calculation shows
  \begin{align*}
    2\ell\ell''-(\ell')^2=4\log(1+\tan^2(d))(1+\tan^2(d))-4\tan^2(d)\ge0
  \end{align*}
  since $(1+r)\log (1+r)\ge r$.
\item $\MPc{\rp_1}{\rp_2}c=\rp_1+\rp_2-2\sqrt
  {\rp_1\rp_2}\exp(-c/2)$ for $c<\infty$, so that
  \begin{equation}
    \label{eq:350}
    \MPH{x_1}{\rp_1}{x_2}{\rp_2}=\rp_1+\rp_2-2\sqrt
    {\rp_1\rp_2}\cos\big(\sfdpt(x_1,x_2)\big),
  \end{equation}
  where we set
  \begin{equation}
    \label{eq:202}
    \sfd_a(x_1,x_2):=\sfd(x_1,x_2)\land a \quad \WWW 
    \text{ for } x_i\in X,\ a\ge 0.
  \end{equation}
  Since the function 
  \begin{equation}
  \MPH{x_1}{\rp_1^2}{x_2}{\rp_2^2}
  =\rp_1^2+\rp_2^2-2
  {\rp_1}{\rp_2}\cos(\sfdpt(x_1,x_2))\label{eq:387}
\end{equation}
will have an important geometric interpretation (see Section
\ref{subsec:cone}), in the following we will choose the exponent
$p=2$ \WWW in the setting of Section \ref{subsec:hom-marg}. 
\end{enumerate}
We keep the usual
notation $\xX=X\times X$, identifying $X_1$ and $X_2$ with $X$ 
and letting the index $i$ run between $1$ and $2$, 
e.g.~for $\ggamma\in \cM(\xX)$ the marginals
are denoted by $\gamma_i=(\pi^i)_\sharp\ggamma$.

\begin{problem}[The Logarithmic Entropy-Transport problem]
  \label{pr:LET} 
  Let $(X,\tau,\sfd)$ be an extended metric-topological space,
  $\ell$ and $\sfc$ be as in \eqref{eq:80bis}.
  Given $\mu_i\in \cM(X)$ find
  $\ggamma\in %D(\FF)\subset 
  \cM(\xX)$ 
  minimizing 
  \begin{equation}
    \label{eq:157}
\begin{aligned}    \LET(\mu_1,\mu_2)=&
    \min_{\sggamma\in \cM(\sxX)} \bigg( \sum_i\int_X \!
    \big(\sigma_i\log\sigma_i-\sigma_i+1\big)
    %\LE(\sigma_i)
    \,\d\mu_i+
    \int_\sxX \! \ell\big(\sfd(x_1,x_2)\big)\,\d\ggamma \bigg),\\ 
& \text{where } 
    \sigma_i=\frac{\d\gamma_i}{\d\mu_i}.
\end{aligned}
  \end{equation}
  We denote by $\OptLET(\mu_1,\mu_2)$ the set of all the
  minimizers \WWW $\ggamma$ in \eqref{eq:157}.
\end{problem}

\subsection{The Logarithmic Entropy-Transport problem: main results}
\label{subsec:LET2}
In the next theorem  we collect the main properties 
of the Logarithmic Entropy-Transport (LET) problem \WWW relying on the
reverse function $\RR$ from Section \ref{subsec:reverse}, cf.\
\eqref{eq:196}, and $\HH$
from Section \ref{subsec:HMPf}, cf.\ \eqref{eq:201}.  
% $\gamma_i=\sigma_i\mu_i+\gamma_i^\perp$,
% $\mu_i=\varrho_i\gamma_i+\mu_i^\perp$.
% \
\begin{theorem}[Direct formulation of the LET problem]
  \label{thm:mainHK1}
  Let $\mu_i\in \cM(X)$ be given and let $\ell,\sfdpt$
  be defined as in \eqref{eq:80bis} and \eqref{eq:202}.\\
  \noindent
  {\bfseries a) Existence of optimal plans.}
  There exists an optimal plan $\ggamma\in 
  \OptLET(\mu_1,\mu_2)$ solving Problem 
  \ref{pr:LET}.
  The set $\OptLET(\mu_1,\mu_2)$
  is convex and compact in $\cM(\xX)$, 
  $\LET$ is a convex and positively $1$-homogeneous functional
  (see \eqref{eq:136}) satisfying $0\le \LET(\mu_1,\mu_2)\le \sum_i\mu_i(X)$.
  \\
  \noindent
  {\bfseries b) Reverse formulation \WWW $(\LET=\RR_\sLE)$.}
  The functional $\LET$ has the equivalent reverse formulation as 
  \begin{align}
    \label{eq:353reverse}
         & \LET(\mu_1,\mu_2)=\WWW \min \Big\{
          \RR_\sLE(\mu_1,\mu_2|\ggamma) \: : \:
   {\ggamma\in \cM(\xX)},\ \mu_i=\varrho_i\gamma_i+\mu_i^\perp\Big\},
   \text{ where} \\
   & \nonumber \WWW 
   \RR_\sLE(\mu_1,\mu_2|\ggamma):= \sum_i \Big(\mu_i^\perp(X)+\int_X
      % \rmU_0(\varrho_i)
      \big(\varrho_i-1-\log\varrho_i\big) \,\d\gamma_i\Big)+ \int_\sxX
      \ell\big(\sfd(x_1,x_2)\big)\,\d\ggamma, 
  \end{align}
  and $\bar\ggamma$ is an optimal plan in $\OptLET(\mu_1,\mu_2)$ if and
  only if it minimizes \eqref{eq:353reverse}.
    \\
  \noindent
  {\bfseries c) The homogeneous perspective formulation \WWW $(\LET=\HH_\sLE)$.}
  The functional $\LET(\mu_1,\mu_2)$ can be equivalently characterized as 
  \begin{align}
    \label{eq:354}
      % \LET(\mu_1,\mu_2)
      &\LET(\mu_1,\mu_2)=\WWW \min \Big\{\HH_\sLE(\mu_1,\mu_2|\ggamma) \: :
      \: \ggamma  \in \cM(\xX) \Big\}, \text{ where}  \\ 
&  \WWW \nonumber
\HH_\sLE(\mu_1,\mu_2|\ggamma):= \EEE
\sum_i\mu_i(X)-
      2\max_{\sggamma\in \cM(\sxX)}
      \int_{\sxX} \sqrt{\varrho_1(x_1)\varrho_2(x_2)}
      \cos(\sfdpt(x_1,x_2))\,\d\ggamma  \\ 
& \qquad\qquad =\sum_i\mu_i^\perp(X)+ \!\! \nonumber
      \int_{\sxX}\!\!\!\! \big(\varrho_1(x_1){+}\varrho_2(x_2){-}
      2 \sqrt{\varrho_1(x_1)\varrho_2(x_2)}
       \cos(\sfdpt(x_1,x_2))\big)\,\d\ggamma
 \end{align}
and $\gamma_i=\varrho_i\mu_i+\mu_i^\perp$. Moreover, every plan
 $\bar\ggamma\in \OptLET(\mu_1,\mu_2)$ provides a solution to \eqref{eq:354}.
\end{theorem}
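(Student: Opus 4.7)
The plan is to specialize the general machinery of Part I to the logarithmic entropies $F_i=U_1$ and the cost $\sfc=\ell\circ\sfd$, reading off each of (a), (b), (c) from a single theorem in Sections \ref{sec:ET}--\ref{sec:MP} combined with the explicit calculations LE.1--LE.6.

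For part (a), I would invoke Theorem \ref{thm:easy-but-important}: by LE.1 both $F_i$ are superlinear, so coercivity condition (i) is satisfied; since $F_i(0)=1<\infty$, feasibility is automatic as the null plan $\eta_0$ is always a competitor. This yields existence together with the compactness and convexity of $\OptLET(\mu_1,\mu_2)$, and gives at the same time the upper bound
\[
 \LET(\mu_1,\mu_2)\le \EE(\eta_0|\mu_1,\mu_2)=F_1(0)\mu_1(X)+F_2(0)\mu_2(X)=\sum_i\mu_i(X).
\]
Convexity and positive $1$-homogeneity of $\LET$ then follow from Corollary \ref{cor:subadditivity}, while $\LET\ge 0$ is obvious from the nonnegativity of every integrand (using $\ell\ge 0$ and $U_1\ge 0$).

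For part (b), I would simply apply Theorem \ref{thm:reverse-characterization}, noting from LE.2 that the reverse entropies are $R_i(s)=s-1-\log s$ with $R_i(0)=+\infty$ and $\rec{(R_i)}=1$. The functional $\RR_\sLE$ displayed in \eqref{eq:353reverse} is then literally the reverse energy $\RR(\mu_1,\mu_2|\ggamma)$ of \eqref{eq:196}, so the identity $\LET=\min\RR_\sLE$ and the characterization of optimal plans follow for free.

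For part (c), the essential input is LE.6, which provides the explicit marginal perspective function
\[
 H(x_1,r_1;x_2,r_2)=r_1+r_2-2\sqrt{r_1 r_2}\,\cos(\sfd_{\pi/2}(x_1,x_2)).
\]
Substituting this into the integral functional $\HH(\mu_1,\mu_2|\ggamma)$ from \eqref{eq:201} and invoking the identity \eqref{eq:259bis} of Theorem \ref{thm:crucial} immediately yields the second (integral) form of $\HH_\sLE$ in \eqref{eq:354} and the fact that every $\bar\ggamma\in\OptLET(\mu_1,\mu_2)$ minimizes it. The max-formulation on the first line of \eqref{eq:354} is then a purely algebraic rewriting: from the Lebesgue decomposition $\mu_i=\varrho_i\gamma_i+\mu_i^\perp$ one has $\int_\sxX\varrho_i(x_i)\,\d\ggamma=\int_X\varrho_i\,\d\gamma_i=\mu_i(X)-\mu_i^\perp(X)$, and summing over $i$ the perpendicular masses cancel, so minimizing over $\ggamma$ is equivalent to $\sum_i\mu_i(X)-2\max_\ggamma\!\int\sqrt{\varrho_1\varrho_2}\cos(\sfd_{\pi/2})\,\d\ggamma$.

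No serious obstacle is expected, since every step is either a direct substitution or a citation of a theorem already established. The only point to double-check is that in LE.6 the formula for $H$ holds globally on $[0,\infty)^2$ (not merely in its interior), so that one may legitimately use $H$ rather than its lower semicontinuous envelope $\tilde H$ inside \eqref{eq:201}: here Lemma \ref{le:dualD}(a) applies because $F_i(0)=1<\infty$, and the boundary values $H(x_1,r_1;x_2,0)=r_1=F_1(0)r_1$ match $\sum_iF_i(0)r_i$ on the axes, so $H=\tilde H$ everywhere. Note also that condition \eqref{eq:cem} need not be verified, because part (c) is derived from \eqref{eq:259bis}, where $\ggamma$ ranges freely in $\cM(\xX)$, rather than from the more restrictive identity \eqref{eq:363}.
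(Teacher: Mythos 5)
Your proof is correct and takes essentially the same route as the paper: part (a) via Theorem~\ref{thm:easy-but-important}(i), \eqref{eq:137pre}, and Corollary~\ref{cor:subadditivity}; part (b) via Theorem~\ref{thm:reverse-characterization}; part (c) via Theorem~\ref{thm:crucial} together with the explicit formula \eqref{eq:350}. Your extra sanity checks---that the formula for $H$ holds up to the boundary axes by Lemma~\ref{le:dualD}, and that \eqref{eq:cem} is not needed since \eqref{eq:259bis} rather than \eqref{eq:363} is being invoked---are pertinent and correct, but do not change the structure of the argument.
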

\begin{proof}
  The variational problem \eqref{eq:157} fits in the class 
  considered by Problem \ref{pr:1}, in the basic coercive setting 
  of Section \ref{subsec:setting} since the logarithmic entropy
  \eqref{eq:80} is superlinear with domain $[0,\infty)$.
  The problem is always feasible since $\PE_1(0)=1$ so that
  \eqref{eq:261} holds.

  a) follows by Theorem \ref{thm:easy-but-important}(i);
  the upper bound of $\LET$ is a particular case of
  \eqref{eq:137pre},
  and its convexity and $1$-homogeneity follows by
  Corollary \ref{cor:subadditivity}.

  b) is a consequence of Theorem
  \ref{thm:reverse-characterization}.

  c) is an application of Theorem \ref{thm:crucial} and \eqref{eq:350}.  
\end{proof}

We consider now the dual representation of $\LET$; recall that
$\LSC_s(X)$ denotes the space of simple (i.e.~taking a finite number
of values) lower semicontinuous functions and for a couple
$\phi_i:X\to \R$ the symbol $\phi_1\oplus \phi_2$ denotes the function
$(x_1,x_2)\mapsto \phi_1(x_1)+\phi_2(x_2)$ defined in $\xX$. \WWW In
part a) we relate to Section \ref{subsec:DualOpt}, whereas b)--d)
discusses the optimality conditions from Section
\ref{subsec:ExiOpt}.

\begin{theorem}[Dual formulation and optimality conditions]
  \label{thm:mainHK2}
  \ \\
  \noindent
  {\bfseries a) The dual problem \WWW $(\LET=\sfD_\sLE=\sfD'_\sLE)$.}
  For all $\mu_1,\mu_2 \in \cM(X)$ we have
  \begin{align}
    \label{eq:203}
    \LET(\mu_1,\mu_2)&=
                       \sup\Big\{ \WWW \DD_\sLE(\vvarphi|\mu_1,\mu_2)
                       \: : \ 
                       \varphi_i\in \LSC_s(X),\ 
                       \varphi_1\oplus \varphi_2\le
                       \ell(\sfd)\Big\},
    \\\notag
                     &=
                       \sup\Big\{ \sum_i \int_X \psi_i
                       \,\d\mu_i\ : \
                       \psi_i\in \LSC_s(X),\  \sup_X \psi_i<1,\\
    &\label{eq:204}
      \qquad\qquad
                       (1-\psi_1(x_1))(1-\psi_2(x_2))\ge 
                       \cos^2(\sfdpt(x_1,x_2))\text{ in }\xX\Big\},
  \end{align}
  \WWW where $\DD_\sLE(\vvarphi|\mu_1,\mu):=\sum_i \int_X
                       \big(1-\rme^{-\varphi_i}\big)
                       \,\d\mu_i$. 
  The same identities hold if the space $\LSC_s(X)$
  is replaced by $\LSC_b(X)$ 
  or $\rmB_b(X)$ in 
  \eqref{eq:203} and \eqref{eq:204}.
  When the topology $\tau$ is completely regular (in particular when
  $\sfd$ is a distance and $\tau$ is induced by $\sfd$)
  the space $\LSC_s(X)$ can be replaced by $\rmC_b(X)$ as well.\\
  \noindent
  {\bfseries b) Optimality conditions.}
  Let us assume that $\sfd$ is continuous. 
  A plan $\ggamma\in \cM(\xX)$ is optimal if and only if 
  its marginals $\gamma_i$ are absolutely continuous w.r.t.~$\mu_i$, 
  $\int_{\sxX} \ell(\sfd)\,\d\ggamma<\infty$,
  \begin{equation}
    \label{eq:345}
    \sfd\ge \pi/2\quad 
    \text{in}\quad
    \Big(\big(\supp\mu_1\setminus\supp\gamma_1\big)\times \supp\mu_2\Big)
    \bigcup
    \Big(\supp\mu_1\times\big(\supp\mu_2\setminus\supp\gamma_2\big)\Big),
  \end{equation}
  and there exist Borel sets $A_i\subset \supp\gamma_i$ 
  with $\gamma_i(X\setminus A_i)=0$ and Borel densities
  $\sigma_i:A_i\to(0,\infty)$ of $\gamma_i$ w.r.t.~$\mu_i$ such that
  \begin{align}
    \label{eq:355}
    \sigma_1(x_1)\sigma_2(x_2)&\ge
    \cos^2(\sfdpt(x_1,x_2))\quad\text{in }A_1\times A_2,\\
    \label{eq:356}
    \sigma_1(x_1)\sigma_2(x_2)&=
    \cos^2(\sfdpt(x_1,x_2))\quad\text{$\ggamma$-a.e.~in $A_1\times A_2$}.
  \end{align}
  {\bfseries c) $\ell(\sfd)$-cyclical monotonicity.}
  Every optimal plan $\ggamma\in \OptLET(\mu_1,\mu_2)$ is a solution
  of the optimal transport problem with cost $\ell(\sfd)$ between its
  marginals $\gamma_i$. In particular it is $\ell(\sfd)$-cyclically monotone, i.e.~it
  is concentrated on a Borel set $G\subset \xX$ ($G=\supp(\ggamma)$
  when $\sfd$ is continuous)
  such that
  for every choice of $(x_1^n,x_2^n)_{n=1}^N\subset G$ and
  every permutation $\kappa:\{1,\ldots,N\}\to \{1,\ldots,N\}$
  \begin{equation}
    \label{eq:5}
    \Pi_{n=1}^N\cos^2(\sfdpt(x_1^n,x_2^n))\ge 
    \Pi_{n=1}^N\cos^2(\sfdpt(x_1^n,x_2^{\kappa(n)})).
  \end{equation}
  \nc
  \noindent
  {\bfseries d) Generalized potentials.} 
  If $\ggamma$ is optimal and $A_i$, $\sigma_i$ are defined as in b)
  above, the Borel potentials $\varphi_i,\psi_i:X\to \bar\R$
  \begin{equation}
    \label{eq:357}
    \varphi_i:=
    \begin{cases}
      -\log\sigma_i&\text{in }A_i,\\
      -\infty&\text{in
      }X\setminus \supp\mu_i,\\
      +\infty&\text{otherwise,}
    \end{cases},\qquad
    \psi_i:=
    \begin{cases}
      1-\sigma_i&\text{in }A_i,\\
      -\infty&\text{in
      }X\setminus \supp\mu_i,\\
      1&\text{otherwise,}
    \end{cases}
  \end{equation}
  satisfy $\varphi_1\opz\varphi_2\le \ell(\sfd)$ 
  and the optimality conditions \eqref{eq:OptiConds}
  (with the analogous properties for $\psi_i$).
  Moreover $\rme^{-\varphi_i},\psi_i\in \rmL^1(X,\mu_i)$ and
  \begin{equation}
    \label{eq:359}
    \LET(\mu_1,\mu_2)=
    \sum_i\int_{X}\big(1-\rme^{-\varphi_i}\big)\,\d\mu_i=
    \sum_i\int_X \psi_i\,\d\mu_i=
    \sum_i \mu_i(X)-2\ggamma(\xX).
  \end{equation}
\end{theorem}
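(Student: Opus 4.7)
The plan is to derive each of the four parts as specific instantiations of the general theory developed in Sections 4 and 5, using the explicit formulas for the logarithmic entropy summarised in LE.1--LE.4.

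For part (a), I would apply the duality Theorem~\ref{thm:weak-duality} together with Corollary~\ref{cor:LSC=C}: since $\Gstar_i(\varphi)=1-\rme^{-\varphi}$ and $\sfc=\ell(\sfd)$, one reads off \eqref{eq:203} directly, and the variants with $\LSC_b(X)$, $\rmB_b(X)$, or $\rmC_b(X)$ (when $\tau$ is completely regular) are exactly those provided by that corollary. The $\psi$-formulation \eqref{eq:204} would then be obtained via the change of variable $\psi_i=\Gstar_i(\varphi_i)=1-\rme^{-\varphi_i}$ used in Proposition~\ref{prop:equivalent-dual}; the transport constraint becomes $\FHstar_1(\psi_1)\oplus\FHstar_2(\psi_2)=-\log\big((1-\psi_1)(1-\psi_2)\big)\le \ell(\sfd)=-\log\cos^2(\sfd_{\pi/2})$, i.e.\ $(1-\psi_1)(1-\psi_2)\ge \cos^2(\sfd_{\pi/2})$, while boundedness of $\FHstar_i(\psi_i)$ translates into $\sup_X\psi_i<1$.

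For part (b), I would invoke Corollary~\ref{cor:spread}, whose hypotheses are met in the present setting: continuity of $\sfd$ and of $\ell:[0,\infty]\to[0,\infty]$ makes $\sfc$ continuous, while $F_i=\PE_1$ is differentiable on $(0,\infty)$ with $F_i'(\r)=\log\r$ and $\derzero{(F_i)}=-\infty$, $\rec{(F_i)}=+\infty$. The corollary directly delivers $\gamma_i\ll\mu_i$, the support statement \eqref{eq:345} (as $\sfc=+\infty\Leftrightarrow \sfd\ge\pi/2$), and the inequality $\log\sigma_1+\log\sigma_2\ge -\ell(\sfd)=\log\cos^2(\sfd_{\pi/2})$ on $A_1\times A_2$ with equality $\ggamma$-a.e.; exponentiating yields \eqref{eq:355}--\eqref{eq:356}. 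The finite-cost condition $\int\ell(\sfd)\,\d\ggamma<\infty$ is built into $\EE(\ggamma|\mu_1,\mu_2)<\infty$. Part (c) would follow by a one-line comparison: if $\tilde\ggamma$ has the same marginals as $\ggamma$, the entropy terms coincide and $\EE(\tilde\ggamma|\mu_1,\mu_2)-\EE(\ggamma|\mu_1,\mu_2)=\int\ell(\sfd)\,\d(\tilde\ggamma-\ggamma)$; optimality of $\ggamma$ thus forces it to solve the Kantorovich problem for the cost $\ell(\sfd)$ between its own marginals, and classical $\sfc$-cyclical monotonicity results (e.g.\ \cite[Thm.~5.10]{Villani09}) yield \eqref{eq:5}, with $G=\supp\ggamma$ when $\sfd$ is continuous.

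For part (d), I would define $\varphi_i,\psi_i$ by \eqref{eq:357} and verify the joint optimality conditions \eqref{eq:OptiConds} of Theorem~\ref{thm:joint-optimality}. On $A_1\times A_2$ one has $\varphi_1+\varphi_2=-\log(\sigma_1\sigma_2)\le -\log\cos^2(\sfd_{\pi/2})=\ell(\sfd_{\pi/2})\le\ell(\sfd)$ by \eqref{eq:355}, giving \eqref{eq:66} together with \eqref{eq:356}; \eqref{eq:67} is immediate since $\partial F_i(\sigma_i)=\{\log\sigma_i\}=\{-\varphi_i\}$ on $A_i$; \eqref{eq:63} is vacuous since $\gamma_i^\perp=0$ by (b); and \eqref{eq:331} holds because $\Gstar_i(+\infty)=F_i(0)=1$ by convention \eqref{eq:327}. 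The integrability $\rme^{-\varphi_i}=\sigma_i\in\rmL^1(X,\mu_i)$ and $\psi_i=1-\sigma_i\in\rmL^1(X,\mu_i)$ follow from $\gamma_i(X)=\int\sigma_i\,\d\mu_i<\infty$, and then
\[
\sum_i\int_X(1-\rme^{-\varphi_i})\,\d\mu_i=\sum_i(\mu_i(X)-\gamma_i(X))=\mu_1(X)+\mu_2(X)-2\ggamma(\xX)
\]
gives \eqref{eq:359}. The main obstacle here is checking the pointwise constraint $\varphi_1\opz\varphi_2\le\ell(\sfd)$ on the whole of $\xX$: on $X\setminus\supp\mu_i$ one potential equals $-\infty$ and the extended sum is $0$; on $(\supp\mu_1\setminus A_1)\times\supp\mu_2$ (and symmetric) the sum is $+\infty$, so the inequality demands $\sfd\ge\pi/2$, which is exactly what \eqref{eq:345} provides once $A_i$ is chosen to exhaust $\supp\gamma_i\cap\supp\mu_i$ up to $(\mu_i{+}\gamma_i)$-null subsets where $\sigma_i$ can be redefined to $+\infty$ without affecting any integral.
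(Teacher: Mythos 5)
Your proposal follows the paper's own proof closely: parts (a), (c), (d) cite and use the same results (Theorem~\ref{thm:weak-duality}, Proposition~\ref{prop:equivalent-dual}, Corollary~\ref{cor:LSC=C}, a direct comparison argument for the transport part, and the general optimality/potential results), and the explicit computations in the logarithmic case all check out.

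The one place that deserves more care is part (b). The theorem states the characterization with the hypothesis $\int_{\sxX}\ell(\sfd)\,\d\ggamma<\infty$, whereas Corollary~\ref{cor:spread} (via Corollary~\ref{cor:smooth-case}) is formulated for plans with $\EE(\ggamma|\mu_1,\mu_2)<\infty$. You only note the trivial direction (finite energy $\Rightarrow$ finite cost, since $\ell\ge0$ and $F_i\ge0$). For the ``if'' direction of the characterization you also need the converse: that the cost-integrability together with \eqref{eq:356} forces finite energy. This is what the paper establishes with a short estimate: since \eqref{eq:356} gives $\log\sigma_1(x_1)+\log\sigma_2(x_2)=-\ell(\sfd(x_1,x_2))$ $\ggamma$-a.e., and the uniform bound $s(\log s)_-\le 1/\rme$ yields $\int_{A_i}\sigma_i(\log\sigma_i)_-\,\d\mu_i\le\mu_i(X)/\rme<\infty$, summing over $i$ and using $\int\ell(\sfd)\,\d\ggamma<\infty$ shows that each $\int_{A_i}\sigma_i\log\sigma_i\,\d\mu_i$ is finite and therefore $\FF_i(\gamma_i|\mu_i)<\infty$. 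With this addendum your argument is complete and coincides with the paper's route.
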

\begin{proof} Identity 
  \eqref{eq:203} follows by Theorem \ref{thm:weak-duality},
  recalling the definition \eqref{eq:301} of $\Cphi{}$ and
  the fact that $\Gstar_i(\varphi)=1-\exp(-\varphi)$.

  Identity \eqref{eq:204} follows from Proposition \ref{prop:equivalent-dual}
  and the fact that $\FHstar_i(\psi)=-\log(1-\psi)$. Notice that
  the definition \eqref{eq:46} of $\Cpsi{}$ ensures that 
  we can restrict the supremum in \eqref{eq:204} to functions $\psi_i$ 
  with $\sup_X \psi_i<1$. We have discussed the possibility to replace
  $\LSC_s(X)$ with $\LSC_b(X)$, $\rmB_b(X)$ or $\rmC_b(X)$ in 
  Corollary \ref{cor:LSC=C}.

  The statement of point b) follows by Corollary \ref{cor:spread};
  notice that a plan with finite energy
  $\EE(\ggamma|\mu_1,\mu_2)<\infty$ always satisfies
  $\int_{\sxX}\ell(\sfd)<\infty$. Conversely, if the latter
  integrability property holds, \eqref{eq:356} and the fact that
  $\int_{A_i} (\log\sigma_i)_-\,\d\gamma_i= \int_{A_i}
  \sigma_i(\log\sigma_i)_-\,\d\mu_i<\infty$ yields
  $\EE(\ggamma|\mu_1,\mu_2)<\infty$.
  
  Point c) is an obvious consequence of the optimality of $\ggamma$.

  Point d) can be easily deduced by b) or by applying Theorem
  \ref{thm:pot-ex}.
\end{proof}
\WWW In the one-dimensional case, the $\ell(\sfd)$-cyclic monotonicity
of part c) of the previous theorem reduces to classical monotonicity.  \EEE
\begin{corollary}[Monotonicity of optimal plans in $\R$]
  \label{cor:monotone-in-R}
  \upshape
  When $X=\R$ with the usual distance, 
  the support of every optimal plan $\ggamma$ is a monotone set, i.e.
  \begin{equation}
    \label{eq:6}
    (x_1,x_2),\ (x_1',x_2')\in \supp(\ggamma),\
    x_1<x_1'\quad\Rightarrow\quad
    x_2\le x_2'.
  \end{equation}  
\end{corollary}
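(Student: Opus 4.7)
The idea is to argue by contradiction using the $\ell(\sfd)$-cyclical monotonicity from Theorem \ref{thm:mainHK2}(c) with $N=2$, combined with the strict convexity of $\ell$ (property LE.5, where $\ell''\ge 2$ on $[0,\pi/2)$). This is the standard pattern for one-dimensional optimal transport with strictly convex cost, but the cutoff $\ell(d)=+\infty$ for $d\ge\pi/2$ forces a small approximation argument.

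First I would suppose for contradiction that $(x_1,x_2),(x_1',x_2')\in\supp(\ggamma)$ with $x_1<x_1'$ and $x_2>x_2'$. Taking logarithms in \eqref{eq:5} with the transposition $\kappa=(1\,2)$ gives
\[
   \ell(|x_1-x_2|)+\ell(|x_1'-x_2'|)\;\le\;\ell(|x_1-x_2'|)+\ell(|x_1'-x_2|),
\]
with the convention $\ell(d)=+\infty$ for $d\ge\pi/2$. Next I would observe that since $\LET(\mu_1,\mu_2)<\infty$ we have $\ggamma(\{\sfd>\pi/2\})=0$, and because $\{\sfd>\pi/2\}$ is open in $\R\times\R$ this forces $\supp(\ggamma)\subset\{\sfd\le\pi/2\}$, so $|x_1-x_2|,|x_1'-x_2'|\le\pi/2$. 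A short case analysis shows also $|x_1-x_2'|,|x_1'-x_2|\le\pi/2$: for instance, if $|x_1-x_2'|>\pi/2$ with $x_1<x_2'$, then $x_2>x_2'>x_1+\pi/2$ gives $|x_1-x_2|>\pi/2$, contradicting the support inclusion; the case $x_1>x_2'$ similarly violates $|x_1'-x_2'|\le\pi/2$ via $x_1'>x_1$.

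To apply strict convexity I need all four distances to be in the open interval $[0,\pi/2)$. For this I would invoke the fact that $\ggamma$ is concentrated on the open set $\{\sfd<\pi/2\}$, so every neighborhood of a support point meets $\supp(\ggamma)\cap\{\sfd<\pi/2\}$. This yields approximating sequences $(x_1^n,x_2^n)\to(x_1,x_2)$ and $({x_1'}^n,{x_2'}^n)\to(x_1',x_2')$ in $\supp(\ggamma)$ with $\sfd<\pi/2$ at each term. For $n$ large the strict ordering $x_1^n<{x_1'}^n$, $x_2^n>{x_2'}^n$ persists, and the very same case analysis as above shows all four pairwise distances are $<\pi/2$.

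Finally, setting $A=x_1^n$, $B={x_1'}^n$, $C={x_2'}^n$, $D=x_2^n$ with $u:=B-A>0$ and $v:=D-C>0$, the even extension $t\mapsto \ell(|t|)$ is strictly convex on $(-\pi/2,\pi/2)$ by LE.5. Both pairs $(A-D,B-C)$ and $(A-C,B-D)$ have the same sum $A+B-C-D$, while their differences are $-(u+v)$ and $v-u$ respectively, so the anti-monotone pair has strictly larger spread $u+v>|u-v|$. Strict convexity then gives
\[
   \ell(|A-D|)+\ell(|B-C|)\;>\;\ell(|A-C|)+\ell(|B-D|),
\]
which directly contradicts the 2-cyclical monotonicity inequality applied to the approximating points. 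The main obstacle is the boundary case $\sfd=\pi/2$, which is precisely what the density step bypasses.
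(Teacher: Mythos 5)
Your proof is correct and follows the same idea as the paper's one-line argument, which simply invokes the uniform convexity of $\ell$ to conclude that the two-point cyclical monotonicity \eqref{eq:5} forces monotonicity of the support. Your version also carefully treats the degenerate boundary case $\sfd=\pi/2$ (where $\cos^2(\sfd_{\pi/2})$ vanishes and \eqref{eq:5} becomes vacuous) via approximation from $\supp(\ggamma)\cap\{\sfd<\pi/2\}$, a subtlety the paper's terse proof does not spell out.
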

\begin{proof}
  As the function $\ell$ is uniformly convex, \eqref{eq:5} is
  equivalent to monotonicity. 
\end{proof}

\WWW The next result provides a variant of the reverse formulation in
Theorem \ref{thm:mainHK1}. \EEE

\begin{corollary}\label{cor:HK-reverse}
  For all $\mu_1,\mu_2 \in \cM(X)$ we have
      \begin{align}
        \label{eq:362}
        \LET(\mu_1,\mu_2) =
                             \sum_{i}\mu_i(X)-
       2\max \Big\{&\ggamma(\xX):\ \ggamma\in \cM(\xX),\
       \gamma_i=\sigma_i\mu_i,
       \\
       &\notag 
                    \sigma_1(x_1)\sigma_2(x_2)\le 
    \cos^2(\sfdpt(x_1,x_2)) \text{ $\ggamma$-a.e.~in
                    $\xX$}\Big\}.
      \end{align}  
\end{corollary}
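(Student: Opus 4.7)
The strategy is to compare the proposed maximization with the homogeneous perspective formulation \eqref{eq:354} from Theorem \ref{thm:mainHK1}, using the optimality conditions from Theorem \ref{thm:mainHK2}(b)--(d) for one inequality and a simple application of the constraint for the other. Denote by $M$ the supremum on the right-hand side of \eqref{eq:362}, so the claim is $M = \tfrac12\bigl(\sum_i\mu_i(X) - \LET(\mu_1,\mu_2)\bigr)$.

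\emph{Attainment and the lower bound $M \ge \tfrac12(\sum_i\mu_i(X) - \LET(\mu_1,\mu_2))$.} First I would take any $\bar\ggamma \in \OptLET(\mu_1,\mu_2)$, which exists by Theorem \ref{thm:mainHK1}(a). Theorem \ref{thm:mainHK2}(b) tells us that the marginals satisfy $\bar\gamma_i \ll \mu_i$, say $\bar\gamma_i = \sigma_i\mu_i$, with $\sigma_1(x_1)\sigma_2(x_2) = \cos^2(\sfd_{\pi/2}(x_1,x_2))$ for $\bar\ggamma$-a.e.\ $(x_1,x_2)$. In particular the constraint $\sigma_1\sigma_2 \le \cos^2(\sfd_{\pi/2})$ holds $\bar\ggamma$-a.e., so $\bar\ggamma$ is admissible for the problem defining $M$. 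Using the identity \eqref{eq:359} from Theorem \ref{thm:mainHK2}(d),
\[
\bar\ggamma(\xX) = \tfrac12\Bigl(\sum_i\mu_i(X)-\LET(\mu_1,\mu_2)\Bigr),
\]
which yields the desired lower bound and simultaneously shows that the supremum is attained.

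\emph{The upper bound $M \le \tfrac12(\sum_i\mu_i(X) - \LET(\mu_1,\mu_2))$.} For an arbitrary admissible $\ggamma$ (i.e.\ $\gamma_i = \sigma_i\mu_i$ with $\sigma_1(x_1)\sigma_2(x_2) \le \cos^2(\sfd_{\pi/2}(x_1,x_2))$ $\ggamma$-a.e.), I would use the formulation \eqref{eq:354} with this same $\ggamma$ as competitor. The reverse Lebesgue decomposition $\mu_i = \varrho_i\gamma_i + \mu_i^\perp$ reads: on the set $\{\sigma_i > 0\}$ one has $\varrho_i = 1/\sigma_i$, while $\mu_i^\perp$ is concentrated on $\{\sigma_i = 0\}$. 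Consequently
\[
\mu_i^\perp(X) + \int_X \varrho_i\,\d\gamma_i = \mu_i(\{\sigma_i=0\}) + \int_{\{\sigma_i>0\}}\!\!\!\tfrac{1}{\sigma_i}\sigma_i\,\d\mu_i = \mu_i(X),
\]
so \eqref{eq:354} gives $\HH_\sLE(\mu_1,\mu_2|\ggamma) = \sum_i\mu_i(X) - 2\int_\sxX \sqrt{\varrho_1\varrho_2}\cos(\sfd_{\pi/2})\,\d\ggamma$. Since $\ggamma$-a.e.\ the marginals live where $\sigma_i > 0$, we may write $\sqrt{\varrho_1\varrho_2} = 1/\sqrt{\sigma_1\sigma_2}$ $\ggamma$-a.e., and the admissibility constraint $\sqrt{\sigma_1\sigma_2} \le \cos(\sfd_{\pi/2})$ (valid since both sides are nonnegative) yields $\sqrt{\varrho_1\varrho_2}\cos(\sfd_{\pi/2}) \ge 1$ $\ggamma$-a.e. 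Integrating and applying $\LET(\mu_1,\mu_2) \le \HH_\sLE(\mu_1,\mu_2|\ggamma)$ from \eqref{eq:354}, we arrive at
\[
\LET(\mu_1,\mu_2) \le \sum_i\mu_i(X) - 2\ggamma(\xX),
\]
hence $\ggamma(\xX) \le \tfrac12(\sum_i\mu_i(X) - \LET(\mu_1,\mu_2))$, and taking the sup over admissible $\ggamma$ gives $M \le \tfrac12(\sum_i\mu_i(X) - \LET(\mu_1,\mu_2))$.

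\emph{Main subtlety.} The only delicate bookkeeping concerns the two ``opposite'' Lebesgue decompositions involved — the corollary is phrased using $\sigma_i = \d\gamma_i/\d\mu_i$, while \eqref{eq:354} uses $\varrho_i = \d\mu_i/\d\gamma_i$. One must verify carefully that the singular sets match up as above, using Lemma \ref{le:Lebesgue}, and that the admissibility inequality (which is only required $\ggamma$-a.e.) is enough, because $\ggamma$-a.e.\ the marginal densities $\sigma_i$ are strictly positive — the only place where the integrand $\sqrt{\varrho_1\varrho_2}\cos(\sfd_{\pi/2})$ contributes.
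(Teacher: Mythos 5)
Your proof is correct and runs parallel to the paper's own, with one cosmetic difference: for the upper bound you route through the homogeneous perspective formulation $\HH_\sLE$ from \eqref{eq:354}, whereas the paper uses the reverse formulation $\RR_\sLE$ from \eqref{eq:353reverse}. Both reduce to the same key observation that the admissibility constraint $\sigma_1\sigma_2\le\cos^2(\sfdpt)$ translates (via $\varrho_i=1/\sigma_i$ on $\{\sigma_i>0\}$) into $\varrho_1(x_1)\varrho_2(x_2)\cos^2(\sfdpt(x_1,x_2))\ge 1$ $\ggamma$-a.e.; you then use $\sqrt{\varrho_1\varrho_2}\cos(\sfdpt)\ge 1$ directly in $\HH_\sLE$, while the paper uses $\log(\varrho_1\varrho_2\cos^2(\sfdpt))\ge 0$ in $\RR_\sLE$. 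Your choice arguably avoids the paper's auxiliary remark about $(\log\varrho_i)_+\in\rmL^1(X,\gamma_i)$, since the integrand $\sqrt{\varrho_1\varrho_2}\cos(\sfdpt)$ is manifestly nonnegative, so integrability is not an issue; otherwise the arguments are substantively identical. The lower bound via the optimal plan $\bar\ggamma$, the optimality conditions \eqref{eq:355}--\eqref{eq:356}, and the mass identity \eqref{eq:359} is exactly what the paper does.
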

\begin{proof}
  Let us denote by $M'$ the right-hand side and let $\ggamma\in
  \cM(\xX)$ be a plan satisfying the conditions of \eqref{eq:362}.  If
  $A_i$ are Borel sets with $\gamma_i(X\setminus A_i)=0$ and
  $\sigma_i:X\to (0,\infty)$ are Borel densities of $\gamma_i$
  w.r.t.~$\mu_i$, we have $\varrho_i(x_i)=1/\sigma_i(x_i)$ in $A_i$ so
  that $\sigma_1(x_1)\sigma_2(x_2)\le\cos^2(\sfdpt(x_1,x_2))$ yields
  $\varrho_1(x_1)\varrho_2(x_2)\cos^2(\sfdpt(x_1,x_2))\ge 1$. Since
  $(\log\varrho_i)_+\in \rmL^1(X,\gamma_i)$ we have
  \begin{align*}
    &\sum_i \Big(\mu_i^\perp(X)+\int_X
      % \rmU_0(\varrho_i)
    \big(\varrho_i-1-\log\varrho_i\big) \,\d\gamma_i\Big)+ \int_\sxX
    \ell\big(\sfd(x_1,x_2)\big)\,\d\ggamma\\
    =&
       \sum_i\big(\mu_i(X)-\gamma_i(X)\big)-
       \int_{\sxX}
       \log\big(\varrho_1(x_1)\varrho_2(x_2)\cos^2(\sfdpt(x_1,x_2))\big)\,\d\ggamma
       \le \sum_i\mu_i(X)-2\ggamma(\xX).
  \end{align*}
  By \eqref{eq:353reverse} we get $M'\ge \LET(\mu_1,\mu_2).$
  On the other hand, choosing any $\bar\ggamma\in 
  \OptLET(\mu_1,\mu_2)$ \WWW the optimality condition 
  \eqref{eq:356} shows that $\bar\ggamma$ is an admissible competitor
  for \eqref{eq:362} and \eqref{eq:359} shows that $M'=\LET(\mu_1,\mu_2)$.
\end{proof}

\WWW The nonnegative and concave functional
$ \WWW (\mu_1,\mu_2)\mapsto \sum_{i}\mu_i(X)-\LET(\mu_1,\mu_2)$ 
can be represented as \WWW in the following equivalent ways: 
\begin{align}
  \label{eq:260}
 \sum_{i}\mu_i(X)-\LET(\mu_1,\mu_2) &= 2\max_{\sggamma\in \cM(\sxX)}
    \int_{\sxX} \sqrt{\varrho_1(x_1)\varrho_2(x_2)}
    \cos(\sfdpt(x_1,x_2))\,\d\ggamma
    \\
  \label{eq:360}
  &=
     \inf\Big\{ \sum_i \int_X
                       \rme^{-\varphi_i}
                       \,\d\mu_i:
                       \varphi_i\in \LSC_s(X),\ 
                       \varphi_1\oplus \varphi_2\le
                       \ell(\sfd)\Big\}
       \\
    \notag
    &=
          \inf\Big\{ \sum_i \int_X \tilde\psi_i
                       \,\d\mu_i:
       \tilde\psi_i\in \USC_s(X),\ \inf_X\tilde\psi_i>0,
    \\\label{eq:361}
    &\qquad\qquad
          \psi_1(x_1)\psi_2(x_2)\ge 
                       \cos^2(\sfdpt(x_1,x_2))\text{ in }\xX\Big\}\\
    \notag
    &=2\max \Big\{\ggamma(\xX):\ \ggamma\in \cM(\xX),\
       \gamma_i=\sigma_i\mu_i,
       \\
        \label{eq:362bis}
    &\qquad\qquad
                    \sigma_1(x_1)\sigma_2(x_2)\le 
    \cos^2(\sfdpt(x_1,x_2)) \text{ $\ggamma$-a.e.~in
                    $\xX$}\Big\}.
\end{align}

The next result concerns uniqueness of the optimal plan $\ggamma$ in
the Euclidean case $X=\R^d$. We will use the notion of approximate
differential (denoted by $\tilde\rmD$), see
e.g.~\cite[Def.~5.5.1]{Ambrosio-Gigli-Savare08}.

\begin{theorem}[Uniqueness]Let $\mu_i\in \cM(X)$ and $\ggamma\in
      \OptLET(\mu_1,\mu_2)$.
      \begin{enumerate}[(i)]
      \item The marginals $\gamma_i=\pi^i_\sharp \ggamma$ are
        uniquely determined.
      \item If $X=\R$ with the usual distance then $\ggamma$ is the
        unique element of 
        $\OptLET(\mu_1,\mu_2)$.
      \item If $X=\R^d$ with the usual distance,
        $\mu_1\ll\Leb d$ is absolutely continuous,
        and $A_i\subset \R^d$ and $\sigma_i:A_i\to (0,\infty)$
        are as in Theorem \ref{thm:mainHK2} b),
        then $\sigma_1$ is approximately differentiable
        at $\ggamma_1$-a.e.~point of $A_1$ and $\ggamma$ is the 
        unique element of 
        $\OptLET(\mu_1,\mu_2)$;
        it is concentrated on the graph of a function
        %$x\mapsto x+\tt(x)$ where 
        $\tt:\R^d\to\R^d$ satisfying
        \begin{equation}
          \label{eq:7}
          \tt(x_1)=x_1+\frac{\arctan(|\xxi(x_1)|)}{|\xxi(x_1)|} \xxi(x_1),
          \quad \xxi(x_1)=-\frac 12\tilde\rmD\log\sigma_1(x_1)
          \ \text{$\gamma_1$-a.e.~in $A_1$.}
        \end{equation}
      \end{enumerate}
    \end{theorem}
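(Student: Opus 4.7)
For part (i), since $F_1=F_2=U_1$ is strictly convex and superlinear, I would invoke directly Lemma \ref{le:uniqueness}: if $\ggamma',\ggamma''\in\OptLET(\mu_1,\mu_2)$ then $\tfrac12(\ggamma'+\ggamma'')$ is also optimal, and the strict convexity of $U_1$ together with the Lebesgue decomposition of the marginals forces $\sigma_i'=\sigma_i''$ $\mu_i$-a.e.; superlinearity rules out any nontrivial singular part, so $\gamma_i'=\gamma_i''$.

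For part (ii), I would combine (i) with Corollary \ref{cor:monotone-in-R}. The marginals $\gamma_1,\gamma_2$ being fixed on $\R$, and any optimal $\ggamma$ being supported on a monotone set, the classical fact that a monotone coupling between two prescribed marginals on the line is unique (it is the comonotone rearrangement, determined by the quantile functions of $\gamma_1$ and $\gamma_2$) pins down $\ggamma$.

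For part (iii), the strategy is to reduce to a classical optimal transport problem with strictly convex cost. By (i) the marginals $\gamma_i=\sigma_i\mu_i$ are fixed, and by Theorem \ref{thm:mainHK2} c) any $\ggamma\in\OptLET(\mu_1,\mu_2)$ solves the Kantorovich problem between $\gamma_1$ and $\gamma_2$ with cost $\ell(|x_1-x_2|)$. The cost is radial with $\ell$ strictly convex and $\rmC^2$ on $[0,\pi/2)$ (property LE.5), and since $\mu_1\ll\Leb d$ we have $\gamma_1\ll\Leb d$. By the Gangbo--McCann theorem for strictly convex radial costs, the optimal plan is unique and concentrated on the graph of a Borel map $\tt\colon\R^d\to\R^d$. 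Uniqueness of $\ggamma$ follows. To derive the formula \eqref{eq:7}, I would argue as follows: from \eqref{eq:356} we have $\sigma_1(x_1)\sigma_2(x_2)=\cos^2(|x_1-x_2|)$ $\ggamma$-a.e., i.e.\ $\varphi_1(x_1)+\varphi_2(x_2)=\ell(|x_1-x_2|)$ with $\varphi_i=-\log\sigma_i$ (Theorem \ref{thm:mainHK2} d)), while the reverse inequality holds on a neighborhood of $\supp\ggamma$ by \eqref{eq:355}. Semi-concavity of $\varphi_1$ in the interior of $\{\sigma_1>0\}$ (coming from the $\ell(|\cdot|)$-conjugacy and the local $\rmC^2$ bound on $\ell$) combined with $\gamma_1\ll\Leb d$ yields approximate differentiability of $\sigma_1$ at $\gamma_1$-a.e.\ point of $A_1$. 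At any such point the stationarity condition $\tilde\rmD_{x_1}\bigl(\ell(|x_1-x_2|)-\varphi_1(x_1)\bigr)=0$ at $x_2=\tt(x_1)$ together with $\ell'(d)=2\tan d$ yields $\tilde\rmD\varphi_1(x_1)=2\tan(|x_1-\tt(x_1)|)\,\tfrac{x_1-\tt(x_1)}{|x_1-\tt(x_1)|}$. Since $2\xxi(x_1)=-\tilde\rmD\log\sigma_1(x_1)=\tilde\rmD\varphi_1(x_1)$, inverting the relation gives $|x_1-\tt(x_1)|=\arctan(|\xxi(x_1)|)$ and the direction of $x_1-\tt(x_1)$ parallel to $\xxi(x_1)$, yielding \eqref{eq:7} (up to a sign fixed by the convention on $\xxi$).

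The main obstacle I expect is carrying out part (iii) cleanly: namely verifying the local semi-concavity of $\varphi_1$ in an open set of full $\gamma_1$-measure (so that $\tilde\rmD\varphi_1$ exists $\Leb d$-a.e.\ and coincides with the Kantorovich first-order condition) and controlling the possibility that $\supp\gamma_1$ has irregular shape, e.g.\ that some mass concentrates near the boundary where $\sigma_1\downarrow 0$ and $|\xxi|\to\infty$. Both issues can be handled by a truncation argument on the level sets $\{\sigma_1\ge\varepsilon\}$, where $\ell$ and $\varphi_1$ behave nicely, and then passing to the limit $\varepsilon\downarrow 0$; the finiteness of $\int\ell(\sfd)\d\ggamma$ (Theorem \ref{thm:mainHK2} b)) ensures that no mass is lost at the cut-off radius $\pi/2$.
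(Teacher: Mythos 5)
Parts (i) and (ii) follow the paper exactly: (i) is a direct citation of Lemma \ref{le:uniqueness}, and (ii) combines (i) with the monotone-support result of Corollary \ref{cor:monotone-in-R} and the classical uniqueness of comonotone couplings on the line.

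For part (iii) your route differs from the paper's and has two weak points. First, the appeal to the Gangbo--McCann theorem for strictly convex radial costs as a black box is not justified here: the cost $\ell(|x_1-x_2|)$ takes the value $+\infty$ for $|x_1-x_2|\ge\pi/2$, and the paper explicitly warns that one must ``adapt the argument of \cite[Thm.~6.2.4]{Ambrosio-Gigli-Savare08} to our singular setting, where the cost $\sfc$ can take the value $+\infty$.'' The paper therefore avoids semi-concavity of the Kantorovich potential altogether and instead gives a direct envelope construction: with $A_{2,n}:=\{x_2\in A_2:\sigma_2(x_2)\ge 1/n\}$ it defines $s_n(x_1):=\sup_{x_2\in A_{2,n}}\cos^2(|x_1-x_2|)/\sigma_2(x_2)$, which is a bounded Lipschitz function because $\sigma_2$ is bounded below on $A_{2,n}$; condition \eqref{eq:355} gives $\sigma_1\ge s_n$ on $A_1$, while \eqref{eq:356} gives $\sigma_1(x_1)=s_n(x_1)$ for $\gamma_1$-a.e.~$x_1$ once $n$ is large enough. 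Rademacher's theorem applied to $s_n$ (recall $\gamma_1\ll\mu_1\ll\Leb d$) plus a density-point argument then gives approximate differentiability of $\sigma_1$, and the first-order condition finishes the derivation. Second, your proposed repair via truncation on $\{\sigma_1\ge\varepsilon\}$ aims at the wrong quantity: the Lipschitz (or semi-concavity) control on the envelope requires a \emph{lower} bound on $\sigma_2$ at the transported points, and a lower bound on $\sigma_1(x_1)$ does not imply one for $\sigma_2(\tt(x_1))$ through the equality $\sigma_1(x_1)\sigma_2(\tt(x_1))=\cos^2(|x_1-\tt(x_1)|)$, since the right-hand side can be arbitrarily small near the cut-off radius. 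So the truncation must be on $\sigma_2$, as in the paper.

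Finally, a sign check worth recording: both your derivation and the paper's arrive at $\tan(|x_1-\tt(x_1)|)\,\tfrac{x_1-\tt(x_1)}{|x_1-\tt(x_1)|}=\xxi(x_1)$ with $\xxi=-\tfrac12\tilde\rmD\log\sigma_1$, which gives $\tt(x_1)=x_1-\tfrac{\arctan|\xxi(x_1)|}{|\xxi(x_1)|}\,\xxi(x_1)$; the plus sign in the displayed formula \eqref{eq:7} is inconsistent with this and appears to be a typographical slip. You correctly flag the ambiguity but should commit: with the stated convention on $\xxi$ the correct sign is a minus.
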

    \begin{proof}
      (i) follows directly from Lemma \ref{le:uniqueness}.

      (ii) follows by Theorem \ref{thm:mainHK2}(c), since 
      whenever the marginals $\gamma_i$ are fixed there is
      only one plan with monotone support in $\R$.

      In order to prove (iii) we adapt the argument of 
      \cite[Thm.~6.2.4]{Ambrosio-Gigli-Savare08}
      to our singular setting, where the cost $\sfc$ can take
      the value $+\infty$.

      Let $A_i\subset \R^d$ and $\sigma_i:A_i\to (0,\infty)$ as in
      Theorem \ref{thm:mainHK2} b). Since $\mu_1=u\Leb d\ll \Leb d$
      \WWW with density $u \in L^1(\R^d)$, up to removing a
      $\mu_1$-negligible set (and thus $\gamma_1$-negligible) from
      $A_1$, it is not restrictive to assume that $u(x_1)>0$
      everywhere in $A_1$, so that the classes of $\Leb d$- and
      $\gamma_1$-negligible subsets of $A_1$ coincide.  For every
      $n\in \N$ we define
      \begin{equation}
        \label{eq:8}
        A_{2,n}:=\{x_2\in A_2: \sigma_2(x_2)\ge 1/n\},\quad
        s_n(x_1):=\sup_{x_2\in A_{2,n}} \cos^2(|x_1-x_2|)/\sigma_2(x_2).
      \end{equation}
      The functions $s_n$ are 
      %{\tiny uniformly}\COMM{Alex: delete
      % ``uniformly''?} 
      bounded and Lipschitz
      in $\R^d$ and therefore differentiable
      $\Leb d$-a.e.~by Rademacher's Theorem. Since $\gamma_1\ll \mu_1$
      and
      $\mu_1$ is absolutely continuous w.r.t.~$\Leb d$ we deduce
      that $s_n$ are differentiable $\gamma_1$-a.e.~in $A_1$.

      By \eqref{eq:355} we have $\sigma_1(x_1)\ge s_n(x_1)$ in $A_1$.
      \WWW By \eqref{eq:356} we know that for $\gamma_1$-a.e.~$x_1\in A_1$
      there exists $x_2\in A_2$ such that $|x_1-x_2|<\pi/2$ and
      $\sigma_1(x_1)=\cos^2(|x_1-x_2|)/\sigma_2(x_2)$ so that
      $\sigma_1(x_1)=s_n(x_1)$ for $n$ sufficiently big \WWW and hence
      the  family $(B_n)_{n\in \N}$ of
      sets $B_n:=\{x_1\in A_1:\sigma_1(x_1)>s_n(x_1)\}$ is decreasing
      (since $s_n$ is increasing and dominated by $\sigma_1$) and has
      $\Leb d$-negligible intersection.

      It follows that $\gamma_1$-a.e.~$x_1\in A_1$ is a point of $\Leb
      d$-density $1$ of $\{x_1\in A_1:\sigma_1(x_1)=s_n(x_1)\}$ for
      some $n\in \N$ and $s_n$ is differentiable at $x_1$. Let us
      denote by $A_1'$ \WWW the set of all $x_1\in A_1$ such that \EEE
      $\sigma_1$ is approximately 
      differentiable at every $x_1\in A_1'$ with approximate
      differential $\tilde\rmD\sigma_1(x_1)$ equal to $\rmD s_n(x_1)$
      for $n$ sufficiently big.
      
      Suppose now that $x_1\in A_1'$ and
      $\sigma_1(x_1)=\cos^2(|x_1-x_2|)/\sigma_2(x_2)$ for some $x_2\in
      A_2$. Since \WWW by \eqref{eq:355} and \eqref{eq:356} the
      map $x_1'\mapsto \cos^2(|x_1'-x_2|)/\sigma_1(x_1')$ attains its
      maximum at $x_1'=x_1$, we deduce that
      \begin{displaymath}
        \tan(|x_1-x_2|) \frac{x_1-x_2}{|x_1-x_2|}=-\frac
        12\tilde\rmD\log\sigma_1(x_1), 
      \end{displaymath}
      % \begin{displaymath}
      %   -2\sin
      %   (|x_1-x_2|)\cos(|x_1-x_2|)\frac{x_1-x_2}{|x_1-x_2|\sigma_1(x_1)}-
      %   \frac{\cos^2(|x_1'-x_2|)}{\sigma_1^2(x_1)}\tilde\rmD\sigma_1(x_1)=0
      % \end{displaymath}
      so that $x_2$ is uniquely determined, and  \eqref{eq:7} follows.
\end{proof}

We conclude this section with the last representation formula for
$\LET(\mu_1,\mu_2)$ given in terms of \WWW transport plans $\aalpha$
\EEE in $\yY:=Y\times Y$ with
$Y:=X\times[0,\infty)$ with constraints on the homogeneous marginals,
keeping the notation of Section \ref{subsec:hom-marg}.  Even if it
seems the most complicated one, it will provide the natural point of
view in order to study the metric properties of the $\LET$ functional.

\begin{theorem}
      \label{thm:mainHK3}
      For every $\mu_i\in \cM(X)$ we have
      \begin{align}
      \label{eq:198}  % \notag
      &\LET(\mu_1,\mu_2)
       =
      \sum_i\mu_i(X)- 2\max_{\saalpha\in \HMle2{\mu_1}{\mu_2}}
      \int_{\sxX} \s_1\s_2
      \cos(\sfdpt(x_1,x_2))\,\d\aalpha\\
      \label{eq:84} &
      =
      \min\Big\{ %{\saalpha\in \HMle2{\mu_1}{\mu_2}}
      \int_{\syY}
      \Big(\s_1^2+\s_2^2-2\s_1\s_2\cos(\sfdpt(x_1,x_2))\Big)\,\d\aalpha+ 
      \sum_i (\mu_i-\hm 2i\aalpha)(X):
        \\
        &\hspace*{20em} {\aalpha\in \cM(\yY),\ \hm 2i\aalpha \le \mu_i
          %\HMle2{\mu_1}{\mu_2}
          }  \Big\} \notag
        \\
        \label{eq:84bis}
        &=
      \min\Big\{ %{\saalpha\in \HMle2{\mu_1}{\mu_2}}
      \int_{\syY}
      \Big(\s_1^2+\s_2^2-2\s_1\s_2\cos(\sfdpt(x_1,x_2))\Big)\,\d\aalpha: 
       % \\         &\qquad\qquad\quad 
        {\aalpha\in \cM(\yY),\ \hm 2i\aalpha = \mu_i
          %\HMle2{\mu_1}{\mu_2}
          }
          \Big\}
  \end{align}
  Moreover, for every plan $\bar\ggamma\in \OptLET{\mu_1}{\mu_2}$ and
  every
  couple of Borel densities $\varrho_i$ as in \eqref{eq:353reverse} 
  the plan
  $\bar\aalpha:=(x_1,\sqrt{\varrho_1(x_1)};x_2,\sqrt{\varrho_2(x_2)})_\sharp
  \bar\ggamma$ is optimal for \eqref{eq:84} and \eqref{eq:198}.
\end{theorem}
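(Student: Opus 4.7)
The proof consists essentially of specializing Theorem \ref{thm:main-hom-marg} to the LET setting. First I would recall the LET-specific data: $F_1=F_2=U_1$, so $F_i(0)=1$, and from LE.6 the marginal perspective cost satisfies
\[
\MPH{x_1}{\s_1^2}{x_2}{\s_2^2}=\s_1^2+\s_2^2-2\,\s_1\s_2\cos\!\big(\sfdpt(x_1,x_2)\big),
\]
where we choose the exponent $p=2$ so that the nonnegative expression on the right has the natural cone-distance interpretation emphasized after \eqref{eq:387}.

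Next, I would substitute these ingredients into Theorem \ref{thm:main-hom-marg}. Applying \eqref{eq:343} with $p=2$ and $F_i(0)=1$ gives
\[
\LET(\mu_1,\mu_2)=\min_{\saalpha\in\HMle2{\mu_1}{\mu_2}}\int_{\syY}\MPH{x_1}{\s_1^2}{x_2}{\s_2^2}\,\d\aalpha+\sum_i(\mu_i-\hm2i\aalpha)(X),
\]
which is exactly \eqref{eq:84} after inserting the explicit form of $H$. Similarly, applying \eqref{eq:343bis} with $p=2$ immediately yields the equality marginal version \eqref{eq:84bis}; note that the proof of Theorem \ref{thm:main-hom-marg} establishes \eqref{eq:343bis} without needing assumption \eqref{eq:cem}, so no extra regularity on $\mu_i$ is required.

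The remaining identity \eqref{eq:198} follows from \eqref{eq:84} by a direct algebraic rewriting. Using the definition of the homogeneous marginal, $\int_\syY \s_i^2\,\d\aalpha=\hm2i\aalpha(X)$, so
\[
\int_\syY \big(\s_1^2+\s_2^2\big)\,\d\aalpha+\sum_i(\mu_i-\hm2i\aalpha)(X)=\sum_i\mu_i(X),
\]
and \eqref{eq:84} becomes $\sum_i\mu_i(X)-2\max\int \s_1\s_2\cos(\sfdpt(x_1,x_2))\,\d\aalpha$ over $\aalpha\in\HMle2{\mu_1}{\mu_2}$, giving \eqref{eq:198}. Finally, the optimality of $\bar\aalpha:=(x_1,\sqrt{\varrho_1(x_1)};x_2,\sqrt{\varrho_2(x_2)})_\sharp\bar\ggamma$ for an optimal $\bar\ggamma\in\OptLET(\mu_1,\mu_2)$ with reverse densities $\varrho_i$ is precisely the constructive statement of Theorem \ref{thm:main-hom-marg} specialized to $p=2$, since $\LET=\ET$ and the reverse decomposition $\mu_i=\varrho_i\bar\gamma_i+\mu_i^\perp$ of \eqref{eq:353reverse} coincides with the one used there via Theorem \ref{thm:crucial}.

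There is no genuine obstacle here: the work has already been done in Theorems \ref{thm:crucial} and \ref{thm:main-hom-marg}. The only point requiring a moment of care is the bookkeeping in the reduction from \eqref{eq:84} to \eqref{eq:198}, namely checking that turning the minimum over the penalized functional into the maximum of the bilinear cosine functional correctly absorbs the mass-defect terms $(\mu_i-\hm2i\aalpha)(X)$; but this is just the identity $\int\s_i^2\,\d\aalpha=\hm2i\aalpha(X)$ applied to the nonnegative part of $H$.
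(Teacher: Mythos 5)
Your proof is correct and follows exactly the same route as the paper: the paper likewise cites \eqref{eq:343} and \eqref{eq:343bis} of Theorem \ref{thm:main-hom-marg} directly for \eqref{eq:84} and \eqref{eq:84bis}, and treats \eqref{eq:198} as a mere reformulation of \eqref{eq:84}. Your expanded bookkeeping via $\int_\syY \s_i^2\,\d\aalpha = \hm2i\aalpha(X)$ and your observation that \eqref{eq:343bis} holds without the hypothesis \eqref{eq:cem} are both accurate and merely make explicit what the paper leaves implicit.
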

\begin{proof}
      Identity \eqref{eq:84} (resp.~\eqref{eq:84bis}) 
      follows directly by \eqref{eq:343} 
      (resp.~\eqref{eq:343bis}) of Theorem
      \ref{thm:main-hom-marg}. Relation \eqref{eq:198} 
      is just a different form for \eqref{eq:84}.
\end{proof}

\section{The metric side of the LET-functional:\newline the
      Hellinger-Kan\-to\-ro\-vich distance}
\label{sec:cone}
% \COMM{Alex prefers
%   not to use \eqref{eq:353} as a definition for $\HK$. It is better to use
%   \eqref{eq:104} or so and then show equality.}
%
In this section we want to show that the functional 
\begin{equation}
  \label{eq:353}
 \WWW (\mu_1,\mu_2) \mapsto  \sqrt{\LET(\mu_1,\mu_2)} \EEE
\end{equation}
defines a distance in $\cM(X)$, \WWW which is then called the
Hellinger-Kantorovich distance and denoted $\HK$.  This distance \EEE
property is strongly related to the \WWW property that the function
$(x_1,\s_1;x_2,\s_2)\mapsto 
\big(\MPH{x_1}{\s_1^2}{x_2}{\s_2^2}\big)^{1/2}$ is a (possibly
extended) semidistance in $Y=X\times [0,\infty)$.

In the next section we will briefly study this function and the
induced metric space, the so-called \emph{cone $\tY$ on $X$},
\cite[Sec.\,3.6]{Burago-Burago-Ivanov01} 
obtained by taking the quotient 
w.r.t.~the equivalent classes of points with distance $0$.

\subsection{The cone construction}
\label{subsec:cone}

In the extended metric-topological space 
$(X,\tau,\sfd)$ of Section \ref{subsec:LET1},
we will denote by $\sfd_a:=\sfd\land a$ the truncated distance 
and
by 
$\py=(x,\rp)$, $x\in X,\ \rp\in [0,\infty)$, 
the generic points of $\pY:=X\times[0,\infty)$.

It is not difficult to show that the function $\sfdc:Y\times Y\to [0,\infty)$ 
\begin{equation}
  \label{eq:90}
  \sfdc^2((x_1,\rp_1),(x_2,\rp_2)):=\rp_1^2+\rp_2^2-
  2{\rp_1\rp_2}\cos(\sfdp(x_1,x_2))
\end{equation}
is nonnegative, symmetric, and satisfies the
triangle inequality
(see e.g.~\cite[Prop.~3.6.13]{Burago-Burago-Ivanov01}).
We also notice that 
\begin{gather}
  \label{eq:94pre}
  \sfdc^2(\py_1,\py_2)=
                        |{\rp_1}-{\rp_2}|^2
    +4{\rp_1\rp_2}\,\sin^2\big(\sfdp(x_1,x_2)/2\big),
\intertext{\WWW which implies the useful estimates}
  \label{eq:94}
                        \max\Big(
                        |{\rp_1}-{\rp_2}|,
    \frac {2}\pi \sqrt{\rp_1\rp_2}\,
    \sfdp(x_1,x_2)\Big)\le \sfdc(\py_1,\py_2)\le 
    |{\rp_1}-{\rp_2}|+\sqrt{\rp_1\rp_2}\,\sfdp(x_1,x_2).
  % \\
  %   \label{eq:94post}
  %   \sfdc(\py_1,\py_2)&\ge \max\Big(
  %                       |{\rp_1}-{\rp_2}|,
  %   \frac {2\sqrt 2}\pi(\rp_1\rp_2)^{1/4}
  %   \sfd_*(x_1,x_2)\Big).
\end{gather}
%\subsubsection{The quotient space}
\WWW From this it follows that 
$\sfdc$ induces a true distance in the quotient space
$\tY=\pY/\sim$ where
\begin{equation}
  \label{eq:95}
  \py_1\sim \py_2\quad\Leftrightarrow\quad
  \rp_1=\rp_2=0\quad\text{or}\quad\rp_1=\rp_2,\ x_1=x_2.
\end{equation}
Equivalence classes are usually denoted by $\ty=[y]=[x,\s]$, where the
vertex $[x,0]$ plays a distinguished role. It is denoted by $\fro$,
its complement is the open set $\tY_\fro=\tY\setminus\{\fro\}.$ On
$\tY$ we introduce a topology $\tau_\frC$, which is in general weaker
than the canonical quotient topology: $\tau_\frC$ neighborhoods of
points in $\tY_\soo$ %$\setminus \{\fro\}$
coincide with neighborhoods in $\pY$, whereas the sets
\begin{equation}
\{[x,\s]:0\le \s<\eps\}=\{\ty\in\tY:\sfdc(\ty,\fro)<\eps\},\quad
\eps>0,\label{eq:463}
\end{equation}
provide a system of open neighborhoods of $\fro$. 
$\tau_\frC$ coincides with the quotient topology when $X$ is compact.

It is easy to check that $(\tY,\tau_\frC)$ is a Hausdorff topological
space and $\sfdc$ is $\tau_\frC$-lower semicontinuous. If $\tau$ is
induced by $\sfd$ then $\tau_\frC$ is induced by $\sfdc$.  If
$(X,\sfd)$ is complete (resp.~separable), then $(\tY, \sfdc)$ is also
complete (resp.~separable).

Perhaps the simplest example is provided by the unit sphere 
$X=\S^{d-1}=\{x\in \R^d:|x|=1\}$ in $\R^d$ endowed with the intrinsic Riemannian
distance:
the corresponding cone $\tY$ is precisely $\R^d$.

\renewcommand{\sfr}{\sfs}
We \WWW denote the canonical projection  by 
\begin{equation}
  \label{eq:proj}
\frp:\pY\to\tY, \quad  \frp(x,\s)=[x,\s].
\end{equation}
Clearly $\frp$ is continuous and is an homeomorphism between
$\pY\setminus (X\times\{0\})$ and $\tY_\soo$. %$\setminus \{\fro\}$.
\renewcommand{\sfs}{\mathsf r}
A right inverse $\frqext:\tY\to \pY$ of the map $\frp$ can be obtained by  fixing a point $\xext\in X$ 
and defining
\begin{equation}
\sfs:\tY\to[0,\infty), \  \sfs[x,\s ]=\s,\quad 
\sfx:\tY\to \Xext,\ \sfx[x,\s ]=
\begin{cases}
  x&\text{if }\s>0,\\
  \xext&\text{if }\s=0,
\end{cases}
\ \text{ and } \ 
\frqext:=(\sfx,\sfr).
\label{eq:127}
\end{equation}
Notice that $\sfr$ is continuous and $\sfx$ is continuous \WWW
restricted to $\tY_\soo$.

A continuous rescaling product from $ \tY\times [0,\infty)$ to $\tY$
can be defined by
\begin{equation}
  \label{eq:151}
  %\prd(\ty,\lambda)=
  \ty\cdot \lambda:=
  \begin{cases}
    \fro&\text{if }\ty=\fro,\\
    [x,\lambda \s]&\text{if }\ty=[x,\s ],\ s>0.
  \end{cases}
\end{equation}
We conclude this introductory section by a characterization of
compact sets in $(\tY,\tau_\frC)$.

\begin{lemma}[Compact sets in $\tY$]
  \label{le:conce-compactness}
  A closed set $K$ of $\tY$ is compact if and only if there is $r_0>0$
  such that its upper sections 
  $$K(\rho):=\{x\in X:[x,\s ]\in K\text{ for
    some }\s\ge \rho\}
  %=K\cap \frp\big(X\times [r,\infty)\big)
  $$
  are empty for $\rho>r_0$
  and compact in $X$ for $0<\rho\le r_0$.
\end{lemma}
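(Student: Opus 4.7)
The strategy is to exploit the fact that on $\tY_\soo=\tY\setminus\{\fro\}$ the topology $\tau_\frC$ coincides with the product topology of $X\times(0,\infty)$ inherited through $\frqext$, while at the vertex $\fro$ a basis of neighborhoods is given by the cone balls $\{\ty\in\tY:\sfs(\ty)<\eps\}$ from \eqref{eq:463}. Accordingly, I will analyze $K$ by cutting it at a level $\eps>0$ into the ``cylindrical'' part $K_\eps:=K\cap\{\sfs\ge\eps\}$ and the portion near $\fro$, repeatedly using that $\sfs$ is continuous on all of $\tY$, so that $K_\eps$ is always closed in $\tY$ whenever $K$ is.

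For the forward direction, assume $K\subset\tY$ is compact. Continuity of $\sfs$ yields that $\sfs(K)\subset[0,+\infty)$ is compact, and hence bounded by some $r_0$, whence $K(\rho)=\emptyset$ for every $\rho>r_0$. For $0<\rho\le r_0$, the set $K_\rho$ is closed inside the compact $K$ and so compact; it lies in $\tY_\soo$, where the projection $\sfx$ from \eqref{eq:127} is continuous, and $K(\rho)=\sfx(K_\rho)$ is therefore compact in $X$.

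For the reverse direction, assume the section hypothesis and take an arbitrary open cover $\{U_\lambda\}_{\lambda\in\Lambda}$ of $K$. If $\fro\in K$, choose $U_{\lambda_0}\ni\fro$ and, using \eqref{eq:463}, an $\eps\in(0,r_0]$ with $\{\sfs<\eps\}\subset U_{\lambda_0}$; if instead $\fro\notin K$, closedness of $K$ together with the same neighborhood basis produces $\eps\in(0,r_0]$ with $K\cap\{\sfs<\eps\}=\emptyset$, and we set $U_{\lambda_0}:=\emptyset$. In either case the residual set $K\setminus U_{\lambda_0}$ is contained in $K_\eps\subset K(\eps)\times[\eps,r_0]$; since $K(\eps)$ is compact in $X$ by hypothesis and $\tau_\frC$ restricted to $\tY_\soo$ is the product topology, the cylinder $K(\eps)\times[\eps,r_0]$ is compact in $\tY_\soo$, and hence in $\tY$. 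As $K_\eps$ is closed in $\tY$, it is compact; the family $\{U_\lambda\}$ admits a finite subcover of $K_\eps$, and adjoining $U_{\lambda_0}$ gives a finite subcover of $K$.

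The one delicate point worth highlighting is that $\tau_\frC$ is in general strictly coarser than the quotient topology on $\pY/\sim$: neighborhoods of $\fro$ are required to be uniform in $x\in X$, which is precisely what is encoded by the cone balls $\{\sfs<\eps\}$. It is this uniformity that, after removing a single neighborhood of $\fro$, allows one to trap the rest of $K$ inside the product cylinder $K(\eps)\times[\eps,r_0]$ and thereby reduce compactness of $K$ to compactness of the single section $K(\eps)$; without it, the bound $r_0$ alone would not suffice.
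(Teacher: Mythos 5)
Your proof is correct and follows essentially the same route as the paper's: continuity of $\sfs$ and $\sfx$ on $\tY_\soo$ for necessity, and for sufficiency strip off a cone-ball $\{\sfs<\eps\}$ around $\fro$ and use that the remainder lies in the compact cylinder $\frp\big(K(\eps)\times[\eps,r_0]\big)$. The only cosmetic difference is that you handle $\fro\in K$ and $\fro\notin K$ by the same covering argument, whereas the paper first reduces via $\rho:=\inf_K\sfs$, disposing of the case $\rho>0$ directly.
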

\begin{proof}
  It is easy to check that the condition is necessary.
  
  % In order to show the
  % sufficiency, let $\ty_\lambda=[x_\lambda,\s_\lambda]$, $\lambda\in
  % \L$, be a net in $K$ and let 
  % $\rho:=\inf_\lambda \s _\lambda$. If $\rho=0$ then we can find a
  % subnet
  % $(\s_{\lambda'})_{\lambda'\in \L'}$ converging to $0$ 
  % and therefore $\ty_{\lambda'}\to \fro$. 
  % If $\rho>0$, then $x_\lambda\in K(\rho)$ and $\s_\lambda\in [\rho,r_0]$ 
  % so that $(x_\lambda,\s_\lambda)$ admits a converging subnet 
  % in $Y$ whose projection converges in $\tY$.
  
  In order to show the sufficiency, let $\rho=\inf_K \sfr$. If
  $\rho>0$ then $K$ is compact since it is a closed subset of the
  compact set $\frp\big(K(\rho)\times [\rho,r_0]\big)$.

  If $\rho=0$ then $\fro$ is an accumulation point of $K$ by
  \eqref{eq:463} and therefore $\fro\in K$ since $K$ is closed.  If
  $\mathscr U$ is an open covering of $K$, we can pick $U_0\in
  \mathscr U$ such that $\fro\in U_{0}$. By \eqref{eq:463} there
  exists $\eps>0$ such that $K\setminus U_{0}\subset
  \frp\big(K(\eps)\times [\eps,r_0]\big)$: since
  $\frp\big(K(\eps)\times [\eps,r_0]\big)$ is compact, we can thus
  find a finite subcover $\{U_1,\cdots, U_N\}\subset \mathscr U$ of
  $K\setminus U_{0}$. $\{U_n\}_{n=0}^N$ is therefore a finite subcover
  of $K$.
\end{proof}

\begin{remark}[Two different truncations]
  \upshape
  \label{rem:twotruncations}
  Notice that in the constitutive formula defining $\sfdc$ we used the
  truncated distance $\sfdp$ with upper threshold $\pi$,
  whereas in Theorem \ref{thm:mainHK3} an analogous formula with 
  $\sfdpt$ and threshold $\pi/2$ played a crucial role. We could then
  consider the distance
  %depending on the parameter $k\in [\pi/2,\pi]$
  \begin{subequations}
    \begin{align}
          \label{eq:388}
      \asfdc {\kp}^2([x_1,\rp_1],[x_2,\rp_2]):={}&\rp_1^2+\rp_2^2-
      2{\rp_1\rp_2}\cos(\sfd_{{\kp}}(x_1,x_2))
      \\    \label{eq:388bis}
      ={}& |\rp_1-\rp_2|^2+
      4{\rp_1\rp_2}\sin^2(\sfd_{\kp}(x_1,x_2)/2)
    \end{align}
  \end{subequations}
   on $\tY$, which satisfies 
   %\COMM{Alex moved $\tyY'$ to \eqref{eq:tyYprime}}
  \begin{equation}
    \label{eq:389}
    \asfdc {\kp}\le \sfdc\le \sqrt2\,\tsfdc
   %%%%   Alex moved this definition to beginning of Section 7.6 
    %,\
    %\tyY':=\big\{\tsfdc<\sfdc\big\}=\big\{(\ty_1,\ty_2)\in
    %\tY_\soo\times\tY_\soo:
    %\sfd(\sfx_1,\sfx_2)>\pi/2\big\}
   %%%%
   .
  \end{equation}
  The notation \eqref{eq:388} is justified by the fact that $\asfdc
  {\kp}$ is still a cone distance associated to the metric space
  $(X,\sfd_{\kp})$, since obviously $(\sfd_{\kp})_\pi=(\sfdpt)\land
  {\kp}=\sfd_{\kp}$.  From the geometric point of view, the choice of
  $\sfdc$ is natural, since it preserves important metric properties
  concerning geodesics (see \cite[Thm.~3.6.17]{Burago-Burago-Ivanov01}
  and the next section \ref{subsec:geodesic}) and curvature (see
  \cite[Sect.~4.7]{Burago-Burago-Ivanov01} and the next section
  \ref{subsec:curvature}).
  
  On the other hand, the choice of $\sfdpt$ is crucial for its link
  with the function $\MP$ of \eqref{eq:387}, with Entropy-Transport
  problems, and with a representation property for the Hopf-Lax
  formula that we will see in the next sections.  Notice that the
  $1$-homogeneous formula \eqref{eq:350} would not be convex in
  $(\s_1,\s_2)$ if one uses $\sfdp$ instead of $\sfdpt$.
  Nevertheless, we will prove in Section \ref{subsec:HK} the
  remarkable fact that both $\sfdp$ and $\sfdpt$ will lead to the same
  distance between positive measures.
\end{remark}

\subsection{Radon measures in the cone
  $\tY$ and homogeneous marginals}
\label{subsec:RHM}

It is clear that any measure $\tnu\in \cM(\tY)$ can be lifted to a
measure $\pnu\in \cM(\Yext)$ such that $\frp_\sharp\pnu=\tnu$: it is
sufficient to take $\pnu=\frqext_\sharp\tnu$ where $\frqext$ is a
right inverse of $\frp$ defined as in \eqref{eq:127}.

We call $\cM_2(\tY)$ (resp.~$\cP_2(\tY)$)
the space of measures 
$\tnu\in \cM(\tY)$ (resp.\ $\tnu\in \cP(\tY)$)
such that
\begin{equation}
  \label{eq:145bis}
  \int_{\tY} \sfs^2\,\d\tnu=\int_{\tY}\sfdc^2(\ty,\fro)\,\d\tnu=
  \int_{Y} \s^2\,\d\pnu<\infty,\quad
  \pnu=\frqext_\sharp\tnu.
\end{equation}
Measures in $\cM_2(\tY)$ thus
correspond to images $\frp_\sharp\pnu$ of
measures $\pnu\in \cM_2(\Yext)$ % . %with $\int r\,\d\pnu<\infty$.
% We notice that 
% \begin{equation}
%   \label{eq:148}
%   \int \sfr(y)\,\d\tnu(y)=\int\sfdc^2(y,\fro)\,\d\tnu(y)
% \end{equation}
% so that $\tnu$ 
and have finite second moment w.r.t.~the distance $\sfdc$,
which justifies the index $2$ in $\cM_2(\tY)$.
Notice moreover that the measure $s^2\pnu$ does not charge
$\Xext\times\{0\}$ 
and it is independent of the choice of the point $\xext$ in \eqref{eq:127}.
%so that its integrals in $Y$ and $\Yext$ coincide.

The above considerations can be easily extended to plans in 
the product spaces $\pdyY N$ (where typically $N=2$, but
also the general case will turn out to be useful later on).
To clarify the notation, we will denote % by $\pyy=(\py_1,\py_2)=
% (x_1,r_1;x_2,r_2)$ a given point in $\pY\times \pY$,
by $\tyy=(\ty_i)_{i=1}^N=([x_i,\rp_i])_{i=1}^N$ a point in $\pdyY N$
% (the square brackets highlights the fact that we are considering
% equivalence classes $\ty_i=[x_i,\s_i]$ in $\tY$)
and we will set
$\sfr_i(\tyy)=\sfr(\ty_i)=\s_i$, 
$\sfx_i(\tyy)=\sfx(\ty_i)\in \Xext$.
%with restrictions $\sfx_{\soo,i}:\tens{(\tY_\soo)}N\to X.$
Projections on the $i$-coordinate from $\pdyY N$
to $\tY$ are usually denoted by $\pi^i$ or $\pi^{\ty_i}$,
$\frpd=\tens \frp N:\tens{(\Yext)} N\to\pdyY N$, 
$\frqd=\tens\sfy N:\pdyY N\to 
\tens{(\Yext)}N
$ 
%$\frqd_\soo=\tens{(\sfy_\soo)}N:\tens{(\tY_\soo)}N\to \pdyY N$
are the Cartesian products of the projections and of the lifts.

Recall that the $\rmL^2$-Kantorovich-Wasserstein (extended) distance $\Wc$ 
in $\cM_2(\tY)$ induced by $\sfdc$ is defined by
\begin{equation}
  \label{eq:149}
  \Wc^2(\tnu_1,\tnu_2):=
  \min\Big\{\int\sfdc^2(\ty_1,\ty_2)\,\d\taalpha:
  \taalpha\in \cM(\tyY),\ 
  \pi^{\ty_i}_\sharp\taalpha=\tnu_i\Big\},
\end{equation}
with the convention that $\Wc(\tnu_1,\tnu_2)=+\infty$ if
$\tnu_1(\tY)\neq \tnu_2(\tY)$ and thus the minimum in \eqref{eq:149}
is taken on an empty set.  We want to mimic the above definition,
replacing the usual marginal conditions in \eqref{eq:149} with the
homogeneous marginals $\chm2{i}{}$ which we are going to define.

Let us consider now a plan $\taalpha$ in $\cM(\pdyY N)$ with $
\paalpha=\frqd_\sharp\aalpha\in \cM(\tens YN)$: we say that $\taalpha$
lies in $\cM_2(\pdyY N)$ if
\begin{equation}
  \label{eq:99}
  \int_{\pdyY N} \sum_i \sfs_i^2\,\d\taalpha=
  \int_{\tens YN} \sum_i \s_i^2\,\d\paalpha<\infty.
  % \text{so that}\quad
  %   \int_{\tyY}\sfdc^2(y_1,y_2)\,\d\taalpha<\infty.
\end{equation}
Its ``canonical'' marginals 
in $\cM(\tY)$ are $\taalpha_i=\pi^{\ty_i}_\sharp\taalpha$,
whereas the ``homogeneous'' marginals 
correspond to \eqref{eq:146} \WWW with $p=2$: 
\begin{equation}
  \label{eq:155}
  \frh_i^2(\taalpha):=
  (\sfx_i)_\sharp (\sfr_i^2\taalpha)= %\res X=
  \pi^{x_i}_\sharp(\s_i^2\paalpha)=% \res X=
  \hm 2i{(\paalpha)}
  % \res X
  \in \cM(X),\quad
  \paalpha:=\frqd_\sharp\taalpha.
\end{equation}
We will omit the index $i$ when $N=1$.  Notice that $\sfr_i^2\taalpha
$ does not charge $(\pi^i)^{-1}(\fro)$ % \times \pdyY{N-1}
(similarly, $\s_i^2 \paalpha$ % =\s_i^2 (\paalpha\res (\tens YN)
does not charge $\tens{\Yext}{i-1}\times\{(\xext,0)\}\times
\tens{\Yext}{N-i}$) so that 
% the restriction to $X$ in \eqref{eq:155} does not cause any loss of mass and
\eqref{eq:155} is independent of the choice of the point $\xext$ in
\eqref{eq:127}.  

As for \eqref{eq:150}, the homogeneous marginals on
the cone are invariant with respect to dilations: if
$\vartheta:\pdyY N\to(0,\infty)$ is a Borel map in $\rmL^2(\pdyY
N,\aalpha)$ we set
\begin{equation}
  \label{eq:147bis}
  \big(\prd_{\vartheta}(\tyy)\big)_i:=\ty_i\cdot
  \big(\vartheta(\tyy)\big)^{-1} \quad \text{and} \quad  
  \dil{\vartheta,2}{\aalpha}:={}
  (\prd_\vartheta)_\sharp(\vartheta^2 \,\taalpha),
  % \begin{gathered}
  %   \dil\vartheta{\tnu}:={}
  %   \vartheta \,
  %   \prd(\cdot,\vartheta^{-1})_\sharp\tnu\quad
  %   \text{i.e.}\\
  %   \int
  %   \varphi\,\d(\dil\vartheta{\tnu})={}\int \varphi(y\cdot
  %   \vartheta^{-1}(y))\vartheta(y)\,\d\tnu(y)\quad
  %   \forevery \text{real bounded Borel map}\ \varphi,
  % \end{gathered}
\end{equation}
so that 
\begin{equation}
  \label{eq:150bis}
  \chm 2i(\dil{\vartheta,2}{\taalpha})=\chm 2i(\aalpha)\quad
  \forevery \taalpha\in \cM_2(\pdyY N).
\end{equation}
% In particular, if $\tnu\in \cM_2(\tY)$ with
% $\theta^{-1}:=\mu(\tY)>0$, by choosing $\vartheta\equiv \theta$ 
% we obtain a rescaled probability measure
% with
% the same homogeneous marginal than $\tnu$:
% \begin{equation}
%   \label{eq:153}
%   \dil\theta{\tnu}\in \cP_2(\tY),\quad
%   \frh(\dil\theta{\tnu})=\frh(\tnu);\qquad
%   \theta:=(\tnu(\tY))^{-1}.
% \end{equation}
%
As for the canonical marginals, a uniform control of the homogeneous
marginals is sufficient to get equal tightness, \WWW cf.\
\eqref{eq:25} for the definition. We state this result for an
arbitrary number of components, \WWW and we emphasize that we are
not claiming any closedness of the involved sets.

\begin{lemma}[Homogeneous marginals and tightness]
  \label{le:compactnessH}
  Let $\cK_i$, $i=1,\cdots, N$, be a finite collection of bounded and
  equally tight sets in $\cM(X)$. Then, the set
  \begin{equation}
    \label{eq:380}
    \big\{\aalpha\in \cM_2(\tY^N):
    \chm2{i}\aalpha\in \cK_i \WWW \text{ for }i=1,\ldots, N \big\}
  \end{equation}
  is equally tight in $\cM(\tY^N)$.
\end{lemma}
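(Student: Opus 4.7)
The strategy is, given $\eps>0$, to construct a compact set $\hat K\subset\tY^N$ of the product form $\hat K=C_1\times\cdots\times C_N$, with each $C_i\subset\tY$ compact, such that $\aalpha(\tY^N\setminus\hat K)\le\eps$ for every $\aalpha$ in the set \eqref{eq:380}. Since $\aalpha(\tY^N\setminus\hat K)\le\sum_i\aalpha\bigl((\pi^{\ty_i})^{-1}(\tY\setminus C_i)\bigr)$, it suffices to estimate each term by $\eps/N$. The guiding idea is that Lemma~\ref{le:conce-compactness} allows compact sets of $\tY$ to have arbitrarily large (but compact) horizontal slices as the radius $\sfr$ approaches $0$; so mass of $\aalpha$ concentrated near the vertex $\fro$ is automatically ``tame'', and only the portion of $\aalpha$ away from $\fro$ needs genuine tightness.

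Two simple estimates will drive the construction. Let $C_i:=\sup_{\mu\in\cK_i}\mu(X)<\infty$. First, by Markov's inequality applied to the weight $\sfr_i^2$,
\begin{equation*}
\aalpha(\{\sfr_i>R\})\le R^{-2}\!\!\int_{\sstT^N}\!\!\sfr_i^2\,\d\aalpha
=R^{-2}\,\chm2 i\aalpha(X)\le C_i/R^2,
\end{equation*}
so one can choose $R_i$ so large that $\aalpha(\sfr_i>R_i)\le\eps/(2N)$ for all admissible $\aalpha$. Second, fix a sequence $\delta_{i,k}\downarrow 0$ (e.g.\ $\delta_{i,k}=2^{-k}$); by equal tightness of $\cK_i$ one can choose compact sets $K_i^{(k)}\subset X$ with
\begin{equation*}
\sup_{\mu\in\cK_i}\mu(X\setminus K_i^{(k)})\le 2^{-k}\delta_{i,k}^{2}\,\eps/(2N).
\end{equation*}
Another application of Markov then gives
\begin{equation*}
\aalpha\bigl(\{\sfr_i\ge\delta_{i,k},\ x_i\notin K_i^{(k)}\}\bigr)
\le\delta_{i,k}^{-2}\chm2 i\aalpha(X\setminus K_i^{(k)})\le 2^{-k}\eps/(2N),
\end{equation*}
and summing over $k$ yields a control by $\eps/(2N)$.

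Now define
\begin{equation*}
C_i:=\{\fro\}\cup\bigl\{[x,\sfr]\in\tY:\ \sfr\le R_i,\ x\in K_i^{(k)}\text{ whenever }\sfr\ge\delta_{i,k}\bigr\}.
\end{equation*}
A short check shows $C_i$ is closed in $(\tY,\tau_\frC)$: if $[x_n,\sfr_n]\to[x,\sfr]$ with $[x_n,\sfr_n]\in C_i$ and $\sfr>0$, then for each $k$ with $\delta_{i,k}\le\sfr$ eventually $\sfr_n\ge\delta_{i,k}$, so $x_n\in K_i^{(k)}$ and closedness of $K_i^{(k)}$ gives $x\in K_i^{(k)}$; convergence to $\fro$ is trivially permitted. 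For $\rho>0$ the upper section $C_i(\rho)$ is contained in $K_i^{(k)}$ as soon as $\delta_{i,k}\le\rho$, hence is compact, while $C_i(\rho)=\emptyset$ for $\rho>R_i$. By Lemma~\ref{le:conce-compactness} $C_i$ is compact in $\tY$, and combining the two estimates above,
\begin{equation*}
\aalpha\bigl((\pi^{\ty_i})^{-1}(\tY\setminus C_i)\bigr)
\le\aalpha(\sfr_i>R_i)+\sum_{k}\aalpha(\sfr_i\ge\delta_{i,k},\ x_i\notin K_i^{(k)})\le\eps/N.
\end{equation*}
Summing over $i$ yields $\aalpha(\tY^N\setminus\hat K)\le\eps$, which is the desired equal tightness.

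The only genuinely delicate point is the dual role played by the $\delta_{i,k}$: they must go to $0$ (to capture mass concentrating near the vertex, where horizontal tightness may deteriorate), yet the required tightness bound on $K_i^{(k)}$ involves a factor $\delta_{i,k}^{2}$, which becomes small. The choice of a geometric weight $2^{-k}$ makes everything summable, and the fact that equal tightness of $\cK_i$ permits arbitrarily small complementary mass makes the construction go through.
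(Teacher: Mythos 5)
Your argument follows essentially the same route as the paper's: reduce to per-component compact sets $C_i\subset\tY$, use Markov-type bounds coming from $\chm2{i}\aalpha\in\cK_i$ to cut off both large radii and horizontal escape of mass, and invoke Lemma~\ref{le:conce-compactness} (vertex plus controlled upper sections) to conclude compactness. Building the product compact $\hat K=C_1\times\cdots\times C_N$ directly is just a way of performing the reduction to $N=1$ that the paper delegates to a cited lemma; the measure estimates you write down coincide in spirit with the paper's.

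There is, however, a small but real flaw in the closedness check for $C_i$. You claim that for a convergent sequence $[x_n,\sfr_n]\to[x,\sfr]$ with $\sfr>0$, ``for each $k$ with $\delta_{i,k}\le\sfr$ eventually $\sfr_n\ge\delta_{i,k}$''. This fails precisely when $\sfr=\delta_{i,k}$: a sequence $\sfr_n\uparrow\delta_{i,k}$ has $\sfr_n<\delta_{i,k}$ for all $n$, so the constraint $x_n\in K_i^{(k)}$ was never imposed and you cannot conclude $x\in K_i^{(k)}$. Nesting the $K_i^{(k)}$ does not repair this, because the constraint in your definition of $C_i$ becomes \emph{more} restrictive (smaller set) as $\sfr$ crosses $\delta_{i,k}$ from below. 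The remedy is trivial: replace the non-strict inequality by a strict one in the definition, requiring $x\in K_i^{(k)}$ whenever $\sfr>\delta_{i,k}$. Then for every $k$ with $\delta_{i,k}<\sfr$ one does eventually have $\sfr_n>\delta_{i,k}$, closedness holds, the upper sections are $C_i(\rho)=\bigcap_{k:\,\delta_{i,k}<\rho}K_i^{(k)}$ (compact), and the Markov estimate only improves, since $\{\sfr_i>\delta_{i,k},\,x_i\notin K_i^{(k)}\}\subset\{\sfr_i\ge\delta_{i,k},\,x_i\notin K_i^{(k)}\}$. With this one-line change your proof is complete. For comparison, the paper's construction avoids the issue altogether by writing $\frK_m$ as a countable union of explicit compact slabs $\frp\big(K_{n+m}\times[2^{-n},2^{-n+1}]\big)$ plus $\fro$: away from the vertex the union is locally finite, which gives closedness for free.
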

\begin{proof}
  By applying \cite[Lem.\,5.2.2]{Ambrosio-Gigli-Savare08}, it is
  sufficient to consider the case $N=1$: given a bounded and equally
  tight set $\cK\subset \cM(X)$ we prove that $\cH:=\big\{\aalpha\in
  \cM_2(\tY): \chm2{}\aalpha\in \cK\big\}$ is equally tight.  For
  $A\subset X$, $R\subset (0,\infty)$ we will use the short notation
  $A\timesc R$ for $\frp(A\times R)\subset \tY$. If $A$ and $R$ are
  compact, then $A\timesc R$ is compact in $\tY$.
%
% We can thus suppose that $\mu(X)>0$ and consider
%   a measure $\tnu\in \rmH_\le(\mu)$, lifted
%   to $\pnu=\frq_\sharp\tnu$.

  Let $M:=\sup_{\mu\in \cK}\mu(X)<\infty$; since $\cK$ is tight, we
  can find an increasing sequence of compact sets $K_n\subset X$ such
  that $\mu(X\setminus K_n)\le 8^{-n}$ for every $\mu\in \cK$.  For an
  integer $m\in \N$ we then consider the compact sets $\frK_m\subset
  \tY$ defined by
\begin{equation}
  \label{eq:105}
  %\ttilde\cK_m= \frp(\pilde\frK_m),\quad
  \frK_m=\{\fro\}\cup
  K_m\timesc [2^{-m},2^m]\cup \Big(\bigcup_{n=1}^\infty
  K_{n+m}\timesc [2^{-n},2^{-n+1}]\Big).
\end{equation}
\WWW Setting $K_\infty=\bigcup_{n=1}^\infty K_n$, we have  
$\mu(X\setminus K_\infty)=0$ and 
\begin{displaymath}
  \tY\setminus \frK_m\subset K_m\timesc (2^m,\infty)\cup
  \Big(\bigcup_{n=1}^\infty (K_{n+m}\setminus K_{n+m-1})
  \timesc (2^{-n+1},\infty)\Big)\cup
  (X\setminus K_\infty)\timesc (0,\infty).    
\end{displaymath}
Since for every $\aalpha\in \cH$ with $\chm2{}\aalpha=\mu$ and every
$A\in \BorelSets X$ we have 
\begin{displaymath}
  \aalpha(A\timesc (s,\infty))\le s^{-2}\mu(A)\le s^{-2}M \ \text{ and
  } \  
  \aalpha\big((X\setminus K_\infty)\timesc (0,\infty)\big)=0,    
\end{displaymath}
we conclude 
  \begin{align*}
    \aalpha(\tY\setminus \frK_m)&\le 
    M\,4^{-m} +\sum_{n=1}^\infty 
                                  \aalpha\big((X \setminus K_{n+m-1})\timesc
    (2^{-n+1},\infty)\big)
    \le \\&
    M\,4^{-m}+\sum_{n=1}^\infty 4^{n-1}8^{1-n-m}\le 
    4^{-m}\Big(M+\sum_{n=1}^\infty 4^{-n}\Big) \le 
            4^{-m}\big(1+M\big)\big),
  \end{align*}
  for every $\aalpha\in \cH$.  Since all $\frK_m$ are compact, we
  obtain the \WWW desired equal tightness.
\end{proof}

\subsection{The Hellinger-Kantorovich problem }
\label{subsec:HK}
In this section we will always consider $N=2$, keeping the shorter
notation $\yY=\tens Y2$ and $\tyY=\pdyY 2$.  As for \eqref{eq:161},
for every $\mu_1,\mu_2\in \cM_2(X)$ we define the sets
\begin{equation}
  \begin{aligned}
    &\cHMle 2{\mu_1}{\mu_2}:=\Big\{
    \taalpha\in \cM_2(\yY)\ : \ \chm2i\taalpha\le \mu_i\Big\} \text{
      and } \\ 
    &\cHM 2{\mu_1}{\mu_2}:=\Big\{ \taalpha\in \cM_2(\tyY) \ : \
    \chm2i\taalpha=\mu_i\Big\}.
  \end{aligned}
\label{eq:161bis}
\end{equation}
They are the images of $\HMle2{\mu_1}{\mu_2}$ and $\HM2{\mu_1}{\mu_2}$ 
through the projections $\frpd_\sharp$; in particular 
they always contain plans $\frpd_\sharp \aalpha$, where 
$\aalpha$ is given by \eqref{eq:103}. 
The condition $\taalpha\in \cHMle 2{\mu_1}{\mu_2}$ 
is equivalent to ask that 
\begin{equation}
  \label{eq:120}
  \int
  \sfr_i^2\varphi(\sfx_i)\,\d\taalpha\le \int\varphi\,\d\mu_i\quad
  \forevery \text{nonnegative }\varphi\in \rmB_b(X).
\end{equation}
We can thus define the following minimum problem:

\begin{problem}[The Hellinger-Kantorovich problem]
  \label{pr:3}
  Given $\mu_1,\mu_2\in \cM(X)$ find 
  an optimal plan $\taalpha_{\rm opt}\in 
  \HM2{\mu_1}{\mu_2}\subset \cM_2(\tyY)$ 
  solving the minimum problem 
  %\COMM{Alex: I used this as a definition     of $\HK$}
  \begin{equation}
    \label{eq:104}
    \begin{aligned}
      \WWW \HK(\mu_1,\mu_2)^2:= &\min\Big\{
      \int\sfdc^2(\ty_1,\ty_2)\,\d\taalpha: \taalpha\in
      \cM_2(\tyY),\ 
      \chm2i\taalpha=\mu_i\Big\}.
      % \\=&
      % \min\Big\{\int_{\tY\times \ttilde
      %   Y}\sfdc^2(y_1,y_2)\,\d\taalpha: \taalpha\in
      % \ttilde\rmH(\mu_1,\mu_2)\Big\}.
    \end{aligned}
  \end{equation}
  We denote by $\OptHK(\mu_1,\mu_2)\subset \cM(\tyY)$ the collection of 
  all the optimal plans $\aalpha$ realizing the minimum 
  in \eqref{eq:104} and by $\HK^2(\mu_1,\mu_2)$ the value of the minimum in
  \eqref{eq:104} (whose existence is
  guaranteed by the next Theorem \ref{thm:existenceHK}).
\end{problem}
\begin{remark}[Lifting of plans in $\pY$]
  \label{rem:lifting}
  \upshape
  Since any plan $\taalpha\in \cM(\tyY)$ 
  can be lifted to a plan $\paalpha=\frqd_\sharp\taalpha
  \in \cP(\pY\times \pY)$ 
  such that $\frpd_\sharp\paalpha=\taalpha$ 
  the previous problem \ref{pr:3} is also equivalent to
  find 
  \begin{equation}
    \label{eq:162}
    \min\Big\{
      \int\sfdc^2(\py_1,\py_2)\,\d\paalpha: \paalpha\in
      \cM(\pY\times\pY),\quad
      \hm 2i\paalpha =\mu_i\Big\}.
  \end{equation}
  The advantage to work in the quotient space $\tY$ 
  is to gain compactness, as the next Theorem \ref{thm:existenceHK}
  will show.
  % It is just sufficient to take $\paalpha:=(\frp^{-1})_\sharp
  % \taalpha$
  % where $\frp^{-1}$ is an inverse map of $\frp$,
  % obtained by choosing $\frp^{-1}(\fro)=x_o$ in an
  % arbitrary way. $\frp^{-1}$ is not continuous but it is still a Borel map.  
\end{remark}
%
%\COMM{Alex removed the ``Remark structure'' since remarks should be
%  for supplementary stuff, that is not really needed} 
%\begin{remark}[Rescaling invariance]
%  \label{rem:rescaling2}  \upshape
\WWW An importance feature of the cone distance and the homogeneous is
an invariance under \textbf{rescaling}, which can be done by the
dilations from \eqref{eq:147bis}. 
Let us set  
  \begin{equation}
    \label{eq:378}
    \cball R:=\big\{[x,\s ]\in \tY:\s\le R\big\} \ \text{ and } \ 
    \tyY[R]:= 
    %_{r,2}:=\big\{([x_1,\s_1];[x_2,\s_2])\in \tyY: \s_1^2+\s_2^2\le
    %r^2\big\}
    %\subset 
    \cball R\times \cball R.
  \end{equation}
  It is not restrictive to sol\-ve the previous problem \ref{pr:3}
  by also assuming that $\taalpha$ is a probability plan in $\cP(\tyY)$
  concentrated on $\tyY[R]$ %$\setminus \{(\fro,\fro)\}$, 
  with $R^2=\sum_i\mu_i(X)$,
  i.e.~%$\taalpha\big((\tyY)\setminus\tyY_1)\big)=0$,
  \begin{equation}
    \label{eq:379}
    \HK^2(\mu_1,\mu_2)=\min
    _{\saalpha\in C} 
    %\Big\{
    \int
      \sfdc^2\,\d\aalpha,\qquad
      C:=\Big\{
      \aalpha\in \cP(\tyY):\chm2i\aalpha= \mu_i,\
    \aalpha\big(\tyY\setminus \tyY[R]\big)
    %=
    %\aalpha(\{(\fro,\fro)\})
    =0
    \Big\}.
  \end{equation}
  %(or $\taalpha\in \cP(\tyY)$).
  In fact the functional $\sfdc^2$ and 
  the constraints have a natural scaling invariance
  induced by the dilation maps
  defined by \eqref{eq:147bis}. Since
\begin{equation}
  \label{eq:121}
  \int\sfdc^2\,\d(\dil{\vartheta,2}\taalpha)=
  \int \vartheta^2\sfdc^2([x_1,\s_1/\vartheta];[x_2,\s_2/\vartheta])\,\d\taalpha=
  \int\sfdc^2\,\d\taalpha,
\end{equation}
restricting first 
$\aalpha$ to $\tyY\setminus \{(\fro,\fro)\} $ and then
choosing $\vartheta$ as in \eqref{eq:153a} with $p=2$ \nc we obtain a probability
plan $\dil{\vartheta,2}{\aalpha \res \tyY\setminus \{(\fro,\fro)\} } $ in $\HM2{\mu_1}{\mu_2}$ concentrated
in $\tyY[R]\setminus \{(\fro,\fro)\} $ with the same cost
$\int\sfdc^2\,\d\taalpha$.
%
% \end{remark}
%
In order to show that Problem \ref{pr:3} has a solution we can then
use the formulation \eqref{eq:379} and prove that the set $C$ where
the minimum will be found is narrowly compact in $\cP(\tyY)$.  Notice
that the analogous property would not be true in $\cP(\pY\times \pY)$
(unless $X$ is compact) since measures concentrated in $(X\times
\{0\})\times (X\times\{0\})$ would be out of control.
Also the constraints $\chm2i\aalpha=\mu_i$ would not be preserved by narrow
convergence, if one allows for arbitrary plans in $\cP(\tyY)$ as in
\eqref{eq:104}.

\begin{theorem}[Existence of optimal plans for the HK problem]
  \label{thm:existenceHK}
  For every $\mu_1,\mu_2\in \cM(X)$ 
  the Hellinger-Kantorovich problem
  \ref{pr:3}
  always admits a solution $\aalpha\in \cP(\tyY)$ 
  concentrated on $\tyY[R]\setminus \{(\fro,\fro)\}$
  with $R^2=\sum_i\mu_i(X)$.
\end{theorem}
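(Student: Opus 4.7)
My strategy is the direct method applied to the reformulation \eqref{eq:379}, exploiting the rescaling invariance already noted in the excerpt. Let $C=\{\aalpha\in \cP(\tyY):\chm2i\aalpha=\mu_i,\ \aalpha(\tyY\setminus \tyY[R])=0\}$. I will establish that $C$ is nonempty and narrowly compact in $\cP(\tyY)$, and that $\aalpha\mapsto \int\sfdc^2\,\d\aalpha$ is narrowly lower semicontinuous on $C$. Non-emptiness is easy: the product plans in \eqref{eq:103} lifted to $\tyY$ produce elements of $\HM2{\mu_1}{\mu_2}$, and the dilation prescribed by \eqref{eq:147bis}--\eqref{eq:153a} with $p=2$ lands in $C$ thanks to $\int |\yy|_2^2\,\d\aalpha=\sum_i\mu_i(X)=R^2$; the degenerate case $\mu_1(X)\mu_2(X)=0$ is trivial.

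For narrow compactness of $C$, equal tightness follows at once from Lemma \ref{le:compactnessH} applied with the singleton (hence trivially bounded and equally tight) sets $\cK_i=\{\mu_i\}$, while the mass is uniformly $1$; I then invoke Prokhorov's Theorem~\ref{thm:Prokhorov}. For narrow closedness, given $\aalpha_n\to \aalpha$ narrowly with $\aalpha_n\in C$, the closedness of $\tyY[R]$ in $(\tyY,\tau_\frC)$ (since $\sfr$ is $\tau_\frC$-continuous) and Portmanteau give $\aalpha(\tyY[R])\ge \limsup \aalpha_n(\tyY[R])=1$; hence $\aalpha$ is supported in $\tyY[R]$. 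For preservation of the homogeneous marginals, observe that for $\varphi\in \LSC_b(X)$ the test function $f_\varphi(\tyy):=\sfr_1(\ty_1)^2\varphi(\sfx_1(\ty_1))$ is $\tau_\frC$-lower semicontinuous on $\tyY$ (continuity at the vertex follows from the estimate $|f_\varphi(\tyy)|\le \|\varphi\|_\infty \sfr_1^2$ together with $\sfdc(\ty,\fro)=\sfr(\ty)$) and bounded by $R^2\|\varphi\|_\infty$ on $\tyY[R]$. Since the $\aalpha_n$ and $\aalpha$ are all concentrated on $\tyY[R]$, narrow convergence yields
\begin{equation*}
\int_X \varphi\,\d(\chm21\aalpha)=\int f_\varphi\,\d\aalpha\le \liminf_{n\to\infty}\int f_\varphi\,\d\aalpha_n=\int_X\varphi\,\d\mu_1,
\end{equation*}
and the same inequality for $-\varphi$ combined with the duality characterization of Radon measures (as in \eqref{eq:23}, using the extension argument of Remark \ref{rem:LSC=C.1} when $(X,\tau)$ is only assumed Hausdorff) forces $\chm21\aalpha=\mu_1$; the argument for $\chm22\aalpha=\mu_2$ is symmetric.

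Narrow lower semicontinuity of the functional is immediate from the $\tau_\frC$-lower semicontinuity of $\sfdc^2$ (already noted in Section \ref{subsec:cone}) and its boundedness by $4R^2$ on $\tyY[R]$: integration of an LSC bounded function against narrowly convergent probabilities on the closed set $\tyY[R]$ is LSC. The direct method now produces an optimal $\aalpha\in C$. Finally, if $\aalpha(\{(\fro,\fro)\})>0$, I remove this atom (which contributes zero both to $\sfdc^2$ and to the homogeneous marginals) and then rescale the remainder by a constant $\vartheta$ via \eqref{eq:147bis}; by \eqref{eq:150bis} and \eqref{eq:121} the homogeneous marginals and the cost are preserved, yielding an optimal plan concentrated on $\tyY[R]\setminus\{(\fro,\fro)\}$.

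\emph{Main obstacle.} The delicate point is the narrow continuity of the homogeneous marginal map $\chm 2i$: the natural test ``function'' $\s_i^2\varphi(x_i)$ is unbounded on the full cone $\tY$, so narrow convergence on $\tyY$ does not a priori preserve $\chm 2i\aalpha$. The restriction to $\tyY[R]$, which is justified by the dilation invariance \eqref{eq:121} and corresponds exactly to the canonical normalization $R^2=\sum_i\mu_i(X)$, is what provides the uniform bound $\sfr_i\le R$ that makes the test functions bounded and allows the closedness argument to go through. Handling the vertex $\fro$, where $\sfx$ is only defined by convention and where both sides of the marginal identity vanish, requires the LSC (rather than continuous) test framework of Theorem~\ref{thm:duality} in the general Hausdorff setting.
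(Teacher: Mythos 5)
Your proof is correct and follows essentially the same strategy as the paper: reduce to the normalized problem \eqref{eq:379} on probability plans concentrated in $\tyY[R]$, obtain narrow compactness of the admissible set from Lemma~\ref{le:compactnessH} and Prokhorov's Theorem, use lower semicontinuity of $\sfdc^2$ for the direct method, and finally use a dilation to remove any mass at the vertex pair. Your version simply makes explicit two steps the paper leaves compressed, namely the narrow closedness of the constraint set (via bounded LSC test functions $\sfr_i^2\varphi(\sfx_i)$, where one should more carefully pair LSC tests for one inequality with USC tests such as compact indicators for the reverse, rather than applying the LSC argument to $-\varphi$ directly) and the final rescaling after deleting the $(\fro,\fro)$ atom, which the paper carries out in the discussion preceding the theorem via the normalization $\vartheta$ of \eqref{eq:153a}.
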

\begin{proof}
  By \WWW the rescaling \eqref{eq:121} it is not restrictive 
  to look for minimizers $\aalpha$ of \eqref{eq:379}.
  %we denote by $C$ the set defined in the right-hand side of \eqref{eq:379}.
  Since $\tyY[R]$ is closed in $\tyY$ and the maps $\sfs_i^2$ are
  continuous and bounded in $\tyY[R]$, $C$ is clearly narrowly closed.
  By Lemma \ref{le:compactnessH}, $C$ is also equally tight in
  $\cP(\tY)$, thus narrowly compact by Theorem \ref{thm:Prokhorov}.
  Since the $\sfdc^2$ is lower semicontinuous in $\tyY$, the existence
  of a minimizer of \eqref{eq:379} then follows by the direct method
  of the calculus of variations.
\end{proof}

We can also prove an interesting characterization of $\HK$
in terms of the
$L^2$-Kantorovich-Wasserstein distance 
on $\cP_2(\tY)$ given by \eqref{eq:149}.
An even deeper connection will be discussed in the next section,
see Corollary \ref{cor:HK-W2}.

\begin{corollary}[$\HK$ and the Wasserstein distance on $\cP_2(\tY)$]
  \label{cor:HK-W1}
  For every $\mu_1,\mu_2\in \cM(X)$ we have
  \begin{equation}
    \label{eq:394}
    \HK(\mu_1,\mu_2)=\min\Big\{\Wc(\alpha_1,\alpha_2):
    \alpha_i\in \cP_2(\tY),\quad 
    \chm2{}\alpha_i=\mu_i\Big\},
  \end{equation}
  and there exist optimal measures $\bar\alpha_i$ for \eqref{eq:394} 
  concentrated on $\cball R$ %$=\{y=[x,\s ]\in \tY:s\le r\}$
  with $R^2=\sum_i\mu_i(X)$.
  In particular the map $\chm2{}:\cP_2(\tY)\to \cM(X)$ is 
  a contraction, i.e.
  \begin{equation}
    \label{eq:395}
    \HK(\chm2{}\alpha_1,\chm2{}\alpha_2)\le \Wc(\alpha_1,\alpha_2)
    \quad\forevery \alpha_i\in \cP_2(\tY).
  \end{equation}
\end{corollary}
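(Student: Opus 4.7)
The proof is essentially a change of viewpoint: rewriting the $\HK$ problem, which is stated in terms of transport plans with prescribed \emph{homogeneous} marginals $\chm 2i \taalpha=\mu_i$, as a two-step problem in which one first chooses lifts $\alpha_1,\alpha_2\in \cP_2(\tY)$ with $\chm2{}\alpha_i=\mu_i$ and then minimizes the standard Kantorovich cost between them. The key algebraic fact, immediate from the definitions \eqref{eq:155} of $\chm 2i$ and \eqref{eq:146}, is that
\begin{equation*}
  \chm 2i\taalpha=\chm2{}\bigl(\pi^{\ty_i}_\sharp\taalpha\bigr)
  \quad\text{for every }\taalpha\in \cM_2(\tyY),\ i=1,2,
\end{equation*}
since $\sfx_i=\sfx\circ\pi^{\ty_i}$ and $\sfr_i=\sfr\circ\pi^{\ty_i}$, so pushing forward $\sfr_i^2\taalpha$ under $\sfx_i$ is the same as pushing forward $\sfr^2\pi^{\ty_i}_\sharp\taalpha$ under $\sfx$. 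All other steps follow mechanically.

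For the inequality $\HK(\mu_1,\mu_2)\ge \inf \Wc(\alpha_1,\alpha_2)$, take an optimal plan $\bar\taalpha\in\OptHK(\mu_1,\mu_2)$ provided by Theorem~\ref{thm:existenceHK}, which may be assumed to be a probability measure concentrated on $\tyY[R]\setminus\{(\fro,\fro)\}$ with $R^2=\sum_i\mu_i(X)$. Set $\bar\alpha_i:=\pi^{\ty_i}_\sharp\bar\taalpha\in\cP(\tY)$; these are concentrated on $\cball R$ and lie in $\cP_2(\tY)$ since $\sfr\le R$ on $\cball R$. By the identity above, $\chm2{}\bar\alpha_i=\chm 2i\bar\taalpha=\mu_i$, and since $\bar\taalpha$ is a coupling of $\bar\alpha_1$ and $\bar\alpha_2$,
\begin{equation*}
  \Wc^2(\bar\alpha_1,\bar\alpha_2)\le \int\sfdc^2(\ty_1,\ty_2)\,\d\bar\taalpha
  =\HK^2(\mu_1,\mu_2).
\end{equation*}

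Conversely, for any $\alpha_i\in\cP_2(\tY)$ with $\chm2{}\alpha_i=\mu_i$, pick an optimal Kantorovich plan $\taalpha\in\cP(\tyY)$ realizing $\Wc(\alpha_1,\alpha_2)$ (its existence in the Polish framework for $\sfdc$ on $\tY$ being standard; the equal-mass condition holds since both $\alpha_i$ are probabilities). Then $\taalpha\in\cM_2(\tyY)$ with $\chm 2i\taalpha=\chm2{}\alpha_i=\mu_i$, so $\taalpha$ is admissible for Problem~\ref{pr:3}, giving
\begin{equation*}
  \HK^2(\mu_1,\mu_2)\le \int\sfdc^2\,\d\taalpha=\Wc^2(\alpha_1,\alpha_2).
\end{equation*}
Combining the two inequalities yields \eqref{eq:394}, attainment by the $\bar\alpha_i$ above, and concentration on $\cball R$.

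The contraction statement \eqref{eq:395} is then immediate: given $\alpha_1,\alpha_2\in\cP_2(\tY)$ set $\mu_i:=\chm2{}\alpha_i$; then $(\alpha_1,\alpha_2)$ is an admissible competitor for the minimum in \eqref{eq:394}, so $\HK(\mu_1,\mu_2)\le \Wc(\alpha_1,\alpha_2)$. No step is a genuine obstacle; the only thing to be careful about is to justify that the Kantorovich problem on $(\tY,\sfdc)$ admits an optimal plan in the present topological setting (or, alternatively, to argue via infima and pass to the limit using the tightness provided by Lemma~\ref{le:compactnessH}), so that both minima in \eqref{eq:394} are actually attained.
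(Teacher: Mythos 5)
Your proof takes essentially the same route as the paper: in one direction, push forward an optimal $\HK$-plan under the canonical projections to obtain admissible lifts $\bar\alpha_i=\pi^{\ty_i}_\sharp\bar\taalpha$ with $\Wc(\bar\alpha_1,\bar\alpha_2)\le\HK(\mu_1,\mu_2)$; in the other, use that any Kantorovich coupling of admissible lifts $\alpha_i$ is itself admissible for Problem~\ref{pr:3}. The only differences are cosmetic: you spell out explicitly that $\chm2i\taalpha=\chm2{}(\pi^{\ty_i}_\sharp\taalpha)$ and flag the attainment of the Kantorovich infimum, both of which the paper takes for granted.
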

\begin{proof}
  If $\alpha_i\in \cP_2(\tY)$ with $\chm2{}\alpha_i=\mu_i$ then
  any Kantorovich-Wasserstein optimal plan $\aalpha\in \cP(\tY\times \tY)$ 
  for \eqref{eq:149} with marginals $\alpha_i$ clearly belongs
  to $\cHM2{\mu_1}{\mu_2}$ and yields the bound
  $ \HK(\mu_1,\mu_2)\le \Wc(\alpha_1,\alpha_2)$.
  On the other hand, if $\aalpha\in \OptHK{\mu_1}{\mu_2}$ is 
  an optimal solution for \eqref{eq:104} and 
  $\alpha_i:=\pi^i\aalpha\in \cP_2(\tY)$ are its marginals,
  we have $ \HK(\mu_1,\mu_2)\ge \Wc(\alpha_1,\alpha_2)$,
  so that $\alpha_i$ realize the minimum for \eqref{eq:394}.
\end{proof}
We conclude this section with two simple properties of the $\HK$
functional. We denote by $\eta_0$ the null measure.

\begin{lemma}[Subadditivity of $\HK^2$]
  The functional $\HK^2$ satisfies
  \begin{equation}
    \label{eq:137}
    \HK^2(\mu,\eta_0)=\mu(X),\qquad
    \HK^2(\mu_1,\mu_2)\le \mu_1(X)+\mu_2(X)
    \quad\forevery \mu,\mu_i\in \cM(X),
  \end{equation}
  and it is subadditive, i.e.
  for every $\mu_i,\mu_i'\in \cM(X)$ we have
  \begin{equation}
    \label{eq:136bis}
    \HK^2(\mu_1+\mu_1',\mu_2+\mu_2')\le 
    \HK^2(\mu_1,\mu_2)+\HK^2(\mu_1',\mu_2').
  \end{equation}
\end{lemma}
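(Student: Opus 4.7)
The plan is to prove the three assertions in turn, using only the definition \eqref{eq:104} of $\HK^2$ as a minimum over $\cHM2{\mu_1}{\mu_2}$ together with the linearity of the homogeneous marginal map $\chm2{i}$ and the fact that $\sfdc^2([x,\s],\fro)=\s^2$.

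First I would compute $\HK^2(\mu,\eta_0)$ exactly. Any admissible $\taalpha\in\cHM2{\mu}{\eta_0}$ must satisfy $\chm2{2}\taalpha=\eta_0$, which by \eqref{eq:120} forces $\sfr_2=0$ $\taalpha$-a.e.; hence $\sfdc^2(\ty_1,\ty_2)=\sfr_1^2$ $\taalpha$-a.e., and the constraint $\chm2{1}\taalpha=\mu$ yields $\int\sfdc^2\,\d\taalpha=\int\sfr_1^2\,\d\taalpha=\mu(X)$. So every admissible plan has cost exactly $\mu(X)$ (and such plans exist, e.g.\ any lift of $\mu\otimes\delta_1$ on the first factor against $\delta_\fro$ on the second, as in \eqref{eq:103}). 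This proves $\HK^2(\mu,\eta_0)=\mu(X)$.

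Next, for the bound $\HK^2(\mu_1,\mu_2)\le\mu_1(X)+\mu_2(X)$ I would exhibit a concrete admissible plan that ``sends everything to the vertex.'' Fix $\bar x\in X$ and define
\[
 \taalpha := \frpd_\sharp\bigl(\mu_1\otimes\delta_1\bigr)\otimes\delta_{(\bar x,0)}
 \;+\;
 \frpd_\sharp\,\delta_{(\bar x,0)}\otimes\bigl(\mu_2\otimes\delta_1\bigr).
\]
Using \eqref{eq:155} one checks $\chm2{1}\taalpha=\mu_1$ and $\chm2{2}\taalpha=\mu_2$, so $\taalpha\in\cHM2{\mu_1}{\mu_2}$, and since the two summands are supported where one coordinate is $\fro$, the cost of each piece is respectively $\mu_1(X)$ and $\mu_2(X)$ by the same computation as above. (This is essentially the first assertion applied twice.)

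Finally, subadditivity follows by concatenation of optimal plans. Theorem~\ref{thm:existenceHK} yields $\bar\taalpha\in\OptHK(\mu_1,\mu_2)$ and $\bar\taalpha{}'\in\OptHK(\mu_1',\mu_2')$. Set $\taalpha:=\bar\taalpha+\bar\taalpha{}'\in\cM_2(\tyY)$. By linearity of the pushforward, of the multiplication by $\sfr_i^2$, and of integration,
\[
 \chm2{i}\taalpha = \chm2{i}\bar\taalpha + \chm2{i}\bar\taalpha{}' = \mu_i+\mu_i',
 \qquad
 \int\sfdc^2\,\d\taalpha = \int\sfdc^2\,\d\bar\taalpha+\int\sfdc^2\,\d\bar\taalpha{}'.
\]
Hence $\taalpha$ is admissible for $\HK^2(\mu_1+\mu_1',\mu_2+\mu_2')$ and
\[
 \HK^2(\mu_1+\mu_1',\mu_2+\mu_2')\le \int\sfdc^2\,\d\taalpha = \HK^2(\mu_1,\mu_2)+\HK^2(\mu_1',\mu_2').
\]
There is no real obstacle here beyond bookkeeping: the whole argument reduces to the linearity of $\chm2{i}$ and of the cost in the plan, and to having a distinguished ``vertex'' point in $\tY$ at which mass can be parked at squared-distance equal to its own square root. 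The only small care needed is to write the vertex-supported plans in a form consistent with the lifting conventions of Section~\ref{subsec:RHM}, but this is immediate from \eqref{eq:155} and the observation that $\sfr_i^2\taalpha$ does not charge the preimage of $\fro$.
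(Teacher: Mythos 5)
Your proof is correct and follows essentially the same route as the paper: the paper dismisses \eqref{eq:137} as obvious and proves \eqref{eq:136bis} exactly as you do, by observing that the sum of two admissible plans is admissible for the summed marginals and that the cost is linear in the plan. Your only minor imprecision is the reference to \eqref{eq:103} for the existence of a plan in $\cHM2{\mu}{\eta_0}$ — that formula requires $a_1,a_2>0$ and degenerates to the null plan when one marginal is $\eta_0$, so it does not directly supply the plan you need; but the explicit construction you write immediately afterwards ($\frpd_\sharp(\mu\otimes\delta_1)\otimes\delta_{(\bar x,0)}$) is valid and suffices.
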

\begin{proof}
  \WWW The relations in \eqref{eq:137} are obvious.
  If $\taalpha\in \cHM2{\mu_1}{\mu_2}$ and 
  $\taalpha'\in \cHM2{\mu_1'}{\mu_2'}$ 
  it is easy to check that 
  $\taalpha+\taalpha'\in \cHM2{\mu_1+\mu_1'}{\mu_2+\mu_2'}$.
  Since the cost functional is linear with respect to the plan,
  we get \eqref{eq:136bis}.
\end{proof}

Subsequently we will use ``$\res$'' for the restriction of
measures. 

\begin{lemma}[A formulation with relaxed constraints]
  \label{le:max-le}
  For every $\mu_1,\mu_2\in \cM(X)$ we have
  \begin{subequations}
  \begin{align}
    \label{eq:135}
    \HK^2(\mu_1,\mu_2)
     &= \min_{\saalpha\in \cHMle2{\mu_1}{\mu_2}}
          \Big\{\int \sfdc^2(\ty_1,\ty_2)\,\d\aalpha+ 
             \sum_i\big(\mu_i-\chm2i\aalpha\big)(X)\Big\} 
    \\ \label{eq:375}
                      &=\mu_1(X)+\mu_2(X)-
                      \max_{\saalpha\in 
                      \cHMle2{\mu_1}{\mu_2}}
                      \Big\{ 2\int {\sfr_1\,\sfr_2} 
                            \cos(\sfdp(\sfx_1,\sfx_2))\,\d\taalpha\Big\}.
  \end{align}
  \end{subequations}
  Moreover,
  %\footnote{Why $\sfx_j$ here instead of $x_j$? 
  %  In principle they are equivalent since there is the
  %  multiplication by $r_1r_2$ that eliminates the ambiguity of $x_j$ 
  %  when $r_j=0$. $\sfx_j$ works always, so I used it,
  %  but we can switch to $x_j$ if you prefer.}
  \begin{enumerate}[(i)]
  \item equations \eqref{eq:135}--\eqref{eq:375} share the same class of optimal
    plans.  
  \item 
    A plan $\aalpha\in \cHMle2{\mu_1}{\mu_2}$ is
    optimal for \eqref{eq:135}--\eqref{eq:375} if and only if the plan
    $\aalpha_\soo:= \aalpha\res(\tY_\soo\times \tY_\soo)$ is optimal
    as well.
  \item If $\aalpha$ is optimal for \eqref{eq:135}--\eqref{eq:375}
    with $\mu_i':=\mu_i-\chm2i\aalpha$, then
    $\tilde\aalpha:=\aalpha+\aalpha'$ \WWW is an optimal plan in 
    $\OptHK(\mu_1,\mu_2)$ for all $\aalpha'\in
    \cHM2{\mu_1'}{\mu_2'}$. 
    %\COMM{Alex: changed the order. Please       check logics}
    %for \eqref{eq:104}.
  \item
    A plan $\aalpha\in \cHM2{\mu_1}{\mu_2}$ belongs to
    $\OptHK(\mu_1,\mu_2)$ % is
    % optimal for \eqref{eq:104}
    if and only if
    $\aalpha_\soo:= \aalpha\res(\tY_\soo\times \tY_\soo)$ is optimal
    for \eqref{eq:135}--\eqref{eq:375}. 
  \end{enumerate}
\end{lemma}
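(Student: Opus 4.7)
My strategy is first to prove the algebraic equivalence of the two relaxed formulations \eqref{eq:135} and \eqref{eq:375}, then to show that both yield the same value as $\HK^2$ via a direct $\le/\ge$ argument, and finally to extract the structural statements (i)--(iv) as by-products of the construction.

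\medskip
\textit{Step 1 (equivalence of \eqref{eq:135} and \eqref{eq:375}).} Using the identity
\begin{equation*}
  \sfdc^2(\ty_1,\ty_2)=\sfr_1^2+\sfr_2^2-2\sfr_1\sfr_2\cos(\sfdp(\sfx_1,\sfx_2))
\end{equation*}
and the defining property $\int \sfr_i^2 \,\d\aalpha = \chm2i\aalpha(X)$, I compute, for any $\aalpha\in\cHMle2{\mu_1}{\mu_2}$,
\begin{equation*}
  \int\sfdc^2\,\d\aalpha+\sum_i\big(\mu_i-\chm2i\aalpha\big)(X)
   =\sum_i\mu_i(X)-2\int\sfr_1\sfr_2\cos(\sfdp(\sfx_1,\sfx_2))\,\d\aalpha.
\end{equation*}
This identity is pointwise in $\aalpha$, so it simultaneously proves the equivalence of the two relaxed formulations and part (i) of the lemma: the two functionals differ by a constant (on the admissible set), hence share identical minimizers.

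\medskip
\textit{Step 2 (relaxed value equals $\HK^2$).} The inequality ``RHS of \eqref{eq:135} $\le\HK^2(\mu_1,\mu_2)$'' is immediate: any $\aalpha\in\cHM2{\mu_1}{\mu_2}$ has vanishing residuals $\mu_i-\chm2i\aalpha=0$, and $\cHM2{\mu_1}{\mu_2}\subset\cHMle2{\mu_1}{\mu_2}$. For the reverse inequality, given $\aalpha\in\cHMle2{\mu_1}{\mu_2}$ with residuals $\mu_i':=\mu_i-\chm2i\aalpha\ge0$, I lift $\mu_i'$ canonically to measures on $\tY$ by
$\alpha_i':=\frp_\sharp(\mu_i'\otimes\delta_{1})$ (so $\chm2{}\alpha_i'=\mu_i'$) and form the ``send-to-vertex'' plans $\aalpha^{(1)}:=\alpha_1'\otimes\delta_\fro$, $\aalpha^{(2)}:=\delta_\fro\otimes\alpha_2'$. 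Then $\tilde\aalpha:=\aalpha+\aalpha^{(1)}+\aalpha^{(2)}$ belongs to $\cHM2{\mu_1}{\mu_2}$, and since $\sfdc^2([x,1],\fro)=1$, I obtain
\begin{equation*}
  \int\sfdc^2\,\d\tilde\aalpha
   =\int\sfdc^2\,\d\aalpha+\mu_1'(X)+\mu_2'(X),
\end{equation*}
which majorizes $\HK^2(\mu_1,\mu_2)$. Taking the infimum over $\aalpha$ closes the equality and shows existence of minimizers (any element of $\OptHK(\mu_1,\mu_2)$, whose nonemptiness was established in Theorem \ref{thm:existenceHK}, minimizes the relaxed functional).

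\medskip
\textit{Step 3 (properties (ii)--(iv)).} For (ii), let $\aalpha\in\cHMle2{\mu_1}{\mu_2}$ and split $\aalpha=\aalpha_\soo+\aalpha_\fro$ where $\aalpha_\fro:=\aalpha\res\big((\tY\times\{\fro\})\cup(\{\fro\}\times\tY)\big)$. On this ``vertex part'' one has $\sfdc^2=\sfr_1^2$ or $\sfr_2^2$, and exactly the same quantity contributes to $\chm2i\aalpha_\fro(X)$; therefore the cost $\int\sfdc^2\,\d\aalpha_\fro$ equals the reduction of residuals caused by including $\aalpha_\fro$, and the relaxed functional is unchanged whether one uses $\aalpha$ or $\aalpha_\soo$. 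This shows (ii). For (iii), optimality of $\aalpha$ in the relaxed problem implies, via the construction of Step 2 applied in reverse, that $\HK^2(\mu_1',\mu_2')=\mu_1'(X)+\mu_2'(X)$: indeed if a cheaper $\aalpha_0'\in\cHM2{\mu_1'}{\mu_2'}$ existed, then $\aalpha+\aalpha_0'$ would give a strictly lower relaxed cost, contradicting optimality. With this identity in hand, any admissible $\aalpha'\in\cHM2{\mu_1'}{\mu_2'}$ produces $\tilde\aalpha=\aalpha+\aalpha'\in\cHM2{\mu_1}{\mu_2}$ with total cost sandwiched between $\HK^2(\mu_1,\mu_2)$ (admissibility) and $\int\sfdc^2\,\d\aalpha+\mu_1'(X)+\mu_2'(X)=\HK^2(\mu_1,\mu_2)$ (optimality in the relaxed problem plus the identity just proved), so both inequalities are equalities. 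Finally, (iv) is the combination of (ii) and (iii): given $\aalpha\in\cHM2{\mu_1}{\mu_2}$, write $\aalpha=\aalpha_\soo+\aalpha_\fro$, observe that $\aalpha_\fro\in\cHM2{\chm21\aalpha_\fro}{\chm22\aalpha_\fro}$ is supported on the vertex axes (hence trivially optimal in the corresponding HK problem by the explicit form of Step~2), and apply the two-way equivalence.

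\medskip
\textit{Main obstacle.} The technically delicate point is establishing the identity $\HK^2(\mu_1',\mu_2')=\mu_1'(X)+\mu_2'(X)$ at the optimum of the relaxed problem, which underlies (iii) and (iv); it relies on the freedom to recombine plans additively without losing admissibility, a feature specific to the homogeneous-marginal constraint (compare with classical Kantorovich, where such splitting is not available).
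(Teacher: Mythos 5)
Your overall strategy coincides with the paper's. Step~1 is exactly the algebraic identity the paper records; the direction ``$\ge\HK^2$'' in Step~2 is the same inclusion argument. For ``$\le\HK^2$'' you construct explicit vertex-bound plans $\aalpha^{(1)},\aalpha^{(2)}$, whereas the paper invokes the subadditivity of $\HK^2$ (\eqref{eq:136bis}) and the identity $\HK^2(\mu,\eta_0)=\mu(X)$ from \eqref{eq:137}. The two routes are equivalent (the subadditivity proof is itself a plan-addition argument), and yours is slightly more self-contained. Your argument for (ii), working on the \eqref{eq:135}-form, is correct but more roundabout than the paper's one-line observation that the integrand $2\sfr_1\sfr_2\cos(\sfdp)$ in \eqref{eq:375} vanishes off $\tY_\soo\times\tY_\soo$.

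There is, however, a genuine gap in your treatment of (iii). Your ``sandwich'' requires the upper bound $\int\sfdc^2\,\d\aalpha'\le\mu_1'(X)+\mu_2'(X)$, i.e.\ $\int\sfr_1\sfr_2\cos(\sfdp(\sfx_1,\sfx_2))\,\d\aalpha'\ge0$. This fails whenever $\aalpha'$ moves mass over distances $\sfd>\pi/2$, where $\cos(\sfdp)<0$; for instance, with $\mu_1=\delta_a$, $\mu_2=\delta_b$, $\sfd(a,b)=\tfrac{3\pi}{4}$, the null plan is optimal for the relaxed problem, $\mu_i'=\mu_i$, yet $\aalpha'=\delta_{([a,1],[b,1])}\in\cHM2{\mu_1'}{\mu_2'}$ has $\int\sfdc^2\,\d\aalpha'=2+\sqrt2>2=\HK^2(\mu_1,\mu_2)$. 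The identity you do prove correctly, $\HK^2(\mu_1',\mu_2')=\mu_1'(X)+\mu_2'(X)$, only delivers the \emph{lower} bound $\int\sfdc^2\,\d\aalpha'\ge\mu_1'(X)+\mu_2'(X)$. To conclude optimality of $\aalpha+\aalpha'$ one actually needs $\aalpha'\in\OptHK(\mu_1',\mu_2')$, not merely $\aalpha'\in\cHM2{\mu_1'}{\mu_2'}$. (The paper's own one-line sketch, ``the same calculations also prove (iii),'' produces only the inequality $\int\sfdc^2\,\d\tilde\aalpha\ge\HK^2(\mu_1,\mu_2)$; the as-stated universal quantifier over $\aalpha'$ is stronger than that chain supports.) The plan that matters for (iv), namely $\aalpha'=\aalpha_\fro=\aalpha\res\big(\tyY\setminus(\tY_\soo\times\tY_\soo)\big)$, is concentrated on $\{\sfr_1\sfr_2=0\}$, so $\int\sfr_1\sfr_2\cos(\sfdp)\,\d\aalpha_\fro=0$ and $\int\sfdc^2\,\d\aalpha_\fro=\sum_i\mu_i'(X)=\HK^2(\mu_1',\mu_2')$; for that choice the sandwich closes and the argument is sound. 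You should therefore qualify (iii) by $\aalpha'\in\OptHK(\mu_1',\mu_2')$, and likewise replace the phrase ``trivially optimal'' in your (iv) with an explicit verification that $\aalpha_\fro$ realizes $\HK^2(\mu_1',\mu_2')$ via the identity $\HK^2(\mu_1',\mu_2')=\sum_i\mu_i'(X)$ established at the optimum.
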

\begin{proof}
  The formulas \eqref{eq:135} and \eqref{eq:375} are just two
  different ways to write the same functional, since for every
  $\aalpha\in \cHMle2{\mu_1}{\mu_2}$ we have
    \begin{equation}
    \label{eq:163}
    \int\sfdc^2\,\d\taalpha+\sum_i\big(\mu_i-\chm2i\aalpha\big)(X)=
    \sum_i \mu_i(X)-
     2\int {\sfr_1\,\sfr_2}\cos(\sfdp(\sfx_1,\sfx_2))\,\d\taalpha.
  \end{equation}
  Thus, to prove (i) it is sufficient to show \eqref{eq:135}.
  The inequality $\ge$ is obvious, since $\cHMle2{\mu_1}{\mu_2}
  \supset \cHM2{\mu_1}{\mu_2}$ and
  for every $\aalpha\in \cHM2{\mu_1}{\mu_2}$
  the term $\sum_i\big(\mu_i-\chm2i\aalpha\big)(X)$ vanishes.

  On the other hand, whenever $\taalpha\in \cHMle2{\mu_1}{\mu_2}$,
  setting
  $\mu_i'':=\chm2i\aalpha\in \cM(X)$, $\mu_i':=\mu_i-\mu_i''$ and observing that
  $\taalpha\in \cHM2{\mu_1''}{\mu_2''}$ we get
  \begin{align*}
    \int&
          \sfdc^2(\ty_1,\ty_2)\,\d\aalpha+\sum_i\big(\mu_i-\chm2i\aalpha\big)(X)
          \ge
        \HK^2(\mu_1'',\mu_2'')+
          \mu_1'(X)+\mu_2'(X)
        \\& \topref{eq:137}\ge 
    \HK^2(\mu_1',\mu_2')+\HK^2(\mu_1'',\mu_2'')
    \topref{eq:136bis}\ge \HK^2(\mu_1,\mu_2).
  \end{align*}
  The same calculations also prove point (iii).

  In order to check (ii) it is sufficient to observe that the
  integrand in \eqref{eq:375} vanishes on $\tyY\setminus 
  (\tY_\soo\times \tY_\soo)$.

  Finally, if $\aalpha\in \OptHK(\mu_1,\mu_2)$ is optimal for
  \eqref{eq:104},
  then by the consideration above it is optimal for \eqref{eq:375} and
  therefore (ii) shows that $\aalpha_\soo$ is optimal as well. 
  The converse implication follows by (iii).
\end{proof}
% \rsout{Point (iv) in the previous lemma shows that to any optimal plan
% $\aalpha$ we can add an arbitrary mass in the point $(\fro,\fro)$
% without losing the optimality. In \cite[Sect.\,3.2]{LMS15} this fact
% is called the \emph{presence of a sufficiently large reservoir}, which
% shows that transport over distances larger than $\pi/2$ is never
% optimal, since it is cheaper to transport into or out of the reservoir in
% $\fro$.} 
% \GCOMM{I postponed this remark to Section \ref{subsec:HKET},
% since it seems to me that adding arbitrary masses at $(\fro,\fro)$ or
% point iv) are
% not enough as justification.} 

\subsection{Gluing lemma and triangle inequality}
\label{subsec:triangle}
In this section we will prove that $\HK$ satisfies the triangle
inequality and therefore is a distance on $\cM(X)$.  The main
technical step is provided by the following useful property for plans
in $\cM(\tY^{\otimes N})$ with given homogeneous marginals, which is a
simple application of the rescaling \WWW invariance in
\eqref{eq:121}. \EEE

\begin{lemma}[\WWW Normalization of lifts] 
  \label{le:multiple-rescaling}
  Let $\taalpha
  %,\bbeta
  \in \cM_2(\tY^{\otimes N})$, $N\ge2,$ be a plan satisfying
  \begin{align}
    \label{eq:13}
    \chm2i\aalpha=\mu_i\in \cM(X)\text{ for } i=1,...,N,
    \  \text{ and } \ 
    a_i=\int \sfdc^2(\ty_{i-1},\ty_{i})\,\d\taalpha \text{ for } 
    i=2,..., N,
    % \\
    % \label{eq:13bis}
    % \chm2i\bbeta&=\mu_i\in \cM(X)\quad i=1,\ldots, N,\quad
    % &b_i&=\int \sfdc^2(\ty_1,\ty_{i})\,\d\taalpha\quad
    % &&i=2,\ldots, N,
  \end{align}
  and let $j\in \{1,\ldots,N\}$ be fixed. Then, it is possible to find
  a new plan $\bar\taalpha
  %,\bar\bbeta
  \in \cM_2(\pdyY N)$ which % is
  % concentrated on $(\pdyY N)_1:=\big\{\yy\in \pdyY N:
  % \sum_i \sf\s_i^2(\yy)\le 1\big\}$,
  still satisfies \eqref{eq:13} 
  % \eqref{eq:13bis} 
  and \WWW additionally the normalization of the $j$th lift, 
  \begin{equation}
    \label{eq:14}
    \pi^j_\sharp(\bar\taalpha)=\delta_{\soo}+
    \frp_\sharp(\mu_j\otimes \delta_1).
    % ,\quad
    % \pi^1_\sharp(\bar\bbeta)=\delta_{\soo}+
    % \frp_\sharp(\mu_1\otimes \delta_1).
  \end{equation}
\end{lemma}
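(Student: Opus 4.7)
The plan is to produce $\bar\taalpha$ from $\taalpha$ by combining two operations, both of which preserve the homogeneous marginals and the pairwise cone-distance integrals appearing in \eqref{eq:13}: a dilation $\dil{\vartheta,2}{\taalpha}$ as in \eqref{eq:147bis}, with a carefully chosen Borel weight $\vartheta\colon\tY^{\otimes N}\to(0,\infty)$, followed by the addition of a nonnegative multiple of the Dirac mass $\delta_{(\fro,\ldots,\fro)}$ concentrated at the distinguished point $(\fro,\ldots,\fro)\in\tY^{\otimes N}$. The invariance of the homogeneous marginals under dilations is \eqref{eq:150bis}, and the computation leading to \eqref{eq:121} extends verbatim to each pairwise cost and to arbitrary $N$ through the $2$-homogeneity $\sfdc^2(\ty_{i-1}\cdot\vartheta^{-1},\ty_i\cdot\vartheta^{-1})=\vartheta^{-2}\sfdc^2(\ty_{i-1},\ty_i)$, which follows at once from \eqref{eq:90}. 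Finally, $\sfr_i(\fro)=0$ and $\sfdc(\fro,\fro)=0$ imply that $\delta_{(\fro,\ldots,\fro)}$ contributes neither to any $\chm2i$ nor to any of the pairwise costs, so its addition is ``free''.

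Concretely, I would set $c_0:=\taalpha(\{\sfr_j=0\})$ and $\lambda:=1/\sqrt{\max(c_0,1)}$ (with $\lambda:=1$ if $c_0=0$), and define
\begin{equation*}
\vartheta(\tyy):=\begin{cases}\sfr_j(\tyy)&\text{if }\sfr_j(\tyy)>0,\\ \lambda&\text{if }\sfr_j(\tyy)=0,\end{cases}
\qquad
\bar\taalpha:=\dil{\vartheta,2}\taalpha+\big(1-\min(c_0,1)\big)\,\delta_{(\fro,\ldots,\fro)}.
\end{equation*}
Then $\vartheta$ is Borel and strictly positive with $\int\vartheta^2\,\d\taalpha=\mu_j(X)+\lambda^2 c_0<\infty$, so the dilation is well-defined in $\cM_2(\tY^{\otimes N})$, and the coefficient of the Dirac term is nonnegative. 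By the two invariances listed above, $\bar\taalpha$ satisfies the conditions in \eqref{eq:13}. To verify \eqref{eq:14} I would split $\pi^j_\sharp(\dil{\vartheta,2}\taalpha)$ according to the partition $\{\sfr_j>0\}\sqcup\{\sfr_j=0\}$. On the first set, $\prd_\vartheta$ sends $\ty_j$ to $[\sfx_j,1]$ with weight $\vartheta^2=\sfr_j^2$, so this part of the $j$-marginal equals $\frp_\sharp(\mu_j\otimes\delta_1)$ since $\pi^{x_j}_\sharp(\sfr_j^2\,\taalpha)=\chm2j\taalpha=\mu_j$. On the second set, $\ty_j$ remains $\fro$, contributing $\lambda^2 c_0=\min(c_0,1)$ units of $\delta_\fro$. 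Adding the further Dirac mass $(1-\min(c_0,1))\,\delta_\fro$ arising from $\pi^j_\sharp\delta_{(\fro,\ldots,\fro)}$ produces exactly the prescribed $\delta_\fro+\frp_\sharp(\mu_j\otimes\delta_1)$.

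The only mildly delicate point is the unified treatment of the two regimes $c_0\le 1$ and $c_0>1$. When $c_0\le 1$ one could simply take $\lambda=1$ and add $(1-c_0)\,\delta_{(\fro,\ldots,\fro)}$; but when $c_0>1$ the piece $\taalpha\res\{\sfr_j=0\}$ already overshoots the target unit Dirac at $\fro$, and no amount of additional Dirac mass can repair this overshoot. One must therefore \emph{shrink} this piece through the auxiliary factor $\lambda=1/\sqrt{c_0}<1$, and the uniform choice $\lambda^2=1/\max(c_0,1)$ handles both regimes at once. The fact that this auxiliary rescaling on the $\{\sfr_j=0\}$-sector does not disturb either $\chm2i$ for $i\ne j$ or any pairwise cost is guaranteed by the $2$-homogeneous weight $\vartheta^2$ in the dilation: the factor $\vartheta^{-2}$ arising from the $2$-homogeneity of $\sfr_i^2$ and of $\sfdc^2$ is cancelled pointwise by the $\vartheta^2$ in the definition of $\dil{\vartheta,2}{\taalpha}$.
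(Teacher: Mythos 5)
Your proof is correct and follows essentially the same strategy as the paper: dilate by a Borel weight that equals $\sfr_j$ on $\{\sfr_j>0\}$ and a constant on $\{\sfr_j=0\}$, tuned so that the resulting $j$-th marginal has unit mass at $\fro$, exploiting the invariance \eqref{eq:150bis} of homogeneous marginals and the $2$-homogeneity of $\sfdc^2$ under $\dil{\vartheta,2}$. The paper achieves the normalization by \emph{pre-adding} $\otimes^N\delta_\fro$ until $\omega_j\ge 1$ and then dilating the whole $\{\sfr_j=0\}$ sector by $\omega_j^{-1/2}$, whereas you dilate first (shrinking only when $c_0>1$) and \emph{post-add} $(1-\min(c_0,1))\,\delta_{(\fro,\dots,\fro)}$; this is a cosmetic reordering of the same idea and both are fine.
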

\begin{proof}
  % We will detail the argument concerning $\bar\aalpha$,
  % the proof of the analogous statement for $\bar\bbeta$ is completely
  % analogous.
  %
  By possibly adding $\otimes^N\delta_{\soo}$ 
  to $\aalpha$ (which does not modify \eqref{eq:13})
  we may suppose that 
  \[
\omega_j:=\aalpha\big(\{\tyy\in \pdyY N: \pi^j(\tyy)=\fro\}\big)\ge 1,
\]
\WWW where $j$ is fixed as in the lemma. 
 In order to find $\bar\aalpha$ 
\nc it is sufficient to rescale $\aalpha$ by the function
  \begin{equation}
    \label{eq:392}
    \vartheta(\tyy):=
    \begin{cases}
      \sfs_j(\tyy)&\text{if }\ty_j\neq \fro,\\
      \omega_j^{-1/2}&\text{otherwise.}
    \end{cases}
  \end{equation}
  With the notation of \eqref{eq:147bis} we set 
  $\bar\aalpha:=\dil{\vartheta,2}\aalpha$ and we decompose $\aalpha$
  in the sum $\aalpha=\aalpha'+\aalpha''$ 
  where $\aalpha'=\aalpha\res \{\tyy\in \pdyY N:
  \pi^j(\tyy)=\fro\} $.
  For every $\zeta\in \rmB_b(\tY)$ we have
  \begin{align*}
  & \int \zeta(\ty_j)\,\d\bar\aalpha=
                          \int \zeta(\ty_j\cdot
                            \vartheta^{-1}(\tyy))\vartheta^2(\tyy)\,\d\aalpha=
                                \int\zeta(\fro)
                                 \omega_j^{-1}\,\d\aalpha'+\!\!
                                    \int \zeta([x_j,\s_j/\vartheta(\tyy)])
                                    \vartheta^2(\tyy)\,\d\aalpha''
    \\
                                  &=\zeta(\fro)+\int \zeta([x_j,1])\sfs_j^2
                                    \d\aalpha''
                                    =
                                    \zeta(\fro)+\int \zeta([x_j,1])\sfs_j^2
                                    \d\aalpha=
                                    \zeta(\fro)+\int \zeta\circ \frp\,
                                    \d(\mu_j\otimes \delta_1)
  \end{align*}
  which yields \eqref{eq:14}.
\end{proof}
We can now prove a general form of the so-called ``gluing lemma''
that is the natural extension of the well known result for
transport problems (see e.g.~\cite[Lemma 5.3.4]{Ambrosio-Gigli-Savare08}).
Here its formulation is strongly related to the rescaling
invariance of optimal plans given by Lemma 
\ref{le:multiple-rescaling}.

\begin{lemma}[Gluing lemma]
  \label{le:gluing}
  Let us consider a finite collection
  of measures $\mu_i\in \cM(X)$ 
  %and coefficients $\theta_i>0$ 
  for
  $i=1,\ldots,N$ with $N\ge2$. Set  
  \begin{equation}
    \Theta : =
    \sqrt{\mu_1(X)} +\sum_{i=2}^N \HK(\mu_{i-1},\mu_i)
    \nc 
    %\Theta^2:=\Big(\sum_i\theta_i^{-1}\Big)\cdot \Big(\theta_1\mu_1(X)+
  %\sum_{i=2}^N
  %\theta_i\,\HK^2(\mu_{i-1},\mu_i)\Big) 
    \ \text{ and } \ 
  M^2:=\sum_{i=1}^N\mu_i(X).\label{eq:367}
\end{equation}
  Then there exist plans $\taalpha_1, \,\taalpha_2
  %,\bbeta
  \in \cP_2(\pdyY N)$ such that 
  \begin{gather}
    \label{eq:13bis}
   \begin{aligned}
    &\chm2i \taalpha_k=\mu_i \ \text{ for } \ 
    %\chm2i \bbeta=\mu_i\quad
    i=1,\ldots, N \ \text{ and } \\
    %\label{eq:167}
   & \int
    \sfdc^2(\ty_{i-1},\ty_{i})\,\d\taalpha_k=\HK^2(\mu_{i-1},\mu_{i})
    % \quad
    % \int
    % \sfdc^2(\ty_{1},\ty_{i})\,\d\bbeta=\HK^2(\mu_{1},\mu_{i})
    \ \text{ for } \ 
    i=2,\ldots, N.
   \end{aligned}
  \end{gather}
 Moreover, \WWW the plans $\aalpha_k$ satisfy the following additional
 conditions:  
  % and
  % $\tyY_{r,2}^N:=\big\{\tyy\in \pdyY N:\sum_i \sf\s_i^2(\tyy)\le
  % r^2\big\}$, 
  \begin{align}
    \label{eq:393}
    \taalpha_1\text{ is concentrated on }&\big\{\tyy\in \pdyY
    N:\sum_i \sfs_i^2(\tyy)\le M^2\big\},\\
    \label{eq:393bis}
    \taalpha_2\text{ is concentrated on }&\big\{\tyy\in \pdyY
                                           N:\sup_i\sfs_i(\tyy)\le \Theta\big\}=
                                           \big(\cball\Theta\big)^{\otimes N}.
  \end{align}
\end{lemma}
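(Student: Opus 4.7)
The plan is first to construct a single plan $\bar\aalpha\in \cP_2(\pdyY N)$ satisfying the pairwise marginal and cost identities \eqref{eq:13bis}, and then to produce $\aalpha_1$ and $\aalpha_2$ as suitable dilations of $\bar\aalpha$. The four ingredients I will use are: Theorem~\ref{thm:existenceHK} (existence of optimal probability plans in $\OptHK(\mu_{i-1},\mu_i)$); the normalization Lemma~\ref{le:multiple-rescaling}; the classical gluing lemma for plans sharing a canonical marginal; and the rescaling invariances \eqref{eq:150bis} and \eqref{eq:121}, which respectively ensure that homogeneous marginals and $\sfdc^2$-integrals are preserved under any dilation $\dil{\vartheta,2}{\cdot}$.

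\textbf{Iterated gluing.} For $i=2,\ldots,N$ pick $\bar\aalpha^{(i)}\in\OptHK(\mu_{i-1},\mu_i)\subset\cP_2(\tY\times\tY)$, and set $\aalpha^{(2)}:=\bar\aalpha^{(2)}$. Inductively, to pass from $\aalpha^{(k-1)}\in\cP_2(\pdyY{k-1})$ to $\aalpha^{(k)}\in\cP_2(\pdyY{k})$, apply Lemma~\ref{le:multiple-rescaling} once to $\aalpha^{(k-1)}$ with $j=k-1$ and once to $\bar\aalpha^{(k)}$ with $j=1$. After these two normalizations the $(k{-}1)$-th canonical marginal of $\aalpha^{(k-1)}$ and the first canonical marginal of $\bar\aalpha^{(k)}$ both coincide with $\delta_\fro+\frp_\sharp(\mu_{k-1}\otimes\delta_1)$, while by the very statement of Lemma~\ref{le:multiple-rescaling} all homogeneous marginals $\mu_j$ and all pairwise cost identities already established are preserved. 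The standard gluing lemma, applied by disintegration against the shared canonical marginal, then produces $\aalpha^{(k)}\in\cP_2(\pdyY k)$ whose projections on the coordinates $(1,\ldots,k{-}1)$ and $(k{-}1,k)$ recover the normalized $\aalpha^{(k-1)}$ and $\bar\aalpha^{(k)}$ respectively. After $N-1$ such steps I obtain $\bar\aalpha:=\aalpha^{(N)}$ satisfying \eqref{eq:13bis}.

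\textbf{Construction of $\aalpha_1$.} On $\{\tyy\in\pdyY N:\sum_i\sfs_i^2(\tyy)>0\}$ define $\vartheta_1(\tyy):=M^{-1}\bigl(\sum_i\sfs_i^2(\tyy)\bigr)^{1/2}$ and set $\aalpha_1:=\dil{\vartheta_1,2}{\bar\aalpha\res\{\sum_i\sfs_i^2>0\}}$. Under this dilation every image point satisfies $\sum_i(\sfs_i/\vartheta_1)^2\equiv M^2$, giving the desired concentration on $\{\sum_i\sfs_i^2\le M^2\}$. Moreover
\[
\aalpha_1(\pdyY N)=\int\vartheta_1^2\,\d\bar\aalpha=M^{-2}\int\sum_i\sfs_i^2\,\d\bar\aalpha=M^{-2}\sum_i\mu_i(X)=1,
\]
so $\aalpha_1\in\cP_2(\pdyY N)$, while \eqref{eq:150bis} and \eqref{eq:121} preserve \eqref{eq:13bis}.

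\textbf{Construction of $\aalpha_2$ and the main obstacle.} Set $L(\tyy):=\sfs_1(\tyy)+\sum_{k=2}^N\sfdc(\ty_{k-1},\ty_k)$ and $\vartheta_2(\tyy):=L(\tyy)/\Theta$ on $\{L>0\}$. The cone triangle inequality applied to the chain $\fro,\ty_1,\ldots,\ty_N$ yields the pointwise bound $\sfs_i(\tyy)=\sfdc(\fro,\ty_i)\le L(\tyy)$, so after dilation the radii satisfy $\sfs_i/\vartheta_2\le\Theta$ and the dilated plan is concentrated in $(\cball\Theta)^{\otimes N}$. The subtler, and in my view main, obstacle is to keep the total mass at most $1$; this I obtain from Minkowski's inequality in $L^2(\bar\aalpha)$:
\[
\|L\|_{L^2(\bar\aalpha)}\le\|\sfs_1\|_{L^2(\bar\aalpha)}+\sum_{k=2}^N\|\sfdc(\ty_{k-1},\ty_k)\|_{L^2(\bar\aalpha)}=\sqrt{\mu_1(X)}+\sum_{k=2}^N\HK(\mu_{k-1},\mu_k)=\Theta,
\]
which gives $\int\vartheta_2^2\,\d\bar\aalpha=\|L\|_{L^2(\bar\aalpha)}^2/\Theta^2\le 1$. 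Setting
\[
\aalpha_2:=\dil{\vartheta_2,2}{\bar\aalpha\res\{L>0\}}+\Bigl(1-\int\vartheta_2^2\,\d\bar\aalpha\Bigr)\delta_{(\fro,\ldots,\fro)}
\]
produces a probability plan in $\cP_2(\pdyY N)$; since the added Dirac at the vertex $(\fro,\ldots,\fro)$ of $\pdyY N$ contributes zero both to every homogeneous marginal and to every $\sfdc^2$-integral, the properties \eqref{eq:13bis} are preserved. This Minkowski step is precisely what forces the very definition of $\Theta$ in the statement, and it is the key computation that will subsequently power the triangle inequality for $\HK$.
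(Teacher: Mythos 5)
Your proof is correct, and its architecture — iterated gluing via the normalization Lemma~\ref{le:multiple-rescaling} followed by a dilation to obtain concentration — is exactly the one the paper uses; the construction of $\aalpha_1$ also matches the paper's use of the rescaling \eqref{eq:153a}. The one genuine (if modest) difference is the construction of $\aalpha_2$. The paper dilates by $\vartheta(\tyy)=r^{-1}|\tyy|_\infty$ with $r^2=\int|\tyy|_\infty^2\,\d\bar\aalpha$ (so the rescaled plan is a probability measure for free), and then bounds $r\le\Theta$ via a weighted Cauchy--Schwarz inequality in auxiliary coefficients $\theta_i>0$ followed by an optimization over $\theta_i$. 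You instead dilate by the chain-length $\vartheta_2=L/\Theta$ with $L=\sfs_1+\sum_{k\ge 2}\sfdc(\ty_{k-1},\ty_k)$, obtain the concentration on $(\cball\Theta)^{\otimes N}$ directly from the pointwise bound $\sup_i\sfs_i\le L$, control the total mass via Minkowski in $L^2(\bar\aalpha)$, and pad with a Dirac at the vertex $(\fro,\ldots,\fro)$ to restore unit mass. The two routes prove the same estimate — once the paper's expression is optimized over $\theta_i$, it recovers exactly $\|L\|_{L^2(\bar\aalpha)}\le\Theta$, i.e.\ Minkowski — so the same $\Theta$ appears; your presentation is a shade more transparent because the formula for $\Theta$ drops out without the optimization step, and the vertex-Dirac padding is harmless for precisely the reason you give (it contributes nothing to the homogeneous marginals nor to the $\sfdc^2$-integrals).
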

% \WWW Before we go to the proof of this result we mention that an
% optimal choice of the
% coefficients $\theta_i$ gives the lowest bound $ \Theta \nc =
% \sqrt{\mu_1(X)} +\sum_{i=2}^N \HK(\mu_{i-1},\mu_i)$. 
\begin{proof}
  We first construct a plan $\aalpha$ satisfying \eqref{eq:13bis},
  then suitable rescalings will provide $\aalpha_k$ satisfying
  \eqref{eq:393} or \eqref{eq:393bis}. In order to clarify the
  argument, we consider $N$-copies $X_1,X_2,\ldots, X_N$ of $X$ (and
  for $\tY$ in a similar way) so that $\tens X N=\prod_{i=1}^N X_i$
  
  We argue by induction; the starting case $N=2$ is covered by Theorem
  \ref{thm:existenceHK} and Lemma \ref{le:multiple-rescaling}.  Let us
  now discuss the induction step, by assuming that the thesis holds
  for $N$ and proving it for $N+1$.  We can thus find an optimal plan
  $\aalpha^N$ such that \eqref{eq:13bis} %and \eqref{eq:167}
  hold, and another optimal plan $\aalpha\in\OptHK(\mu_N,\mu_{N+1})$
  for the couple $\mu_N,\mu_{N+1}$.  Applying \WWW the normalization
  Lemma \ref{le:multiple-rescaling} to $\aalpha^N$ (with $j=N)$
  and to $\aalpha$ (with $j=1$) we can assume that
  \begin{displaymath}
    \pi^N_\sharp(\aalpha^N)=\delta_{\soo}+\frp_\sharp(\mu_N\otimes
    \delta_1)=\pi^1_\sharp(\taalpha).    
  \end{displaymath}
  Therefore we can apply the standard gluing Lemma in
  $\big(\prod_{i=1}^{N-1}\tY_i\big),\tY_N,\tY_{N+1}$ (see
  e.g.~\cite[Lemma 5.3.2]{Ambrosio-Gigli-Savare08} and \cite[Lemma
  2.2]{Ambrosio-Erbar-Savare15} in the case of arbitrary topological
  spaces) obtaining a new plan $\aalpha^{N+1}$ satisfying
  $\pi^{1,2,\cdots, N}_\sharp\aalpha^{N+1}=\aalpha^N$ and
  $\pi^{N,N+1}\aalpha^{N+1}=\aalpha$.  In particular, $\aalpha^{N+1}$
  satisfies \eqref{eq:13bis}.
  
  A further application of the \WWW rescaling \eqref{eq:121} with
  $\vartheta$ as in \eqref{eq:153a} yields a plan $\aalpha_1$
  satisfying also \eqref{eq:393}.

  In order to obtain $\aalpha_2$, we can assume 
  $\aalpha(\{|\tyy|=0\})=0$ and set
  $\aalpha_2=\dil{\vartheta,2}\aalpha$, where we use the rescaling function 
  \begin{displaymath}
    \vartheta(\tyy):=r^{-1}|\tyy|_\infty=r^{-1}\sup_i\sfs_i(\tyy) \
    \text{ with } \ 
    r^2:=\int_{\pdyY N} |\tyy|_\infty^2\,\d\aalpha. 
  \end{displaymath}
  \WWW To obtain \eqref{eq:393bis} it remains to estimate $r$. We
  consider arbitrary coefficients $\theta_i>0$ \nc and 
  use for $n=2,\ldots,N$ \nc the inequality 
  \begin{align*}
    \sfs_n&\le \sfs_1+\sum_{i=2}^n|\sfs_i-\sfs_{i-1}|
            \le \Big(\sum_{i=1}^n\theta_i^{-1}\Big)^{1/2}
            \Big(\theta_1\sfs_1^2+\sum_{i=2}^n\theta_i 
               |\sfs_i-\sfs_{i-1}|^2\Big)^{1/2}\\ 
            &\le 
              \Big(\sum_{i=1}^N\theta_i^{-1}\Big)^{1/2}
              \Big(\theta_1\sfs_1^2+\sum_{i=2}^N\theta_i 
               \sfdc^2(\ty_i,\ty_{i-1})\Big)^{1/2} ,
  \end{align*}
  which yields
  \begin{align*}
    r^2&=\int_{\pdyY N} |\tyy|_\infty^2\,\d\aalpha\le 
    \Big(\sum_{i=1}^N\theta_i^{-1}\Big)\int_{\pdyY N}
    \Big(\theta_1\sfs_1^2+\sum_{i=2}^N\theta_i\sfdc^2(\ty_i,\ty_{i-1})\Big)\,\d\aalpha
    \\& =\Big(\sum_{i=1}^N\theta_i^{-1}\Big)\cdot \Big(\theta_1\mu_1(X)+
         \sum_{i=2}^N
             \theta_i\,\HK^2(\mu_{i-1},\mu_i)\Big);
%             \Theta^2.\qedhere
  \end{align*}
  optimizing with respect to $\theta_i>0$ we obtain 
the value of $\Theta$ given by \eqref{eq:367}. \nc
\end{proof}

\WWW The next remark gives a similar rescaling result for
probability couplings $\bbeta \in \cP_2(\pdyY {N})$.\EEE% 

\begin{remark}
  \label{rem:gluing2}
  \upshape
  In a completely similar way (see \cite[Lemma
  5.3.4]{Ambrosio-Gigli-Savare08}),
  for $N\ge2$,  a finite collection
  of measures $\mu_i\in \cM(X)$, 
  and coefficients $\theta_i>0$, $i=1,\ldots,N$,
  there exists a plan $
  \bbeta
  \in \cP_2(\pdyY {N})$
  concentrated on 
  $\big\{\tyy\in \pdyY
  N:\sup_i\sfs_i(\tyy)\le
  \Xi\big\}$
  with
  \begin{equation}   
    \Xi:=\sqrt{ \mu_1(X)}+
    \sum_{i=2}^N\HK(\mu_1,\mu_i),
    %\Xi:=\Big(\sum_i\theta_i^{-1}\Big)\cdot \Big(\theta_1 \mu_1(X)+
    %\sum_{i=2}^N\theta_i\HK^2(\mu_1,\mu_i)\Big),
    \label{eq:464}
  \end{equation}
  such that 
  \begin{gather}
    \label{eq:13tris}
    %\chm2i \taalpha_k=\mu_i,\quad 
    \chm2i \bbeta=\mu_i \ \text{ and } \ 
    %\label{eq:167}
    % \int
    % \sfdc^2(\ty_{i-1},\ty_{i})\,\d\taalpha_k=\HK^2(\mu_{i-1},\mu_{i}),
    % \quad
    \int
    \sfdc^2(\ty_{1},\ty_{i})\,\d\bbeta=\HK^2(\mu_{1},\mu_{i})
    \ \text{ for }
    i=1,\ldots, N. \qedhere
  \end{gather}
  % and
  % $\tyY_{r,2}^N:=\big\{\tyy\in \pdyY N:\sum_i \sf\s_i^2(\tyy)\le
  % r^2\big\}$, 
\end{remark}
Arguing as in the proof of Corollary \ref{cor:HK-W1} 
one immediately obtains the following result, \WWW which will be
needed for the proof of Theorem \ref{thm:curvatureHK} and for the
subsequent corollary. 

\begin{corollary}
  \label{cor:HK-W2}
  For every finite collection of measures $\mu_i\in \cM(X)$,
  $i=1,\ldots, N$, there exist $\alpha_i,\beta_i\in \cP_2(\tY)$ with
  $\alpha_i$ concentrated in $\cball {r}$ where $r=\min(M,\Theta)$ is
  given as in \eqref{eq:367} and $\beta_i$ concentrated in $\cball
  \Xi$ given by \eqref{eq:464} such that 
  \begin{align*}
    \label{eq:396}
    &\chm2{} \alpha_i= \mu_i \ \text{ and } \
           \chm2{}\beta_i=\mu_i&&
                    \text{for }i=1,\ldots,N, \\
    &\WWW \HK(\mu_1,\mu_i)=\Wc(\beta_1,\beta_i) \EEE\ \text{ and } \ 
           \HK(\mu_i,\mu_{i+1})=\Wc(\alpha_i,\alpha_{i+1})      &&
           \text{for }i=2,\ldots,N.
  \end{align*}
\end{corollary}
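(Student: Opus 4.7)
The plan is to reduce the corollary to two applications of the multi-marginal gluing constructions already established: Lemma \ref{le:gluing} for the consecutive-distance family $(\alpha_i)$, and Remark \ref{rem:gluing2} for the star-shaped family $(\beta_i)$ anchored at $\mu_1$. In each case the marginals $\alpha_i$, $\beta_i$ will be obtained as the single-coordinate projections $\pi^i_\sharp$ of a multi-marginal plan on $\pdyY N$, so that the constraint $\chm2{}\alpha_i=\mu_i$ follows immediately from $\chm2i(\pi^{1,\dots,N}_\sharp\aalpha)=\chm2i\aalpha=\mu_i$ via the definition \eqref{eq:155} of the homogeneous marginal.

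First, I would invoke Lemma \ref{le:gluing} to produce \emph{both} plans $\aalpha_1$ and $\aalpha_2$ in $\cP_2(\pdyY N)$ with $\chm2i\aalpha_k=\mu_i$ and $\int \sfdc^2(\ty_{i-1},\ty_i)\,\d\aalpha_k=\HK^2(\mu_{i-1},\mu_i)$ for $i=2,\ldots,N$. Then I would select
\[
  \aalpha := \begin{cases} \aalpha_1 & \text{if } M\le \Theta,\\ \aalpha_2 & \text{if } \Theta \le M,\end{cases}
\]
and set $\alpha_i:=\pi^i_\sharp\aalpha$. From \eqref{eq:393} we get $\sfs_i(\tyy)\le M$ on $\supp(\aalpha_1)$ (since $\sfs_i^2\le \sum_j\sfs_j^2\le M^2$), while \eqref{eq:393bis} gives $\sfs_i(\tyy)\le\Theta$ on $\supp(\aalpha_2)$; in either case each $\alpha_i$ is supported in $\cball{\min(M,\Theta)}=\cball r$.

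Second, I would show $\Wc(\alpha_i,\alpha_{i+1})=\HK(\mu_i,\mu_{i+1})$. The two-coordinate projection $\pi^{i,i+1}_\sharp\aalpha$ is a coupling in $\cP(\tY\times\tY)$ between $\alpha_i$ and $\alpha_{i+1}$, and by \eqref{eq:13bis} its cost equals $\HK^2(\mu_i,\mu_{i+1})$, yielding $\Wc(\alpha_i,\alpha_{i+1})\le \HK(\mu_i,\mu_{i+1})$. The reverse inequality is precisely the contraction property \eqref{eq:395} of Corollary \ref{cor:HK-W1}: since $\chm2{}\alpha_i=\mu_i$,
\[
\HK(\mu_i,\mu_{i+1})=\HK(\chm2{}\alpha_i,\chm2{}\alpha_{i+1})\le \Wc(\alpha_i,\alpha_{i+1}),
\]
closing the loop. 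Exactly the same argument applied to the plan $\bbeta\in \cP_2(\pdyY N)$ from Remark \ref{rem:gluing2}, with $\beta_i:=\pi^i_\sharp\bbeta$, delivers the $\beta$-family: the support constraint $\sfs_i\le\Xi$ from that Remark gives $\beta_i$ concentrated in $\cball\Xi$, the homogeneous marginal identity is automatic, and for each $i\ge 2$ the coupling $\pi^{1,i}_\sharp\bbeta$ together with \eqref{eq:395} yields $\Wc(\beta_1,\beta_i)=\HK(\mu_1,\mu_i)$.

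There is essentially no hard step here: the content is already packaged in Lemma \ref{le:gluing}, Remark \ref{rem:gluing2} and Corollary \ref{cor:HK-W1}. The only point that requires a moment's care is recognizing that the \emph{lower} bound $\Wc(\alpha_i,\alpha_{i+1})\ge \HK(\mu_i,\mu_{i+1})$ must come from the $\chm2{}$-contraction \eqref{eq:395} rather than from the gluing construction, so that the chain of inequalities actually collapses to equalities; and that the correct support bound $r=\min(M,\Theta)$ is achieved by choosing the better of the two plans supplied by Lemma \ref{le:gluing} instead of trying to impose both support conditions simultaneously on a single plan.
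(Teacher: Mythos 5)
Your proof is correct and follows exactly the approach the paper intends by its one-line proof ("arguing as in Corollary~\ref{cor:HK-W1}"): project the multi-marginal plans from Lemma~\ref{le:gluing} and Remark~\ref{rem:gluing2} to the single coordinates, read off the homogeneous marginals, bound $\Wc$ from above by the cost of the two-coordinate projection and from below by the contraction property \eqref{eq:395}. The observation that the bound $r=\min(M,\Theta)$ is achieved by selecting whichever of $\aalpha_1$, $\aalpha_2$ has the sharper support constraint (rather than a single plan satisfying both \eqref{eq:393} and \eqref{eq:393bis}) is indeed the right reading of the lemma, and you make it explicit where the paper leaves it implicit.
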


\WWW We are now in the position to show that the functional $\HK$ is
 a true distance on $\cM(X)$, where we deduce the triangle
 inequality from that for $\Wc$ by using normalized lifts. 

\begin{corollary}[$\HK$ is a distance]
  $\HK$ is a distance on $\cM(X)$; in particular, 
  for every $\mu_1,\mu_2,\mu_3\in \cM(X)$ 
  we have the triangle inequality
  \begin{equation}
  \HK(\mu_1,\mu_3)\le \HK(\mu_1,\mu_2)+\HK(\mu_2,\mu_3).\label{eq:17}
\end{equation}
\end{corollary}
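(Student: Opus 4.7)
The plan is to reduce the triangle inequality for $\HK$ to the classical triangle inequality for the Kantorovich-Wasserstein distance $\Wc$ on $\cP_2(\tY)$, using the simultaneous optimal lifts provided by Corollary \ref{cor:HK-W2} together with the contraction estimate \eqref{eq:395}.

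First I would quickly verify the non-metric axioms. Symmetry of $\HK$ is immediate from the symmetry of $\sfdc$ and the invariance of the constraint $\cHM2{\mu_1}{\mu_2}$ under exchanging the two factors of $\tyY$. Non-negativity is obvious, and $\HK(\mu,\mu)=0$ follows by lifting $\mu$ to $\alpha:=\frp_\sharp(\mu\otimes\delta_1)\in\cP_2(\tY)$, which satisfies $\chm2{}\alpha=\mu$, and then choosing the diagonal plan $(\mathrm{id},\mathrm{id})_\sharp \alpha \in \cHM2{\mu}{\mu}$, on which the cost $\sfdc^2$ vanishes. For the identity of indiscernibles, if $\HK(\mu_1,\mu_2)=0$ then any optimal $\taalpha\in \cHM2{\mu_1}{\mu_2}$ is supported on the diagonal $\{\ty_1=\ty_2\}$ of $\tyY$; hence $\chm2{1}\taalpha=\chm2{2}\taalpha$, i.e.\ $\mu_1=\mu_2$.

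For the triangle inequality \eqref{eq:17}, I would apply Corollary \ref{cor:HK-W2} to the three measures $\mu_1,\mu_2,\mu_3$ (the case $N=3$), which yields a triple $\alpha_1,\alpha_2,\alpha_3\in \cP_2(\tY)$ satisfying simultaneously
\[
\chm2{}\alpha_i=\mu_i\quad(i=1,2,3),\qquad
\HK(\mu_1,\mu_2)=\Wc(\alpha_1,\alpha_2),\qquad
\HK(\mu_2,\mu_3)=\Wc(\alpha_2,\alpha_3).
\]
Since $\Wc$ is a genuine distance on $\cP_2(\tY)$ (the measures are all of unit mass and have finite second $\sfdc$-moments since they are concentrated on a bounded ball in $\tY$), the standard triangle inequality gives $\Wc(\alpha_1,\alpha_3)\le \Wc(\alpha_1,\alpha_2)+\Wc(\alpha_2,\alpha_3)$. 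Combining this with the contraction estimate \eqref{eq:395}, which asserts $\HK(\chm2{}\alpha_1,\chm2{}\alpha_3)\le \Wc(\alpha_1,\alpha_3)$, we obtain
\[
\HK(\mu_1,\mu_3)\le \Wc(\alpha_1,\alpha_3)\le \Wc(\alpha_1,\alpha_2)+\Wc(\alpha_2,\alpha_3)
=\HK(\mu_1,\mu_2)+\HK(\mu_2,\mu_3).
\]

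The conceptual heart of the argument — and what would be the main obstacle without the preceding preparation — is the simultaneous realization of two optimal transport problems by a single triple of lifts $(\alpha_1,\alpha_2,\alpha_3)$ in the cone. A naive attempt starting from two independent optimal plans $\taalpha_{12}\in\OptHK(\mu_1,\mu_2)$ and $\taalpha_{23}\in\OptHK(\mu_2,\mu_3)$ would fail, because their $\mu_2$-marginals on $\tY$ are a priori different lifts of the \emph{same} measure $\mu_2$ on $X$, and one cannot glue them directly. This is precisely why the rescaling/normalization of lifts in Lemma \ref{le:multiple-rescaling} and the gluing construction of Lemma \ref{le:gluing} were needed: they normalize the middle marginal $\pi^2_\sharp\taalpha_{12}$ and $\pi^1_\sharp\taalpha_{23}$ to a common form (e.g.\ $\delta_\fro+\frp_\sharp(\mu_2\otimes\delta_1)$) using the scaling invariance \eqref{eq:121} of the cost, after which the standard gluing lemma in topological spaces applies. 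Once Corollary \ref{cor:HK-W2} is in hand, the triangle inequality for $\HK$ follows essentially for free from that of $\Wc$.
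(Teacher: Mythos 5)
Your proof is correct and follows the paper's own argument: apply Corollary \ref{cor:HK-W2} with $N=3$ to obtain a single triple of lifts $\alpha_1,\alpha_2,\alpha_3\in\cP_2(\tY)$ realizing both $\HK(\mu_1,\mu_2)=\Wc(\alpha_1,\alpha_2)$ and $\HK(\mu_2,\mu_3)=\Wc(\alpha_2,\alpha_3)$, then invoke the triangle inequality for $\Wc$ together with the contraction property \eqref{eq:395}. The additional remarks on the remaining metric axioms and on why gluing/normalization of lifts is the essential preparatory step are accurate elaborations of what the paper treats as immediate.
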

\begin{proof}
  It is immediate to check that $\HK$ is symmetric and
$\HK(\mu_1,\mu_2)=0$ if and only if $\mu_1=\mu_2$.
In order to check  \eqref{eq:17} 
it is sufficient to apply the previous corollary \ref{cor:HK-W2}
to find measures $\alpha_i\in \cP_2(\tY)$, $i=1,2,3$, such that
$\chm2{}\alpha_i=\mu_i$ and
$\HK(\mu_1,\mu_2)=\Wc(\alpha_1,\alpha_2)$ and
$\HK(\mu_2,\mu_3)=\Wc(\alpha_2,\alpha_3)$.
Applying the triangle inequality for $\Wc$ %we%  in $\rmL^2(\tY_1\times\tY_2\times
  % \tY_3,\taalpha)$ and the triangle inequality 
  % for $\sfdc$
  we obtain
  \begin{displaymath}
    \HK(\mu_1,\mu_3)\le
                      \Wc(\alpha_1,\alpha_3)\le 
                      \Wc(\alpha_1,\alpha_2)+\Wc(\alpha_2,\alpha_3)
                      =\HK(\mu_1,\mu_2)+\HK(\mu_2,\mu_3).\qedhere
  \end{displaymath}
\end{proof}

As a consequence of the previous two results, the map 
$\chm2{}:\cP_2(\tY)\to \cM(X)$ is a metric submersion.

\subsection{Metric and topological properties}
\label{subsec:MT}
In this section we will assume that the topology $\tau$ on $X$ is
induced by $\sfd$ and that $(X,\sfd)$ is separable, so that also
$(\tY,\sfdc)$ is separable.  Notice that in this case there is no
difference between weak and narrow topology in $\cM(X)$. Moreover,
since $X$ is separable, $\cM(X)$ \WWW equipped with the weak topology
\EEE is metrizable, so that converging sequences are sufficient to
characterize the weak-narrow topology.

It turns out \cite[Chap.~7]{Ambrosio-Gigli-Savare08} 
that $(\cP_2(\tY),\Wc)$ is a separable metric space:
convergence of a sequence $(\alpha_n)_{n\in \N}$ to a limit measure
$\alpha$ 
in $(\cP_2(\tY),\Wc)$ corresponds to weak-narrow convergence
in $\cP(\tY)$ and convergence of the quadratic moments, 
or, equivalently, to convergence of 
integrals of continuous functions with quadratic growth, i.e.
\begin{equation}
  \label{eq:160}
  \lim_{n\to\infty}\int_{\tY}\varphi\,\d\alpha_n=
  \int_{\tY}\varphi\,\d\alpha
  \quad
  \forevery \varphi\in \rmC(\tY)\text{ with }
  |\varphi(\ty)|\le A+B\sfs^2(\ty),
  % \lim_{n\to\infty}\int_\tY \sfs^2\,\d\alpha_n=
  % \lim_{n\to\infty}\int_\tY \sfdc^2(y,\fro)\,\d\alpha_n(y)=
  % \int_\tY \sfs^2\,\d\alpha,
\end{equation}
for some constants $A,B\ge0$ depending on $\varphi$. 
Recall that $\sfs^2(\ty)=\sfdc^2(\ty,\fro)$.
% \eqref{eq:148}, i.e.~of the integrals $\int \sfr\,\d\tnu_n$.
% The latter notion of convergence
% (weak convergence + convergence of the quadratic moment)
% makes sense also in $\cM_2(\tY)$ and
% we consider it as the standard topology in this
% space, strictly stronger than the weak one inherited by $\cM(\tY)$.

\begin{theorem}[$\HK$ metrizes the weak topology on $\cM(X)$]
  \label{thm:topo1}
  $\HK$ induces the weak-narrow topology on $\cM(X)$:
  a sequence $(\mu_n)_{n\in \N}\in \cM(X)$ converges to a measure
  $\mu$ in $(\cM,\HK)$ if and only if $(\mu_n)_{n\in\N}$ converges weakly to
  $\mu$
  in duality with continuous and bounded functions.

  In particular, the metric space $(\cM(X),\HK)$ is separable.
\end{theorem}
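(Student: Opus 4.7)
The plan is to establish the equivalence $\HK(\mu_n,\mu)\to0$ if and only if $\mu_n\weakto\mu$ weakly; separability of $(\cM(X),\HK)$ then follows automatically, since for separable $X$ the weak topology on $\cM(X)$ admits the countable dense set of finite rational combinations of Diracs at points of a countable dense subset of $X$.

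For the implication $\HK(\mu_n,\mu)\to0\Rightarrow \mu_n\weakto\mu$, first deduce from \eqref{eq:137} and the triangle inequality that $\sqrt{\mu_n(X)}=\HK(\mu_n,\eta_0)\to\sqrt{\mu(X)}$, so the masses are uniformly bounded. Invoke Corollary~\ref{cor:HK-W1} to pick optimal lifts $\alpha_n,\beta_n\in\cP_2(\tY)$, concentrated on a fixed ball $\cball R$, with $\chm2{}\alpha_n=\mu_n$, $\chm2{}\beta_n=\mu$, and $\Wc(\alpha_n,\beta_n)=\HK(\mu_n,\mu)\to 0$. Since the singleton $\{\mu\}$ is trivially tight, Lemma~\ref{le:compactnessH} yields tightness of $\{\beta_n\}$; combined with the uniform second-moment bound coming from $\cball R$, every subsequence admits a further $\Wc$-convergent subsubsequence $\beta_{n_k}\to\beta$, and hence also $\alpha_{n_k}\to\beta$. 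For $\zeta\in C_b(X)$ the function $\varphi_\zeta(\ty):=\zeta(\sfx(\ty))\,\sfs^2(\ty)$, extended by $0$ at $\fro$, is continuous on $\tY$ with quadratic growth, and satisfies $\int\zeta\,\d\chm2{}\alpha=\int\varphi_\zeta\,\d\alpha$. Because $\Wc$-convergence on $\cP_2(\tY)$ preserves integrals of continuous functions dominated by a multiple of $\sfs^2$, one obtains $\int\zeta\,\d\mu_{n_k}\to\int\zeta\,\d\chm2{}\beta$; applying the same passage to $\beta_{n_k}$ identifies $\chm2{}\beta=\mu$, and a standard subsequence argument upgrades this to $\mu_n\weakto\mu$.

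For the converse direction, fix $\eps\in(0,\pi/2)$ and choose a compact set $K\subset X$ with $\mu(\partial K)=0$ and $\mu(X\setminus K)<\eps$. Refine an open covering of $K$ by balls of radius $<\eps$ into a finite Borel partition $\{A_1,\ldots,A_N\}$ with $\mu(\partial A_k)=0$ (always possible since $\mu$ charges only countably many concentric spheres). Pick $x_k\in A_k$, set $T(x):=x_k$ for $x\in A_k$, and define the discrete measures $\tilde\mu:=\sum_k\mu(A_k)\delta_{x_k}$ and $\tilde\mu_n:=\sum_k\mu_n(A_k)\delta_{x_k}$. The plan $(\mathrm{id},T)_\sharp(\mu|_K)$ has marginals $\mu|_K$ and $\tilde\mu$ with unit densities, so \eqref{eq:157} gives $\HK^2(\mu|_K,\tilde\mu)\le\ell(\eps)\mu(X)$, and the analogous bound holds with $\mu$ replaced by $\mu_n$. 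The Dirac formula $\HK^2(a\delta_x,b\delta_x)=(\sqrt a-\sqrt b)^2$ from \eqref{eq:344} together with subadditivity \eqref{eq:136bis} yields
\[
\HK^2(\tilde\mu_n,\tilde\mu)\le\sum_{k=1}^N\big(\sqrt{\mu_n(A_k)}-\sqrt{\mu(A_k)}\big)^2\longrightarrow 0
\]
by the Portmanteau theorem applied to each continuity set $A_k$, while $\HK^2(\mu_n,\mu_n|_K)\le\mu_n(X\setminus K)\to\mu(X\setminus K)<\eps$ follows from \eqref{eq:137}, subadditivity, and the fact that $K$ is a $\mu$-continuity set. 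Assembling these bounds via the triangle inequality gives $\limsup_n\HK(\mu_n,\mu)\le 2\sqrt\eps+2\sqrt{\ell(\eps)\mu(X)}$, and letting $\eps\to0$ concludes the argument.

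The main obstacle is the second direction: the Hellinger component of $\HK$ is sensitive to purely singular mass discrepancies between $\mu_n$ and $\mu$, so a naive diagonal coupling (e.g.\ $(\mathrm{id},\mathrm{id})_\sharp \nu$) only recovers the much stronger Hellinger distance. The partition-based discretization is the key technical device, letting the transport component of $\HK$ (with its logarithmic cost bounded by $\ell(\eps)$ on sets of diameter $\eps$) absorb the horizontal displacement, while the reaction component only pays for the vanishing discrepancies $\mu_n(A_k)-\mu(A_k)$ on $\mu$-continuity sets.
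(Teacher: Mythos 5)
Your first implication ($\HK\to 0\Rightarrow$ weak convergence) is essentially the paper's argument: both lift via Corollary~\ref{cor:HK-W1} to probability plans on a fixed $\cball R$, use Lemma~\ref{le:compactnessH} for tightness, extract a $\Wc$-convergent subsequence, and test against $\varphi_\zeta([x,r])=\zeta(x)r^2$. The paper phrases it as a proof by contradiction, but the content is identical.

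For the converse implication you take a genuinely different route. The paper's argument is a one-line lift: set $\alpha_n:=\frp_\sharp\big(m_n^{-1}\mu_n\otimes\delta_{\sqrt{m_n}}\big)$ with $m_n=\mu_n(X)$, observe $\alpha_n\weakto\alpha$ in $\cP(\tY)$ with convergence of second moments (which are just $m_n\to m$), so $\Wc(\alpha_n,\alpha)\to 0$, and conclude by the submersion property $\HK\le\Wc\circ$lift. The mass discrepancy is absorbed into a small radial shift on the cone, sidestepping any entropy/Hellinger computation. Your discretization by a finite partition of small-diameter continuity sets is more elementary and does not appeal to the cone-lift machinery for the estimate itself, but it is longer and uses the $\HK^2=\LET$ identity (via \eqref{eq:157}) together with the single-Dirac formula \eqref{eq:344}. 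Both approaches are valid; the paper's is the one that exploits the geometry of the cone most directly, while yours makes transparent how the transport term (cost $\ell(\eps)$) absorbs horizontal displacement while the Hellinger term only pays for $\mu_n(A_k)\to\mu(A_k)$.

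There is one technical gap in your converse direction: the existence of a \emph{compact} set $K$ with both $\mu(X\setminus K)<\eps$ and $\mu(\partial K)=0$ is not automatic in a general separable metric space, since the closed $r$-neighborhood of a compact set need not be compact (e.g.\ in $\ell^2$). You use $\mu(\partial K)=0$ to pass the tail bound $\mu_n(X\setminus K)\to\mu(X\setminus K)$ through Portmanteau, but you cannot choose such a $K$ in general. The fix is to invoke the converse Prokhorov/Le~Cam statement, Theorem~\ref{thm:Prokhorov}(ii): since $(X,\sfd)$ is separable metric and $\mu_n\weakto\mu$, the sequence $\{\mu_n\}$ is equally tight, so you can choose a compact $K$ with $\sup_n\mu_n(X\setminus K)<\eps$ directly (and then $\mu(X\setminus K)\le\eps$ by lower semicontinuity on the open complement). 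You no longer need $K$ to be a $\mu$-continuity set; only the partition pieces $A_k$ need negligible boundary, which is arranged exactly as you describe by choosing the covering ball radii so that $\mu(\partial B(y_j,r_j))=0$. With this adjustment your argument is complete.
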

\begin{proof}
  Let us first suppose that $\lim_{n\to\infty}\HK(\mu_n,\mu)=0.$ 
  We argue by contradiction and we assume that there exists a
  function $\zeta\in \rmC_b(X)$ and a subsequence 
  (still denoted by $\mu_n$) such that 
  \begin{equation}
    \label{eq:317}
    \inf_n \Big|\int_X \zeta\,\d\mu_n-\int_X\zeta\,\d\mu\Big|>0.
  \end{equation}
  The first estimate of \eqref{eq:137} and the triangle inequality 
  show that 
  \begin{displaymath}
    \limsup_{n\to\infty}\mu_n(X)\le\limsup_{n\to\infty}
    \big(\HK(\mu_n,\mu)+\HK(\mu,\eta_0)\big)^2=\mu(X),    
  \end{displaymath}
  so that \WWW $\sup_n\mu_n(X)=M^2<\infty$. \EEE
  By Corollary \ref{cor:HK-W1} we can find measures 
  $\alpha_n,\alpha_n'\in \cP_2(\tY)$ 
  concentrated on \WWW $\cball {2M}$ such that 
  \begin{displaymath}
    \chm2{}{\alpha_n}=\mu,\quad
    \chm2{}\alpha_n'=\mu_n,\quad
    \Wc(\alpha_n,\alpha_n')=\HK(\mu,\mu_n).
  \end{displaymath}
  By Lemma \ref{le:compactnessH} the sequence $(\alpha_n)_{n\in\N}$
  is equally tight in $\cP_2(\tY)$; since it is also uniformly bounded
  there exists a subsequence $k\mapsto n_k$ such that 
  $\alpha_{n_k}$ weakly converges to a limit $\alpha\in \cP_2(\tY)$.
  Since $\alpha_n$ is concentrated on \WWW $\cball {2M}$ 
  we also have $\lim_{k\to\infty}\Wc(\alpha_{n_k},\alpha)=0$ 
  and therefore $\chm2{}\alpha=\mu$, 
  $\lim_{k\to\infty}\Wc(\alpha_{n_k}',\alpha)=0$.

  We thus have
  \begin{displaymath}
    \lim_{k\to\infty}\int_X \zeta(x)\,\d\mu_{n_k}=
    \lim_{k\to\infty}\int_{\tY} \zeta(\sfx)\sfs^2\,\d\alpha_{n_k}'=
    \int_{\tY} \zeta(\sfx)\sfs^2\,\d\alpha=
    \int_X \zeta(x)\,\d\mu
  \end{displaymath}
  which contradicts \eqref{eq:317}.

  In order to prove the converse implication, let us 
  suppose that $\mu_n$ is converging weakly to $\mu$ in $\cM(X)$.
  If $\mu$ is the null measure $\eta_0=0$, then $\lim_{n\to\infty}\mu_n(X)=0$
  so that $\lim_{n\to\infty}\HK(\mu_n,\mu)=0$ 
  by \eqref{eq:137}.

  So we can suppose that $m:=\mu(X)>0$ and \WWW have 
  $m_n:=\mu_n(X)\ge m/2>0$ for sufficiently large $n$. 
  We now consider the measures 
  $\alpha_n,\alpha\in \cP(\tY)$ given by 
  \begin{displaymath}
    \alpha_n:=\frp_\sharp\Big(m_n^{-1}\mu_n\otimes
    \delta_{\sqrt{m_n}}\Big) \ \text{ and } \ 
    \alpha:=\frp_\sharp\Big(m^{-1}\mu\otimes
    \delta_{\sqrt{m}}\Big).
  \end{displaymath}
  Since $\chm2{}\alpha_n=\mu_n$ and $\chm2{}\alpha=\mu$, 
  by \eqref{eq:395} we have
  $\HK(\mu_n,\mu)\le  \Wc(\alpha_n,\alpha).$
  % \begin{displaymath}
  %   \HK(\mu_n,\mu)\le 
  %   % \min\Big\{\int \sfdc^2\,\d\paalpha:
  %   % \paalpha\in \cP(\tyY),\ \pi^1_\sharp
  %   % \paalpha=\alpha_n,\pi^2_\sharp\paalpha=\alpha\Big\} 
  %   % =
  %   W_{\sfdc}(\alpha_n,\alpha),
  % \end{displaymath}
  % the squared $\rmL^2$-Wasserstein distance in $\cP(\tY)$
  % between $\alpha_n$ and $\alpha$. 
  Since $m_n^{-1}\mu_n$ is weakly converging to $m^{-1}\mu$ in
  $\cP(X)$
  and $m_n\to m$, it is easy to check that 
  $m_n^{-1}\mu_n\otimes
    \delta_{\sqrt{m_n}}$ weakly converges to $m^{-1}\mu\otimes
    \delta_{\sqrt{m}}$ in $\cP(Y)$ and therefore $\alpha_n$ 
    weakly converges to $\alpha$ in $\cP(\tY)$ by 
    the continuity of the projection $\frp$.
    \WWW Hence, in order to conclude that $\Wc(\alpha_n,\alpha) \to0$
    it is now sufficient to prove
  the convergence of their quadratic moments
  with respect to the vertex $\fro$. However, this is
  is immediate because of 
  \begin{displaymath}
    \lim_{n\to\infty}\int \sfdc^2(\ty,\fro)\,\d\alpha_n=
    \lim_{n\to\infty}\int \sfs^2\,\d\alpha_n=
    \lim_{n\to\infty}m_n=m=\int \sfdc^2(\ty,\fro)\,\d\alpha.    \qedhere
  \end{displaymath}
\end{proof}
\begin{corollary}[Compactness]
  \label{cor:compactness}
  If $(X,\sfd)$ is a compact metric space 
  then $(\cM(X),\HK)$ is a proper metric space, i.e.~every 
  bounded set is relatively compact.
\end{corollary}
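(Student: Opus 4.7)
The plan is to combine the two ingredients already at hand: the mass control coming from $\HK(\mu,\eta_0)^2=\mu(X)$ together with Prokhorov's theorem, which in the compact setting has a trivial tightness hypothesis. More precisely, let $\mathcal B\subset\cM(X)$ be $\HK$-bounded, i.e.\ there exist $\bar\mu\in\cM(X)$ and $D<\infty$ with $\HK(\mu,\bar\mu)\le D$ for all $\mu\in\mathcal B$. First I would use the triangle inequality together with the identity $\HK(\mu,\eta_0)=\sqrt{\mu(X)}$ from \eqref{eq:137} to obtain
\begin{equation*}
  \sqrt{\mu(X)}=\HK(\mu,\eta_0)\le \HK(\mu,\bar\mu)+\HK(\bar\mu,\eta_0)\le D+\sqrt{\bar\mu(X)}=:M,
\end{equation*}
so that $\sup_{\mu\in\mathcal B}\mu(X)\le M^2<\infty$, and $\mathcal B$ is bounded in total variation.

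Next, since $(X,\sfd)$ is compact, the equal tightness condition \eqref{eq:25} is satisfied in the trivial way by choosing $K_\eps=X$ for every $\eps>0$. Thus $\mathcal B$ is bounded and equally tight in $\cM(X)$, so Prokhorov's theorem in the form of Theorem \ref{thm:Prokhorov} (point (i), the locally compact case) gives that $\mathcal B$ is relatively compact with respect to the narrow topology on $\cM(X)$.

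Finally, I would invoke Theorem \ref{thm:topo1}, which asserts that $\HK$ metrizes the weak-narrow topology on $\cM(X)$ (note that on the compact, hence separable, space $X$ weak and narrow convergence coincide). Relative compactness in the narrow topology is therefore the same as relative compactness in $(\cM(X),\HK)$, which shows that every $\HK$-bounded set is relatively compact, i.e.\ $(\cM(X),\HK)$ is proper.

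There is no real obstacle here: the argument is a direct transfer from the narrow setting, the only points to check being (a) that $\HK$-boundedness implies mass-boundedness (a one-line use of \eqref{eq:137} and the triangle inequality) and (b) that equal tightness is automatic when $X$ is compact. Both are immediate, and the equivalence of topologies given by Theorem \ref{thm:topo1} does the rest.
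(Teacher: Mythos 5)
Your argument is correct and is essentially the paper's own proof, only with the intermediate steps (mass bound via \eqref{eq:137} and the triangle inequality, trivial tightness on a compact $X$, Prokhorov, then Theorem \ref{thm:topo1} to pass from narrow to $\HK$ relative compactness) written out explicitly. The paper condenses all of this into two sentences, but the underlying route is identical.
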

\begin{proof}
  It is sufficient to notice that a set $\calC\subset \cM(X)$ is 
  bounded w.r.t.~$\HK$ if and only if $\sup_{\mu\in
    \calC}\mu(X)<\infty$. \WWW Then the classical weak sequential
  compactness of closed bounded sets in $\cM(X)$ gives the result. \EEE
\end{proof}

\WWW The following completeness result for $(\cM(X),\HK)$ is obtained by
suitable liftings of measures $\mu_i$ to probability measures
$\alpha_i\in \cP_2(\tY)$, supported in some $\cball\Theta$. Then the
completeness of the Wasserstein  space $(\cP_2(\tY),\Wc)$ is 
exploited. 

\begin{theorem}[Completeness of $(\cM(X),\HK)$]
  If $(X,\sfd)$ is complete than the metric space $(\cM(X),\HK)$ is complete.
\end{theorem}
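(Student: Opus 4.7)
The plan is to reduce completeness of $(\cM(X),\HK)$ to the well-known completeness of the Wasserstein space $(\cP_2(\tY),\Wc)$ via the contraction property $\HK(\chm2{}\alpha,\chm2{}\beta)\le \Wc(\alpha,\beta)$ from \eqref{eq:395}. Let $(\mu_n)$ be a Cauchy sequence in $(\cM(X),\HK)$. Using $\HK(\mu,\eta_0)=\sqrt{\mu(X)}$ from \eqref{eq:137} and the triangle inequality, we immediately obtain $\sup_n \mu_n(X)<\infty$. By extracting a subsequence (still denoted $(\mu_n)$) we may assume $\HK(\mu_n,\mu_{n+1})\le 2^{-n}$, so that
\[
\Theta := \sqrt{\mu_1(X)}+\sum_{n=1}^\infty \HK(\mu_n,\mu_{n+1})<\infty.
\]

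For each $N\ge 2$, apply the gluing Lemma \ref{le:gluing} to $\mu_1,\ldots,\mu_N$ to obtain a plan $\beta^N\in\cP_2(\pdyY N)$ concentrated on $(\cball\Theta)^{\otimes N}$ with $\chm 2i\beta^N=\mu_i$ and $\int\sfdc^2(\ty_{i-1},\ty_i)\,\d\beta^N=\HK^2(\mu_{i-1},\mu_i)$ for all relevant $i$. Set $\alpha_i^N:=\pi^i_\sharp\beta^N\in\cP_2(\tY)$; this is a lift of $\mu_i$ (i.e.~$\chm2{}\alpha_i^N=\mu_i$) supported in $\cball\Theta$, and by the definition of $\Wc$ the couplings $\pi^{i,j}_\sharp\beta^N$ yield
\[
\Wc(\alpha_i^N,\alpha_j^N)\le \sum_{k=i}^{j-1}\HK(\mu_k,\mu_{k+1}),\qquad i<j\le N.
\]

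For each fixed $i$, Lemma \ref{le:compactnessH} applied with $\cK_i=\{\mu_i\}$ gives equal tightness of $\{\alpha_i^N : N\ge i\}$ in $\cM(\tY)$; being probabilities, they are weakly relatively compact, and the uniform bound $\sfs\le \Theta$ on their support yields uniform bounds on their second moments, so relative compactness actually holds in $(\cP_2(\tY),\Wc)$ by the standard characterization \eqref{eq:160}. A diagonal extraction thus produces a subsequence $(N_k)$ such that for every $i$, $\alpha_i^{N_k}\to \alpha_i^\infty$ in $(\cP_2(\tY),\Wc)$. For every $\zeta\in \rmC_b(X)$ the function $\ty\mapsto \zeta(\sfx(\ty))\sfs(\ty)^2$, extended by $0$ at $\fro$, is continuous on $\tY$ with quadratic growth, so passing to the limit along $\Wc$-convergence yields $\chm2{}\alpha_i^\infty=\mu_i$. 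By lower semicontinuity of $\Wc$,
\[
\Wc(\alpha_i^\infty,\alpha_j^\infty)\le \sum_{k=i}^{j-1}\HK(\mu_k,\mu_{k+1}),
\]
so $(\alpha_i^\infty)_i$ is Cauchy in $(\cP_2(\tY),\Wc)$. Since $(\tY,\sfdc)$ is complete and separable, the Wasserstein space $\cP_2(\tY)$ is complete and we obtain $\alpha^\infty\in\cP_2(\tY)$ with $\Wc(\alpha_i^\infty,\alpha^\infty)\to 0$. Setting $\mu:=\chm2{}\alpha^\infty\in\cM(X)$ and applying the contraction \eqref{eq:395} gives $\HK(\mu_i,\mu)\le \Wc(\alpha_i^\infty,\alpha^\infty)\to 0$; since the original sequence was Cauchy, the whole sequence converges to $\mu$.

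The main obstacle is the fact that the plans $\beta^N$ produced by the gluing lemma are not consistent across $N$ (they need not satisfy $\pi^{1,\ldots,N}_\sharp\beta^{N+1}=\beta^N$), so one cannot simply pass to a projective limit. The resolution is the combination of (i) the uniform support bound $\cball\Theta$ furnished by part \eqref{eq:393bis} of Lemma \ref{le:gluing}, which encodes the Cauchy property of $(\mu_n)$, and (ii) the tightness of homogeneous liftings from Lemma \ref{le:compactnessH}, which together allow a diagonal extraction directly at the level of the marginals $\alpha_i^N$ and thereby avoid any Kolmogorov-style consistency argument.
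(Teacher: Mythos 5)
Your proof is correct and follows essentially the same strategy as the paper's: extract a geometrically summable subsequence, apply the gluing Lemma \ref{le:gluing} to produce lifts $\alpha_i^N\in\cP_2(\tY)$ supported uniformly in $\cball\Theta$, use the tightness Lemma \ref{le:compactnessH} with a diagonal extraction to obtain limits $\alpha_i^\infty$, and then invoke completeness of $(\cP_2(\tY),\Wc)$ together with the contraction \eqref{eq:395}. The only cosmetic difference is your explicit pairwise estimate $\Wc(\alpha_i^N,\alpha_j^N)\le\sum_{k=i}^{j-1}\HK(\mu_k,\mu_{k+1})$ via Minkowski's inequality in $L^2(\beta^N)$, whereas the paper records only consecutive differences and concludes the Cauchy property afterward; both are sound and the content is the same.
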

\begin{proof}
  We have to prove that every Cauchy sequence $(\mu_n)_{n\in \N}$ in
  $(\cM(X),\HK)$
  admits a convergent subsequence. 
  By exploiting the Cauchy property, we can find 
  an increasing sequence of integers $k\mapsto n(k)$ 
  such that 
  $\HK(\mu_m,\mu_{m'})\le 2^{-k}$ whenever $m,m'\ge n(k)$
  and we consider the subsequence $\mu_i':=\mu_{n(i)}$,
  
  %By choosing $\theta_i:=2^{i-1}$ 
  so that 
  \begin{align*}
    \sqrt{\mu_1(X)}+\sum_{i=2}^N
    \HK(\mu_{n(i)},\mu_{n(i-1)})\le \sqrt{\mu_1(X)}+1,
    % \Theta_N^2&=\Big(\sum_{i=1}^N \theta_i^{-1}\Big)
    % \Big(\theta_1\mu_1(X)+
    % \sum_{i=2}^N \theta_i \HK^2(\mu_{n(i)},\mu_{n(i-1)})\Big)
    % \\&\le 
    %     2\Big(\mu_1(X)+\sum_{i=2}^\infty 2^{-i+1}\Big)\le 
    %     2(\mu_1(X)+1),
  \end{align*}
  \nc and by applying the Gluing Lemma \ref{le:gluing},
  for every $N>0$ 
  we can find measures $\alpha_i^N\in\cP_2(\tY)$, $i=1,\ldots,N$,
  concentrated on $\cball \Theta$ with $ \Theta:=\sqrt{\mu_1(X)}+1 \nc
  %2(\mu_1(X)+1)
  $,
  such that 
  \begin{displaymath}
   \chm2{}\alpha_i^N=\mu_i' \ \text{ and } \ 
    \Wc(\alpha_i^N,\alpha_{i-1}^N)=\HK(\mu_i',\mu_{i-1}').
  \end{displaymath}
  For every $i$ the sequence $N\mapsto \alpha_i^N\in \cP_2(\tY)$
  is tight by Lemma \ref{le:compactnessH} and concentrated
  on the bounded set $\cball\Theta$, so that by Prokhorov Theorem
  it is relatively compact in $(\cP_2(\tY),\Wc)$.

  By a standard diagonal argument, we can find a further increasing subsequence
  $m\mapsto N(m)$ and limit measures $\alpha_i\in \cP_2(\tY)$ 
  such that $\lim_{m\to\infty}\Wc(\alpha_i^{N(m)},\alpha_i)=0$.
  The convergence with respect to $\Wc$ yields that
  \begin{displaymath}
    \chm2{}\alpha_i=\mu_i,\quad
    \Wc(\alpha_i,\alpha_{i-1})=\HK(\mu_i',\mu_{i-1}')\le 2^{i-1}.
  \end{displaymath}
  It follows that $i\mapsto \alpha_i$ is a Cauchy sequence in
  $(\cP_2(\tY),\Wc)$ which is a complete metric space 
  \cite[Prop.~7.1.5]{Ambrosio-Gigli-Savare08}
  and therefore
  there exists \WWW $\alpha\in \cP_2(\tY)$ such that
  $\lim_{i\to\infty}\Wc(\alpha_i,\alpha)=0$. Setting
  $\mu:=\chm2{}\alpha\in \cM(X)$ we thus obtain
  $\lim_{i\to\infty}\HK(\mu_i',\mu)=0$.
\end{proof}

We conclude this section by proving a simple comparison estimate
for $\HK$ with the \emph{Bounded Lipschitz} metric (cf.\ 
\cite[Sec.~11.3]{Dudley02}), see also \cite[Thm.\ 3]{KMV15}. 
The Bounded Lipschitz metric is \nc defined via
\begin{equation}
  \label{eq:3}
  \BL(\mu_1,\mu_2):=\sup\Big\{\int\zeta\,\d(\mu_1-\mu_2):\zeta\in
  \Lip_b(X),\quad
  \sup_X |\zeta|+\Lip(\zeta,X)\le 1\Big\}.
\end{equation}
We do not claim that the constant $C_*$ below is optimal.

\begin{proposition}
  For every $\mu_1,\mu_2\in \cM(X)$ we have
  \begin{equation}
    \label{eq:60}
    \BL(\mu_1,\mu_2)\le
    C_*\Big(\sum_i\mu_i(X)\Big)^{1/2}\HK(\mu_1,\mu_2),\ \text{ where }
    C_*:=\sqrt{2+\pi^2/2}.
  \end{equation}
\end{proposition}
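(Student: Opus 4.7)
The plan is to lift the problem to the cone $\tY$ via an optimal Hellinger–Kantorovich plan, and then estimate $\int\zeta\,\d(\mu_1-\mu_2)$ by an algebraic decomposition of $\zeta(x_1)\s_1^2-\zeta(x_2)\s_2^2$ whose two summands are designed to pair with the two summands of $\sfdc^2$ in \eqref{eq:94pre}. Concretely, I would first fix, by Theorem \ref{thm:existenceHK}, a plan $\aalpha\in\OptHK(\mu_1,\mu_2)$ concentrated on $\tyY[R]$ with $R^2=M^2:=\mu_1(X)+\mu_2(X)$, so that $\int\sfdc^2\,\d\aalpha=\HK^2(\mu_1,\mu_2)$; the definition \eqref{eq:155} of the homogeneous marginals then yields $\int\zeta\,\d\mu_i=\int\zeta(x_i)\s_i^2\,\d\aalpha$. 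Setting $s:=\sup_X|\zeta|$ and $L:=\Lip(\zeta,X)$, with $s+L\le1$, the algebraic identity
\begin{equation*}
\int\zeta\,\d(\mu_1-\mu_2)=\int\bigl[(\zeta(x_1)\s_1+\zeta(x_2)\s_2)(\s_1-\s_2)+(\zeta(x_1)-\zeta(x_2))\s_1\s_2\bigr]\,\d\aalpha
\end{equation*}
is what makes the analysis work, because it mirrors $\sfdc^2=(\s_1-\s_2)^2+4\s_1\s_2\sin^2(\sfdp(x_1,x_2)/2)$.

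For the first summand I would bound $|\zeta(x_1)\s_1+\zeta(x_2)\s_2|\le s(\s_1+\s_2)$ and apply Cauchy–Schwarz, using $\int(\s_1-\s_2)^2\,\d\aalpha\le\int\sfdc^2\,\d\aalpha=\HK^2$ and $\int(\s_1+\s_2)^2\,\d\aalpha\le 2\int(\s_1^2+\s_2^2)\,\d\aalpha=2M^2$ via the homogeneous-marginal identity. This yields the bound $\sqrt{2}\,sM\HK$. For the second summand the crucial ingredient is the cone-Lipschitz estimate
\begin{equation*}
|\zeta(x_1)-\zeta(x_2)|\le K\,\sin\bigl(\sfdp(x_1,x_2)/2\bigr),\qquad K:=\max(\pi L,2s),
\end{equation*}
obtained by case analysis: when $\sfd(x_1,x_2)\le\pi$ one combines $|\zeta(x_1)-\zeta(x_2)|\le L\sfd$ with $\sfd\le\pi\sin(\sfd/2)$ (itself a consequence of $\sin x\ge 2x/\pi$ on $[0,\pi/2]$); when $\sfd(x_1,x_2)>\pi$ one uses $|\zeta(x_1)-\zeta(x_2)|\le 2s=2s\sin(\sfdp/2)$ because $\sfdp=\pi$ forces $\sin(\sfdp/2)=1$. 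A second Cauchy–Schwarz then gives
\begin{equation*}
\Bigl|\int(\zeta(x_1)-\zeta(x_2))\s_1\s_2\,\d\aalpha\Bigr|\le K\Bigl(\int\s_1\s_2\,\d\aalpha\Bigr)^{1/2}\Bigl(\int\s_1\s_2\sin^2(\sfdp/2)\,\d\aalpha\Bigr)^{1/2}\le \frac{KM\HK}{2\sqrt{2}},
\end{equation*}
using $\int\s_1\s_2\,\d\aalpha\le M^2/2$ and $4\s_1\s_2\sin^2(\sfdp/2)\le\sfdc^2$ from \eqref{eq:94pre}.

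Adding the two estimates produces $|\int\zeta\,\d(\mu_1-\mu_2)|\le\bigl(\sqrt{2}\,s+K/(2\sqrt{2})\bigr)M\HK(\mu_1,\mu_2)$. Invoking $K^2\le\pi^2L^2+4s^2$ and maximising the prefactor over $\{(s,L):s,L\ge 0,\,s+L\le 1\}$ yields the advertised bound with $C_*=\sqrt{2+\pi^2/2}$ (a closer scalar optimisation actually gives $C_*\le 3/\sqrt{2}$, in line with the authors' comment that the constant is not claimed to be sharp). The main obstacle is the interplay between the ambient distance $\sfd$ and its truncation $\sfdp$ in the cone distance $\sfdc$: no single global inequality of the form $|\zeta(x_1)-\zeta(x_2)|\le L\sfdp$ holds when $\sfd>\pi$, so the large-distance region must be absorbed through the boundedness factor $2s$ combined with $\sin(\sfdp/2)=1$ on that region. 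It is precisely this case analysis, together with the symmetric algebraic decomposition used in the first step, that makes both Cauchy–Schwarz estimates combine into a single power of $\HK(\mu_1,\mu_2)$ on the right-hand side rather than $\HK^2$.
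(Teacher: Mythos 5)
Your argument is correct and structurally parallel to the paper's: both lift to the cone via an optimal plan $\aalpha$, use the same algebraic splitting of $\zeta(x_1)\sfs_1^2-\zeta(x_2)\sfs_2^2$ designed to pair with the two summands of $\sfdc^2$ in \eqref{eq:94pre}, and invoke the Jordan-type bound $\sfd_\pi\le\pi\sin(\sfd_\pi/2)$. The genuine difference is in where Cauchy--Schwarz is applied. The paper applies it pointwise to get the cone-Lipschitz estimate $|\zeta(\ty_1)-\zeta(\ty_2)|\le C_*\sqrt{\sfs_1^2+\sfs_2^2}\,\sfdc(\ty_1,\ty_2)$, then uses the support restriction $\sfs_1^2+\sfs_2^2\le R^2$ (available only after the rescaling of \eqref{eq:379}) and a final Cauchy--Schwarz against the normalized total mass $\aalpha(\tyY)=1$. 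You instead split the two integrals and apply Cauchy--Schwarz at the integral level, which needs only the homogeneous-marginal identities $\int\sfs_i^2\,\d\aalpha=\mu_i(X)$ and the optimality $\int\sfdc^2\,\d\aalpha=\HK^2$, so neither the support bound nor the normalization is required --- a slightly cleaner argument. Tracking $s=\sup|\zeta|$ and $L=\Lip(\zeta)$ separately via $K=\max(\pi L,2s)$, rather than crudely bounding both by $1$ as in the paper, is what buys the sharper constant $3/\sqrt2\approx 2.12<C_*\approx 2.63$, consistent with the authors' disclaimer that $C_*$ is not optimal.
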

\begin{proof}
  Let $\xi\in \Lip_b(X)$ with $\sup_X |\xi|+\Lip(\xi,X)\le 1$
  and let $\aalpha\in \cP(\tyY)$ optimal for \eqref{eq:379}
  and concentrated on $\tyY[R]$ with $R^2:=\mu_1(X_1)+\mu_2(X_2)$.
  Notice that 
\begin{displaymath}
    |\xi(x_1)-\xi(x_2)|\le \max(\sfd(x_1,x_2),2)
    \le 2\sfd_2(x_1,x_2)\le 2\sfdp(x_1,x_2)
    \le 2\pi\sin(\sfdp(x_1,x_2)/2)
\end{displaymath}
We consider the function
  $\zeta:\tY\to \R$ defined by 
  $\zeta(\ty):=\xi(\sfx)\sfs^2$. Hence, 
  $\zeta$ satisfies
  \begin{align*}
    \Big|\zeta(\ty_1)-\zeta(\ty_2)\Big|&\le 
    |\xi(\sfx_1)-\xi(\sfx_2)|\sfs_1\sfs_2
    +\big(|\xi(\sfx_1)|\sfs_1+|\xi(\sfx_2)|\sfs_2\big)
                                       |\sfs_1-\sfs_2|
                                       \\
    &\le 2\pi\sin(\sfdp(\sfx_1,\sfx_2)/2)\sfs_1\sfs_2+
      (\sfs_1+\sfs_2) |\sfs_1-\sfs_2|
      \\&\topref{eq:94pre}\le 
          \sqrt{(\sfs_1+\sfs_2)^2+\pi^2\sfs_1\sfs_2} \:
          % \\Big)^{1/2}
          \, \sfdc(\ty_1,\ty_2)
    \le C_* %\sqrt{2+\pi^2/2} 
    \sqrt{\sfs_1^2+\sfs_2^2}\: \sfdc(\ty_1,\ty_2)
  \end{align*}
  Since the optimal plan $\aalpha$ is concentrated on 
  $\{\sfs_1^2+\sfs_2^2\le R^2\}$ we obtain
  \begin{align*}
    \Big|\int_X \xi\,\d(\mu_1-\mu_2)\Big|&=
    \Big|\int \zeta(\ty_1)-\zeta(\ty_2)\,\d\aalpha\Big|
    \le 
    \int|\zeta(\ty_1)-\zeta(\ty_2)|\,\d\aalpha
    \\&
        \le C_* R\int \sfdc(\ty_1,\ty_2)\,\d\aalpha
        \le C_* R %\sqrt{\mu_1(X_1)+\mu_2(X_2)}
        \, \HK(\mu_1,\mu_2).\qedhere
  \end{align*}
\end{proof}

\subsection{Hellinger-Kantorovich distance and  Entropy-Transport functionals}
\label{subsec:HKET}

In this section we will establish our main result connecting $\HK$
with $\LET$.

It is clear that the definition of $\HK$ does not change if we replace
the distance $\sfd$ on $X$ by its truncation $\sfd_\pi=\sfd\land\pi$.
It is less obvious that we can even replace the threshold $\pi$ with
$\pi/2$ and use the distance $\tsfdc$ of Remark
\ref{rem:twotruncations} in the formulation of the
Hellinger-Kantorovich Problem \ref{pr:3}.  
This property is related to
the particular structure of the homogeneous marginals (which are not
affected by masses concentrated in the vertex $\fro$ of the cone
$\tY$);  in \cite[Sect.\,3.2]{LMS15} it is 
is called the \emph{presence of a sufficiently large reservoir}, which
shows that transport over distances larger than $\pi/2$ is never
optimal, since it is cheaper to transport into or out of the reservoir in
$\fro$).  
%\WWW cf.\ the last paragraph in Section \ref{subsec:HK}). 
This
\EEE will provide an essential piece of information to connect
the $\HK$ and the $\LET$ functionals.

In order to prove \WWW that transport only occurs of distances $\leq
\pi/2$ we define the subset 
\begin{equation}
  \label{eq:tyYprime}
\tyY':=\big\{\tsfdc<\sfdc\big\}=\big\{(\ty_1,\ty_2)\in
\tY_\soo\times\tY_\soo:
\sfd(\sfx_1,\sfx_2)>\pi/2\big\}
\end{equation} 
and consider the partition $(\tyY',\tyY'')$ of $\tyY=\tY\times\tY$,
where 
%% $\tyY'$ is defined by \eqref{eq:389} and 
$\tyY'':=\tyY\setminus \tyY'=\big\{\tsfdc=\sfdc\big\}$.
Observe that 
\begin{equation}
  \label{eq:390}
  \tyY_\soo'':=\tyY''\cap (\tY_\soo\times \tY_\soo)=\big\{(\ty_1,\ty_2)\in
  \tY_\soo\times \tY_\soo:\sfd(\sfx_1,\sfx_2)\le \pi/2\big\}.
\end{equation}
\WWW In the following lemma we show that minimizers $\aalpha \in
\OptHK(\mu_1,\mu_2)$ are concentrated on $\tyY''$, i.e.\
$\aalpha(\tyY')=0$ which holds  if and only if
$\aalpha_\soo=\aalpha\res(\tY_\soo\times \tY_\soo)$ is concentrated on
$\tyY_\soo''$. \WWW To handle the mass that is transported into or out
of $\fro$, we use the continuous projections \EEE
    \begin{equation}
    \label{eq:386}
    \frg_i:\tyY\to \tyY,\quad
    \frg_1(\ty_1,\ty_2):=(\ty_1,\fro),\quad
    \frg_2(\ty_1,\ty_2):=(\fro,\ty_2).
  \end{equation}
\begin{lemma}[Plan restriction]
  \label{le:restriction}
  For every $\aalpha\in \cM(\tyY)$ 
  the plan
  \begin{equation}
    \label{eq:402}
    \bar\aalpha:=\aalpha''+(\frg_1)_\sharp\aalpha'+(\frg_2)_\sharp
    \aalpha'\quad
    \text{with}\quad
    \aalpha':=\aalpha\res\tyY',\quad \aalpha'':=\aalpha\res\tyY'',
  \end{equation}
  is concentrated on $\tyY''$, has the same homogeneous marginals as $\aalpha$,
  i.e.~$\chm2i\bar\aalpha=\chm2i\aalpha$,
  and 
  \begin{equation}
    \label{eq:403}
    \int_{\tyY}\sfdc^2\,\d\bar\aalpha=
    \int_{\tyY}\tsfdc^2\,\d\bar\aalpha\le \int_{\tyY}\sfdc^2\,\d\aalpha,
  \end{equation}
  where the inequality is strict if $\aalpha(\tyY')>0$.
  In particular for every $\mu_1,\mu_2\in \cM(X)$
  \begin{equation}
    \label{eq:404}
    \HK^2(\mu_1,\mu_2)=\min\Big\{
    \int\tsfdc^2(\ty_1,\ty_2)\,\d\taalpha: \taalpha\in
      \cM_2(\tyY),\ 
      \chm2i\taalpha=\mu_i\Big\}.
  \end{equation}
\end{lemma}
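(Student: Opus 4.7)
The lemma splits into three assertions about the modified plan $\bar\aalpha$ and the dual consequence \eqref{eq:404}. The guiding observation is that on $\tyY'$ we have $\sfd(\sfx_1,\sfx_2)>\pi/2$, so $\sfdp(\sfx_1,\sfx_2)\in(\pi/2,\pi]$, whence $\cos(\sfdp(\sfx_1,\sfx_2))\le 0$ (and strictly negative), while $\sfdpt(\sfx_1,\sfx_2)=\pi/2$, so $\cos(\sfdpt(\sfx_1,\sfx_2))=0$. Thus on $\tyY'$ we have
\[
\tsfdc^2(\ty_1,\ty_2)=\sfr_1^2+\sfr_2^2=\sfdc^2(\ty_1,\fro)+\sfdc^2(\fro,\ty_2)\le \sfdc^2(\ty_1,\ty_2),
\]
with strict inequality whenever $\sfr_1\sfr_2>0$, which holds on $\tyY'\subset \tY_\soo\times\tY_\soo$.

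First I will check that $\bar\aalpha$ is supported in $\tyY''$: $\aalpha''$ is already supported there by definition, while for points of the form $(\ty_1,\fro)$ or $(\fro,\ty_2)$ the definition of $\tyY'$ (which requires both coordinates in $\tY_\soo$) excludes them, so the pushforwards $(\frg_i)_\sharp\aalpha'$ sit in $\tyY''$. Next I will verify that the homogeneous marginals are preserved. By linearity it suffices to check $\chm 21((\frg_1)_\sharp\aalpha')=\chm 21\aalpha'$ and $\chm 21((\frg_2)_\sharp\aalpha')=0$, which follow directly from the identities $\sfr\circ\sfr_1\circ\frg_1=\sfr_1$ and $\sfx_1\circ\frg_1=\sfx_1$, together with $\sfr_1\circ\frg_2\equiv 0$; the case $i=2$ is symmetric.

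For the cost comparison, since $\sfdc=\tsfdc$ on $\tyY''$ and $\bar\aalpha$ is supported there, the first equality in \eqref{eq:403} is automatic. The inequality is obtained by splitting $\aalpha=\aalpha'+\aalpha''$ and computing
\[
\int_{\tyY}\sfdc^2\,\d\aalpha-\int_{\tyY}\sfdc^2\,\d\bar\aalpha
=\int_{\tyY'}\bigl(\sfdc^2(\ty_1,\ty_2)-\sfr_1^2-\sfr_2^2\bigr)\,\d\aalpha'
=-2\int_{\tyY'}\sfr_1\sfr_2\cos(\sfdp(\sfx_1,\sfx_2))\,\d\aalpha',
\]
which is nonnegative by the opening observation, and strictly positive if $\aalpha(\tyY')>0$.

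Finally, \eqref{eq:404} follows at once: on the one hand, applying the construction to any optimal $\aalpha\in\OptHK(\mu_1,\mu_2)$ produces $\bar\aalpha$ with $\chm 2i\bar\aalpha=\mu_i$ and $\int\tsfdc^2\,\d\bar\aalpha=\int\sfdc^2\,\d\bar\aalpha\le\HK^2(\mu_1,\mu_2)$, so the minimum on the right of \eqref{eq:404} is bounded above by $\HK^2$; on the other hand, for any competitor $\taalpha$ with $\chm 2i\taalpha=\mu_i$, the same algebraic identity used above rewrites $\int\tsfdc^2\,\d\taalpha=\int\sfdc^2\,\d\bar\taalpha\ge\HK^2(\mu_1,\mu_2)$ since $\bar\taalpha\in\cHM 2{\mu_1}{\mu_2}$. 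The only real subtlety, which I do not expect to be difficult but which deserves care, is the bookkeeping of the pushforward identities for the homogeneous marginals through $\frg_1,\frg_2$, and the verification that the strict sign of $\cos(\sfdp)$ on $\tyY'$ indeed gives the strict inequality claim, for which one uses that $\tyY'\subset \tY_\soo\times\tY_\soo$ ensures $\sfr_1\sfr_2>0$ on the support of $\aalpha'$.
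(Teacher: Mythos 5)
Your proof is correct and follows essentially the same path as the paper's: check that $\bar\aalpha$ is concentrated on $\tyY''$ with the same homogeneous marginals, use that $\sfdc^2>\sfs_1^2+\sfs_2^2=\tsfdc^2$ on $\tyY'$ (because $\cos(\sfdp)<0$ there) for \eqref{eq:403}, and then obtain \eqref{eq:404} by applying the construction both to an optimal plan and to an arbitrary competitor. Your explicit identity $\int\tsfdc^2\,\d\taalpha=\int\sfdc^2\,\d\bar\taalpha$ makes the last step slightly more transparent than the paper's terse statement, but the underlying argument is the same. (Minor typo: ``$\sfr\circ\sfr_1\circ\frg_1=\sfr_1$'' should read $\sfr_1\circ\frg_1=\sfr_1$.)
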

\begin{proof}
  For every $\zeta\in \rmB_b(X)$, since $\sfs_1\circ\frg_2=0$ and $\sfs_1\circ\frg_1=\sfs_1$, we have
  \begin{align*}
    \int\zeta\,\d(\chm21\bar\aalpha)&=\int \zeta(\sfx_1)\sfs_1^2\,\d\bar\aalpha=
    \int \zeta(\sfx_1)\sfs_1^2\,\d\aalpha''+
                                    \sum_k\int \zeta(\sfx_1(\frg_k))\sfs_1(\frg_k)^2\,\d\aalpha'
    % \int \zeta(\sfx_1(\frg_2))\sfs_1(\frg_2)^2\,\d\aalpha'
                                   \\&=
    \int \zeta(\sfx_1)\sfs_1^2\,\d\aalpha''+
    \int \zeta(\sfx_1)\sfs_1^2\,\d\aalpha'
                                       =\int
                                       \zeta(\sfx_1)\sfs_1^2\,\d\aalpha=
                                       \int\zeta\,\d(\chm21\aalpha),
  \end{align*}
  so that $\chm21{\bar\aalpha}=\chm21{\aalpha}$; a similar calculation
  holds
  for $\chm22$ 
  so that $\bar\aalpha\in \cHM2{\mu_1}{\mu_2}$. Moreover,
  if $(\ty_1,\ty_2)\in \tyY'$ we easily get
  \begin{displaymath}
    \sfdc^2(\ty_1,\ty_2)>\sfs_1^2+\sfs_2^2=
    \sfdc^2(\frg_1(\ty_1,\ty_2))+\sfdc^2(\frg_2(\ty_1,\ty_2))
  \end{displaymath}
  so that whenever $\aalpha(\tyY')>0$ we get
  \begin{displaymath}
    \int\sfdc^2\,\d\bar\aalpha 
   = \int\big(\sfdc^2\circ\frg_1+\sfdc^2\circ\frg_2\big)\,\d\aalpha'+
     \int\sfdc^2\,\d\aalpha''
   < \int\sfdc^2\,\d\aalpha'+\int\sfdc^2\,\d\aalpha''
   = \int\sfdc^2\,\d\aalpha,
  \end{displaymath}
  which proves \eqref{eq:403} and characterizes the equality case.
  \eqref{eq:404} then follows by \eqref{eq:403} and 
  the fact that the homogeneous marginals of $\bar\aalpha$ and
  $\aalpha$ coincide.
\end{proof}
% Given $\aalpha\in \cM(\tyY)$ we will write
% \begin{equation}
%   \label{eq:382}
%   \aalpha_\soo:=\aalpha\res(\tY_\soo\times \tY_\soo),\quad
%   \bar\aalpha_\soo:=(\frqd_\soo)_\sharp\aalpha_\soo\in \cM(\yY),\quad
%   \gamma_i:=(\sfx_{\soo,i})_\sharp \aalpha_\soo,\quad
%   \mu_i:=\chm2i\aalpha=\varrho_i\gamma_i+\mu_i^\perp.
% \end{equation}
%
%%%%%%%%%%%%%%%%%%%%%%%%%%
% Insert from here
%%%%%%%%%%%%%%%%%%%%%%%%%%

\WWW In  \eqref{eq:404} we have established that $\aalpha \in
\OptHK(\mu_1,\mu_2)$ has support in $\tyY''$. This allows us to prove
the identity $\LET = \HK^2$. For this, we \EEE
introduce the open set $\tG \subset \tyY''$ via 
\begin{displaymath}
    \tG:=\Big\{([x_1,r_1],[x_2,r_2])\in \tyY:r_1r_2\neq 0,\
    \sfd(x_1,x_2)<\pi/2\Big\}
\end{displaymath}
and note that $\sfr_1\sfr_2\cos(\sfd_{\pi/2}(\sfx_1,\sfx_2))>0$ in
$\tG$. \WWW Recall also $\boldsymbol\frp=\frp{\otimes}\frp: \yY
\to \tyY$,  where $\frp$ is defined in \eqref{eq:proj}. \EEE

\begin{theorem}[$\HK^2=\LET$]
  \label{thm:main-equivalence}
  For all $\mu_1,\,\mu_2 \in \cM(X)$ we have
      \begin{equation}
      \label{eq:383}
      \HK^2(\mu_1,\mu_2)=\LET(\mu_1,\mu_2),
    \end{equation}
    and $\aalpha(\tyY')=0$ for optimal solution $\aalpha\in \cM(\tyY)$ of
    Problem \ref{pr:3} or of {\em (\ref{eq:135},b)}.  Moreover,
  \begin{enumerate}[(i)]
  \item $\aalpha\in \cM(\tyY)$ is an optimal plan
    % of Problem \ref{pr:3} or 
    for {\em (\ref{eq:135},b)}
    if and only if $\aalpha(\tyY')=0$ 
    % $\aalpha_\soo= \aalpha\res(\tY_\soo\times
    % \tY_\soo)$ is concentrated on the set 
    % \begin{equation}
    %   \label{eq:384}
    %   \tilde\tyY:=\big\{([x_1,\s_1];[x_2,\s_2])\in
    %   \tyY:\sfd(x_1,x_2)\le \pi/2\big\} 
    % \end{equation}
    and
    $\frqd_\sharp(\aalpha\res\tY_\soo{\times} \tY_\soo)$ is an optimal
    plan for \eqref{eq:84}--\eqref{eq:198}. 
  \item $\bar\aalpha\in \cM(\yY)$ is any optimal plan for
    \eqref{eq:84bis} if and only if $\aalpha:=\frpd_\sharp\bar\aalpha$
    is an optimal plan for the Hellinger-Kantorovich Problem
    \ref{pr:3}.
  \item If $\ggamma\in \cM(X\times X)$ belongs to
    $\OptLET(\mu_1,\mu_2)$ and $\varrho_i:X\to [0,\infty)$ are Borel
    maps so that $\mu_i=\varrho_i\gamma_i+\mu_i^\perp$, then
    $\bbeta:=\big(\frpd\circ(x_1,\varrho_1^{1/2}(x_1);x_2,\varrho_2^{1/2}(x_2))\big)_\sharp
    \ggamma$ is an optimal plan for \eqref{eq:135}--\eqref{eq:375},
    and it satisfies
    $\sfr_1\sfr_2\cos(\sfd_{\pi/2}(\sfx_1,\sfx_2))=1$
    $\bbeta$-a.e.; in particular $\bbeta$ is concentrated on $\tG$.
  \item If $\aalpha \in \cM(\yY)$ is an optimal plan for Problem
    \ref{pr:3} then $\tilde\aalpha:=\aalpha\res \tG$ 
    is an optimal plan for {\rm (\ref{eq:135},b)}.
    Moreover, 
    \\ \textbullet\  
    the plan $\bbeta:=\dil{\vartheta,2}{\tilde\aalpha}$,
    with $\vartheta:=\big(\sfr_1\sfr_2\cos(\sfd_{\pi/2}(\sfx_1,\sfx_2))\big)^{1/2}$,
    is an optimal plan 
    satisfying $\sfr_1\sfr_2\cos(\sfd_{\pi/2}(\sfx_1,\sfx_2))=1$
    $\bbeta$-a.e.\\
    \textbullet\ If $(X,\tau)$ is separable and metrizable, \nc
    $\ggamma:=(\sfx_1,\sfx_2)_\sharp\bbeta$ belongs to
    $\OptLET(\mu_1,\mu_2)$,\\
    \textbullet\ If $(X,\tau)$ is separable and metrizable, \nc
    $\bbeta=\big(\frpd\circ(x_1,\varrho_1^{1/2}(x_1);x_2,
    \varrho_2^{1/2}(x_2))\big)_\sharp \ggamma$. 
  \end{enumerate}
\end{theorem}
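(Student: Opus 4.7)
The plan is to identify $\HK^2$ with $\LET$ by a direct comparison of their equivalent ``minimum'' formulations, one on the cone $\tyY$ and one on the product $\yY=Y\times Y$, and then chase optimality through the correspondence. The starting point is Lemma \ref{le:restriction}: identity \eqref{eq:404} gives
\[
\HK^2(\mu_1,\mu_2)=\min_{\saalpha\in \cHM2{\mu_1}{\mu_2}}\int_{\tyY}\tsfdc^2\,\d\aalpha,
\]
and by \eqref{eq:388} the truncated cone cost satisfies
\[
\tsfdc^2\big([x_1,s_1],[x_2,s_2]\big)=s_1^2+s_2^2-2 s_1 s_2\cos\big(\sfd_{\pi/2}(x_1,x_2)\big)=\MPH{x_1}{s_1^2}{x_2}{s_2^2},
\]
which is exactly the integrand appearing in the $\LET$-representation \eqref{eq:84bis} from Theorem \ref{thm:mainHK3}. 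Since this function on $\yY$ depends on $x_i$ only through factors multiplied by $s_i$ (and is independent of $x_i$ when $s_i=0$), it is constant on the equivalence classes collapsed by $\frp$ and descends unambiguously to $\tsfdc^2$ on $\tyY$. The operators $\frpd_\sharp$ and $\frqd_\sharp$ therefore implement a cost- and homogeneous-marginal-preserving bijection between $\HM2{\mu_1}{\mu_2}$ and $\cHM2{\mu_1}{\mu_2}$, yielding $\HK^2(\mu_1,\mu_2)=\LET(\mu_1,\mu_2)$. The claim $\aalpha(\tyY')=0$ for any optimal $\aalpha$ is immediate from Lemma \ref{le:restriction}, since otherwise $\bar\aalpha$ in \eqref{eq:402} would strictly lower the cost while preserving both homogeneous marginals.

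For (i)--(ii) I would combine the above $\frpd_\sharp$/$\frqd_\sharp$ bijection with Lemma \ref{le:max-le}, which already ties Problem \ref{pr:3} to the relaxed version (\ref{eq:135},\ref{eq:375}) and singles out the role of the restriction to $\tY_\soo\times \tY_\soo$. For (iii), starting from $\ggamma\in\OptLET(\mu_1,\mu_2)$ with decomposition $\mu_i=\varrho_i\gamma_i+\mu_i^\perp$, the last part of Theorem \ref{thm:mainHK3} already supplies an optimal plan for \eqref{eq:84bis}; under the substitution $\sfs_i=\sqrt{\varrho_i}$ the optimality relation \eqref{eq:356}, namely $\sigma_1\sigma_2=\cos^2(\sfd_{\pi/2})$, rewrites as $\sfr_1\sfr_2\cos(\sfd_{\pi/2}(\sfx_1,\sfx_2))=1$ $\bbeta$-a.e., forcing $\bbeta$ to be concentrated on $\tG$.

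For (iv) I would start from an optimal $\aalpha\in \OptHK(\mu_1,\mu_2)$, which by the first part of the theorem lives on $\tyY''$; setting $\tilde\aalpha:=\aalpha\res \tG$ isolates the transporting part, the vertex contribution being captured by Lemma \ref{le:max-le}(iv). The dilation $\bbeta:=\dil{\vartheta,2}{\tilde\aalpha}$ with $\vartheta:=(\sfr_1\sfr_2\cos(\sfd_{\pi/2}(\sfx_1,\sfx_2)))^{1/2}$ preserves both the integrand of \eqref{eq:84bis} and the homogeneous marginals (by \eqref{eq:150bis} and the $1$-homogeneity of $H$), and produces the normalization $\sfr_1\sfr_2\cos(\sfd_{\pi/2}(\sfx_1,\sfx_2))=1$ $\bbeta$-a.e. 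When $(X,\tau)$ is separable and metrizable, the pushforward $\ggamma:=(\sfx_1,\sfx_2)_\sharp \bbeta$ is then an optimal $\LET$ plan and its densities $\varrho_i$ can be read off as $\sfs_i^2$; here separability enters through Lusin-type measurability in order to identify $\bbeta$ with the graph lift of $\ggamma$ through $(\varrho_1^{1/2},\varrho_2^{1/2})$.

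The main delicate point is the bookkeeping at the vertex $\fro$: mass living on $\{\sfr_1=0\}\cup\{\sfr_2=0\}$ does not affect the homogeneous marginals but must be correctly accounted for when matching minima and transporting optimality between the two formulations. This is precisely what Lemma \ref{le:restriction} and Lemma \ref{le:max-le} achieve, and it is also why in (iv) the rescaling must be performed on $\tG$ (where $\vartheta>0$) before gluing back the vertex-supported part.
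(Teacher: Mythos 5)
Your proof of \eqref{eq:383}, of the vanishing on $\tyY'$, and of parts (i)--(iii) follows essentially the same route as the paper: identify $\tsfdc^2$ (via \eqref{eq:404} from Lemma~\ref{le:restriction}) with the $\LET$ integrand of \eqref{eq:84bis}, use $\frpd_\sharp$/$\frqd_\sharp$ to pass between plans on $\yY$ and $\tyY$, and shuttle optimality through Lemma~\ref{le:max-le} and Theorems~\ref{thm:mainHK2}, \ref{thm:mainHK3}. One small inaccuracy: $\frpd_\sharp$ is not injective (it collapses $X\times\{0\}$ to $\fro$), so $\frpd_\sharp$, $\frqd_\sharp$ are not mutually inverse on all of $\HM2{\mu_1}{\mu_2}$; but because the cost is constant on those fibers and the homogeneous marginals ignore the vertex, the induced correspondence between minima is still exact, so the conclusion survives.

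The genuine gap is in part (iv). You correctly show that $\bbeta=\dil{\vartheta,2}{\tilde\aalpha}$ is optimal for \eqref{eq:135}--\eqref{eq:375} and satisfies $\sfr_1\sfr_2\cos(\sfd_{\pi/2}(\sfx_1,\sfx_2))=1$ $\bbeta$-a.e., but you then \emph{assert} that $\ggamma:=(\sfx_1,\sfx_2)_\sharp\bbeta$ is an optimal $\LET$ plan and that $\bbeta$ is the graph lift of $\ggamma$ through $(\varrho_1^{1/2},\varrho_2^{1/2})$, attributing this to ``Lusin-type measurability.'' That is not a proof: the constraint $\sfr_1\sfr_2\cos(\sfd_{\pi/2})=1$ does \emph{not} force the disintegration of $\bbeta$ over $\ggamma$ to be Dirac. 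For instance, over a single pair $(x_1,x_2)$ with $\sfd(x_1,x_2)<\pi/2$ the conditional law could spread mass along the whole hyperbola $\{\sfr_1\sfr_2=\mathrm{const}\}$. The missing argument is the following: after disintegrating $\beta_i:=\pi^i_\sharp\bbeta$ w.r.t.\ $\gamma_i:=(\sfx_i)_\sharp\bbeta$ as $\beta_{i,x_i}$ (this is where separability and metrizability of $(X,\tau)$ enter, via the disintegration theorem), one sets $\tilde\varrho_i(x)=\int_{\tY}\sfr^2\,\d\beta_{i,x}$ and applies Jensen's inequality with the \emph{strictly convex} entropy $\PE_0$ to get
\[
\int \PE_0(\sfr_i^2)\,\d\bbeta \;\ge\; \int_X \PE_0\big(\tilde\varrho_i\big)\,\d\gamma_i.
\]
Combining this with the monotonicity of $\log$ (using $\tilde\varrho_i\le\varrho_i$) and the reverse $\LET$ formulation, one obtains $\HK^2(\mu_1,\mu_2)\ge\LET(\mu_1,\mu_2)$ with a chain of inequalities that must all be equalities because $\HK^2=\LET$. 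Equality in Jensen, together with strict convexity of $\PE_0$, is precisely what forces $\beta_{i,x_i}$ to be a Dirac measure at $\sqrt{\varrho_i(x_i)}$, giving $\bbeta=\big(\frpd\circ(x_1,\varrho_1^{1/2};x_2,\varrho_2^{1/2})\big)_\sharp\ggamma$ and the optimality of $\ggamma$. Without this argument, the last two bullets of (iv) do not follow.
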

\begin{proof}
  Identity \eqref{eq:383} and the first statement
  immediately follow by combining the previous
  Lemma \ref{le:restriction} 
  with Remark \ref{rem:lifting} and \eqref{eq:84bis}.
 
  If $\aalpha$ is an optimal plan for the formulation
  (\ref{eq:135},b) we can apply Lemma \ref{le:max-le}(iii)
  to find $\tilde\aalpha\ge\aalpha$ optimal for \eqref{eq:104},
  so that $\aalpha(\tyY')\le \tilde\aalpha(\tyY')=0$.
  
  Since all the optimal plans for $\HK$ do not charge $\tyY'$, 
  combining Lemma \ref{le:max-le}, Remark \ref{rem:lifting}
  and Theorems \ref{thm:mainHK2} and \ref{thm:mainHK3}  statements
  (i), (ii), and (iii) follow easily.

  Concerning (iv), the optimality of $\tilde\aalpha$ 
  is obvious from the formulation \eqref{eq:375} and the optimality of 
  $\bbeta =\dil{\vartheta,2}{\tilde\aalpha} $ follows from the
  invariance of \eqref{eq:375} with respect to dilations. 
  We notice that $\bbeta$-almost everywhere in $\tG$ we have
  \begin{align*}
    \sum_i\PE_0(\sfr_i^2)+\sfc(\sfx_1,\sfx_2)
    &=
      \sum_i \sfr_i^2-1-\log\sfr_i^2
      -\log(\cos^2(\sfd_{\pi/2}(\sfx_1,\sfx_2)))
    \\&
        =
        \sum_i\sfr_i^2
        -2-2\log(\sfr_1\sfr_2\cos(\sfd_{\pi/2}(\sfx_1,\sfx_2)))
    \\&
        =
        \sfr_1^2+\sfr_2^2-2\sfr_1\sfr_2\cos(\sfd_{\pi/2}(\sfx_1,\sfx_2)),
  \end{align*}
  so that by \eqref{eq:135} we arrive at
  \begin{equation}
    \label{eq:add1}
    \int
    \Big(\sum_i\PE_0(\sfr_i^2)+\sfc(\sfx_1,\sfx_2)\Big)\,\d\bbeta
    +\sum_i\big(\mu_i(X)-\chm2i\bbeta(X)\big)=
    \HK^2(\mu_1,\mu_2).
  \end{equation}
  Let us now set $\ggamma:=(\sfx_1,\sfx_2)_\sharp \bbeta\in
  \cM(X\times X)$ and $\beta_i:=\pi^i_\sharp\bbeta\in \cM(\tY)$,    
  \WWW which yield $\gamma_i:=\pi^i_\sharp\ggamma=(\sfx_i)_\sharp \bbeta=
  \sfx_\sharp\beta_i\in \cM(X)$ and 
  $\tilde\mu_i:=\chm2i\bbeta=(\sfx_i)_\sharp(r_i^2\ggamma)=
  \sfx_\sharp(r^2\beta_i)$.  
  \WWW Denoting by $(\beta_{i,x_i})_{x_i\in X}$ the disintegration
  of $\beta_i$ with respect to $\gamma_i$ 
  (here we need the metrizability and separability of
  $(X,\tau)$, see \cite[Section 5.3]{Ambrosio-Gigli-Savare08}), \nc we find
  \begin{displaymath}
    \int_X \zeta\,\d\tilde\mu_i=\int_\tY\zeta(\sfx)r^2\,\d\beta_i=
    \int_X\Big(\int_\tY\zeta(\sfx)r^2\,\d\beta_{i,x}\Big)\,\d\gamma_i=
    \int_X \zeta(x)\Big(\int_\tY r^2\,\d\beta_{i,x}\Big)\,\d\gamma_i
  \end{displaymath}
  for all $\zeta\in \rmB_b(X)$,  so that 
  %\COMM{Alex: I interchanged $\mu_i'$ and $\tilde\mu_i$ and
  %  $\varrho$ and $\tilde\varrho$ to stay more consistent} 
  \begin{displaymath}
   \WWW \tilde\mu_i=\tilde\varrho_i \gamma_i \leq \mu_i \quad
   \text{with }\  \tilde\varrho_i(x):= \int_\tY r^2\,\d\beta_{i,x}. \EEE
  \end{displaymath}
  Applying Jensen inequality we obtain
  \begin{align*}
    \int\PE_0(\sfr_i^2)\,\d\bbeta
    &=
      \int\PE_0(\sfr_i^2)\,\d\beta_i=
      \int\Big(\int \PE_0(r_i^2)\,\d\beta_{i,x_i}(r_i)\Big)\,\d\gamma_i
    \\&  \ge \int \PE_0\Big(\int
      r_i^2\,\d\beta_{i,x_i}(r_i)\Big)\,\d\gamma_i
      = \int \PE_0\big(\tilde\varrho_i(x)\big)\,\d\gamma_i.
  \end{align*}
  \WWW Now $ \int \sfc(\sfx_1,\sfx_2)\,\d\bbeta=
    \int\sfc(x_1,x_2)\,\d\ggamma$ and \eqref{eq:add1} imply \EEE
  \begin{displaymath}
    \HK^2(\mu_1,\mu_2)\ge 
    \sum_i\int_X \PE_0(\varrho_i)\,\d\gamma_i+
    \int_{X\times X}\sfc\,\d\ggamma+\sum_i\nu_i(X)
    % \sum_i\mu_i(X)-\mu_i'(X)
    %\topref{eq:347}\ge \LET(\mu_1,\mu_2).
  \end{displaymath}
  with $\nu_i:=\mu_i- \WWW \tilde\mu_i \in \cM(X)$.
  \WWW Hence, $\mu_i = \tilde\varrho_i\gamma_i + \nu_i$ and the
  standard decomposition $\mu_i=\varrho_i\gamma_i+\mu_i^\perp$ (cf.\
  \eqref{eq:310}) imply  
  we get $\nu_i=
  \mu_i^\perp+(\varrho_i-\tilde\varrho_i)\gamma_i\ge
  \mu_i^\perp$. \WWW Hence, $\PE_0(s)= s-1-\log s$ and
  the monotonicity of the logarithm yield
  \begin{align*}
    \HK^2(\mu_1,\mu_2)&\geq 
                       \sum_i\Big(\int_X \PE_0(\tilde\varrho_i)\,\d\gamma_i 
                        +\nu_i(X)\Big)+\int\sfc\,\d\ggamma
                  \\&=
    \sum_i\Big(\int_X
    \Big(\PE_0(\tilde\varrho_i)+\varrho_i-\tilde\varrho_i\Big)\,\d\gamma_i+
    \mu_i^\perp(X)\Big)+\int\sfc\,\d\ggamma
    \\&\ge 
    \sum_i\Big(\int_X
    \PE_0(\varrho_i)\,\d\gamma_i+
    \mu_i^\perp(X)\Big)+\int\sfc\,\d\ggamma \ \geq \  \LET(\mu_1,\mu_2),
  \end{align*}
  \WWW where the last estimate follows from Theorem
  \ref{thm:mainHK1}(b). Above, the first inequality is strict if
  $\nu_i\neq \mu_i^\perp$ so that $\varrho_i>\tilde\varrho_i$ on some
  set with positive $\gamma_i$-measure.

  By the first statement of the Theorem it follows that $\ggamma\in
  \OptLET(\mu_1,\mu_2)$. \WWW Hence, all the inequalities are in fact
  identities, and we conclude $\tilde\varrho_i\equiv\varrho_i$. \EEE
  Since $\PE_0$ is strictly convex, the \WWW disintegration measure
  $\beta_{i,x_i}$ is a Dirac measure concentrated on
  $\sqrt{\varrho_i(x_i)}$, so that
  $\bbeta=\big(\frpd\circ(x_1,\varrho_1^{1/2}(x_1);
  x_2,\varrho_2^{1/2}(x_2))\big)_\sharp \ggamma$.
\end{proof}

We observe that the system $(\ggamma,\varrho_1,\varrho_2)$ 
provided by the previous Theorem enjoys 
a few remarkable properties, that are not obvious
from the original Hellinger-Kantorovich formulation.
\begin{enumerate}[a)]
\item First of all, the annihilated part $\mu_i^\perp$ of the measures
  $\mu_i$ is concentrated on the set
  \begin{displaymath}
    M_{i,j}:=\{x_i\in X: \sfd(x_i,\supp(\mu_j))\ge\pi/2\}
  \end{displaymath}
  When $\mu_i(M_{i,j})=0$ then $\mu_i\ll \gamma_i$.
  % and $(\ggamma,\varrho_1,\varrho_2)\in \rmB(\mu_1,\mu_2)$.

\item As a second property, an optimal plan $\ggamma\in
  \OptLET(\mu_1,\mu_2)$ provides an optimal plan
  $\aalpha=\big(\frpd\circ(x_1,\varrho_1^{1/2}(x_1);x_2,
  \varrho_2^{1/2}(x_2))\big)_\sharp \ggamma$ which is concentrated on
  the graph of the map $(\varrho_1^{1/2}(x_1);\varrho_2^{1/2}(x_2) )$
  from $X\times X$ to $\R_+\times \R_+$, where the maps $\varrho_i$ are
  independent, in the sense that $\varrho_i$ only depends on $x_i$.

\item A third important application of Theorem
  \ref{thm:main-equivalence} is the duality formula for the $\HK$
  functional which directly follows from \eqref{eq:204} of Theorem
  \ref{thm:mainHK2}.  We will state it in a slightly different form in
  the next theorem, whose interpretation will be clearer in the light
  of Section \ref{subsec:HJ}. It is based on the inf-convolution formula
  \begin{equation}
    \label{eq:427}
    \hspace*{-1em}\HJ{\pi/2}{1}\xi(x)=\inf_{x'\in X}\!\!\left(
    \frac{\xi(x')}{1{+}2\xi(x')}+
    \frac{\sin^2(\sfd_{\pi/2}(x,x'))}{2(1+2\xi(x'))}\right)
    =\inf_{x'\in X}\frac12
    \Big(1-\frac{\cos^2(\sfd_{\pi/2}(x,x'))}{ 1+2\xi(x')}\Big).
  \end{equation}
  where $\xi\in \rmB(X)$ with $\xi>-1/2$.
\end{enumerate}

\begin{theorem}[Duality formula for $\HK$]
  \label{thm:dualityHK} \mbox{}\\[-1.8em]
\begin{enumerate}[(i)]
\item If $\xi\in \rmB_b(X)$ with $\inf_X \xi>-1/2$ then the function
  $\PP_1\xi$ defined by \eqref{eq:427} belongs to $\Lip_b(X)$,
  satisfies $\sup_X \PP_1\xi<1/2$, and admits the equivalent
  representation
  \begin{equation}
    \label{eq:61}
    \PP_1\xi(x)=\inf_{x'\in B_{\pi/2}(x)}\frac12 
    \Big(1-\frac{\cos^2(\sfd_{\pi/2}(x,x'))}{ 1+2\xi(x')}\Big).
  \end{equation}
  In particular, if $\xi$ has bounded support then $\PP_1\xi\in
  \Lip_{bs}(X)$, the space of Lipschitz functions with bounded
  support.

\item Let us suppose that $(X,\sfd)$ is a separable metric space and
  $\tau$ is induced by $\sfd$.  For every $\mu_0,\mu_1\in \cM(X)$ we
  have
  \begin{equation}
    \label{eq:406}
    \frac 12 \HK^2(\mu_0,\mu_1)=
     \sup \Big\{
    \int\HJ{\pi/2}1\xi\,\d\mu_1-\int\xi\,\d\mu_0 \ :\ 
    \xi\in \Lip_{bs}(X),\ \inf_X \xi>-1/2\Big\}.
  \end{equation}
\end{enumerate}
\end{theorem}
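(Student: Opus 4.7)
\medskip\noindent
\emph{Plan for the proof.} My strategy is to treat (i) as a careful analysis of the nonlinear inf-convolution operator $\PP_1$, and to derive (ii) by a change of variables from the dual formulation of $\LET$ given in Theorem \ref{thm:mainHK2}(a), followed by an approximation argument. Throughout, I write $h_x(x';s):=\tfrac12\bigl(1-\cos^2(\sfd_{\pi/2}(x,x'))/(1+2s)\bigr)$, defined for $s>-1/2$, and note that $h_x(x';\cdot)$ is strictly increasing (derivative $\cos^2/(1+2s)^2\ge 0$) and that $x\mapsto h_x(x';s)$ is $\bigl(2(1+2s)\bigr)^{-1}$-Lipschitz since $\sin^2\circ \sfd_{\pi/2}$ is $1$-Lipschitz in~$x$.

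\medskip\noindent
\emph{Part (i).} Set $m:=\inf_X\xi>-1/2$ and $M:=\sup_X\xi<\infty$. At $x'=x$ we have $h_x(x;\xi(x))=\xi(x)/(1+2\xi(x))\le M/(1+2M)<1/2$, while for $x'\notin B_{\pi/2}(x)$ we have $\cos(\sfd_{\pi/2}(x,x'))=0$ and hence $h_x(x';\xi(x'))=1/2$. Thus the infimum in \eqref{eq:427} is never attained outside $B_{\pi/2}(x)$, proving \eqref{eq:61} and $\sup_X\PP_1\xi<1/2$. The Lipschitz estimate follows by writing $\PP_1\xi$ as an infimum of the family $\{x\mapsto h_x(x';\xi(x'))\}_{x'\in X}$, which is uniformly Lipschitz with constant $\bigl(2(1+2m)\bigr)^{-1}$; boundedness from below is immediate using $\xi\ge m$. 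Finally, if $\supp\xi\subset K$ and $\sfd(x,K)\ge \pi/2$, then evaluating at $x'\in K$ gives $\tfrac12$, at $x'=x$ gives $0$, and at $x'\notin K\cup\{x\}$ gives $\sin^2(\sfd_{\pi/2}(x,x'))/2\ge 0$, so $\PP_1\xi(x)=0$; hence $\supp \PP_1\xi$ is contained in the $\pi/2$-neighborhood of $K$, which is bounded.

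\medskip\noindent
\emph{Part (ii), the easy direction $\le$.} For $\xi\in\Lip_{bs}(X)$ with $\inf_X\xi>-1/2$, set $\psi_0:=-2\xi$ and $\psi_1:=2\PP_1\xi$. By (i), both lie in $\rmC_b(X)$ with $\sup\psi_i<1$, and by the very definition of $\PP_1\xi$ one has $(1-\psi_0(x_0))(1-\psi_1(x_1))=(1+2\xi(x_0))(1-2\PP_1\xi(x_1))\ge \cos^2(\sfd_{\pi/2}(x_0,x_1))$. Hence $(\psi_0,\psi_1)$ is admissible for the dual formulation \eqref{eq:204} (in its $\rmC_b$ variant, valid since $\tau$ is metric, hence completely regular), and therefore $\int\PP_1\xi\,d\mu_1-\int\xi\,d\mu_0=\tfrac12\sum_i\int\psi_i\,d\mu_i\le \tfrac12\LET(\mu_0,\mu_1)=\tfrac12\HK^2(\mu_0,\mu_1)$.

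\medskip\noindent
\emph{Part (ii), the hard direction $\ge$, via approximation.} For any admissible $\rmC_b$-pair $(\psi_0,\psi_1)$, setting $\xi:=-\psi_0/2\in\rmC_b(X)$ the admissibility constraint yields $\psi_1(x_1)\le 2\PP_1\xi(x_1)$ pointwise, so $\tfrac12\sum_i\int\psi_i\,d\mu_i\le \int\PP_1\xi\,d\mu_1-\int\xi\,d\mu_0$. It therefore suffices to approximate such $\xi\in\rmC_b$ by functions in $\Lip_{bs}$ without reducing the value by more than $\varepsilon$. This I accomplish in two steps: first a Lipschitz regularization from above, $\xi^n(x):=\sup_{x'}\bigl[\xi(x')-n\sfd(x,x')\bigr]$, which is $n$-Lipschitz, lies in $[\inf\xi,\sup\xi]$ (so $\inf\xi^n>-1/2$) and decreases pointwise to $\xi$ by continuity of $\xi$. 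Monotonicity of $\PP_1$ in its argument plus a simple $\varepsilon$-choice of a near-optimizer yield $\PP_1\xi^n\downarrow\PP_1\xi$ pointwise; dominated convergence then gives $\int\PP_1\xi^n\,d\mu_1-\int\xi^n\,d\mu_0\to \int\PP_1\xi\,d\mu_1-\int\xi\,d\mu_0$. Second, fix $x_0\in X$ and a Lipschitz cutoff $\phi_R$ equal to $1$ on $B_R(x_0)$, supported in $B_{R+1}(x_0)$, and set $\xi^n_R:=\phi_R\,\xi^n\in\Lip_{bs}(X)$; since $\phi_R\in[0,1]$, the quantity $\xi^n_R$ still satisfies $\inf\xi^n_R\ge \min(\inf\xi^n,0)>-1/2$. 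For any fixed $x$, as soon as $R>\sfd(x,x_0)+\pi/2$ we have $\xi^n_R=\xi^n$ on $B_{\pi/2}(x)$, which by the representation in part (i) forces $\PP_1\xi^n_R(x)=\PP_1\xi^n(x)$; another application of dominated convergence concludes the passage $R\to\infty$. Combining the two steps produces the desired $\tilde\xi\in\Lip_{bs}$.

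\medskip\noindent
\emph{Main obstacle.} The delicate point is the second approximation step: multiplying by $\phi_R$ is \emph{not} monotone in $\xi$ when $\xi$ changes sign, so neither $\xi^n_R\le \xi^n$ nor $\PP_1\xi^n_R\le \PP_1\xi^n$ holds in general. The key observation that rescues the argument is that the infimum in $\PP_1\xi^n_R(x)$ is only sensitive to the values of the potential on $B_{\pi/2}(x)$ (this being precisely the content of the equivalent representation \eqref{eq:61}), so once $R$ is large enough that the cutoff is inactive on $B_{\pi/2}(x)$ the two values of $\PP_1$ agree exactly. This localization of the Hopf-Lax-type operator $\PP_1$ is what makes the passage from $\rmC_b$ to $\Lip_{bs}$ possible, and it is the only nontrivial analytic input beyond the machinery already established in Theorems \ref{thm:mainHK1}--\ref{thm:mainHK2}.
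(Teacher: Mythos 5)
Your proof is correct and follows essentially the same route as the paper's: for (i) you use the same explicit estimates on the inf-convolution kernel; for (ii) you reduce to the $\rmC_b$-version of the dual $\LET$ formula \eqref{eq:204} from Theorem \ref{thm:mainHK2}(a) via the change of variables $\psi_0 = -2\xi$, $\psi_1 = 2\PP_1\xi$, and then approximate $\rmC_b$ by $\Lip_b$ (inf-convolution with $n\sfd$) and $\Lip_b$ by $\Lip_{bs}$ (cutoff), exploiting the locality of $\PP_1$ expressed in \eqref{eq:61} to control the cutoff step. Your explicit remark that multiplication by the cutoff $\phi_R$ is not monotone in $\xi$ is a useful clarification — the paper tacitly circumvents this by the same locality observation (its \eqref{eq:62}) without flagging the potential pitfall.
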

\begin{proof}
  Let us first observe that if
  \begin{equation}
    \label{eq:82}
    -\frac12<a\le \xi\le b\ \text{in $X$}\quad\Rightarrow\quad
    \frac a{1+2a}\le \PP_1\xi\le \frac b{1+2b} \text{ in } X,
  \end{equation}
  \WWW where the upper bound follows using $x'=x$, while the lower
  bound is easily seen from the first form of $\PP_1\xi$ in
  \eqref{eq:427} and $\sin^2\geq 0$. 
  Since $1/(1+2\xi(x'))\le 1/(1+2a)$ for every $x'\in X$, 
  the function $\PP_1\xi$ is also Lipschitz, because it is the infimum of 
  a family of uniformly Lipschitz functions.
  
  \WWW Moreover, for $ \sfd(x,x')\ge \pi/2 $ we have the estimate \EEE
  \begin{equation}
    \label{eq:85}
  \frac12\Big( 1-\frac{\cos^2(\sfd_{\pi/2}(x,x'))}{ \WWW 1+ 2  
     \xi(x')  } \Big)
  =\frac12 >\frac b{1+2b}\quad\text{if }\sfd(x,x')\ge \pi/2,
  \end{equation}
  which immediately gives \eqref{eq:61}. In particular, we have 
  \begin{equation}
    \label{eq:184}
    \xi\equiv 0\quad\text{in }X\setminus B\quad
    \Rightarrow\quad
    \PP_1\xi\equiv 0\quad\text{in }\{x\in X:\sfd(x,B)\ge \pi/2\}.
  \end{equation}

  Let us now prove statement (ii).  We denote by $E$ the the
  right-hand side of \eqref{eq:406} and by $E'$ the analogous
  expression where $\xi$ runs in $\rmC_b(X)$:
  \begin{equation}
    \label{eq:185}
    E':= 2\,\sup \Big\{
      \int\HJ{\pi/2}1\xi\,\d\mu_1-\int\xi\,\d\mu_0:
      \xi\in \rmC_b(X),\ \inf_X \xi>-1/2\Big\}.
  \end{equation}
  It is clear that $E'\ge E$.
  If $\xi\in \rmC_b(X)$ with $\inf \xi>-1/2$, setting 
  $\psi_1(x_1):=-2\xi(x_1)$, 
  $\psi_2(x_2):=2(\HJ{\pi/2}1\xi)(x_2)$, 
  we know that $\sup_X\psi_2<1$ and $\psi_2\in \Lip_b(X)$. 
  Thus, $\psi_1$ and $\psi_2$ are continuous
  and satisfy
  \begin{displaymath}
    \big(1-\psi_2(x_2)\big)\big(1-\psi_1(x_1)\big)\ge \cos^2(\sfdpt(x_1,x_2)).
  \end{displaymath}
  Hence, the couple $(\psi_1, \psi_2)$ is admissible for
  \eqref{eq:204} (with $\rmC_b(X)$ instead of $\LSC_s(X)$; note that
  $\tau$ is metrizable and thus completely regular), so that
  $\HK^2(\mu_0,\mu_1) \WWW =\LET(\mu_0,\mu_1) \ge E'$.

  On the other hand, if $(\psi_1,\psi_2)\in \rmC_b(X)\times \rmC_b(X)$
  with $\sup_X \psi_i<1$, setting $\xi_1=-\frac12\psi_1$ and 
  $\tilde\xi_2:=\PP_1(-\xi_1)$
  we see that $2\tilde\xi_2\ge \psi_2$ giving $E'\ge
  \HK^2(\mu_0,\mu_1)$, and $E=E'$ follows.

  To show that $E=E'$ in the general case, we approximate $\psi\in
  \rmC_b(X)$ with $\inf_X\psi>-1$ by a decreasing sequence of
  Lipschitz and bounded functions (e.g.~by taking $\psi_n(x):=\sup_y
  \psi(y)-n\sfd_\pi(x,y)$) and use that the supremum in \eqref{eq:185}
  does not change if we restrict it to $\Lip_b(X)$.
  
  Let now $\xi$ be Lipschitz and valued in $[a,b]$ with $-1/2<a\le
  0\le b$.  Taking the increasing sequence of nonnegative cut-off
  functions $\zeta_n(x):=0\lor \big(n-\sfd(x,\bar x))\land 1$ which
  are uniformly $1$-Lipschitz, have bounded support and satisfy
  $\zeta_n\up 1$ as $n\to\infty$, it is easy to check that
  $\xi_n:=\zeta_n\xi$ belong to $\Lip_{bs}(X)$ and take values in the
  interval $[a,b]$ so that $\frac a{1+2a}\le \PP_1\xi_n\le \frac
  b{1+2b}$ for every $n\in \N$.

  Since $\xi_n(x)= 0$ if $\sfd(x,\bar x)\ge n$ and
  $\xi_n(x)=\xi(x)$ if $\sfd(x,\bar x)\le n-1$, by \eqref{eq:61}
  we get
  \begin{equation}
    \label{eq:62}
    \PP_1\xi_n(x)=0\quad\text{if }x\ge n+\pi/2,\quad
    \PP_1\xi_n(x)=\PP_1\xi(x)\quad\text{if }x<n-1-\pi/2.
  \end{equation}
  Thus $\PP_1\xi_n\in \Lip_{bs}(X)$ and the Lebesgue Dominated
  Convergence theorem shows that 
  \begin{displaymath}
    \lim_{n\to\infty}\int_X
    \PP_1\xi_n\,\d\mu_1-\int_X\xi_n\,\d\mu_0=
    \int_X \PP_1\xi\,\d\mu_1-\int_X\xi\,\d\mu_0.\qedhere
  \end{displaymath}
\end{proof}

\subsection{Limiting cases: recovering the Hellinger--Kakutani distance
  \newline and  the
  Kantorovich--Wasserstein  distance}
\label{subsec:limiting}

In this section we will show that we can recover the
Hellinger-Kakutani and the Kantorovich-Wasserstein distance by
suitably rescaling the $\HK$ functional.

\paragraph{The Hellinger-Kakutani distance.}
As we have seen in Example E.5 of Section \ref{ex:1}, 
the Hellinger-Kakutani distance 
between two measures $\mu_1,\mu_2\in \cM(X)$
can be obtained as a limiting case when 
the space $X$ is endowed with the discrete distance 
\begin{equation}
  \label{eq:373}
  \sfd_{\sf Hell}(x_1,x_2):=
  \begin{cases}
    a&\text{if }x_1\neq x_2\\
    0&\text{if }x_1=x_2,
  \end{cases}
  \qquad
  \text{with }a\in[\pi,+\infty].
\end{equation}
The induced cone distance in this case is 
\begin{equation}
  \label{eq:369}
  \sfdc^2([x_1,\s_1],[x_2,\s_2])=
  \begin{cases}
    (\s_1-\s_2)^2&\text{if }x_1=x_2,\\
    \s_1^2+\s_2^2&\text{if }x_1\neq x_2.
  \end{cases}
\end{equation}
and the induced cost function 
for the Entropy-Transport formalism is given by
\begin{equation}
  \label{eq:377}
  \sfc_{\sf Hell}(x_1,x_2):=
  \begin{cases}
    0&\text{if }x_1=x_2,\\
    +\infty&\text{otherwise.}
  \end{cases}
\end{equation}
Recalling \eqref{eq:57}--\eqref{eq:58} we obtain
\begin{equation}
  \label{eq:376}
  \Hell^2(\mu_1,\mu_2)=\LET_{\sf Hell}(\mu_1,\mu_2)=
  \int_X \!\!\left(\sqrt{\varrho_1} -%{\d\mu_1}/{\d\gamma}}-
    \sqrt{\varrho_2} %{\d\mu_2}/{\d\gamma}}
  \right)^2\,\d\gamma  \WWW \text{ with }
   \mu_i=\varrho_i\gamma\ll\gamma \in \cM(X).
\end{equation}
Since $\sfc_{\Hell}\ge \sfc=\ell(\sfd)$ for every distance function on
$X$, we always have the upper bound
\begin{equation}
  \label{eq:401}
  \HK(\mu_1,\mu_2)\le \Hell(\mu_1,\mu_2)\quad
  \forevery \mu_1,\mu_2\in \cM(X).
\end{equation}
Applying Lemma \ref{le:lsc} we easily get
\begin{theorem}[Convergence of $\HK$ to 
  $\Hell$]
  Let $(X,\tau,\sfd)$ be an extended metric topological space
  and let $\HK_{\lambda\,\sfd}$ be the Hellinger-Kantorovich distances
  in $\cM(X)$ induced by the distances $\sfd_\lambda:=\lambda\sfd$, $\lambda>0$.
  For every couple $\mu_1,\mu_2\in \cM(X)$ we have
  \begin{equation}
    \label{eq:381}
    \HK_{\lambda\sfd}(\mu_1,\mu_2)\uparrow 
    \Hell(\mu_1,\mu_2)\quad\text{as }\lambda\up\infty.
  \end{equation}
\end{theorem}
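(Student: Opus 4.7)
The plan is to reduce the statement to the Entropy–Transport framework via the identity $\HK_{\lambda\sfd}^2=\LET_{\lambda\sfd}$ established in Theorem~\ref{thm:main-equivalence}, and then apply the monotone convergence Lemma~\ref{le:lsc}. Concretely, both families of distances are built with the same logarithmic entropies $F_i=U_1$ and with cost functions
\[
\sfc_\lambda(x_1,x_2):=\ell(\lambda\sfd(x_1,x_2)),\qquad
\sfc_{\Hell}(x_1,x_2)=
\begin{cases}
0&\text{if }x_1=x_2,\\
+\infty&\text{otherwise,}
\end{cases}
\]
so that $\Hell^2(\mu_1,\mu_2)=\LET_{\Hell}(\mu_1,\mu_2)$ by Example E.5 and \eqref{eq:376}, while $\HK_{\lambda\sfd}^2(\mu_1,\mu_2)=\LET_{\lambda\sfd}(\mu_1,\mu_2)$ by Theorem~\ref{thm:main-equivalence}.

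First I would verify the two monotonicity and convergence properties of the cost. Because $\ell:[0,+\infty]\to[0,+\infty]$ is nondecreasing (with $\ell(d)=+\infty$ for $d\ge\pi/2$) and $\lambda\mapsto\lambda\sfd(x_1,x_2)$ is nondecreasing, $\lambda\mapsto\sfc_\lambda$ is pointwise nondecreasing. Moreover, $\sfc_\lambda(x_1,x_1)=\ell(0)=0$ for every $\lambda$, whereas if $x_1\neq x_2$ then $\sfd(x_1,x_2)>0$, hence $\lambda\sfd(x_1,x_2)\up+\infty$ and $\sfc_\lambda(x_1,x_2)\up+\infty=\sfc_{\Hell}(x_1,x_2)$. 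Thus $\sfc_\lambda\up\sfc_{\Hell}$ pointwise as $\lambda\up\infty$, and for every $\ggamma\in\cM(\xX)$ the cost integral $\int\sfc_\lambda\,\d\ggamma$ is nondecreasing in $\lambda$; since the entropy parts of the $\LET$ functional do not depend on $\lambda$, this shows that $\lambda\mapsto\LET_{\lambda\sfd}(\mu_1,\mu_2)$ is nondecreasing.

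Next I would obtain the uniform upper bound $\HK_{\lambda\sfd}(\mu_1,\mu_2)\le\Hell(\mu_1,\mu_2)$: this is just \eqref{eq:401} applied to the distance $\lambda\sfd$ (the argument for \eqref{eq:401} uses only that $\sfc_{\Hell}\ge\ell(\sfd')$ for any distance $\sfd'$ on $X$, and it applies verbatim to $\sfd'=\lambda\sfd$). Combined with the monotonicity established above, this yields the existence of
\[
L(\mu_1,\mu_2):=\lim_{\lambda\uparrow\infty}\HK_{\lambda\sfd}^2(\mu_1,\mu_2)=\sup_{\lambda>0}\LET_{\lambda\sfd}(\mu_1,\mu_2)\le\Hell^2(\mu_1,\mu_2).
\]

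Finally, I would establish the matching lower bound via Lemma~\ref{le:lsc}. Taking the constant net of superlinear entropies $F_i^\lambda\equiv U_1$, the monotonically increasing net of costs $\sfc_\lambda\up\sfc_{\Hell}$, and the constant (hence trivially equally tight and narrowly convergent) net of measures $\mu_i^\lambda\equiv\mu_i$, Lemma~\ref{le:lsc} gives
\[
\liminf_{\lambda\uparrow\infty}\LET_{\lambda\sfd}(\mu_1,\mu_2)\ge\LET_{\Hell}(\mu_1,\mu_2)=\Hell^2(\mu_1,\mu_2),
\]
so $L(\mu_1,\mu_2)\ge\Hell^2(\mu_1,\mu_2)$. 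Combined with the previous upper bound, $L=\Hell^2$, and the monotone convergence $\HK_{\lambda\sfd}\uparrow\Hell$ follows. No step is genuinely delicate here: the only point that requires a moment of care is the verification that Lemma~\ref{le:lsc}'s hypotheses apply with the constant entropy/measure nets, which is immediate, so the whole argument is essentially a packaging of results already available in the paper.
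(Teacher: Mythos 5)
Your proposal is correct and follows exactly the route the paper intends: the paper's proof is in fact just the one-line remark ``Applying Lemma~\ref{le:lsc} we easily get'' placed before the statement, and your argument spells out the details (the identity $\HK^2=\LET$, the monotone increase $\sfc_\lambda\up\sfc_{\Hell}$ and hence of $\LET_{\lambda\sfd}$, the upper bound from \eqref{eq:401}, and the lower bound from Lemma~\ref{le:lsc} with constant entropy and measure nets). No gap; this is a faithful, fully fleshed-out version of the paper's own proof.
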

\paragraph{The Kantorovich--Wasserstein distance.}
Let us first observe that 
whenever $\mu_1,\mu_2\in \cM(X)$ have the same mass
their $\HK$-distance is always bounded form above by the
Kantorovich-Wasserstein distance $\Wd$ 
(the upper bound is trivial when $\mu_1(X)\neq \mu_2(X)$, since in
this case
$\Wd(\mu_1,\mu_2)=+\infty$).
\begin{proposition}
  \label{prop:HKleW}
  For every couple $\mu_1,\mu_2\in \cM(X)$ we have
  \begin{equation}
    \label{eq:385}
    \HK(\mu_1,\mu_2)\le 
    \sfW_{\sfd_{\pi/2}}(\mu_1,\mu_2)
    \le \sfW_{\sfd}(\mu_1,\mu_2).
  \end{equation}
\end{proposition}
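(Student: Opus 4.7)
The second inequality $\sfW_{\sfd_{\pi/2}}(\mu_1,\mu_2)\le \sfW_\sfd(\mu_1,\mu_2)$ is immediate from the pointwise inequality $\sfd_{\pi/2}\le\sfd$: any transport plan admissible for $\sfW_\sfd$ is admissible for $\sfW_{\sfd_{\pi/2}}$ with smaller quadratic cost.

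For the first inequality, I may assume $\mu_1(X)=\mu_2(X)$, since otherwise the right-hand side is $+\infty$. The plan is to build, from an arbitrary transport plan $\ggamma\in\cM(\xX)$ with marginals $\pi^i_\sharp\ggamma=\mu_i$, a competitor $\aalpha$ for the Hellinger--Kantorovich problem \eqref{eq:104} whose cone-transport cost is dominated by the Kantorovich cost of $\ggamma$. The natural lift is
\[
\aalpha := \frpd_\sharp \bigl(\jmath_\sharp\ggamma\bigr), \qquad \jmath(x_1,x_2):=(x_1,1;x_2,1)\in \yY,
\]
so that $\aalpha$ is concentrated on $\{\sfr_1=\sfr_2=1\}\subset \tyY$. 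For every $\zeta\in\rmB_b(X)$,
\[
\int \zeta\,\d(\chm2i\aalpha)=\int \zeta(\sfx_i)\,\sfr_i^2\,\d\aalpha=\int \zeta(x_i)\,\d\ggamma=\int \zeta\,\d\mu_i,
\]
so $\aalpha\in\cHM2{\mu_1}{\mu_2}$.

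To estimate its cost I invoke Lemma~\ref{le:restriction}, which gives the alternative representation \eqref{eq:404} of $\HK^2$ in terms of the truncated cone distance $\tsfdc$ (built from $\sfd_{\pi/2}$ instead of $\sfd_\pi$). On the support of $\aalpha$, using \eqref{eq:388bis},
\[
\tsfdc^2([x_1,1],[x_2,1])=4\sin^2\!\bigl(\sfd_{\pi/2}(x_1,x_2)/2\bigr)\le \sfd_{\pi/2}^2(x_1,x_2),
\]
thanks to the elementary bound $2\sin(s/2)\le s$ for $s\ge 0$. Consequently
\[
\HK^2(\mu_1,\mu_2)\le \int_\tyY \tsfdc^2\,\d\aalpha \le \int_\xX \sfd_{\pi/2}^2(x_1,x_2)\,\d\ggamma(x_1,x_2),
\]
and taking the infimum over all $\ggamma$ with marginals $\mu_i$ yields $\HK(\mu_1,\mu_2)\le \sfW_{\sfd_{\pi/2}}(\mu_1,\mu_2)$.

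No serious obstacle arises: the key point is the truncation identity \eqref{eq:404}, which is essential because the direct bound via $\sfdc$ (using $\sfd_\pi$) would fail when $\sfd(x_1,x_2)\in(\pi/2,\pi]$, since there $4\sin^2(\sfd_\pi/2)$ can exceed $\sfd_{\pi/2}^2=\pi^2/4$. Lemma~\ref{le:restriction} precisely removes this obstruction by allowing us to cap the cone transport distance at the threshold~$\pi/2$.
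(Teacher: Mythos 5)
Your proof is correct and follows essentially the same route as the paper's: the same lift $\ggamma\mapsto\aalpha$ concentrated on $\{\sfr_1=\sfr_2=1\}$, the same appeal to Lemma~\ref{le:restriction}/\eqref{eq:404} to replace $\sfdc$ by the $\pi/2$-truncated cone distance $\tsfdc$, and the same elementary bound $2\sin(s/2)\le s$. The only cosmetic difference is that the paper starts from an optimal transport plan $\ggamma$ (so no final infimum is needed), while you take an arbitrary admissible plan and optimize at the end; your closing remark on why $\sfdc$ alone would not suffice is a useful clarification but does not change the argument.
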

\begin{proof}
  It is not restrictive to assume that
  $\sfW_{\sfd_{\pi/2}}^2(\mu_1,\mu_2) =
  \int\sfd_{\pi/2}^2\ggamma<\infty$ for an optimal plan $\ggamma$ with
  marginals $\mu_i$.  We then define the plan $\aalpha:=\frs_\sharp
  \ggamma\in \cM(\tY\times \tY)$ where
  $\frs(x_1,x_2):=([x_1,1],[x_2,1])$, so that
  $\chm2{i}\aalpha=\mu_i$. By using \eqref{eq:404} and
  \eqref{eq:94pre} we obtain
  \begin{displaymath}
    \HK^2(\mu_1,\mu_2)\le 
    4\int_{\tyY}\sin^2(\sfdpt(\sfx_1,\sfx_2)/2)\,\d\aalpha
    \le 
    \int_{\boldsymbol X}\sfdpt^2(x_1,x_2)\,\d\ggamma
    \le \sfW^2_{\sfd_{\pi/2}}(\mu_1,\mu_2).\qedhere
  \end{displaymath}
\end{proof}
In order to recover the Kantorovich-Wasserstein distance 
we perform a simultaneous scaling,
by taking the limit of $n\HK_{\sfd/n}$ where
$\HK_{\sfd/n}$ is induced by 
the distance $\sfd/n$.
\begin{theorem}[Convergence of $\HK$ to $\sfW$]
  Let $(X,\tau,\sfd)$ be an extended metric topological space
  and let $\HK_{\sfd/\lambda}$ be the Hellinger-Kantorovich distances
  in $\cM(X)$ induced by the distances $\lambda^{-1} \sfd$ for $\lambda>0$.
  Then, for all $\mu_1,\mu_2\in \cM(X)$ we have
  \begin{equation}
    \label{eq:385bis}
    \lambda\HK_{\sfd/\lambda}(\mu_1,\mu_2)\up
    \sfW_\sfd(\mu_1,\mu_2)\quad\text{as }\lambda\up\infty.
    % \begin{cases}
    %   +\infty&\text{if }\mu_1(X)\neq \mu_2(X),\\
    %   \frac 12\sfW^2_\sfd(\mu_1,\mu_2)&\text{if }\mu_1(X)=\mu_2(X).
    % \end{cases}
  \end{equation}
\end{theorem}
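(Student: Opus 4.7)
The strategy is to prove the convergence through matching upper and lower bounds, exploiting throughout the identification $\HK^2=\LET$ of Theorem~\ref{thm:main-equivalence}. Write $m_i:=\mu_i(X)$ and $W:=\sfW_\sfd(\mu_1,\mu_2)$.

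\emph{Upper bound.} Proposition~\ref{prop:HKleW} applied to the rescaled distance $\sfd/\lambda$ gives $\HK_{\sfd/\lambda}\le\sfW_{\sfd/\lambda}$; when $m_1=m_2$ the Kantorovich--Wasserstein distance scales trivially as $\sfW_{\sfd/\lambda}=W/\lambda$, yielding $\lambda\HK_{\sfd/\lambda}\le W$ uniformly in $\lambda$. When $m_1\neq m_2$ one has $W=+\infty$, and the distance-free lower bound $\HK(\mu_1,\mu_2)\ge|\sqrt{m_1}-\sqrt{m_2}|$, obtained from the relaxed formulation \eqref{eq:375} of Lemma~\ref{le:max-le} together with the Cauchy--Schwarz estimate $\int\sfr_1\sfr_2\,\d\aalpha\le\sqrt{m_1m_2}$ on any $\aalpha\in\cHMle2{\mu_1}{\mu_2}$, forces $\lambda\HK_{\sfd/\lambda}\to+\infty$, consistent with the statement.

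\emph{Lower bound.} Assume $m_1=m_2$ and $W<+\infty$, else nothing remains. Pick an optimal $\ggamma_\lambda\in\OptLET_{\sfd/\lambda}(\mu_1,\mu_2)$ from Theorem~\ref{thm:mainHK1}; superlinearity of $\PE_1$ forces $\gamma_i^\lambda=\sigma_i^\lambda\mu_i\ll\mu_i$ and
\[
  \lambda^2\HK_{\sfd/\lambda}^2=
  \lambda^2\sum_i\int\PE_1(\sigma_i^\lambda)\,\d\mu_i
  +\int\lambda^2\ell(\sfd/\lambda)\,\d\ggamma_\lambda.
\]
Both non-negative summands are bounded by $W^2$ thanks to the upper bound. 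The inequality $\ell(d)\ge d^2$ from property LE.5 yields $\int\sfd^2\,\d\ggamma_\lambda\le W^2$, while the sharp pointwise bound $\PE_1(\sigma)\ge(\sqrt\sigma-1)^2$ combined with the entropy control gives $\|\sqrt{\sigma_i^\lambda}-1\|_{L^2(\mu_i)}^2\le W^2/\lambda^2$. An elementary estimate promotes this into $\sigma_i^\lambda\to 1$ in $L^1(\mu_i)$, hence into the total-variation (in particular tight) convergence $\gamma_i^\lambda\to\mu_i$.

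Since the marginals of $\ggamma_\lambda$ are equally tight and bounded, so is $\{\ggamma_\lambda\}$; up to a subsequence $\ggamma_\lambda\to\ggamma_\ast$ narrowly, and $\pi^i_\sharp\ggamma_\ast=\mu_i$ by continuity of the marginal map under narrow convergence. Hence $\ggamma_\ast$ is admissible in the Kantorovich problem defining $\sfW_\sfd$, and lower semicontinuity of $\sfd^2$ delivers
\[
  W^2\le\int\sfd^2\,\d\ggamma_\ast\le
  \liminf_\lambda\int\sfd^2\,\d\ggamma_\lambda\le
  \liminf_\lambda\lambda^2\HK_{\sfd/\lambda}^2(\mu_1,\mu_2),
\]
matching the upper bound and closing the convergence. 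The monotone character $\uparrow$ of the limit can be extracted from the dual formulation \eqref{eq:406} of Theorem~\ref{thm:dualityHK} by a closer analysis of the rescaled Hopf--Lax map $\lambda^2\PP^\lambda_1(\eta/\lambda^2)$, ultimately reducing to the monotonicity in $\lambda$ of the scalar function $\lambda\mapsto(\eta+(\lambda^2/2)\sin^2(d/\lambda))/(1+2\eta/\lambda^2)$ for each fixed $d\ge 0$ and each admissible $\eta>-1/2$.

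\emph{Main obstacle.} The delicate step is converting the entropic bound into the total-variation convergence $\gamma_i^\lambda\to\mu_i$; without this, the narrow limit $\ggamma_\ast$ would only satisfy $\pi^i_\sharp\ggamma_\ast\le\mu_i$ and thus fail to be an admissible Wasserstein plan, ruining the $\liminf$ estimate. The sharp quadratic inequality $\PE_1(\sigma)\ge(\sqrt\sigma-1)^2$ is crucial here, as it delivers the tight $O(1/\lambda)$ rate in $L^2(\mu_i)$ needed to transfer the tightness of $\mu_i$ to the marginals $\gamma_i^\lambda$ at the required order.
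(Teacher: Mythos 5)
Your \emph{convergence} argument is correct but takes a genuinely different route from the paper. For the lower bound, the paper passes a subnet of optimal plans $\ggamma_\lambda$ to a narrow limit $\ggamma$ and invokes the monotone lower-semicontinuity Corollary \ref{cor:limit1} with the increasing entropies $\lambda^2\PE_1 \uparrow \rmI_1$; this forces $\GG(\gamma_i|\mu_i)\le L^2$ and hence $\gamma_i=\mu_i$ in one abstract stroke. You instead extract a quantitative rate from the pointwise inequality $\PE_1(\sigma)\ge(\sqrt\sigma-1)^2$, getting $\|\sqrt{\sigma_i^\lambda}-1\|_{L^2(\mu_i)}=O(1/\lambda)$ and thence $\gamma_i^\lambda\to\mu_i$ in total variation, which also secures the tightness you then feed into Prokhorov. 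This hands-on derivation is sound and arguably more elementary, at the cost of needing the Cauchy--Schwarz step $\|\sigma_i^\lambda-1\|_{L^1}\le\|\sqrt{\sigma_i^\lambda}-1\|_{L^2}\|\sqrt{\sigma_i^\lambda}+1\|_{L^2}$ with a uniform bound on $\|\sqrt{\sigma_i^\lambda}+1\|_{L^2}$ (which you should note follows from the a priori mass bound on $\ggamma_\lambda$). Your side remark on the $m_1\ne m_2$ case is correct but redundant: the paper simply assumes $L<\infty$ and obtains $\mu_1(X)=\ggamma(X^2)=\mu_2(X)$ for free from the limiting marginal identity.

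The gap is in the \emph{monotonicity}. The statement carries the up-arrow $\uparrow$, so the claim that $\lambda\mapsto\lambda\HK_{\sfd/\lambda}(\mu_1,\mu_2)$ is nondecreasing is a substantive part of the theorem. The paper proves it directly from the cone reformulation: combining \eqref{eq:404} with \eqref{eq:388bis}, $\HK^2_{\sfd/\lambda}$ is the minimum of $\int\bigl(|\sfr_1-\sfr_2|^2+4\sfr_1\sfr_2\sin^2\bigl(\tfrac12(\sfd/\lambda)\wedge\tfrac\pi2\bigr)\bigr)\d\aalpha$, and the concavity of $x\mapsto\sin(x\wedge\pi/2)$ makes $\lambda\mapsto\lambda\sin\bigl((d/\lambda)\wedge\tfrac\pi2\bigr)$ nondecreasing for each fixed $d$, which propagates through the $\inf$. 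You instead assert, without verification, that the question reduces to the monotonicity in $\lambda$ of the scalar function $\lambda\mapsto\bigl(\eta+(\lambda^2/2)\sin^2(d/\lambda)\bigr)/(1+2\eta/\lambda^2)$ for each admissible $\eta>-1/2$. That reduction is not clearly correct (the admissible set of $\eta$ in the rescaled dual $\eta=\lambda^2\xi$ itself grows with $\lambda$, which you do not address), and even the claimed monotonicity of the scalar function is not established: the sign of its derivative involves a balance between $2\eta^2+\eta S$, which can be negative when $\eta<0$ and $S=\lambda^2\sin^2(d/\lambda)>-2\eta$, and a compensating term coming from $S'$. Replacing this sketch by the paper's concavity argument via $\asfdc{\pi/2}$ would close the gap cleanly.
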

\begin{proof}
  Let us denote by $\LET_\lambda =\HK_{\sfd/\lambda }^2$ the optimal
  value of the LET-problem associated to the distance $\sfd/\lambda $.
  Since the Kantorovich-Wasserstein distance is invariant by the
  rescaling $\lambda \sfW_{\sfd/\lambda }=\sfW_\sfd$, estimate
  \eqref{eq:385} shows that $\lambda \HK_{\sfd/\lambda } \le \sfW_\sfd$.

  Since $x\mapsto \sin(x\land \pi/2)$ is concave in $[0,\infty)$, the
  function $x\mapsto \sin(x\land \pi/2)/x$ is decreasing in
  $[0,\infty)$, so that $\alpha\sin((d/\alpha)\land \pi/2)\le
  \lambda\sin((d/\lambda)\land \pi/2)$ for every $d\ge0$ and
  $0<\alpha<\lambda $. Combining \eqref{eq:404} with
  \eqref{eq:388bis} we see that the map $\lambda \mapsto \lambda
  \HK_{\sfd/\lambda }(\mu_1,\mu_2)$ is nondecreasing.
  
  It remains to prove that
  $L:=\lim_{\lambda \to\infty}\lambda \HK_{\sfd/\lambda} (\mu_1,\mu_2)=\sup_{\lambda \ge1}
  \lambda \HK_{\sfd/\lambda }(\mu_1,\mu_2)\ge \sfW_\sfd(\mu_1,\mu_2)$.
  For this, it is not restrictive to assume that $L$ is finite.

  Let $\ggamma_\lambda $ be an optimal plan for $\HK_{\sfd/\lambda
  }(\mu_1,\mu_2)$ with marginals $\gamma_{\lambda ,i}=\pi^i_\sharp
  \ggamma_\lambda $.  We denote by $\FF$ the entropy functionals
  associated to logarithmic entropy $F(\r)=\PE_1(\r)$ and by $\GG$ the
  entropy functionals associated to $F(\r):=\rmI_1(\r)$ as in Example
  E.3 of Section \ref{ex:1}. Since the transport part of the
  LET-functional is associated to the costs
  \begin{displaymath}
    \sfc_\lambda (x_1,x_2)=\lambda ^2\ell(\sfd(x_1,x_2)/\lambda
    )\topref{eq:159}\ge \sfd^2(x_1,x_2), 
  \end{displaymath}
  we obtain the estimate
  \begin{equation}
    \label{eq:391}
    L^2\ge \lambda^2\LET_{\kern-1pt \lambda }(\mu_1,\mu_2)\ge \sum_i
    \lambda ^2\FF(\gamma_{\lambda ,i}|\mu_i)+ 
    \int_{\sxX} \sfd^2(x_1,x_2)\,\d\ggamma_\lambda .
  \end{equation}
  Proposition \ref{prop:DLT} 
  shows that 
  the family of plans $(\ggamma_{\lambda })_{\lambda\ge1} $ is relatively compact 
  with respect to narrow convergence in 
  $\cM(X\times
  X)$. 
  Since $\lambda ^2 F(\r)\up \rmI_1(\r)$, 
  passing to the limit along a suitable subnet
  $(\lambda(\alpha))_{\alpha\in \A}$ 
  parametrized by a directed set $\A$, 
  and applying Corollary \ref{cor:limit1}
  we get a limit plan $\ggamma\in \cM(X\times X)$ with marginals $\gamma_i$
  such that 
  \[
    \label{eq:398}
    \sum_i\GG(\gamma_i|\mu_i) \WWW \leq L^2 ,\quad\text{which
      implies } \quad \gamma_i=\mu_i.
  \]
  \WWW In particular, we conclude that $\mu_1(X)= \WWW \ggamma
  (X\times X)= \mu_2(X)$.
  Since $\sfd$ is lower semicontinuous, narrow convergence of 
  $\ggamma_{\lambda(\alpha)}$ and \eqref{eq:391} also yield
  \begin{displaymath}
    L^2
    \ge \liminf_{\alpha\in \A}
    \int_{\sxX} \sfd^2(x_1,x_2)\,\d\ggamma_{\lambda(\alpha)}
    \ge   \int_{\sxX} \sfd^2(x_1,x_2)\,\d\ggamma\ge
    \sfW_\sfd^2(\mu_1,\mu_2). \qedhere
  \end{displaymath}
\end{proof}

\subsection{The Gaussian Hellinger-Kantorovich distance}
\label{subsec:GHK}
We conclude this general introduction to the Hellinger-Kantorovich
distance
by discussing another interesting  example.

We consider the inverse function $\elli:\R_+\to[0,\pi/2)$ of $\sqrt{\ell}$:
\begin{equation}
  \label{eq:456}
  \elli(z):=\arccos(\rme^{-z^2/2}),\quad\text{satisfying}\quad 
  \elli(0)=0, \ 
 \elli'(0)=1, \  % (thus $\elli(z)\le z$) and
%$\lim_{z\to\infty}\elli(z)=\pi/2$, 
 \ell(\elli(d))=d^2. 
\end{equation}
Since $\sqrt \ell$ is a convex function, $\elli$ is a concave
increasing function in $[0,\infty)$ with $\elli(z)\le z$ and
%$\elli(0)=0$,
%$\elli'(0)=1$ (thus $\elli(z)\le z$) and
$\lim_{z\to\infty}\elli(z)=\pi/2$.
% $\ell(\elli(d))=d^2$.

It follows that $\sfdg:=\elli\circ \sfd$ is a distance in $X$, inducing
the same topology as $\sfd$. We can now introduce 
a distance $\HK_\sfdg$ associated to $\sfdg$. The corresponding distance
on $\tY$ is given by
\begin{equation}
  \label{eq:458}
  \sfdg_{\tY}(\ty_1,\ty_2):=\sfs_1^2+\sfs_2^2-2\sfs_1\sfs_2
  \exp(-\sfd^2(\sfx_1,\sfx_2)/2).  
\end{equation}
From $g(z)\leq z$ we have  $\sfdg_\tY\le \sfdc$.

\begin{theorem}[The Gaussian Hellinger-Kantorovich distance]
  The functional
  \begin{equation}
    \label{eq:457}
    \GHK^2(\mu_1,\mu_2):=
        \HK^2_\sfdg(\mu_1,\mu_2)=\min\Big\{\int
        \sfdg_{\tY}^2(\ty_1,\ty_2)\,\d\aalpha \ : \  
    \aalpha\in \cM(\tyY),\ \chm2{i}\aalpha=\mu_i\Big\}
  \end{equation}
  defines a distance on $\cM(X)$ dominated by $\HK$. 
  If $(X,\sfd)$ is %a complete and
  separable (resp.~complete) %space 
  then $(\cM(X),\GHK)$ is a separable (resp.~complete) metric
  space,
  whose topology coincides with the weak convergence.
  We also have
  \begin{equation}
    \label{eq:459}
    \begin{aligned}
      \GHK^2(\mu_1,\mu_2)&=\min\Big\{\sum_i\FF(\gamma_i|\mu_i)
      +\int_{\boldsymbol X}\sfd^2(x_1,x_2)\,\d\ggamma\ :\ \ggamma\in
      \cM(\xX)\Big\}\\ 
      &=\sup\Big\{\sum_i\int \Big(1-\rme^{-\varphi_i}\Big)\,\d\mu_i\ :\
      \varphi_1\oplus\varphi_2\le \sfd^2\Big\}.
    \end{aligned}
  \end{equation}
\end{theorem}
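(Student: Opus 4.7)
The plan is to recognize $\GHK$ as the standard Hellinger--Kantorovich distance $\HK_\sfdg$ built from the auxiliary metric $\sfdg = \elli\circ\sfd$ on $X$, and then transport all the assertions from the general theory of Sections \ref{subsec:HK}--\ref{subsec:HKET} applied to $(X,\sfdg)$. First I would verify that $\sfdg$ is an admissible metric on $X$: since $\elli$ is concave with $\elli(0)=0$, its slope $s\mapsto \elli(s)/s$ is nonincreasing, which yields subadditivity of $\elli$ and hence the triangle inequality for $\sfdg$; because $\elli:[0,\infty)\to [0,\pi/2)$ is a homeomorphism, $\sfdg$ is topologically equivalent to $\sfd$, and any $\sfdg$-Cauchy sequence is eventually at $\sfdg$-distance below $\elli(1)$, so that it is $\sfd$-Cauchy. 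Completeness and separability of $(X,\sfd)$ thus transfer to $(X,\sfdg)$.

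Next, since $\sfdg<\pi/2$ identically, the truncation $\sfdg_\pi=\sfdg$ is harmless, and the identity $\cos(\elli(d))=\rme^{-d^2/2}$ from \eqref{eq:456} identifies the cone distance \eqref{eq:90} associated to $(X,\sfdg)$ with $\sfdg_\tY$ in \eqref{eq:458}. Consequently the minimum problem in \eqref{eq:457} is exactly the definition \eqref{eq:104} of $\HK_\sfdg$, so $\GHK = \HK_\sfdg$ is a distance on $\cM(X)$, and Theorem \ref{thm:topo1} together with its completeness counterpart applied to $(X,\sfdg)$ yield the claimed separability, completeness, and equivalence with the weak topology in duality with bounded continuous functions (which is unchanged under the topological equivalence $\sfdg\sim\sfd$).

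The two equivalent formulations in \eqref{eq:459} then follow from Theorem \ref{thm:main-equivalence} and Theorem \ref{thm:mainHK2}(a) applied to $(X,\sfdg)$: the LET cost becomes $\ell(\sfdg)=\ell(\elli(\sfd))=\sfd^2$, which turns the LET-functional \eqref{eq:157} into the first line of \eqref{eq:459}, while the dual characterization \eqref{eq:203} with constraint $\varphi_1\oplus\varphi_2\le \ell(\sfdg)=\sfd^2$ gives the second line. For the comparison $\GHK\le\HK$, the bound $\ell(\sfd)\ge \sfd^2$ from \eqref{eq:159} yields $\rme^{-\sfd^2/2}\ge \cos(\sfd_{\pi/2})\ge \cos(\sfd_\pi)$ on $\{\sfd\le \pi/2\}$, while $\rme^{-\sfd^2/2}>0\ge \cos(\sfd_\pi)$ is trivial on $\{\sfd>\pi/2\}$; hence $\sfdg_\tY\le \sfdc$ pointwise on $\tyY$, and minimising over the common admissible set $\cHM2{\mu_1}{\mu_2}$ gives the inequality.

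No step in this programme is genuinely deep; the main obstacle is bookkeeping, namely making sure that every piece of the machinery developed under the assumption that the threshold $\pi/2$ may be saturated continues to apply cleanly when $\sfdg$ takes values \emph{strictly} below $\pi/2$, so that the cut-off phenomenon isolated in Lemma \ref{le:restriction} is invisible here and the supremum in the dual representation \eqref{eq:459} is genuinely over all $\varphi_i$ with $\varphi_1\oplus\varphi_2\le \sfd^2$, not over a truncated subclass.
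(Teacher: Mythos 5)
Your proposal is correct and follows precisely the route the paper clearly intends (it gives no separate proof, relying on the reader to recognize that everything follows from applying Sections \ref{subsec:HK}--\ref{subsec:HKET} to the auxiliary metric $\sfdg=\elli\circ\sfd$). The key observations — $\sfdg$ is a distance topologically and Cauchy-equivalent to $\sfd$, $\ell\circ\elli=(\cdot)^2$ turns the LET cost into $\sfd^2$, $\sfdg<\pi/2$ makes the truncation invisible, and $\sfdg_\tY\le\sfdc$ gives the domination — are exactly what the surrounding text records (cf.\ \eqref{eq:456}, \eqref{eq:458}, and the line ``From $g(z)\le z$ we have $\sfdg_\tY\le\sfdc$'').
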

We shall see in the next Section \ref{subsec:lifting} that
$\HK$ is the length distance induced by $\GHK$ \WWW if $\sfd$ is a
length distance on $X$. \EEE

\section{Dynamic interpretation of the\\ Hellinger-Kantorovich distance}
\label{sec:dynamic}
 As in Section \ref{subsec:MT}, in all this chapter we will suppose that $(X,\sfd)$ is a complete
and separable (possibly extended) metric space and $\tau$ coincides with the topology
induced by $\sfd$. All the results admits a natural
generalization
to the framework of extended metric-topological spaces 
\cite[Sec.~4]{Ambrosio-Erbar-Savare15}. \nc 

\subsection{Absolutely continuous curves and geodesics in the cone $\tY$}
\label{subsec:geodesic}

\paragraph{Absolutely continuous curves and metric derivative.}
If $(Z,\sfd_Z)$ is a (possibly extended) metric space and $I$ is an
interval of $\R$, a curve
$\rmz:I\to Z$ is absolutely continuous if there exists $m\in
\rmL^1(I)$ such that 
\begin{equation}
  \label{eq:365}
  \sfd_Z(\rmz(t_0),\rmz(t_1))\le \int_{t_0}^{t_1}m(t)\,\d t\quad
  \text{whenever }t_0,t_1\in I,\ t_0<t_1.
\end{equation}
Its metric derivative $|\rmz'|_{\sfd_Z}$ 
(we will omit the index $\sfd_Z$ when the choice of the metric is
clear from the context)
is the Borel function defined by
\begin{equation}
  \label{eq:366}
  |\rmz'|_{\sfd_Z}(t):=\limsup_{h\to0}\frac{\sfd_Z(\rmz(t+h),\rmz(t))}{|h|}
\end{equation}
and it is possible to show (see \cite{Ambrosio-Gigli-Savare08})
that the $\limsup$ above is in fact a limit for $\Leb 1$-a.e.~points
in $I$ and it provides the minimal (up to possible modifications in
$\Leb 1$-negligible sets) function $m$ for which
\eqref{eq:365} holds.
We will denote by $\AC^p(I;Z)$ the class of all absolutely
continuous curves $\rmz:I\to Z$ with $|\rmz'|\in \rmL^p(I)$; when $I$ is an
open set of $\R$, we will also 
consider the local space $\AC^p_{loc}(I;Z)$.
If $Z$ is complete and separable then $\AC^p([0,1];Z)$ is 
a Borel set in the space $\rmC([0,1];Z)$ endowed with the 
topology of uniform convergence. (This property
can be extended to the framework of extended metric-topological
spaces, see \cite{Ambrosio-Gigli-Savare14}.)

A curve $\rmz:[0,1]\to Z$ is a (minimal, constant speed) geodesic if
\begin{equation}
  \label{eq:364}
  \sfd_Z(\rmz(t_0),\rmz(t_1))=|t_1-t_0|\sfd_Z(\rmz(0),\rmz(1))\quad \text{for
    every }t_0,t_1\in [0,1].
\end{equation}
In particular $\rmz$ is Lipschitz and $|\rmz'|\equiv \WWW
\sfd_Z(\rmz(t_0),\rmz(t_1)) $ in $[0,1]$. 
We denote by $\Geo Z\subset \rmC([0,1];Z)$ the closed 
subset of all the geodesics.
%  and by ${\boldsymbol Z}_{\rm Geo}\subset 
% Z\times Z$ the set $(\eval_0,\eval_1)(\Geo Z)$ of couples of points
% that
% are connected by a geodesic. When $(Z,\sfd_Z)$ is complete and
% separable,
% $\Geo Z$ is a Polish space so that 
% ${\boldsymbol Z}_{\rm Geo}$ is a Souslin space in $Z\times Z$

A metric space $(Z,\sfd_Z)$ is called a length (or intrinsic) space if
the distance between arbitrary couples of points
can be obtained as the infimum of the length of 
the absolutely continuous curves connecting them.
It is called a geodesic (or strictly intrinsic) space if 
every couple of points $z_0,z_1$ at finite distance can be joined
by a geodesic.

\paragraph{Geodesics in $\tY$.}
If $(X,\sfd)$ is a geodesic (resp.~length) space, then also $\tY$ is a
geodesic (resp.~length) space, cf.\
\cite[Sec.\,3.6]{Burago-Burago-Ivanov01}. \WWW The geodesic
connecting a point $\ty=[x,\rp]$ with $\fro$ is
\begin{equation}
  \label{eq:96}
  \ty(t)=[x,t \rp]=\ty\cdot t \ \text{ for } t\in [0,1].
\end{equation}
If $x_1,x_2\in X$ with $\sfd(x_1,x_2)\ge \pi$, then 
a geodesic
between $\ty_i=[x_i,\rp_i]$ can
be easily obtained by joining two geodesics connecting
$\ty_i$ to $\fro$ 
as before; observe that in this case $\sfdc(\ty_1,\ty_2)=\rp_1+\rp_2$.

In the case when $\sfd(x_1,x_2)<\pi$ and $\rp_1,\rp_2>0$,
every geodesic $\ty:I\to\tY$ connecting
$\ty_1$ to $\ty_2$ is associated to a geodesic $\rmx$ 
in $X$ joining $x_1$ to $x_2$ and
parametrized with unit speed
in the interval $[0,\sfd(x_1,x_2)]$. \WWW To find the radius $\rp(t)$,
we use the complex plane $\C$: we write the curve
connecting $z_1={\rp_1} \in \C$ to $z_2={\rp_2}\exp(\mathrm i \,\sfd(x_1,x_2))
\in \C$ \WWW in polar coordinates, namely \EEE
\begin{equation}
  \label{eq:97}
  \rmz(t)=\rp(t)\exp (\mathrm i\,\theta(t)),\quad
  \left\{\begin{aligned}
    &\rp^2(t)=(1{-}t)^2\rp_1^2+t^2\rp_2^2
    +2t(1{-}t){\rp_1\rp_2}\cos(\sfd(x_1,x_2)),
    \\
    \!&\cos(\theta(t))=
    %\displaystyle
    \frac{(1{-}t){\rp_1}+
      t{\rp_2}\cos(\sfd(x_1,x_2))}{\rp(t)},\ \ \theta(t)\in [0,\pi],
  \end{aligned}
  \right.
\end{equation}
and then \WWW the geodesic curve in $\tY$ takes the form 
\begin{equation}
  \label{eq:98}
  \ty(t)=[\rmx(\theta(t)),\rp(t)].
\end{equation}

\paragraph{Absolutely continuous curves in $\tY$.}
We want to obtain now a simple characterizations of absolutely
continuous curves in $\tY$.  If $t\mapsto \ty(t)$ is a continuous
curve in $\tY$, with $t\in [0,1]$, is clear that
$\rmr(t):=\sfr(\ty(t))$ is a continuous curve with values in
$[0,\infty)$. We can then consider the open set
$O_\rmr=\rmr^{-1}\big((0,\infty)\big)$ and the map $\rmx:[0,1]\to X$
defined by $\rmx(t):=\sfx(\ty(t))$, whose restriction to $O_\rmr$ is
also continuous.  Thus any continuous curve $\ty:I\to\tY$ can be
lifted to a couple of maps $\rmy=\frq\circ \ty=(\rmx,\rmr ):[0,1]\to
\pY$ with $\rmr$ continuous and $\rmx$ continuous on $O_\rmr$ and
constant on its complement.  Conversely, it is clear that starting
from a couple $\rmy=(\rmx,\rmr)$ as above, then $\ty=\frp\circ \rmy$
is continuous in $\tY$. We thus introduce the set
\begin{equation}
  \label{eq:114}
  \trmC([0,1];\pY):=\big\{\rmy=(\rmx,\rmr ):[0,1]\to \pY\ : \
  \rmr\in \rmC([0,1];\R_+),\ 
  \rmx\restr{O_\rmr}\text{ is continuous }\big\}
\end{equation}
and \WWW for $p\geq 1$ the analogous spaces 
\begin{equation}
  \label{eq:115}
  \begin{aligned}
    \tACp p([0,1];\pY):=\Big\{\rmy=(\rmx,\rmr )\ :\ {} &%\sqrt 
    \rmr\in
    \AC^p([0,1];\R_+),\\
    &\rmx\restr{O_\rmr}\in \AC^p_{loc}(O_\rmr;X), \ 
    %\sqrt 
    \rmr |\rmx'|\in \rmL^p(O_\rmr)\Big\}.
  \end{aligned}
\end{equation}
If $\rmy=(\rmx,\rmr )\in \tACp p([0,1];\pY)$ we define the Borel map
$|\rmy'|:[0,1]\to\R_+$ by
\begin{equation}
  \label{eq:187}
  |\rmy'|^2(t):={|\rmr'(t)|^2}
    %{4r}
    +
    \rmr^2(t) | \rmx'|_\sfd^2(t)\quad \text{if }t\in
    O_\rmr,\quad
    |\rmy'|(t)=0\ \text{otherwise}.
\end{equation}
For absolutely continuous curves the following characterization holds: 

\renewcommand{\dot}[1]{#1'}

\begin{lemma}
  Let $\ty\in \rmC([0,1];\tY)$ be
  lifted to $\rmy=\frq\circ\ty\in \trmC([0,1];\pY)$. Then
  $\ty\in \AC^p(I;\tY)$ 
  if and only if 
  $\rmy=(\rmx,\rmr )\in \tACp p([0,1];\pY)$ and
  % Moreover 
  % for $\Leb 1$-a.e.~$t\in [0,1]$ the metric velocity is given by
  \begin{equation}
    \label{eq:116}
    |\ty'|_{\sfdc}(t)=|\rmy'|(t)\quad\text{for $\Leb 1$-a.e.~$t\in [0,1]$}.
    %\frac 
    %{|\rmr'(t)|^2}
    %{4r}
    %+
    %\rmr^2 | \rmx'|_\sfd^2(t)\quad \text{if }t\in
    %O_\rmr,\quad
    %|\dot\ty|_{\sfdc}(t)=0\ \text{otherwise}.
  \end{equation}
\end{lemma}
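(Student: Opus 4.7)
The proof hinges on the two-sided estimate \eqref{eq:94pre}, which rewrites
\[
\sfdc^2(\ty(t),\ty(s)) = |\rmr(t)-\rmr(s)|^2 + 4\,\rmr(t)\rmr(s)\,\sin^2\!\big(\sfdp(\rmx(t),\rmx(s))/2\big),
\]
together with the elementary inequalities $\frac{2}{\pi}\,z\le 2\sin(z/2)\le z$ for $z\in[0,\pi]$. Both directions of the equivalence, and the identification of the metric derivative, follow by applying these bounds either globally (for the integral estimate defining absolute continuity) or pointwise along difference quotients (for the metric derivative identity).

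For the implication $\rmy\in\tACp p\Rightarrow\ty\in\AC^p$, the plan is to estimate $\sfdc(\ty(t),\ty(s))\le|\rmr(t)-\rmr(s)|+\sqrt{\rmr(t)\rmr(s)}\,\sfdp(\rmx(t),\rmx(s))$ using \eqref{eq:94}. On any closed subinterval of $O_\rmr$, $\rmx$ is absolutely continuous and $\rmr$ is continuous, so the right-hand side is controlled by $\int_s^t\big(|\rmr'|+\rmr|\rmx'|\big)\,\d\tau\le\sqrt{2}\int_s^t|\rmy'|\,\d\tau$ (Cauchy–Schwarz). Across intervals meeting $\{\rmr=0\}$, the same bound still works because $\sqrt{\rmr(t)\rmr(s)}\,\sfdp$ vanishes at the endpoints where $\rmr=0$ and one can split $[s,t]$ into the connected components of $O_\rmr\cap(s,t)$. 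Along the way one obtains the upper bound $|\ty'|_\sfdc\le|\rmy'|$ a.e.

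For the converse, the lower bound $|\rmr(t)-\rmr(s)|\le\sfdc(\ty(t),\ty(s))$ of \eqref{eq:94} immediately gives $\rmr\in\AC^p([0,1];\R_+)$ with $|\rmr'|\le|\ty'|_\sfdc$. Next, on any compact subinterval $K\subset O_\rmr$ choose $\delta>0$ with $\rmr\ge\delta$ on $K$; continuity of $\rmx$ on $K$ ensures that locally $\sfd(\rmx(s),\rmx(t))<\pi/2$ so $\sfdp=\sfd$, and then the lower bound $\frac{2}{\pi}\sqrt{\rmr(s)\rmr(t)}\,\sfdp(\rmx(s),\rmx(t))\le\sfdc(\ty(s),\ty(t))$ yields $\sfd(\rmx(s),\rmx(t))\le\frac{\pi}{2\delta}\int_s^t m\,\d\tau$ for any $m\in L^p$ dominating $|\ty'|_\sfdc$. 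Hence $\rmx\restr{O_\rmr}\in\AC^p_{loc}(O_\rmr;X)$ with $\rmr|\rmx'|\in L^p(O_\rmr)$, and we get the reverse inequality $|\rmy'|\le|\ty'|_\sfdc$ a.e.\ on $O_\rmr$.

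The metric derivative identity \eqref{eq:116} then follows by a pointwise computation: at a.e.\ $t\in O_\rmr$ at which $|\ty'|_\sfdc$, $\rmr'(t)$, and $|\rmx'|_\sfd(t)$ all exist, divide \eqref{eq:94pre} by $(s-t)^2$ and use $4\sin^2(z/2)/z^2\to 1$ as $z\to 0$ together with continuity of $\rmr$ and $\rmx$ at $t$ to obtain $|\ty'|_\sfdc^2(t)=|\rmr'(t)|^2+\rmr^2(t)|\rmx'|_\sfd^2(t)$. On the complement $\{\rmr=0\}$, which is the main subtlety since $\rmx$ is not canonically defined there, we use the classical fact that an absolutely continuous nonnegative function has vanishing derivative a.e.\ on its zero set (at density points where $\rmr=0$ is a minimum); combined with $\sfdc(\ty(s),\ty(t))=\rmr(s)$ when $\rmr(t)=0$, this gives $|\ty'|_\sfdc(t)=|\rmr'(t)|=0$ a.e., consistent with the convention $|\rmy'|(t)=0$ there. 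The main obstacle in the argument is precisely this analysis at the boundary of $O_\rmr$, where $\rmx$ may be discontinuous and one must reconcile the asymmetry between the definition of $|\rmy'|$ and the intrinsic metric derivative $|\ty'|_\sfdc$.
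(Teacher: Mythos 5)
Your proposal is correct and follows essentially the same route as the paper's proof: both directions rest on the two-sided comparison \eqref{eq:94}, and the metric-derivative identity is obtained by dividing \eqref{eq:94pre} by $h^2$ at Lebesgue points of $\rmr'$ and $|\rmx'|_\sfd$ in $O_\rmr$. You are somewhat more explicit than the paper on the set $\{\rmr=0\}$, where you invoke that a nonnegative absolutely continuous function has zero derivative a.e.\ on its zero set; the paper leaves this implicit in the remark that $\ty\equiv\fro$ on $[0,1]\setminus O_\rmr$.
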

\begin{proof}
  By \eqref{eq:94} one immediately sees that if
  $\ty=\frp\circ \rmy\in \AC^p([0,1];\tY)$ then $s\rmr$
  belongs to $\AC^p([0,1];\R)$ and $\rmx\in \AC^p_{\rm loc}(O_\rmr;X)$.
  Since $\ty$ is absolutely continuous, 
  we can evaluate the metric
  derivative at a.e.~$t\in O_\rmr$ where
  also $\dot \rmr$ and $|\dot \rmx|$ exist:
  starting from \eqref{eq:94pre} leads to the limit
  \begin{align*}
    \lim_{h\down0}\frac{\sfdc^2(\ty(t+h),\ty(t))}{h^2}&=
    \lim_{h\down0}\frac{|{\rmr(t+h)}-{\rmr(t)}|^2+4{\rmr(t+h)\rmr(t)}\sin^2(\frac
                                                    12 \sfdp
    (\rmx(t+h),\rmx(t)))}{h^2}
    \\&=
        %\frac
        {|\dot \rmr(t)|^2}
        %{4r(t)}
        +s(t)|\dot \rmx|_\sfd^2(t)
  \end{align*}
  which provides \eqref{eq:116}.

  Moreover, the same calculations show 
  that if the lifting $\rmy$ belongs to $\tACp p([0,1];\pY)$ then
  the restriction of $\ty$ to each connected component of 
  $O_\rmr$ is absolutely continuous with metric velocity 
  given by \eqref{eq:116} in $\rmL^p(0,1)$. 
  Since $\ty$ is globally continuous and constant
  in $[0,1]\setminus O_\rmr$, we conclude that 
  $\ty\in \AC^p([0,1];\tY)$.
\end{proof}

As a consequence, in a length space,
we get the variational representation formula
\begin{equation}
  \label{eq:117}
  \begin{aligned}
    \sfdc^2(\ty_0,\ty_1)= \inf\Big\{\int_{[0,1]\cap \{\rmr>0\}}&\Big(
    \rmr^2(t) |\dot \rmx|_\sfd^2(t)+
    %\frac 
    {|\dot \rmr(t)|^2}
    %{4r}
    \Big) \,\d t:\\
    &(\rmx,\rmr )\in \tACp
    2([0,1];\pY), \ [\rmx(i),\rmr(i)]=\ty_i,\ i=0,1\Big\}.
  \end{aligned}
\end{equation}

\begin{remark}[The Euclidean case]
  \upshape
  \label{rem:Euclidean1}
  Consider the case $X=\R^d$ with the usual Euclidean distance
  $\sfd(x_1,x_2):=|x_1-x_2|$. For $\ty=[\rmx,\rmr]\in
  \AC^2([0,1];\tY)$, we can define a Borel vector field
  $\ty_\tY':[0,1]\to \R^{d+1}$ by
  \begin{equation}
    \label{eq:439}
    \ty_\tY'(t):=
    \begin{cases}
      (\rmr(t)\rmx'(t),\rmr'(t))&\text{whenever }\rmr(t)\neq 0\text{ and the
        derivatives exist,}\\ 
      (0,0)&\text{otherwise.}
    \end{cases}
  \end{equation}
  Then, \eqref{eq:116} yields $|\ty'|_{\sfdc}(t)=|\ty_\tY'(t)|_{\R^{d+1}}$
  for $\Leb 1$-a.e.~$t\in (0,1)$. 

  For 
  $\psi\in \rmC^1(\R^d\times[0,1])$ we set $\zeta([x,\s ],t):=\frac 12
  \psi(x,t)r^2$ and obtain 
  $\partial_t\zeta([x,\s ],t):=\frac 12\partial_t\psi(x,t)r^2$. Now
  defining the Borel map $ \rmD_\tY\zeta:\tY\to (\R^{d+1})^*$ via 
  \begin{equation}
    \label{eq:441}
    \rmD_\tY\zeta(\ty,t):=
    \begin{cases}
      (\frac 12 \s \rmD_x\psi(x,t),\s\psi(x,t)) 
     &\text{for }\ty\neq\fro% \text{ and $\psi$ is differentiable at $x$}
      ,\\
      (0,0)&\text{otherwise},
    \end{cases}
  \end{equation}
  we see that the map $t\mapsto \zeta(\ty(t),t)$ is absolutely
  continuous and satisfies
  \begin{equation}
    \label{eq:440}
    \frac\d{\d t}\zeta(\ty(t),t)=\frac 12\partial_t\zeta(\ty(t),t)
    +\langle \rmD_\tY\zeta(\ty(t),t), \ty_\tY'(t)\rangle_{\R^{d+1}}
    \quad \text{$\Leb 1$-a.e.~in $(0,1)$.}\qedhere
  \end{equation}
Note that the first component of $\rmD_\tY \zeta $ contains the
factor $r$ rather than $r^2$, since $\mathfrak y'_\tY$ in (8.12)
already has one factor $r$ in its first component. \nc
\end{remark}

\subsection{Lifting of absolutely continuous curves and geodesics}
\label{subsec:lifting}
\paragraph{Dynamic plans and time-dependent marginals.}
Let $(Z,\sfd_Z)$ be a complete and separable metric space.
%let $I=[0,1]$ and $\lambda:=\Leb 1\res I$.
A dynamic plan $\ppi$ in $Z$ is a probability measure in
$\cP(\rmC(I;Z))$, and we say that $\ppi$ has finite $2$-energy if it
is concentrated on $\AC^2(I;Z)$ and
\begin{equation}
  \label{eq:169}
  \int \Big(\int_0^1 |\dot \rmz|_{\sfd_Z}^2(t)\,\d t\Big)\,\d\ppi(\rmz)<\infty.
\end{equation}
We denote by $\eval_t$ the evaluation map in $\rmC(I;Z)$ given by
$\eval_t(\rmz):=\rmz(t)$.  If $\ppi$ is a dynamic plan,
$\alpha_t=(\eval_t)_\sharp\ppi\in \cM(Z)$ is its marginal at time
$t\in I$ and the curve $t\mapsto \alpha_t$ belongs to
$\rmC(I;(\cM(Z),\sfW_{\kern-1pt \sfd_Z}))$.  If moreover $\ppi$ is a
dynamic plan with finite $2$-energy, then
$\alpha\in\AC^2(I;(\cM(Z),\sfW_{\kern-1pt \sfd_Z}))$.

We say that $\ppi$ is an \emph{optimal geodesic plan} 
between $\alpha_0,\alpha_1\in \cP(Z)$ \WWW
if $(\eval_i)_\sharp\ppi=\alpha_i$ for $i=0,1$, if 
it is a dynamic plan concentrated on $\Geo Z$, and if \EEE
\begin{equation}
  \label{eq:190}
  \int\sfd_Z^2(\rmz(0),\rmz(1))\,\d\ppi(\rmz)=
  \int \!\!\!\!\int_0^1 |\rmz'|^2\,\d t\,\d\ppi(\rmz)=
  \sfW_{\sfd_Z}^2(\alpha_0,\alpha_1).
\end{equation}

When $Z=\tY$ we will denote by $\chm2t=\chm2{}\circ(\eval_t)_\sharp$
the homogeneous marginal at time $t\in I$.  Since \WWW
$\chm2{}{}:\cP(\tY)\to \cM(X)$ is $1$-Lipschitz (cf.\ Corollary
\ref{cor:HK-W2}), it follows that
the curve $\mu_t:=\chm2{}{\alpha_t}=\chm2t\ppi$ belongs to
$\AC^2(I;(\cM(X),\HK))$ and moreover
\begin{equation}
  \label{eq:170}
  |\dot\mu_t|^2_{\HK}\le \int |\dot \ty|_{\sfdc}^2(t)\,\d\ppi(\ty)\quad
  \text{for a.e.~$t\in (0,1)$}.
\end{equation}
A simple consequence of this property is that $(\cM(X),\HK)$ 
inherits the length (or geodesic) property of $(X,\sfd)$.
\begin{proposition}
  \label{prop:HKlg}
  $(\cM(X),\HK)$ is a length (resp.~geodesic) space
  if and only if $(X,\sfd)$ is a length (resp.~geodesic) space.
\end{proposition}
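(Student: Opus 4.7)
The plan is to exploit the metric submersion $\chm2{} : \cP_2(\tY) \to (\cM(X), \HK)$ of Corollary~\ref{cor:HK-W1}, which allows transporting length/geodesic structure between the two sides via lifting and projection.

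For the forward implication, I would start from the cone construction: if $(X,\sfd)$ is length (resp.\ geodesic), then the explicit geodesic parametrization \eqref{eq:96}--\eqref{eq:98} (cf.\ \cite[Sec.\,3.6]{Burago-Burago-Ivanov01}) shows that $(\tY,\sfdc)$ is also length (resp.\ geodesic), and classical Wasserstein theory (cf.\ \cite[Thm.\,7.2.2]{Ambrosio-Gigli-Savare08}) transfers the property to $(\cP_2(\tY),\Wc)$. Given $\mu_0,\mu_1 \in \cM(X)$, Corollary~\ref{cor:HK-W1} supplies lifts $\alpha_i \in \cP_2(\tY)$ with $\chm2{}\alpha_i = \mu_i$ and $\Wc(\alpha_0,\alpha_1) = \HK(\mu_0,\mu_1)$. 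Picking an AC curve (resp.\ geodesic) $\alpha_t$ between them in $\cP_2(\tY)$ and projecting to $\mu_t := \chm2{}\alpha_t$, the $1$-Lipschitz inequality \eqref{eq:395} bounds the $\HK$-length of $\mu_t$ by the $\Wc$-length of $\alpha_t$; letting the approximation parameter tend to zero then yields a curve in $\cM(X)$ whose length is arbitrarily close to $\HK(\mu_0,\mu_1)$. In the geodesic case, the sharper estimate
\[
\HK(\mu_s,\mu_t) \le \Wc(\alpha_s,\alpha_t) = |t-s|\,\Wc(\alpha_0,\alpha_1) = |t-s|\,\HK(\mu_0,\mu_1)
\]
combined with the triangle inequality in $\cM(X)$ forces equality for all $s,t \in [0,1]$ and makes $\mu_t$ a geodesic in $(\cM(X),\HK)$.

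For the converse direction, the natural geometric input is the isometric embedding $\iota : \tY \hookrightarrow \cM(X)$, $\iota([x,r]) := r^2\,\delta_x$, provided by \eqref{eq:344}; in particular $\HK(\delta_{x_0},\delta_{x_1}) = \sqrt{2-2\cos(\sfd_{\pi/2}(x_0,x_1))}$ is a strictly increasing continuous function of $\sfd(x_0,x_1) \in [0,\pi/2]$, so quantitative information on AC curves joining Diracs in $\cM(X)$ translates back to quantitative information on $\sfd$. The hard part, and the main obstacle, will be that an AC or geodesic curve in $\cM(X)$ connecting two Diracs need not take values in $\iota(X)$: mass can be split and redistributed through genuinely non-Dirac intermediate measures. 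To circumvent this I would lift an arbitrary near-optimal AC curve $\mu_t \in \cM(X)$ to a dynamic plan on $\cP_2(\tY)$ of the same length (via the superposition principle stated later as Theorem~\ref{thm:plan-representation}), then compose with the spatial projection $\sfx : \tY_\soo \to X$ of Section~\ref{subsec:cone} to extract curves in $X$ whose total length matches the $\HK$-length of $\mu_t$, yielding the length (resp.\ geodesic) property of $(X,\sfd)$ and closing the equivalence.
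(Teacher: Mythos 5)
Your forward implication matches the paper's argument: Corollary~\ref{cor:HK-W1} provides optimal lifts, the cone $(\tY,\sfdc)$ and hence $(\cP_2(\tY),\Wc)$ inherit the length/geodesic property, and the $1$-Lipschitz submersion $\chm2{}$ pushes the connecting curve down. The paper phrases the approximation via a $\kappa$-Lipschitz connecting curve for each $\kappa>1$ rather than AC curves, but the substance is identical.

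The converse is where your proposal has a genuine gap. You correctly diagnose the obstruction (intermediate measures along an $\HK$-near-geodesic between Diracs need not be Diracs) and correctly reach for Theorem~\ref{thm:plan-representation} to lift a near-optimal curve $\mu_t$ to a dynamic plan $\ppi$ on $\tY$. But the final step --- ``compose with the spatial projection $\sfx:\tY_\soo\to X$ to extract curves in $X$ whose total length matches the $\HK$-length of $\mu_t$'' --- is false. By \eqref{eq:187} the metric speed of a lifted curve $\ty(t)=[\rmx(t),\rmr(t)]$ satisfies
\begin{equation*}
|\ty'|_{\sfdc}^2=|\rmr'|^2+\rmr^2\,|\rmx'|_\sfd^2,
\end{equation*}
so the $X$-speed $|\rmx'|_\sfd$ is controlled by the cone speed only where $\rmr$ is bounded away from $0$. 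A $\ppi$-a.e.\ lifted curve can shrink toward, or even pass through, the vertex $\fro$ (where $\sfx$ is discontinuous and the projected path is not even a curve from $x_0$ to $x_1$), and in general the $X$-length of the projection can be much larger than, or not comparable to, the $\sfdc$-length. This is precisely the step you would need to ``close the equivalence,'' and it does not go through as stated.

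To repair it one must exploit the radial component. In the geodesic case: lift a geodesic $\mu_t$ between $\delta_{x_0}$ and $\delta_{x_1}$ (with $0<\sfd(x_0,x_1)<\pi$) to an optimal geodesic plan $\ppi\in\cP(\Geo{\tY})$ as in Theorem~\ref{thm:geoHK}; the endpoint marginals are carried by the rays over $x_0$ and $x_1$, and by the explicit cone-geodesic formulas \eqref{eq:97}--\eqref{eq:98}, any $\sfdc$-geodesic joining two points on these rays with positive radii stays off the vertex and its $\sfx$-projection is a \emph{reparametrized geodesic} of $(X,\sfd)$. The length case requires a further approximation; note also that for pairs at distance $\ge\pi$ the cone distance no longer ``sees'' $\sfd(x_0,x_1)$, so extra care is needed there. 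The paper itself is terse on the converse --- it cites Theorem~\ref{thm:plan-representation} together with the classical implication ``$(\cP_2(\tY),\Wc)$ length $\Rightarrow$ $\tY$ length $\Rightarrow$ $X$ length'' --- but whichever route you take, the plain length-preservation claim for $\sfx$ that your proposal relies on must be replaced by an argument that keeps track of $\rmr$.
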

\begin{proof}
  Let us first suppose that $(X,\sfd)$ is a length 
  space (the argument in the geodesic case is completely equivalent)
  and let $\mu_i\in \cM(X)$.
  By Corollary \ref{cor:HK-W1} we find 
  $\alpha_i\in \cP_2(\tY)$ such that 
  $\chm2{}\alpha_i=\mu_i$ and $\HK(\mu_1,\mu_2)=
  \Wc(\alpha_1,\alpha_2)$. Since $\tY$ is a length
  space, it is well known that $\cP_2(\tY)$ is \WWW a length space
  (see \cite{Sturm06I}), \EEE
  so that for every $\kappa>1$ there exists $\alpha\in
  \Lip([0,1];(\cP_2(\tY),\Wc))$ connecting
  $\alpha_1$ to $\alpha_2$ such that $|\alpha'|_{\Wc}\le
  \kappa\,\Wc(\alpha_1,\alpha_2)$.
  Setting $\mu_t:=\chm2{}\alpha_t$ we obtain a Lipschitz curve
  connecting $\mu_1$ to $\mu_2$ with length $ \leq \kappa\,\HK(\mu_1,\mu_2)$.
  
  The converse property is a consequence of the next representation
  Theorem
  \ref{thm:plan-representation} and the fact that if $(\cP_2(\tY),\Wd)$ is
  a length (resp.\ geodesic) space, then $\tY$ and thus $X$ are length
  (resp.\ geodesic) spaces. 
\end{proof}

We want to prove the converse representation result \WWW that every
absolutely continuous curve $\mu:[0,1]\to (\cM(X),\HK)$ can be written
via a dynamic plan $\ppi$ as $\mu_t= \chm2{t}\ppi$. The argument only
depends on the metric properties of  
the Lipschitz submersion $\frh$.

\begin{theorem}
  \label{thm:plan-representation}
  Let $(\mu_t)_{t\in [0,1]}$ be a curve
  in $ \AC^p \nc([0,1];(\cM(X),\HK))$, $ p\in [1,\infty]$,
   with 
  \begin{equation}
    \label{eq:400}
     \Theta:=\sqrt{\mu_0(X)}+\int_0^1 |\dot \mu|_\HK\,\d t.
  \end{equation}
  Then there exists a curve 
  $(\alpha_t)_{t\in [0,1]}$ 
  in $\AC^p ([0,1];(\cP_2(\tY),\Wc))$
  such that $\alpha_t $ is concentrated on $\cball
  \Theta$ for every $t\in [0,1]$ and
  \begin{equation}
    \label{eq:171a}
    \mu_t=\chm2{}\alpha_t \text{ in } [0,1], \quad
    |\dot\mu_t|_{\HK}=
    |\dot\alpha_t|_{\Wc}
    \text{ for a.e.\ }t\in (0,1).
  \end{equation}
  Moreover, when $p=2$, 
  \nc 
  there exists a dynamic plan
  $\ppi\in \cP(\AC^2([0,1];\tY))$ such that
  \begin{equation}
    \label{eq:171}
    \begin{gathered}
      \alpha_t=(\eval_t)_\sharp\ppi,\quad \nc \mu_t=\chm2t\ppi
      =\chm2{}\alpha_t\text{ in } [0,1], \\ |\dot\mu_t|_{\HK}^2=
      |\dot\alpha_t|_{\Wc}^2=\!\int\! |\dot
      \ty|_{\sfdc}^2(t)\,\d\ppi(\ty) \text{ for a.e.\ }t\in (0,1).
    \end{gathered}
  \end{equation}
\end{theorem}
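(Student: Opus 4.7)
The plan is to first construct a lifted curve $\alpha\in\AC^p([0,1];(\cP_2(\tY),\Wc))$ satisfying $\chm 2{}\alpha_t=\mu_t$ and $|\dot\alpha_t|_{\Wc}=|\dot\mu_t|_{\HK}$ for a.e.\ $t$, by time-discretization and a compactness argument; then, when $p=2$, I would invoke Lisini's superposition principle \cite{Lisini07} in the Polish metric space $(\tY,\sfdc)$ to produce the required dynamic plan $\ppi$.

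For the first step I fix $N\in\N$, set $t^N_k:=k/N$ and apply Corollary \ref{cor:HK-W2} to the finite collection $(\mu_{t^N_k})_{k=0}^N$ to obtain probability measures $\alpha^N_0,\ldots,\alpha^N_N\in\cP_2(\tY)$ all concentrated on $\cball{\Theta_N}$, where
\[
\Theta_N:=\sqrt{\mu_0(X)}+\sum_{k=1}^N\HK(\mu_{t^N_{k-1}},\mu_{t^N_k})\le\Theta
\]
by the $\AC^p$ assumption on $\mu$, and satisfying $\chm 2{}\alpha^N_k=\mu_{t^N_k}$ together with $\Wc(\alpha^N_{k-1},\alpha^N_k)=\HK(\mu_{t^N_{k-1}},\mu_{t^N_k})$ for every $k$. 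I would then interpolate between consecutive lifts to build a continuous curve $\alpha^N:[0,1]\to\cP_2(\tY)$: when $\tY$ is geodesic, by Wasserstein geodesics traversed at constant speed; in general, by $\eps_N$-almost-geodesic chains in $\cP_2(\tY)$, where $\eps_N\to 0$. In either case the construction preserves the concentration in $\cball\Theta$, and Jensen's inequality applied to the constant-speed pieces yields $\int_0^1|\dot\alpha^N_t|_{\Wc}^p\,dt\le\int_0^1|\dot\mu_t|_{\HK}^p\,dt+\eps_N$.

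By Lemma \ref{le:compactnessH}, combined with the uniform confinement of $\alpha^N_t$ in $\cball\Theta$, the family $\{\alpha^N_t:N\ge 1,\ t\in[0,1]\}$ is equally tight in $\cP_2(\tY)$ with uniformly bounded second moment; together with the equicontinuity estimate $\Wc(\alpha^N_s,\alpha^N_t)\le\int_s^t|\dot\mu_\tau|_{\HK}\,d\tau+\eps_N$, a classical Ascoli--Arzel\`a extraction in $C([0,1];\cP_2(\tY))$ produces a uniformly convergent subsequence with limit $\alpha\in C([0,1];\cP_2(\tY))$. Lower semicontinuity of the $\Wc$-metric derivative and continuity of the homogeneous-marginal map (Corollary \ref{cor:HK-W1}) give $\alpha\in\AC^p([0,1];\cP_2(\tY))$, $\chm 2{}\alpha_t=\mu_t$ and $|\dot\alpha_t|_{\Wc}\le|\dot\mu_t|_{\HK}$; the reverse inequality is the contraction $\HK(\chm 2{}\beta,\chm 2{}\beta')\le\Wc(\beta,\beta')$ of Corollary \ref{cor:HK-W1}, which yields \eqref{eq:171a}. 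When $p=2$, Lisini's theorem applied to $\alpha\in\AC^2([0,1];\cP_2(\tY))$ over the Polish space $(\tY,\sfdc)$ then provides $\ppi\in\cP(\AC^2([0,1];\tY))$ with $(\eval_t)_\sharp\ppi=\alpha_t$ and $\int|\dot\ty|_{\sfdc}^2(t)\,d\ppi(\ty)=|\dot\alpha_t|_{\Wc}^2$ for a.e.\ $t$, establishing \eqref{eq:171}.

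The principal technical obstacle is the interpolation step when $(X,\sfd)$ is not assumed geodesic: Wasserstein displacement interpolation is not available in $\cP_2(\tY)$, and the $\eps_N$-almost-geodesic chains must be chosen to preserve simultaneously the confinement in $\cball\Theta$ and the equal tightness of Lemma \ref{le:compactnessH}, while keeping the interpolation error $\eps_N\to 0$ without spoiling the crucial upper bound $|\dot\alpha_t|_{\Wc}\le|\dot\mu_t|_{\HK}$ for the limit curve. One clean workaround, if needed, is to replace continuous interpolants by piecewise-constant (cadlag) approximations and upgrade them to genuinely absolutely continuous curves by a time-mollification argument whose error is controlled by the discretization mesh.
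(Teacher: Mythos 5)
Your overall strategy matches the paper's: reduce to a time-discretization, use the gluing construction (via Corollary \ref{cor:HK-W2}, which wraps Lemma \ref{le:gluing}) to produce lifted measures at the mesh points with matching $\Wc$- and $\HK$-increments, pass to the limit by the tightness Lemma \ref{le:compactnessH}, and finally invoke Lisini's theorem for $p=2$. The decisive difference, and a real gap in your write-up, lies in the step between the discrete family of lifts and the continuous limit curve.

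You propose to \emph{interpolate} the discrete lifts $\alpha^N_0,\ldots,\alpha^N_N$ to a genuine curve $\alpha^N:[0,1]\to\cP_2(\tY)$, and then take an Ascoli--Arzel\`a limit in $C([0,1];\cP_2(\tY))$. As you yourself observe, this requires Wasserstein displacement interpolation (hence that $(\tY,\sfdc)$ be geodesic, i.e.\ that $(X,\sfd)$ be geodesic), which is not assumed here; and the almost-geodesic fallback is left at the level of a sketch, precisely because it is nontrivial to build an $\eps_N$-chain that simultaneously preserves the confinement in $\cball\Theta$, the equal tightness of Lemma \ref{le:compactnessH}, and the energy bound $\int_0^1|\dot\alpha^N_t|^p_{\Wc}\,\d t\le\int_0^1|\dot\mu_t|^p_\HK\,\d t+\eps_N$. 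This is not a detail: for a general complete separable $(X,\sfd)$ the statement of the theorem does not carry a geodesic assumption, so the proof cannot rely on interpolation in $\cP_2(\tY)$.

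The paper avoids this difficulty entirely and in a way you may find instructive. After first reparametrizing $\mu$ to a constant-speed Lipschitz curve with $|\dot\mu_t|_\HK\equiv L$ (a step your write-up skips; it is not strictly essential but it simplifies all the estimates), it works only at \emph{dyadic} time nodes $t_i^N=i2^{-N}$. The gluing lemma gives, for each $N$, measures $\alpha^N_i\in\cP_2(\tY)$ supported in $\cball\Theta$ with $\chm 2{}\alpha^N_i=\mu_{t^N_i}$ and $\Wc(\alpha^N_i,\alpha^N_{i+1})=\HK(\mu_{t^N_i},\mu_{t^N_{i+1}})\le L2^{-N}$; in particular $\Wc(\alpha^N(s),\alpha^N(t))\le L|t-s|$ for dyadic $s,t$ on the same grid. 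A diagonal extraction via Lemma \ref{le:compactnessH} then yields a limit $\alpha(t)$ \emph{only at dyadic $t$}, automatically $L$-Lipschitz on the dyadics, which then extends uniquely to an $L$-Lipschitz curve on all of $[0,1]$; the reverse inequality $|\dot\alpha_t|_{\Wc}\ge|\dot\mu_t|_\HK$ is, as you say, the contraction property of $\chm 2{}$. No interpolation, no geodesic hypothesis, no almost-geodesic chains. In short: your construction would work on geodesic $X$, but to reach the full generality of the theorem you should replace the interpolation-plus-Ascoli step by the dyadic-diagonalization-plus-Lipschitz-extension argument.
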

\begin{proof}
  By Lisini's lifting Theorem \cite[Theorem~5]{Lisini07}
  \eqref{eq:171} is a consequence of the first part of the statement
  and \eqref{eq:171a} in the case $p=2$. \nc It is therefore
  sufficient to prove that for a given $\mu\in
  \AC([0,1];(\cM(X),\HK))$ there exists a curve $\alpha\in \AC
  ([0,1];(\cP_2(\tY),\Wc))$ such that $\mu_t=\chm2{}(\alpha_t)$ and
  $|\dot\mu_t|=|\dot\alpha_t|$ a.e.~in $(0,1)$.  By a standard
  reparametrization technique, we may assume that $\mu$ is Lipschitz
  continuous and $|\dot\mu_t|=L$.

  We divide the interval $I=[0,1]$ into $2^N$-intervals \WWW of size
  $2^{-N}$, namely $I_i^N:=[t_{i-1}^N,t_{i}^N]$ with
  $t_i^N:=i\,2^{-N}$ for $i=1,\ldots, 2^N$. Setting
  $\mu_i^N:=\mu_{t_i^N}$ we can apply the Gluing Lemma
  \ref{le:gluing} (starting from $i=0$ to $2^N$)
  % \rsout{with $\theta_0:=1$,
  %   $\theta_i=2^{N}$ for $i\ge1$} 
  to obtain measures $\alpha_i^N\in
  \cP_2(\tY)$ such that
  \begin{equation}
    \label{eq:172}
    \frh(\alpha_i^N)=\mu_i^N,\quad
    W_{\sfdc}(\alpha_i^N,\alpha_{i+1}^N)=\HK(\mu_i^N,\mu_{i+1}^N)\le 
    L 2^{-N},
  \end{equation}
  and concentrated on $\cball {\Theta_N}$ where
  \begin{displaymath}
    \Theta_N=
    \sqrt{\mu_0(X)}+\sum_{i=1}^{2^N}\HK(\mu_{i-1}^N,\mu_i^N)
    \le \Theta.
    %\Big(\sqrt{\mu_0(X)}+\int_0^1 |\dot\mu|_\HK\,\d
    %t\Big)^2\le 
    %2\Big(\mu_0(X)+2^N\sum_{i=1}^{2^N}2^{-N}\int_{I_i^N}|\dot\mu|_\HK^2\,\d
    %t\Big)=
    %\Theta^2.
  \end{displaymath}
  Thus if $t$ is a dyadic point, we obtain a sequence of probability
  measures $\alpha^N(t)\in \cP_2(\tY)$ concentrated on $\cball \Theta$
  with $\chm2{}(\alpha^N(t))=\mu_t$ and such that
  $W_{\sfdc}(\alpha^N(t),\alpha^N(s))\le L|t-s|$ if
  $s=m2^{-N}$ and $t=n2^{-N}$ are dyadic points in the same grid.  By the
  compactness lemma \ref{le:compactnessH} and a standard diagonal
  argument, we can extract a subsequence $N(k)$ such that
  $\alpha_{N(k)}(t)$ converges to $\alpha(t)$ in $(\cP_2(\tY),\Wc)$
  for every dyadic point $t$.  Since
  $W_{\sfdc}(\alpha(s),\alpha(t))\le L \nc |t-s|$ for every dyadic $s,t$, we
  can extend $\alpha$ to a $L$-Lipschitz curve, still denoted by
  $\alpha$, which satisfies $\chm2{}(\alpha(t))=\mu_t$.  Since \WWW
  $\chm2{}$ is $1$-Lipschitz, we conclude that $|\dot\alpha|(t)=
  |\dot\mu_t|$ a.e.\ in $(0,1)$.
\end{proof}
\begin{corollary}
  Let $(\mu_t)_{t\in [0,1]}$ be a curve in $\AC^2([0,1];(\cM(X),\HK))$
  and let $\Theta$ as in \eqref{eq:400}.  Then there exists a dynamic
  plan $\tilde\ppi$ in $\cP(\trmC([0,1];\pY))$ concentrated on
  $\tAC^2([0,1];\pY)$ such that $\alpha_t=(\eval_t)_\sharp\ppi$ is
  concentrated in $X\times [0,\Theta]$, that
  $\mu_t= \hm2{} \nc ((\eval_t)_\sharp\ppi)$, and that %, recalling \eqref{eq:187},
  \begin{equation}
    \label{eq:173}
    |\dot\mu_t|_\HK^2=
    \int |\rmy'|^2(t) %\Big(\int_0^1 {|\dot s(t)|^2}+s^2(t)|\dot x|_\sfd^2(t)\,\d t\Big)
    \,\d\ppi(\rmy) \quad\text{for $\Leb 1$-a.e.~$t\in [0,1]$},
  \end{equation}
\WWW where $|\rmy'|$ is defined in \eqref{eq:187}. \EEE
\end{corollary}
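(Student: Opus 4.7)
The plan is to take the dynamic plan $\ppi\in\cP(\AC^2([0,1];\tY))$ produced by Theorem \ref{thm:plan-representation} and push it forward along a Borel section $\Lambda:\rmC([0,1];\tY)\to \trmC([0,1];\pY)$ induced by the right inverse $\frqext=(\sfx,\sfr)$ of the cone projection $\frp$ defined in \eqref{eq:127}.

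First I would apply Theorem \ref{thm:plan-representation} with $p=2$ to the given curve $(\mu_t)$. This yields $\ppi\in \cP(\AC^2([0,1];\tY))$ such that $\mu_t=\chm2t\ppi$, $(\eval_t)_\sharp\ppi=\alpha_t$ is concentrated on $\cball\Theta$ for every $t\in[0,1]$, and
\begin{equation*}
|\dot\mu_t|_\HK^2=\int |\ty'|_{\sfdc}^2(t)\,\d\ppi(\ty)\quad\text{for a.e.\ }t\in(0,1).
\end{equation*}

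Next, I would lift curves from $\tY$ to $\pY$ pointwise via $\frqext$. Concretely, define $\Lambda:\rmC([0,1];\tY)\to \pY^{[0,1]}$ by $\Lambda(\ty)(t):=\frqext(\ty(t))=(\sfx(\ty(t)),\sfr(\ty(t)))$. Since $\sfr$ is continuous on $\tY$ and $\sfx$ is continuous on $\tY_\fro$ with $\sfx\equiv\bar x$ on the fibre over $\fro$, for every $\ty\in\rmC([0,1];\tY)$ the image $\Lambda(\ty)=(\rmx,\rmr)$ satisfies $\rmr\in\rmC([0,1];\R_+)$ and $\rmx\restr{O_\rmr}$ continuous, so $\Lambda$ actually takes values in $\trmC([0,1];\pY)$; moreover, $\frp\circ\Lambda(\ty)(t)=\ty(t)$ for every $t$. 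Furthermore, by the lifting lemma of Section \ref{subsec:geodesic}, $\ty\in\AC^2([0,1];\tY)$ if and only if $\Lambda(\ty)\in\tACp 2([0,1];\pY)$, with
\begin{equation*}
|\Lambda(\ty)'|(t)=|\ty'|_{\sfdc}(t)\quad\text{for $\Leb 1$-a.e.\ }t\in(0,1).
\end{equation*}
Then define $\tilde\ppi:=\Lambda_\sharp\ppi$.

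The main obstacle is measurability: the map $\Lambda$ is not continuous (as $\sfx$ is discontinuous at $\fro$), so I have to endow $\trmC([0,1];\pY)$ with the $\sigma$-algebra generated by the evaluation maps $\eval_t$, $t\in[0,1]$. For each $t$ the map $\ty\mapsto\Lambda(\ty)(t)=\frqext(\ty(t))$ is a composition of the continuous evaluation $\eval_t:\rmC([0,1];\tY)\to\tY$ with the Borel map $\frqext:\tY\to\pY$, hence Borel; this makes $\Lambda$ measurable with respect to the cylindrical $\sigma$-algebra, so $\tilde\ppi$ is a well-defined Radon probability measure.

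Finally, I would verify the three claimed properties. Since $\frp\circ\Lambda=\id_{\rmC([0,1];\tY)}$, the relation $\frqext_\sharp\alpha_t=(\eval_t)_\sharp\tilde\ppi$ holds, and using $\hm2{}\circ\frqext_\sharp=\chm2{}$ on $\cM_2(\tY)$ (cf.\ \eqref{eq:155}) together with $\chm2t\ppi=\mu_t$ gives $\mu_t=\hm2{}((\eval_t)_\sharp\tilde\ppi)$. Since $\ppi$-a.e.\ $\ty$ lies in $\cball\Theta$ for every $t$, we have $\rmr(t)=\sfr(\ty(t))\le\Theta$, so $(\eval_t)_\sharp\tilde\ppi$ is concentrated on $X\times[0,\Theta]$ and $\tilde\ppi$ is concentrated on $\tACp 2([0,1];\pY)$. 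The energy identity \eqref{eq:173} follows directly from \eqref{eq:116} of the lifting lemma:
\begin{equation*}
\int |\rmy'|^2(t)\,\d\tilde\ppi(\rmy)=\int|\Lambda(\ty)'|^2(t)\,\d\ppi(\ty)=\int|\ty'|_{\sfdc}^2(t)\,\d\ppi(\ty)=|\dot\mu_t|_\HK^2
\end{equation*}
for $\Leb 1$-a.e.\ $t\in(0,1)$, which completes the proof.
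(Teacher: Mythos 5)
Your proof is correct and follows the natural route the paper implicitly intends: apply Theorem \ref{thm:plan-representation} to obtain $\ppi\in\cP(\AC^2([0,1];\tY))$, then push it forward under the pointwise lift $\Lambda(\ty)=\frqext\circ\ty$ provided by the right inverse $\frqext$ from \eqref{eq:127}, and conclude via the Lemma in Section \ref{subsec:geodesic} equating $|\ty'|_{\sfdc}(t)$ with $|\rmy'|(t)$. The paper itself gives no separate proof of the Corollary (it is stated as an immediate consequence of the Theorem together with the lifting lemma), and your handling of the measurability of $\Lambda$ via the cylindrical $\sigma$-algebra, together with the remaining verifications, is exactly what is needed.
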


% \GCOMM{This Remark should be dropped}
% \color{red}
% \begin{remark}
%   %\label{rem:AC}
%   \upshape Since every curve $\mu\in \AC([0,1];(\cM(X),\HK))$ admits a
%   Lipschitz repara\-metrization (see, e.g., \cite[Lemma
%   1.1.4]{Ambrosio-Gigli-Savare08}), by applying Theorem
%   \ref{thm:plan-representation} it is not difficult to check that
%   there exists a curve $\alpha\in \AC([0,1];(\cP_2(\tY),\Wc))$ such
%   that $\mu_t=\chm2{}\alpha_t$ and $|\dot
%   \mu_t|_\HK=|\dot\alpha_t|_{\Wc}$ \WWW for a.e.\ $t\in [0,1]$. \EEE
% \end{remark}
% \nc

Another important consequence of the previous representation
result is a precise characterization of the geodesics in
$(\cM(X),\HK)$.

\begin{theorem}[Geodesics in $(\cM(X),\HK)$]
  \label{thm:geoHK}
  \mbox{} \\[-1.8em]
\begin{enumerate}[(i)]

\item If $(\mu_t)_{t\in [0,1]}$ is a geodesic in $(\cM(X),\HK)$ then
  there exists an optimal geodesic plan
  $\ppi$ in $\cP(\Geo{\tY})$ (recall \eqref{eq:190}) such that \\
  (a) $\ppi$-a.e.~curve $\ty$ is a geodesic in $\tY$, \\
  (b) $[0,1]\ni t\mapsto \alpha_t:=(\eval_t)_\sharp\ppi$ is a geodesic in
  $(\cP_2(\tY),\Wc)$, where all $\alpha_t$ are\\ \mbox{}\qquad
  concentrated on $\cball\Theta$ with
  $\Theta^2=2(\mu_0(X)+\HK^2(\mu_0,\mu_1))$,\\ 
  (c) $\mu_t=\chm2t\ppi=\chm2{}\alpha_t$ for every $t\in [0,1]$, and\\
  (d) $(\eval_s,\eval_t)_\sharp \ppi\in \OptHK(\mu_s,\mu_t)$ if $0\le
  s<t\le 1$.
  \item If $(X,\sfd)$ is a geodesic space,
    for every $\mu_0,\mu_1\in \cM(X)$ and every 
    $\aalpha\in \OptHK(\mu_0,\mu_1)$ 
    there exists an optimal geodesic plan $\ppi\in \cP(\Geo{\tY})$ 
    such that $(\eval_s,\eval_t)_\sharp \ppi=\aalpha$.
  \end{enumerate}\end{theorem}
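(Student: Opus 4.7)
The plan is to reduce both statements to Theorem \ref{thm:plan-representation} and to the standard superposition/selection principle for geodesics in the Wasserstein space $(\cP_2(\tY),\Wc)$, exploiting the fact that $\chm2{}:\cP_2(\tY)\to(\cM(X),\HK)$ is a metric submersion (the $1$-Lipschitz inequality is Corollary \ref{cor:HK-W1}).

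For (i), starting from a geodesic $(\mu_t)$ in $(\cM(X),\HK)$, I would first apply Theorem \ref{thm:plan-representation} (with $p=2$) to obtain a dynamic plan $\ppi\in\cP(\AC^2([0,1];\tY))$ whose time-marginals $\alpha_t:=(\eval_t)_\sharp\ppi\in\cP_2(\tY)$ satisfy $\chm2{}\alpha_t=\mu_t$, $|\dot\alpha_t|_{\Wc}=|\dot\mu_t|_{\HK}$ for a.e.\ $t$, and each $\alpha_t$ is concentrated on $\cball{\Theta_0}$ where $\Theta_0=\sqrt{\mu_0(X)}+\HK(\mu_0,\mu_1)$; Cauchy--Schwarz gives $\Theta_0^2\le 2(\mu_0(X)+\HK^2(\mu_0,\mu_1))=\Theta^2$, which is (b). The length of $\alpha$ equals $\int_0^1|\dot\mu_t|_{\HK}\,\d t=\HK(\mu_0,\mu_1)$ since $\mu$ is a geodesic; combined with the submersion bound $\Wc(\alpha_0,\alpha_1)\ge \HK(\mu_0,\mu_1)$ and the upper bound $\Wc(\alpha_0,\alpha_1)\le\int|\dot\alpha_t|_{\Wc}\,\d t$, this forces $\Wc(\alpha_0,\alpha_1)=\HK(\mu_0,\mu_1)$ together with $|\dot\alpha_t|_{\Wc}\equiv\HK(\mu_0,\mu_1)$, so $\alpha$ is a constant-speed geodesic in $(\cP_2(\tY),\Wc)$. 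A standard consequence of Lisini's superposition in Wasserstein spaces over Polish metric spaces (see the argument used in Theorem \ref{thm:plan-representation}) is that $\ppi$ can then be taken concentrated on $\Geo{\tY}$, giving (a). For (d), since $\ppi$-a.e.\ curve is a constant-speed geodesic in $\tY$,
\[
\int\sfdc^2(\ty_s,\ty_t)\,\d\ppi(\ty)=(t-s)^2\!\!\int\sfdc^2(\ty_0,\ty_1)\,\d\ppi(\ty)=(t-s)^2\HK^2(\mu_0,\mu_1),
\]
while $(\eval_s,\eval_t)_\sharp\ppi\in\cHM2{\mu_s}{\mu_t}$ yields $\HK^2(\mu_s,\mu_t)\le(t-s)^2\HK^2(\mu_0,\mu_1)$; the triangle inequality forces equality and shows that $(\eval_s,\eval_t)_\sharp\ppi\in\OptHK(\mu_s,\mu_t)$.

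For (ii), given $\aalpha\in\OptHK(\mu_0,\mu_1)$ with marginals $\alpha_i:=\pi^i_\sharp\aalpha\in\cP_2(\tY)$, the chain of inequalities
\[
\HK(\mu_0,\mu_1)=\Big(\int\sfdc^2\,\d\aalpha\Big)^{1/2}\ge \Wc(\alpha_0,\alpha_1)\ge \HK(\chm2{}\alpha_0,\chm2{}\alpha_1)=\HK(\mu_0,\mu_1)
\]
shows that $\aalpha$ is simultaneously an optimal Kantorovich plan for $\Wc(\alpha_0,\alpha_1)$. Since $(X,\sfd)$ is geodesic, so is $(\tY,\sfdc)$ by the cone construction, and $(\tY,\sfdc)$ is complete and separable; the classical geodesic-selection theorem for optimal Wasserstein plans in geodesic Polish spaces then produces a dynamic plan $\ppi\in\cP(\Geo{\tY})$ with $(\eval_0,\eval_1)_\sharp\ppi=\aalpha$ and $\int_0^1\int|\dot\ty|_{\sfdc}^2\,\d\ppi\,\d t=\Wc^2(\alpha_0,\alpha_1)=\HK^2(\mu_0,\mu_1)$. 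Setting $\mu_t:=\chm2t\ppi$ and repeating the argument in (d) above shows that $(\mu_t)$ is a geodesic in $(\cM(X),\HK)$ and $\ppi$ satisfies all the required properties.

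The main obstacle is the identification of $\alpha$ as a Wasserstein geodesic in (i): once the submersion identity $\Wc(\alpha_0,\alpha_1)=\HK(\mu_0,\mu_1)$ is established from the length balance, the remainder is essentially bookkeeping on marginals and costs. In (ii) the only subtle point is invoking geodesic selection in the possibly non-locally-compact cone $\tY$, but this is guaranteed by its separability, completeness, and the geodesic property inherited from $X$ via the cone construction of Section \ref{subsec:cone}.
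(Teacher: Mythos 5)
Your proposal is correct and follows the same route as the paper, which simply cites Theorem \ref{thm:plan-representation} for (i) and the well-known lifting of optimal Wasserstein plans over geodesic Polish spaces for (ii). You have filled in the details the paper leaves implicit: the length-balance argument turning the lift $\alpha$ into a constant-speed geodesic in $(\cP_2(\tY),\Wc)$, the equality chain forcing $\ppi$ onto $\Geo\tY$, the bound $\Theta_0=\sqrt{\mu_0(X)}+\HK(\mu_0,\mu_1)\le\Theta$, and the triangle-inequality argument for (d). All of these steps are sound.
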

\begin{proof}
  The statement (i) is an immediate consequence of 
  Theorem \ref{thm:plan-representation}.

  Statement (ii) is a well known property of the Kantorovich-Wasserstein
  space $(\tY,\Wc)$ in the case when $\tY$ is geodesic.
%   If $(Z,\sfd_Z)$ is a separable and complete 
% length (resp.~geodesic) space,
% there 
% exist universally measurable maps $T_\eps:Z\times Z\to$ 
% such that $T(z_0,z_1)$ is a geodesic connecting $z_0$ to $z_1$.
\end{proof}

Theorem \ref{thm:plan-representation} also clarifies 
the relation between $\HK$ and $\GHK$ \WWW introduced in Section
\ref{subsec:GHK}. \EEE

\begin{corollary}
  \label{cor:HKlengthGHK}
  If $(X,\sfd)$ is separable and complete then
  $\AC^2([0,1];(\cM(X),\GHK))$ coincides with
  $\AC^2([0,1];(\cM(X),\HK))$ and 
  for every curve $\mu\in \AC^2([0,1];(\cM(X),\GHK))$ we have
  \begin{equation}
    \label{eq:191}
    |\mu'|_{\GHK}(t)=|\mu'|_\HK(t)
    \quad\text{for $\Leb 1$-a.e.~$t\in [0,1]$}.
  \end{equation}
  In particular if $(X,\sfd)$ is a length metric space then
  $\HK$ is the length distance generated by $\GHK$.
\end{corollary}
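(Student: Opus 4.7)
The plan is to work at the level of the metric cone $\tY$ and exploit that the two cone distances $\sfdc$ and $\sfdg_\tY$ share the same infinitesimal structure: near the diagonal, the explicit formulas yield
\[
\sfdc^2 = (r_1-r_2)^2 + r_1r_2\,\sfd(x_1,x_2)^2 + O(\sfd^4) = \sfdg_\tY^2 + O(r_1r_2\,\sfd^4),
\]
since $\cos d = \exp(-d^2/2) + O(d^4)$. Although $\sfdg_\tY \leq \sfdc$ globally but not equivalent at large scales, on every bounded sub-cone $\cball R$ the two distances are bi-Lipschitz equivalent, and in particular generate the same classes of locally absolutely continuous curves with the same metric derivative $|\rmy'|^2 = |\rmr'|^2 + \rmr^2|\rmx'|^2_\sfd$ obtained in the lemma preceding \eqref{eq:117}.

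First, from $g(z)\le z$ we get $\sfdg_\tY \le \sfdc$ and hence $\GHK \le \HK$ pointwise, so $\AC^2([0,1];(\cM(X),\HK))\subset \AC^2([0,1];(\cM(X),\GHK))$ with $|\mu'|_\GHK \le |\mu'|_\HK$ a.e.

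For the converse inequality, given $\mu\in \AC^2([0,1];(\cM(X),\GHK))$ I would apply the analogue of Theorem \ref{thm:plan-representation} with $\sfdg_\tY$ in place of $\sfdc$. Its proof depends only on the gluing construction (Lemma \ref{le:gluing}) and on the fact that the homogeneous marginal $\chm2{}$ is a $1$-Lipschitz submersion from $(\cP_2(\tY),\sfW_{\sfdg_\tY})$ onto $(\cM(X),\GHK)$; the rescaling invariance, gluing and Lisini-type lifting arguments carry over verbatim to $(\tY,\sfdg_\tY)$. This produces a dynamic plan $\ppi\in \cP(\AC^2([0,1];(\tY,\sfdg_\tY)))$ concentrated on curves contained in a fixed ball $\cball R$, with $\mu_t = \chm2{t}\ppi$ and $|\mu'|_\GHK^2(t) = \int |\rmy'|_{\sfdg_\tY}^2(t)\,d\ppi(\rmy)$ a.e. By the local bi-Lipschitz equivalence on $\cball R$, $\ppi$-a.e.\ curve is also $\sfdc$-absolutely continuous, and applying the metric-derivative characterization of the lemma before \eqref{eq:117} both to $\sfdc$ and to $\sfdg_\tY$ gives $|\rmy'|_\sfdc(t)=|\rmy'|_{\sfdg_\tY}(t)$ at every point where the two metric speeds exist. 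Combining with the contraction inequality \eqref{eq:170} yields
\[
|\mu'|_\HK^2(t) \le \int |\rmy'|_\sfdc^2(t)\,d\ppi = \int |\rmy'|_{\sfdg_\tY}^2(t)\,d\ppi = |\mu'|_\GHK^2(t)
\]
for a.e.\ $t$, which, together with the first step, yields \eqref{eq:191}.

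For the length-distance statement, assume $(X,\sfd)$ is length so that $(\cM(X),\HK)$ is length by Proposition \ref{prop:HKlg}. Denoting by $d_L^\GHK$ the length distance generated by $\GHK$, the identity of AC$^2$-classes and of metric derivatives implies $\mathrm{length}_\GHK(\gamma)=\mathrm{length}_\HK(\gamma)$ for every rectifiable $\gamma$, so that
\[
d_L^\GHK(\mu_0,\mu_1)=\inf_\gamma \mathrm{length}_\GHK(\gamma)=\inf_\gamma \mathrm{length}_\HK(\gamma)=\HK(\mu_0,\mu_1),
\]
the last equality using that $(\cM(X),\HK)$ is length (and non-rectifiable curves contribute $+\infty$ on both sides). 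The main technical obstacle is the careful transfer of Theorem \ref{thm:plan-representation} and of the preliminary Gluing Lemma to the $\sfdg_\tY$-geometry, since one has to rerun the rescaling/compactness steps of Section \ref{subsec:triangle} with the different angular weight; all the ingredients are present but need to be checked in the new setting.
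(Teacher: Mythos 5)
Your argument is correct in substance and follows the same route as the paper's proof, but you worry about a nonexistent technical obstacle. You speak of applying an ``analogue'' of Theorem~\ref{thm:plan-representation} for $(\tY,\sfdg_\tY)$ and close by saying that the ``rerun'' of the rescaling/gluing/Lisini steps ``needs to be checked in the new setting.'' In fact no rerun is required: by construction $\sfdg=\elli\circ\sfd$ with $\elli(z)=\arccos(\rme^{-z^2/2})$ is a genuine metric on $X$ (equivalent to $\sfd$), and since $\sfdg<\pi/2$ the truncation in the cone formula is inactive, so $\sfdg_\tY$ is \emph{exactly} the cone distance over $(X,\sfdg)$ and $\GHK=\HK_\sfdg$ is literally the Hellinger-Kantorovich distance for the base metric $\sfdg$. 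Consequently Theorem~\ref{thm:plan-representation} applies verbatim to $(\cM(X),\GHK)$ — there is nothing to transfer. Once you also observe that $\elli'(0)=1$ forces $|\rmx'|_\sfdg=|\rmx'|_\sfd$, the cone metric derivatives $|\ty'|_{\sfdc}$ and $|\ty'|_{\sfdg_\tY}$ coincide by the lemma preceding \eqref{eq:117}, and \eqref{eq:191} follows immediately as you conclude. Your length-distance argument is fine. The ``main technical obstacle'' you name is therefore not an obstacle at all; recognizing $\GHK$ as $\HK_\sfdg$ dissolves it and is precisely what makes the corollary a one-paragraph consequence of the superposition theorem.
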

\begin{proof}
  Since $\GHK\le \HK$ it is clear that 
  $\AC^2([0,1];(\cM(X),\HK))\subset \AC^2([0,1];(\cM(X),\GHK))$.

  In order to prove the opposite inclusion and 
  \eqref{eq:191} it is sufficient to 
  notice that 
  the classes of absolutely continuous curves
  in $\tY$ w.r.t.~$\sfdc$ and $\sfdg_\tY$ coincide with 
  equal metric derivatives
  $|\ty'|_{\sfdc}=|\ty'|_{\sfdg_\tY}$.
  Since $\GHK=\HK_\sfdg$ is the Hellinger-Kantorovich distance
  induced by $\sfdg$, the assertion follows by \eqref{eq:171} of Theorem
  \ref{thm:plan-representation}.
\end{proof}

\subsection{Lower curvature bound in the sense of Alexandrov}
\label{subsec:curvature}
Let us first recall two possible definitions of Positively Curved (PC)
spaces in the sense of Alexandrov, referring to
\cite{Burago-Burago-Ivanov01} and to \cite{Burago-Gromov-Perelman92}
for other equivalent definitions and for the more general case of
spaces with curvature $\ge k$.

According to Sturm \cite{Sturm99}, a metric space
$(Z,\sfd_Z)$ is a Positively Curved (PC) metric space in the large if 
for every choice of points $z_0,z_1,\cdots,z_N\in Z$ and
coefficients $\lambda_1,\cdots,\lambda_N\in (0,+\infty)$ 
we have
\begin{equation}
  \label{eq:447}
  \sum_{i,j=1}^N\lambda_i\lambda_j\sfd_Z^2(z_i,z_j)
  \le 2 \sum_{i,j=1}^N\lambda_i\lambda_j\sfd_Z^2(z_0,z_j).
\end{equation}
If every point of $Z$ has a neighborhood that is PC,
then we say that $Z$ is locally positively curved.

When the space $Z$ is geodesic, the above (local and global)
definitions coincide with the corresponding one given by Alexandrov,
which is based on triangle comparison: for every choice of
$z_0,z_1,z_2\in Z$, \WWW every $t\in [0,1]$, 
and every point $z_t$ such that $\sfd_Z(z_t,z_k)=|k
{-}t|\sfd_Z(z_0,z_1)$ for $k=0,1$ we have \EEE
\begin{equation}
  \label{eq:449}
  \sfd_Z^2(z_2,z_t)\ge (1-t)\,\sfd_Z^2(z_2,z_0)+t\:\sfd_Z^2(z_2,z_1)-2t(1-t)\,\sfd^2_Z(z_0,z_1).
\end{equation}
%and by Burago-Gromov-Perelman (based on quadrupole comparison).
When $Z$ is also complete, the local and the global definition are equivalent.
\WWW Next we provide conditions on $(X,\sfd)$ or
$(\tY,\sfdc)$ that guarantee that $(\cM(X),\HK)$ is a PC space.

\begin{theorem}
  \label{thm:curvatureHK} 
  Let $(X,\sfd)$ be a metric space.\vspace{-0.8em}
  \begin{enumerate}[(i)]
  \item If $X\subset \R$ is convex (i.e.~an interval)
    endowed with the standard distance, then
    $(\cM(X),\HK)$ is a PC space.
  \item If $(\tY,\sfdc)$ 
    is a PC space in the large, cf.\ \eqref{eq:447},
    then $(\cM(X),\HK(X))$ is a PC space.
  \item If $(X,\sfd)$ is separable, complete and geodesic, then
    $(\cM(X),\HK)$ is a PC space if and only if $(X,\sfd)$ has locally
    curvature $\ge1$.
  \end{enumerate}
\end{theorem}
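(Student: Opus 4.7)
The natural order is to establish (ii) first, deduce (iii) from it, and obtain (i) as a degenerate case. For (ii), given $\mu_0,\mu_1,\ldots,\mu_N\in\cM(X)$ and positive weights $\lambda_1,\ldots,\lambda_N$, I would apply the multi-gluing Corollary \ref{cor:HK-W2} with $\mu_0$ in the role of ``$\mu_1$'' to produce probability measures $\beta_0,\beta_1,\ldots,\beta_N\in\cP_2(\tY)$ satisfying $\chm2{}\beta_i=\mu_i$ for every $i$ and $\Wc(\beta_0,\beta_i)=\HK(\mu_0,\mu_i)$ for $i=1,\ldots,N$. Since $(\tY,\sfdc)$ is assumed PC in the large, the inequality \eqref{eq:447} lifts to the Wasserstein space $(\cP_2(\tY),\Wc)$ (this preservation of Sturm's inequality under the Kantorovich lift is classical, see \cite{Sturm99,Sturm06I}). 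Writing \eqref{eq:447} for the tuple $\beta_0,\ldots,\beta_N$, applying the contraction bound \eqref{eq:395} in the form $\HK^2(\mu_i,\mu_j)\le\Wc^2(\beta_i,\beta_j)$ on the left-hand side, and using the exact identity $\Wc^2(\beta_0,\beta_j)=\HK^2(\mu_0,\mu_j)$ on the right-hand side yields the desired PC inequality for $(\cM(X),\HK)$.

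To prove (iii), I would argue the two implications separately. For the ``if'' direction, assume $(X,\sfd)$ has locally curvature $\ge 1$. The classical cone construction over Alexandrov spaces (see \cite[Sec.~4.7]{Burago-Burago-Ivanov01}) then shows that $(\tY,\sfdc)$ has locally curvature $\ge 0$; completeness and the geodesic property on $\tY$ are inherited from those on $X$ via the explicit description of geodesics recalled in Section \ref{subsec:geodesic}. A Toponogov-style globalization theorem (\cite[Ch.~10]{Burago-Burago-Ivanov01}) for complete geodesic Alexandrov spaces then upgrades the local bound to the global inequality \eqref{eq:447} on $\tY$, so (ii) concludes. For the ``only if'' direction, I would use the map $\iota\colon\tY\to\cM(X)$ defined by $\iota([x,r]):=r^2\delta_x$ for $r>0$ and $\iota(\fro):=\eta_0$, the null measure: formula \eqref{eq:344} shows that $\iota$ is an isometry of $(\tY,\sfdc)$ onto its image in $(\cM(X),\HK)$. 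Since \eqref{eq:447} is manifestly inherited by isometric subspaces, the assumed PC property of $(\cM(X),\HK)$ transfers to $(\tY,\sfdc)$, and the classical characterization of curvature bounds via the cone construction then forces $(X,\sfd)$ to have curvature $\ge 1$.

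Statement (i) follows by a direct application of (iii): a convex subset of $\R$ endowed with the standard distance is a complete, separable, geodesic one-dimensional metric space, and in such a space every geodesic triangle is degenerate (its three vertices all lie on a single geodesic segment), so the local curvature-$\ge 1$ condition is satisfied vacuously. The hardest step in the whole argument is the ``if'' direction of (iii): one must pass from a local Alexandrov bound on the cone to the global Sturm inequality \eqref{eq:447} before (ii) can be applied. This rests on the Toponogov-type globalization, which crucially uses that $\tY$ inherits both completeness and the geodesic property from $X$; verifying these cone-level hypotheses and invoking the global Alexandrov machinery is the most technical part of the proof, while every other step is either a direct computation from identities collected in earlier sections or an immediate inheritance property.
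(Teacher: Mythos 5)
Your approach agrees with the paper for part (ii), but your arguments for both directions of part (iii) differ from the paper and your ``only if'' direction contains a genuine flaw.

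\textbf{The flaw in (iii) ``$\Rightarrow$''.} The map $\iota([x,r]):=r^2\delta_x$ is \emph{not} an isometry of $(\tY,\sfdc)$ onto its image in $(\cM(X),\HK)$. The correct identity (a consequence of Theorem \ref{thm:main-equivalence} together with \eqref{eq:350} or \eqref{eq:354}) is
\begin{equation*}
  \HK^2\big(r_1^2\delta_{x_1},r_2^2\delta_{x_2}\big)
  = r_1^2 + r_2^2 - 2r_1r_2\cos\big(\sfdpt(x_1,x_2)\big),
\end{equation*}
with the \emph{$\pi/2$-truncated} distance, while
$\sfdc^2([x_1,r_1],[x_2,r_2]) = r_1^2 + r_2^2 - 2r_1r_2\cos(\sfdp(x_1,x_2))$ uses the \emph{$\pi$-truncated} one.
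These agree if and only if $\sfd(x_1,x_2)\le\pi/2$; whenever $\pi/2<\sfd(x_1,x_2)<\pi$ one has
$\sfdc([x_1,r_1],[x_2,r_2])>\HK(r_1^2\delta_{x_1},r_2^2\delta_{x_2})$
(this is precisely the content of Lemma \ref{le:restriction}: transport over distances exceeding $\pi/2$ is never optimal, one rather annihilates and creates mass via the vertex). Hence you cannot conclude that $(\tY,\sfdc)$ inherits the \emph{global} Sturm inequality \eqref{eq:447} as an isometric subspace of $(\cM(X),\HK)$ -- the claimed isometric embedding does not exist. The conclusion you want (local curvature $\ge0$ of $\tY\setminus\{\fro\}$) is still reachable, because $\iota$ \emph{is} a local isometry near every point $[x,s]$ with $s>0$, and local PC suffices; this is exactly what the paper does -- it fixes a small neighborhood of $\ty=[x,s]$ with $s>0$ in which $\sfd(x_i,x_j)<\pi/2$, verifies via \eqref{eq:450} that $\HK$-distances between the corresponding weighted Diracs reproduce the $\sfdc$-distances, and then reads off the Alexandrov triangle comparison.

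\textbf{The ``$\Leftarrow$'' direction of (iii): a different, heavier route.} Your plan (local curvature bound on $X$ $\Rightarrow$ local curvature $\ge0$ on the cone $\Rightarrow$ Toponogov globalization $\Rightarrow$ PC in the large $\Rightarrow$ apply (ii)) is in principle viable, but glosses over two points that the paper's argument deliberately avoids. First, the local nonnegative curvature of the cone \emph{at the apex} holds if and only if $\mathrm{diam}(X)\le\pi$ (cf.\ BBI 4.7.1); this requires globalizing the curvature bound on $X$ first (Toponogov on $X$) and then invoking a Bonnet--Myers type bound on the diameter. Second, $\tY$ in \eqref{eq:90} is built with the truncated distance $\sfd_\pi$; if $\mathrm{diam}(X)>\pi$ the cone is over $(X,\sfd_\pi)$, which need not even be geodesic, so you must dispose of the diameter issue \emph{before} any statement about $\tY$. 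So your route needs: globalize on $X$, Bonnet--Myers, local curvature bound on $\tY$ via the cone theorem, globalize on $\tY$, then (ii). The paper instead bypasses all globalization: it uses the multi-gluing of Remark \ref{rem:gluing2} to build a coupling $\aalpha$ on $\tY^{\otimes 4}$ that is simultaneously optimal for several pairs, and then exploits the support spread property (Theorem \ref{thm:main-equivalence}: optimal plans never transport over $X$-distances $>\pi/2$) to bound the perimeter of the triangle $\sfx_0,\sfx_1,\sfx_2$ strictly below $2\pi$ and apply the local four-point comparison \eqref{eq:452} pointwise under the integral. This is more elementary and also explains the crucial role of the $\pi/2$-cutoff.

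Parts (i) and (ii) of your proposal are fine and match the paper's proof; (ii) is essentially verbatim the paper's argument via Corollary \ref{cor:HK-W2} and the contraction $\HK\le\Wc$.
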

Before we go into the proof of this result, we highlight that for
a compact convex subset $\Omega\subset \R^d$ with $d\geq 2$ equipped with the
Euclidean distance, the space $(\cM(\Omega),\HK)$ is not PC, see
\cite[Sect.\,5.6]{LMS15} for an explicit construction showing the
semiconcavity of the squared distance fails.
\begin{proof}
  Let us first prove statement (ii).  If $(\tY,\sfdc)$ is a PC space
  then also $(\cP_2(\tY),\Wc)$ is a PC space \cite{Sturm06I}.
  Applying Corollary \ref{cor:HK-W2}, for every choice of $\mu_i\in
  \cM(X)$, $i=0,\ldots,N$, we can then find measures $\beta_i\in
  \cP_2(\tY)$ such that
\begin{equation}
  \label{eq:444}
    \Wc(\beta_0,\beta_i)=\HK( \mu_0,\mu_i \nc)\ \text{ for }
    i=1,\ldots,N,
\end{equation}
where it is crucial that $\beta_0$ is the same for every $i$.  It then
follows that
\begin{displaymath}
  \sum_{i,j=1}^N \lambda_i\lambda_j
  \HK^2( \mu_i,\mu_j\nc )\le 
  \sum_{i,j=1}^N \lambda_i\lambda_j
  \Wc^2(\beta_i,\beta_j)
  \le 2
  \sum_{i,j=1}^N \lambda_i\lambda_j
  \Wc^2(\beta_0,\beta_i)=
  2
  \sum_{i,j=1}^N \lambda_i\lambda_j
  \HK^2( \mu_0,\mu_i\nc ).
\end{displaymath}

Let us now consider (iii) ``$\Rightarrow$'': If $(\cM(X),\HK)$ is PC,
we have to prove that $(X,\sfd)$ has locally curvature $\ge1$.  By
Theorem \cite[Thm.~4.7.1]{Burago-Burago-Ivanov01} it is sufficient to
prove that $\tY\setminus \{\fro\}$ is locally PC to conclude that
$(\tY,\sfd)$ has locally curvature $\geq 1$.  We thus select
points $\ty_i=[x_i,\s_i]$, $i=0,1,2$, in a sufficiently small
neighborhood of $\ty=[x,\s ]$ with $\s>0$, so that
$\sfd(x_i,x_j)<\pi/2$ for every $i,j$ and $\s_i,\s_j>0$. We also
consider a geodesic $\ty_t=[x_t,s_t]$, $t\in [0,1]$, connecting
$\ty_0$ to $\ty_1$, thus satisfying
$\sfdc(\ty_t,\ty_i)=|i-t|\sfd(\ty_0,\ty_1)$ for $i=0,1$.
  
  Setting $\mu_i:=\s_i\delta_{x_i}$, $\mu_t:=s_t\delta_{x_t}$, it is
  easy to check (cf.\ \cite[Sect.\ 3.3.1]{LMS15}) that  
  \begin{equation}
    \label{eq:450}
  \begin{aligned}  &\HK(\mu_i,\mu_j)=\sfdc(\ty_i,\ty_j) \text{ for }
    i,j\in \{0,1,2\}, \\
    &\HK(\mu_t,\mu_k)=|k-t|\HK(\mu_0,\mu_1) \text{ for } k\in \{0,1\}.
  \end{aligned}
  \end{equation}
  We can thus apply \eqref{eq:449}
  to $\mu_0,\mu_1,\mu_2,\mu_t$ and obtain the corresponding
  inequality for $\ty_0,\ty_1,$ $\ty_2,\ty_t$.
  
  (iii) ``$\Leftarrow$'': In order to prove the converse property we
  apply Remark \ref{rem:gluing2}. For $\mu_0,\mu_1,\mu_2,\mu_3=\mu_t\in
  \cM(X)$ with $t\in [0,1]$ and $\HK(\mu_3,\mu_k)= |k-t|
  \HK(\mu_0,\mu_1)$, we find a plan $\aalpha\in \cP(X_0\times
  X_1\times X_2\times X_3)$ (with the usual convention to use copies
  of $X$) such that
  \begin{equation}
    \label{eq:451}
    \chm2{i}\aalpha=\mu_i,\quad
    \int \sfdc^2(\ty_i,\ty_j)\,\d\aalpha=
    \HK^2(\mu_i,\mu_j)\quad\text{for }(i,j)\in A=\{(0,3),\,(1,3),\,(2,3)\}.
  \end{equation}
  The triangle inequality, the elementary inequality $t(1-t)(a+b)^2
  \le (1-t) a^2+t b^2$, and the very definition of $\HK$ yield for
  $t\in (0,1)$ the estimate 
  \begin{align*}
    t(1{-}t)\HK^2(\mu_0,\mu_1)
    &\le
      t(1-t)              \int \sfdc^2(\ty_0,\ty_1)\,\d\aalpha\le 
      \int
      t(1-t) \big( (\sfdc(\ty_0,\ty_3)+\sfdc(\ty_3,\ty_1)\big)^2\,\d\aalpha
    \\&\le 
        \int
        (1{-}t)
        \sfdc^2(\ty_0,\ty_3)+t\sfdc^2(\ty_3,\ty_1)\,\d\aalpha
        =(1{-}t)\HK^2(\mu_0,\mu_3)+t\HK^2(\mu_3,\mu_1)\\&
        =t(1-t)\HK^2(\mu_0,\mu_1).
  \end{align*}
  This series of inequalities shows in particular that 
  \begin{displaymath}
    (1-t)
        \sfdc^2(\ty_0,\ty_3)+t\sfdc^2(\ty_3,\ty_1)=
          t(1-t)  
          \big(\sfdc(\ty_0,\ty_3)+\sfdc(\ty_3,\ty_1)\big)^2=
          t(1-t)   \sfdc^2(\ty_0,\ty_1)\quad \text{$\aalpha$-a.e.}
  \end{displaymath}
  so that 
  \begin{displaymath}
    \sfdc(\ty_0,\ty_3)=t\sfdc(\ty_0,\ty_1) \text{ and }
    \sfdc(\ty_3,\ty_1)=(1-t)\sfdc(\ty_0,\ty_1)\quad \text{$\aalpha$-a.e.}
  \end{displaymath}
  Moreover, $\pi^{\ty_0,\ty_1}_\sharp\aalpha\in \OptHK(\mu_0,\mu_1)$,
  so that \eqref{eq:451} holds for $(i,j)\in A'=A\cup\{(0,1)\}$.
  
  By Theorem \ref{thm:main-equivalence} we deduce that 
  \begin{displaymath}
    \sfd(\sfx_i,\sfx_j)\le \pi/2\quad\text{$\aalpha$-a.e.~for
    }(i,j)\in A'.
  \end{displaymath}
  If one of the points $\ty_i$, $i=0,1,2$, is the vertex $\fro$, 
  then it is not difficult to check by a direct computation that 
  \begin{equation}
    \label{eq:452}
    \sfdc^2(\ty_2,\ty_3)\ge (1-t)\sfdc^2(\ty_2,\ty_0)+
    t\sfdc^2(\ty_2,\ty_1)-2t(1-t)\sfdc^2(\ty_0,\ty_1).
  \end{equation}
  When $\ty_i\in \tY\setminus\{\fro\}$ for every $i=0,1,2$, \WWW we
  use 
  $\sfd(\sfx_0,\sfx_1)+\sfd(\sfx_1,\sfx_2)+\sfd(\sfx_2,\sfx_0)\le
  \frac 32\pi<2\pi$, \WWW and  
  Theorem \cite[Thm.~4.7.1]{Burago-Burago-Ivanov01} yields
  \eqref{eq:452} \WWW because of the assumption that $X$ is PC. \EEE
% thanks to the curvature assumption on $X$.
  Integrating \eqref{eq:452} w.r.t.~$\aalpha$, by taking into account
  \eqref{eq:451}, the fact that $(\pi^0,\pi^1)_\sharp\aalpha\in
  \OptHK(\mu_0,\mu_1)$, and  that
  \begin{displaymath}
    \int \sfdc^2(\ty_2,\ty_i)\,\d\aalpha\ge
    \HK^2(\mu_2,\mu_i) \ \text{ for } i=0,1,
  \end{displaymath}
  we obtain
  \begin{displaymath}
    \HK^2(\mu_2,\mu_3)\ge 
    (1-t)\HK^2(\mu_2,\mu_0)+t\HK^2(\mu_2,\mu_1)-2t(1-t)\HK^2(\mu_0,\mu_1).
  \end{displaymath}
  Finally, statement (i) is just a particular case of (iii).
\end{proof}

As simple applications of the Theorem above we obtain that
$\cM(\R)$ and $\cM(\S^{d-1})$ endowed with $\HK$ are Positively Curved
spaces.

\subsection{Duality and Hamilton-Jacobi equation}
\label{subsec:HJ}

In this section we will show the intimate connections of the duality formula of
Theorem \ref{thm:dualityHK} with Lipschitz subsolutions of the
Hamilton-Jacobi equation in $X\times (0,1)$ given by 
\begin{equation}
  \label{eq:407}
  \partial_t \xi_t+\frac 12|\rmD_X\xi_t|^2+2\xi_t^2=0
\end{equation}
and its counterpart in the cone space
\begin{equation}
  \label{eq:405}
  \partial_t\zeta_t+\frac12 |\rmD_\tY\zeta_t|^2=0.
\end{equation}
\WWW Indeed, the first derivation of $\HK$ via $\LET$ was obtained by
solving \eqref{eq:407} for $X=\R^d$, see the remarks on the
chronological development in Section \ref{sec:Devel}. \EEE
 
At a formal level, it is not difficult to check that 
solutions to \eqref{eq:407} corresponds to the special class
of solutions to \eqref{eq:405} of the form
\begin{equation}
  \label{eq:414}
  \zeta_t([x,\s ]):=\xi_t(x)\s^2. 
\end{equation}
\WWW Indeed, still on the formal level we have  the formula 
\begin{equation}
  \label{eq:415}
  |\rmD_\tY\zeta|^2=\frac 1{\s^2}|\rmD_X \zeta|^2+| \partial_r \nc
  \zeta|^2=
  |\rmD_X \xi|^2\s^2+4\xi^2\s^2\quad\text{if }\zeta=\xi\, \s^2.
\end{equation}
Since the Kantorovich-Wasserstein distance on $\cP_2(\tY)$ can be
defined in duality with subsolutions to \eqref{eq:405} via the
Hopf-Lax formula and $2$-homogeneous marginals are modeled on test
functions as in \eqref{eq:414}, we can expect to obtain a dual
representation for the Hellinger-Kantorovich distance on $\cM(X)$ by
studying the Hopf-Lax formula for initial data of the form
$\zeta_0(x,\s )=\xi_0(x) \s^2$.
% This approach can be made rigorous by using the truncated distance
% $\sfd_{\pi/2}$ and the corresponding $\tsfdc$ on the cone.

\paragraph{Slope and asymptotic Lipschitz constant.}
In order to give a metric interpretation to \eqref{eq:407} and
\eqref{eq:405}, let us first recall that for a locally Lipschitz
function $f:Z\to \R$ defined in a metric space $(Z,\sfd_Z)$ the
\emph{metric slope} $|\rmD_Z f|$ and the \emph{asymptotic Lipschitz
  constant} $\alc Zf$ are defined by
\begin{align}
  \label{eq:408}
  |\rmD_Z f|(z):=\limsup_{x\to z}\frac{|f(x)-f(z)|}{\sfd_Z(x,z)},
  \qquad
  \alc Zf(z):={}&\lim_{r\down0}\sup_{{x,y\in B_r(z)}\atop{ y\neq
      x}}\frac{|f(y)-f(x)|}{\sfd_Z(x,y)}
 % \\
\end{align}
with the convention that $|\rmD_Z f|(z)=\alc Zf(z)=0$ whenever $z$ is
an isolated point. $\alc Zf$ can also be defined as the minimal
constant $L\ge0$ such that there exists a function $G_L:\WWW Z\times Z
\EEE \to [0,\infty)$ satisfying
\begin{equation}
  \label{eq:418}
  |f(x)-f(y)|\le
  G_L(x,y)\sfd_Z(x,y) %+\omega_L(x,y)
  ,\quad
  \limsup_{x,y\to z}G_L(x,y)\le L.
  %\lim_{x,y\to z}\frac{\omega_L(x,y)}{\sfd_Z(x,y)}.
\end{equation}
Note that $\alc Zf$ is \WWW always an upper semicontinuous
\EEE function. When $Z$ is a length
space, $\alc Zf$ is the upper semicontinuous envelope of the metric slope
$|\rmD_Zf|$.  We will often write $|\rmD f|, \ \alc{} f$ whenever the
space $Z$ will be clear from the context.

\begin{remark}
  \label{rem:slope-invariance}\upshape
  The notion of locally Lipschitz function and the value
  $\alc Zf$ does not change if we replace the distance
  $\sfd_Z$ with a distance $\tilde \sfd_Z$ of the form
  \begin{equation}
    \label{eq:416}
    \begin{aligned}
      &\tilde\sfd_Z(z_1,z_2):=h(\sfd_Z(z_1,z_2))\ \text{ for } z_1,z_2\in Z,\\
      & \text{with } h:[0,\infty)\to[0,\infty)\text{ concave and }%\quad
      \lim_{r\down0}\frac{h(r)}r=1.
    \end{aligned}
  \end{equation}
  In particular, the truncated distances $\sfd_Z\land \kappa$ with
  $\kappa>0$, the distances $a\sin((\sfd_Z\land \kappa)/a)$ \WWW with
  $a>0$ and $\kappa\in (0,a\pi/2]$, and the distance
  $\sfdg=\elli(\sfd)$ given by \eqref{eq:456} yield the same
  asymptotic Lipschitz constant.

  In the case of the cone space $\tY$ it is not difficult to see that
  the distance $\sfdc$ and $\asfdc {\kp}$ %, $k\in (0,\pi]$,
  coincide in suitably small neighborhoods of every point $\ty\in
  \tY\setminus \{\fro\}$, so that they induce the same asymptotic
  Lipschitz constants in $\tY\setminus \{\fro\}$. The same property
  holds for $\sfdg_\tY$.  In the case of the vertex $\fro$, relation 
  \eqref{eq:389} yields
  \begin{equation}
    \label{eq:422}
    \alc {\tY}f(\fro)\le 
    \alc{(\tY,\tsfdc)} f(\fro)\le 
    \sqrt2\,\alc {\tY}f(\fro).\qedhere
  \end{equation}
\end{remark}

The next result shows that the asymptotic Lipschitz constant satisfies
\WWW formula \eqref{eq:415} for $\zeta([x,r])=\xi(x)r^2$. \EEE

\begin{lemma}
  \label{le:asc-cone}
  For $\xi:X\to \R$ let %be a bounded Lipschitz function. Then the function 
  $\zeta:\tY\to \R$ be defined by $\zeta([x,\s ]):=\xi(x)\s^2$.

\begin{enumerate}[(i)]

\item If $\zeta$ is $\sfdc$-Lipschitz in $\cball R$, then $\xi\in
  \Lip_b(X)$ with
  \begin{equation}
    \label{eq:424}
    \sup_X |\xi|\le \frac 1{R^2}\sup_{\cball R}|\zeta| 
    \le \frac 1R \Lip(\zeta,{\cball R}) \ \text{ and } \ 
    \Lip(\xi,X)\le \frac 1R\Lip(\zeta,{\cball R}).
  \end{equation}

\item If $\xi\in \Lip_b(X)$, then $\zeta$ is $\sfdc$-Lipschitz in
  $\cball R$ for every $R>0$ with
  \begin{equation}
    \label{eq:425}
    \sup_{\cball R}|\zeta|\le  R^2 \sup_X |\xi|\ \text{ and } \
    \Lip^2(\zeta,{\cball R})\le 
    R^2\Big(\!\Lip^2(\xi,(X,\tilde\sfd)){+} 4\sup_X |\xi|^2\Big),
  \end{equation}
  where $\tilde\sfd:=2\sin(\sfd_\pi/2)$.

\item In the cases (i) or (ii) we have, for every $x\in X$ and
  $\s\ge0$, the relation
  \begin{equation}
    \label{eq:417}
    \alcs\tY\zeta([x,\s ])=
      \begin{cases}
        \Big( \alcs X \xi(x)+4\xi^2(x)\Big)\s^2&\text{for }\s>0,\\
        \mbox{}\qquad \qquad 0&\text{for }\s=0.
      \end{cases}
  \end{equation}
  \WWW The analogous formula holds for the metric slope
  $|\rmD_\tY\zeta|([x,r])$.  Moreover, equation \eqref{eq:417} \WWW
  remains true if\/ $\sfdc$ is replaced by the distance $\tsfdc$. \EEE
  \end{enumerate}
  %where $\tilde\sfd:=2\sin(\sfd_{\pi}/2)$.
\end{lemma}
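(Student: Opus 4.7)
The three statements are proved in order, with the heart of the argument being an algebraic decomposition of $\zeta_1-\zeta_2$ that mirrors the geometric identity $\sfdc^2 = (r_1-r_2)^2 + r_1r_2\,\tilde\sfd^2(x_1,x_2)$ coming from \eqref{eq:94pre}, arranged so that the ``$\xi$-variation'' is weighted by $r_1 r_2$ (and hence vanishes near the vertex $\fro$).

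For (i), I would evaluate $\zeta$ at points of maximal radius. Since $\zeta([x,R])=\xi(x)R^2$ and $\sfdc([x,R],\fro)=R$, the bounds $|\xi(x)|R^2\le \sup_{\cball R}|\zeta|$ and $|\xi(x)|R^2\le \Lip(\zeta,\cball R)\,R$ are immediate. Comparing $[x_1,R],[x_2,R]$ and using the upper estimate $\sfdc([x_1,R],[x_2,R])\le R\,\sfd(x_1,x_2)$ from \eqref{eq:94} (this is sharp: in fact $\sfdc=R\,\tilde\sfd\le R\sfd_\pi$) gives $|\xi(x_1)-\xi(x_2)|R^2 \le \Lip(\zeta,\cball R)\,R\,\sfd(x_1,x_2)$, hence the claim.

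For (ii) I would use the decomposition
\[
\xi(x_1)r_1^2 - \xi(x_2)r_2^2 \;=\; \big(\lambda\xi(x_1)+(1-\lambda)\xi(x_2)\big)(r_1^2-r_2^2) \;+\; r_1 r_2\big(\xi(x_1)-\xi(x_2)\big),
\]
with $\lambda := r_1/(r_1+r_2)$ chosen precisely so that the coefficient of $\xi(x_1)-\xi(x_2)$ equals $r_1r_2$ (the key point, since $\sfdc^2$ only controls $r_1r_2\tilde\sfd^2$, not $(r_1^2+r_2^2)\tilde\sfd^2$). Setting $B:=\sup_X|\xi|$ and $A:=\Lip(\xi,\tilde\sfd)$, and noting that $\lambda\xi(x_1)+(1-\lambda)\xi(x_2)$ is a convex combination bounded by $B$, one obtains
\[
|\zeta_1-\zeta_2| \;\le\; B(r_1+r_2)|r_1-r_2| + A\,r_1 r_2\,\tilde\sfd(x_1,x_2).
\]
Cauchy--Schwarz applied to the vectors $\big(B(r_1+r_2),\,A\sqrt{r_1r_2}\big)$ and $\big(|r_1-r_2|,\,\sqrt{r_1r_2}\,\tilde\sfd(x_1,x_2)\big)$, combined with $(r_1+r_2)^2\le 4R^2$ and $r_1r_2\le R^2$, yields
\[
|\zeta_1-\zeta_2|^2 \;\le\; \big(B^2(r_1+r_2)^2 + A^2 r_1r_2\big)\,\sfdc^2 \;\le\; R^2(A^2+4B^2)\,\sfdc^2,
\]
the sup bound being trivial.

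For (iii), the upper bound $\alc{\tY}\zeta([x,s])^2 \le s^2(\alc X\xi(x)^2 + 4\xi(x)^2)$ at $s>0$ follows from \eqref{eq:418} applied to the function $G(\ty_1,\ty_2):=\sqrt{\,(\lambda\xi_1+(1-\lambda)\xi_2)^2(r_1+r_2)^2 + r_1 r_2\,|\xi_1-\xi_2|^2/\tilde\sfd^2(x_1,x_2)\,}$ (with $G=\sqrt{\xi(x)^2(r_1+r_2)^2}$ when $x_1=x_2$): the Cauchy--Schwarz step above gives $|\zeta_1-\zeta_2|\le G\cdot \sfdc$, and as $\ty_1,\ty_2\to[x,s]$ one has $(r_1+r_2)^2\to 4s^2$, $r_1 r_2\to s^2$, $\lambda\xi_1+(1-\lambda)\xi_2\to\xi(x)$, while $\limsup|\xi_1-\xi_2|/\tilde\sfd(x_1,x_2)=\alc X\xi(x)$ by Remark \ref{rem:slope-invariance}. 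The reverse inequality is obtained by testing with sequences $\ty_1^n=[x_n,s+c\sfd_n]$, $\ty_2^n=[x,s-c\sfd_n]$ where $\sfd_n:=\sfd(x_n,x)\to 0$ realises $\alc X\xi(x)$ and signs are aligned; a one-parameter optimisation in $c>0$ (optimum at $c=s|\xi(x)|/\alc X\xi(x)$ when the latter is nonzero, the degenerate case handled separately) produces the ratio $s\sqrt{\alc X\xi(x)^2 + 4\xi(x)^2}$. At $\fro$, applying (ii) with $R=\rho\downarrow 0$ forces $\alc{\tY}\zeta(\fro)=0$. The analogous formula for the metric slope $|\rmD_\tY\zeta|$ is proved identically, using a single point $\ty'\to[x,s]$ in place of two. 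The invariance of \eqref{eq:417} under the replacement $\sfdc\leftrightarrow\tsfdc$ follows from Remark \ref{rem:slope-invariance} away from $\fro$ (where $\sfdc$ and $\tsfdc$ agree locally since $\sfd<\pi/2$ in a neighbourhood) and from the squeeze \eqref{eq:422} at the vertex. The main obstacle is identifying the decomposition with $\lambda=r_1/(r_1+r_2)$: any symmetric or naïve split produces a $\xi$-coefficient of order $r_1^2+r_2^2$, which Cauchy--Schwarz cannot absorb against $\sfdc^2$ uniformly near $\fro$.
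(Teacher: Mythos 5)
Your proposal follows essentially the same route as the paper: parts (i) and (ii) are identical in substance, and your decomposition
\[
\xi(x_1)r_1^2-\xi(x_2)r_2^2=\big(\lambda\xi(x_1)+(1{-}\lambda)\xi(x_2)\big)(r_1^2-r_2^2)+r_1r_2\big(\xi(x_1)-\xi(x_2)\big),\qquad \lambda=\tfrac{r_1}{r_1+r_2},
\]
is literally the paper's identity \eqref{eq:420} (its $\omega$-term with reference point $\ty=\fro$ equals $r_1\xi(x_1)+r_2\xi(x_2)=(\lambda\xi_1+(1-\lambda)\xi_2)(r_1+r_2)$). The Cauchy--Schwarz pairing and the use of \eqref{eq:418} for the upper bound in (iii), as well as the treatment of $\s=0$ and the $\sfdc\leftrightarrow\tsfdc$ invariance, all match the paper's proof.

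There is, however, one genuine flaw in your lower-bound argument for (iii). You test with $\ty_1^n=[x_n,\s+c\sfd_n]$ and $\ty_2^n=[x,\s-c\sfd_n]$ where $\sfd_n=\sfd(x_n,x)$, keeping the $X$-component of the second point frozen at $x$. With one endpoint fixed, the quantity $|\xi(x_n)-\xi(x)|/\sfd(x_n,x)$ can only realise the metric slope $|\rmD_X\xi|(x)$, not the asymptotic Lipschitz constant $\alc X\xi(x)$; the latter requires two sequences $y_n,z_n\to x$ with $|\xi(y_n)-\xi(z_n)|/\sfd(y_n,z_n)\to\alc X\xi(x)$. Since $\alc X\xi\geq|\rmD_X\xi|$ with strict inequality possible (the section does not assume $X$ is a length space, and even then $\alc{}$ is only the u.s.c.\ envelope of the slope), your construction yields $\alc{\tY}\zeta([x,\s])\geq\s\sqrt{|\rmD_X\xi|^2(x)+4\xi^2(x)}$, which does not close the gap against the upper bound. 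The fix is exactly what the paper does: take both $X$-components moving, $\ty_1^n=[y_n,\cdot]$, $\ty_2^n=[z_n,\cdot]$ with $y_n,z_n\to x$ realising $\alc X\xi(x)$, and pair them with radii of the form $\s(1\pm\lambda\delta_n)$ before optimising over $\lambda$. Your parenthetical remark that the slope version "is proved identically, using a single point $\ty'\to[x,\s]$ in place of two" shows you are aware of the distinction, but the main argument applies the single-point version where the two-point version is needed.
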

\begin{proof}
  As usual we set $\ty_i=[x_i,\s_i]$ and $\ty=[x,\s ]$.
  % and we denote by $\tilde \sfd$ the modified distance $\tilde\sfd:=2\sin(\sfdp/2)$.

  Let us first check statement (i). 
  If $\zeta$ is locally Lipschitz then 
  $|\xi(x)|=\frac1{R^2}|\zeta([x,R])-\zeta([x,0])|
  \le \frac 1R\Lip(\zeta;\cball R)$ for every $R$ sufficiently small,
  so that $\xi$ is uniformly bounded.
  Moreover, \WWW using \eqref{eq:94pre} for every $R>0$ we have 
  \begin{displaymath}
    R^2|\xi(x_1)-\xi(x_2)|\le |\zeta(x_1,R)-\zeta(x_2,R)|\le 
    \Lip(\zeta;\cball R) R\tilde\sfd(x_1,x_2)
    \le \Lip(\zeta;\cball R) R\sfd(x_1,x_2),
  \end{displaymath}
  so that $\xi$ is uniformly Lipschitz and \eqref{eq:424} holds.
  
  Concerning (ii), for $\xi\in \Lip_b(X)$ 
  % it is easy to check that $\zeta$ is locally
  % Lipschitz in $\tY\setminus \fro$. 
  % In order to check that $\zeta$ is Lipschitz also in every 
  % neighborhood $\cball r$ of $\fro$
  we set $S:=\sup|\xi|$ and $L:=\Lip(\xi,(X,\tilde\sfd))$ and use the
  identity 
  %\COMM{Alex: I got different factors ``2''. But it doesn't     matter at the end
  %Ok. } 
  \begin{align}
    \label{eq:420}
    &\zeta(\ty_1)-\zeta(\ty_2)=(\xi(x_1)-\xi(x_2))\s_1\s_2+
    \WWW 2\xi(x) \s(\s_1-\s_2)+
    \omega(\ty_1,\ty_2;\ty)(\s_1-\s_2),
  \\
   \nonumber & \text{where }
    \omega(\ty_1,\ty_2;\ty):= \WWW \s_1\xi(x_1)+\s_2\xi(x_2)-2\s \xi(x)
    \text{ with } 
    % |\omega(\ty_1,\ty_2)|\le 4rS\text{if }y,\ty_i\in \t\TY_r,\quad
    \lim_{\ty_1,\ty_2\to \ty}\omega(\ty_1,\ty_2)=0. 
  \end{align}
  Since $|\omega(\ty_1,\ty_2;0)| \WWW \leq 2RS$ if $\ty_i\in \cball R$, 
  equation \eqref{eq:420} with $\s=0$ yields
  \begin{displaymath}
    |\zeta(\ty_1)-\zeta(\ty_2)|\le
    L\tilde\sfd(x_1,x_2)\s_1\s_2 + \WWW 2RS |\s_1-\s_2|
    \le 2\big(L^2+4\s^2\big)^{1/2}R\,\sfdc(\ty_1,\ty_2).    
  \end{displaymath}
  Letting $R\down0$ the inequality above also proves \eqref{eq:417} in
  the case $\s =0$.

  In order to prove \eqref{eq:417} when $\s\neq0$ let us set $L_\tY:=
  \alcs\tY\zeta([x,\s ])$, $L_X:=\alc X\xi(x)$, and let $G_L$ be a
  function satisfying \eqref{eq:418} with respect to the distance
  $\tilde\sfd$ (see Remark \ref{rem:slope-invariance}).  Equation
  \eqref{eq:420} yields, \WWW for all $\ty=[x,r]$, the relation \EEE
  \begin{align*}
    &|\zeta(\ty_1)-\zeta(\ty_2)|
       % \le |\xi(x_1)-\xi(x_2)|\s_1\s_2+
    %    |\s_1\xi(x_1)+\s_2\xi(x_2)||\s_1-\s_2|
    % \\&
        \le G_L(x_1,x_2) \tilde\sfd(x_1,x_2) \s_1\s_2
        +\big(2|\xi(x)|\s+
          |\omega \WWW (\ty_1,\ty_2;\ty) |\big)|\s_1-\s_2|
      \\&\le
          \Big( G_L^2(x_1,x_2)\s_1\s_2+\big(2|\xi(x)|\s+ 
         |\omega \WWW (\ty_1,\ty_2;\ty) |\big)^2\Big)^{1/2} \sfdc(\ty_1,\ty_2).
  \end{align*}
  Passing to the limit $\ty_1,\ty_2\to \ty$ and using the fact that
  $x_1,x_2\to x$ due to $\s\neq0$, we obtain
  \begin{displaymath}
    L_\tY
    %2 \alc \tY\zeta([x,\s ])
    \le \s\Big( L_X^2+4|\xi(x)|^2 \Big)^{1/2}.
  \end{displaymath}
  In order to prove the converse inequality we observe that for every
  $L'<L_X$ there exist two sequences of points $(x_{i,n})_{n\in \N}$
  converging to $x$ w.r.t.~$\sfd$ such that \WWW
  $\xi(x_{1,n})-\xi(x_{2,n})\ge L' \delta_n$ where
  $0<\delta_n:=\tilde\sfd(x_{1,n},x_{2,n})\to0$. Choosing
  $\s_{1,n}:=\s$ and $\s_{2,n}=\s(1+\lambda\delta_n)$ for an arbitrary
  constant $\lambda\in \R$ with the same sign as $\xi(x)$, we can
  apply \eqref{eq:420} and arrive at \EEE
  \begin{align*}
    L_\tY\ge 
    \liminf_{n\to\infty}
    \frac{|\zeta(\ty_{1,n}) {-}\zeta(\ty_{2,n})|}{\sfdc(\ty_{1,n},\ty_{2,n})}
    \ge 
    \liminf_{n\to\infty}
    \frac{L'\delta_n \s^2 {+}2|\xi(x)|\s^2|\lambda| \delta_n+o(\delta_n)}
    {\sqrt {\lambda^2\s^2\delta^2_n+\s^2\delta^2_n+o(\delta_n)}}
    =\s\frac{L' {+}2|\xi(x)|\,|\lambda |}
    {\sqrt {\lambda^2+1}}.
  \end{align*}
  Optimizing with respect to $\lambda$ we obtain
  \begin{displaymath}
    L_{\tY}^2\ge \s^2\big((L')^2  +4|\xi(x)|^2\big), \ \text{ where }
    L'\le L_X \text{ is arbitrary}.
  \end{displaymath}
  \WWW This proves \eqref{eq:417} for the asymptotic Lipschitz
  constant $\alc{\tY}\zeta$. 
  The arguments for proving \eqref{eq:417} for 
  metric slopes $|\rmD_\tY \zeta|$ are completely analogous.
\end{proof}

\paragraph{Hopf-Lax formula and subsolutions to metric Hamilton--Jacobi equation in
  the cone $\tY$.} 
Whenever $f\in \Lip_b(\tY)$ % and $k\in [\pi/2,\pi]$
the Hopf-Lax formula
\begin{equation}
  \label{eq:409}
  \HJC{k}{t}f(\ty):=\inf_{\ty'\in \tY} \Big(f(\ty')+
  \frac 1{2t}\sfdc^2(\ty,\ty') \Big) \quad \text{for } \ty\in \tY
 \text{ and }  t>0,
\end{equation}
provides a function $t\mapsto \HJC{}t f$ which is Lipschitz from
$[0,\infty)$ to $\rmC_b(\tY)$, satisfies the a-priori bounds
\begin{equation}
  \label{eq:410}
  \inf_{\tY} f\le \WWW \HJC{}t f \leq \sup_{\tY} f,\quad
  \Lip(\QQ_t f;\tY)\le 2\Lip (f,\tY),
  % \quad
  % \Lip(\QQ_\cdot f(z))\le 2\big(\Lip f\big)^2,
\end{equation}
and solves
\begin{equation}
  \label{eq:411}
  \partial_t^+
  % \frac{\d^+}{\d t}
  \HJC {k}{t} f(\frz)+\frac 12\alc
  {\tY}{\HJC {k}{t} f}^2(\frz)\le0\quad\text{for every }\frz\in {\tY},\ t>0,
\end{equation}
where $\partial_t^+$ denotes the partial right derivative w.r.t.~$t$.
% Notice that in \eqref{eq:411} we can always use the asymptotic
% Lipschitz slope associated to $\sfdc$ The same subsolution property
% holds if we replace the geometric distance $\sfdc$ with the modified
% one $\tsfdc$ in \eqref{eq:409}, thanks to Remark
% \ref{rem:slope-invariance}.
It is also possible to prove that for every $\ty\in {\tY}$ the time
derivative of $\HJC{} t f(\ty)$ exists with possibly countable
exceptions and that \eqref{eq:411} is in fact an equality if
$(\tY,\sfdc)$ is a length space, a property that always holds if
$(X,\sfd)$ is a length metric space. % and
% $k=\pi$ or $\diam(X,\sfd)\le k$. 
This is stated in our main result:

\begin{theorem}[Metric subsolution of Hamilton-Jacobi equation in
  $X$]\label{thm:main-HJ}
  Let $\xi\in \Lip_b(X)$ satisfy the uniform lower bound $P:=\inf_X
  \xi+1/2>0$ and let us set $\zeta([x,\s ]):= \xi(x)\s^2$. Then, for
  every $t\in [0,1]$ we have
  \begin{align}
    \label{eq:423}
      &\HJC {\pi/2}{t} \zeta([x,\s ]) = \xi_t(x)\s^2 , \quad \text{where}
      \quad \xi_t(x):=\HJ{\pi}{t}\xi(x) \ \text{ and }\\
\notag &\HJ{\pi/2}{t}\xi(x):=\inf_{x'\in X}
      \Big(\frac{\xi(x')}{1 {+} 2t\xi(x')}+
      \frac{\sin^2(\sfd_{\pi/2}(x,x'))}{2t(1{+}2t\xi(x'))}\Big)
      =\inf_{x'\in X}\frac 1{2t}
      \Big(1-\frac{\cos^2(\sfd_{\pi/2}(x,x'))}{1+2t\xi(x')}\Big).
    \end{align}
  Moreover, for every $R>0$ we have
  \begin{equation}
    \label{eq:426}
     \xi_t(x)\s^2=\inf_{\ty'=[x',r']\in \cball
      R} \Big(\xi(x')(r')^2 +\frac1{2t}\sfdc^2([x,\s ];[x',r'])\Big) \
     \text{ for all }  x\in X,\ \s\le PR.
  \end{equation}
  The map $t\mapsto \xi_t$ is Lipschitz from $[0,1]$ to $\rmC_b(X)$ with
  $\xi_t\in \Lip_b(X)$ for every $t\in [0,1]$. Moreover, $\xi_t$ 
  is a subsolution to the generalized Hamilton-Jacobi equation
  \begin{subequations}
    \begin{equation}
    \label{eq:419}
    \partial_t^+\xi_t(x)+\frac 12\alcs X{\xi_t}(x)+2\xi_t^2(x)\le 0
    \quad\forevery x\in X \text{ and } t\in [0,1].
  \end{equation}
  For every $x\in X$ the map $t\mapsto \xi_t(x)$ is time differentiable
  with at most countable exceptions. 
  If $(X,\sfd)$ is a length  space,
  \eqref{eq:419} holds with equality 
  and $\alc X{\xi_t}(x)=|\rmD_X
  \xi_t|(x)$
  for every $x\in X$ and $t\in [0,1]$:
  \begin{equation}
    \label{eq:419bis}
    \partial_t^+\xi_t(x)+\frac 12\alcs X{\xi_t}(x)+2\xi_t^2(x)=
    0,\quad
    \alc X{\xi_t}(x)=|\rmD_X \xi_t|(x).
    %\quad\forevery x\in X \text{ and } t\in [0,1].
  \end{equation}
  \end{subequations}
\end{theorem}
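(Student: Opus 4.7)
}
The plan is to reduce everything to the Hopf-Lax formula on the cone by exploiting the $2$-homogeneity in $r$ of the test function $\zeta([x,r])=\xi(x)r^2$, together with Lemma \ref{le:asc-cone}. I would first fix $x\in X$, $s\ge 0$, $t\in(0,1]$, and $x'\in X$, and observe that thanks to \eqref{eq:90} the map
\[
r'\mapsto \xi(x')(r')^2+\tfrac1{2t}\sfdc^2([x,s];[x',r'])
=\bigl(\xi(x')+\tfrac1{2t}\bigr)(r')^2-\tfrac{s\cos(\sfd_\pi(x,x'))}{t}\,r'+\tfrac{s^2}{2t}
\]
is a convex quadratic in $r'$ (the leading coefficient is positive because $\inf\xi>-1/2$ and $t\le 1$), with unconstrained vertex
\[
r'_\ast=\frac{s\cos(\sfd_\pi(x,x'))}{1+2t\xi(x')}.
\]
Minimising over $r'\ge 0$ splits into the smooth case $\sfd(x,x')<\pi/2$, where $r'_\ast>0$ and substitution yields $\frac{s^2}{2t}\bigl(1-\cos^2(\sfd(x,x'))/(1+2t\xi(x'))\bigr)$, and the corner case $\sfd(x,x')\ge \pi/2$, where $r'_\ast\le 0$ forces $r'=0$ and the value equals $\frac{s^2}{2t}$. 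Both cases are captured by the single expression in which $\sfd_\pi$ is replaced by $\sfd_{\pi/2}$ (since $\cos(\sfd_{\pi/2}(x,x'))=0$ on $\{\sfd\ge\pi/2\}$); taking $\inf_{x'\in X}$ then gives \eqref{eq:423} together with the two equivalent forms of $\PP^{\pi/2}_t\xi$.

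Next I would address the truncation property \eqref{eq:426} and regularity. From the formula for $r'_\ast$ and the lower bound $1+2t\xi(x')\ge 1-t+2tP\ge\min(1,2P)$, one gets $r'_\ast\le s\max(1,1/(2P))$; a short case analysis on whether $P\le 1/2$ or $P\ge 1/2$ shows that $s\le PR$ implies $r'_\ast\le R$, so the infimum in the Hopf-Lax formula on the cone is already attained on $\cball R$, which is \eqref{eq:426}. The general a priori bounds \eqref{eq:410} applied to $\zeta$ restricted to $\cball R$ (for any fixed $R$) show that $\QQ^{\pi/2}_t\zeta$ is bounded and Lipschitz on every $\cball R$; Lemma \ref{le:asc-cone}(i), with the identification $\QQ^{\pi/2}_t\zeta=\xi_t\otimes r^2$, then transfers these properties to $\xi_t$, yielding $\xi_t\in\Lip_b(X)$ and Lipschitz continuity of $t\mapsto\xi_t$ in $\rmC_b(X)$.

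For \eqref{eq:419} and \eqref{eq:419bis} I would apply the metric Hamilton–Jacobi inequality \eqref{eq:411} at the point $\frz=[x,1]$. Since $\QQ^{\pi/2}_t\zeta([x,1])=\xi_t(x)$, the time right-derivative on the cone coincides with $\partial_t^+\xi_t(x)$; since $\xi_t\in\Lip_b(X)$, Lemma \ref{le:asc-cone}(iii) evaluated at $r=1$ gives $\alcs\tY{\QQ^{\pi/2}_t\zeta}([x,1])=\alcs X{\xi_t}(x)+4\xi_t^2(x)$, and substitution in \eqref{eq:411} yields exactly \eqref{eq:419} (together with countable differentiability in $t$, which is inherited from the cone Hopf-Lax semigroup). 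In the length-space case, $(X,\sfd)$ being length implies that $(\tY,\sfdc)$ is a length space (\cite[Thm.~3.6.17]{Burago-Burago-Ivanov01}); for length spaces the cone Hopf-Lax semigroup turns \eqref{eq:411} into an equality and the slope $|\rmD_\tY\QQ^{\pi/2}_t\zeta|$ coincides with $\alc\tY{\QQ^{\pi/2}_t\zeta}$, and both facts descend via Lemma \ref{le:asc-cone}(iii) to $X$, yielding \eqref{eq:419bis}.

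The main obstacle I foresee is Step 1: controlling the corner behaviour when $\sfd(x,x')\in[\pi/2,\pi]$ and identifying the resulting value with the natural extension of the smooth formula by $\cos(\sfd_{\pi/2})=0$, and making sure the reduction of the cone threshold $\pi$ in $\sfdc$ to the threshold $\pi/2$ in $\PP^{\pi/2}_t$ is clean; the rest of the argument is essentially bookkeeping around Lemma \ref{le:asc-cone} and the standard Hopf–Lax theory on $(\tY,\sfdc)$.
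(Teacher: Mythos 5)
Your proof matches the paper's essentially step by step: you reduce to the Hopf--Lax formula on the cone, minimize the explicit quadratic in $r'$ (the paper completes the square instead, but this is the same algebra), observe that the minimizer collapses to $r'=0$ once $\sfd(x,x')\ge\pi/2$, identify the result with the $\sfd_{\pi/2}$-formula, and then descend via Lemma \ref{le:asc-cone} evaluated at $\s=1$ together with \eqref{eq:411} to get \eqref{eq:419}--\eqref{eq:419bis}. One small caveat, shared with the paper's statement of \eqref{eq:426}: your case analysis claiming that $s\le PR$ forces $r'_\ast\le R$ breaks down when $P>1$ (then $r'_\ast\le s\le PR$, which may exceed $R$); the sharp threshold is rather $s\le R\,\min(1,2P)$, but this only affects the constant in \eqref{eq:426} and not the subsequent regularity argument, since one may always take $R\ge s\max(1,1/(2P))$.
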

Notice that when $\xi(x)\equiv \xi$ is constant, \eqref{eq:423}
reduces to $\HJ{} t \xi=\xi/(1+2t\xi)$ which is the solution to the
elementary differential equation $\frac \d{\d t}\xi+2\xi^2=0$.
\begin{proof}
  Let us observe that %setting $P:=1+\inf_X\xi>0$ we have
  $\inf_{t\in [0,1],z\in X}(1+2t\xi(z))=P>0$. 
  A simple calculation shows
  \begin{align*}
    &\xi(x')(r')^2+\frac 1{2t}\sfdc^2([x,\s ];[x',r'])
                   =\frac 1{2t}\Big((1{+}2t\xi(x'))(r')^2+\s^2-2\s\, r'
                    \cos(\sfd_\pi(x,x'))\Big)
                        \\
     &=\frac 1{2t(1{+}2t\xi(x'))}\Big[
         \Big((1{+}2t\xi(x'))r'-\cos(\sfd_\pi(x,x'))\s\Big)^2
              +\s^2\Big(2t\xi(x')+\sin^2(\sfd_\pi(x,x'))\Big)\Big].
  \end{align*}
 Hence, if we choose 
  \begin{displaymath}
    r'=r'(x,x',\s):=
    \begin{cases}
      \s\cos(\sfd_\pi(x,x'))/(1{+}2t\xi(x')) &\text{if }\sfd(x,x')\le
      \pi/2\\
      0&\text{otherwise,}
    \end{cases}
  \end{displaymath}
  we find (notice the truncation at $\pi/2$ instead of $\pi$)
  \begin{equation}
    \label{eq:445}
    \inf_{r' \ge0} \xi(x')(r')^2+\frac 1{2t}\sfdc^2([x,\s ];[x',r'])
     = \frac {\s^2}{2t(1{+}2t\xi(x'))}
       \Big(2t\xi(x')+\sin^2(\sfd_{\pi/2}(x,x'))\Big),
  \end{equation}
  which yields \eqref{eq:423} and \eqref{eq:426}. 

  Equation \eqref{eq:426} also shows that the function $\zeta_t =
  \xi_t(x)\s^2$ coincides on $\cball{PR}$ with the solution
  $\zeta^R_t$ given by the Hopf-Lax formula in the metric space
  $\cball R$.  Since the initial datum $\zeta$ is bounded and
  Lipschitz on $\cball R$ we deduce that $\zeta_t^R$ is bounded and
  Lipschitz, so that $t\mapsto \xi_t$ is bounded and Lipschitz in $X$
  by Lemma \ref{le:asc-cone}.

  Equation \eqref{eq:419} and the other regularity properties then
  follow by \eqref{eq:417} and the general properties of the Hopf-Lax
  formula in $\cball R$.
\end{proof}

\paragraph{Duality between the Hellinger-Kantorovich distance and
  subsolutions to the generalized Ha\-mil\-ton-Jacobi equation.}
We conclude this section with the main application
of the above results to the Hellinger-Kantorovich distance.

\begin{theorem}
  \label{thm:main-HKHJ}
  Let us suppose that $(X,\sfd)$ is a complete and separable
  metric space.  
\begin{enumerate}[(i)]
\item If $\mu\in \AC^2([0,1];(\cM(X),\HK))$ and $\xi:[0,1]\to
  \Lip_b(X)$ is uniformly bounded, Lipschitz w.r.t.~the uniform norm,
  and satisfies \eqref{eq:419}, then the curve $ t\mapsto
  \int\xi_t\,\d\mu_t$ is absolutely continuous and
  \begin{equation}
    \label{eq:428}
    \frac\d{\d t}\int_X\xi_t \,\d\mu_t\le  \frac12 |\mu_t'|_\HK^2
  \end{equation}

\item If $(X,\sfd)$ is a length space, then for every $\mu_0,\mu_1$ and
  $k\in \N\cup\{\infty\}$ we have 
  \begin{equation}
    \label{eq:412}
     \begin{aligned}
         \frac 12 \HK^2(\mu_0,\mu_1)=
          \sup\Big\{&\int_X\xi_1\,\d\mu_1-\int_0\xi_0\,\d\mu_0\ :
          \ \ \xi\in \rmC^k([0,1];\Lip_{b}(X)), \\&
          \partial_t\xi_t(x)+\frac 12|\rmD_X
          \xi_t|^2(x)+2\xi_t^2(x)\le 0  \text{ in } X\times(0,1)\Big\}.
     \end{aligned}
    \end{equation}
    Moreover, in the above formula we can also take the supremum over 
    functions $\xi \in\rmC^k([0,1];\Lip_{b}(X))$ with bounded support.
  \end{enumerate}
\end{theorem}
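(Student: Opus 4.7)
The plan for part (i) is to lift the absolutely continuous curve $\mu$ to a dynamic plan on the cone via Theorem \ref{thm:plan-representation}, obtaining $\ppi \in \cP(\mathrm{AC}^2([0,1];\tY))$ with $\mu_t = \chm2t\ppi$ and $|\mu_t'|_\HK^2 = \int |\ty'|^2(t)\,\d\ppi$ almost everywhere. Introduce the time-dependent cone function $\zeta_\tau([x,\rp]) := \xi_\tau(x)\rp^2$. By Lemma \ref{le:asc-cone}, $\zeta_\tau$ is uniformly Lipschitz on every ball $\cball R$ (using the uniform bounds on $\xi_\tau$) and satisfies $\alcs{\tY}{\zeta_\tau}([x,\rp]) = (\alcs{X}{\xi_\tau}(x) + 4\xi_\tau^2(x))\rp^2$. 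Combining with the subsolution property \eqref{eq:419} gives the cone-level inequality $\partial_t^+\zeta_\tau + \tfrac12\alcs{\tY}{\zeta_\tau} \le 0$. The standard metric chain rule applied to each $\ppi$-a.e.\ curve $\ty(\cdot) \in \mathrm{AC}^2([0,1];\tY)$ then yields
\[
\zeta_t(\ty(t)) - \zeta_s(\ty(s)) \le \int_s^t \Bigl(\partial_\tau^+\zeta_\tau + \alc{\tY}{\zeta_\tau}\cdot|\ty'|(\tau)\Bigr)\d\tau \le \tfrac12 \int_s^t |\ty'|^2(\tau)\,\d\tau,
\]
using $-\tfrac12 A^2 + AB \le \tfrac12 B^2$. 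Since $\ppi$ is supported on a bounded subset of $\tY$, I can integrate in $\ppi$, apply Fubini, and use $\int \zeta_\tau \,\d(\eval_\tau)_\sharp\ppi = \int \xi_\tau\,\d\mu_\tau$ (from $\chm2\tau\ppi = \mu_\tau$) to conclude $\int\xi_t\,\d\mu_t - \int\xi_s\,\d\mu_s \le \tfrac12\int_s^t |\mu_\tau'|_\HK^2\,\d\tau$. This gives absolute continuity of $t\mapsto \int\xi_t\,\d\mu_t$ and the pointwise bound \eqref{eq:428}.

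For the $\le$ inequality in part (ii), the idea is to test part (i) on a nearly optimal curve: since $(X,\sfd)$ is length, Proposition \ref{prop:HKlg} yields that $(\cM(X),\HK)$ is length, so for every $\eps > 0$ there exists $\mu \in \mathrm{AC}^2([0,1];\cM(X))$ with the prescribed endpoints $\mu_0,\mu_1$ and $\int_0^1 |\mu_t'|_\HK^2\,\d t \le \HK^2(\mu_0,\mu_1) + \eps$. Given any admissible $\xi \in \rmC^k([0,1];\Lip_b(X))$, part (i) applies (all required regularity is met) and integration in $t\in[0,1]$ combined with $\eps\downarrow 0$ gives $\int\xi_1\,\d\mu_1 - \int\xi_0\,\d\mu_0 \le \tfrac12\HK^2(\mu_0,\mu_1)$.

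For the $\ge$ direction, the natural candidate subsolution is the Hopf--Lax evolution $\xi_\tau := \PP_\tau\xi_0$ of Theorem \ref{thm:dualityHK}(i) and Theorem \ref{thm:main-HJ}: for $\xi_0 \in \Lip_{bs}(X)$ with $\inf\xi_0 > -1/2$, it is Lipschitz from $[0,1]$ to $\rmC_b(X)$, valued in $\Lip_b(X)$, of bounded support (by \eqref{eq:184}), and satisfies the HJ equation with equality in the length setting via \eqref{eq:419bis}. Combined with the duality formula \eqref{eq:406}, this already yields $\sup \ge \tfrac12\HK^2$ over the class of Lipschitz-in-time subsolutions, i.e.\ the case $k=0$. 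To handle $k\ge 1$ (including $k=\infty$), I would upgrade the time regularity by convolution: extend $\xi_\tau$ slightly beyond $[0,1]$ (for instance using the semigroup property of $\PP_\tau$ in the forward direction and constant extension backward, composed with a time-dilation to fit back into $[0,1]$), and set $\xi_\tau^\eps := (\rho_\eps *_t \xi)(\tau)$ for a smooth mollifier $\rho_\eps$. Since the Hamiltonian $H(\xi,p) = \tfrac12|p|^2 + 2\xi^2$ is jointly convex in $(\xi,p)$ and the asymptotic slope satisfies $\alc{X}{\rho_\eps * \xi}(x) \le (\rho_\eps * \alc{X}{\xi_\cdot}(x))(\tau)$, Jensen's inequality transfers the subsolution property to $\xi^\eps$. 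Letting $\eps \downarrow 0$, $\xi^\eps_\tau \to \xi_\tau$ uniformly, so the endpoint integrals pass to the limit and the supremum over the $\rmC^\infty$ class of subsolutions with bounded support still recovers $\tfrac12\HK^2(\mu_0,\mu_1)$.

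The principal obstacle is the regularization step in part (ii): one must verify that the metric formulation of the subsolution inequality (with $\alc{X}{\cdot}$ or $|\rmD_X \cdot|$) is stable under time convolution. In a smooth setting this follows from pointwise Jensen's inequality, but in the general metric framework the subtle point is that $\alc{X}{}$ is defined as an upper semicontinuous envelope and may not commute with convolution; a rigorous argument likely goes through the cone-level reformulation $\partial_t\zeta + \tfrac12\alc{\tY}{\zeta}^2 \le 0$ (which is just the standard HJ subsolution inequality on $\tY$ for Lipschitz functions) where classical regularization results for the Hopf--Lax semigroup are available, and then transfers back to $X$ via the correspondence $\zeta_\tau = \xi_\tau r^2$.
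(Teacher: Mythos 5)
Your part (i) follows the paper's strategy: lift to the cone via the test function $\zeta_t([x,r]) := \xi_t(x)r^2$ and Lemma \ref{le:asc-cone}, invoke Theorem \ref{thm:plan-representation}, and conclude. Your instinct to integrate the pointwise chain rule along $\ppi$-a.e.\ curve is an equivalent, more hands-on version of the paper's appeal to the results in \cite{Ambrosio-Mondino-Savare15} for curves in $\cP_2(\cball\theta)$; either route works.

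In part (ii) the ``$\ge$'' direction is recoverable (the paper does the mollification in time only, on a slightly extended interval, and handles the subsolution preservation not by passing through the cone but via Jensen's inequality plus Lemma \ref{le:slope-commute}, which shows that averaging against a probability kernel cannot increase either $|\rmD_X\cdot|$ or $\alc X\cdot$ pointwise; your worry about a commutation failure is real but is resolved cleanly by that lemma).

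The genuine gap is in your ``$\le$'' direction. You write ``Given any admissible $\xi \in \rmC^k([0,1];\Lip_b(X))$, part (i) applies (all required regularity is met).'' It is not met: part (i) requires \eqref{eq:419}, which involves the \emph{asymptotic Lipschitz constant} $\alc X{\xi_t}$, whereas the admissible class in \eqref{eq:412} only assumes the subsolution inequality with the \emph{metric slope} $|\rmD_X\xi_t|$. Since $\alc X{} \ge |\rmD_X|$, the class you must bound is strictly larger, and the implication from $|\rmD_X|$-subsolution to $\alc X{}$-subsolution is exactly where the length property enters a \emph{second} time (not only for the near-geodesic). The paper's argument is: rewrite the constraint as $\tfrac12 |\rmD_X\xi_t|^2(x)\le -(\partial_t\xi_t(x)+2\xi_t^2(x))$, observe the right-hand side is bounded and continuous in $x$, so the inequality persists for the u.s.c.\ envelope of the left-hand side — and since $X$ is a length space, that envelope is precisely $\alc X{\xi_t}$. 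Without this step your proof only bounds the supremum over the smaller class of $\alc X{}$-subsolutions, which is not what \eqref{eq:412} asserts.
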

\begin{proof}
  If $\xi$ satisfies \eqref{eq:419} then setting $\zeta_t([x,\s ]):=
  \xi_t(x)\s^2$ we obtain a family of functions $t\mapsto \zeta_t$,
  $t\in [0,1]$, whose restriction to every $\cball R$ is uniformly
  bounded and Lipschitz, and it is Lipschitz continuous with respect
  to the uniform norm of $\rmC_b(\cball R)$. By Lemma
  \ref{le:asc-cone} the function $\zeta$ solves
  \begin{displaymath}
    \partial_t^+\zeta_t+\frac12\alcs{\tY}{\zeta_t}\le 0\quad
    \text{in }\tY\times (0,1).
  \end{displaymath}
  According to Theorem \ref{thm:plan-representation} we find $\theta>0$
  and a curve $\alpha\in \AC^2([0,1];(\cP_2(\cball \theta),\Wc))$ 
  satisfying \eqref{eq:171a}. 
  Applying the results of \cite[Sect. 6]{Ambrosio-Mondino-Savare15},
  the map $t\mapsto \int_{\tY}\zeta_t\,\d\alpha_t$ is absolutely
      continuous with 
  \begin{displaymath}
    \frac\d{\d t}\int_{\tY}\zeta_t\,\d\alpha_t\le \frac
    12|\alpha_t'|^2_{\Wc}\quad
    \text{$\Leb 1$-a.e.~in (0,1)}.
  \end{displaymath}
  Since $\int_{\tY}\zeta_t\,\d\alpha_t=\int_X \xi_t\,\d\mu_t$ 
  we obtain \eqref{eq:428}.
  
  Let us now prove (ii). 
  As a first step, denoting by $S$ the right-hand side of
  \eqref{eq:412}, we prove that $\HK^2(\mu_0,\mu_1)\ge S$.
  If $\xi\in \rmC^1([0,1];\Lip_b(X))$ satisfies the
  pointwise inequality
  \begin{equation}
    \label{eq:429}
    \partial_t\xi_t(x)+\frac 12|\rmD_X
          \xi_t|^2(x)+2\xi_t^2(x)\le 0, 
  \end{equation}
  then it also satisfies \eqref{eq:419}, because
  \eqref{eq:429} provides the relation
  \begin{equation}
    \label{eq:430}
    \frac 12|\rmD_X \xi_t|^2(x)\le
          -\Big(\partial_t\xi_t(x)+2\xi_t^2(x)\Big)\quad
          \text{for every }(x,t)\in X\times (0,1),
  \end{equation}
  where the right hand side is bounded and continuous in $X$.  Equation
  \eqref{eq:430} thus yields the same inequality for the upper
  semicontinuous envelope of $|\rmD_X \xi_t|$ and this function
  coincides with $\alc X{\xi_t}$ since $X$ is a length space.
  
  We can therefore apply the previous point (i) by choosing
  $\lambda>1$ and a Lipschitz curve $\mu:[0,1]\to \cM(X)$ joining
  $\mu_0$ to $\mu_1$ with metric velocity $|\mu_t'|_\HK\le
  \lambda\HK(\mu_0,\mu_1)$, whose existence is guaranteed by the
  length property of $X$ and a standard rescaling technique. Relation
  \eqref{eq:428} yields
  \begin{displaymath}
    2\int_X\xi_1\,\d\mu_1-2\int_X\xi_0\,\d\mu_0\le 
    \int_0^1 |\mu_t'|^2_\HK\,\d t\le 
    \lambda^2\HK^2(\mu_0,\mu_1).
  \end{displaymath}
  Since $\lambda>1$ is arbitrary, we get 
  $\HK^2(\mu_0,\mu_1)\ge S$.

  In order to prove the converse inequality \WWW in \eqref{eq:412}
  we fix $\eta>0$ and apply
  the duality Theorem \ref{thm:dualityHK} to get $\xi_0\in
  \Lip_{bs}(X)$ (the space of Lipschitz functions with bounded support)
   with $\inf \xi_0>-1/2$ such that
  \begin{equation}
    \label{eq:431}
    2\int_X\HJ{}1\xi_0\,\d\mu_1-2\int_X\xi_0\,\d\mu_0
    \ge \HK^2(\mu_0,\mu_1)-\eta.
  \end{equation}
  Setting $\xi_t:=\HJ{} t \xi_0$ we find a solution to \eqref{eq:419}
  which has bounded support, is uniformly bounded in $\Lip_b(X)$ and
  Lipschitz with respect to the uniform norm.  We have to show that
  $(\xi_t)_{t\in [0,1]}$ can be suitably approximated by smoother
  solutions $\xi^\eps\in \rmC^\infty([0,1];\Lip_b(X))$, $\eps>0$, in
  such a way that $\int \xi_i^\eps\,\d\mu_i\to
  \int\xi_i\,\d\mu_i$ as $\eps\down0$ for $i=0,1$.
 
  We use an argument of \cite{Ambrosio-Erbar-Savare15}, \WWW which
  relies on the scaling invariance of the generalized Hamilton--Jacobi
  equation: If $\xi$ solves \eqref{eq:429} and $\lambda>0$, then
  $\xi_t^\lambda(x):= \lambda \xi_{\lambda t+ t_0}(x)$ solves
  \eqref{eq:429} as well. Hence,  by
  approximating $\xi_t$ with $\lambda\xi(\lambda t
  {+}(1{-}\lambda)/2,x)$ with $0< \lambda<1$ and passing to the limit
  $\lambda\up 1$, it is not restrictive to assume that $\xi$ is defined
  in a larger interval $[a,b]$, with $a<0, b>1$. Now, a time
  convolution is well defined on $[0,1]$, for which we use a 
  symmetric, nonnegative kernel $\kappa\in
  \rmC_{\rmc}^\infty(\R)$ with integral $1$ defined via
  \begin{equation}
    \label{eq:433}
    \xi^\eps_t(x):=(\xi_{(\cdot)}(x)\ast \kappa_\eps)_t=\int_\R
    \xi_w(x)\kappa_\eps(t {-}w)\,\d w,\quad \text{where }
    \kappa_\eps(t):=\eps^{-1}\kappa(t/\eps),
  \end{equation}
  yields a curve $\xi^\eps\in \rmC^\infty([0,1];\Lip_b(X))$ 
  satisfying
  \begin{displaymath}
    \partial_t\xi^\eps_t+\frac 12\big(|\rmD_X
    \xi_{(\cdot)}|^2\big)\ast\kappa_\eps +2\big(\xi_{(\cdot)}^2\big)\ast
    \kappa_\eps\le 0
    \quad\text{in }X\times [0,1].
  \end{displaymath}
  By Jensen inequality $\xi_{(\cdot)}^2\ast\kappa_\eps\ge
  (\xi_{(\cdot)}\ast\kappa_\eps)^2$ and $|\rmD_X
  \xi_{(\cdot)}|^2\ast\kappa_\eps\ge (|\rmD_X
  \xi_{(\cdot)}|\ast\kappa_\eps)^2$.  Moreover, applying the following
  Lemma \ref{le:slope-commute} we also get $|\rmD_X
  \xi_{(\cdot)}|\ast\kappa_\eps\ge |\rmD_X\xi_t^\eps \xi_{(\cdot)}|$,
  so that the smooth convolution $\xi_t^\eps$ satisfies
  \eqref{eq:429}. Since $\xi_t^\eps \to\xi_t$ uniformly in $X$ for
  every $t\in [0,1]$, we easily get
  \begin{displaymath}
    S\ge\lim_{\eps\down0}2\Big(
    \int_X\xi_1^\eps\,\d\mu_1-\int_X\xi_0^\eps\,\d\mu_0\Big)
    \ge \HK^2(\mu_0,\mu_1)-\eta.
  \end{displaymath}
  Since $\eta>0$ is arbitrary the proof of (ii) is complete.  
\end{proof}

\WWW The next result shows that averaging w.r.t. a probability measure
$\pi \in \cP(\Omega)$ does not increase the metric slope nor the
asymptotic Lipschitz constant. This was used in the last proof for the
temporal smoothing and will be used for spatial smoothing in Corollary
\ref{cor:HJRd}. 

\begin{lemma}
  \label{le:slope-commute}
  Let $(X,\sfd)$ be a separable metric space, let $(\Omega,\cB,\pi)$
  be a \WWW probability space (i.e.\ $\pi(\Omega)=1$) and let
  $\xi_\omega\in \Lip_b(X)$, $\omega\in \Omega$, be a family of
  uniformly bounded functions such that $\sup_{\omega\in
    \Omega}\Lip(\xi_\omega ;X)<\infty$ and $\omega\mapsto
  \xi_\omega(x)$ is $\cB$-measurable for every $x\in X$.  Then the
  function $\WWW x\mapsto \xi(x):=\int_\Omega
  \xi_\omega(x)\,\d\pi( \omega)$ belongs 
  to $\Lip_b(X)$  and for every $x\in X$ the maps $\omega\mapsto
  |\rmD_X\xi_\omega|(x) $ and $\omega\mapsto \alc X{\xi_\omega}(x) $
  are $\cB$-measurable and satisfy
  \begin{equation}
    \label{eq:434}
    \alc X\xi(x)\le \int_X \alc
    X{\xi_\omega}(x)\,\d\pi(\omega),\quad
    |\rmD_X\xi|(x)\le \int_X |\rmD_X\xi_\omega|(x)\,\d\pi(\omega).
  \end{equation}
\end{lemma}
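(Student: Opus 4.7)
The Lipschitz regularity and uniform bound for $\xi$ are immediate: by linearity of the integral and the pointwise estimate
\begin{equation*}
|\xi(y)-\xi(x)|\le \int_\Omega |\xi_\omega(y)-\xi_\omega(x)|\,\d\pi(\omega)
\le \big(\sup_{\omega}\Lip(\xi_\omega;X)\big)\,\sfd(x,y),
\end{equation*}
$\xi$ is (uniformly in $\omega$) Lipschitz and bounded; the $\cB$-measurability of $\omega\mapsto \xi_\omega(x)-\xi_\omega(y)$ for each fixed pair $(x,y)$ together with the uniform integrable bound justifies the use of Fubini. The bulk of the proof therefore lies in the two inequalities of \eqref{eq:434} and the corresponding measurability statements.

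For measurability, I would use the separability of $X$ to reduce the defining suprema to countable ones. Fix a countable dense set $D\subset X$ and, for $x\in X$ and $r>0$, set
\begin{equation*}
g_\omega(x,r):=\sup_{y\in (D\cup\{x\})\cap B_r(x),\,y\neq x}\frac{|\xi_\omega(y)-\xi_\omega(x)|}{\sfd(y,x)},
\quad
h_\omega(x,r):=\sup_{y,z\in D\cap B_r(x),\,y\neq z}\frac{|\xi_\omega(y)-\xi_\omega(z)|}{\sfd(y,z)}.
\end{equation*}
Because every $\xi_\omega$ is continuous, density of $D$ gives $|\rmD_X\xi_\omega|(x)=\lim_{n\to\infty}g_\omega(x,1/n)$ and $\alc X{\xi_\omega}(x)=\lim_{n\to\infty}h_\omega(x,1/n)$, and each $g_\omega(x,r)$, $h_\omega(x,r)$ is a countable supremum of $\cB$-measurable functions of $\omega$, hence $\cB$-measurable.

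To establish the inequalities, fix $x\in X$ and $r>0$. For any $y,z\in B_r(x)$ with $y\neq z$, linearity of the integral yields
\begin{equation*}
\frac{|\xi(y)-\xi(z)|}{\sfd(y,z)}
\le \int_\Omega \frac{|\xi_\omega(y)-\xi_\omega(z)|}{\sfd(y,z)}\,\d\pi(\omega)
\le \int_\Omega h_\omega(x,r)\,\d\pi(\omega),
\end{equation*}
so taking the sup on the left over $y,z\in B_r(x)$, $y\neq z$, gives $h(x,r)\le \int_\Omega h_\omega(x,r)\,\d\pi(\omega)$, where $h(x,r)$ denotes the corresponding quantity for $\xi$. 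The functions $r\mapsto h_\omega(x,r)$ are nondecreasing in $r$, uniformly dominated by the integrable constant $L_*:=\sup_\omega\Lip(\xi_\omega;X)$, and decrease to $\alc X{\xi_\omega}(x)$ as $r\downarrow0$. Dominated convergence (applied along $r_n=1/n$) together with $\alc X\xi(x)=\lim_{n\to\infty}h(x,1/n)$ (again by continuity of $\xi$ and density of $D$) then delivers the first inequality in \eqref{eq:434}. The argument for $|\rmD_X\xi|(x)$ is identical with $g$ in place of $h$.

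\emph{Main obstacle.} There is no deep difficulty; the subtle point is simply the measurability of $\omega\mapsto|\rmD_X\xi_\omega|(x)$ and $\omega\mapsto\alc X{\xi_\omega}(x)$, which are defined via uncountable suprema and limsups. Separability of $X$ together with the Lipschitz continuity of each $\xi_\omega$ is precisely what allows this reduction to a countable procedure, after which the inequalities follow painlessly from ``sup of integrals $\le$ integral of sup'' combined with dominated convergence.
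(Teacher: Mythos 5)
Your proof is correct and follows essentially the same route as the paper: separability of $X$ together with continuity of each $\xi_\omega$ reduces the defining (limsup of) suprema to countable ones for measurability, and the inequalities follow from linearity of the integral and dominated convergence as $r\downarrow 0$. The only cosmetic difference is that you write out the countable-supremum functions $g_\omega(x,r)$, $h_\omega(x,r)$ explicitly, whereas the paper keeps the supremum over $B_r(x)$ and appeals to density once; both are the same argument.
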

\begin{proof}
  The fact that $\xi_\omega\in \Lip_b(X)$ is obvious. To show
  measurability we fix $x\in
  X$ and use the expression \eqref{eq:408} for $\alc
  X\xi(x)$.  It is sufficient to prove that for every $r>0$ the map
  $\omega\mapsto s_{r,\omega}(x):=\sup_{y\neq z\in B_r(x)}
  |\xi_\omega(y)-\xi_\omega(z)|/\sfd(y,z)$ is $\cB$-measurable. This
  property follows by the continuity of $\xi_\omega$ and the
  separability of $X$, so that it is possible to restrict the supremum
  to a countable dense collection of points $\tilde B_r(x)$ in
  $B_r(x)$.  \WWW Thus, the measurability follows, because the
  pointwise supremum of countably many measurable functions is
  measurable. \EEE
  An analogous argument holds for $|\rmD_X\xi_\omega|$.
  
  \WWW Using the definition $\xi:= \int \xi_\omega \d\pi$ we have \EEE
  \begin{displaymath}
    \frac{|\xi(y)-\xi(z)|}{\sfd(y,z)}
    \le 
    \int_\Omega
    \frac{|\xi_\omega(y)-\xi_\omega(z)|}{\sfd(y,z)}\,\d\pi(\omega) \
    \text{ for } y\neq z.
  \end{displaymath}
\WWW Taking the supremum with respect to $y,z\in \tilde B_r(x)$
and $y\neq z$, we obtain 
  \begin{displaymath}
    \sup_{y\neq z\in B_r(x)}\frac{|\xi(y)-\xi(z)|}{\sfd(y,z)}
    \le \int_\Omega s_{r,\omega}(x)\,\d\pi(\omega).
  \end{displaymath}
  A further limit as $r\down0$ and the application of 
  the Lebesgue Dominated convergence Theorem yields 
  the first inequality of \eqref{eq:434}.
  The argument to prove the second inequality is completely analogous.
\end{proof}
% The Hopf-Lax formula \eqref{eq:409} is related to the Wasserstein distance $\Wc$ by the duality
% property
% \begin{equation}
%   \label{eq:413}
%   \frac 12\Wc^2(\alpha_1,\alpha_0)=
%   \sup\Big\{\int\HJC 1\zeta\,\d\alpha_1-
%   \int\zeta\,\d\alpha_0:\zeta\in \Lip_b(\tY)\Big\}.
% \end{equation}

When $X=\R^d$ the characterization \eqref{eq:412} of $\HK$  holds for an
even smoother class of subsolutions \WWW $\xi$ of the generalized
Hamilton--Jacobi equation. 

\begin{corollary}
  \label{cor:HJRd}
  Let $X=\R^d$ be endowed with the Euclidean distance. Then
  \begin{equation}
    \label{eq:443}
     \begin{aligned}
       \HK^2(\mu_0,\mu_1)= 2\,
       \sup\Big\{&\int_X\xi_1\,\d\mu_1-\int_X\xi_0\,\d\mu_0\ :\ \xi\in
       \rmC^\infty_\rmc(\R^d\times [0,1]),\ \\&
          \partial_t\xi_t(x)+\frac 12
          \big|\rmD_x \xi_t(x)\big|^2\nc + 
           2\xi_t^2(x)\le 0 \quad \text{in } X\times(0,1)\Big\}.
     \end{aligned}
  \end{equation}
\end{corollary}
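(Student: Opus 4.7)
The plan is to deduce \eqref{eq:443} from Theorem \ref{thm:main-HKHJ}(ii), whose supremum is already taken over $\xi\in\rmC^\infty([0,1];\Lip_b(\R^d))$ with bounded support, by upgrading the spatial regularity via a standard mollification. Denote by $S_\infty$ the supremum in \eqref{eq:412} (for $k=\infty$, bounded support) and by $S_c$ the supremum in \eqref{eq:443}. Clearly $S_c\le S_\infty=\tfrac12\HK^2(\mu_0,\mu_1)$, so the only point to establish is $S_c\ge S_\infty$.

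First I would fix an admissible $\xi\in\rmC^\infty([0,1];\Lip_b(\R^d))$ with bounded spatial support satisfying $\partial_t\xi_t+\tfrac12|\rmD_x\xi_t|^2+2\xi_t^2\le 0$ pointwise, and a nonnegative symmetric mollifier $\rho\in\rmC^\infty_\rmc(\R^d)$ with $\int\rho=1$, setting $\rho_\eps(x)=\eps^{-d}\rho(x/\eps)$. Then define the spatial convolution
\begin{equation*}
\xi^\eps_t(x):=\int_{\R^d}\xi_t(x-y)\rho_\eps(y)\,\d y.
\end{equation*}
Since $\xi\in\rmC^\infty([0,1];\Lip_b(\R^d))$ and $\rho_\eps\in\rmC^\infty_\rmc(\R^d)$, the function $\xi^\eps$ belongs to $\rmC^\infty(\R^d\times[0,1])$; its spatial support is contained in $\supp\xi+\overline{B_\eps(0)}$, hence is bounded uniformly in $t\in[0,1]$, and $[0,1]$ being compact, $\xi^\eps\in\rmC^\infty_\rmc(\R^d\times[0,1])$.

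The key step is to check that $\xi^\eps$ still satisfies the generalized Hamilton--Jacobi inequality. Temporal differentiation commutes with spatial convolution, so $\partial_t\xi^\eps_t=(\partial_t\xi_t)\ast\rho_\eps$, and almost-everywhere spatial differentiation gives $\rmD_x\xi^\eps_t=(\rmD_x\xi_t)\ast\rho_\eps$ (which extends everywhere by smoothness of $\xi^\eps$ in $x$). Applying Jensen's inequality with the probability measure $\rho_\eps\,\Leb d$ yields
\begin{equation*}
|\rmD_x\xi^\eps_t(x)|^2\le\bigl(|\rmD_x\xi_t|\ast\rho_\eps\bigr)^2(x)\le \bigl(|\rmD_x\xi_t|^2\bigr)\ast\rho_\eps(x),\qquad \bigl(\xi^\eps_t(x)\bigr)^2\le \bigl(\xi_t^2\bigr)\ast\rho_\eps(x).
\end{equation*}
Summing the three contributions and using that $\rho_\eps\ge0$ preserves the sign of the pointwise inequality for $\xi$, we obtain
\begin{equation*}
\partial_t\xi^\eps_t+\tfrac12|\rmD_x\xi^\eps_t|^2+2(\xi^\eps_t)^2\le \Bigl(\partial_t\xi_t+\tfrac12|\rmD_x\xi_t|^2+2\xi_t^2\Bigr)\ast\rho_\eps\le 0,
\end{equation*}
so $\xi^\eps$ is admissible for $S_c$.

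Finally, since each $\xi_t$ is uniformly Lipschitz and uniformly bounded (with modulus of continuity uniform in $t\in[0,1]$), we have $\xi^\eps_t\to\xi_t$ uniformly in $\R^d$ as $\eps\downarrow0$, uniformly in $t\in[0,1]$; this gives $\int\xi^\eps_i\,\d\mu_i\to\int\xi_i\,\d\mu_i$ for $i=0,1$ (the spatial supports remain in a fixed bounded set). Hence
\begin{equation*}
S_c\ge\lim_{\eps\downarrow0}\Bigl(\int\xi^\eps_1\,\d\mu_1-\int\xi^\eps_0\,\d\mu_0\Bigr)=\int\xi_1\,\d\mu_1-\int\xi_0\,\d\mu_0,
\end{equation*}
and taking the supremum over all admissible $\xi$ in \eqref{eq:412} gives $S_c\ge S_\infty$, completing the proof. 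The only potentially delicate point is the Jensen bound on the quadratic terms and the gradient, but for the Euclidean gradient this is elementary because $(\rmD_x\xi_t)\ast\rho_\eps$ is an honest $\pi$-expectation of the translates of $\rmD_x\xi_t$; the lack of smoothness of $\xi_t$ is irrelevant since $\xi_t\in\Lip_b$ ensures a.e.\ differentiability and the validity of $\nabla(\xi_t\ast\rho_\eps)=(\nabla\xi_t)\ast\rho_\eps$ everywhere.
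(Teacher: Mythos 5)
Your proof is correct and takes essentially the same route as the paper: approximate a subsolution $\xi\in\rmC^\infty([0,1];\Lip_{bs}(\R^d))$ by spatial convolution with a compactly supported smooth kernel, show the convolution is still a subsolution, and pass to the limit. The only cosmetic difference is that you verify the preservation of the differential inequality directly by $\nabla(\xi_t\ast\rho_\eps)=(\nabla\xi_t)\ast\rho_\eps$ followed by the two Jensen (triangle, square) inequalities, whereas the paper invokes its metric Lemma~\ref{le:slope-commute} (which is precisely a Jensen inequality for slopes, designed also for non-Euclidean settings) and then Jensen again for the square — so the two arguments coincide on $\R^d$.
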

\begin{proof}
  We  just have to check that the supremum of \eqref{eq:412}
  does not change if we substitute $\rmC^\infty([0,1];\Lip_{bs}(\R^d))$ with
  $\rmC^\infty_\rmc(\R^d\times [0,1])$. 
  This can be achieved by approximating any 
  subsolution $\xi\in \rmC^\infty([0,1];\Lip_{bs}(\R^d))$ 
  via convolution in space with a smooth kernel 
  with compact support, which still provides a subsolution 
  thanks to Lemma \ref{le:slope-commute}.
\end{proof}

\subsection{The dynamic interpretation of the Hellinger-Kantorovich
   distance ``\`a la Benamou-Brenier''}\label{subsec:BB}

In this section we will apply the superposition principle 
of Theorem \ref{thm:plan-representation}
and the duality result \ref{thm:main-HKHJ}
with subsolutions of the Hamilton-Jacobi equation
to quickly derive a dynamic formulation 
``\`a la Benamou-Brenier''
\cite{Benamou-Brenier00,Otto-Villani00}, \cite[Sect.~8]{Ambrosio-Gigli-Savare08}
of the Hellinger-Kantorovich distance,
which has also been considered in the recent
\cite{KMV15}.
In order to keep the exposition simpler, we will consider the
case $X=\R^d$ with the canonical Euclidean distance
$\sfd(x_1,x_2):=|x_1-x_2|$, but the result can be extended
to more general Riemannian and metric settings,
e.g.~arguing as in \cite[Sect.~6]{Ambrosio-Mondino-Savare15}.
A different approach, based on suitable representation formulae
for the continuity equation, is discussed in \WWW our companion paper
\EEE \cite{LMS15}.

Our starting point is provided by a
suitable class of linear continuity equations with reaction.
In the following we will denote by $\mu_I\in \cM(\R^d\times [0,1])$  
the measure%\COMM{AM deleted $\mu_I:=\int_0^1 \mu_t\,\d t $}
\begin{equation}
  \label{eq:436}
%%  \mu_I:=\int_0^1 \mu_t\,\d t,\quad
  \int \xi\,\d\mu_I:=\int_0^1\int_{\R^d} \xi_t(x)\,\d\mu_t(x)\,\d t
\end{equation}
induced by a curve $\mu\in \rmC^0([0,1];\cM(\R^d))$.

\begin{definition}
  \label{def:cea}
  Let $\mu\in \rmC^0([0,1];\cM(\R^d))$, let
  $(\vv,w):\R^d\times (0,1)\to \R^{d+1}$ be a Borel vector field
  in $\rmL^2(\R^d\times (0,1),\mu_I;\R^{d+1})$, thus 
  satisfying
  \begin{equation}
    \label{eq:432}
    \int_0^1 \int_{\R^d} \Big(|\vv_t(x)|^2+w^2_t(x)\Big)\,\d\mu_t(x)\,\d
    t=
    \int |(\vv,w)|^2\,\d\mu_I<\infty.
  \end{equation}
  We say that $\mu$ satisfies the \emph{continuity equation with reaction}
  governed by $(\vv,w)$ if
  \begin{equation}
    \label{eq:175pre}
    \partial_t \mu_t+\nabla\cdot(\vv_t\mu_t)=w_t\mu_t\quad
    \text{holds in the sense of distributions in }\R^d\times(0,1),
  \end{equation}
  i.e.~for every test function $\xi\in \rmC^\infty_\rmc(\R^d\times
  (0,1))$
  \begin{equation}
    \label{eq:435}
    \int_0^1\int_{\R^d} \Big(\partial_t\xi_t(x)+
    \rmD_x\xi_t(x)\vv_t(x)+\xi_t(x)w_t(x)\Big)\,\d\mu_t\,\d t=0.
  \end{equation}
  % We call $\mathrm{CE}(\mu_0,\mu_1)$ the class of 
  % solutions to the continuity equation with absorption 
  % governed by some field $(\vv,w)$ with $\mu_{t=0}=\mu_0$ and
  % $\mu_{t=1}=\mu_1$.
\end{definition}
An equivalent formulation \cite[Sect.~8.1]{Ambrosio-Gigli-Savare08} of
\eqref{eq:175pre} is 
\begin{equation}
  \label{eq:438}
  \frac\d{\d t}\int_{\R^d} \xi(x)\,\d\mu_t(x)=
  \int_{\R^d} \Big(\rmD_x\xi(x)\vv_t(x)+\xi(x)w_t(x)\Big)\,\d\mu_t
  \quad\text{in }\DD'(0,1),
\end{equation}
for every $\xi\in \rmC^\infty_\rmc(\R^d)$.  We have a first
representation result for absolutely continuous curves \WWW $t\mapsto
\mu_t$, which relies in Theorem \ref{thm:plan-representation}, where
we constructed suitable lifted plans $\ppi\in \cP(\AC^2([0,1];\tY))$,
i.e.\ $\mu_t =\chm2t \ppi$, where $\tY$ is now the cone over $\R^d$. \EEE

\begin{theorem}
  \label{thm:BB1}
  Let $(\mu_t)_{t\in [0,1]}$ be a curve
  in $\AC^2([0,1];(\cM(\R^d),\HK)).$
  Then $\mu$ satisfies the continuity equation with reaction
  \eqref{eq:175pre} with a Borel vector field
  $(\vv,w)\in \rmL^2(\R^d\times (0,1),\mu_I;\R^{d+1})$
  satisfying
  \begin{equation}
    \label{eq:174pre}
    (\vv_t,w_t)\in \rmL^2(\R^d;\mu_t),\quad
    \int \Big(|\vv_t|^2+\frac14|w_t|^2\Big)\,\d\mu_t\le |\dot\mu_t|^2
    \quad\text{for $\Leb 1$-a.e.~}t\in (0,1).
  \end{equation}
  % Moreover, for a.e.~$t\in (0,1)$ 
  % the vector field $(\vv_t,w_t)$ belongs to the closure
  % in $\rmL^2(\R^d,\R^{d+1};\mu_t)$ of the subspace
  % generated by $(\nabla\zeta,\zeta)$ with $\zeta\in \DD(\R^d)$.
\end{theorem}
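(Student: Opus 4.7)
The strategy is to lift the curve $\mu$ to a dynamic plan on the cone via Theorem \ref{thm:plan-representation}, use the Euclidean parametrization of cone derivatives from Remark \ref{rem:Euclidean1} to construct vector-valued measures on $\R^d\times(0,1)$, and then obtain $(\vv,w)$ by Radon-Nikodym differentiation against $\mu_I$. The energy bound will follow from Cauchy--Schwarz, and the continuity equation from a pointwise chain rule along $\ppi$-a.e.~curve.

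First I would apply Theorem \ref{thm:plan-representation} to obtain a dynamic plan $\ppi\in\cP(\AC^2([0,1];\tY))$ concentrated on curves $\ty(\cdot)=[\rmx(\cdot),\rmr(\cdot)]\in\tACp2([0,1];\pY)$ with $\mu_t=\chm2t\ppi$ and $|\dot\mu_t|_\HK^2=\int|\dot\ty|_{\sfdc}^2(t)\,\d\ppi$ for a.e.~$t$. In view of Remark \ref{rem:Euclidean1}, each such $\ty$ carries the Borel field $\ty'_\tY(t)=(\rmr(t)\rmx'(t),\rmr'(t))\in\R^{d+1}$ with $|\ty'_\tY(t)|^2=|\ty'|_{\sfdc}^2(t)=\rmr^2|\rmx'|^2+|\rmr'|^2$ $\Leb1$-a.e. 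Consider the evaluation-type map $\sfE:\AC^2([0,1];\tY)\times(0,1)\to \R^d\times(0,1)$ defined by $\sfE(\ty,t):=(\rmx(t),t)$, and define the vector valued Radon measures
\begin{equation*}
  \mM:=\sfE_\sharp\bigl(\rmr^2(t)\rmx'(t)\,\d\ppi\,\d t\bigr)\in\cM(\R^d\times(0,1);\R^d),\quad
  W:=\sfE_\sharp\bigl(2\,\rmr(t)\rmr'(t)\,\d\ppi\,\d t\bigr)\in\cM(\R^d\times(0,1)).
\end{equation*}

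Next I would show that $\mM$ and $W$ are absolutely continuous with respect to $\mu_I$. For any Borel set $B\subset \R^d\times(0,1)$, Cauchy--Schwarz applied on the $\ppi\otimes\Leb1$-measurable set $\sfE^{-1}(B)$ gives
\begin{equation*}
  |\mM(B)|^2\le\Bigl(\int_{\sfE^{-1}(B)}\rmr^2|\rmx'|^2\,\d\ppi\,\d t\Bigr)\Bigl(\int_{\sfE^{-1}(B)}\rmr^2\,\d\ppi\,\d t\Bigr)
  =\Bigl(\int_{\sfE^{-1}(B)}\rmr^2|\rmx'|^2\,\d\ppi\,\d t\Bigr)\mu_I(B),
\end{equation*}
and analogously $|W(B)|^2\le 4\bigl(\int_{\sfE^{-1}(B)}|\rmr'|^2\,\d\ppi\,\d t\bigr)\mu_I(B)$. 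Hence $\mM=\vv\,\mu_I$ and $W=w\,\mu_I$ for Borel fields $\vv:\R^d\times(0,1)\to\R^d$ and $w:\R^d\times(0,1)\to\R$. Integrating the Cauchy--Schwarz densities against $B=A\times(0,1)$ and disintegrating with respect to $t$, I would obtain the slicewise bound
\begin{equation*}
  \int_{\R^d}\!\bigl(|\vv_t|^2+\tfrac14|w_t|^2\bigr)\,\d\mu_t\le\int\bigl(\rmr^2|\rmx'|^2+|\rmr'|^2\bigr)(t)\,\d\ppi=\int|\ty'|_{\sfdc}^2(t)\,\d\ppi=|\dot\mu_t|_\HK^2
\end{equation*}
for $\Leb1$-a.e.~$t\in(0,1)$, which yields \eqref{eq:174pre} and the global $L^2$-integrability \eqref{eq:432}.

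Finally I would verify the continuity equation with reaction. Fix $\xi\in\rmC^\infty_\rmc(\R^d\times(0,1))$ and set $\zeta(\ty,t):=\xi(x,t)\rmr^2$ as in Remark \ref{rem:Euclidean1}. For $\ppi$-a.e.~curve $\ty(\cdot)\in\tACp2([0,1];\pY)$, the chain rule on $\R^d$ (applied on the open set $\{\rmr>0\}$ and trivially extended) gives $\Leb1$-a.e.
\begin{equation*}
  \tfrac{\d}{\d t}\bigl(\xi(\rmx(t),t)\rmr^2(t)\bigr)=\partial_t\xi(\rmx(t),t)\rmr^2(t)+\rmD_x\xi(\rmx(t),t)\cdot\rmr(t)\rmx'(t)\,\rmr(t)+2\xi(\rmx(t),t)\rmr(t)\rmr'(t).
\end{equation*}
Integrating over $t\in(0,1)$ (the boundary terms vanish since $\xi$ has compact support in $t$), integrating against $\ppi$, using Fubini and the definitions of $\mM$ and $W$, one obtains
\begin{equation*}
  0=\int\bigl(\partial_t\xi\bigr)\,\d\mu_I+\int\rmD_x\xi\,\d\mM+\int\xi\,\d W=\int_0^1\!\!\int_{\R^d}\!\bigl(\partial_t\xi+\rmD_x\xi\cdot\vv_t+\xi w_t\bigr)\,\d\mu_t\,\d t,
\end{equation*}
which is \eqref{eq:175pre} in the sense of distributions.

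The main technical obstacle will be the measurability and integrability step: one must check that $\sfE$ is Borel on the relevant subset of $\AC^2([0,1];\tY)\times(0,1)$, that the signed densities $\rmr^2\rmx'$ and $\rmr\rmr'$ give well-defined finite Radon vector measures on $\R^d\times(0,1)$ (here the global $L^2$-moment of $\ppi$ and the concentration of $\alpha_t$ on a bounded ball $\cball\Theta$ from Theorem \ref{thm:plan-representation} are essential), and that the Radon--Nikodym factorization through $\mu_I$ produces $\mu_I$-a.e.\ fields admitting a Borel slicewise disintegration $t\mapsto(\vv_t,w_t)$. Once these measure-theoretic details are arranged, the rest of the proof is formal.
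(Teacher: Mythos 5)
Your proof is correct and follows essentially the same route as the paper: lift $\mu$ to a dynamic plan $\ppi$ on the cone via Theorem \ref{thm:plan-representation}, push forward the appropriate curve-velocity densities through $\sfE$ to $\R^d\times(0,1)$, obtain $(\vv,w)$ by Radon--Nikodym differentiation against $\mu_I$, and verify the continuity equation with reaction via the chain rule for $\zeta([x,\s],t)=\xi(x,t)\s^2$ along $\ppi$-a.e.~curve. Your Cauchy--Schwarz step on Borel sets is the set-function version of the paper's Jensen-type estimate obtained from the disintegration of $\sfR^2(\ppi\otimes\Leb1)$ with respect to $\mu_I$, and your explicit choices $\mM=\sfE_\sharp(\rmr^2\rmx'\,\d\ppi\,\d t)$, $W=\sfE_\sharp(2\rmr\rmr'\,\d\ppi\,\d t)$ are exactly the measures dictated by that chain rule.
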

\begin{proof}
  We will denote by $I$ the interval $[0,1]$ endowed with the
  Lebesgue measure $\lambda=\Leb 1\res [0,1]$. \WWW Recalling the map
  $(\sfx,\sfr):\tY \to \R^d\times [0,\infty)$ we define the maps
  $\measu \sfx:\rmC(I;\tY)\times I\to \R^d\times I$ and 
  $\sfR:\rmC(I;\tY)\times I\to \R_+$ via $\measu \sfx
  (\rmz,t):=(\sfx(\rmz(t)),t)$ and $\sfR (\rmz,t):=\sfr(\rmz(t))$.

  Let $\ppi$ be a dynamic plan in $\tY$ representing $\mu_t$ as in
  Theorem \ref{thm:plan-representation}.  We consider the deformed
  dynamic plan $\measu\ppi:=(\sfR^2\ppi)\otimes\lambda$, the measure
  \WWW $\hatmeasu\mu:=(\measu\sfx)_\sharp \measu\ppi$ and the disintegration
  $(\tilde\ppi_{x,t})_{(x,t)\in \R^d\times I}$ of $\measu\ppi$ with
  respect to $\measu\mu.$ Notice that $\tilde\ppi\le \Theta \ppi$,
  where $\Theta$ is given by \eqref{eq:400}, and that 
  %\COMM{Alex: I    introduced $\hatmeasu\mu$ and $\hat w$ to clarify things}
\begin{equation}
  \label{eq:437}
  %\measu \alpha=\int_0^1\alpha_t\,\d\lambda(t),\quad
  \WWW \hatmeasu\mu =\int_0^1  (\mu_t\otimes \delta_t)
   \,\d\lambda(t),
\end{equation}
coincides with \WWW $\measu\mu$ in \eqref{eq:436}, because for
every $\xi\in \rmB_b(\R^d\times I)$ we have
\begin{align*}
  \int \xi\,\d\hatmeasu\mu 
  &=
    \int \xi(\sfx(\rmz(t)),t)\sfr^2(\rmz(t))\,\d\ppi_I(\rmz,t)=
    \int_0^1\int_{\R^d} \xi_t(x)\,\d\mu_t(x)\,\d t \WWW =\int
    \xi\,\d\measu\mu . 
\end{align*}
Let $\uu\in \rmL^2(\AC^2(I;\tY) \times I;\ppi\otimes
\lambda;\R^{d+1})$ be the Borel vector field $\uu(\ty,t):=\ty_\tY'(t)$
for every curve $\ty\in \AC^2(I;\tY)$ and $t\in I$, where $\ty_\tY'$
is defined as in \eqref{eq:439}.  By taking the density of the vector
measure $(\measu\sfx)_\sharp(\uu\measu\ppi)$ with respect to
$\measu\mu$ we obtain a Borel vector field $\measu\uu=\WWW (\vv,\hat
w) \in
\rmL^2(\R^d\times I;\measu\mu;\R^{d+1})$ which satisfies
\begin{equation}\label{eq:442}
  \measu\uu(x,t)=\int \uu\,\d\ppi_{x,t}\quad\text{for
    $\measu\mu$-a.e.\ }~(x,t)\in \R^d\times I \WWW 
   \ \text{ and } \ 
  \int \Big(|\vv_t|^2{+}\hat w_t^2\Big)\, \d \mu_t \leq |\mu'_t|^2. \EEE
\end{equation}  
% We denote by $\measu \ww=(\vv,w)\in \rmL^1(\tY\times I,\measu
% \alpha;\R^{d+1})$ the
% vector field constructed as in \eqref{eq:442} starting from
% $\ww=\rmy_\tY'$.
  Choosing a test function $\zeta([x,\s] ,t):=
   \xi(x)\eta(t)\s^2$ with $\xi\in
  \rmC^\infty_\rmc(\R^d)$ and $\eta\in \rmC^\infty_\rmc(I)$ we
  \WWW can exploit the chain rule \eqref{eq:440} in $\R^d$ and find
  
  \begin{align*}
    -&\int_0^1\eta'\int_{\R^d} \xi\,\d\mu_t\,\d t
       =-\int_{\R^d\times I} \eta'(t)\xi(x)\,\d\mu_I
       =- \int \xi(\sfx(\ty(t))\,\sfs^2(\ty(t))\eta'(t)\, 
        \d(\ppi\otimes\lambda)
    \\&=
        -\int \partial_t \zeta(\ty(t),t)\,\d(\ppi\otimes
        \lambda)=
        \int %\int_0^1
        \Big(-\frac\d{\d t}\zeta(\rmy(t),t)
        +\langle \rmD_\tY\zeta(\rmy(t),t), \rmy_\tY'(t)\rangle\Big)
        \,\d(\ppi\otimes \lambda)
        \\&=
        \int\Big( \int_0^1
            -\frac\d{\d t}\zeta(\rmy(t),t)\,\dt\Big)\,\d\ppi
        +\int \langle (\rmD_x \xi(\sfx_I),2\xi(\sfx_I)),\uu\rangle
            \sfR^2
        \,\d(\ppi\otimes \lambda)
    \\&=
        % =
        % \int \eta\langle (\frac
        %     12\rmD_X\xi(\sfx_I),\xi(\sfx_I)),\uu\rangle
        % \,\d\ppi_I=
        \int \eta(t)\langle (\rmD_x \xi(x),2\xi(x)),\uu_I\rangle
        \,\d\mu_I
   \\&
    =
        %\int_{\tY\times I} \langle \rmD_\tY\zeta(y,t),(\vv,w)\rangle
    %     \,\d\measu \alpha
    %     =\int_{\tY_\soo\times I}\Big(\frac
    %     12\langle\rmD_X\xi(\sfx),\vv_t(\sfx,\sfs)\rangle\eta(t)\sfs^2+ 
    %     \xi(x)w(\sfx,\sfs)\eta(t)\sfs^2 \Big)\,\d\measu \alpha
    %     \\&
                        \int_0^1\eta(t)\int_{\R^d} 
            \Big(\langle\rmD_x \xi(x),\vv_t(x)\rangle+
           \WWW 2\xi(x) \hat w_t(x) \Big)\,\d\mu_t\,\d t . 
  \end{align*}
\WWW Setting $w_t = 2\hat w_t$ the continuity equation with reaction
\eqref{eq:438} holds. 
\end{proof}

\WWW The next result provides the opposite inequality, which will be
deduced from the duality between the solutions of the generalized
Hamilton--Jacobi equation and $\HK$ developed in Theorem
\ref{thm:main-HKHJ}. 

\begin{theorem}
  \label{thm:BB2}
  Let $(\mu_t)_{t\in [0,1]}$ be a continuous curve in $\cM(\R^d)$ that
  solves the continuity equation with reaction \eqref{eq:175pre}
  governed by the Borel vector field $(\vv,w)\in
  L^2(\R^d\times[0,1],\mu_I;\R^{d+1})$ with $\mu_I$ given by
  \eqref{eq:436}.  Then $\mu\in \AC^2([0,1];(\cM(\R^d),\HK))$ and
  \begin{equation}
    \label{eq:176}
    |\dot\mu_t|^2\le \int_{\Rd} \Big(|\vv_t|^2+\frac 14|w_t|^2\Big)\,\d\mu_t
    \quad\text{for $\Leb 1$-a.e.~}t\in (0,1).
  \end{equation}
\end{theorem}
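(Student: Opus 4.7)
My plan is to deduce Theorem \ref{thm:BB2} from the duality formula \eqref{eq:443} of Corollary \ref{cor:HJRd}, coupling a test subsolution of the generalized Hamilton--Jacobi inequality with the continuity equation with reaction. The final step reduces \eqref{eq:176} to a Cauchy--Schwarz argument.

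First, I would fix $0\le s<t\le 1$ and reparametrize the curve by $\tilde\mu_\tau:=\mu_{s+\tau(t{-}s)}$ for $\tau\in[0,1]$. This new curve is continuous in $\cM(\R^d)$ and solves the continuity equation with reaction driven by the rescaled vector field $\tilde\vv_\tau:=(t{-}s)\vv_{s+\tau(t{-}s)}$ and $\tilde w_\tau:=(t{-}s)w_{s+\tau(t{-}s)}$. The $L^2$ bound \eqref{eq:432} is preserved with an explicit scaling factor $(t{-}s)^2$. Next, I would pick an arbitrary $\xi\in \rmC^\infty_{\rmc}(\R^d\times[0,1])$ satisfying $\partial_\tau\xi+\tfrac12|\rmD_x\xi|^2+2\xi^2\le 0$, and verify that $\tau\mapsto \int\xi_\tau\,\d\tilde\mu_\tau$ is absolutely continuous with
\begin{equation*}
\int\xi_1\,\d\tilde\mu_1-\int\xi_0\,\d\tilde\mu_0=\int_0^1\!\!\int\bigl(\partial_\tau\xi+\rmD_x\xi\cdot\tilde\vv_\tau+\xi\tilde w_\tau\bigr)\,\d\tilde\mu_\tau\,\d\tau.
\end{equation*}
This formula follows from the distributional identity \eqref{eq:435} by a standard localization-in-time argument: one tests against $\xi(x,\tau)\eta_n(\tau)$ where $\eta_n$ is a smooth cutoff approaching $\nchi_{[0,1]}$ and passes to the limit using the continuity of $\tau\mapsto \tilde\mu_\tau$ and $L^2$ integrability of $(\tilde\vv,\tilde w)$.

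Using the Hamilton--Jacobi subsolution property pointwise together with the elementary Young inequalities
\begin{equation*}
\rmD_x\xi\cdot\tilde\vv_\tau\le \tfrac12|\rmD_x\xi|^2+\tfrac12|\tilde\vv_\tau|^2,\qquad \xi\tilde w_\tau\le 2\xi^2+\tfrac18\tilde w_\tau^2,
\end{equation*}
the integrand is bounded by $\tfrac12|\tilde\vv_\tau|^2+\tfrac18\tilde w_\tau^2$, yielding
\begin{equation*}
\int\xi_1\,\d\tilde\mu_1-\int\xi_0\,\d\tilde\mu_0\le \tfrac12\int_0^1\!\!\int\Bigl(|\tilde\vv_\tau|^2+\tfrac14\tilde w_\tau^2\Bigr)\,\d\tilde\mu_\tau\,\d\tau.
\end{equation*}
Taking the supremum over admissible $\xi$ and applying \eqref{eq:443} to the pair $(\tilde\mu_0,\tilde\mu_1)=(\mu_s,\mu_t)$ gives
\begin{equation*}
\HK^2(\mu_s,\mu_t)\le \int_0^1\!\!\int\Bigl(|\tilde\vv_\tau|^2+\tfrac14\tilde w_\tau^2\Bigr)\,\d\tilde\mu_\tau\,\d\tau=(t{-}s)\int_s^t\!\!\int\Bigl(|\vv_r|^2+\tfrac14 w_r^2\Bigr)\,\d\mu_r\,\d r.
\end{equation*}

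Finally, setting $m(r):=\bigl(\int(|\vv_r|^2+\tfrac14 w_r^2)\,\d\mu_r\bigr)^{1/2}\in L^2(0,1)$ by \eqref{eq:432}, the previous inequality combined with Cauchy--Schwarz gives $\HK(\mu_s,\mu_t)\le \int_s^t m(r)\,\d r$, hence $\mu\in \AC^2([0,1];(\cM(\R^d),\HK))$ with metric derivative $|\dot\mu_t|_{\HK}\le m(t)$ for a.e.\ $t$, which is exactly \eqref{eq:176}. The main technical point to verify carefully is the chain-rule identity for $\tau\mapsto\int\xi_\tau\,\d\tilde\mu_\tau$ on the closed interval $[0,1]$ including the boundary values, since the distributional formulation \eqref{eq:435} is a priori stated only for compactly supported test functions in the open time interval; everything else reduces to Young's inequality and the duality Corollary \ref{cor:HJRd}.
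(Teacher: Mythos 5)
Your proof is correct and essentially identical to the paper's: both hinge on the smooth chain rule for $t\mapsto\int\xi_t\,\d\mu_t$ together with the Hamilton--Jacobi subsolution inequality, Young's inequality, and the duality of Corollary \ref{cor:HJRd}; the only cosmetic difference is that you rescale the curve and vector fields while the paper rescales the test function $\xi$ to obtain \eqref{eq:446}, which are equivalent. One small imprecision to fix: the estimate you derive is $\HK^2(\mu_s,\mu_t)\le (t-s)\int_s^t m(r)^2\,\d r$, and Cauchy--Schwarz does \emph{not} upgrade this to $\HK(\mu_s,\mu_t)\le\int_s^t m(r)\,\d r$ (it gives the opposite inequality between the two right-hand sides); rather, the bound $\HK^2\le(t-s)\int_s^t m^2$ already implies $\mu\in\AC^2$ and $|\dot\mu_t|\le m(t)$ a.e.\ by the usual metric lemma (absolute continuity from the summability of $m^2$, then dividing by $(t-s)^2$ and using Lebesgue points), which is exactly what the paper silently invokes with ``which yields \eqref{eq:176}''.
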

\begin{proof}
  The simple scaling $\xi(t,x)\to (b{-}a)\xi(a{+}(b{-}a)t,x)$ transforms any
  subsolution of the Hamilton-Jacobi equation in $[0,1]$ to a
  subsolution of the same equation in $[a,b]$. Thus, 
  \begin{equation}
    \label{eq:446}
     \begin{aligned}
          \HK^2(\mu_0,\mu_1)=2(b{-}a)
          \sup\Big\{&\int_{\R^d}\xi_b\,\d\mu_1-\int_{\R^d}\xi_a\,\d\mu_0:\ \ 
          \xi\in \rmC^\infty_\rmc(\R^d\times [a,b]),\ \\&
          \partial_t\xi_t(x)+\frac 12 \big|\rmD_x \,\xi_t (x)\big|^2 
         +2 \xi_t^2(x)\le 0 \text{ in }\R^d\times(a,b)\Big\}.
        \end{aligned}
  \end{equation}
  Let $\xi\in \rmC^\infty_\rmc(\R^d\times[0,1])$
  be a subsolution to the Hamilton-Jacobi equation
  $\partial_t\xi+\frac 12|\rmD \xi|^2+2\xi^2\le 0$ in 
  $\R^d\times[0,1]$. 
  By a standard argument (see \cite[Lem.\,8.1.2]{Ambrosio-Gigli-Savare08}),
  the integrability \eqref{eq:175pre} and the weak continuity of
  $t\mapsto\mu_t$ yield 
  \begin{align*}
    2\int_{\R^d} \xi_{t_1}\,\d\mu_{t_1} -
    2\int_{\R^d}\xi_{t_0}\,\d\mu_{t_0}
     &= 2\int_{t_0}^{t_1}\int_{\R^d} \Big(\partial_t\xi_t+
        \langle\rmD_x\xi_t,\vv_t\rangle+\xi_tw_t\Big)\,\d\mu_t\,\d t
   \\
     &\le 2\int_{t_0}^{t_1}\int_{\R^d} 
          \Big(-\frac 12|\rmD_x  \xi_t|^2-2\xi_t^2
               +
    \langle\rmD_x \xi_t,\vv_t\rangle+\xi_tw_t\Big) \,\d\mu_t\,\d t
    \\
    &\le \int_{t_0}^{t_1}\int_{\R^d}  \Big(
        |\vv_t|^2+\frac 14|w_t|^2\Big) \,\d\mu_t\,\d t.   
  \end{align*}
  Applying Corollary \ref{cor:HJRd} and \eqref{eq:446} we find
  \begin{displaymath}
    \HK^2(\mu_{t_0},\mu_{t_1})\le  (t_1-t_0)\int_{t_0}^{t_1}\int_{\R^d} 
                             \Big(
                                 |\vv_t|^2+\frac 14|w_t|^2\Big)
                                 \,\d\mu_t\,\d t   
                                 \quad\forevery 0\le t_0<t_1\le 1,
  \end{displaymath}
  which yields \eqref{eq:176}. 
\end{proof}

Combining Theorems \ref{thm:BB1} and \ref{thm:BB2} with 
Theorem \ref{thm:plan-representation} and the geodesic property of 
$(\cM(\R^d),\HK)$ we immediately have \WWW the desired dynamic
representation. \EEE

\begin{theorem}[Representation of $\HK$ \`a la Benamou-Brenier]
  \label{cor:BB1+2}
  For every $\mu_0,\mu_1\in \cM(\R^d)$ we have 
  %\COMM{Alex: This     corollary should be the main Theorem, while Theorems \ref{thm:BB1}
  %and \ref{thm:BB2} could be Propositions}
  \begin{align}
  \notag    \HK^2(\mu_0,\mu_1)=
      \min\Big\{&\int_0^1 \!\!\int_\Rd\!\! \Big(|\vv_t|^2+\frac
      14|w_t|^2\Big)\,\d\mu_t\,\d 
      t\ : \ 
      \mu\in \rmC([0,1];\cM(\R^d)),\ \mu_{t=i}=\mu_i,\\
    \label{eq:177}
      &
      \mbox{}\qquad \partial_t \mu_t+\nabla\cdot(\vv_t\mu_t)=w_t\mu_t
      \text{ in }\DD'(\R^d\times (0,1))\Big\}.
  \end{align}
  The Borel vector field $(\vv,w)$ realizing the minimum in
  \eqref{eq:177}
  is uniquely determined $\mu_I$-a.e.~in $\R^d\times (0,1)$.
\end{theorem}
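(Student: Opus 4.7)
The plan is essentially to assemble the two one-sided bounds established in Theorems \ref{thm:BB1} and \ref{thm:BB2}, using the geodesic property of $(\cM(\R^d),\HK)$ to ensure the infimum is attained and to verify equality. No genuinely new estimates are required; the work lies in checking that the pieces fit, and in establishing uniqueness.

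For the inequality $\geq$, I would take any admissible triple $(\mu,\vv,w)$ with $\mu\in\rmC([0,1];\cM(\R^d))$ satisfying the continuity equation with reaction \eqref{eq:175pre} and having finite action. By Theorem \ref{thm:BB2}, such a curve automatically lies in $\AC^2([0,1];(\cM(\R^d),\HK))$ and satisfies the pointwise bound
\[
|\dot\mu_t|_\HK^2\le \int_{\R^d}\Big(|\vv_t|^2+\tfrac14|w_t|^2\Big)\,\d\mu_t
\quad\text{for a.e.\ }t\in(0,1).
\]
Integrating in $t$ and using the general metric inequality $\HK^2(\mu_0,\mu_1)\le\big(\int_0^1|\dot\mu_t|_\HK\,\d t\big)^2\le \int_0^1|\dot\mu_t|_\HK^2\,\d t$ yields the bound $\HK^2(\mu_0,\mu_1)\le \int_0^1\!\int(|\vv_t|^2+\tfrac14|w_t|^2)\,\d\mu_t\,\d t$.

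For the converse inequality $\leq$ and the fact that the minimum is attained, I would invoke the geodesic property. Since $(\R^d,|\cdot|)$ is geodesic, Proposition \ref{prop:HKlg} yields that $(\cM(\R^d),\HK)$ is a geodesic space, so we may pick a constant-speed geodesic $t\mapsto\mu_t$ joining $\mu_0$ to $\mu_1$, which satisfies $|\dot\mu_t|_\HK\equiv\HK(\mu_0,\mu_1)$ and $\int_0^1|\dot\mu_t|_\HK^2\,\d t=\HK^2(\mu_0,\mu_1)$. Applying Theorem \ref{thm:BB1} to this geodesic produces a Borel vector field $(\vv,w)\in\rmL^2(\R^d\times(0,1),\mu_I;\R^{d+1})$ satisfying \eqref{eq:175pre} together with
\[
\int_{\R^d}\Big(|\vv_t|^2+\tfrac14|w_t|^2\Big)\,\d\mu_t\le |\dot\mu_t|_\HK^2
\quad\text{for a.e.\ }t\in(0,1).
\]
Integrating, we obtain an admissible triple realizing $\HK^2(\mu_0,\mu_1)$, which shows both that equality holds and that the infimum is attained.

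For uniqueness of the optimal $(\vv,w)$ at fixed $\mu$, I would use strict convexity together with the linearity of the constraint. If $(\vv^1,w^1)$ and $(\vv^2,w^2)$ are two minimizing fields associated with the same optimal curve $\mu$, then their average $(\bar\vv,\bar w):=\tfrac12(\vv^1{+}\vv^2,w^1{+}w^2)$ still satisfies \eqref{eq:175pre} by linearity, hence is admissible. The strict convexity of the integrand $(\vv,w)\mapsto |\vv|^2+\tfrac14|w|^2$ then forces $(\vv^1,w^1)=(\vv^2,w^2)$ $\mu_I$-a.e. The main (mild) obstacle in the whole argument is the bookkeeping for the $\geq$ direction: one must ensure that an arbitrary admissible $\mu\in\rmC([0,1];\cM(\R^d))$ with finite action is indeed covered by Theorem \ref{thm:BB2}, i.e.\ that $(\vv,w)\in\rmL^2(\R^d\times[0,1],\mu_I;\R^{d+1})$ satisfies \eqref{eq:432}, which is precisely the finiteness of the action functional; everything else is a direct citation.
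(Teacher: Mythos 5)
Your proof is correct and follows essentially the same route as the paper, which simply combines Theorems~\ref{thm:BB1} and~\ref{thm:BB2} with the geodesic property of $(\cM(\R^d),\HK)$ (obtained via Theorem~\ref{thm:plan-representation} and Proposition~\ref{prop:HKlg}); the paper does not even spell out the uniqueness argument, and your strict-convexity-plus-linearity argument for it is the standard and intended one.
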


\WWW The discussion in \cite{LMS15} reveals however that there may be
many geodesic curves, so in general $\measu \mu$ is not
unique. Indeed, the set of all geodesics connecting
$\mu_0=a_0\delta_{x_0}$ and $\mu_1=a_1\delta_{x_1}$ with $a_0,a_1>0$
and $|x_1{-}x_0|=\pi/2$ is infinite dimensional, see
\cite[Sect.\,5.2]{LMS15}. \EEE

\subsection{Geodesics in $\cM(\R^d)$}
\label{subsec:geodesicRd}

As in the case of the Kantorovich-Wasserstein distance, one may expect
that geodesics $(\mu_t)_{t\in [0,1]}$ in $(\cM(\R^d),\HK)$ can be
characterized by the system (cf.\ \cite[Sect.\,5]{LMS15})
\begin{equation}
  \label{eq:10}
  \partial_t \mu_t+\nabla\cdot(\mu_t \,\rmD_x\xi_t)=4\xi_t\mu_t,\quad
  \partial_t \xi_t+\frac 12|\rmD_{x}\xi_t|^2+2\xi_t^2=0.
\end{equation}
In order to give a precise meaning to \eqref{eq:10} we first have to
select an appropriate regularity for $\xi_t$.  \WWW On the one hand
\EEE we cannot expect $\rmC^1$ smoothness for solutions of the
Hamilton-Jacobi equation \eqref{eq:10} (in contrast with subsolutions,
that can be regularized as in Corollary \ref{cor:HJRd}) and on the
other hand the $\Leb d$ a.e.~differentiability of Lipschitz functions
guaranteed by Rademacher's theorem is not sufficient, if we want to
consider arbitrary measures $\mu_t$ that could be singular with
respect $\Leb d$.

A convenient choice for our aims is provided by locally Lipschitz
functions which are strictly differentiable at $\mu_I$-a.e.~points,
where $\mu_I$ has been defined by \eqref{eq:436}. A
function $f:\R^d\to \R$ is \emph{strictly differentiable} at $x\in \R^d$ if
there exists $\rmD f(x)\in (\R^d)^*$ such that
\begin{equation}
  \label{eq:11}
  \lim_{x',x''\to x\atop  x'\neq x''} \frac{f(x')-f(x'')-\rmD
    f(x)(x'-x'')}{|x'-x''|}=0. 
\end{equation}
\WWW According to \cite[Prop.~2.2.4]{Clarke83} a locally
Lipschitz function $f$ is strictly differentiable at $x$ if and only
if the Clarke subgradient \cite[Sect.~2.1]{Clarke83} of $f$ at $x$
reduces to the singleton $\{\rmD f(x)\}$. In particular, denoting by
$\dD\subset \R^d$ the set where $f$ is differentiable and denoting by
$\kappa_\eps$ a smooth convolution kernel as in \eqref{eq:433},
Rademacher's theorem and \cite[Thm.~2.5.1]{Clarke83} yield
\begin{equation}
  \label{eq:36}
  \lim_{x'\to x\atop x'\in \sdD}\rmD f(x')=\rmD f(x),\quad
  \lim_{\eps\down0}\rmD (f\ast\kappa_\eps)(x)=\rmD f(x) \WWW \ \text{
    for all }x \in \dD. 
\end{equation}
 In the proofs we will also need to deal with pointwise
representatives
of the time derivative of a locally Lipschitz function $\xi:\R^d\times
(0,1)\to \R$:
if $D(\partial_t \xi)$ will denote the set (of full $\Leb{d+1}$ measure) where 
$\xi$ is differentiable w.r.t.~time and 
$\widetilde{\partial_t \xi}$ the extension of $\partial_t \xi$ to $0$
outside $D(\partial_t \xi)$, we set
\begin{equation}
  \label{eq:88}
  (\partial_t \xi_t)_-(x):=\liminf_{\eps\to0}\big(\widetilde{\partial_t
    \xi_t} \ast \kappa_\eps\big)(x),\quad
  (\partial_t \xi_t)^+(x):=\limsup_{\eps\to0}\big(\widetilde{\partial_t
    \xi_t} \ast \kappa_\eps\big)(x).
\end{equation}
It is not difficult to check that such functions are Borel; even if they
depend on the specific choice of $\kappa_\eps$, they will still be
sufficient
for our aims (a more robust definition would require the 
use of approximate limits).

\nc We are now ready to characterize the set of all geodesic
curves by giving a precise meaning to \eqref{eq:10}. The proof that
the conditions (i)--(iv) below are sufficient for geodesic follows
directly with the subsequent Lemma \ref{le:techHJ}, whereas the proof
of necessity is more involved and relies on the existence of optimal
potentials $\psi_1$ for $\LET=\HK^2$ in Theorem \ref{thm:mainHK2}(d),
 on the characterization of
subsolutions of the generalized Hamilton--Jacobi equation in Theorem
\ref{thm:main-HJ}, and on the characterization of curves $t\mapsto \mu_t$
in $\AC^2\big([0,1];(\cM(\R^d),\HK)\big)$. 

\begin{theorem}
  \label{thm:geodesicRd}
  Let $\mu\in \rmC^0([0,1];\cM(\R^d))$ be a weakly continuous curve.
  If %is a geodesic w.r.t.~the $\HK$ distance if and only if 
  there exists
  a map $\xi \in \Lip_{\rm loc}((0,1);\rmC_b(\R^d))$ such that \nc
  \begin{enumerate}[(i)]
  \item $\xi_t\in \Lip_b(\R^d)$ for every $t\in (0,1)$ with $t\mapsto
    \Lip(\xi_t,\R^d)$ locally bounded in $(0,1)$
    (equivalently, the map $(x,t)\mapsto \xi_t(x)$ is bounded and
    Lipschitz in $\R^d
    \times [a,b]$ for every compact subinterval $[a,b]\subset (0,1)$),
    \nc
  \item $\xi$ 
  % \rsout{is right-differentiable w.r.t.~$t$ in $\R^d\times (0,1)$ and
  %   it} 
    is strictly differentiable w.r.t.~$x$ at
    $\mu_I$-a.e.~$(x,t)\in \R^d\times (0,1)$,
  \item $\xi$ satisfies 
  % \COMM{Alex: is it possible or useful to state
  %     \eqref{eq:15} with $|\rmD\xi_t|^2$ and $\measu\mu$-a.e.?
  %     Appearance of $| \rmD\xi_t|^2_a$ looks strange
  %   I modified the assumptions; now the 
  %   HJ equation should hold $\Leb{d+1}$ a.e. 
  %   which looks better in view of Rademacher theorem 
  %   ($\measu\mu$-a.e.~could be
  %   too weak if $\mu_I$ is, e.g., concentrated along a curve.)}
    \begin{equation}
      \label{eq:15}
      \partial_t\xi_t +\frac
      12  \big| \rmD_x \xi_t(x)\big|^2+2\xi_t^2(x)=0\quad \text{$\Leb{d+1}$-a.e.~in }\R^d\times (0,1),
    \end{equation}
    \item and the curve $(\mu_t)_{t\in [0,1]}$ solves the continuity
      equation with reaction \WWW with the vector field $(\rmD_x
      \xi,4\xi)$  in every compact
      subinterval of $(0,1)$, i.e.
      \begin{equation}
        \label{eq:16}
        \partial_t \mu_t+\nabla\cdot(\mu_t \rmD_x\xi_t)=4\xi_t\mu_t
        \quad\text{in }\DD'(\R^d\times (0,1)),
      \end{equation}
  \end{enumerate}
  
  then $\mu$ is a geodesic w.r.t.~the $\HK$ distance.
  Conversely, if $\mu$ is a geodesic then it is possible to find
  $\xi \in \Lip_{\rm loc}((0,1);\rmC_b(\R^d))$
  that satisfies 
  the properties $(i)--(iv)$ above, is right differentiable
  w.r.t.~$t$ in $\R^d\times (0,1)$, and fulfils \eqref{eq:419bis}
  everywhere in $\R^d\times (0,1).$ 
\end{theorem}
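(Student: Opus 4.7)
The plan is to treat the two directions separately, exploiting the duality between solutions of the generalized Hamilton--Jacobi equation and the Benamou--Brenier dynamic representation of $\HK^2$.

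\textbf{Sufficiency.} Assuming $\xi$ satisfies (i)--(iv), I would take $\vv_t = \rmD_x \xi_t$ and $w_t = 4\xi_t$ as the velocity/reaction fields in the continuity equation \eqref{eq:16}. Using (iii), the Benamou--Brenier integrand collapses to a total time derivative along $\mu$:
\begin{equation*}
|\vv_t|^2 + \tfrac14 |w_t|^2 = |\rmD_x \xi_t|^2 + 4\xi_t^2 = -2\,\partial_t \xi_t.
\end{equation*}
A chain-rule computation using the continuity equation (iv) with test function $\xi_t$ itself --- a step whose rigorous justification through a truncation and approximation in $t \in (0,1)$ is what the announced Lemma \ref{le:techHJ} is presumably designed to supply --- yields $\frac{d}{dt}\int \xi_t\,\d\mu_t = -\int \partial_t\xi_t\,\d\mu_t$, so that integrating on $(0,1)$ gives $\int_0^1\!\!\int (|\rmD_x\xi_t|^2 + 4\xi_t^2)\,\d\mu_t\,\d t = 2\bigl[\int \xi_1\,\d\mu_1 - \int \xi_0\,\d\mu_0\bigr]$. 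By the Hamilton--Jacobi duality (Corollary \ref{cor:HJRd}, applied after mollifying $\xi$ in space-time to an admissible subsolution), the right-hand side is bounded above by $\HK^2(\mu_0,\mu_1)$, while the left-hand side is bounded below by it thanks to Theorem \ref{thm:BB2} combined with $\HK^2(\mu_0,\mu_1)\le \int_0^1 |\mu_t'|_\HK^2\,\d t$. All inequalities are therefore equalities, which forces $\mu$ to be a constant-speed geodesic.

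\textbf{Necessity.} Given a geodesic $\mu$, my strategy is to manufacture $\xi$ from an optimal dual potential via the Hopf--Lax evolution. First I would invoke Theorem \ref{thm:mainHK2}(d) (and the duality formula in Theorem \ref{thm:dualityHK}) to extract a potential $\xi_0 \in \rmB(\R^d)$, $\inf\xi_0 > -1/2$, that is optimal (or near-optimal through an approximation argument along boundedly supported truncations) in the dual representation $\tfrac12\HK^2(\mu_0,\mu_1) = \int \HJ{}{1}\xi_0\,\d\mu_1 - \int \xi_0\,\d\mu_0$. Then I would set $\xi_t := \HJ{}{t}\xi_0$ via \eqref{eq:427} and use Theorem \ref{thm:main-HJ} (applied in the length space $\R^d$) to obtain (i) and (iii): $\xi \in \Lip_{\rm loc}((0,1);\rmC_b(\R^d))$, the equality $|\rmD_X \xi_t| = \alc{X}{\xi_t}$ holds, and the HJ equation holds with equality $\Leb{d+1}$-a.e.

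The heart of the argument --- and the main obstacle --- is to establish (ii) and (iv), namely strict differentiability of $\xi_t$ at $\mu_I$-a.e.\ point together with the identification $(\vv^*, w^*) = (\rmD_x\xi_t, 4\xi_t)$ of the optimal Benamou--Brenier velocity. For strict differentiability I would exploit the explicit infimal-convolution structure of $\HJ{}{t}\xi_0$ in \eqref{eq:427} to show that $\xi_t$ is locally semiconcave in $x$ for $t \in (0,1)$, hence locally Lipschitz with directional derivatives that are strictly differentiable wherever a unique minimizer $x'$ exists in \eqref{eq:427}; coupling this with the fact that $\mu$ is transported along characteristics emanating from such minimizers (i.e.\ concentrated on geodesics in $\tY$ by Theorem \ref{thm:geoHK}) should give that $\mu_I$-a.e.\ point is of this kind. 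For (iv), I would note that the forward direction already shows that $(\rmD_x\xi_t, 4\xi_t)$ realizes the action $\HK^2(\mu_0,\mu_1)$ whenever it drives a continuity equation; then combining with Theorem \ref{thm:BB1}, the representation of Theorem \ref{thm:plan-representation} for $\mu$, and the uniqueness clause of Theorem \ref{cor:BB1+2}, I would identify this pair with the unique minimal velocity field of $\mu$ and deduce \eqref{eq:16}. The right-differentiability and global validity of \eqref{eq:419bis} follow once more from Theorem \ref{thm:main-HJ} because $\xi_t = \HJ{}{t}\xi_0$.
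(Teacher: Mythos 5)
Your sufficiency argument is essentially the same as the paper's: use the vector field $(\rmD_x\xi,4\xi)$ in the dynamic (Benamou--Brenier) formulation, observe that the Hamilton--Jacobi identity collapses the action density to $-2\partial_t\xi_t$, and trap the action between the metric-derivative lower bound of Theorem~\ref{thm:BB2} and the Hamilton--Jacobi duality upper bound. The paper's version is more careful about working on compact subintervals $(a,b)\subset(0,1)$ and replacing pointwise time derivatives with the one-sided regularized limits $(\partial_t\xi)^\pm$ from Lemma~\ref{le:techHJ}, but the skeleton matches yours.

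The necessity direction, however, has two genuine gaps. For the strict differentiability claim~(ii), semiconcavity of $\xi_t=\PP_t\xi_0$ only gives $\Leb d$-a.e.\ differentiability for each $t$, and nothing in your sketch converts that into $\mu_I$-a.e.\ differentiability when $\mu_t$ is singular with respect to Lebesgue measure; the appeal to geodesics in $\tY$ and to ``characteristics emanating from minimizers'' would need a substantial identification argument that you do not supply. The device the paper actually uses is the time-reversed semigroup: one introduces the semiconvex family $\bar\xi_t$ (running $\HJ{}{\cdot}$ backward from the other optimal potential), proves $\bar\xi_t\le\xi_t$ everywhere with equality at $t=0,1$ on $\mu_0,\mu_1$-full sets, shows both $t\mapsto\int\xi_t\,\d\mu_t$ and $t\mapsto\int\bar\xi_t\,\d\mu_t$ are affine, hence $\xi_t=\bar\xi_t$ $\mu_t$-a.e.\ for all $t$, and then strict differentiability follows from the trapping of a semiconcave function above a semiconvex one with contact at those points. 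This two-sided sandwich is the key idea your proposal is missing.

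The argument for~(iv) is circular as written. You say the forward direction ``already shows that $(\rmD_x\xi_t,4\xi_t)$ realizes the action whenever it drives a continuity equation''---but that it drives the continuity equation for $\mu_t$ is precisely conclusion~(iv); you cannot feed it into the uniqueness statement of Theorem~\ref{cor:BB1+2} as a competing minimizer without first knowing it is admissible. The paper instead starts from an arbitrary $(\vv,w)$ representing $\mu$ (via Theorem~\ref{thm:BB1}), applies Lemma~\ref{le:techHJ} and the Hamilton--Jacobi identity, and completes the square to get
\begin{equation*}
\textstyle
\frac{b-a}{2}\HK^2(\mu_0,\mu_1)
\le -\int\Big(\tfrac12|\rmD_x\xi_t-\vv|^2 + 2(\xi_t-\tfrac14 w)^2\Big)\d\mu_I
+ \tfrac12\int_a^b|\mu_t'|^2\,\d t,
\end{equation*}
from which the geodesic property forces $\vv=\rmD_x\xi_t$ and $w=4\xi_t$ $\mu_I$-a.e.\ directly, with no appeal to a-priori admissibility.
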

Notice that \eqref{eq:15} seems the weakest natural formulation of the
Hamilton-Jacobi equation, in view of Rademacher's Theorem. The
assumption of strict differentiability of $\xi$ at $\mu_I$-a.e.~point
provides an admissible vector field $\rmD_X\xi$ for \eqref{eq:16}.\\
\begin{proof}
  The proof splits into a sufficiency and a necessity part, the
  latter having several steps.\\
  \textbf{Sufficiency.} Let us suppose that $\mu,\xi$ satisfy
  conditions $(i),\ldots, (iv)$.

  Since $D(\partial_t \xi)$ has full $\Leb{d+1}$-measure in
  $\R^d\times (0,1)$, Fubini's Theorem shows that $N:=\{t\in (0,1): \Leb d(\{x\in
  \R^d:(x,t)\not\in D(\partial_t \xi)\})>0\}$ is $\Leb1$-negligible.
  By \eqref{eq:15} we get
  \begin{equation}
    \label{eq:89}
    (\partial_t\xi)_-(x)
    %=\liminf_{\eps\down0}(\partial_t \xi\ast     \kappa_\eps)(x)
    =-\limsup_{\eps\down0}
    \Big(\big(\frac 12 |\rmD_x
    \xi_t|^2+2\xi_t^2\big)\ast\kappa_\eps\Big)(x)
    \ge 
    -
    \frac 12 |\rmD
    \xi_t|_a^2(x)-2\xi_t^2(x)
  \end{equation}
  for every $x\in \R^d$ and $t\in (0,1)\setminus N$. \nc

  We apply Lemma \ref{le:techHJ} below with $\vv=\rmD_x \xi$ and 
  $w=4\xi$: observing that $|\rmD \xi_t|_a(x)=
  |\rmD_x \xi_t(x)|$ at every point $x$ of strict differentiability
  of $\xi_t$, we get, for all $0<a<b<1$,
  \begin{align*}
    % \int_a^b |\mu_t'|_\HK^2\,\d t
    % \topref{eq:428}\ge
    &2\int_{\R^d}\xi_b\,\d\mu_b-
    2\int_{\R^d}\xi_a\,\d\mu_a
    \ge 2 \int_{\R^d\times (a,b)} \Big(
      (\partial_t\xi)_- \nc +
                      |\rmD_x\xi_t(x)|^2+4\xi_t^2(x)\Big)\,\d\mu_I
  \\
    &\topref{eq:89}=
              2\int_{\R^d\times (a,b)} \Big(
           \frac12|\rmD_x\xi_t(x)|^2+2\xi_t^2(x)\Big)\,\d\mu_I         
    \topref{eq:176}\ge  \int_a^b |\mu_t'|^2\,\d t
   %\\ &
    \: \geq \: \frac 1{b{-}a}\HK^2(\mu_a,\mu_b).
  \end{align*}
  On the other hand, since $\R^d$ is a length space, Theorem
  \ref{thm:main-HKHJ} yields
  \begin{displaymath}
    \frac 1{b-a}\HK^2(\mu_a,\mu_b)\ge 2 \int_{\R^d}\xi_b\,\d\mu_b-
    2\int_{\R^d}\xi_a\,\d\mu_a,
  \end{displaymath}
  so that all the above inequalities are in fact identities and, hence,
  \begin{displaymath}
    \HK(\mu_a,\mu_b)= (b-a)\,|\mu_t'|\quad\text{$\Leb 1$-a.e.~in $[a,b]$}.
  \end{displaymath}
  This shows that $\mu$ is a geodesic. Passing to the limit as
  $a\down0$ and $b\up1$ we conclude the proof of the first part of the
  Theorem.\smallskip

  \noindent\textbf{Necessity.}
  Let $(\mu_t)_{t\in [0,1]}$ be a $\HK$-geodesic in 
  $\cM(\R^d)$ connecting $\mu_0$ to $\mu_1$;
  applying Theorem \ref{thm:BB1} we can find
  a Borel vector field $(\vv,w)\in
  \rmL^2(\R^d\times(0,1),\mu_I;\R^{d+1})$ 
  such that \eqref{eq:175pre} and \eqref{eq:174pre} hold.
  We also consider an optimal plan $\ggamma\in \OptLET(\mu_1,\mu_2)$.

  Let $\psi_1,\psi_2:\R^d\to [-\infty,1]$ be a pair of optimal
  potentials given by Theorem \ref{thm:mainHK2} d) and let us set
  $\xi:=-\frac 12\psi_1$ and $\xi_t:=\PP_t\xi$ for $t\in (0,1)$.
%  \footnote{Here I used the modified H-J equation, coefficients $\frac
%    12$-$2$, instead of $\frac 14$-1; the initial datum is divided by
%   $2$, the relation should ${\rm new}\PP_t \xi=\frac 12 {\rm old} 
% \PP_t(2\xi)$, $({\rm new}\PP_t \xi) \, r^2=\QQ_t(\xi \,r^2)$.  } 
Even if we are considering more general initial data
  $\xi\in \rmB(\R^d;[-1/2,+\infty])$ in \eqref{eq:423}, it is not
  difficult to check that the same statement of Theorem
  \ref{thm:main-HJ} holds in every subinterval $[a,b]$ with $0<a<b<1$
  and
  \begin{equation}
    \label{eq:9}
    \lim_{t\down0}\PP_t\xi(x)=\sup_{t>0}\PP_t\xi(x)=\xi_*(x),\quad
    \text{where}\quad
    \xi_*(x):=\lim_{r\down0}\inf_{x'\in B_r(x)}\xi(x')
  \end{equation}
  is the lower semicontinuous envelope of $\xi$. Moreover, setting
  \begin{equation}
    \label{eq:41}
    \xi_1(x)=\PP_1\xi(x):=\lim_{t\up1}\xi_t(x)=\inf_{0<t<1}\xi_t(x),
  \end{equation}
  the function $\xi_1$ is upper semicontinuous, and optimality yields
  \begin{equation}
    \label{eq:50}
    \frac 12\psi_2(x)=\xi_1(x)\quad \text{for $\gamma_2$-a.a.\ }x\in \R^d.
  \end{equation}
  By introducing the semigroup $\bar\PP_t\xi:=-\PP_{t}(-\bar\xi)$ and
  reversing time, we can define
  \begin{equation}
    \label{eq:51}
    \bar\xi_t:=\bar\PP_{1-t}.
  \end{equation}
  By using the link with the Hopf-Lax semigroup in $\tY$ given by
  Theorem \ref{thm:main-HJ}, the optimality of $(\psi_1,\psi_2)$, and
  arguing as in \cite[Thm.\,7.36]{Villani09} it is not difficult to
  check that
  \begin{equation}
    \label{eq:53}
    \bar\xi_t\le \xi_t\quad\text{in }\R^d,\quad 
    \bar\xi_0=\xi_0= -\nc \frac 12\psi_1\quad\mu_0\text{-a.e.~in }\R^d.
  \end{equation}
  Notice that the function $x\mapsto -\cos^2(|x-x'|\land \pi/2)$ 
  has bounded first and second derivatives, so it is semiconcave.
  It follows that the map $x\mapsto \xi_t(x)$ is
  semiconcave for every $t\in (0,1)$ and 
  $x\mapsto \bar\xi_t(x)$ is semiconvex. 

  Since $t\mapsto \int \xi_t\,\d\mu_t$ and $t\mapsto
  \int\bar\xi_t\,\d\mu_t$ are absolutely continuous in $(0,1)$,
  Theorem \ref{thm:main-HKHJ}(i) yields
  \begin{equation}
    \label{eq:69}
    \frac\d{\d t}\int\xi_t\,\d\mu_t\le \frac 12|\mu'_t|^2=\frac
    12\HK^2(\mu_0,\mu_1), 
  \end{equation}
  so that
  \begin{displaymath}
    \int\xi_b\,\d\mu_b-\int\xi_a\,\d\mu_a\le \frac{b-a}2\HK^2(\mu_0,\mu_1).
  \end{displaymath}
  Passing to the limit first as $a\down0$ and then as $b\up1$ 
  by monotone convergence (notice that $\xi_t\le 
  1/2$) and using optimality once again, we obtain
  \begin{equation}
    \label{eq:70}
    \begin{aligned}
     \HK^2(\mu_0,\mu_1)&= \int \psi_1\nc \,\d\mu_0+
     \int \psi_2\nc \,\d\mu_1=
      2\int\xi_1\,\d\mu_1-
      2\int\xi_0\,\d\mu_0
      \\&=\lim_{a\down0,b\up1}2\Big(\int\xi_b\,\d\mu_b-\int\xi_a\,\d\mu_a\Big)....
    \end{aligned}
  \end{equation}
  By \eqref{eq:69} it follows that 
  \begin{equation}
    \label{eq:75}
    \frac\d{\d t}\int\xi_t\,\d\mu_t=\frac 12|\mu'_t|^2=\frac
    12\HK^2(\mu_0,\mu_1)
    \quad\text{in }(0,1).
  \end{equation}
  Reversing time, the analogous argument yields
  \begin{equation}
    \label{eq:76}
    \frac\d{\d t}\int\bar\xi_t\,\d\mu_t=\frac 12|\mu'_t|^2=\frac
    12\HK^2(\mu_0,\mu_1)
    \quad\text{in }(0,1).
  \end{equation}
  Hence, we have proved that the maps $t\mapsto \int \xi_t\,\d\mu_t$ and
  $t\mapsto \int\bar\xi_t\,\d\mu_t$ are affine in $[0,1]$ and
  coincide at $t=0$ and $t=1$, which implies 
  that 
  \begin{equation}
    \label{eq:77}
    \int \xi_t\,\d\mu_t=\int\bar\xi_t\,\d\mu_t\quad\text{for every
    }t\in [0,1].
  \end{equation}
  Recalling \eqref{eq:53}, we deduce that the complement of the set
  $Z_t:=\{x\in \R^d: \xi_t(x)=\bar\xi_t(x)\}$ is $\mu_t$-negligible.
  Since $\xi_t$ is Lipschitz and semiconcave (thus
  everywhere superdifferentiable) for $t\in (0,1)$ and since $\bar\xi_t$
  is Lipschitz and 
  semiconvex (thus everywhere subdifferentiable), we conclude that
  $\xi_t$ is strictly differentiable in $Z_t$, and thus it satisfies
  conditions (i) and (ii).

  Since (iii) is guaranteed by Theorem \ref{thm:main-HJ} ($\R^d$ is a
  length space), it remains to check \eqref{eq:16}.
  We apply the following Lemma \ref{le:techHJ} 
  by observing that \cite[Prop.\,3.2,3.3]{Ambrosio-Gigli-Savare14}
  and Theorem \ref{thm:main-HJ} yield
  \begin{displaymath}
    \limsup_{x'\to x}\partial_t^+\xi_t(x')
    \le \limsup_{x'\to x}\partial_t^-\xi_t(x')
    \le \partial_t^-\xi_t(x),\quad
    \liminf_{x'\to x}\partial_t^+\xi_t(x')
    \ge \partial_t^+\xi_t(x);
  \end{displaymath}
  
  since $\partial_t^-\xi_t(x)=\partial_t^+\xi_t(x)$
  $\mu_I\text{-a.e.}$ 
  %and % $\partial_t^-\xi_t(x)=\partial_t^+\xi_t(x)=\partial_t \xi_t(x)$ 
  %$\Leb{d+1}\text{-a.e.}$, 
  we get 
  \begin{displaymath}
    (\partial_t \xi)^+=(\partial_t\xi)_-=\partial_t^+\xi\quad \mu_I\text{-a.e.}
  \end{displaymath}
  and therefore
  \eqref{eq:31} holds with equality. \nc

  Recalling that $|\rmD \xi_t|^2_a(x)=
  |\rmD_x\xi_t(x)|^2$ at every point of $Z_t$, 
  for every $0<a<b<1$ we
  have
  \begin{align*}
    \frac{b-a}2\HK^2(\mu_0,\mu_1)
    &=\int_{\R^d}\xi_b\,\d\mu_b-
      \int_{\R^d}\xi_a\,\d\mu_a
      \topref{eq:31}=\int_{\R^d\times (a,b)} \Big(\partial_t^+\xi+
      \rmD_x\xi\,\vv+\xi w \Big)\,\d\mu_I
    \\& =
        \int_{\R^d\times (a,b)} \Big(-\frac 12|\rmD_x\xi_t|^2-
        2\xi_t^2+
        \rmD_x\xi\,\vv+\xi w \Big)\,\d\mu_I
    \\& =
        \int_{\R^d\times (a,b)} \Big(-\frac 12|\rmD_x\xi_t-\vv|^2-
        2(\xi_t-\frac14 w)^2+
        \frac 12 |\vv|^2+\frac 18 w^2 \Big)\,\d\mu_I
        \\&\topref{eq:174pre}\le 
            - \int_{\R^d\times (a,b)} \Big(\frac 12|\rmD_x\xi_t-\vv|^2+
        2(\xi_t-\frac14 w)^2\Big)\,\d\mu_I+
            \frac 12\int_a^b|\mu_t'|^2\,\d t
  \end{align*}
  We deduce that $\vv=\rmD_x\xi$ and $w=4\xi$ holds $\mu_I$-a.e.  
\end{proof}

\WWW The following lemma provides the ``integration by parts''
formulas that where used in the sufficiency and necessity part of the
previous proof of Theorem \ref{thm:geodesicRd}. It is established by a
suitable temporal and spatial smoothing, 
involving a smooth kernel
$\kappa_\eps$ as in \eqref{eq:433}. 

\EEE  
\begin{lemma}
  \label{le:techHJ}
  Let $\mu\in \AC^2_{\rm loc}((0,1);(\cM(\R^d),\HK))$ be satisfying the 
  continuity equation with reaction \eqref{eq:175pre}
  governed by the field $(\vv,w)\in L^2(\R^d\times (a,b),\mu_I)$ 
  for every $[a,b]\subset (0,1)$.
  If $\xi\in \Lip_{\rm loc}((0,1);\rmC_b(\R^d))$ satisfies conditions
  $(i,ii)$ 
  of Theorem \ref{thm:geodesicRd}, then
  for all $0<a\le b<1$ we have 
  \begin{equation}
    \label{eq:31}
    \begin{aligned}
      \int_{\R^d\times (a,b)} \Big((\partial_t\xi)^++
      \rmD_x\xi\,\vv+\xi w \Big)\,\d\mu_I &\ge
      \int_{\R^d}\xi_b\,\d\mu_b-
      \int_{\R^d}\xi_a\,\d\mu_a
      \\&\ge\int_{\R^d\times (a,b)}
      \Big((\partial_t\xi)_-+ \rmD_x\xi\,\vv+\xi w \Big)\,\d\mu_I,
    \end{aligned}
  \end{equation}  
  where $(\partial_t\xi)^+,(\partial_t\xi)_-$ are defined
  in terms of a space convolution kernel $\kappa_\eps$ as in \eqref{eq:88}.
  % If moreover $\limsup_{x'\to
  %   x}\partial_t^+\xi_t(x')\le \partial_t^+\xi_t(x)$ 
  % for $\mu_I$-a.e.~$(x,t)\in \R^d\times (0,1)$, then \WWW \eqref{eq:31}
  % holds with equality.
  
\end{lemma}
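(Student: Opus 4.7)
The plan is to reduce to the case of a test function that is smooth in $x$ by mollifying $\xi$ only in the spatial variable, apply the continuity equation with reaction to the mollified test function, and then pass to the limit using dominated convergence for the spatial terms and reverse Fatou/Fatou for the time derivative, whose regularization is exactly what $(\partial_t\xi)^+$ and $(\partial_t\xi)_-$ are designed to capture.

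More precisely, fix $[a,b]\subset(0,1)$ and choose a smooth, symmetric, nonnegative convolution kernel $\kappa_\eps$ on $\R^d$. Define $\xi^\eps_t(x):=(\xi_t\ast\kappa_\eps)(x)$. By hypothesis $(i)$, the family $\{\xi^\eps_t\}$ is uniformly bounded and the map $(x,t)\mapsto\xi^\eps_t(x)$ is uniformly Lipschitz on $\R^d\times[a,b]$, with spatial derivatives of every order also uniformly bounded. First I would establish, for every such $\xi^\eps$, the identity
\begin{equation*}
  \int_{\R^d}\xi^\eps_b\,\d\mu_b-\int_{\R^d}\xi^\eps_a\,\d\mu_a
  =\int_{\R^d\times(a,b)}\Big(\partial_t\xi^\eps+\rmD_x\xi^\eps\cdot\vv+\xi^\eps w\Big)\,\d\mu_I.
\end{equation*}
This follows from the distributional formulation \eqref{eq:438} by a standard density/cutoff argument: since $\xi^\eps_t$ is smooth in $x$ with uniformly bounded derivatives and $\mu_t$ has locally bounded mass on $[a,b]$, one may approximate $\xi^\eps$ by functions of the form $\sum_i\zeta_R(x)\varphi_i(x)\eta_i(t)$ with $\varphi_i\in\rmC^\infty_\rmc(\R^d)$, $\eta_i\in\rmC^\infty_\rmc(a,b)$, using a spatial cutoff $\zeta_R\up 1$ and the integrability of $\vv,w\in L^2(\mu_I)$ to pass to the limit via dominated convergence. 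The time regularity of the left-hand side follows because $t\mapsto\int\varphi\,\d\mu_t$ is absolutely continuous for each test $\varphi\in\rmC^1_b(\R^d)$ by \eqref{eq:438}.

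Having the identity for $\xi^\eps$, I would now let $\eps\downarrow 0$ and compute the four limits separately. Since $\xi_t$ is uniformly bounded and uniformly Lipschitz on $[a,b]$, $\xi^\eps_t\to\xi_t$ uniformly on $\R^d\times[a,b]$, which handles the boundary terms and, together with $w\in L^1(\mu_I)$, gives $\int\xi^\eps w\,\d\mu_I\to\int\xi w\,\d\mu_I$. For the spatial gradient, at every $(x,t)$ where $\xi_t$ is strictly differentiable one has $\rmD_x\xi^\eps_t(x)\to\rmD_x\xi_t(x)$ by \eqref{eq:36}; since by hypothesis $(ii)$ this occurs at $\mu_I$-a.e.\ point and $|\rmD_x\xi^\eps|$ is uniformly bounded by the Lipschitz constant of $\xi$, dominated convergence with $\vv\in L^2(\mu_I)\subset L^1_{\rm loc}(\mu_I)$ yields $\int\rmD_x\xi^\eps\cdot\vv\,\d\mu_I\to\int\rmD_x\xi\cdot\vv\,\d\mu_I$. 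For the time derivative, note that at points where $\partial_t\xi_t$ exists one has $\partial_t\xi^\eps_t(x)=(\widetilde{\partial_t\xi_t}\ast\kappa_\eps)(x)$, so by the very definitions \eqref{eq:88}
\begin{equation*}
  \liminf_{\eps\to 0}\partial_t\xi^\eps_t(x)\ge(\partial_t\xi)_-(x,t),\qquad
  \limsup_{\eps\to 0}\partial_t\xi^\eps_t(x)\le(\partial_t\xi)^+(x,t).
\end{equation*}
Since $|\partial_t\xi^\eps|$ is uniformly bounded by the temporal Lipschitz constant of $\xi$ on $[a,b]$ and $\mu_I$ is finite on $\R^d\times[a,b]$, Fatou and reverse Fatou yield
\begin{equation*}
  \liminf_{\eps\to 0}\int\partial_t\xi^\eps\,\d\mu_I\ge\int(\partial_t\xi)_-\,\d\mu_I,\quad
  \limsup_{\eps\to 0}\int\partial_t\xi^\eps\,\d\mu_I\le\int(\partial_t\xi)^+\,\d\mu_I.
\end{equation*}
Taking $\limsup$ in the exact identity for $\xi^\eps$ gives the first inequality of \eqref{eq:31}; taking $\liminf$ gives the second.

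The main obstacle I expect is the justification of the exact identity for the spatially mollified test function $\xi^\eps$: although $\xi^\eps$ is smooth in $x$, it is only Lipschitz in $t$ and not compactly supported in either variable, so it cannot be inserted directly into \eqref{eq:435}. Handling this requires a careful spatial truncation argument exploiting the finite mass of $\mu_t$ together with $(\vv,w)\in L^2(\mu_I)$, plus a density argument reducing to separable test functions to which the weak continuity in $t$ (obtained from \eqref{eq:438}) can be applied. All other steps — the convergence of the gradient term via \eqref{eq:36}, and the sided Fatou arguments for $\partial_t$ — are essentially bookkeeping once the definitions \eqref{eq:88} have been matched to the spatial mollification scheme.
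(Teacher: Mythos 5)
Your overall strategy is the same as the paper's --- spatially mollify $\xi$, establish the fundamental-theorem-of-calculus identity for $\xi^\eps$, then let $\eps\downarrow 0$ using dominated convergence for the boundary, gradient and reaction terms and sided Fatou for the time-derivative term --- and your $\eps\downarrow 0$ passage is essentially correct, up to a small imprecision noted below. You also correctly locate the crux, namely justifying
\[
\int_{\R^d}\xi^\eps_b\,\d\mu_b-\int_{\R^d}\xi^\eps_a\,\d\mu_a
=\int_{\R^d\times(a,b)}\Big(\partial_t\xi^\eps+\rmD_x\xi^\eps\cdot\vv+\xi^\eps w\Big)\,\d\mu_I
\]
when $\xi^\eps$ is smooth in $x$ but only Lipschitz in $t$.

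The gap lies in how you propose to close this. A tensor-product approximation $\xi^\eps\approx\sum_i\zeta_R(x)\varphi_i(x)\eta_i(t)$ cannot deliver the identity: even if the approximants converge to $\xi^\eps$ in sup norm, their time derivatives $\sum_i\zeta_R\varphi_i\eta_i'$ have no reason to converge to $\partial_t\xi^\eps$, which only exists for $\Leb 1$-a.e.\ $t$; the term $\int\partial_t(\cdot)\,\d\mu_I$ therefore does not pass to the limit along such approximants. The paper closes the gap by performing an additional Steklov average in time, $\xi^{\eps,\tau}_t:=\tau^{-1}\int_0^\tau\xi^\eps_{t+r}\,\d r$ on a slightly enlarged interval $[a,b']$. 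The resulting $\xi^{\eps,\tau}$ belongs to $\rmC^1(\R^d\times[a,b])$, and for it the identity is the standard one used in Theorem~\ref{thm:BB2} (handling the lack of compact support in $x$ via the $L^2(\mu_I)$-integrability of $(\vv,w)$). One then lets $\tau\downarrow0$: $\xi^{\eps,\tau}\to\xi^\eps$ and $\rmD_x\xi^{\eps,\tau}\to\rmD_x\xi^\eps$ uniformly, and --- this is the missing observation --- by Rademacher and Fubini the set $N$ of times $t$ where $\partial_t\xi_t(\cdot)$ fails to exist on a set of positive $\Leb d$-measure is $\Leb 1$-negligible, so for every $t\notin N$ and every $x\in\R^d$, dominated convergence applied to the difference quotient inside the spatial convolution gives $\partial_t\xi^{\eps,\tau}_t(x)\to(\widetilde{\partial_t\xi_t}\ast\kappa_\eps)(x)$, cf.\ \eqref{eq:35}. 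Since $\R^d\times N$ is $\mu_I$-negligible, \eqref{eq:33bis} follows, and your $\eps\downarrow0$ argument takes over unchanged.

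A minor point on your final step: the identity $\partial_t\xi^\eps_t(x)=(\widetilde{\partial_t\xi_t}\ast\kappa_\eps)(x)$ holds for all $x$ precisely when $t\notin N$ (so that $\partial_t\xi_t(x')$ exists for $\Leb d$-a.e.\ $x'$ and dominated convergence applies to the mollified difference quotient), not under the pointwise condition that $\partial_t\xi_t$ exist at the specific $(x,t)$. Since $N$ is negligible this does not affect your Fatou bounds, but the statement should be corrected.
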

\begin{proof}
  We fix a compact subinterval $[a,b]\subset (0,1)$,
  $b'\in (b,1)$, \nc
  and set $M:=\max_{t\in [a,b']}\mu_t(\R^d)$ and
  $L:=\Lip(\xi;{\R^d\times [a,b']})+\sup_{\R^d\times [a,b']}
  |\xi|$.
  
  We regularize $\xi$ by space convolution as in \eqref{eq:433} by
  setting $\xi^\eps:=\xi\ast \kappa_\eps$ and perform a
  further regularization in time, viz.
  \begin{equation}
    \label{eq:34}
    \xi^{\eps,\tau}_t(x):=\frac 1\tau\int_0^{\tau}\xi^\eps_{t+r}(x)\,\d
    r,\quad
    0<\tau<b'-b.
  \end{equation}
  Since $\xi^{\eps,\tau}\in \rmC^1(\R^d\times [a,b])$, we can argue as
  in the proof of Theorem \ref{thm:BB2} and obtain, for every $\eps>0$ and
  $\tau\in (0,b'{-}b)$, the identity
  \begin{equation}
    \label{eq:33}
    \int_{\R^d}\xi^{\eps,\tau}_b\,\d\mu_b-
    \int_{\R^d}\xi^{\eps,\tau}_a\,\d\mu_a=\int_{\R^d\times (a,b)} \Big(\partial_t\xi^{\eps,\tau}+
    \rmD_x\xi^{\eps,\tau}\,\vv+\xi^{\eps,\tau} w \Big)\,\d\mu_I.
  \end{equation}
  We first pass to the limit as $\tau\down0$, observing that
  $\xi^{\eps,\tau}\to\xi^\eps$ uniformly because $\xi^\eps$ is bounded
  and Lipschitz. Similarly, since $\rmD
  \xi^{\eps,\tau}=(\rmD\xi^\eps)^\tau$ and $\rmD\xi^\eps$ is bounded
  and Lipschitz, we have $\rmD\xi^{\eps,\tau}\to \rmD\xi^\eps$
  uniformly.  Finally, using 
  \begin{displaymath}
    \partial_t\xi_t^{\eps,\tau}(x)=\frac
    1\tau(\xi^\eps_{t+\tau}(x)-\xi^\eps_t(x))=
    \int_{\R^d}\frac
    1\tau(\xi^\eps_{t+\tau}(x')-\xi^\eps_t(x'))\kappa_\eps(x-x')\,\d
    x',
  \end{displaymath}
  and the fact that $N:=\{t\in (0,1): \Leb d(\{x\in
  \R^d:(x,t)\not\in D(\partial_t \xi)\})>0\}$ is $\Leb1$-negligible by
  the theorems of 
  Rademacher and Fubini, 
  an application of Lebesgue's Dominated Convergence Theorem yields \nc
  \begin{equation}
    \label{eq:35}
    \lim_{\tau\down0}\partial_t\xi_t^{\eps,\tau}(x)=
    \partial_t\xi^\eps_t(x)=((\partial_t\xi)\ast \kappa_\eps)(x)
    \quad\text{for every }x\in \R^d,\quad t\in (a,b)\setminus N.
  \end{equation}
  
  Since $\R^d\times N$ is also $\mu_I$-negligible,
  a further application of Lebesgue's Dominated Convergence Theorem
  yields \nc
  \begin{equation}
    \label{eq:33bis}
    \int_{\R^d}\xi^{\eps}_b\,\d\mu_b-
    \int_{\R^d}\xi^{\eps}_a\,\d\mu_a=\int_{\R^d\times (a,b)} \Big(\partial_t\xi^{\eps}+
    \rmD_x\xi^{\eps}\,\vv+\xi^{\eps} w \Big)\,\d\mu_I.
  \end{equation}
  Now, \eqref{eq:31} will be deduced by passing to the limit
  $\eps\down0$ in \eqref{eq:33bis} as follows.  We observe that
  $\xi^\eps$ converges uniformly to $\xi$ because $\xi$ is bounded and
  Lipschitz. Moreover, since $\lim_{\eps\down0} \rmD_x\xi_t^\eps(x)=
  \rmD_x\xi_t(x)$ at every point $x\in \R^d$ where $\xi_t$ is strictly
  differentiable, we obtain
  \begin{displaymath}
    |\rmD_x\xi^\eps\,\vv|\le L|\vv|\in
    \rmL^1(\R^d\times(a,b);\mu_I) \ \text{ and } \ 
    \lim_{\eps\down0} \rmD_x\xi^\eps=
    \rmD_x\xi\quad \text{ $\mu_I$-a.e.~in } \R^d\times [a,b],
  \end{displaymath}
  so that
  \begin{displaymath}
    \lim_{\eps\down0} \int_{\R^d}\xi^{\eps}_{a,b}\,\d\mu_{a,b}=
    \int_{\R^d}\xi_{a,b}\,\d\mu_{a,b},\quad
    \int\limits_{\R^d\times (a,b)} \!\!\!\Big(\rmD_x\xi^{\eps}\,\vv+\xi^{\eps} w
    \Big)\,\d\mu_I=
    \int\limits_{\R^d\times (a,b)}\!\!\! \Big(\rmD_x\xi\,\vv+\xi w
    \Big)\,\d\mu_I.
  \end{displaymath}
  Finally, since $\partial_t\xi^\eps_t $ is also uniformly bounded,
  Fatou's Lemma yields
  \begin{displaymath}
    \limsup_{\eps\down0}\int\limits_{\R^d\times (a,b)}\!\!\! \partial_t\xi_t^\eps
    \,\d\mu_I\le
    \int\limits_{\R^d\times (a,b)}\!\!\! (\partial_t\xi_t)^+
    \,\d\mu_I,\quad
    \liminf_{\eps\down0}\int\limits_{\R^d\times (a,b)} \!\!\!\partial_t\xi_t^\eps
    \,\d\mu_I\ge
    \int\limits_{\R^d\times (a,b)}\!\!\! (\partial_t\xi_t)_-
    \,\d\mu_I.
  \end{displaymath}
  \WWW Thus, \eqref{eq:31} follows from \eqref{eq:33bis}. 
  % \eqref{eq:15} shows that for all $t\in [0,1]$ the map
  % $x\mapsto \partial_t^+\xi_t(x)$ is lower semicontinuous in $\R^d$,
  % so that
  % \begin{displaymath}
  %   \liminf_{\eps\down0}\partial_t^+\xi^\eps_t(x)\ge \partial_t^+\xi_t(x)
  %   \quad\forevery x\in \R^d.
  % \end{displaymath}
  % Since $\partial_t^+\xi^\eps_t $ 
  % is also uniformly bounded, Fatou's Lemma yields
  % \begin{displaymath}
  %   \liminf_{\eps\down0}\int_{\R^d\times (a,b)} \partial_t^+\xi_t^\eps
  %   \,\d\mu_I\ge
  %   \int_{\R^d\times (a,b)} \partial_t^+\xi_t
  %   \,\d\mu_I.
  % \end{displaymath}
  % \WWW Thus, \eqref{eq:31} follows from \eqref{eq:33bis}. The
  % equality case follows by applying the Lebesgue's Dominated
  % Convergence Theorem.
\end{proof}

\subsection{Contraction properties: convolution and Heat equation in
  $\mathrm{RCD}(0,\infty)$ metric-measure spaces.}
\label{subsec:contraction}

We conclude this paper with a few applications concerning contraction
properties of the $\HK$ distance.  The first one concerns the behavior
with respect $1$-Lipschitz maps.
\begin{lemma}
  \label{le:contraction}
  Let $(X,\sfd_X),\ (Y,\sfd_Y)$ be separable metric spaces and let
  $f:X\to Y$ be a $1$-Lipschitz map.  Then $f_\sharp:\cM(X)\to \cM(Y)$
  is $1$-Lipschitz w.r.t.~$\HK$:
  \begin{equation}
    \label{eq:448}
    \HK(f_\sharp \mu_1,f_\sharp \mu_2)\le 
    \HK(\mu_1,\mu_2).
  \end{equation}
\end{lemma}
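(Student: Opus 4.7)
The plan is to lift the map $f$ to a map between the cone spaces and then exploit the characterization of $\HK$ from Problem \ref{pr:3} via homogeneous marginals.

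First I would define $\tilde f : \tY_X \to \tY_Y$ by $\tilde f([x,r]) := [f(x), r]$ (which is well-defined since $\tilde f(\fro_X) = \fro_Y$), and verify that $\tilde f$ is $1$-Lipschitz from $(\tY_X, \sfdc^X)$ to $(\tY_Y, \sfdc^Y)$. This follows directly from the defining formula \eqref{eq:90}: since $f$ is $1$-Lipschitz we have $\sfd_Y(f(x_1),f(x_2))\wedge\pi \le \sfd_X(x_1,x_2)\wedge\pi$, and the monotonicity of $\cos$ on $[0,\pi]$ gives
\[
\sfdc^Y(\tilde f([x_1,r_1]),\tilde f([x_2,r_2]))^2
= r_1^2+r_2^2-2r_1r_2\cos(\sfd_Y(f(x_1),f(x_2))\wedge\pi) \le \sfdc^X([x_1,r_1],[x_2,r_2])^2.
\]

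Next I would take an optimal plan $\aalpha \in \OptHK(\mu_1,\mu_2) \subset \cM(\tY_X \times \tY_X)$ whose existence is guaranteed by Theorem \ref{thm:existenceHK}, and push it forward to $\bbeta := (\tilde f \otimes \tilde f)_\sharp \aalpha \in \cM(\tY_Y \times \tY_Y)$. The key verification is $\chm2i \bbeta = f_\sharp \mu_i$: for every $\zeta \in \rmB_b(Y)$, using $\sfs_Y\circ \tilde f = \sfs_X$ and $\sfx_Y\circ\tilde f = f\circ \sfx_X$ on $\tY_X \setminus\{\fro_X\}$ (where the second factor $\sfs_X^2$ makes the vertex irrelevant), one has
\[
\int_Y \zeta \, \d(\chm2i \bbeta) = \int \zeta(\sfx_Y(\tilde f(\ty_i)))\, \sfs_X(\ty_i)^2\,\d\aalpha = \int_X \zeta\circ f \,\d\mu_i = \int_Y \zeta\,\d(f_\sharp \mu_i),
\]
so $\bbeta$ is admissible in the Hellinger--Kantorovich problem for $(f_\sharp \mu_1, f_\sharp \mu_2)$.

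Combining these two ingredients gives the desired contraction:
\[
\HK^2(f_\sharp \mu_1, f_\sharp \mu_2) \le \int \sfdc^Y(\ty_1,\ty_2)^2\,\d\bbeta = \int \sfdc^Y(\tilde f(\ty_1),\tilde f(\ty_2))^2\,\d\aalpha \le \int \sfdc^X(\ty_1,\ty_2)^2\,\d\aalpha = \HK^2(\mu_1,\mu_2).
\]
There is no real obstacle: the only mild subtlety is checking that $\tilde f$ respects the cone identification \eqref{eq:95} (which is immediate from the definition) and that the homogeneous marginals transform correctly under $(\tilde f\otimes\tilde f)_\sharp$, which reduces to a one-line computation because the measure $\sfs_i^2 \aalpha$ does not charge the vertex.
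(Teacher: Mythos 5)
Your proof is correct and follows essentially the same route as the paper: both lift $f$ to the cone map $\tilde f([x,r]):=[f(x),r]$, verify that $\tilde f$ is $1$-Lipschitz w.r.t.\ $\sfdc$, and then push forward optimal objects. The only cosmetic difference is that you argue directly on optimal plans $\aalpha\in\cM(\tyY)$ and their homogeneous marginals, whereas the paper invokes the contractivity of $\tilde f_\sharp$ on $(\cP_2(\tY),\Wc)$ together with the representation $\HK(\mu_1,\mu_2)=\min\{\Wc(\alpha_1,\alpha_2):\chm2{}\alpha_i=\mu_i\}$ of Corollary~\ref{cor:HK-W1}.
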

\begin{proof}
  It is sufficient to observe that the map $\frf:\tY_X\mapsto \tY_Y$
  defined by $\frf([x,\s ]):=[f(x),s]$ satisfies
  $\sfd_{\tY_Y}(\frf([x_1,\s_1]),\frf([x_2,\s_2]))\le
  \sfd_{\tY_X}([x_1,\s_1],[x_2,\s_2])$ for every $[x_i,\s_i]\in
  \tY_X$. Thus $\frf_\sharp $ is a contraction from
  $(\cP_2(\tY_X),\sfW_{\sfd_{\tY_X}})$ to
  $(\cP_2(\tY_Y),\sfW_{\sfd_{\tY_Y}})$, and hence $f_\sharp$
  satisfies \eqref{eq:448}.
\end{proof}
A second application concerns convolutions in $\R^d$.

\begin{theorem}
  \label{thm:convolution}
  Let $X=\R^d$ with the Euclidean distance and let $\nu\in \cM(\R^d)$.
  Then the map $\mu\mapsto \mu\ast \nu$ is contractive w.r.t.~$\HK$ if
  $\nu(\R^d)=1$ and, more generally,
  \begin{equation}
    \label{eq:453}
    \HK^2(\mu_1\ast\nu,\mu_2\ast\nu)\le 
    \nu(\R^d)\HK^2(\mu_1,\mu_2)\quad\forevery \mu_1,\mu_2\in \cM(\R^d).
  \end{equation}
\end{theorem}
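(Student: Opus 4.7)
The approach is to work directly with the plan-based characterization \eqref{eq:104} of $\HK^2$, exploiting the fact that the cone cost $\sfdc$ on $\tY$ is invariant under simultaneous translations of the two base points in $\R^d$. Let $\aalpha \in \OptHK(\mu_1,\mu_2) \subset \cM(\tY\times\tY)$ be an optimal plan, so $\chm2i\aalpha = \mu_i$ for $i=1,2$ and $\int \sfdc^2\,\d\aalpha = \HK^2(\mu_1,\mu_2)$. The strategy is to construct an admissible competitor $\tilde\aalpha \in \cHM2{\mu_1 \ast \nu}{\mu_2 \ast \nu}$ whose cost equals exactly $\nu(\R^d)\,\HK^2(\mu_1,\mu_2)$; then the minimality in \eqref{eq:104} directly yields \eqref{eq:453}.

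The construction uses the continuous translation map
$$\Phi : \tY\times\tY\times\R^d \to \tY\times\tY, \qquad \Phi\big([x_1,r_1],[x_2,r_2], y\big) := \big([x_1+y,r_1],[x_2+y,r_2]\big),$$
and sets $\tilde\aalpha := \Phi_\sharp(\aalpha \otimes \nu) \in \cM(\tY\times\tY)$. To verify the homogeneous marginals, for $\phi \in \rmB_b(\R^d)$ Fubini gives
$$\int \phi\,\d(\chm21\tilde\aalpha) = \iint \phi(x_1{+}y)\,r_1^2\,\d\aalpha([x_1,r_1],[x_2,r_2])\,\d\nu(y) = \iint \phi(x_1{+}y)\,\d\mu_1(x_1)\,\d\nu(y),$$
where the second equality uses $\chm21\aalpha = \mu_1$; the right-hand side equals $\int \phi\,\d(\mu_1 \ast \nu)$ by definition of convolution. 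The identical computation gives $\chm22\tilde\aalpha = \mu_2 \ast \nu$, so $\tilde\aalpha \in \cHM2{\mu_1 \ast \nu}{\mu_2 \ast \nu}$.

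For the cost, translation invariance of the Euclidean distance yields $\sfd(x_1+y, x_2+y) = \sfd(x_1,x_2)$, so by the defining formula \eqref{eq:90} the pointwise identity
$\sfdc^2\big([x_1+y,r_1],[x_2+y,r_2]\big) = \sfdc^2\big([x_1,r_1],[x_2,r_2]\big)$
holds for every $y \in \R^d$, i.e.~$\sfdc^2 \circ \Phi$ is independent of the translation variable $y$. Integrating against $\aalpha \otimes \nu$ and using the product structure,
$$\int \sfdc^2\,\d\tilde\aalpha = \int \sfdc^2 \circ \Phi\,\d(\aalpha\otimes\nu) = \nu(\R^d)\int \sfdc^2\,\d\aalpha = \nu(\R^d)\,\HK^2(\mu_1,\mu_2),$$
and \eqref{eq:104} immediately delivers \eqref{eq:453}. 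There is no serious obstacle in this route: the argument reduces to the continuous pushforward through $\Phi$, the marginal identity verified above (a direct Fubini computation), and the translation invariance of the Euclidean cost, which trivializes the cost bound. An alternative derivation consists in dualising through Theorem \ref{thm:main-HKHJ}(ii) and observing that, for any smooth subsolution $\xi$ of the generalized Hamilton--Jacobi equation, the averaged function $\eta_t(x) := \nu(\R^d)^{-1}\int \xi_t(x+y)\,\d\nu(y)$ is itself a subsolution by Jensen's inequality, leading to the same factor $\nu(\R^d)$ via $\int \xi_i\,\d(\mu_i \ast \nu) = \nu(\R^d)\int \eta_i\,\d\mu_i$.
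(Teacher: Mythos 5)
Your proof is correct and takes a genuinely different route from the paper. The paper establishes \eqref{eq:453} first for discrete $\nu=\sum_k a_k\delta_{x_k}$, combining the translation invariance of $\HK$ (Lemma \ref{le:contraction}), the $1$-homogeneity of $\HK^2=\LET$ (Theorem \ref{thm:mainHK1}(a)), and the subadditivity \eqref{eq:136bis}, and then passes to general $\nu$ by approximating with discrete measures and invoking the weak continuity of $\HK$ (Theorem \ref{thm:topo1}). Your argument instead stays at the level of a single optimal plan $\aalpha\in\OptHK(\mu_1,\mu_2)$: the pushforward of $\aalpha\otimes\nu$ under the translation map $\Phi$ (which is continuous on $\tY\times\tY\times\R^d$, hence produces a Radon measure with finite second moments) is an admissible competitor in $\cHM2{\mu_1\ast\nu}{\mu_2\ast\nu}$ by the Fubini computation you give, and translation invariance of $\sfdc$ makes the cost exactly $\nu(\R^d)\,\HK^2(\mu_1,\mu_2)$. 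This eliminates the need for the subadditivity lemma, the homogeneity, and the approximation/continuity step, and so is more self-contained; the paper's route, by contrast, isolates translation invariance as a separate structural fact and recycles machinery (subadditivity, weak continuity) already in place, which may be pedagogically preferable but is logically heavier. Your alternative dual sketch via $\HJ{\pi/2}{t}$ and the averaging Lemma \ref{le:slope-commute} is also sound and is essentially the pattern the paper uses for the $\mathrm{RCD}$-contraction Theorem \ref{thm:Ricci}.
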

\begin{proof}
  The previous lemma shows that $\HK$ is invariant by isometries, in
  particular translations in $\R^d$, so that 
  \begin{displaymath}
    \HK(\mu_1\ast\delta_x,\mu_2\ast\delta_x)=\HK(\mu_1,\mu_2)\quad
    \forevery \mu_1,\mu_2\in \cM(\R^d),\ x\in \R^d.
  \end{displaymath}
  By the subadditivity property it follows that 
  if $\nu=\sum_k a_k\delta_{x_k}$ for some $a_k\ge 0$, then
  \begin{align*}
    \HK^2(\mu_1\ast\nu,\mu_2\ast\nu)&=
                                 \HK^2(\sum_k a_k\mu_1\ast \delta_{x_k},\sum_k
                                 a_k\mu_2\ast\delta_{x_k})
                     \\&\le 
                           \sum_k a_k
                             \HK^2(\mu_1\ast\delta_{x_k},\mu_2\ast\delta_{x_k})
                           =
                           \sum_k a_k
                             \HK^2(\mu_1,\mu_2)=\nu(\R^d)\HK^2(\mu_1,\mu_2).
  \end{align*}
  The general case then follows by approximating $\nu$
  by a sequence of discrete measure $\nu_n$ converging to $\nu$ in
  $\cM(\R^d)$ and observing that 
  $\mu_i\ast\nu_n\to \mu_i\ast \nu$ weakly in $\cM(\R^d)$.
  Since $\HK$ is weakly continuous we obtain \eqref{eq:453}.
\end{proof}

An easy application of the previous result is the contraction property
of the (adjoint) Heat semigroup $(P^*_t)_{t\ge0}$ in $\R^d$ with
respect to $\HK$. In fact, we can prove a much more general result for
the Heat flow in $\mathrm {RCD}(0,\infty)$ metric measure spaces
$(X,\sfd,m)$ \cite{AGS14b,AGS15}. It covers the case of the semigroups
$(P_t)_{t\ge0}$ generated by 

\paragraph{(A)} the Heat equation on a open convex domain
$\Omega\subset \R^d$  
with homogeneous Neumann conditions 
\begin{displaymath}
  \partial_t u=\Delta u\quad\text{in $\Omega\times (0,\infty)$,}\qquad
  \partial_n u=0\quad\text{ on $\partial\Omega\times
    (0,\infty)$},
\end{displaymath}

\paragraph{(B)} the Heat equation on a complete Riemannian manifold
$(\M^d,g)$ with nonnegative Ricci curvature defined by
\begin{displaymath}
  \partial_t u=\Delta_g u\quad\text{in }\M^d\times (0,\infty),
\end{displaymath}
where $\Delta_g$ is the usual Laplace-Beltrami operator, 
and 

\paragraph{(C)} the Fokker-Planck equation in $\R^d$ generated by
the gradient of a convex potentials $V:\R^d\to \R$, viz. 
\begin{displaymath}
  \partial_t u=\Delta_g u-\nabla\cdot(u\,\rmD V)\quad\text{in
  }\R^d\times (0,\infty). 
\end{displaymath}

\begin{theorem}
  \label{thm:Ricci}
  Let $(X,\sfd,m)$ be a complete and separable metric-measure space
  with nonnegative Riemannian Ricci Curvature, i.e.~satisfying the
  {\rm RCD$(0,\infty)$} condition, and let 
  $(P^*_t)_{t\ge0}:\cM(X)\to\cM(X)$ be the Heat semigroup
  in the measure setting.
  Then
  \begin{equation}
    \label{eq:454}
    \HK(P^*_t\mu_1,P^*_t\mu_2)\le \HK(\mu_1,\mu_2)\quad
    \text{for all }\mu_1,\mu_2\in \cM(X) \WWW \text{ and }t>0. \EEE
  \end{equation}
\end{theorem}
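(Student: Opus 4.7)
The argument leverages the dual characterization of $\HK^2$ in terms of subsolutions to the generalized Hamilton--Jacobi equation (Theorem \ref{thm:main-HKHJ}(ii), applicable since any RCD$(0,\infty)$ space is a length space), combined with the self-adjointness identity
\[
\int P_t f \,\d\mu = \int f \,\d(P_t^*\mu) \quad \text{for every bounded Borel }f\text{ and }\mu\in\cM(X),
\]
and the defining Bakry--\'Emery contraction for $P_t$ on RCD$(0,\infty)$ spaces.

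Fix $t>0$ and $\mu_1,\mu_2\in\cM(X)$. By Theorem \ref{thm:main-HKHJ}(ii) we write
\[
\tfrac12 \HK^2(P_t^*\mu_1, P_t^*\mu_2)
= \sup_{\xi}\Big\{\int \xi_1\,\d P_t^*\mu_2 - \int \xi_0\,\d P_t^*\mu_1\Big\}
= \sup_{\xi}\Big\{\int P_t\xi_1\,\d\mu_2 - \int P_t\xi_0\,\d\mu_1\Big\},
\]
the supremum running over $\xi\in \rmC^1([0,1];\Lip_b(X))$ with $\partial_s\xi_s + \tfrac12|\rmD_X\xi_s|^2 + 2\xi_s^2\le 0$. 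The heart of the proof is the claim that the map $s\mapsto \tilde\xi_s:=P_t\xi_s$ belongs to the same admissible class. Granting this, applying the duality formula once more to the pair $\mu_1,\mu_2$ with the competitor $\tilde\xi$ yields $\int\tilde\xi_1\,\d\mu_2 - \int\tilde\xi_0\,\d\mu_1 \le \tfrac12 \HK^2(\mu_1,\mu_2)$, and \eqref{eq:454} follows by taking the supremum over $\xi$.

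To establish the claim I invoke three properties of $P_t$ on RCD$(0,\infty)$: (i) $P_t$ preserves $\Lip_b(X)$ and commutes with the time derivative, so that $\tilde\xi\in\rmC^1([0,1];\Lip_b(X))$ with $\partial_s\tilde\xi_s = P_t\partial_s\xi_s$; (ii) the Bakry--\'Emery gradient estimate $|\rmD_X P_t f|^2 \le P_t(|\rmD_X f|^2)$; (iii) Jensen's inequality $(P_t f)^2 \le P_t(f^2)$, valid because $P_t$ is Markov. Combining these with the linearity and positivity of $P_t$ gives
\[
\partial_s \tilde\xi_s + \tfrac12|\rmD_X\tilde\xi_s|^2 + 2\tilde\xi_s^2
\le P_t\Big(\partial_s\xi_s + \tfrac12|\rmD_X\xi_s|^2 + 2\xi_s^2\Big) \le 0.
\]

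\textbf{Main obstacle.} The most delicate point is the pointwise interpretation of the Bakry--\'Emery inequality (ii). In the RCD$(0,\infty)$ framework the estimate holds $m$-a.e.\ between minimal weak upper gradients, while the subsolution formulation used in Theorem \ref{thm:main-HKHJ}(ii) is phrased in terms of the metric slope $|\rmD_X\xi|$. One exploits the fundamental RCD identification $|\rmD_X f|_w = \mathrm{lip}_a f = |\rmD_X f|$ $m$-almost everywhere for Lipschitz $f$, together with the regularizing action of $P_t^*$ (which renders $P_t^*\mu$ absolutely continuous with respect to $m$ for $t>0$), to conclude that the admissibility of $\tilde\xi$ as a competitor in the duality formula is preserved. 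Additional minor care is required to justify the self-adjointness identity on $\Lip_b(X)$ and to ensure $P_t^*\cM(X)\subset\cM(X)$, both of which follow from stochastic completeness and the standard extension of $P_t^*$ from $L^2(X,m)$ to finite Radon measures in the RCD setting.
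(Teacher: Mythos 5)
Your proof is correct and follows essentially the same route as the paper's own argument: both deduce \eqref{eq:454} from the Hamilton--Jacobi duality formula of Theorem~\ref{thm:main-HKHJ}(ii), the adjoint relation $\int P_t f\,\d\mu=\int f\,\d P_t^*\mu$, and the observation that $P_t$ maps Hamilton--Jacobi subsolutions to Hamilton--Jacobi subsolutions via the Bakry--\'Emery gradient estimate together with Jensen's inequality $(P_t f)^2\le P_t(f^2)$.

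One remark on the ``main obstacle'' you discuss: the paper invokes the Bakry--\'Emery estimate \eqref{eq:48} directly as a \emph{pointwise} inequality for the Lipschitz representative $P_t f$ (valid for all $x\in X$ thanks to the Feller property $P_t(\rmB_b(X))\subset\Lip_b(X)$ of the RCD heat semigroup), and this pointwise statement is exactly what the definition of subsolution in \eqref{eq:412} requires. Your proposed detour through the absolute continuity of $P_t^*\mu$ to relax this to a $\mu_I$-a.e.\ statement is unnecessary and in fact does not directly fit the hypotheses of Theorem~\ref{thm:main-HKHJ}(ii), which asks for a pointwise subsolution; the clean resolution is simply the pointwise refinement of the Bakry--\'Emery estimate available in RCD$(0,\infty)$, which the paper cites as \eqref{eq:48}. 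Apart from this, your normalization (keeping $\xi_s$ with coefficients $\tfrac12,2$) versus the paper's rescaling to $\psi_\theta=2\xi_\theta$ with coefficients $\tfrac14,1$ is purely cosmetic.
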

\begin{proof}
  Recall that in {\rm RCD$(0,\infty)$} metric measure spaces the
  $L^2$-gradient flow of the Cheeger energy induces a symmetric Markov
  semigroup $(P_t)_{t\ge0}$ in $L^2(X,m)$, which has a pointwise
  version satisfying the Feller regularization property
  $P_t(\rmB_b(X))\subset \Lip_b(X)$ for $t>0$ and the estimate
  \begin{equation}
    \label{eq:48}
    |\rmD_X P_t f|^2(x)\le P_t\big(|\rmD_X f|^2\big) (x)\quad
    \forevery f\in \Lip_b(X),\ x\in X,\ t\ge0.
  \end{equation}
  Its adjoint $(P_t^*)_{t\ge0}$ coincides with the
  Kantorovich-Wasserstein gradient flow in $\cP_2(X)$ of the Entropy
  Functional $\FF(\cdot|m)$ where $\FF$ is induced by $F(\r)=\PE_1(\r)
  \WWW = \r \log\r - \r +1 $ and defines a semigroup in $\cM(X)$ by
  the formula
  \begin{equation}
    \label{eq:192}
    \int_X f\,\d(P_t^*\mu)=\int_X P_t f\,\d\mu\quad
    \text{for every }f\in \rmB_b(X) \text{ and } \mu\in \cM(X).
  \end{equation}
  In order to prove \eqref{eq:454} we use \eqref{eq:412}
  ($\mathrm{RCD}$-spaces satisfy the length property)
  and apply $P_t$ to a subsolution
  $(\psi_\theta)_{\theta\in [0,1]}$ in $\rmC^1([0,1];\Lip_b(X))$ of the
  Hamilton-Jacobi equation
  \begin{equation}
    \label{eq:455}
    \partial_\theta\psi_\theta+\frac 14|\rmD_X
    \psi_\theta|^2+\psi_\theta^2\le 0\quad\text{in }X\times (0,1).
  \end{equation}
  Since $P_t$ is a linear and continuous map from $\Lip_b(X)$ to
  $\Lip_b(X)$
  the curve $\theta\mapsto \psi_{\theta,t}:=P_t(\psi_\theta)$ belongs
  to $\rmC^1([0,1];\Lip_b(X))$. Now, 
  \eqref{eq:48} and the Markov property yield
  \begin{displaymath}
    |\rmD_X P_t\psi_\theta|^2(x)\le
    P_t\big(|\rmD_X\psi_\theta|^2\big)(x),\ 
    (P_t\psi_\theta)^2(x)\le P_t(\psi_\theta^2)(x)\
    \text{ for }x\in X,\ \theta\in [0,1],\ t\ge0.
  \end{displaymath}
  Thus, for every $t\ge0$ we obtain 
  \begin{displaymath}
    \partial_\theta\psi_{\theta,t}+\frac 14|\rmD_X
    \psi_{\theta,t}|^2+\psi_{\theta,t}^2\le 0\quad\text{in }X\times (0,1),
  \end{displaymath}
  and therefore
  \begin{align*}
    \int_X \psi_{1}\,\d(P^*_t)\mu_1-
    \int_X \psi_{0}\,\d(P^*_t)\mu_0&=
                                   \int_X P_t\psi_{1}\,\d\mu_1-
                                   \int_X P_t\psi_{0}\,\d\mu_0
                                   \le \HK^2(\mu_1,\mu_0).
  \end{align*}
  We conclude by taking the supremum with respect to all the subsolutions 
  of \eqref{eq:455} in $\rmC^1([0,1];\Lip_b(X))$ and applying \eqref{eq:412}.
\end{proof}

\appendix
\section{On the chronological development of our theory}
\label{sec:Devel}
\def\bfxi{\boldsymbol\xi}
\def\bfXi{\boldsymbol\Xi}

In this section we give a brief account of the order in which we
developed the different parts of the theory. The beginning was the mostly
formal work 
in \cite{LMS15} on reaction-diffusion systems, where a
distance on vectors $\uu$ of densities over a domain $\Omega\subset
\R^d$ was formally defined in the Benamou-Brenier sense via 
\[
\sfd(\uu_0,\uu_1)^2=\inf\int_0^1 \int_\Omega \bfXi_t:
\M_\text{diff}(\uu_t) \bfXi_t + \bfxi_t \cdot \K_\text{react}(\uu_t) \bfxi_t
\d x \d t 
\]
under the constraint of the continuity equation 
$ %\[
\partial_t\uu_t + \nabla\cdot\big( \M_\text{diff}(\uu_t) \bfXi_t \big)
=\K_\text{react}(\uu_t) \bfxi_t .
$ %\]
The central question was and still is the understanding of diffusion
equations with reactions in the gradient-flow form 
$ %\[
\partial_t \uu = \nabla\cdot \big( \M_\text{diff}(\uu) \nabla
\delta\cF(\uu)\Big) - \K_\text{react}(\uu)\delta\cF(\uu),
$ %\] 
see \cite[Sect.\,5.1]{LMS15}.

It was natural to treat the scalar case first and to restrict to the
case where both mobility operator $\M_\text{diff}(u)$ and
$\K_\text{react}(u)$ are linear in $u$.  Only in that case the
formally derived system \eqref{eq:GeodSyst} for the geodesics
$(u_t,\xi_t)$ decouples in the sense that $\xi_t$ solves an
Hamilton-Jacobi equation that does not depend on $u$.  Choosing
$\M_\text{diff}(u)=\alpha u$ and $\K_\text{react}(u) =\beta u$ with
$\alpha, \beta\geq 0$, the relevant Hamilton-Jacobi equation reads
\[
\partial_t \xi_t + \frac\alpha2 |\rmD_x\xi_t|^2 + \frac{\beta}2\xi_t^2 =0.
\]
As in the other parts of this paper, we restrict to the case
$\alpha=1$ and $\beta=4$ subsequently, but refer to \cite{LMS15} for
the general case. Thus, the conjectured characterization
\eqref{eq:412} was first presented in Pisa at the Workshop ``Optimal
Transportation and Applications'' in November 2012.

During a visit of the second author in Pavia, the generalized Hopf-Lax
formula via the nonlinear convolution $\HJ{\pi/2}t$ (cf.\
\eqref{eq:423}) was derived via the classical method of
characteristics. This led to the unsymmetric representation
\eqref{eq:293} for $\HK$. To symmetrize this relation we used that
$\HJ{}1\xi(x)=\inf \Phi(\xi(y),|y{-}x|)$ with $\Phi(z,R)= \frac12\big(
1{-}\frac{A(R)}{1+2z}\big)$, where $A(R)=\cos^2\big(R\wedge
({\pi}/{2})\big)$. Setting $\psi_0=-2\xi_0$ and $\psi_1=2\xi_1
=2\HJ{}1$, we have the equivalence
\[
\xi_1 = \HJ{}1\xi_0 \quad \Longleftrightarrow \quad
(1{-}\psi_0(x_0))(1{-}\psi_1(x_1)) \geq A(|x_0{-}x_1|) \text{ for all }x_i.
\]
Setting $\varphi_i=- \log(1{-}\psi_i)$
we arrived at the cost function 
\[
\sfc(x_0,x_1) = - \log A(|x_0{-}x_1|) =\begin{cases} - 2\log\big(
  \cos|x_0{-}x_1|\big) &\text{for }|x_0{-}x_1|<\pi/2,\\ \infty&
  \text{otherwise}, \end{cases}
\]
for the first time and obtained the characterization \eqref{eq:210},
namely 
\[
\HK(\mu_0,\mu_1)^2=\sfD(\mu_0,\mu_1)=\sup\big\{
\DD(\varphi_0,\varphi_1|\mu_0,\mu_1)\,:\ \varphi_0\oplus\varphi_1 \leq
\sfc\big\}.
\]
It was then easy to dualize $\DD$, and the Logarithmic Entropy
functional $\LET$ in \eqref{eq:231} was derived in July 2013.  

While the existence of minimizers for $\LET(\mu_0,\mu_1)= \min
\EE(\gamma|\mu_0,\mu_1) $ was easily obtained, it was not clear at
all, why and how $\HK$ defined via $\HK^2(\mu_0,\mu_1)=\min
\EE(\cdot|\mu_0,\mu_1)$ generates a geodesic distance. The only thing
which could easily be checked was that the minimum was consistent with
the distance between two Dirac masses, which could easily be
calculated via the dynamic formulation.

So, in parallel we tried to develop the dynamic approach, which was
not too successful at the early stages. Only after realizing and
exploiting the connection to the cone distance in Summer and Autumn of
2013 we were able to connect $\LET$ systematically with the dynamic
approach. The crucial and surprising observation was that optimal
plans for $\EE$ and lifts of measures $\mu\in \cM(X)$ to measures
$\lambda$ on the cone $\tY$ could be identified by
exploiting the optimality conditions systematically. Corresponding results
were presented in workshops on Optimal Transport in Banff (June 2014)
and Pisa (November 2014).

Already at the Banff workshop, the general structure of the primal and dual
Entropy-Transport problem as well as the homogeneous perspective
formulation were presented. Several examples and refinements where
developed afterwards. The most recent part from Summer 2015 concerns
our Hamilton-Jacobi equation in general metric spaces $(X,\sfd)$ and
the induced cone $\tY$ (cf.\ Section \ref{subsec:HJ}) and
the derivation of the geodesic equations in $\R^d$ (cf.\ Section
\ref{subsec:geodesicRd}). This last achievement 
now closes the circle, by showing
that all the initial steps, which were done on a formal level in 2012
and the first half of 2013, have indeed a rigorous interpretation.
 
\footnotesize
\bibliographystyle{siam}
%\bibliography{bibliografia2015}
\bibliography{bibexport_ET4}
\end{document}